
\documentclass[11pt]{article}
\usepackage{amsmath,amsfonts,amssymb,amsthm}
\usepackage{latexsym, xspace, enumerate}
\usepackage[mathscr]{eucal}
\usepackage[all]{xypic}
\usepackage{hyperref}
\usepackage{amscd}
\usepackage{enumerate}
%

\usepackage{vmargin}
\setmarginsrb{28mm}{20mm}{28mm}{25mm}%
          {10mm}{7mm}{10mm}{10mm}

\numberwithin{equation}{section}

\newtheorem{theorem}{Theorem}[subsection]

\newtheorem{conjecture}[theorem]{Conjecture}
\newtheorem{corollary}[theorem]{Corollary}

\newtheorem{em-definition}[theorem]{Definition}
\newtheorem{em-example}[theorem]{Example}

\newtheorem{lemma}[theorem]{Lemma}

\newtheorem{problem}[theorem]{Problem}
\newtheorem{proposition}[theorem]{Proposition}
\newtheorem{em-remark}[theorem]{Remark}

\newtheorem{fact}[theorem]{Fact}
\newtheorem{question}[theorem]{Question}
\newenvironment{example}{\begin{em-example} \em }{ \end{em-example}}
\newenvironment{remark}{\begin{em-remark} \em }{ \end{em-remark}}
\newenvironment{definition}{\begin{em-definition} \em }{ \end{em-definition}}

\newcommand{\F}{{\mathcal{M}}}
\def\P{\mathbf{P}}

\def\Q{{\mathbb Q}}

\def\R{{\mathbb R}}

\def\Z{{\mathbb Z}}
\def\T{{\mathbb T}}
\def\N{{\mathbb N}}
\def\J{\mathbb J}

\def\C{{\mathcal C}}
\def\CC{{\mathcal I}}

\def\c{\mathcal C}
\def\1{\emph 1)}
\def\2{\emph 2)}
\def\3{\emph 3)}
\def\4{\emph 4)}

\def\Aut{\mathrm{Aut}}
\def\End{\mathrm{End}}

\def\U{\mathcal U}

\baselineskip=18pt

\def\ent{\mathrm{ent}}
\def\aent{\ent^\star}

\def\Flow{\mathrm{Flow}}

\def\cov{\mbox{{\rm cov}}}
\def\pgp{\mathrm{Pol}}
\def\P{\mathbf{P}}

\def\mod{\mathrm{mod}}

\def\H{\mathfrak{H}}
\def\HB{\mathfrak H_b}
\def\abg{\mathbf{AbGrp}}
\def\af{\mathbf{Flow}}
\global\def\f{\phi}
\newcommand{\ms}[1]{\mathscr{#1}}

\title{Topological Entropy and Algebraic Entropy\\ for group endomorphisms}
\author{Dikran Dikranjan
\\{\footnotesize {\tt  dikran.dikranjan@dimi.uniud.it}} 
\\{\footnotesize Dipartimento di Matematica e Informatica,}
\\{\footnotesize Universit\`{a} di Udine,}
\\{\footnotesize Via delle Scienze, 206 - 33100 Udine, Italy} 
 \and Anna Giordano Bruno
\\{\footnotesize {\tt  anna.giordanobruno@uniud.it}} 
\\{\footnotesize Dipartimento di Matematica e Informatica,}
\\{\footnotesize Universit\`{a} di Udine,}
\\{\footnotesize Via delle Scienze, 206 - 33100 Udine, Italy}
 }

\date{
}

\begin{document}

\maketitle

\bigskip
\begin{abstract}
The notion of entropy appears in many fields and this paper is a survey about entropies in several branches of Mathematics. We are mainly concerned with the topological and the algebraic entropy in the context of continuous endomorphisms of locally compact groups, paying special attention to the case of compact and discrete groups respectively. The basic properties of these entropies, as well as many examples, are recalled. Also new entropy functions are proposed, as well as generalizations of several known definitions and results. Furthermore we give some connections with other topics in Mathematics as Mahler measure and Lehmer Problem from Number Theory, and the growth rate of groups and Milnor Problem from Geometric Group Theory. Most of the results are covered by complete proofs or references to appropriate sources. 
\end{abstract}

\newpage
\tableofcontents

\newpage
\section{Introduction}

The notion of entropy was invented by Clausius in Physics in 1865 and carried over to Information Theory by Shannon in 1948. Kolmogorov \cite{K} and Sinai \cite{Sinai} in 1958 introduced the measure theoretic entropy in Ergodic Theory. Moreover the topological entropy was defined for the first time by Adler, Konheim and McAndrew \cite{AKM} in 1965, and other notions of topological entropy were given by Bowen \cite{B} and Hood \cite{hood}. Finally, entropy was taken also in Algebraic Dynamics by Adler, Konheim and McAndrew \cite{AKM} in 1965 and Weiss \cite{We} in 1974, and then by Peters \cite{P} in 1979.

\smallskip 
In each setting entropy is a non-negative real-valued function $h$ measuring the randomness and disorder (the value $\infty$ is allowed). So the entropy $h(T)$ of a transformation $T:X\to X$ of a space $X$ measures the average uncertainty about where $T$ moves the points of $X$; the size of $h(T)$ reflects the randomness of $T$ and the degree to which $T$ disorganizes the space $X$.
The main specific cases in which entropy is defined are:
\begin{enumerate}[(a)]
\item $X$ probability space, $T$ measure preserving trasformation, $h_{mes}$ measure entropy;
\item $X$ set, $T$ selfmap, $\mathfrak h$ set theoretic entropy;
\item $X$ topological space, $T$ continuous selfmap, $h_{top}$ topological entropy;
\item $X$ metric or uniform space, $T$ uniformly continuous selfmap, $h_U$ uniform entropy;
\item $X$ discrete group, $T$ endomorphism, $h_{alg}$ algebraic entropy;
\item $X$ locally compact group, $T$ continuous endomorphism, $k$ topological entropy, $h_{alg}$ algebraic entropy.
\end{enumerate}

\bigskip
This paper covers the first author's survey talk on entropy given at the International Conference on Topology in Islamabad in July 2011. This explains
the choice to focus our attention mainly on two entropies, namely the topological entropy and the algebraic entropy, considering both of them in the setting of (topological) groups.  Nevertheless, we pay some attention also to the other known entropies and discuss some new ones, providing new results and new connections in many directions. Among the novelties we should mention two set-theoretic entropies (\S \ref{set-sec}), the e-spectra (\S \ref{tes-sec} and \S \ref{aes-sec}) providing connection to the famous Lehmer Problem, and the general setting of the algebraic entropy and the (topological) adjoint entropy, without any limitation on the domain of the endomorphism. 

\medskip
In Section \ref{prelude} we collect a number of known notions and results which do not explicitly contain entropy, but are used in the rest of the paper.
We start recalling the notion of Mahler measure, as well as Lehmer Problem, in \S \ref{Lehmer-sec}. We use this notion in several parts of the paper and we give also some equivalent forms of Lehmer Problem in terms of entropy, as we mention below in the Introduction. 
In \S \ref{bernoulli-sec} we give several known results. In particular we recall the definition of the Bernoulli shifts (left, right and two sided).
As becomes clear in the main body of the paper, these shifts are the leading examples in the theory of entropy. 

\medskip
In Section  \ref{mes-set-sec} we consider two different kinds of entropy. In \S \ref{mes-sec} we recall the notion of measure theoretic entropy in ergodic theory, 
introduced independently by Kolmogorov \cite{K} and Sinai \cite{Sinai} for measure preserving transformations of probability measure spaces, using measurable partitions of the space.
This entropy is not the main subject of our exposition and it is widely studied and understood, so we only
give the definition of this entropy and some of its basic properties (as they gave the inspiration for the concept of topological entropy having similar properties), referring to \cite{Wa,W} for a more detailed treatment. In  \S \ref{set-sec} we introduce two kinds of set-theoretic entropy, namely, the covariant entropy $\mathfrak h$ and the contravariant  entropy  $\mathfrak h^*$ for selfmaps of sets. These new entropies have interesting properties on their own; on the other hand they are strictly related respectively to the topological entropy and to the algebraic entropy, as they permit to compute easily the topological entropy and the algebraic entropy of special endomorphisms (namely, the generalized shifts; see \S \ref{genshift}).

\medskip
Section \ref{top-sec} treats the topological entropy. In \S \ref{akm-sec} we briefly recall the notion of topological entropy, due to Adler, Konheim and McAndrew \cite{AKM}, for continuous selfmaps of compact spaces, defined using counterimages of open coverings of the space. Moreover we state its basic properties, namely, Monotonicity for continuous images, Invariance under conjugation, Continuity on inverse limits, Logarithmic Law, Weak Addition Theorem and a Reduction to surjective continuous selfmaps.
We recall in \S \ref{bowen-sec} the definition due to Bowen \cite{B} of the uniform entropy for uniformly continuous selfmaps of metric spaces, and its generalization to uniformly continuous selfmaps of uniform spaces due to Hood \cite{hood}. In particular, the uniform entropy coincides with the topological entropy when they are both defined. More details on this entropy can be found in the survey paper \cite{DSV}.

Starting from this point, we focus our attention on the realm of topological groups and their continuous endomorphisms. In \S \ref{cg-sec} we state the main properties of the topological entropy for continuous endomorphisms of compact groups; these are Bernoulli normalization, Kolmogorov-Sinai Formula, Yuzvinski Formula, Addition Theorem and Uniqueness Theorem.
In \S \ref{tes-sec} we introduce the topological e-spectrum of a compact abelian group $K$, namely the set $\mathbf E_{top}(K)$ of all possible values of the topological entropy of continuous endomorphisms of $K$. Letting $\mathbf E_{top}= \bigcup \mathbf E_{top}(K)$, the union taken over all compact abelian groups $K$, the question whether the equality $\inf (\mathbf E_{top}\setminus \{0\})= 0$ is equivalent to Lehmer Problem.
In the final \S \ref{lc-sec} of this section dedicated to the topological entropy, we pass to the more general setting of continuous endomorphisms of locally compact groups using Hood's generalization of Bowen's definition of uniform entropy. Not so much is known in this general situation, so we describe it giving some examples and open problems to further improve this knowledge. Moreover a reduction is found for the computation of the uniform entropy of continuous endomorphisms of totally disconnected locally compact groups eliminating the use of the measure.

\medskip
The algebraic entropy is exposed in Section \ref{alg-sec}. The first definition of the algebraic entropy $\ent$ for endomorphisms of abelian groups was sketched briefly in \cite{AKM}. The same notion was studied later by Weiss \cite{We} and more recently by Dikranjan, Goldsmith, Salce and Zanardo \cite{DGSZ}. Since the definition uses trajectories of finite subgroups, it perfectly fits for endomorphisms of torsion abelian groups (or more generally, locally finite groups). More precisely the algebraic entropy $\ent$ of an endomorphism $\f$ of an abelian group coincides with the algebraic entropy $\ent$ of the restriction of $\f$ to the torsion part of the group. Peters \cite{P} introduced another notion of algebraic entropy for automorphisms $\f$ of abelian groups, using trajectories of finite {\em subsets} with respect to the inverse automorphism $\f^{-1}$. As noted in \cite{DG}, this definition can be appropriately modified in order to be extended to arbitrary endomorphisms $\f$ of abelian groups, by using trajectories of finite subsets with respect to $\phi$. The so defined algebraic entropy $h_{alg}$ coincides with $\ent$ on endomorphisms of torsion abelian groups.

An extension of the algebraic entropy $\ent$, called \emph{intrinsic algebraic entropy}, was recently given in \cite{DGSV} replacing the family of all finite subgroups of $G$ by a suitable larger family of subgroups depending on the endomorphism $\phi$ of $G$. Moreover in the algebraic context other entropies were defined for endomorphisms of modules by means of real-valued module invariants in \cite{SZ}; these entropies generalize the algebraic entropy $\ent$ in a direction different from the one of $h_{alg}$ considered here, they were intensively studied in the recent papers \cite{GBS,SVV,SZ,Vi,Z}.

In \S \ref{def-sec} we define the algebraic entropy $h_{alg}$ for endomorphisms of arbitrary groups, removing any hypothesis of commutativity on the groups. It turns out that the basic properties of the algebraic entropy still hold in this general setting, such as Monotonicity for subgroups and quotients, Invariance under conjugation, Logarithmic Law, Bernoulli normalization, Continuity on direct limits, Weak Addition Theorem. In \S \ref{ab-sec} we recall the fundamental properties of the algebraic entropy for endomorphisms of abelian groups: Algebraic Kolmogorov-Sinai Formula, Algebraic Yuzvinski Formula  and Addition Theorem. Then comes a Uniqueness Theorem claiming that five of these properties (Bernoulli normalization, Invariance, Continuity, Algebraic Yuzvinski Formula  and Addition Theorem) uniquely determine the algebraic entropy.  We leave also several open problems related to a possible generalization of the Addition Theorem and the Uniqueness Theorem to the non-abelian case.

In \S \ref{growth-sec} we explain how the algebraic entropy is related to the classical topic of growth rate of finitely generated groups.  This notion was introduced by Milnor \cite{M1} and many authors studied the problem posed by him \cite{M3} in this context; among them we mention Milnor \cite{M2} himself, Wolf \cite{Wolf}, Bass \cite{Bass}, Tits \cite{Tits}, Adyan \cite{Ad}, Gromov \cite{Gro} and Grigorchuk \cite{Gri1,Gri2,Gri3,Gri4}. We point out how the algebraic entropy extends in a natural way the growth rate of groups to the growth rate of group endomorphisms, allowing us to extend Milnor Problem to any group endomorphism. In \S \ref{Growth-sec} we expose the known results on algebraic entropy that solve this more general version of Milnor Problem in the abelian case.

In analogy with the case of topological entropy, in \S \ref{aes-sec} we introduce the algebraic spectrum $\mathbf E_{alg}(G)$ of all possible values of the algebraic entropy of endomorphisms of an abelian group $G$. Letting $\mathbf E_{alg}= \bigcup \mathbf E_{alg}(G)$, the union taken over all abelian groups $G$, we prove that the question whether $\inf (\mathbf E_{alg}\setminus \{0\})= 0$ is equivalent to Lehmer Problem.

Finally \S \ref{alc-sec} concerns algebraic entropy for continuous endomorphisms of locally compact groups. Peters \cite{Pet1} gave a definition of entropy for topological automorphisms of locally compact abelian groups extending the notion of entropy he introduced in the discrete case. This definition was appropriately modified by Virili \cite{V} for endomorphisms of locally compact abelian groups, and  here we avoid the hypothesis on the group to be abelian. So the notion of algebraic entropy can be extended to the general setting of continuous endomorphisms of locally compact groups; namely, the same context as the one considered for the uniform entropy. Only some properties of the algebraic entropy are known in this general context, so we leave some open problems, that indicate a possible way to improve this knowledge. Analogously to the topological case, a measure-free computation of the algebraic entropy of continuous endomorphisms of totally disconnected locally compact groups is proposed. 

\medskip
The adjoint algebraic entropy for endomorphisms of abelian groups, was recently introduced in \cite{DGV} as a counterpart of the algebraic entropy $\ent$, replacing finite subgroups by finite-index subgroups. In Section \ref{adj-sec} we propose a more general setting, removing the hypothesis on the groups to be abelian. This is done in \S \ref{adj-def}, where we verify also that the basic properties established in the abelian case, as Invariance under conjugation, Logarithmic Law, Monotonicity for subgroups and quotients, Weak Addition Theorem, remain valid in the general context as well. In \S \ref{adj-mp} we list the relevant properties of the adjoint algebraic entropy in the abelian case. In particular, we see that the Addition Theorem does not hold in full generality and we recall a fundamental dichotomy result: only the values $0$ and $\infty$ may occur as values of the adjoint algebraic entropy. We show in \S \ref{top-adj} that an appropriate recourse to topology  can avoid this dichotomy. Here we recall the notion of topological adjoint entropy for continuous endomorphisms of topological abelian groups given in \cite{G}; also in this case we remove the hypothesis that the groups have to be abelian.

\medskip
We dedicate Section \ref{conn-sec} to describe the known relations among all the entropies introduced so far. 

In \S \ref{mes-top} we see the precise relation of the topological entropy with the measure entropy when they both make sense, namely, on compact metric spaces endowed with invariant Borel probability measure. In particular they coincide for continuous surjective endomorphisms of compact groups equipped with their Haar measure. 
Then in \S \ref{top-alg} we describe how the topological entropy and the algebraic entropy are related by Pontryagin duality in the context of continuous endomorphisms of locally compact abelian groups. In the compact-discrete case, the topological entropy of a continuous endomorphism of a compact abelian group coincides with the algebraic entropy of the dual endomorphism of the dual discrete abelian group. We see also how this can be generalized for continuous endomorphisms of totally disconnected locally compact abelian groups with totally disconnected dual;  this can be achieved by using the above mentioned  possibility to avoid the use of measure in the computation of the topological entropy and the algebraic entropy. The validity of this result in full generality is an open problem.
In \S \ref{genshift} we give the relations of the covariant set-theoretic entropy with the topological entropy, and of the contravariant set-theoretic entropy with the algebraic entropy of generalized shifts introduced in \cite{AKH}.  It is clear that the topological adjoint entropy coincides with the adjoint algebraic entropy for endomorphisms of discrete groups; on the other hand in \S \ref{LAAAAST} we see that it coincides with the topological entropy for continuous endomorphisms of totally disconnected compact groups.
Using Pontryagin duality we give also the relation of the topological adjoint entropy with the algebraic entropy.

\medskip
The last Section \ref{cav-sec} briefly recalls the categorical approach to entropy from \cite{DG1}. In particular, the notion of entropy function of an abelian category is recalled. This kind of entropy functions cover the case of the algebraic entropy, while another notion of contravariant entropy function is needed to view the topological entropy and the measure entropy in categorical terms.

A different general approach to entropy is discussed in \cite{DGV1}, where entropy is defined in the category $\mathfrak S$ of normed semigroups and then pulled back to a specific category $\mathfrak X$ (of sets, topological or measure spaces, abelian groups, modules, etc.) via appropriate functors $F:\mathfrak X \to \mathfrak S$.

\subsection{Notation and terminology}

We denote by $\mathbb Z$, $\mathbb N$, $\mathbb N_+$, $\Q$ and $\R$ respectively the set of integers, the set of natural numbers (including $0$), the set of positive integers, the set of rationals and the set of reals. For $n\in\N$, let $\R_{\geq n}=\{x\in\R: x\geq n\}$ and $\R_{> n}=\{x\in\R: x> n\}$. For $m\in\mathbb N_+$, we use $\mathbb Z(m)$ for the finite cyclic group of order $m$, and for a prime $p$ we denote by $\mathbb J_p$ the $p$-adic integers and by $\Q_p$ the $p$-adic numbers.

With a slight divergence from the standard use, for a set $X$ we denote by $[X]^{<\omega}$ the set of all non-empty finite subsets of $X$.

Let $G$ be a group. In general, we denote the group $G$ multiplicatively and its identity by $e_G$; on the other hand, we denote the abelian groups additively. If $H$ is a subgroup of $G$, we indicate this by $H\leq G$, and we denote by $[G:H]$ the index of $H$ in $G$. For $F\in[G]^{<\omega}$ and $n\in\N_+$ let 
$F_{(n)}=\underbrace{F\cdot\ldots\cdot F}_n.$
For a set $X$ we denote by $G^X$ and $G^{(X)}$ respectively the direct product and the direct sum of $|X|$ many copies of $G$.
 
The subset of torsion elements of $G$ is $t(G)$ (it is a subgroup when $G$ is abelian), the center of $G$ is $Z(G)$ and the derived subgroup of $G$ is $G'$. Moreover $\End(G)$ is the set of all endomorphisms of $G$ (it is a ring  when $G$ is abelian). 
We denote by $e_G$ and $id_G$ respectively the endomorphism of $G$ which is identically $e_G$ and the identity endomorphism of $G$.

For $G$ an abelian group, $D(G)$ denotes the divisible hull of $G$. 
For a ring $R$, we denote by $R[t]$ the ring of polynomials with coefficients in $R$.

For a category $\mathfrak M$ we write $M\in\mathfrak M$ if $M$ is an object of $\mathfrak M$ and $N\subseteq M$ if $N$ is a subobject of $M$. 
If $\mathfrak M$ is an abelian category and $M\in\mathfrak M$, we denote by $0_M$ the zero morphism of $M$ and by $1_M$ the identity morphism of $M$.
Moreover, for $M_1,M_2\subseteq M$, we denote by $M_1+M_2$ the join of $M_1$ and $M_2$ and by $M_1\cap M_2$ the intersection of $M_1$ and $M_2$. If $N$ is a subobject of $M$, $M/N$ is the quotient object. 

\newpage
\section{Prelude}\label{prelude}

\subsection{Lehmer Problem}\label{Lehmer-sec}

Let $k$ be a positive integer, let $f(t)=st^k+a_{k-1}t^{k-1}+\ldots+a_0\in\mathbb C[t]$ be a non-constant polynomial with complex coefficients and let $\{\lambda_i:i=1,\ldots,k\}\subseteq\mathbb C$ be the set of all roots of $f(t)$ (we always assume the roots of a polynomial to be counted with multiplicity); in particular, $f(t)=s\cdot\prod_{i=1}^n(t-\lambda_i)$. The Mahler measure of $f(t)$ was defined independently by Lehmer \cite{Lehmer} and Mahler \cite{Mahler} in two different equivalent forms.
Following Lehmer \cite{Lehmer} (see also \cite{EWard}), the \emph{Mahler measure} of $f(t)$ is $M(f(t))=|s|\cdot \prod_{|\lambda_i|>1} |\lambda_i|.$ The \emph{(logarithmic) Mahler measure} of $f(t)$ is $${m}(f(t))=\log M(f(t))=\log|s|+\sum_{|\lambda_i|>1}\log|\lambda_i|.$$
For a survey on the Mahler measure of algebraic numbers see \cite{Smyth} (see also \cite{BDM}, \cite{EWard}, \cite{Hi} and \cite{M}).

\medskip
For an algebraic number $\alpha\in\mathbb C$, the \emph{Mahler measure} $m(\alpha)$ of $\alpha$ is the Mahler measure of the minimal polynomial of $\alpha$.
Lehmer \cite{Lehmer}, with the aim of generating large primes, associated to any monic polynomial $f(t)\in \Z[t]$ with roots $\lambda_1, \ldots, \lambda_k$ the sequence of integers $$\Delta_n(f(t))=\prod_{i=1}^k|1-\lambda_i^n|.$$
The idea comes from Mersenne primes generated by the polynomial $f(t)=t-2$. Lehmer was using the polynomial $f(t)= t^3-t-1$. This is the non-reciprocal polynomial with the smallest positive Mahler measure \cite{Smyth}. The polynomial $$g(t)=x^{10}+x^9-x^7-x^6-x^5-x^4-x^3+x+1$$ is the reciprocal polynomial with the smallest known positive Mahler measure, that is, $m(g(t))=\log \lambda$, where $\lambda=1.17628\ldots$ is the Lehmer number \cite{Hi}. Still in \cite{Hi} it is noted that $\lambda$ is the largest real root of $g(t)$ to date, and it is the only one of its algebraic conjugates outside the unit circle (i.e., $\lambda$ is a \emph{Salem number}). If there exists a polynomial $p(t)$ with positive Mahler measure smaller than this, then $\deg p(t)\geq 55$ \cite{MRQ}.

\begin{remark}
Let $f(t)\in\Z[t]\setminus\{0\}$ be monic with roots $\lambda_1,\ldots,\lambda_k$. Then
$$m(f(t))=\lim_{n\to\infty} \frac{\log |\Delta_n(f(t))|}{n};$$ 
so $m(f(t))$ measures the (exponential) growth of the sequence $\{\Delta_n(f(t))\}_{n\in\N}$, as 
$$\lim_{n\to\infty} \frac{\log |\Delta_n(f(t))|}{n}=\lim_{n\to \infty} \frac{\sum_{i=1}^k\log|1-\lambda_i^n|}{n}= \sum_{i=1}^k \lim_{n\to\infty} \frac{\log|1-\lambda_i^n|}{n}=\sum_{|\lambda_i|>1}  \log|\lambda_i|.$$
\end{remark}

The case of zero Mahler measure is completely determined by  the following theorem due to Kronecker:

\begin{theorem}[Kronecker Theorem]\label{Kr}\emph{\cite{Kr}}
Let $f(t)\in\Z[t]$ be a monic polynomial with roots $\lambda_1, \ldots, \lambda_k$ in $\mathbb C$. If $\lambda_1$ is not a root of unity, then $|\lambda_i|>1$ for at least one $i\in\{1,\ldots,k\}$. 
\end{theorem}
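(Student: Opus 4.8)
The plan is to prove the contrapositive: assuming $|\lambda_i|\le 1$ for all $i\in\{1,\dots,k\}$, I would show that $\lambda_1$ is a root of unity — in fact that \emph{every} nonzero $\lambda_i$ is a root of unity, which is the usual full form of Kronecker's theorem. (Tacitly $\lambda_1\ne 0$, the case $\lambda_1=0$ being vacuous.) The device is to consider, for each $n\in\N_+$, the monic polynomial $f_n(t)=\prod_{i=1}^{k}(t-\lambda_i^n)$ and to exploit that there can be only finitely many of them.

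First I would check that $f_n(t)\in\Z[t]$ for every $n$. Its coefficients are, up to sign, the elementary symmetric functions $e_j(\lambda_1^n,\dots,\lambda_k^n)$, which are symmetric polynomials with integer coefficients in $\lambda_1,\dots,\lambda_k$; by the fundamental theorem of symmetric polynomials (equivalently, via Newton's identities) they are therefore integer polynomials in $e_1(\lambda_1,\dots,\lambda_k),\dots,e_k(\lambda_1,\dots,\lambda_k)$, i.e.\ in the integer coefficients of $f(t)$. Next, since $|\lambda_i^n|=|\lambda_i|^n\le 1$, one gets $|e_j(\lambda_1^n,\dots,\lambda_k^n)|\le\binom{k}{j}\le 2^k$ for all $n$ and $j$, so the coefficients of all the $f_n$ lie in one fixed finite subset of $\Z$; hence $\{f_n(t):n\in\N_+\}$ is a finite set. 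This is the step where the hypothesis $|\lambda_i|\le 1$ is actually used. By the pigeonhole principle there are $n<m$ in $\N_+$ with $f_n(t)=f_m(t)$, i.e.\ the multisets $\{\lambda_1^n,\dots,\lambda_k^n\}$ and $\{\lambda_1^m,\dots,\lambda_k^m\}$ coincide; pick a permutation $\sigma$ of $\{1,\dots,k\}$ with $\lambda_i^m=\lambda_{\sigma(i)}^n$ for all $i$, and let $r$ be its order. Iterating this relation along the orbit of an index $i$ gives
$$\lambda_i^{m^r}=\lambda_{\sigma(i)}^{nm^{r-1}}=\lambda_{\sigma^2(i)}^{n^2m^{r-2}}=\cdots=\lambda_{\sigma^r(i)}^{n^r}=\lambda_i^{n^r},$$
and since $m>n\ge 1$ forces $m^r-n^r>0$, we conclude $\lambda_i^{\,m^r-n^r}=1$ whenever $\lambda_i\ne 0$. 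In particular $\lambda_1$ is a root of unity, and contraposition gives the theorem.

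The proof is essentially formal once the first step is granted, so the only real ``obstacle'' is precisely that first step: the stability of $\Z[t]$ under the substitution $\lambda_i\mapsto\lambda_i^n$, i.e.\ that replacing the roots of an integral monic polynomial by their $n$-th powers again produces an integral monic polynomial. This is a standard application of symmetric-function theory (or of resultants), but it is the one place where something beyond bookkeeping happens; everything else — the boundedness estimate, the pigeonhole, and the short orbit computation — is routine.
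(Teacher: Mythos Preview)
Your argument is the classical proof of Kronecker's theorem and is correct. Note, however, that the paper does not actually supply a proof of this statement: Theorem~\ref{Kr} is simply quoted with a citation to Kronecker's original 1857 paper, and the short ``Indeed, \ldots'' paragraph that follows it is not a proof of the theorem but rather a derivation of the subsequent corollary \emph{from} the theorem. So there is no paper proof to compare against; what you have written is precisely the standard textbook argument (bounded symmetric functions of the $\lambda_i^n$, pigeonhole on the finite set of resulting polynomials, then the orbit computation), and it is the proof one would expect a reader to look up.

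One small quibble: calling the case $\lambda_1=0$ ``vacuous'' is not quite right. If $\lambda_1=0$ then $\lambda_1$ is not a root of unity, yet $f(t)=t$ shows the conclusion can fail; the theorem as stated tacitly assumes $\lambda_1\ne 0$ (or, equivalently, is meant to be read for the nonzero roots). You flagged this, which is good, but ``vacuous'' suggests the hypothesis cannot be met, whereas really it is an implicit nondegeneracy assumption in the statement itself.
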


Indeed, if $f(t)=st^k+ a_{k-1}t^{k-1}+\ldots+a_0\in\Z[t]\setminus \{0\}$, $\deg f(t)=k$ and all roots $\lambda_1,\ldots,\lambda_k$ of $f(t)$ are roots of unity, then there exists $n\in\N_+$ such that every $\lambda_i$ is a root of $t^n-1$. By Gauss Lemma $f(t)$ divides $t^n-1$ in $\Z[t]$. Therefore $s=1$, hence $m(f(t))=0$.
Suppose now that $m(f(t))=0$; we can assume without loss of generality that $f(t)$ is monic. Moreover all $|\lambda_i|\leq 1$ for every $i\in\{1,\ldots,k\}$. By Theorem \ref{Kr} each $\lambda_i$ is a root of unity.

\begin{corollary}
Let $f(t)\in\Z[t]\setminus\{0\}$ be primitive. Then $m(f(t))=0$ if and only if $f(t)$ is cyclotomic (i.e., all the roots of $f(t)$ are roots of unity).
Consequently, if $\alpha$ is an algebraic integer, then $m(\alpha)=0$ if and only if $\alpha$ is a root of unity.
\end{corollary}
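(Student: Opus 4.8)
The plan is to read the corollary off from the formula $m(f(t))=\log|s|+\sum_{|\lambda_i|>1}\log|\lambda_i|$ together with Kronecker's Theorem \ref{Kr} and Gauss Lemma; in effect it is a repackaging of the discussion immediately following Theorem \ref{Kr}, and I would carry out the two implications separately before deducing the statement about algebraic integers.

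First I would prove that, for $f(t)$ primitive, $m(f(t))=0$ implies $f(t)$ cyclotomic. Write $f(t)=st^k+a_{k-1}t^{k-1}+\ldots+a_0$ with roots $\lambda_1,\ldots,\lambda_k$. Since $s\in\Z\setminus\{0\}$ we have $|s|\geq 1$, so $\log|s|\geq 0$, and every summand $\log|\lambda_i|$ with $|\lambda_i|>1$ is strictly positive; hence $m(f(t))=0$ forces both $|s|=1$ and the absence of any root outside the closed unit disc, i.e. $|\lambda_i|\leq 1$ for all $i$. As $|s|=1$, the polynomial $\pm f(t)$ is monic with the same roots, so Theorem \ref{Kr}, applied to each root in turn after relabelling, yields that every $\lambda_i$ is a root of unity; that is, $f(t)$ is cyclotomic.

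For the converse, suppose $f(t)$ is primitive and all its roots are roots of unity. Then $\sum_{|\lambda_i|>1}\log|\lambda_i|$ is an empty sum, so it only remains to check $|s|=1$. Consider $g(t):=\prod_{i=1}^k(t-\lambda_i)=f(t)/s$, a monic polynomial whose coefficients are, up to sign, elementary symmetric functions of the roots of unity $\lambda_i$, hence algebraic integers; being also rational, they are integers, so $g(t)\in\Z[t]$, and a monic integer polynomial is primitive. From $f(t)=s\,g(t)$ the content of $f(t)$ equals $|s|\cdot(\text{content of }g)=|s|$, so primitivity of $f(t)$ gives $|s|=1$, and therefore $m(f(t))=\log 1+0=0$. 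Finally, if $\alpha$ is an algebraic integer, its minimal polynomial $f(t)\in\Q[t]$ is monic, hence lies in $\Z[t]$ by Gauss Lemma and in particular is primitive, with $m(\alpha)=m(f(t))$; by the equivalence just established, $m(\alpha)=0$ iff $f(t)$ is cyclotomic iff all roots of $f(t)$ (the conjugates of $\alpha$) are roots of unity, which holds iff $\alpha$ itself is a root of unity — the forward direction being immediate since $\alpha$ is among those roots, and the backward one because a root of unity has a cyclotomic minimal polynomial.

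The only genuinely delicate point is the step $f(t)/s\in\Z[t]$ in the converse, i.e. that a monic polynomial over $\Q$ all of whose roots are algebraic integers already has integer coefficients; this is standard (the coefficients are rational algebraic integers), but it is the place where primitivity of $f(t)$ is actually used to pin down the leading coefficient. Everything else is a direct bookkeeping with the non-negativity of the terms in the Mahler measure formula and an application of Theorem \ref{Kr}.
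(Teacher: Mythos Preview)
Your proof is correct and the forward direction ($m(f)=0\Rightarrow$ cyclotomic) is identical to the paper's argument. For the converse, the paper takes a slightly different route: it chooses $n$ with all $\lambda_i$ roots of $t^n-1$ and invokes Gauss Lemma to conclude that $f(t)$ divides $t^n-1$ in $\Z[t]$, forcing the leading coefficient to be $\pm 1$. Your argument instead shows directly that $g(t)=f(t)/s\in\Z[t]$ via the fact that its coefficients are rational algebraic integers, and then reads off $|s|=1$ from a content computation. Both are standard and short; your version has the mild advantage that it goes through verbatim when $f$ has repeated roots (where the divisibility $f\mid t^n-1$ would fail), while the paper's version keeps the appeal to Gauss Lemma explicit.
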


On the other hand the general problem posed by Lehmer in 1933 is still open:

\begin{problem}[Lehmer Problem]\emph{\cite{Lehmer}}\label{L-pb}
Is the lower bound of all positive Mahler measures still positive? 
Or, conversely, given any $\delta>0$, is there any algebraic integer whose Mahler measure is strictly between $0$ and $\delta$? 
\end{problem}

This problem is equivalent replacing ``algebraic integer'' with ``algebraic number''. Moreover this is equivalent to ask whether $\inf\{m(f(t)):f(t)\in\Z[t]\ \text{primitive}\}=0$.
It suffices to consider monic polynomials. Indeed, if $\deg(f(t))=k$ and $f(t)=st^k+a_{k-1}t^{k-1}+\ldots+a_0$ with $|s|>1$, then $m(f(t))\geq\log|s|\geq\log2>0$.

Denote 
\begin{equation}\label{LNumber}
\mathfrak L=\inf\{m(\alpha):\alpha\ \text{algebraic integer}\};
\end{equation}
Lehmer asked whether $\mathfrak L$ is zero or strictly positive.

\subsection{Basic results}\label{bernoulli-sec}

\subsubsection{Pontryagin duality and Bernoulli shifts}

Let $G$ be a topological abelian group. The {\em Pontryagin dual} $\widehat{G}$ of $G$ is the group of all continuous homomorphisms $G\to\mathbb T$, endowed with the compact-open topology. If $\phi:G\to G$ is a continuous endomorphism, its dual endomorphism
\begin{center}
$\widehat\phi:\widehat{G}\to \widehat{G}$ is defined by $\widehat\phi(\chi)=\chi\circ\phi$ for every $\chi\in\widehat{G}$. 
\end{center}
The Pontryagin dual of a locally compact abelian group is locally compact as well, and the Pontryagin dual of a discrete (compact) abelian group is always compact (respectively, discrete) \cite{HR,Pontr}. The map $\omega_G: G\to \widehat{\widehat{G}}$ defined by $\omega_G(g)(\chi)=\chi(g)$ for every $g\in G$ and $\chi\in\widehat{G}$ is a topological isomorphism.

\bigskip
Let $K$ be a non-empty set.
\begin{itemize}
\item[(a)] The \emph{two-sided Bernoulli shift} $\overline{\beta}_K$ of $K^{\Z}$ is defined by 
$$\overline\beta_K((x_n)_{n\in\Z})=(x_{n-1})_{n\in\Z}, \mbox{ for } (x_n)_{n\in\Z}\in K^{\Z}.$$
\item[(b)] The \emph{left Bernoulli shift} ${}_K\beta$ of $K^{\N}$ is defined by 
$$_K\beta(x_0,x_1,x_2,\ldots)=(x_1,x_2,x_3,\ldots).$$
\end{itemize}

In case $K$ is a topological space and the product is equipped with the Tichonoff topology, these Bernoulli shifts are continuous ($\overline\beta_K$ is a homeomorphism).  
If $K$ is a group, then the Bernoulli shifts are group endomorphisms ($\overline\beta_K$ is an automorphism). 
Moreover in this case we can define another shift of $K^\N$ as follows (it is again an endomorphism of $K^\N$).
\begin{itemize}
\item[(c)] The \emph{right  Bernoulli shift} $\beta_K$ of the group $K^\N$ is defined by 
$$\beta_K(x_0,x_1,x_2,\ldots)=(e_K,x_0,x_1,\ldots).$$
\end{itemize}

\smallskip
The left Bernoulli shift $_K\beta$ and the two-sided Bernoulli shift $\overline\beta_K$ are relevant for both ergodic theory and topological dynamics, while the right Bernoulli shift $\beta_K$ restricted to the direct sum $\bigoplus_\N K$, when $K$ is a group, is fundamental for the algebraic entropy. 

We denote the Bernoulli shifts restricted to the direct sum respectively by $\overline\beta_K^\oplus$, $\beta_K^\oplus$ and ${}_K\beta^\oplus$. Applying Pontryagin duality we have the following relations. 

\begin{fact}\label{dualbeta}
For a finite abelian group $K$,
\begin{itemize}
\item[(a)] $\widehat{\beta_K^\oplus}={}_K\beta$,
\item[(b)]$\widehat{{}_K\beta^\oplus}=\beta_K$, and
\item[(c)]$\widehat{\overline\beta_K^\oplus}=\overline\beta_K^{-1}$.
\end{itemize}
\end{fact}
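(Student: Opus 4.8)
The plan is to verify the three dualities directly from the definitions, since everything reduces to an explicit computation with finite abelian groups. For a finite abelian group $K$, recall that $\widehat{K^{(X)}}\cong \widehat{K}^{X}$ for any index set $X$: a continuous character of the direct sum $K^{(X)}$ is determined by its values on each summand, and since $K$ is finite there is no continuity constraint, so $\widehat{K^{(X)}}$ is the full product $\prod_{x\in X}\widehat{K}$. Under this identification a character $\chi$ of $K^{(X)}$ corresponds to the family $(\chi_x)_{x\in X}$ with $\chi_x=\chi\restriction_{K\times\{x\}}$. Moreover when $K$ is finite, $\widehat{K}$ is (non-canonically) isomorphic to $K$, but the cleaner route is to keep $\widehat K$ abstract and simply track indices; the Bernoulli shift on $K^{(X)}$ and the one on $\widehat K^{X}$ have exactly the same combinatorial description (shifting the index set), so the statements follow once the direction of the shift is matched up.

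First I would set up (a). The right Bernoulli shift $\beta_K^\oplus$ on $\bigoplus_\N K$ sends $(x_0,x_1,x_2,\ldots)$ to $(e_K,x_0,x_1,\ldots)$, i.e. it moves the entry in position $n$ to position $n+1$. Its dual acts on $\widehat{\bigoplus_\N K}\cong\prod_\N\widehat K$ by $\widehat{\beta_K^\oplus}(\chi)=\chi\circ\beta_K^\oplus$. Writing $\chi=(\chi_n)_{n\in\N}$ with $\chi_n\in\widehat K$, we compute, for $x=(x_n)\in\bigoplus_\N K$,
$$(\chi\circ\beta_K^\oplus)(x)=\chi(e_K,x_0,x_1,\ldots)=\prod_{n\geq 1}\chi_n(x_{n-1})=\prod_{n\geq 0}\chi_{n+1}(x_n),$$
so $\widehat{\beta_K^\oplus}(\chi)=(\chi_{n+1})_{n\in\N}$, which is precisely the action of the left Bernoulli shift ${}_K\beta$ on $\prod_\N\widehat K$ (under the identification $\widehat K^{\N}\cong\widehat{K^{(\N)}}$, with ${}_K\beta$ now read as acting on the full product, not just the direct sum — this is consistent because the dual of a discrete group is compact). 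This gives (a).

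For (b), the computation is the mirror image: ${}_K\beta^\oplus$ on $\bigoplus_\N K$ sends the entry in position $n+1$ to position $n$ and deletes position $0$, so dualizing turns the index shift $n+1\mapsto n$ into $n\mapsto n+1$ with a zero inserted in position $0$, i.e. exactly $\beta_K$ on $\prod_\N\widehat K$; I would just run the same character computation $(\chi\circ{}_K\beta^\oplus)(x)=\prod_{n\geq 0}\chi_n(x_{n+1})$ and read off the coordinates of the dual. For (c), $\overline\beta_K^\oplus$ on $\bigoplus_\Z K$ sends position $n$ to position $n+1$, and the same computation gives $\widehat{\overline\beta_K^\oplus}(\chi)=(\chi_{n+1})_{n\in\Z}$, which is $\overline\beta_K^{-1}$ on $\prod_\Z\widehat K$ (since $\overline\beta_K$ shifts position $n$ to $n+1$, its inverse shifts $n$ to $n-1$, equivalently reindexes $\chi_n\rightsquigarrow\chi_{n+1}$). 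The only genuine subtlety — and the one point I would state carefully rather than leave implicit — is the bookkeeping that under Pontryagin duality the direct sum $\bigoplus$ dualizes to the full direct product $\prod$, so that ${}_K\beta$, $\beta_K$, $\overline\beta_K$ on the right-hand sides are understood as endomorphisms of the (compact) products $\widehat K^{\N}$, $\widehat K^{\N}$, $\widehat K^{\Z}$; the finiteness of $K$ is exactly what makes $\widehat{K^{(X)}}=\widehat K^{X}$ hold on the nose, with no topological obstruction, so no case of the argument is harder than a direct substitution.
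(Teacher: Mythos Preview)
Your proof is correct. The paper states this as a \texttt{Fact} without proof, so there is nothing to compare against; your direct verification via the identification $\widehat{K^{(X)}}\cong\widehat{K}^{X}$ and the explicit character computation is exactly the standard argument, and your bookkeeping of the index shifts in all three cases is accurate.
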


\subsubsection{Mahler measure and endomorphisms}

Let $n\in\N_+$. If $\phi:\Z^n\to \Z^n$ is an endomorphism, then $\phi$ is $\Z$-linear. Then the action of $\phi$ on $\Z^n$ is given by an $n\times n$ matrix $A_\phi$ with integer coefficients. We call characteristic polynomial $p_\phi(t)$ of $\phi$ over $\Z$, the (primitive) characteristic polynomial of $A_\phi$ over $\Z$.
The same occurs replacing $\Z$ with $\Q$. Indeed, if $\phi:\Q^n\to \Q^n$ is an endomorphism, then $\phi$ is $\Q$-linear. Then the action of $\phi$ on $\Q^n$ is given by an $n\times n$ matrix $A_\phi$ with rational coefficients. We call characteristic polynomial $p_\phi(t)$ of $\phi$ over $\Z$, the (primitive) characteristic polynomial of $A_\phi$ over $\Z$.

Taking the dual situations with respect to the two above, we see that they are very similar. In detail, let $\psi:\T^n\to\T^n$ be a continuous endomorphism.  Then $\phi=\widehat\psi:\Z^n\to \Z^n$ is an endomorphism, with its matrix $A_\phi$. It is possible to see that the action of $\psi$ on $\T^n$ is given by the transposed matrix ${}^tA_\phi$, that we call $A_\psi$. The characteristic polynomial $p_\psi(t)$ of $\psi$ is the characteristic polynomial of $A_\psi$, which coincides with the characteristic polynomial $p_\phi(t)$ of $A_\phi$.
Let now $\psi:\widehat\Q^n\to\widehat \Q^n$ be a continuous endomorphism.  Then $\phi=\widehat\psi:\Q^n\to \Q^n$ is an endomorphism, with its matrix $A_\phi$. It is possible to see that the action of $\psi$ on $\widehat\Q^n$ is given by the transposed matrix ${}^tA_\phi$, that we call $A_\psi$. Now, the characteristic polynomial $p_\psi(t)$ of $\psi$ is the characteristic polynomial of $A_\psi$, which coincides with the characteristic polynomial $p_\phi(t)$ of $A_\phi$.

\begin{definition}
For an endomorphism $\phi:\Q^n\to \Q^n$ (respectively, for a continuous endomorphism $\psi:\widehat\Q^n\to\widehat\Q^n$)
we call \emph{Mahler measure} of $\phi$ (respectively, $\psi$) the value $m(\phi)=m(p_\phi(t))$ (respectively, $m(\psi)=m(p_\psi(t))$).
\end{definition}

Let $\alpha\in\mathbb C$ be an algebraic number. Then the finite field extension $\Q(\alpha)$ of $\Q$ is isomorphic to $\Q^n$ as an abelian group, where $n\in\N_+$ is the degree of the extension. Moreover the endomorphism $\varphi_\alpha:\Q(\alpha)\to\Q(\alpha)$, given by the multiplication by $\alpha$, is associated to the companion matrix with respect to the base $(1,\alpha,\ldots,\alpha^{n})$. Then the characteristic polynomial of $\varphi_\alpha$ is precisely the minimum polynomial of $\alpha$ over $\Q$. Hence $m(\varphi_\alpha)=m(\alpha)$, that is, the Mahler measure of the endomorphism $\varphi_\alpha$ is precisely the Mahler measure of $\alpha$.

Let now $\beta\in\mathbb C$ be a transcendent number. Then the simple ring extension $\Q[\beta]$ (isomorphic to the polynomial ring $\Q[t]$) does not coincide with the field $\Q(\beta)$; 
$\Q[\beta]$ is an infinite dimensional vector space over $\Q$, so $\Q[\beta]$ is isomorphic to $\Q^{(\N)}$ as an abelian group. Take $(1,\beta,\ldots,\beta^n,\ldots)$ as a basis of $\Q[\beta]$. The endomorphism $\varphi_\beta$ of $\Q[\beta]$, given by the multiplication by $\beta$, is precisely (conjugated to) the right Bernoulli shift of $\Q^{(\N)}$. To get the two-sided Bernoulli shift one can take the ring $\Q[\beta, \beta^{-1}]$ and identify it with $\Q^{(\Z)}$ as a $\Q$-linear space, using as a base the sequence $(\ldots, \beta^{-n}, \ldots, \beta^{-1}, 1, \beta, \ldots, \beta^{n}, \ldots)$. Under this isomorphism,  the two-sided Bernoulli shift of $\Q^{(\N)}$ is conjugated to the automorphism obtained by the multiplication by $\beta$ in $\Q[\beta, \beta^{-1}]$. 
\subsubsection{Flows}

Consider an arbitrary category $\mathfrak X$.

\begin{definition}
A \emph{flow} in ${\mathfrak X}$ is a pair $(X,\phi)$, where $X$ is an object in $\mathfrak X$ and $\phi: X \to X$ an endomorphism in ${\mathfrak X}$. 
\end{definition} 

The category $\af_{\mathfrak X}$ of flows of $\mathfrak X$ has as objects all flows in ${\mathfrak X}$, and a morphism in $\af_{\mathfrak X}$ between two flows $(X,\phi)$ and $(Y,\psi)$ is a morphism $u: X \to Y$ in ${\mathfrak X}$ such that the diagram
\begin{equation}\label{casc-mor}
\begin{CD}
X @>\phi>> X\\ 
@V u VV  @VV u V\\ 
Y @>>\psi> Y
\end{CD}
\end{equation}
in ${\mathfrak X}$ commutes. Two flows $(X,\phi)$ and $(Y,\psi)$ are isomorphic in $\af_{\mathfrak X}$ if the morphism $u: X \to Y$ in \eqref{casc-mor}
is an isomorphism in ${\mathfrak X}$.

\medskip
Let $\mathbf{Mes}$, $\mathbf{Set}$, $\mathbf{Comp}$, $\mathbf{Grp}$ be respectively the categories of probability spaces and measure preserving transformations, sets and maps, compact spaces and continuous maps, groups and homomorphisms.

\medskip
The known entropies are defined on the endomorphisms $\mathrm{Hom}(X,X)$ of a fixed object $X$ in a specific category $\mathfrak X$. 
So an entropy function can be considered as a map $h:\Flow_\mathfrak X\to \R_{\geq 0}\cup\{\infty\}$, defined on flows $(X,\phi)$. If the domain $X$ of $\phi$ is clear from the context, we can denote the flow $(X,\phi)$ only by $\phi$.

\newpage
\section{Measure theoretic entropy and set-theoretic entropy} \label{mes-set-sec}

\subsection{Measure theoretic entropy} \label{mes-sec}

We recall that a \emph{measure space} is a triple $(X,{\mathfrak B},\mu)$, where $X$ is a set, $\mathfrak B$ is a $\sigma$-algebra over $X$ (the elements od $\mathfrak B$ are called \emph{measurable sets}) and $\mu:\mathfrak B\to \R_{\geq0}\cup\{\infty\}$ is a measure. Moreover $(X,{\mathfrak B},\mu)$ is called \emph{probability space} if $\mu(X)=1$. When there is no possibility of confusion we denote $(X,{\mathfrak B},\mu)$ simply by $X$.
The selfmap $\psi:X\to X$ is a \emph{measure preserving trasformation} if $\mu(\psi^{-1}(B))=\mu(B)$ for every $B\in\mathfrak B$.
 
\smallskip
For a measure space $(X,{\mathfrak B},\mu)$ and a measurable partition $\xi=\{A_i:i=1,\ldots,k\}$ of $X$  (i.e., the partition $\xi$ is contained in $\mathfrak B$), define the \emph{entropy} of $\xi$ by $$H(\xi)=-\sum_{i=1}^k \mu(A_k)\log \mu(A_k).$$

For two partitions $\xi, \eta$ of $X$, let $\xi \vee \eta=\{U\cap V: U\in \xi, V\in \eta\}$ and define $\xi_1\vee \xi_2\vee \ldots \vee \xi_n$ analogously for partitions $\xi_1,\xi_2,\ldots,\xi_n$ of $X$.
For a measure preserving transformation $\psi:X\to X$ and a measurable partition $\xi=\{A_i:i=1,\ldots,k\}$ of $X$, let $\psi^{-j}(\xi)=$ $\{\psi^{-j}(A_i):i=1,\ldots,k\}$. The sequence $\left\{\frac{H(\bigvee_{j=0}^{n-1}\psi^{-j}(\xi))}{n}\right\}_{n\in\N_+}$ is decreasing.

\medskip
Kolmogorov \cite{K} and Sinai \cite{Sinai} gave the following

\begin{definition}
Let $X$ be a measure space and $\psi:X\to X$ a measure preserving transformation.
\begin{itemize}
\item[(a)] For a measurable partition $\xi$ of $X$, the \emph{measure entropy of $\psi$ with respect to $\xi$} is 
$$H_{mes}(\psi,\xi)=\lim_{n\to\infty}\frac{H(\bigvee_{j=0}^{n-1}\psi^{-j}(\xi))}{n}.$$
\item[(b)] The {\em measure entropy} of $\psi$ is $$h_{mes}(\psi)=\sup\{ H_{mes}(\psi,\xi): \xi\ \text{measurable partition of}\ X\}.$$
\end{itemize}
\end{definition} 
  
Sometimes, when the measure $\mu$ of the measure space $X$ is explicitly given and we need to emphasize the specific measure $\mu$, we write also $h_\mu$ in place of $h_{mes}$. 

\begin{example}
For $k\in \N_+$ and $\varphi_k:\T\to \T$ given by the multiplication by $k$, $h_{mes}(\varphi_k)=\log k$. This equality can be deduced from Kolmogorov-Sinai Formula \ref{KSF} and Theorem \ref{A-S} below.
\end{example}

\begin{example}[Coin tossing space]\cite{W}
For a fixed $n\in\N_+$ and for every $i\in \Z$ let $X_i=\Z(n)=\{0,1,\ldots,n-1\}$ and $X=\prod_{i\in\Z}X_i$.
\begin{itemize}
\item[(a)] Then $X$ is a compact topological space with basic open (cylindric) sets $$C=\prod_{i=-\infty}^{n-1}X_i \times\prod_{i=n}^{m} Y_i \times\prod_{i=m+1}^{\infty}X_i$$ where $n<m$ in $\Z$ and $Y_i$ is a non-empty subset of $X_i$ for every $i=n,n+1, \ldots, m$.
\item[(b)] Let $i\in\Z$ and $\mu_i=(p_0,p_1,\ldots,p_{n-1})$ be a fixed probability vector, that is, $p_i\geq 0$ for every $i=0,1,\ldots,n-1$ and $\sum_{i=0}^{n-1} p_i=1$. Then $\mu_i$ is a measure on $X_i$. Hence a Borel measure $\mu$ on $X$ is defined by $\mu(C)=\prod_{i=n}^{m}\mu_i(Y_i)$ \cite[Theorem 4.26]{Wa}. 
\item[(c)] The two-sided Bernoulli shift $\overline\beta_{\Z(n)}:X\to X$ is an invertible measure preserving transformation of $(X,\mu)$. Then $h_{mes}(\overline\beta_{\Z(n)})=-\sum_{i=0}^{n-1}p_i\log p_i$.
\item[(d)] (Bernoulli normalization) Taking in item (c) $p_i=1/n$ for every $i=0,1,\ldots,n-1$, 
one obtains the Haar measure on $X$. The equality in (c) gives $h_{mes}(\overline\beta_{\Z(n)})=h_{mes}({}_{\Z(n)}\beta)=\log n$.
\end{itemize}
\end{example}

This measure entropy is a conjugacy invariant \cite[Theorem 4.11]{Wa}, we recall below some other basic properties of this 
entropy. 

\begin{lemma}[Monotonicity for factors]
Let $X$ be a measure space and $\psi:X\to X$ a measure preserving transformation. If $\overline\psi$ is a factor of $\psi$, then $h_{mes}(\overline\psi)\leq h_{mes}(\psi)$.
\end{lemma}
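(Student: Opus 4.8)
The statement to prove is the "Monotonicity for factors" lemma: if $\overline\psi$ is a factor of the measure preserving transformation $\psi:X\to X$, then $h_{mes}(\overline\psi)\leq h_{mes}(\psi)$. Let me sketch how I would prove this.

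Let me think about what "factor" means here: there is a measure preserving transformation $\overline\psi : \overline X \to \overline X$ on a probability space $(\overline X, \overline{\mathfrak B}, \overline\mu)$ and a measure preserving surjection (a.e.) $\pi : X \to \overline X$ such that $\pi \circ \psi = \overline\psi \circ \pi$ a.e., and $\pi$ pushes $\mu$ forward to $\overline\mu$.

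The plan: Pull back partitions via $\pi$. Given a measurable partition $\overline\xi = \{\overline A_i\}$ of $\overline X$, set $\xi = \pi^{-1}(\overline\xi) = \{\pi^{-1}(\overline A_i)\}$, a measurable partition of $X$. The key computational facts are: (1) $H(\xi) = H(\overline\xi)$, because $\mu(\pi^{-1}(\overline A_i)) = \overline\mu(\overline A_i)$ by the measure-preserving property of $\pi$; (2) $\pi^{-1}$ commutes with the join operation: $\pi^{-1}(\overline\xi \vee \overline\eta) = \pi^{-1}(\overline\xi)\vee \pi^{-1}(\overline\eta)$; (3) $\pi^{-1}$ intertwines the two dynamics: $\psi^{-j}(\pi^{-1}(\overline A_i)) = \pi^{-1}(\overline\psi^{-j}(\overline A_i))$, which follows from $\pi \circ \psi^j = \overline\psi^j \circ \pi$ (a.e.). Combining these, $\bigvee_{j=0}^{n-1}\psi^{-j}(\xi) = \pi^{-1}\big(\bigvee_{j=0}^{n-1}\overline\psi^{-j}(\overline\xi)\big)$, and hence by (1),
$$H\Big(\bigvee_{j=0}^{n-1}\psi^{-j}(\xi)\Big) = H\Big(\bigvee_{j=0}^{n-1}\overline\psi^{-j}(\overline\xi)\Big).$$
Dividing by $n$ and letting $n\to\infty$ yields $H_{mes}(\psi,\xi) = H_{mes}(\overline\psi,\overline\xi)$.

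Then one finishes by taking suprema: for each measurable partition $\overline\xi$ of $\overline X$,
$$H_{mes}(\overline\psi,\overline\xi) = H_{mes}(\psi,\pi^{-1}(\overline\xi)) \leq \sup\{H_{mes}(\psi,\xi) : \xi\ \text{measurable partition of}\ X\} = h_{mes}(\psi),$$
and taking the supremum over all $\overline\xi$ gives $h_{mes}(\overline\psi) \leq h_{mes}(\psi)$.

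The only delicate point — the main obstacle, such as it is — is bookkeeping the "almost everywhere" qualifiers: the intertwining relation $\pi\circ\psi = \overline\psi\circ\pi$ holds only $\mu$-a.e., so one should check that the iterated relation $\pi\circ\psi^j = \overline\psi^j\circ\pi$ still holds a.e. (a finite union of null sets is null), and that null sets do not affect the entropy $H(\cdot)$ of a partition or the join operation. None of this is substantive; it is the standard verification. Given that the excerpt treats this lemma as a recalled basic property, I would keep the proof brief or simply cite \cite{Wa,W}, but the argument above is the complete content.
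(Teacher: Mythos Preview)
Your argument is correct and is exactly the standard proof (pull back partitions along the factor map, use that $\pi$ is measure preserving to match entropies, then take suprema). The paper itself does not prove this lemma at all: it simply lists it among the basic properties of measure entropy recalled from the literature, with a blanket reference to Walters \cite{Wa}. So there is nothing to compare against, and your proposal supplies precisely the content a reader would find in that reference.
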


\begin{theorem}[Logarithmic Law]
Let $X$ be a probability space and $\psi:X\to X$ a measure preserving transformation. Then $h_{mes}(\psi)=k h_{mes}(\psi^k)$ for every $k\in\N_+$. If $\psi$ is invertible, then $h_{mes}(\psi^k)=|k| h_{mes}(\psi)$ for every $k\in\Z$.
\end{theorem}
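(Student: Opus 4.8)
The plan is to establish the Logarithmic Law in the form $h_{mes}(\psi^k)=k\,h_{mes}(\psi)$ for $k\in\N_+$, and then to deduce the statement for negative powers in the invertible case by first showing $h_{mes}(\psi^{-1})=h_{mes}(\psi)$. Throughout I would use two standard facts: that $\psi$ being measure preserving gives $H(\psi^{-1}(\eta))=H(\eta)$ for every measurable partition $\eta$, and that $H_{mes}(\psi,\cdot)$ is monotone under refinement of the partition (an immediate consequence of the monotonicity of $H$ under refinement). The convergence of the defining limit is already recorded just before the definition.

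First I would fix $k\in\N_+$ and a measurable partition $\xi$, and put $\xi^{(k)}=\bigvee_{j=0}^{k-1}\psi^{-j}(\xi)$. The combinatorial heart of the argument is the identity
$$\bigvee_{i=0}^{n-1}(\psi^k)^{-i}\bigl(\xi^{(k)}\bigr)=\bigvee_{i=0}^{n-1}\bigvee_{j=0}^{k-1}\psi^{-(ki+j)}(\xi)=\bigvee_{\ell=0}^{kn-1}\psi^{-\ell}(\xi),$$
which holds because $\{ki+j:0\le i\le n-1,\ 0\le j\le k-1\}=\{0,1,\dots,kn-1\}$. Dividing by $n$, letting $n\to\infty$, and using that the whole sequence $\{H(\bigvee_{\ell=0}^{m-1}\psi^{-\ell}(\xi))/m\}_m$ converges, so its subsequence along $m=kn$ has the same limit, I obtain $H_{mes}(\psi^k,\xi^{(k)})=k\,H_{mes}(\psi,\xi)$.

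Passing to suprema then finishes the case $k\in\N_+$. On one hand $h_{mes}(\psi^k)\ge H_{mes}(\psi^k,\xi^{(k)})=k\,H_{mes}(\psi,\xi)$ for every $\xi$, so $h_{mes}(\psi^k)\ge k\,h_{mes}(\psi)$. On the other hand $\xi^{(k)}$ refines $\xi$, so monotonicity gives $H_{mes}(\psi^k,\xi)\le H_{mes}(\psi^k,\xi^{(k)})=k\,H_{mes}(\psi,\xi)\le k\,h_{mes}(\psi)$, whence $h_{mes}(\psi^k)\le k\,h_{mes}(\psi)$ after taking the supremum over $\xi$; the degenerate case $k=0$ is clear since $\bigvee_{j=0}^{n-1}\mathrm{id}^{-j}(\xi)=\xi$ forces $H_{mes}(\mathrm{id},\xi)=0$. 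For invertible $\psi$ it then suffices to prove $h_{mes}(\psi^{-1})=h_{mes}(\psi)$, after which $\psi^{k}=(\psi^{-1})^{|k|}$ and the positive case applied to $\psi^{-1}$ give $h_{mes}(\psi^k)=|k|\,h_{mes}(\psi)$ for all $k\in\Z$. For the equality $h_{mes}(\psi^{-1})=h_{mes}(\psi)$ I would apply the measure-preserving map $\psi^{-(n-1)}$ to get $H(\bigvee_{j=0}^{n-1}\psi^{j}(\xi))=H(\bigvee_{i=0}^{n-1}\psi^{-i}(\xi))$ for each $n$ and each $\xi$, hence $H_{mes}(\psi^{-1},\xi)=H_{mes}(\psi,\xi)$, and then take suprema.

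I do not foresee a genuine obstacle: the only point needing care is that the clean factor-$k$ identity holds for the iterate $\psi^k$ fed the refined partition $\xi^{(k)}$ rather than $\xi$ itself, so the argument must be closed off using monotonicity in the partition to see that restricting to such refinements does not lower the supremum; everything else is bookkeeping with the already-recalled facts.
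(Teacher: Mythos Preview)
Your argument is correct and is the standard one (essentially the proof in Walters, which the paper cites). Note that the paper itself does not prove this statement: it is part of a brief survey of measure-entropy properties in \S\ref{mes-sec}, stated without proof and deferred to \cite{Wa}. Also observe that the displayed identity in the paper, $h_{mes}(\psi)=k\,h_{mes}(\psi^k)$, is a typo for $h_{mes}(\psi^k)=k\,h_{mes}(\psi)$, which is what you correctly prove.
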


\begin{theorem}[Weak Addition Theorem]
Let $X_1,X_2$ be probability spaces and $\psi_i:X_i\to X_i$ a measure preserving transformation for $i=1,2$. Then $h_{mes}(\psi_1\times \psi_2)=h_{mes}(\psi_1)+h_{mes}(\psi_2)$.
\end{theorem}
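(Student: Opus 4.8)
The plan is to prove the two inequalities $h_{mes}(\psi_1\times\psi_2)\geq h_{mes}(\psi_1)+h_{mes}(\psi_2)$ and $h_{mes}(\psi_1\times\psi_2)\leq h_{mes}(\psi_1)+h_{mes}(\psi_2)$ separately, the non-trivial point being the reduction of an arbitrary partition of the product to ``rectangular'' ones. Throughout, for finite measurable partitions $\xi_1$ of $X_1$ and $\xi_2$ of $X_2$ write $\xi_1\times\xi_2=\{A\times B:A\in\xi_1,\ B\in\xi_2\}$ for the corresponding finite measurable partition of $X_1\times X_2$, equipped with the product $\sigma$-algebra and the product measure $\mu_1\times\mu_2$. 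I will freely use the following standard facts about the partition entropy $H$ and the quantities $H_{mes}(\psi,\cdot)$, for which the reader may consult \cite{Wa,W}: monotonicity $H_{mes}(\psi,\xi)\leq H_{mes}(\psi,\eta)$ whenever $\eta$ refines $\xi$; the conditional inequality $H_{mes}(\psi,\xi)\leq H_{mes}(\psi,\eta)+H(\xi\,|\,\eta)$, where $H(\xi\,|\,\eta)$ denotes the conditional entropy of $\xi$ given $\eta$; and the continuity of the map $(\xi,\eta)\mapsto H(\xi\,|\,\eta)$ with respect to the measures of symmetric differences of the atoms.

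For the lower bound I would first observe that, since $\mu_1\times\mu_2$ makes the two coordinate systems independent, $H(\xi_1\times\xi_2)=H(\xi_1)+H(\xi_2)$, by the elementary computation $-\sum_{A,B}\mu_1(A)\mu_2(B)\log(\mu_1(A)\mu_2(B))=H(\xi_1)+H(\xi_2)$ (using $\sum_A\mu_1(A)=\sum_B\mu_2(B)=1$). Next, since $(\psi_1\times\psi_2)^{-j}(A\times B)=\psi_1^{-j}(A)\times\psi_2^{-j}(B)$, one has $\bigvee_{j=0}^{n-1}(\psi_1\times\psi_2)^{-j}(\xi_1\times\xi_2)=\bigl(\bigvee_{j=0}^{n-1}\psi_1^{-j}(\xi_1)\bigr)\times\bigl(\bigvee_{j=0}^{n-1}\psi_2^{-j}(\xi_2)\bigr)$; applying the product formula for $H$ to this partition, dividing by $n$ and letting $n\to\infty$ gives $H_{mes}(\psi_1\times\psi_2,\xi_1\times\xi_2)=H_{mes}(\psi_1,\xi_1)+H_{mes}(\psi_2,\xi_2)$. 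Taking the supremum over $\xi_1$ and $\xi_2$ independently, and noting that the rectangular partitions form a subfamily of all partitions of $X_1\times X_2$, yields $h_{mes}(\psi_1\times\psi_2)\geq h_{mes}(\psi_1)+h_{mes}(\psi_2)$. (In particular, if either factor has infinite measure entropy the asserted equality is immediate; so henceforth one may assume both $h_{mes}(\psi_i)$ are finite, which also legitimizes the $\varepsilon$-manipulations below.)

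For the upper bound, let $\xi$ be an arbitrary finite measurable partition of $X_1\times X_2$ and let $\varepsilon>0$. Since the algebra generated by the measurable rectangles generates the product $\sigma$-algebra, a standard approximation argument produces finite measurable partitions $\xi_1$ of $X_1$ and $\xi_2$ of $X_2$ such that each atom of $\xi$ is approximated in measure by a union of atoms of $\xi_1\times\xi_2$, and hence, by continuity of conditional entropy, $H(\xi\,|\,\xi_1\times\xi_2)<\varepsilon$. Then the conditional inequality together with the identity of the previous paragraph gives
$$H_{mes}(\psi_1\times\psi_2,\xi)\leq H_{mes}(\psi_1\times\psi_2,\xi_1\times\xi_2)+\varepsilon=H_{mes}(\psi_1,\xi_1)+H_{mes}(\psi_2,\xi_2)+\varepsilon\leq h_{mes}(\psi_1)+h_{mes}(\psi_2)+\varepsilon.$$
Letting $\varepsilon\to 0$ and then taking the supremum over all $\xi$ yields $h_{mes}(\psi_1\times\psi_2)\leq h_{mes}(\psi_1)+h_{mes}(\psi_2)$, which, combined with the lower bound, gives the desired equality.

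The step I expect to be the main obstacle is the rectangular approximation in the upper bound: producing honest finite partitions $\xi_1,\xi_2$ of the two factors whose product partition nearly refines a given partition $\xi$ of $X_1\times X_2$, and certifying that $H(\xi\,|\,\xi_1\times\xi_2)$ can be driven to $0$. This is precisely the place where one must invoke (and, for a self-contained exposition, prove) the density of the algebra of measurable rectangles in the product $\sigma$-algebra together with the continuity of conditional entropy — a martingale-type convergence statement. The lower bound, by contrast, is a routine consequence of the independence built into the product measure.
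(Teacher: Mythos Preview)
Your argument is correct and is precisely the standard textbook proof (e.g.\ Walters \cite{Wa}, Theorem~4.23). Note, however, that the paper does not supply its own proof of this statement: in \S\ref{mes-sec} the Weak Addition Theorem for $h_{mes}$ is merely stated as one of the ``basic properties'' of measure entropy, with an explicit disclaimer that the measure entropy ``is not the main subject of our exposition'' and a blanket reference to \cite{Wa,W} for proofs. So there is nothing to compare against beyond observing that your proof is the one found in those references.
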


An important construction from the point of view of the measure entropy is the following. It was introduced by Pinsker for the first time.
For a measure preserving  transformation $\phi$ of a measure space $(X, {\mathcal B}, \mu)$ the {\em Pinsker $\sigma$-algebra} $\mathfrak P(\phi)$ of $\phi$ is the greatest $\sigma$-subalgebra of ${\mathcal B}$ such that $\phi$ restricted to $(X, \mathfrak P(\phi),\mu\restriction_{{\mathcal B}})$ has zero measure entropy. 

Note that $id_X: (X, {\mathfrak B},\mu) \to (X, {\mathfrak P}(\phi),\mu\restriction_{{\mathcal B}})$ is measure preserving, so $(X, {\mathfrak P}(\phi),\mu\restriction_\mathcal B)$ is a factor of $(X, {\mathfrak B},\mu)$ (see \cite{Wa}).

\medskip
See Walters's book \cite{Wa} for a complete treatment of the measure entropy.

\subsection{Set-theoretic entropies}\label{set-sec}
 
We recall first some natural concepts related to selfmaps of sets. Let $X$ be a set and $\lambda: X \to X$ a selfmap. For $Y\subseteq X$ we say that 
$Y\subseteq X$ is {\em $\lambda$-invariant} (respectively, {\em inversely $\lambda$-invariant}) if $\lambda(Y) \subseteq Y$ (respectively, $\lambda^{-1}(Y) \subseteq Y$). 

\begin{definition}
Let $X$ be a set and  $\lambda: X \to X$ a selfmap. Call $x\in X$
 \begin{itemize}  
  \item[(a)]  {\em periodic} if $\lambda^n(x) = x$ for some $n\in\N_+$;
  \item[(b)]  {\em quasi-periodic} if $\lambda^n(x) = \lambda^m(x)$ for some $m\ne n$ in $\N$;
  \item[(c)] {\em wandering} if $x$ is not quasi-periodic.  
\end{itemize} 
 \end{definition}  

Clearly a point $x\in X$ is wandering  precisely when all $\lambda^n(x)$ with $n\in\N$ are pairwise distinct.  Let 
\begin{itemize}
  \item[(a)] $\mathrm{Per}(\lambda,X) =\{x\in X: x \mbox{ is a periodic point of }\lambda\}$;
  \item[(b)] $\mathrm{QPer}(\lambda,X) =\{x\in X: x \mbox{ is a quasi-periodic point of } \lambda\}$;
  \item[(c)] $\mathrm{Wan}(\lambda,X) = \{ x \in X: x\mbox{ is a wandering point of }\lambda\}$;
  \item[(d)] $\mathrm{Fix}(\lambda,X)=\{x\in X: \lambda(x) = x\}$ are the fixed points of $\lambda$.
\end{itemize}
Then: 
 \begin{itemize}  
  \item[(a)] $\mathrm{QPer}(\lambda,X) = \bigcup_{n\in\N} \lambda^{-n}(\mathrm{Per}(\lambda,X))$; 
  \item[(b)] $\mathrm{QPer}(\lambda,X)$ and $\mathrm{Wan}(\lambda,X)$ are both $\lambda$-invariant and inversely $\lambda$-invariant, while $\mathrm{Per}(\lambda,X)$ is only $\lambda$-invariant;  
  \item[(c)] $X = \mathrm{QPer}(\lambda,X) \cup \mathrm{Wan}(\lambda,X)$ is a partition of $X$.
\end{itemize} 
  
\begin{remark}\label{Rem} 
Let $X$ be a set and $\lambda:X\to X$ a selfmap. For $x,y\in X$  let $x\sim y$ when there exist $n,m \in \N$ such that $\lambda^n(x) = \lambda^m(y)$. Then $\sim$ is an equivalence relation on $X$. Let 
\begin{equation}\label{(P)}
X = \bigcup_{j\in I} E_j 
\end{equation}
be the partition of $X$ related to $\sim$. Then each class $E_j$ is both $\lambda$-invariant and inversely $\lambda$-invariant. 
 Let $Y= X/\sim$ and let $q: X \to Y$ be the quotient map associated to the partition \eqref{(P)}.  
\end{remark}

We order $X$ by letting $x \leq_\lambda y$ (we write simply $x\leq y$ when there is no danger of confusion) if $\lambda^n(y) = x$ for some $n \in \N$. Then $\leq_\lambda $ is a preorder on $X$, which is an order precisely when $\lambda$ has no periodic points beyond the fixed points. The antichains of $(X, \leq_\lambda )$ are called also \emph{$\lambda$-antichains} in case we need to emphasize the dependence on $\lambda$.

\begin{definition} 
Let $X$ be a set, $\lambda:X\to X$ a finitely many-to-one surjective selfmap and $E$ a subset of $X$.  Let 
$$\uparrow\! E=\{y\in X: (\exists x \in E)\ x \leq  y\}= \bigcup_{n\in\N}\lambda^{-n}(E),$$
and let $\F(E)=E\setminus\bigcup_{n\in\N_+}\lambda^{-n}(E)$ be \emph{the set of minimal points of $E$}.
\end{definition}

Clearly, $\uparrow\! E$ is inversely $\lambda$-invariant and $E\subseteq \uparrow\! E$. 

From a dynamical point of view, $x\in E$ is minimal precisely when it is  a \emph{non-recurrent point of $E$} (i.e.,  $\lambda^n(x)\in E$ for no $n\in\N_+$), so $\F(E)$ is the set of non-recurrent points of $E$. The set $E$ is a $\lambda$-antichain if $E=\F(E)$.   

\medskip
We recall now the following definition of subadditive sequence, since in general the sequences in the definitions of the various entropies turn out to have this property.

\begin{definition}\label{subadditive}
A sequence $\{a_n\}_{n\in\N}$ of non negative real numbers is {\em subadditive} if  $a_{n+m}\leq a_n  +a_m$ for every $n,m\in \N$. 
\end{definition}

The following known result is applied several times in the paper, to prove the existence of the limits defining the various entropies.

\begin{lemma}[Fekete Lemma]\label{fekete}\emph{\cite{Fek}, \cite[Theorem 4.9]{Wa}}
For a subadditive sequence $\{a_n\}_{n\in\N}$ of non negative real numbers, the sequence $\{\frac{a_n}{n}\}_{n\in\N}$ converges and $$\lim_{n\to\infty}\frac{a_n}{n}=\inf_{n\in\N}\frac{a_n}{n}.$$
\end{lemma}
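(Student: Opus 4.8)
The plan is to run the classical division-with-remainder argument. I would set $L=\inf_{n\geq 1}\frac{a_n}{n}$ (the index $0$ being excluded since $\frac{a_0}{0}$ is meaningless), and observe first that non-negativity of the $a_n$ forces $L\geq 0$, so $L$ is a finite real number — this is needed for the inequalities below to make sense. The inequality $\liminf_{n\to\infty}\frac{a_n}{n}\geq L$ is then immediate, since every term $\frac{a_n}{n}$ is at least the infimum $L$. Hence the entire content of the lemma is the reverse estimate $\limsup_{n\to\infty}\frac{a_n}{n}\leq L$, which together with the trivial inequality yields that the limit exists and equals $L=\inf_{n\geq 1}\frac{a_n}{n}$.

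To obtain the $\limsup$ bound I would fix $\varepsilon>0$ and choose $m\in\N_+$ with $\frac{a_m}{m}<L+\varepsilon$. Iterating the subadditivity hypothesis gives $a_{km}\leq k\,a_m$ for every $k\in\N_+$, and, writing an arbitrary $n\in\N_+$ as $n=km+r$ with $k\in\N$ and $0\leq r<m$, subadditivity yields $a_n\leq a_{km}+a_r\leq k\,a_m+a_r$ (the term $a_r$ being absent when $r=0$). Setting $C=\max\{a_0,a_1,\dots,a_{m-1}\}$ one gets $a_n\leq k\,a_m+C$ for all $n$, where $C$ depends on $m$, hence on $\varepsilon$, but not on $n$.

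Dividing by $n$ and using $km\leq n$, so that $\frac{k}{n}\leq\frac{1}{m}$, I would conclude
$$\frac{a_n}{n}\leq \frac{k\,a_m}{n}+\frac{C}{n}\leq \frac{a_m}{m}+\frac{C}{n}<L+\varepsilon+\frac{C}{n}.$$
Letting $n\to\infty$ the term $\frac{C}{n}$ vanishes, so $\limsup_{n\to\infty}\frac{a_n}{n}\leq L+\varepsilon$; since $\varepsilon>0$ was arbitrary, $\limsup_{n\to\infty}\frac{a_n}{n}\leq L$, which finishes the proof.

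There is no real obstacle here, this being a textbook lemma; the only steps that deserve a moment's care are (i) remembering to use non-negativity to guarantee $L$ is finite, and (ii) the bookkeeping of the remainder term $a_r$ for $0\leq r<m$ — in particular the harmless case $r=0$ and the role of $a_0$ — which is absorbed into the constant $C$ that then disappears in the limit.
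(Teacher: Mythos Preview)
Your proof is correct and is precisely the standard division-with-remainder argument for Fekete's Lemma. The paper itself does not supply a proof of this lemma but simply cites \cite{Fek} and \cite[Theorem 4.9]{Wa}; your argument is essentially the one found in those references.
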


\subsubsection{Covariant set-theoretic entropy}\label{stent-sec}

Let $X$ be a set and $\lambda: X \to X$  a selfmap. For a finite subset $D$ of $X$ and $n\in\N_+$ the {\em $n$-th $\lambda$-trajectory of $D$} is 
$$\mathfrak T_n(\lambda,D) = D\cup\lambda(D)\cup\cdots\cup\lambda^{n-1}(D),$$ 
while the \emph{$\lambda$-trajectory of $D$} (often named also \emph{(positive) orbit of $D$ under} $\lambda$ by some authors) is
$$\mathfrak T(\lambda,D)= \bigcup_{n\in\N}\lambda^n(D) = \bigcup_{n\in\N_+} \mathfrak  T_n(\lambda,D).$$ 

\smallskip
It is easy to see that $\mathfrak T(\lambda, \{x\}) $ is finite for a point $x\in X$ precisely when $x$ is quasi-periodic (i.e., $\mathfrak T(\lambda,\{x\}$ is infinite if and only if $x$ is wandering). 

\medskip
For the proof of the the first item of the following definition Fekete Lemma \ref{fekete} applies.

\begin{definition}  
Let $X$ be a set and $\lambda: X \to X$ a selfmap.
\begin{itemize}  
\item[(a)] For every finite subset $D$ of $X$ the limit 
$$\mathfrak h (\lambda, D)=\lim_{n\to\infty} \frac{|\mathfrak T_n(\lambda,D) |}{n}$$
exists and satisfies $\mathfrak h(\lambda, D)\leq |D|$ (see \cite[Lemma 2.5]{FD}). It is the {\em set-theoretic entropy} of $\lambda$ with respect to $D$. 
  \item[(b)]  {\rm \cite{FD}} The number $$\mathfrak h (\lambda) = \sup\left\{ \mathfrak h (\lambda, D) :D \in[X]^{<\omega}\right\}$$ is the {\em set-theoretic entropy} of $\lambda$. 
\end{itemize}
\end{definition}

\medskip
Obviously the correspondence $[X]^{<\omega}\to \N\cup\{\infty\}$ given by  $D \mapsto  \mathfrak h (\lambda, D)$ is monotone increasing and $\mathfrak h (\lambda, D_1 \cup D_2)\leq \mathfrak h (\lambda, D_1) + \mathfrak h (\lambda, D_2)$ for every $D_1,D_2\in[X]^{<\omega}$. 
 
We see in the sequel that $\mathfrak h (\lambda, D)$ is always a non-negative integer and we have seen that it is bounded by $|D|$. 

\medskip
The next example is the ``backbone" of set-theoretic entropy: 

\begin{example}[Shift normalization]  \label{Ber_CovarSet} 
Define the \emph{right shift} $\rho: \N \to \N$ by $\rho(n) = n+1$. 
Obviously, ${\mathfrak h}(\rho,\{0\}) = {\mathfrak h}(\rho,\{0,1,\ldots , n\}) = 1$ for every $n\in \N$. Since every finite subset $D$ of $\N$ is contained in $\{0,1,\ldots , n\}$ for some $n\in\N$,  this proves that  $ {\mathfrak h}(\rho) = 1$. 
\end{example}

\begin{example}\label{example1}  
Let $X$ be a set and $\lambda: X \to X$ a selfmap.
\begin{itemize}  
 \item[(a)] Then $\mathfrak h (\lambda, \{x\}) > 0$ for $x\in X$ if and only if $x$ is a wandering point (as $\mathfrak T(\lambda, \{x\}) $ is infinite precisely when $x$ is wandering); in such a case $ \mathfrak h (\lambda, \{x\}) = 1$. 
 \item[(b)] If $D$ is a finite subset of $X$, then:
  \begin{itemize}  
  \item[(b$_1$)] $ \mathfrak h(\lambda, D)>0$ if and only if $D$ contains a wandering point $x$; in such a case  $ \mathfrak h(\lambda, D)\geq\mathfrak h(\lambda, \{x\})=1$;
  \item[(b$_2$)]$\mathfrak h(\lambda,D)=  \mathfrak h(\lambda,\lambda(D))$. 
\end{itemize} 
\end{itemize}
\end{example}
  
Next we list the properties of the set-theoretic entropy proved in \cite{FD}. 

\begin{lemma}\label{Properies0}   Let $X$ be a set and $\lambda: X \to X$ a selfmap. 
\begin{itemize}  
 \item[(a)] \emph{(Monotonicity for subsets)} If $Y$ is a $\lambda$-invariant subset of $X$, then $\mathfrak h(\lambda) \geq \mathfrak h(\lambda\restriction_Y)$. 

If $Y$ is an inversely $\lambda$-invariant subset of $X$, then $\mathfrak h(\lambda) \geq \mathfrak h(\lambda\restriction_{X\setminus Y})$. 
 \item[(b)] \emph{(Monotonicity for factors)} If $\psi:Y\to Y$ a selfmap and $\alpha:X\to Y$ is a surjective map such that $\alpha \circ \lambda = \psi \circ \alpha$, then $\mathfrak h(\lambda) \geq \mathfrak h (\psi)$. 
\end{itemize}
\end{lemma}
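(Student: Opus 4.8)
The plan is to reduce both statements to the single elementary observation that the $n$-th trajectory $\mathfrak T_n(\lambda,D)$ depends only on the iterates $\lambda^k(D)$ with $0\le k\le n-1$, so it is left unchanged by passing to an invariant subset and can only shrink under an equivariant surjection; the entropy with respect to $D$ is then read off via $\mathfrak h(\lambda,D)=\lim_{n\to\infty}|\mathfrak T_n(\lambda,D)|/n$ (the existence of this limit is the Definition preceding the lemma, which rests on Fekete Lemma \ref{fekete}), and one concludes by taking suprema.

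First I would handle the first half of (a). Let $Y\subseteq X$ be $\lambda$-invariant, so that $\lambda\restriction_Y\colon Y\to Y$ is a well-defined selfmap. For $D\in[Y]^{<\omega}$ an easy induction on $k$, using $\lambda^k(D)\subseteq Y$, shows $(\lambda\restriction_Y)^k(D)=\lambda^k(D)$ for all $k\in\N$; hence $\mathfrak T_n(\lambda\restriction_Y,D)=\mathfrak T_n(\lambda,D)$ for every $n\in\N_+$, and therefore $\mathfrak h(\lambda\restriction_Y,D)=\mathfrak h(\lambda,D)$. Since $[Y]^{<\omega}\subseteq[X]^{<\omega}$, taking the supremum over $D\in[Y]^{<\omega}$ yields $\mathfrak h(\lambda\restriction_Y)\le\mathfrak h(\lambda)$. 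The second half of (a) reduces to the first: if $Y$ is inversely $\lambda$-invariant then $X\setminus Y$ is $\lambda$-invariant, because $x\notin Y$ and $\lambda(x)\in Y$ would give $x\in\lambda^{-1}(Y)\subseteq Y$, a contradiction; now apply the first half with $X\setminus Y$ in place of $Y$.

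For (b), fix $D'\in[Y]^{<\omega}$. Using surjectivity of $\alpha$, choose for each point of $D'$ a single preimage under $\alpha$, obtaining $D\in[X]^{<\omega}$ with $\alpha(D)=D'$. From $\alpha\circ\lambda=\psi\circ\alpha$ one gets $\alpha\circ\lambda^k=\psi^k\circ\alpha$ by induction, hence $\alpha(\lambda^k(D))=\psi^k(D')$ and consequently $\alpha(\mathfrak T_n(\lambda,D))=\mathfrak T_n(\psi,D')$ for every $n$. Since $\alpha$ is a function, $|\mathfrak T_n(\psi,D')|\le|\mathfrak T_n(\lambda,D)|$; dividing by $n$ and letting $n\to\infty$ gives $\mathfrak h(\psi,D')\le\mathfrak h(\lambda,D)\le\mathfrak h(\lambda)$. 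Taking the supremum over $D'\in[Y]^{<\omega}$ gives $\mathfrak h(\psi)\le\mathfrak h(\lambda)$, as required.

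I do not expect a genuine obstacle here: the argument is pure bookkeeping. The only points needing (minor) care are the inductive identity $(\lambda\restriction_Y)^k(D)=\lambda^k(D)$ in part (a) — which is where $\lambda$-invariance is actually used — the complementation trick turning inverse invariance into invariance, and making sure the preimage set $D$ chosen in part (b) is finite, which is immediate since one picks exactly one preimage for each of the finitely many elements of $D'$.
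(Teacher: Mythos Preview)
Your proof is correct and is exactly the natural elementary argument. Note that the paper does not actually supply a proof of this lemma: it is stated with a reference to \cite{FD}, so there is no ``paper's own proof'' to compare against beyond the expectation that the argument in \cite{FD} proceeds along the same straightforward lines you have written.
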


The next set of properties of  the set-theoretic entropy are separated in the following theorem since they completely characterize this entropy. 

\begin{theorem}\label{Properies}  \emph{\cite{FD}}
Let $X$ be a set and $\lambda: X \to X$ a selfmap. 
\begin{itemize}  
\item[(a)] \emph{(Invariance under conjugation)} If $Y$ is another set and $\alpha:X\to Y$ is a bijection, then ${\mathfrak h}(\lambda)={\mathfrak h}(\alpha\circ\lambda\circ\alpha^{-1})$.
\item[(b)] \emph{(Continuity)} If $\{Y_i: i \in I\}$ is a directed system of $\lambda$-invariant subsets of $X$ with $ X = \bigcup_{i\in I} Y_i$, then $\mathfrak h(\lambda) = \sup_{i\in I} \mathfrak h(\lambda \restriction_{Y_i})$.  
\item[(c)] \emph{(Addition Theorem)} If $X=Y_1\cup Y_2$ with $Y_1, Y_2\subseteq X$ disjoint and $\lambda$-invariant, then $\mathfrak h(\lambda)=\mathfrak h(\lambda\restriction_{Y_1})+\mathfrak h(\lambda\restriction_{Y_2})$.
\item[(d)] \emph{(Reduction to the image)} For the restriction $\lambda\restriction_{\lambda(X)}: \lambda(X) \to \lambda(X)$, $\mathfrak h(\lambda\restriction_{\lambda(X)})=\mathfrak h(\lambda)$.
\item[(e)] \emph{(Logarithmic Law)}
\begin{itemize}
\item[(e$_1$)]   $\mathfrak h (\lambda^k) = k \mathfrak h(\lambda)$ for all $k \in \N$; 
\item[(e$_2$)]    if $\lambda$ is a bijection, then $\mathfrak h(\lambda^{-1}) = \mathfrak h(\lambda)$, so $\mathfrak h(\lambda^k) = |k| \mathfrak h(\lambda)$ for all $k \in \Z$. 
\end{itemize}
\end{itemize}
\emph{(Uniqueness)} Moreover, if $h:\lambda\mapsto h(\lambda)$ is a function defined on selfmaps $\lambda: X \to X$ with values in $\R_{\geq0} \cup \{\infty\}$ satisfying (a)--(e) and $h(\rho)=1$, then $h$ coincides with $\mathfrak h$.  
\end{theorem}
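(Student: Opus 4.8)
The plan is to verify the five properties (a)--(e) directly from the definition $\mathfrak h(\lambda,D)=\lim_n|\mathfrak T_n(\lambda,D)|/n$ (whose existence is Fekete Lemma~\ref{fekete}) using the elementary transformation rules for trajectories, and then to prove Uniqueness by a structural reduction. For (a), a bijection $\alpha\colon X\to Y$ satisfies $\mathfrak T_n(\alpha\lambda\alpha^{-1},\alpha(D))=\alpha(\mathfrak T_n(\lambda,D))$, so the cardinalities agree and, since $\alpha$ induces a bijection $[X]^{<\omega}\to[Y]^{<\omega}$, the equality of suprema follows. For (b), ``$\geq$'' is Monotonicity for subsets (Lemma~\ref{Properies0}(a)); for ``$\leq$'', every finite $D\subseteq X$ lies in some $Y_i$ by directedness, and then $\mathfrak T_n(\lambda,D)\subseteq Y_i$ by $\lambda$-invariance, so $\mathfrak h(\lambda,D)=\mathfrak h(\lambda\restriction_{Y_i},D)\leq\sup_i\mathfrak h(\lambda\restriction_{Y_i})$. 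For (c), splitting $D=(D\cap Y_1)\sqcup(D\cap Y_2)$ gives $\mathfrak T_n(\lambda,D)=\mathfrak T_n(\lambda,D\cap Y_1)\sqcup\mathfrak T_n(\lambda,D\cap Y_2)$ (disjointness is preserved since $Y_1,Y_2$ are disjoint and $\lambda$-invariant), hence $\mathfrak h(\lambda,D)=\mathfrak h(\lambda\restriction_{Y_1},D\cap Y_1)+\mathfrak h(\lambda\restriction_{Y_2},D\cap Y_2)$; taking suprema over $D$ yields both inequalities. For (d), $\lambda(X)$ is $\lambda$-invariant so ``$\leq$'' is again Monotonicity, while $\mathfrak h(\lambda,D)=\mathfrak h(\lambda,\lambda(D))\leq\mathfrak h(\lambda\restriction_{\lambda(X)})$ by Example~\ref{example1}(b$_2$). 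For (e$_1$) with $k\geq1$, the set $E=\mathfrak T_k(\lambda,D)$ satisfies $\mathfrak T_n(\lambda^k,E)=\mathfrak T_{kn}(\lambda,D)$, giving $\mathfrak h(\lambda^k,E)=k\,\mathfrak h(\lambda,D)$, while $\mathfrak T_n(\lambda^k,D)\subseteq\mathfrak T_{kn}(\lambda,D)$ gives $\mathfrak h(\lambda^k,D)\leq k\,\mathfrak h(\lambda,D)$; the case $k=0$ is trivial since $\mathfrak T_n(\mathrm{id},D)=D$. For (e$_2$), if $\lambda$ is a bijection then $\lambda^{n-1}$ carries $\mathfrak T_n(\lambda^{-1},D)$ bijectively onto $\mathfrak T_n(\lambda,D)$, so $\mathfrak h(\lambda^{-1},D)=\mathfrak h(\lambda,D)$ for every $D$, and the formula for $\mathfrak h(\lambda^k)$ with $k\in\Z$ follows.

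For Uniqueness, let $h$ satisfy (a)--(e) and $h(\rho)=1$; I would show $h(\lambda)=\mathfrak h(\lambda)$ by decomposing $\lambda$ into building blocks. The pivotal observation is that applying (e$_1$) with exponent $0$ gives $h(\mathrm{id}_X)=h(\lambda^0)=0\cdot h(\lambda)=0$ for every $X$. Now $X=\mathrm{QPer}(\lambda,X)\sqcup\mathrm{Wan}(\lambda,X)$ is a partition into $\lambda$-invariant (indeed bi-invariant) subsets, so by the Addition Theorem $h(\lambda)=h(\lambda\restriction_{\mathrm{QPer}(\lambda,X)})+h(\lambda\restriction_{\mathrm{Wan}(\lambda,X)})$, with the identical formula for $\mathfrak h$; I treat the two summands separately. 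On the quasi-periodic part, $\mathrm{QPer}(\lambda,X)=\bigcup_x\mathfrak T(\lambda,\{x\})$ is a directed union of finite $\lambda$-invariant subsets (each $\mathfrak T(\lambda,\{x\})$ is finite because $x$ is quasi-periodic), so by Continuity it suffices to prove $h(\mu)=0$ for every selfmap $\mu$ of a finite set $F$; iterating Reduction to the image gives $h(\mu)=h(\mu\restriction_{\mu^n(F)})$, and since $F$ is finite the decreasing chain $\mu(F)\supseteq\mu^2(F)\supseteq\cdots$ stabilises at a set on which $\mu$ acts as a permutation $\sigma$, so $\sigma^m=\mathrm{id}$ for some $m\geq1$ and $0=h(\mathrm{id})=h(\sigma^m)=m\,h(\sigma)$ forces $h(\sigma)=0=h(\mu)$. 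Thus $h(\lambda\restriction_{\mathrm{QPer}(\lambda,X)})=0=\mathfrak h(\lambda\restriction_{\mathrm{QPer}(\lambda,X)})$.

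For the wandering part, the relation $\sim$ of Remark~\ref{Rem} partitions $\mathrm{Wan}(\lambda,X)$ into bi-invariant classes $\{E_j\}_{j\in J}$, and using the Addition Theorem when $J$ is finite and Continuity when $J$ is infinite (finite unions of the $E_j$ are $\lambda$-invariant and form a directed family) one gets $h(\lambda\restriction_{\mathrm{Wan}(\lambda,X)})=\sum_{j\in J}h(\lambda\restriction_{E_j})$, and the same for $\mathfrak h$; so it suffices to show $h(\lambda\restriction_E)=1=\mathfrak h(\lambda\restriction_E)$ for a single wandering $\sim$-class $E$. Each such $E$ is the directed union of the forward-invariant sets $F=\mathfrak T(\lambda,\{x_1\})\cup\cdots\cup\mathfrak T(\lambda,\{x_r\})$ with $x_i\in E$, so by Continuity $h(\lambda\restriction_E)=\sup_F h(\lambda\restriction_F)$; and for each such $F$, since all the $x_i$ lie in the single $\sim$-class $E$ there is $N$ with $\lambda^n(x_i)$ on the common orbit-ray $\mathfrak T(\lambda,\{x_1\})$ for every $i$ and every $n\geq N$, whence $\lambda^n(F)$ is, for $n$ large, a single orbit-ray on which $\lambda$ acts as a copy of $(\N,\rho)$ (the orbit is faithful because its points are wandering). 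Iterating Reduction to the image then gives $h(\lambda\restriction_F)=h(\rho)=1$, and likewise $\mathfrak h(\lambda\restriction_F)=\mathfrak h(\rho)=1$ by Reduction to the image together with (a) and Example~\ref{Ber_CovarSet}, so $h(\lambda\restriction_E)=1=\mathfrak h(\lambda\restriction_E)$. Assembling the three parts yields $h(\lambda)=\mathfrak h(\lambda)$.

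The step I expect to demand the most care is the Uniqueness argument, and within it the wandering part: one must check cleanly that every finite union of orbit-rays inside a single $\sim$-class collapses, under iteration of $\lambda$, to one orbit-ray conjugate to $(\N,\rho)$, and that every appeal to Continuity and to the Addition Theorem is made with genuinely $\lambda$-invariant (respectively, disjoint $\lambda$-invariant) subsets. By contrast, the properties (a)--(e) are book-keeping with trajectories, and once the vanishing $h(\mathrm{id}_X)=0$ is extracted from the Logarithmic Law at exponent $0$, the reduction of $h$ to these blocks is routine.
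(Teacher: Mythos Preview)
The paper does not prove this theorem, deferring instead to the reference \cite{FD}; the only proof-adjacent content in the paper is the discussion following the statement and Remark~\ref{Rem3Feb}, which sketches exactly the strategy you carry out: axioms (a)--(d) reduce any such $h$ to its values $a_n=h(\sigma_n)$ on the finite cycles, the Logarithmic Law forces $a_n=0$, and the normalization $h(\rho)=1$ together with Reduction to the image handles each wandering $\sim$-class. Your verification of (a)--(e) and your Uniqueness argument are correct and supply the details the paper omits.

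One small slip to fix: the family $\{\mathfrak T(\lambda,\{x\}):x\in\mathrm{QPer}(\lambda,X)\}$ is \emph{not} directed (two distinct fixed points give incomparable singletons), so Continuity cannot be applied to it as written. Replace it by the family of all finite $\lambda$-invariant subsets of $\mathrm{QPer}(\lambda,X)$ --- equivalently, finite unions $\mathfrak T(\lambda,D)$ with $D\in[\mathrm{QPer}(\lambda,X)]^{<\omega}$ --- which is directed and still covers $\mathrm{QPer}(\lambda,X)$; your subsequent reduction to a selfmap of a finite set then goes through unchanged.
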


In other words the Shift normalization $\mathfrak h(\rho) = 1$ and the five basic properties stated in Theorem \ref{Properies} completely determine $\mathfrak h$. In Remark \ref{Rem3Feb} we discuss the independence of these properties. 

One can apply item (c) of the above theorem to compute the set-theoretic entropy by considering two types of selfmaps $\lambda: X \to X$:  
\begin{itemize}  
\item[(a)] maps without quasi-periodic points, that is, maps for which every point is wandering (i.e., $X = W(\lambda,X)$);
\item[(b)] maps for which every point is quasi-periodic (i.e., $X = Q(\lambda,X)$).
\end{itemize}

In the next lemma we see in particular that, for a selfmap $\lambda$ of a set $X$, the subset $Q(\lambda,X)$ is the \emph{Pinsker subset} of $X$ with respect to $\lambda$, namely, the greatest $\lambda$-invariant subset $P$ of $X$ such that $\mathfrak h(\lambda\restriction_P)=0$.

\begin{lemma}\label{setPinsker}
Let $X$ be a set and $\lambda: X \to X$ a selfmap. Then $\mathfrak h(\lambda)=0$ if and only if $X=\mathrm{QPer}(\lambda,X)$.
In particular, $\mathrm{QPer}(\lambda,X)$ is the greatest $\lambda$-invariant subset of $X$ such that $\mathfrak h(\lambda\restriction_{\mathrm{QPer}(\lambda,X)})=0$.
\end{lemma}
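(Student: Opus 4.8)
The plan is to prove the biconditional $\mathfrak h(\lambda)=0 \iff X=\mathrm{QPer}(\lambda,X)$ and then deduce the ``in particular'' statement. For the forward-and-backward equivalence I would argue via the single-point trajectories. Recall from the discussion preceding Example \ref{example1} that $\mathfrak T(\lambda,\{x\})$ is finite precisely when $x$ is quasi-periodic, and from Example \ref{example1}(a) that $\mathfrak h(\lambda,\{x\})=1$ when $x$ is wandering and $\mathfrak h(\lambda,\{x\})=0$ otherwise. So if $X=\mathrm{QPer}(\lambda,X)$, then every $x\in X$ is quasi-periodic, hence $\mathfrak h(\lambda,\{x\})=0$ for all singletons. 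The harder direction here is to pass from all singletons having entropy $0$ to all finite subsets $D$ having $\mathfrak h(\lambda,D)=0$: I would use subadditivity of $D\mapsto\mathfrak h(\lambda,D)$ (noted right after the definition of $\mathfrak h$), namely $\mathfrak h(\lambda,D)\le\sum_{x\in D}\mathfrak h(\lambda,\{x\})=0$. Taking the supremum over $D\in[X]^{<\omega}$ gives $\mathfrak h(\lambda)=0$.

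Conversely, suppose $X\ne\mathrm{QPer}(\lambda,X)$. Then by the partition $X=\mathrm{QPer}(\lambda,X)\cup\mathrm{Wan}(\lambda,X)$ there is a wandering point $x\in X$, and Example \ref{example1}(a) gives $\mathfrak h(\lambda,\{x\})=1$, whence $\mathfrak h(\lambda)\ge 1>0$. This establishes the equivalence. I expect this part to be routine; the only subtlety is making sure the subadditivity inequality is applied to the entropy-with-respect-to-$D$ function and not confused with subadditivity of the defining sequence.

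For the ``in particular'' claim, I need two things. First, that $Q:=\mathrm{QPer}(\lambda,X)$ is $\lambda$-invariant — this was recorded in the list of properties following the definition of $\mathrm{QPer}$ (it is in fact both $\lambda$-invariant and inversely $\lambda$-invariant). Second, that $\mathfrak h(\lambda\restriction_Q)=0$: since $Q=\mathrm{QPer}(\lambda,X)$ and every point of $Q$ is quasi-periodic for $\lambda$, and quasi-periodicity for $\lambda\restriction_Q$ is the same notion (trajectories stay inside $Q$ by invariance), we have $Q=\mathrm{QPer}(\lambda\restriction_Q,Q)$, so the already-proved equivalence applied to the set $Q$ and the map $\lambda\restriction_Q$ yields $\mathfrak h(\lambda\restriction_Q)=0$. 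Finally, for maximality: if $P\subseteq X$ is $\lambda$-invariant with $\mathfrak h(\lambda\restriction_P)=0$, then by the equivalence $P=\mathrm{QPer}(\lambda\restriction_P,P)$, i.e.\ every point of $P$ is quasi-periodic for $\lambda\restriction_P$; but trajectories under $\lambda\restriction_P$ coincide with trajectories under $\lambda$ for points of the $\lambda$-invariant set $P$, so every point of $P$ is quasi-periodic for $\lambda$, giving $P\subseteq\mathrm{QPer}(\lambda,X)=Q$. Hence $Q$ is the greatest such subset.

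The main obstacle, such as it is, is purely bookkeeping: one must be careful that the notions ``quasi-periodic'', ``wandering'', and ``trajectory'' are unaffected by restricting $\lambda$ to a $\lambda$-invariant subset, so that the equivalence proved for general $(X,\lambda)$ can be reapplied verbatim to $(P,\lambda\restriction_P)$. Everything else reduces to the singleton computations of Example \ref{example1} together with subadditivity over finite subsets.
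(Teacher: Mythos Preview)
Your proposal is correct and follows essentially the same line as the paper's proof. The only minor variation is in the forward direction: where you invoke the subadditivity $\mathfrak h(\lambda,D)\le\sum_{x\in D}\mathfrak h(\lambda,\{x\})=0$, the paper argues directly that $\mathfrak T(\lambda,D)=\bigcup_{x\in D}\mathfrak T(\lambda,\{x\})$ is a finite union of finite orbits and hence finite, so $\mathfrak h(\lambda,D)=0$; the paper also leaves the ``in particular'' clause implicit, whereas you spell out the maximality argument in full.
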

\begin{proof} 
If $\mathrm{QPer}(\lambda,X)=X$, for every non empty finite subset $D$ of $X$, the trajectory $\mathfrak  T(\lambda,D)$ of $D$ under $\lambda$ is finite. Hence $\mathfrak h(\lambda)=0$. If $x\in X\setminus \mathrm{QPer}(\lambda,X)$, then $\mathfrak h(\lambda,\{x\})=1$. In particular, $\mathfrak h(\lambda)>0$.
\end{proof}

From now on we can concentrate on maps without quasi-periodic points, i.e., of type (a).  When $X = \mathrm{Wan}(\lambda,X)$,  the partition \eqref{(P)} related to $\sim$ has all equivalence classes $E_j$ infinite and both $\lambda$-invariant and inversely $\lambda$-invariant. Hence $\mathfrak h(\lambda\restriction_{E_j})\geq 1$ for each $j\in I$ by Example \ref{example1}, and an application of (c) and (d) of Theorem \ref{Properies} gives 
\begin{equation}\label{(88)}
\mathfrak h(\lambda) = \sum_{j\in I}\mathfrak h(\lambda\restriction_{E_j}),
\end{equation}
where the sum is intended to be $\infty$ when the sum has infinitely many terms. 
Hence \eqref{(88)} gives $\mathfrak h(\lambda) = |I|$ in case $I$ is finite, otherwise $\mathfrak h(\lambda) = \infty$ in view of Example \ref{example1}. We can resume this in the following: 

\begin{corollary}\label{Coro2}
Let $X$ be a set and $\lambda:X\to X$ a selfmap. Then $\mathfrak h(\lambda)$ coincides with the number of equivalence classes in \eqref{(P)} that meet $W(\lambda,X)$, i.e.,  $\mathfrak h(\lambda)= |q(W(\lambda,X))|$. 
\end{corollary}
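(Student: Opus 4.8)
The statement essentially repackages the discussion preceding it, so the plan is to combine, in the right order, a few of the properties established in Theorem~\ref{Properies} together with Lemma~\ref{setPinsker}. First I would reduce to the wandering part of $X$. Since $X=\mathrm{QPer}(\lambda,X)\cup\mathrm{Wan}(\lambda,X)$ is a partition into two $\lambda$-invariant subsets and $\mathfrak h(\lambda\restriction_{\mathrm{QPer}(\lambda,X)})=0$ by Lemma~\ref{setPinsker}, the Addition Theorem (Theorem~\ref{Properies}(c)) gives $\mathfrak h(\lambda)=\mathfrak h(\lambda\restriction_{\mathrm{Wan}(\lambda,X)})$. I would also record the dichotomy that every $\sim$-class $E_j$ of \eqref{(P)} is contained either in $\mathrm{QPer}(\lambda,X)$ or in $\mathrm{Wan}(\lambda,X)$: if $x\sim y$ with $\lambda^a(x)=\lambda^b(y)$ and $x$ is quasi-periodic, then $\mathfrak T(\lambda,\{\lambda^a(x)\})\subseteq\mathfrak T(\lambda,\{x\})$ is finite, hence $\mathfrak T(\lambda,\{y\})=\{y,\ldots,\lambda^{b-1}(y)\}\cup\mathfrak T(\lambda,\{\lambda^b(y)\})$ is finite and $y$ is quasi-periodic as well. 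Consequently the classes of \eqref{(P)} that meet $\mathrm{Wan}(\lambda,X)$ are exactly those contained in $\mathrm{Wan}(\lambda,X)$, and they constitute the partition of $\mathrm{Wan}(\lambda,X)$ induced by $\sim$; if $I$ denotes their index set, then $|I|=|q(\mathrm{Wan}(\lambda,X))|$.

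Next I would show $\mathfrak h(\lambda\restriction_{E_j})=1$ for each class $E_j\subseteq\mathrm{Wan}(\lambda,X)$. By Remark~\ref{Rem} such $E_j$ is both $\lambda$-invariant and inversely $\lambda$-invariant, and it is infinite since it contains the forward trajectory of any of its (wandering) points. For the lower bound, a point $x\in E_j$ is wandering also for $\lambda\restriction_{E_j}$, so $\mathfrak h(\lambda\restriction_{E_j})\geq\mathfrak h(\lambda\restriction_{E_j},\{x\})=1$ by Example~\ref{example1}. For the upper bound I would check that $\mathfrak h(\lambda\restriction_{E_j},D)=1$ for every non-empty finite $D\subseteq E_j$: as all points of $D$ are $\sim$-equivalent, their forward trajectories pairwise eventually coincide, so there is $N\in\N$ such that for $n\geq N$ the set $\mathfrak T_n(\lambda,D)$ is the union of a fixed finite ``transient'' set (of size at most $|D|\cdot N$) with an interval of length $n+O(1)$ of a single forward trajectory; hence $|\mathfrak T_n(\lambda,D)|=n+O(1)$ and the defining limit equals $1$. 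Taking the supremum over $D$ yields $\mathfrak h(\lambda\restriction_{E_j})=1$.

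Finally, the finite unions $\bigcup_{j\in J}E_j$ over finite $J\subseteq I$ form a directed family of $\lambda$-invariant subsets with union $\mathrm{Wan}(\lambda,X)$, so Continuity (Theorem~\ref{Properies}(b)) gives $\mathfrak h(\lambda\restriction_{\mathrm{Wan}(\lambda,X)})=\sup_J\mathfrak h(\lambda\restriction_{\bigcup_{j\in J}E_j})$, while the Addition Theorem applied inductively to the finite disjoint union $\bigcup_{j\in J}E_j$, together with the previous step, gives $\mathfrak h(\lambda\restriction_{\bigcup_{j\in J}E_j})=\sum_{j\in J}\mathfrak h(\lambda\restriction_{E_j})=|J|$. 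Therefore $\mathfrak h(\lambda\restriction_{\mathrm{Wan}(\lambda,X)})=\sup_J|J|=|I|$, read as $\infty$ when $I$ is infinite (in accordance with the convention in \eqref{(88)}), and combining with the first reduction, $\mathfrak h(\lambda)=|I|=|q(\mathrm{Wan}(\lambda,X))|$. The only step with genuine content is the upper bound $\mathfrak h(\lambda\restriction_{E_j})\leq 1$: a single $\sim$-class may be a complicated infinitely branching tree, so one has to argue carefully, but it all reduces to the elementary fact that finitely many eventually-merging forward rays cover only $n+O(1)$ points during their first $n$ iterations; the rest is bookkeeping with properties already available.
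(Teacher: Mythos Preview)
Your proof is correct and follows essentially the same route as the paper's discussion preceding the corollary: split off the quasi-periodic part via Lemma~\ref{setPinsker} and the Addition Theorem, then decompose $\mathrm{Wan}(\lambda,X)$ into its $\sim$-classes $E_j$ and sum the contributions using Continuity and the Addition Theorem (your equation is exactly the paper's \eqref{(88)}). The one point where you add real content is the upper bound $\mathfrak h(\lambda\restriction_{E_j})\leq 1$: the paper's text only records $\mathfrak h(\lambda\restriction_{E_j})\geq 1$ from Example~\ref{example1} and then asserts $\mathfrak h(\lambda)=|I|$, whereas you justify the missing inequality by the elementary merging-rays estimate $|\mathfrak T_n(\lambda,D)|=n+O(1)$ for finite $D$ inside a single wandering $\sim$-class---this is precisely the detail needed to close the argument when $I$ is finite.
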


To conclude, note that a class $E_j$ of the partition \eqref{(P)} meets $\mathrm{Wan}(\lambda,X)$ if and only if it is contained in $\mathrm{Wan}(\lambda,X)$, so Corollary \ref{Coro2} can be stated also as follows:  {\rm $\mathfrak h(\lambda)$ coincides with the number of equivalence classes $E_j$ contained in $\mathrm{Wan}(\lambda,X)$.}

\medskip
Since the partition \eqref{(P)} for a bijection $\lambda$ coincides with the relative partition for $\lambda^{-1}$ and $\mathrm{Wan}(\lambda^{-1},X)=\mathrm{Wan}(\lambda,X)$, from Corollary \ref{Coro2} one can easily obtain: 

\begin{corollary}\label{Coro3}
Let $X$ be a set and $\lambda:X\to X$ a bijection. Then $\mathfrak h(\lambda^{-1})= \mathfrak h(\lambda)$. Moreover $\mathfrak h(\lambda^k)=|k|\mathfrak h(\lambda)$ for every $k\in\Z$.
\end{corollary}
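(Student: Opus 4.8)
The plan is to derive everything from the description of $\mathfrak h$ obtained in Corollary~\ref{Coro2}, which expresses $\mathfrak h(\lambda)$ purely in terms of the partition \eqref{(P)} associated to the relation $\sim$ and of the wandering set $\mathrm{Wan}(\lambda,X)$. Concretely, I would first observe that for a bijection $\lambda$ the relation $\sim$ built from $\lambda$ and the one built from $\lambda^{-1}$ coincide: if $\lambda^n(x)=\lambda^m(y)$ with $n,m\in\N$, then applying the bijection $\lambda^{-(n+m)}$ yields $\lambda^{-m}(x)=\lambda^{-n}(y)$, so that $x$ and $y$ are $\sim$-equivalent with respect to $\lambda^{-1}$ as well; the converse is symmetric. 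Hence the partition \eqref{(P)} and its quotient map $q$ are literally the same for $\lambda$ and for $\lambda^{-1}$.

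Next I would check that $\mathrm{QPer}(\lambda^{-1},X)=\mathrm{QPer}(\lambda,X)$, hence $\mathrm{Wan}(\lambda^{-1},X)=\mathrm{Wan}(\lambda,X)$: if $x$ is quasi-periodic for $\lambda^{-1}$, say $\lambda^{-n}(x)=\lambda^{-m}(x)$ with $n\ne m$ in $\N$, then applying $\lambda^{n+m}$ gives $\lambda^{m}(x)=\lambda^{n}(x)$ with $m\ne n$ in $\N$, so $x$ is quasi-periodic for $\lambda$; the reverse implication is identical. Combining these two facts with Corollary~\ref{Coro2}, which gives $\mathfrak h(\lambda)=|q(\mathrm{Wan}(\lambda,X))|$ and $\mathfrak h(\lambda^{-1})=|q(\mathrm{Wan}(\lambda^{-1},X))|$ for the common $q$, we conclude $\mathfrak h(\lambda^{-1})=\mathfrak h(\lambda)$.

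For the formula $\mathfrak h(\lambda^k)=|k|\,\mathfrak h(\lambda)$ I would split on the sign of $k$. For $k\in\N$ this is precisely the Logarithmic Law, Theorem~\ref{Properies}(e$_1$), $\mathfrak h(\lambda^k)=k\,\mathfrak h(\lambda)$; in particular for $k=0$ it says $\mathfrak h(id_X)=0$, which also follows from Lemma~\ref{setPinsker} since every point of $X$ is fixed, hence quasi-periodic. For $k=-j$ with $j\in\N_+$ we have $\lambda^k=(\lambda^{-1})^{j}$, so applying Theorem~\ref{Properies}(e$_1$) to $\lambda^{-1}$ and then the first part of the statement gives $\mathfrak h(\lambda^k)=j\,\mathfrak h(\lambda^{-1})=j\,\mathfrak h(\lambda)=|k|\,\mathfrak h(\lambda)$. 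The only point that requires a little care — and it is the only real obstacle — is the bookkeeping of exponents when passing between $\lambda$ and $\lambda^{-1}$: one must keep the relevant powers in $\N$ so as to stay within the definitions of $\sim$ and of quasi-periodicity, which is exactly why composing with a sufficiently large positive power of $\lambda$ (rather than an arbitrary one) is used in the two reductions above.
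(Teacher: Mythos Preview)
Your proof is correct and follows essentially the same approach as the paper: the paper's argument (given in the sentence preceding the corollary) is precisely that the partition \eqref{(P)} and the wandering set are the same for $\lambda$ and $\lambda^{-1}$, whence Corollary~\ref{Coro2} yields $\mathfrak h(\lambda^{-1})=\mathfrak h(\lambda)$; the extension to all $k\in\Z$ via Theorem~\ref{Properies}(e$_1$) is then immediate. You have simply spelled out the verifications in more detail than the paper does.
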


\begin{remark}\label{Rem3Feb}
\begin{itemize}
\item[(a)] It is easy to see that the axioms (a)--(d) in the Uniqueness Theorem \ref{Properies} alone imply also Monotonicity for (invariant) subsets. 
\item[(b)] Let us see that they do not imply
\begin{itemize}  
\item[(e$^*$)] (Monotonicity for factors)
\end{itemize}
Indeed, as one can easily see from the proof of Corollary \ref{Coro2} (or the Uniqueness Theorem \ref{Properies} in \cite{FD}), an entropy function
$h$ satisfying (a)--(d) is completely determined by its values on the periodic selfmaps $\sigma_n: X_n\to X_n$, where $n\in\N_+$, 
$X_n =\{1, \ldots, n -1\}$ and $\sigma_n$ is the cycle $(12\dots n)$ in the permutation group of $X_n$. Let $a_n=h(\sigma_n)$ for every $n\in\N_+$. 
Then the sequence $\{a_n\}_{n\in\N_+}$ determines $h$ in the obvious way. Since each $\sigma_n$ is a factor of the right shift $\rho$ and $\sigma_m$ is a factor of $\sigma_n$ if and only if $n|m$ in $\N$, (e$^*$) becomes equivalent to 
\begin{itemize}  
\item[(e$^{**}$)] $a_n \leq 1$ for all $n\in \N_+$ and the assignment $n \mapsto a_n$ is monotone, where $\N_+$ is ordered with respect to the relation $n|m$. 
\end{itemize}
Hence one can easily violate (e$^{**}$) in many ways keeping the axioms (a)--(d). 
\item[(c)] Obviously, the Logarithmic Law (e) implies (e$^{**}$), since the it gives $2a_n = a_n$, so $a_n = 0$ for every $n\in \N_+$.  
\item[(d)] The next natural question is whether we can replace (e) by the weaker axiom (e$^{*}$) in the Uniqueness Theorem \ref{Properies}, that is, whether (a), (b), (c), (d) and (e$^{*}$) imply (e) (i.e., uniqueness). 
An easy negative example can be obtained by taking $h(\lambda)$ to be the total number of orbits of $\lambda$ (i.e., $a_n= 1$ for all $n\in \N_+$). Generic counterexamples can be obtained by picking the sequence $\{a_n\}_{n\in\N_+}$ so that (e$^{**}$) holds true. 
\end{itemize}
\end{remark}  

Now we recall a pair of notions introduced in \cite{G0} that allow us to give another form of the set-theoretic entropy. 

\begin{definition}{\cite{G0}} 
Let $X$ be a set and $\lambda: X \to X$ a selfmap.
\begin{itemize}  
  \item[(a)] An {\em infinite orbit} of  $\lambda$ in $X$ is an infinite $\lambda$-orbit $O_\lambda(x)=\{\lambda^n(x): n\in \N\}$ of a point $x\in X$.  Note that an infinite orbit in $X$ is nothing else but the trajectory $\mathfrak T(\lambda,\{x\})$ with respect to $\lambda$ of a wandering point of $\lambda$ in $X$. 
  \item[(b)]  The maximum number $\mathfrak o(\lambda)$ of pairwise disjoint infinite orbits of $\lambda$ is called  {\em infinite orbit number} of $\lambda$.
\end{itemize}
\end{definition}  

Obviously, $\mathfrak o(\lambda)$ coincides with the size of a maximum collection of pairwise disjoint trajectories of wandering points of $\lambda$ in $X$ (if such a cardinality is infinite, we just take $\infty$). Therefore $\mathfrak o(\lambda)$ tells us, roughly speaking, how many pairwise disjoint copies of the right shift $\rho$ we can detect within the dynamical system $\lambda: X \to X$. 
The following theorem can be deduced from Corollary \ref{Coro2} or from the final part (i.e., Uniqueness) of Theorem \ref{Properies}.

\begin{theorem}\label{setE}\emph{\cite{FD}} 
Let $X$ be a set and $\lambda: X \to X$ a selfmap. Then $\mathfrak h (\lambda)=\mathfrak o(\lambda)$.
\end{theorem}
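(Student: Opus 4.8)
The plan is to deduce $\mathfrak h(\lambda)=\mathfrak o(\lambda)$ from Corollary \ref{Coro2}, which already identifies $\mathfrak h(\lambda)$ with $|q(\mathrm{Wan}(\lambda,X))|$, the number of $\sim$-classes in the partition \eqref{(P)} that meet (equivalently, are contained in) $\mathrm{Wan}(\lambda,X)$. So it suffices to show that this cardinal equals $\mathfrak o(\lambda)$, the maximum number of pairwise disjoint infinite orbits. Recall that an infinite orbit $O_\lambda(x)$ is exactly the trajectory $\mathfrak T(\lambda,\{x\})$ of a wandering point $x$, and each such orbit is contained in a single $\sim$-class (indeed in the class $E_j$ of $x$), since all the points $\lambda^n(x)$ are $\sim$-equivalent to $x$.

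First I would prove $\mathfrak o(\lambda)\le |q(\mathrm{Wan}(\lambda,X))|$: given any family of pairwise disjoint infinite orbits $\{O_\lambda(x_i)\}_{i\in J}$, each $x_i$ is wandering, and two orbits lying in the same $\sim$-class $E_j$ would force $\lambda^n(x_i)=\lambda^m(x_{i'})$ for some $n,m$, hence $O_\lambda(x_i)\cap O_\lambda(x_{i'})\ne\emptyset$ — contradiction with disjointness. Thus distinct orbits lie in distinct classes, all of which meet $\mathrm{Wan}(\lambda,X)$, giving $|J|\le |q(\mathrm{Wan}(\lambda,X))|$; take the supremum over such families.

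For the reverse inequality $\mathfrak o(\lambda)\ge |q(\mathrm{Wan}(\lambda,X))|$, I would pick, for each $\sim$-class $E_j$ contained in $\mathrm{Wan}(\lambda,X)$, a point $x_j\in E_j$; then $O_\lambda(x_j)\subseteq E_j$ is an infinite orbit, and since the $E_j$ are pairwise disjoint the orbits $\{O_\lambda(x_j)\}$ are pairwise disjoint as well. Hence $\mathfrak o(\lambda)$ is at least the number of such classes, which is $|q(\mathrm{Wan}(\lambda,X))|$ by the remark following Corollary \ref{Coro2}. Combining the two inequalities with Corollary \ref{Coro2} yields $\mathfrak h(\lambda)=|q(\mathrm{Wan}(\lambda,X))|=\mathfrak o(\lambda)$.

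The only genuinely delicate point is the bookkeeping when the relevant cardinal is infinite: one must make sure that "maximum number of pairwise disjoint infinite orbits" is interpreted (as the paper does) as $\infty$ whenever arbitrarily large finite such families exist, so that the supremum manipulation above is legitimate without invoking any choice principle beyond selecting one representative per class. An alternative, cleaner route — which I would mention as a remark — is to bypass Corollary \ref{Coro2} entirely and instead verify directly that $\lambda\mapsto\mathfrak o(\lambda)$ satisfies axioms (a)–(e) of Theorem \ref{Properies} together with $\mathfrak o(\rho)=1$; the Uniqueness clause then forces $\mathfrak o=\mathfrak h$. Checking Invariance, Continuity, and the Logarithmic Law for $\mathfrak o$ is immediate from the definition; the Addition Theorem for $\mathfrak o$ is the one requiring a short argument, namely that a maximal disjoint orbit family on $X=Y_1\cup Y_2$ splits into such families on $Y_1$ and $Y_2$ because each infinite orbit, being $\lambda$-invariant, lies entirely in one $Y_i$.
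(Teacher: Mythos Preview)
Your proposal is correct and follows exactly the two routes the paper itself indicates: the paper states only that the theorem ``can be deduced from Corollary \ref{Coro2} or from the final part (i.e., Uniqueness) of Theorem \ref{Properies}'', and you have spelled out both, taking the Corollary \ref{Coro2} route as primary and the Uniqueness route as a remark. The details you supply (distinct disjoint infinite orbits lie in distinct $\sim$-classes, and one orbit per wandering class gives a disjoint family) are precisely what is needed to pass from $|q(\mathrm{Wan}(\lambda,X))|$ to $\mathfrak o(\lambda)$.
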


\subsubsection{Contravariant set-theoretic entropy}\label{cstent-sec}

In analogy with the covariant set-theoretic entropy described in the previous subsection, we introduce here another notion of entropy for selfmaps, using counterimages in place of images.

\medskip
Let $X$ be a set and $\lambda: X \to X$ a surjective selfmap of $X$. For a finite subset $D$ of $X$ and $n\in\N_+$, the \emph{$n$-th $\lambda$-cotrajectory of $D$} is
$$\mathfrak T_n^*(\lambda,D) = D\cup\lambda^{-1}(D)\cup\ldots\cup\lambda^{-n+1}(D),$$
while the \emph{$\lambda$-cotrajectoy of $D$} is
$$\mathfrak T^*(\lambda,D)=\bigcup_{n\in\N}\lambda^{-n}(D)=\bigcup_{n\in\N_+}\mathfrak T_n^*(\lambda,D).$$ 
Clearly, $\mathfrak T^*(\lambda,D)=\uparrow\! D$.

\medskip
In the next remark we see that for a finitely many-to-one surjective selfmap $\lambda:X\to X$, the sequence $\{\mathfrak T_n^*(\lambda,D)\}_{n\in\N_+}$ appearing in Definition \ref{def-h*} of the contravariant set-theoretic entropy need not be subadditive. So to prove that the $\limsup$ in Definition \ref{def-h*} is actually a limit it is not possible to apply Fekete Lemma \ref{fekete}. On the other hand, we prove in Theorem \ref{limexists} that the limit exists.

\begin{remark}
Let $\lambda:\N\to \N$ be a selfmap defined by $\lambda(1)=\lambda(0)=0$, $\lambda(2n+2)=2n$ and $\lambda(2n+3)=2n+1$ for every $n\in\N$.
\begin{equation*}
\xymatrix{
\ar@{-->}[d] & & \ar@{-->}[d] \\
4\ar[d] & & 3\ar[d] \\
2\ar[dr] & & 1\ar[dl] \\
& 0\ar@(dl,dr)[] &
}
\end{equation*}

\bigskip
\noindent Then $\mathfrak T_2^*(\lambda,\{0\})=\{0,1,2\}$ and so $|\mathfrak T_2^*(\lambda,\{0\})|=3$, while $\mathfrak T_1^*(\lambda,\{0\})=\{0\}$ and hence $|\mathfrak T^*_1(\lambda,\{0\})|+|\mathfrak T^*_1(\lambda,\{0\})|=2<3$.
\end{remark}

\begin{definition}\label{def-h*}
Let $X$ be a set and $\lambda: X \to X$ a finitely many-to-one surjective selfmap.
\begin{itemize}  
  \item[(a)] For every finite subset $D$ of $X$, the number $$\mathfrak h^*(\lambda, D)=\limsup_{n\to\infty} \frac{|\mathfrak T_n^*(\lambda,D)|}{n}$$ is the {\em contravariant set-theoretic entropy} of $\lambda$ with respect to $D$. 
  \item[(b)]  The number $$\mathfrak h^*(\lambda) = \sup\left\{ \mathfrak h^*(\lambda, D) :D \in[X]^{<\omega}\right\}$$ is the {\em contravariant set-theoretic entropy} of $\lambda$. 
\end{itemize}
\end{definition}

We extend the definition to arbitrary finitely many-to-one selfmaps as follows. For a selfmap $\lambda:X\to X$ of a set $X$, the \emph{surjective core} $\mathrm{sc}(\lambda)$ exists; it is the greatest $\lambda$-invariant subset of $X$ such that the restriction $\lambda\restriction_{\mathrm{sc}(\lambda)}: \mathrm{sc}(\lambda) \to \mathrm{sc}(\lambda)$ is surjective (note that the surjective core can be empty). If $\lambda$ is finitely many-to-one, then $\mathrm{sc}(\lambda)=\bigcap_{n\in\N}\lambda^n(X)$.

\begin{definition}
Let $X$ be a set and $\lambda:X\to X$ a finitely many-to-one selfmap. The \emph{contravariant set-theoretic entropy} of $\lambda$ is $\mathfrak h^*(\lambda)=\mathfrak h^*(\lambda\restriction_{\mathrm{sc}(\lambda)})$.
\end{definition}

Obviously the correspondence $[X]^{<\omega}\to \N\cup\{\infty\}$ given by  $D \mapsto  \mathfrak h^* (\lambda, D)$ is monotone increasing. 

\begin{remark}\label{h=h*}  Let $X$ be a set and $\lambda:X\to X$ a bijection. Then 
\begin{itemize}
\item[(a)] $\mathfrak h^*(\lambda)=\mathfrak h(\lambda^{-1})$.
\item[(b)] Moreover $\mathfrak h^*(\lambda^{-1})=\mathfrak h^*(\lambda)$; indeed, in view of item (a) and Corollary \ref{Coro3}, $
\mathfrak h^*(\lambda^{-1})=\mathfrak h(\lambda)= \mathfrak h(\lambda^{-1})= \mathfrak h^*(\lambda).$  
\end{itemize}
\end{remark}

Here comes the leading example: 

\begin{example}[Shift normalization]\label{Ber_ContravarSet} 
\begin{itemize}
  \item[(a)] Define the \emph{left shift} $\varsigma: \N \to \N$ by $\varsigma(n) = n-1$ for every $n\in\N_+$ and $\varsigma(0)=0$.  Obviously ${\mathfrak h^*}(\varsigma,\{0\}) = {\mathfrak h^*}(\varsigma,\{0,1,\ldots , n\}) = 1$ for every $n\in \N$. Since every finite subset $D$ of $\N$ is contained in $\{0,1,\ldots , n\}$ for some $n\in\N$,  this proves that  $ {\mathfrak h^*}(\varsigma) = 1$.
  \item[(b)] For the right shift $\rho: \N \to \N$ defined in Example \ref{Ber_CovarSet} one has $ {\mathfrak h^*}(\rho) = 0$.
\end{itemize}
\end{example}

\begin{lemma}
Let $X$ be a set and $\lambda: X \to X$ a selfmap. 
\begin{itemize}  
\item[(a)] \emph{(Monotonicity for subsets)} If $Y$ is an  $\lambda$-invariant subset of $X$, then $\mathfrak h^*(\lambda) \geq \mathfrak h^*(\lambda\restriction_Y)$. 

If $Y$ is an inversely $\lambda$-invariant subset of $X$, then $\mathfrak h^*(\lambda) \geq \mathfrak h(\lambda\restriction_{X\setminus Y})$. 

\item[(b)] \emph{(Monotonicity for factors)} If $\psi:Y\to Y$ a selfmap and $\alpha:Y\to X$ is a surjective map such that $\alpha \circ \psi = \lambda \circ \alpha$, then $\mathfrak h^*(\lambda) \leq \mathfrak h^* (\psi)$. 

\item[(c)] \emph{(Invariance under conjugation)} If $Y$ is another set and $\alpha:X\to Y$ is a bijection, then ${\mathfrak h^*}(\lambda)={\mathfrak h^*}(\alpha\circ\lambda\circ\alpha^{-1})$.
\item[(d)] \emph{(Addition Theorem)} If $X=Y_1\cup Y_2$ with $Y_1, Y_2$ disjoint, $\lambda^{-1}$-invariant and $\lambda$-invariant, then $\mathfrak h^*(\lambda)=\mathfrak h^*(\lambda\restriction_{Y_1})+\mathfrak h^*(\lambda\restriction_{Y_2})$.
\end{itemize}
\end{lemma}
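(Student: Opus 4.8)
The plan is to handle all four items by the same mechanism: for a fixed $D\in[X]^{<\omega}$ we first compare the cardinalities $|\mathfrak T_n^*(\cdot,D)|$ \emph{for every} $n$, and only then pass to the $\limsup$ and to the supremum over $D$. The elementary observation used throughout is that, if $Z\subseteq X$ is $\lambda$-invariant and $D\subseteq Z$, then $(\lambda\restriction_Z)^{-1}(D)=\lambda^{-1}(D)\cap Z\subseteq\lambda^{-1}(D)$, with equality as soon as $Z$ is also inversely $\lambda$-invariant (since then $\lambda^{-1}(D)\subseteq\lambda^{-1}(Z)\subseteq Z$); iterating, $\mathfrak T_n^*(\lambda\restriction_Z,D)\subseteq\mathfrak T_n^*(\lambda,D)$ always, and $\mathfrak T_n^*(\lambda\restriction_Z,D)=\mathfrak T_n^*(\lambda,D)$ when $Z$ is inversely $\lambda$-invariant. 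As the $\mathfrak h^*$ of a non-surjective map is by definition that of its restriction to $\mathrm{sc}(\lambda)=\bigcap_n\lambda^n(X)$, and as $\mathrm{sc}(\lambda\restriction_Y)\subseteq\mathrm{sc}(\lambda)$ for $\lambda$-invariant $Y$ (and $\mathrm{sc}(\lambda)=\mathrm{sc}(\lambda\restriction_{Y_1})\sqcup\mathrm{sc}(\lambda\restriction_{Y_2})$ under the hypotheses of (d)), we may assume $\lambda$ (and $\psi$ in (b)) surjective; this reduction is routine.

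\emph{Items (a) and (c).} For (a): if $Y$ is $\lambda$-invariant and $D\in[Y]^{<\omega}$, the inclusion above gives $|\mathfrak T_n^*(\lambda\restriction_Y,D)|\le|\mathfrak T_n^*(\lambda,D)|$ for all $n$, hence $\mathfrak h^*(\lambda\restriction_Y,D)\le\mathfrak h^*(\lambda,D)\le\mathfrak h^*(\lambda)$; taking the supremum over $D\in[Y]^{<\omega}$ yields $\mathfrak h^*(\lambda\restriction_Y)\le\mathfrak h^*(\lambda)$. If $Y$ is inversely $\lambda$-invariant then $X\setminus Y$ is $\lambda$-invariant, so the second assertion is the first applied to $X\setminus Y$. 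For (c): put $\mu=\alpha\circ\lambda\circ\alpha^{-1}$, so that $\alpha\circ\lambda=\mu\circ\alpha$ and $\alpha^{-1}\circ\mu=\lambda\circ\alpha^{-1}$; applying (b) to the surjection $\alpha\colon X\to Y$ gives $\mathfrak h^*(\mu)\le\mathfrak h^*(\lambda)$, and applying (b) to the surjection $\alpha^{-1}\colon Y\to X$ gives $\mathfrak h^*(\lambda)\le\mathfrak h^*(\mu)$, whence $\mathfrak h^*(\lambda)=\mathfrak h^*(\alpha\circ\lambda\circ\alpha^{-1})$. (Here $\alpha,\alpha^{-1}$ are bijections, so the delicate point of (b) does not arise.)

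\emph{Item (b).} From $\alpha\circ\psi=\lambda\circ\alpha$ one obtains by induction $\lambda^k\circ\alpha=\alpha\circ\psi^k$, hence $\alpha^{-1}(\lambda^{-k}(D))=\psi^{-k}(\alpha^{-1}(D))$ for every $k$, and therefore
$$\alpha^{-1}\bigl(\mathfrak T_n^*(\lambda,D)\bigr)=\mathfrak T_n^*\bigl(\psi,\alpha^{-1}(D)\bigr)\qquad (n\in\N_+).$$
Since $\alpha$ is surjective it maps the right-hand side onto $\mathfrak T_n^*(\lambda,D)$, so $|\mathfrak T_n^*(\lambda,D)|\le|\mathfrak T_n^*(\psi,\alpha^{-1}(D))|$ for all $n$. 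If $\alpha$ is finitely many-to-one — the natural hypothesis in this setting, parallel to the standing one on $\lambda$ and $\psi$ — then $\alpha^{-1}(D)\in[Y]^{<\omega}$, so dividing by $n$ and taking $\limsup$ gives $\mathfrak h^*(\lambda,D)\le\mathfrak h^*(\psi,\alpha^{-1}(D))\le\mathfrak h^*(\psi)$, and the supremum over $D\in[X]^{<\omega}$ finishes. I expect this finiteness point to be the main obstacle: when $\alpha$ is not finitely many-to-one, $\alpha^{-1}(D)$ is infinite and cannot simply be replaced by a finite subset $E$, since $\psi^{-k}(\alpha^{-1}(x))$ is only known to be contained in $\alpha^{-1}(\lambda^k x)$, not to meet a prescribed finite transversal of it; one must then either add finiteness of $\alpha$ to the hypotheses, or run an exhaustion $\alpha^{-1}(D)=\bigcup_j E_j$ (using that $\mathfrak T_n^*$ commutes with directed unions) and argue that the cardinalities remain controlled.

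\emph{Item (d).} For $D\in[X]^{<\omega}$ put $D_i=D\cap Y_i$. Preimages commute with unions, so $\mathfrak T_n^*(\lambda,D)=\mathfrak T_n^*(\lambda,D_1)\cup\mathfrak T_n^*(\lambda,D_2)$; inverse $\lambda$-invariance of $Y_i$ forces $\mathfrak T_n^*(\lambda,D_i)\subseteq Y_i$, so by disjointness of $Y_1,Y_2$ this union is disjoint, and by the equality case of the elementary observation $\mathfrak T_n^*(\lambda,D_i)=\mathfrak T_n^*(\lambda\restriction_{Y_i},D_i)$. Hence
$$|\mathfrak T_n^*(\lambda,D)|=|\mathfrak T_n^*(\lambda\restriction_{Y_1},D_1)|+|\mathfrak T_n^*(\lambda\restriction_{Y_2},D_2)|\qquad (n\in\N_+).$$
Dividing by $n$ and invoking Theorem \ref{limexists} (so that all three sequences converge), $\mathfrak h^*(\lambda,D)=\mathfrak h^*(\lambda\restriction_{Y_1},D_1)+\mathfrak h^*(\lambda\restriction_{Y_2},D_2)$. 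Taking the supremum over $D\in[X]^{<\omega}$ gives $\mathfrak h^*(\lambda)\le\mathfrak h^*(\lambda\restriction_{Y_1})+\mathfrak h^*(\lambda\restriction_{Y_2})$; conversely, for $D_i\in[Y_i]^{<\omega}$ the displayed identity applied to $D=D_1\cup D_2$ gives $\mathfrak h^*(\lambda)\ge\mathfrak h^*(\lambda\restriction_{Y_1},D_1)+\mathfrak h^*(\lambda\restriction_{Y_2},D_2)$, and taking suprema over $D_1$ and $D_2$ independently yields the reverse inequality, hence equality. (If one wishes to avoid Theorem \ref{limexists}, the inequality $\le$ still follows from subadditivity of $\limsup$, but the reverse inequality genuinely requires the limits to exist, since $\limsup(a_n+b_n)$ can be strictly smaller than $\limsup a_n+\limsup b_n$.)
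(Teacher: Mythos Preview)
The paper states this lemma without proof (the analogous covariant Lemma~\ref{Properies0} is likewise only cited from \cite{FD}), so there is no argument to compare against. Your direct approach --- comparing $|\mathfrak T_n^*(\cdot,D)|$ pointwise in $n$ before passing to the $\limsup$ and the supremum --- is the natural one and works cleanly for (a), (c), and (d); the reduction to surjective maps via $\mathrm{sc}(\lambda\restriction_Y)\subseteq\mathrm{sc}(\lambda)$ and $\mathrm{sc}(\lambda)=\mathrm{sc}(\lambda\restriction_{Y_1})\sqcup\mathrm{sc}(\lambda\restriction_{Y_2})$ is indeed routine.

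Two remarks on the points you flag yourself.

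For (b), your concern is well placed and honest: as written, $\alpha$ is only assumed surjective, so $\alpha^{-1}(D)$ need not lie in $[Y]^{<\omega}$, and the pointwise inequality $|\mathfrak T_n^*(\lambda,D)|\le|\mathfrak T_n^*(\psi,\alpha^{-1}(D))|$ cannot be fed into the definition of $\mathfrak h^*(\psi)$. Adding the hypothesis that $\alpha$ is finitely many-to-one is the clean fix, and is consistent with the ambient category $\mathfrak S$ of finitely many-to-one maps discussed immediately after this lemma in the paper. The exhaustion $\alpha^{-1}(D)=\bigcup_j E_j$ you sketch does not obviously close the gap, for precisely the reason you give: nothing forces $\psi^{-k}$ of a finite transversal to cover $\alpha^{-1}(\lambda^{-k}(D))$.

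For (d), your appeal to Proposition~\ref{limexists} is a forward reference but not a circular one: that proposition is proved via Lemmas~\ref{red-notper}, \ref{no-rec}, \ref{h*:E->x} and Corollary~\ref{max-a-chain}, none of which use the present lemma. Your parenthetical diagnosis is exactly right --- the inequality $\mathfrak h^*(\lambda)\le\mathfrak h^*(\lambda\restriction_{Y_1})+\mathfrak h^*(\lambda\restriction_{Y_2})$ survives on $\limsup$ subadditivity alone, but the reverse inequality genuinely needs the limits to exist, since in general $\limsup(a_n+b_n)\ge\max(\limsup a_n,\limsup b_n)$ is all one gets.
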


Here we give an example showing that the obvious counterpart of the continuity axiom valid for $\mathfrak h$ fails for $\mathfrak h^*$.

\begin{example}\label{Anti-Cont} 
Let $\varrho: \Z\to \Z$ be the \emph{two sided shift} defined by $\varrho(n) = n+1$ for every $n\in\Z$. Then for every $n\in \Z$ the subset $X_n = \{m\in \Z: m\geq n\}$ is $\varrho$-invariant, $\mathfrak h^* (\varrho\restriction _{X_n}) =0$ by Example \ref{Ber_ContravarSet}(b) and  $\Z = \bigcup_{n\in\Z} X_n$. Nevertheless $\mathfrak h^* (\varrho) = 1 > \sup_{n\in\Z} \mathfrak h^* (\varrho\restriction _{X_n}) =0$. 
\end{example}

The next example shows that in the category $\mathfrak S$ of sets and finitely many-to-one surjective maps the inverse limit may not exist.

\begin{example}
Let $\{(Y_n,f_n):n\in\N\}$ be the inverse system of finite sets underlying an infinite binary tree. In other words, 
$Y_n$ is the set of all functions $n \to 2 = \{0,1\}$ and $f_n:Y_{n+1}\to Y_n$ is the restriction
(i.e.,  $Y_0=\{r\}$, $Y_n$ is the set of all binary $n$-tuples for $n>0$, $f_0:Y_1\to Y_0$ is the constant map $r$
and $f_n(y0)=f_n(y1)=y$ for every $y\in Y_n$ and every $n\in\N$).
\begin{equation*}
\xymatrix@R-2pc{
&&& \cdots\ar[ld]\\
 & & 00\ar[ldd] \\
&&& \cdots\ar[lu]\\
& 0 \ar[ldddd] & \\
&&&  \cdots\ar[ld]\\
& & 01 \ar[luu]\\
&&& \cdots\ar[lu]\\
r & & \\
&&& \cdots\ar[ld]\\
 & & 10\ar[ldd] \\
&&& \cdots\ar[lu]\\
& 1\ar[luuuu] & \\
&&& \cdots\ar[ld]\\
 & & 11\ar[luu]\\
&&& \cdots\ar[lu]
}
\end{equation*}
If the inverse limit $Z$ of this inverse system would exist in the category $\mathfrak S$, then $Z$ should be infinite, as the projections $Z\to Y_n$ would have to be surjective. Since a surjective map from an infinite set to a finite set is never finitely many-to one, 
such a $Z$ does not exist.
\end{example}

We give now several reductions for the computation of the contravariant set-theoretic entropy. 

\medskip
For a finitely many-to-one selfmap $\lambda:X\to X$ let $$P_*(\lambda,X)= \uparrow\!\{x\in P(\lambda,X): |\uparrow\!x|<\infty\}.$$ Obviously, $P_*(\lambda,X)$ is both $\lambda$-invariant and inversely  $\lambda$-invariant. Moreover 
\begin{equation}\label{P*}
\mathfrak h^*(\lambda) = \mathfrak h^*(\lambda\restriction_{X\setminus P_*(\lambda,X)} ).  
\end{equation}

\begin{remark}
Let $X$ be a set and $\lambda:X\to X$ a finitely many-to-one selfmap. Consider the partition $X=\bigcup_{j\in I}E_j$ from \eqref{(P)}.
Then $$P_*(\lambda,X)=\bigcup\{E_j:j\in I, E_j\ \text{finite}\}.$$
Moreover $\mathrm{sc}(\lambda) \cap P_*(\lambda,X)=\{x\in \mathrm{Per}(\lambda,X):\uparrow\! x\ \text{is finite}\}$.
\end{remark}

\begin{lemma}\label{sabato} 
Let $X$ be a set and $\lambda:X\to X$ a finitely many-to-one surjective selfmap. If $x\in X$, then $\mathfrak h^*(\lambda,\{x\}) > 0$ if and only if $\uparrow\! x$ is infinite. 
\end{lemma}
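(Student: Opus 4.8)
The plan is to prove the two implications separately, working with a single point $D = \{x\}$, where $\mathfrak T_n^*(\lambda,\{x\}) = \{x\} \cup \lambda^{-1}(x) \cup \cdots \cup \lambda^{-n+1}(x)$. First I would dispose of the easy direction: if $\uparrow\! x$ is finite, then $\mathfrak T_n^*(\lambda,\{x\}) \subseteq \uparrow\! x$ for all $n$, so $|\mathfrak T_n^*(\lambda,\{x\})|$ is bounded by the constant $|\!\uparrow\! x|$, whence $\mathfrak h^*(\lambda,\{x\}) = \limsup_n |\mathfrak T_n^*(\lambda,\{x\})|/n = 0$. (One should keep in mind the standing reduction to the surjective core and the formula \eqref{P*}; here we may as well assume $\lambda$ is surjective, since $x \in \mathrm{sc}(\lambda)$ is forced once $\uparrow\! x$ is infinite and we are computing $\mathfrak h^*$ of the restriction to the core.)

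For the nontrivial direction, suppose $\uparrow\! x = \bigcup_{n\in\N}\lambda^{-n}(x)$ is infinite. Since $\lambda$ is finitely many-to-one, each ``level'' $L_k := \lambda^{-k}(x) \setminus \lambda^{-(k-1)}(x)\cdots$ — more precisely, each set $\lambda^{-k}(x)$ — is finite, so infinitude of the union forces the cotrajectory to keep growing: for every $N$ there is $k$ with $|\mathfrak T_k^*(\lambda,\{x\})| \geq N$. The key step is to upgrade this to a \emph{linear} lower bound on $|\mathfrak T_n^*(\lambda,\{x\})|$. Here I would use the tree/forest structure of $\uparrow\! x$ under $\lambda$: consider the directed graph on $\uparrow\! x$ with an edge $y \to \lambda(y)$; since $\lambda$ is finitely many-to-one this has finite in-degrees, and $\uparrow\! x$ is exactly the set of vertices from which $x$ is reachable. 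An infinite, finitely-branching such structure contains, by König's Lemma, an infinite path $x = y_0, y_1, y_2, \ldots$ with $\lambda(y_{i+1}) = y_i$ and all $y_i$ distinct; equivalently $y_i \in \lambda^{-i}(x)$ for each $i$, and hence $\{y_0,\ldots,y_{n-1}\} \subseteq \mathfrak T_n^*(\lambda,\{x\})$, giving $|\mathfrak T_n^*(\lambda,\{x\})| \geq n$ and therefore $\mathfrak h^*(\lambda,\{x\}) \geq 1 > 0$.

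I expect the main obstacle to be the application of König's Lemma, namely verifying that the relevant graph really is finitely branching in the direction needed. The point is subtle: we want an infinite backward path, i.e.\ at each stage we must pick a preimage, and a priori some preimages lead to finite subtrees. The standard König argument handles this — prune all vertices with only finitely many descendants under iterated preimages; since $\lambda$ is finitely many-to-one the pruned graph is still finitely branching, and it is nonempty precisely because $\uparrow\! x$ is infinite, so it contains an infinite backward path. One must also be slightly careful that the $y_i$ are pairwise distinct: if $y_i = y_j$ with $i < j$ then $x = \lambda^i(y_i) = \lambda^i(y_j)$ while also $x = \lambda^j(y_j) = \lambda^{j-i}(x)$, so $x$ would be periodic with $\uparrow\! x$ reached within the finite set $\{y_0,\ldots,y_{j-1}\}$ together with the finitely many preimages below — contradicting $|\!\uparrow\! x| = \infty$; so distinctness is automatic in the infinite case. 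Finally, combined with the obvious bound $\mathfrak h^*(\lambda,\{x\}) \leq |\{x\}| $-type estimates, one sees in fact $\mathfrak h^*(\lambda,\{x\}) = 1$ whenever $\uparrow\! x$ is infinite, though the lemma only asks for positivity.
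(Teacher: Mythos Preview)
The overall strategy is right --- an infinite backward path through $x$ gives $|\mathfrak T_n^*(\lambda,\{x\})| \geq n$ and hence $\mathfrak h^*(\lambda,\{x\}) \geq 1$ --- but your argument for \emph{distinctness} of the $y_i$ is broken. You deduce correctly that $y_i = y_j$ forces $x$ to be periodic, but then claim this makes $\uparrow\! x$ finite, which is false: in Example~\ref{Ber_ContravarSet} the fixed point $0$ of the left shift $\varsigma$ has $\uparrow\! 0 = \N$. Your pruning version of K\"onig does not save this: in that very example the pruned graph is all of $\N$, and the backward path $0,0,0,\ldots$ is a perfectly legal output.

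The fix is to observe that the periodic points inside $\uparrow\! x$ are exactly the (finite) periodic orbit of $x$ when $x$ is periodic, and there are none otherwise; since $\uparrow\! x$ is infinite you may therefore pick a \emph{non}-periodic $z \in \uparrow\! x$, say with $\lambda^k(z) = x$. Surjectivity alone (no K\"onig needed) produces $z = w_0, w_1, w_2, \ldots$ with $\lambda(w_{i+1}) = w_i$, and now your distinctness argument goes through: a repetition $w_i = w_j$ would make $w_i$ periodic, hence $z = \lambda^i(w_i)$ periodic. Since $w_i \in \lambda^{-(i+k)}(x)$, one gets $|\mathfrak T_{n+k}^*(\lambda,\{x\})| \geq n$ and the conclusion follows. (The paper states Lemma~\ref{sabato} without proof, so there is nothing to compare against; this is simply the natural repair of your argument.)

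Two smaller points. First, K\"onig's Lemma is genuinely unnecessary: surjectivity already hands you the backward path, so the pruning discussion can be dropped. Second, your closing remark that in fact $\mathfrak h^*(\lambda,\{x\}) = 1$ is wrong: there is no analogue of the bound $\mathfrak h(\lambda,D) \leq |D|$ for $\mathfrak h^*$, and indeed $\mathfrak h^*(\lambda,\{x\})$ can be any value in $\N_+ \cup \{\infty\}$ --- for instance $\infty$ whenever there are infinitely many ramification points over $x$ (cf.\ Proposition~\ref{ram-rec}), or more generally $|A|$ for a stratifiable antichain $A$ of $\{x\}$ (Corollary~\ref{max-a-chain}).
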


For a set $X$ and a finitely many-to-one selfmap $\lambda:X\to X$, let ${\bf P}(\lambda,X)$ be the greatest $\lambda$-invariant and inversely $\lambda$-invariant subset of $X$ such that $\mathfrak h^*(\lambda \restriction_{{\bf P}(\lambda,X) })=0$. The following theorem shows in particular the existence of such subset.

\begin{theorem}\label{setPinsker*}
Let $X$ be a set and $\lambda: X \to X$ a finitely many-to-one selfmap. Then:
\begin{itemize}
\item[(a)] $\mathfrak h^*(\lambda)=0$ if and only if $\mathrm{sc}(\lambda)=\mathrm{Per}(\lambda,X)$;
\item[(b)] ${\bf P}(\lambda,X) = X \setminus \bigcup\{\uparrow\! x: x\in X, \uparrow\! x \mbox{ infinite}\} = \{x\in X: \uparrow\! \lambda^n(x) \mbox{ is finite for all }n\in \N\}$.
\end{itemize}
\end{theorem}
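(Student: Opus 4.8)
The plan is to establish (a) first and then bootstrap it into (b). For (a), I would reduce to the surjective case: since $\mathfrak h^*(\lambda)=\mathfrak h^*(\lambda\restriction_{\mathrm{sc}(\lambda)})$ by definition, and $\mathrm{Per}(\lambda,X)\subseteq\mathrm{sc}(\lambda)$ always holds (a periodic point lies in $\lambda^{m}(X)$ for every $m$), it suffices to prove, for a \emph{surjective} finitely many-to-one $\mu$, that $\mathfrak h^*(\mu)=0$ iff every point is periodic. If every point is periodic, then for any $z$ one has $\uparrow\! z\subseteq\{z,\mu(z),\mu^2(z),\dots\}$: indeed $\mu^m(w)=z$ together with $\mu^p(w)=w$ ($p\geq 1$) gives $w=\mu^{cp-m}(z)$ for any multiple $cp\geq m$. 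Hence $\uparrow\! z$ is finite, and since $\mathfrak T_n^*(\mu,D)\subseteq\uparrow\! D=\bigcup_{z\in D}\uparrow\! z$ is bounded for every finite $D$, we get $\mathfrak h^*(\mu)=0$ (alternatively invoke Lemma \ref{sabato}). Conversely, if some $x$ is not periodic, surjectivity lets me choose a backward chain $x=x_0,x_1,x_2,\dots$ with $\mu(x_{i+1})=x_i$; these are pairwise distinct (a coincidence $x_i=x_{i'}$, $i<i'$, would make $x_{i'}$ periodic and hence $x=\mu^{i'}(x_{i'})$ periodic), so $\uparrow\! x$ is infinite and Lemma \ref{sabato} yields $\mathfrak h^*(\mu,\{x\})>0$.

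For (b), the equality $A=B$ of the two displayed sets is a direct unravelling of the definitions: $x\notin B$ means $\uparrow\!\lambda^n(x)$ is infinite for some $n$, and since $x\in\lambda^{-n}(\lambda^n(x))\subseteq\uparrow\!\lambda^n(x)$ this is precisely $x\notin A$; conversely $x\in\uparrow\! y$ with $\uparrow\! y$ infinite forces $\lambda^n(x)=y$ for some $n$, so $\uparrow\!\lambda^n(x)=\uparrow\! y$ is infinite. Next I would verify that $A$ is $\lambda$-invariant and inversely $\lambda$-invariant; with the $B$-description this is immediate from $\lambda^n(\lambda(x))=\lambda^{n+1}(x)$ and from the inclusion $\uparrow\! w\subseteq\uparrow\!\lambda(w)$. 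To get $\mathfrak h^*(\lambda\restriction_A)=0$ I apply part (a): it suffices to show $\mathrm{sc}(\lambda\restriction_A)\subseteq\mathrm{Per}(\lambda,X)$, and if $x\in\mathrm{sc}(\lambda\restriction_A)=\bigcap_k\lambda^k(A)$ then for each $k$ there is $z_k\in A$ with $\lambda^k(z_k)=x$, all the $z_k$ lying in the finite set $\uparrow\! x$; pigeonhole gives $z_{k_1}=z_{k_2}$ for some $k_1<k_2$, whence $\lambda^{k_2-k_1}(x)=x$.

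The heart of the argument is the maximality claim: every $\lambda$-invariant, inversely $\lambda$-invariant $P$ with $\mathfrak h^*(\lambda\restriction_P)=0$ is contained in $A=B$. Since $P$ is $\lambda$-invariant it is enough to show $\uparrow\! y$ is finite for each $y\in P$ (then apply this to $y=\lambda^n(x)$), and by inverse invariance $\uparrow\! y\subseteq P$. Assuming $\uparrow\! y$ infinite, I stratify it by minimal depth $d(z)=\min\{m:\lambda^m(z)=y\}$ into strata $V_m$; each $V_m\subseteq\lambda^{-m}(y)$ is finite, $V_m\neq\emptyset$ forces $V_{m-1}\neq\emptyset$ (because $d(z)=m$ implies $d(\lambda(z))=m-1$), so all $V_m$ are nonempty, and $z\mapsto\lambda(z)$ sends $V_m$ into $V_{m-1}$, making $\uparrow\! y$ a finitely branching tree rooted at $y$ with every level nonempty. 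König's Lemma then yields pairwise distinct $u_0=y,u_1,u_2,\dots\in\uparrow\! y\subseteq P$ with $\lambda(u_{m+1})=u_m$. As $\lambda^k(u_{m+k})=u_m$ with $u_{m+k}\in P$, every $u_m$ lies in $\bigcap_k\lambda^k(P)=\mathrm{sc}(\lambda\restriction_P)$, which by part (a) equals $\mathrm{Per}(\lambda\restriction_P,P)\subseteq\mathrm{Per}(\lambda,X)$; hence all $u_m$ are periodic. But then an induction (using $\lambda(u_{m+1})=u_m$ and periodicity of $u_{m+1}$) shows every $u_m$ lies in the finite forward orbit of the periodic point $y$, contradicting that the $u_m$ are infinitely many distinct points. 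I expect this König's-Lemma step — extracting a genuinely non-collapsing infinite backward chain from an infinite up-set of a finitely many-to-one map, rather than merely an infinite subset — to be the main technical obstacle; once part (a) is in hand the rest is routine bookkeeping, and it also gives the existence of $\mathbf P(\lambda,X)$ as a by-product.
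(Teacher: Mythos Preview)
Your proof is correct. Part (a) follows the paper's line exactly (reduce to the surjective core, then use Lemma \ref{sabato} in both directions), with more detail supplied.

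For part (b), you diverge from the paper in the maximality step. The paper dispatches it in one stroke: given $Y$ with $\mathfrak h^*(\lambda\restriction_Y)=0$ and $y\in Y$, the inverse invariance gives $\uparrow\!\lambda^n(y)\subseteq Y$, so $\mathfrak h^*(\lambda\restriction_Y,\{\lambda^n(y)\})=0$, and Lemma \ref{sabato} immediately forces $\uparrow\!\lambda^n(y)$ to be finite. You instead bootstrap part (a): from $\mathfrak h^*(\lambda\restriction_P)=0$ you conclude $\mathrm{sc}(\lambda\restriction_P)=\mathrm{Per}(\lambda\restriction_P,P)$, then use K\"onig's Lemma to extract an infinite backward chain inside $\uparrow\! y\subseteq P$, observe the chain lies in $\mathrm{sc}(\lambda\restriction_P)$, hence consists of periodic points, and derive a contradiction since they would all fall in the finite forward orbit of $y$. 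This is longer, but it buys you two things: it sidesteps the surjectivity hypothesis in Lemma \ref{sabato} (which the paper applies somewhat loosely to the non-surjective map $\lambda\restriction_Y$), and it makes the argument rest only on part (a), which is already stated for arbitrary finitely many-to-one maps. You also explicitly verify $\mathfrak h^*(\lambda\restriction_A)=0$ via the pigeonhole argument on $\mathrm{sc}(\lambda\restriction_A)$, a step the paper leaves implicit. The paper's route is shorter and conceptually cleaner once one accepts Lemma \ref{sabato} in the needed generality; yours is more self-contained.
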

\begin{proof} 
(a) If $\mathrm{sc}(\lambda)=\mathrm{Per}(\lambda,X)$, then for every non-empty finite subset $D$ of $\mathrm{sc}(\lambda)$, the cotrajectory $\mathfrak T^*(\lambda,D)$ of $D$ under $\lambda$ is finite. Hence $\mathfrak h^*(\lambda)=0$. Now assume that $\mathfrak h^*(\lambda)=0$. Since always $\mathrm{Per}(\lambda,X)\subseteq \mathrm{sc}(X)$, it remains to check that $\mathrm{Per}(\lambda,X)\supseteq \mathrm{sc}(\lambda)$. Assume that there exists $x\in \mathrm{sc}(\lambda)$ with $x\not \in \mathrm{Per}(\lambda,X)$. Then $\uparrow\! x$ is infinite, so $\mathfrak h^*(\lambda,\{x\}) > 0$ by Lemma \ref{sabato}, a contradiction. 

\smallskip
(b) Let for convenience
$$U = \bigcup\{\uparrow\! x: x\in X, \uparrow\! x\ \text{infinite}\}\ \text{and}\ V = \{x\in X: \uparrow\! \lambda^n(x) \mbox{ is finite for all }n\in \N\}.$$
Note that $y \in U$ precisely when there exists $n\in\N$ such that $\uparrow\! \lambda^n(y)$ is infinite, that is, $y \not \in V$.
This proves that $y \in U$ precisely when $y \not \in V$, i.e., the desired equality $X \setminus U = V$ holds.

The set $U$ is inversely $\lambda$-invariant, so that $X \setminus U$ is $\lambda$-invariant. On the other hand, $V$ is inversely $\lambda$-invariant.
Now assume that $Y\subseteq X $ is $\lambda$-invariant, and inversely $\f$-invariant, and that $\mathfrak h^*(\lambda \restriction_{Y})=0$. Pick $y\in Y$. Then $z = \lambda^n(y) \in Y$ and $\uparrow\! \lambda^n(y) \subseteq Y$ for every $n\in \N$. Hence $\mathfrak h^*(\lambda, \{\lambda^n(y)\}) = 0$. By Lemma \ref{sabato}, $\uparrow\!\lambda^n(y)$ is finite for every $n\in\N$. Therefore $y\in V$, and hence $Y \subseteq V$. This proves that $\mathbf P(\lambda,X)= X\setminus U=V$.
\end{proof}

\begin{remark} 
\begin{itemize}
\item[(a)] Example \ref{Anti-Cont} shows that, if in the definition of ${\bf P}(\lambda,X)$ we ask this set to be {\em only} $\lambda$-invariant, then such a  subset need not exist even in the case of the bijection $\varrho: \Z\to \Z$. 
\item[(b)] Note that $P_*(\lambda,X)\subseteq \mathbf P(\lambda,X)$. Moreover $\mathbf P(\lambda,X)\setminus P_*(\lambda,X)$ coincides with the union of those infinite classes $E_j$ in \eqref{(P)} that miss $\mathrm{sc}(\lambda)$ (this must be compared with item (a)). 
\end{itemize}
\end{remark}
 
In view of item (b) of the above remark and \eqref{P*}, we may assume without loss of generality that $P_*(\lambda,X)=\emptyset$ in the sequel. 
 
\medskip
We start from the following reduction to finite subsets containing no periodic point.

\begin{lemma}\label{red-notper}
Let $X$ be a set and $\lambda:X\to X$ a finitely many-to-one surjective selfmap. For every $D\in[X]^{<\omega}$ with $\mathfrak h^*(\lambda,D)>0$  there exists $E\in[X]^{<\omega}$ such that $E\subseteq \uparrow\! D$, $E\subseteq X\setminus \mathrm{Per}(\lambda,X)$ and there exists $n_0\in\N_+$ with $$\mathfrak T^*_{n-n_0}(\lambda,E)  \subseteq \mathfrak T^*_{n}(\lambda, D)\subseteq \mathfrak T^*_{n}(\lambda,E) \cup D_E\ \text{for all}\ n \in\N,n\geq n_0,$$
 where $D_E=\bigcup_{n\in\N_+}\lambda^n(E)\cap \uparrow\! D$. Therefore (as $D_E$ is finite), $\mathfrak h^*(\lambda,D)=\mathfrak h^*(\lambda,E)$.
\end{lemma}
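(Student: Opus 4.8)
The plan is to construct $E$ explicitly as the set of minimal points of $\uparrow\! D$ lying outside $\mathrm{Per}(\lambda,X)$, together with a finite set of periodic points enough to "generate" the periodic part of $\uparrow\! D$, and then to track carefully how the cotrajectories of $D$ and of $E$ differ. First I would observe that $\uparrow\! D = \mathfrak T^*(\lambda,D)$, and since $\lambda$ is finitely many-to-one, $\uparrow\! D$ is the increasing union of the finite sets $\mathfrak T^*_n(\lambda,D)$. The key structural fact is that the hypothesis $\mathfrak h^*(\lambda,D)>0$ forces (via Lemma \ref{sabato} applied to the points of $D$, or rather its contrapositive) that at least one point $x$ of $D$ has $\uparrow\! x$ infinite; such an $x$ cannot be periodic, because a periodic point $x$ with $\uparrow\! x$ infinite would lie in $P_*(\lambda,X)$, which we have assumed empty. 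More generally, under the standing assumption $P_*(\lambda,X)=\emptyset$, every periodic point has finite up-set, so the "periodic contribution" to cotrajectories is uniformly bounded.

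Next I would define $E$. Let $E_1 = \F(\uparrow\! D)\setminus \mathrm{Per}(\lambda,X)$ be the non-periodic minimal points of $\uparrow\! D$; this is finite since $\F(\uparrow\! D) \subseteq \F(D) \subseteq D$... more precisely one shows $\F(\uparrow\! D)\subseteq D$ because any minimal point of $\uparrow\! D$ must already be a non-recurrent point of $D$. The delicate point is that $\uparrow\! E_1$ need not recover all of $\uparrow\! D$: there can be periodic points (and their up-sets) sitting below points of $D$ that are not reached from $E_1$. So I would augment $E_1$ by a finite set $E_2$ of periodic points: for each $d\in D$ whose "$\leq_\lambda$-history" eventually enters $\mathrm{Per}(\lambda,X)$, pick the (unique up to the cycle) periodic point $p_d \le_\lambda d$, and set $E_2 = \{p_d : d\in D\}$, $E = E_1 \cup E_2$. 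Then $E\subseteq \uparrow\! D$ by construction, and I claim $\uparrow\! E = \uparrow\! D$: the inclusion $\subseteq$ is clear, and for $\supseteq$ one takes $y\in \uparrow\! D$, looks at the chain $y \ge_\lambda \lambda(y') \ge_\lambda \cdots$ descending toward $D$ and below; either it hits a non-periodic minimal point (hence an element of $E_1$) or it descends into a cycle, and since that cycle is reachable from some $d\in D$ the corresponding $p_d\in E_2$ has $y \in \uparrow\! p_d$.

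The main obstacle — and where most of the work lies — is the quantitative comparison $\mathfrak T^*_{n-n_0}(\lambda,E) \subseteq \mathfrak T^*_n(\lambda,D) \subseteq \mathfrak T^*_n(\lambda,E)\cup D_E$. For the right-hand inclusion: given $z\in \lambda^{-k}(D)$ with $k<n$, either the descending chain from $z$ through $D$ reaches $E$ within boundedly many more steps (contributing to $\mathfrak T^*_n(\lambda,E)$ after bounding the extra depth by some $n_0$ depending only on $D$, $E$, via finiteness of $D$ and of the relevant periodic up-sets), or $z$ is "trapped above" a periodic point, in which case $z\in \uparrow\! E_2$ but at bounded distance, so $z$ lands in the finite correction set $D_E = \bigcup_{n\ge 1}\lambda^n(E)\cap \uparrow\! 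D$ — here I must check $D_E$ is genuinely finite, which again uses $P_*(\lambda,X)=\emptyset$ to bound the periodic up-sets and uses that only finitely many $\lambda^n(E)$ meet the region "between $E$ and $D$". For the left-hand inclusion one argues symmetrically: $E\subseteq \uparrow\! D$ means each $e\in E$ equals $\lambda^{m_e}(d_e)$ for some $d_e\in D$, so $\lambda^{-j}(e)\subseteq \lambda^{-(j+m_e)}(d_e) \subseteq \mathfrak T^*_{j+m_0}(\lambda,D)$ with $m_0 = \max_e m_e =: n_0$. Once these two inclusions are in hand, $\big||\mathfrak T^*_n(\lambda,D)| - |\mathfrak T^*_n(\lambda,E)|\big| \le |D_E| + (\text{const})$ for all large $n$, so dividing by $n$ and letting $n\to\infty$ gives $\mathfrak h^*(\lambda,D)=\mathfrak h^*(\lambda,E)$ — using that the $\limsup$ of $|\mathfrak T^*_n(\lambda,\cdot)|/n$ is unaffected by a bounded additive perturbation. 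I would present the bound $n_0$ explicitly as the maximum of the two depths arising above. The one subtlety to flag is making sure $E$ is finite and $E\subseteq X\setminus\mathrm{Per}(\lambda,X)$ simultaneously: the statement requires $E$ to contain no periodic points, so in fact I should not put $E_2$ inside $E$ but rather absorb the bounded periodic up-sets entirely into the correction term $D_E$ and into the constant $n_0$ — i.e. take $E = E_1$ only, accept that $\uparrow\! E_1$ may miss a finite "periodic halo," and verify that this missed halo is exactly (contained in) a finite set that can be folded into the $O(1)$ error, which is harmless for the density computation. That reconciliation of "$E$ has no periodic points" with "$\uparrow\! E$ still captures the growth of $\uparrow\! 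D$" is the real crux.
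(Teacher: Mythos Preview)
The paper states this lemma without proof, so there is no argument to compare against; I evaluate your proposal on its own merits.

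There is a genuine gap, and it stems from a misreading of the standing assumption. By definition $P_*(\lambda,X)=\uparrow\!\{x\in\mathrm{Per}(\lambda,X):|\uparrow\! x|<\infty\}$, so $P_*(\lambda,X)=\emptyset$ means that \emph{no} periodic point has finite up-set; equivalently, every periodic point has \emph{infinite} up-set. You assert the opposite (``every periodic point has finite up-set''), and this reversed reading drives the rest of the argument off course. In particular, your claim that a point $x$ with $\uparrow\! x$ infinite ``cannot be periodic'' is false in this setting, and the ``periodic halo'' you hope to fold into an $O(1)$ error term is in fact infinite.

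This makes your final choice $E=E_1=\F(\uparrow\! D)\setminus\mathrm{Per}(\lambda,X)$ fail outright. Take $D=\{d\}$ with $d$ periodic and $\uparrow\! d$ infinite (such points exist whenever $\mathrm{Per}(\lambda,X)\neq\emptyset$, precisely because $P_*(\lambda,X)=\emptyset$). Then $\mathfrak h^*(\lambda,D)>0$ by Lemma~\ref{sabato}, yet $\F(\uparrow\! D)=\emptyset$ (since $d\in\lambda^{-p}(\uparrow\! D)$ for $p$ the period of $d$), hence $E_1=\emptyset$ and $\mathfrak h^*(\lambda,E_1)=0$. The set $\uparrow\! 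D\setminus\uparrow\! E_1=\uparrow\! d$ is infinite, so it cannot be absorbed into any bounded correction.

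The fix is to go \emph{upward} from the periodic part of $D$ rather than downward. For each periodic $d\in D$ with cycle $C_d$, the set $\lambda^{-1}(C_d)\setminus C_d$ is finite (since $\lambda$ is finitely many-to-one), non-empty (else $\uparrow\! d=C_d$ would be finite, contradicting $P_*=\emptyset$), consists of non-periodic points, and lies inside $\uparrow\! d\subseteq\uparrow\! D$. Taking
\[
E=(D\setminus\mathrm{Per}(\lambda,X))\cup\bigcup_{d\in D\cap\mathrm{Per}(\lambda,X)}\bigl(\lambda^{-1}(C_d)\setminus C_d\bigr)
\]
gives a finite subset of $\uparrow\! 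D$ disjoint from $\mathrm{Per}(\lambda,X)$. With $n_0=\max_d|C_d|$ one obtains the left inclusion directly (each $e\in E$ satisfies $\lambda^{m_e}(e)\in D$ with $m_e\le n_0$), and the right inclusion by tracking, for each $y\in\lambda^{-k}(d)$, the first moment the chain $y,\lambda(y),\ldots$ leaves the non-periodic region and enters a cycle $C_d$; the step just before is in $E$, while points already on a cycle lie in $D_E$. Finiteness of $D_E$ then follows because for each $e\in E$ the forward orbit $\{\lambda^n(e):n\ge 1\}$ either is eventually periodic (hence finite as a set) or is an infinite sequence of distinct points, only finitely many of which can have a further iterate landing in the finite set $D$.
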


\begin{lemma}\label{no-rec} 
Let $X$ be a set, $\lambda:X\to X$ a finitely many-to-one surjective selfmap and $E\in[X]^{<\omega}$ with $E\cap \mathrm{Per}(\lambda,X)=\emptyset$.  Then:
\begin{itemize}
\item[(a)] $\uparrow\! E=\uparrow\!\F(E)$ and so $\F(E)\neq\emptyset$; 
\item[(b)] $E\subseteq \mathfrak T^*_m(\lambda,\F(E))$ for some $m\in\N_+$, hence $\mathfrak T^*_n(\lambda,\F(E)) \subseteq \mathfrak T^*_n(\lambda, E)\subseteq \mathfrak T^*_{n+m}(\lambda, \F(E))$ for every $n,m\in\N_+$. In particular $\mathfrak h^*(\lambda,\F(E))=\mathfrak h^*(\lambda,E)$.
\end{itemize}
\end{lemma}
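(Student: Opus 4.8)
The plan is to reduce both parts to a single elementary observation about forward orbits, the hypothesis $E\cap\mathrm{Per}(\lambda,X)=\emptyset$ being exactly what makes it work. First I would prove the \textbf{key claim}: for every $x\in E$ the set $S_x=\{k\in\N:\lambda^k(x)\in E\}$ is finite (it is nonempty since $0\in S_x$). Indeed, if $S_x$ were infinite, then since $E$ is finite the map $k\mapsto\lambda^k(x)$ would take some value twice on $S_x$, yielding $k_1<k_2$ in $S_x$ with $\lambda^{k_1}(x)=\lambda^{k_2}(x)$; the point $z:=\lambda^{k_1}(x)$ would then lie in $E$ and satisfy $\lambda^{k_2-k_1}(z)=z$, i.e. $z\in\mathrm{Per}(\lambda,X)\cap E=\emptyset$, a contradiction. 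Setting $m_x=\max S_x$, we get $\lambda^{m_x}(x)\in E$ while $\lambda^{m_x+j}(x)\notin E$ for all $j\in\N_+$, which by the defining formula $\F(E)=E\setminus\bigcup_{n\in\N_+}\lambda^{-n}(E)$ says precisely that $\lambda^{m_x}(x)\in\F(E)$.

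Part (a) then follows at once: $\uparrow\!\F(E)\subseteq\,\uparrow\! E$ because $\F(E)\subseteq E$, and conversely, given $y\in\,\uparrow\! E$ we pick $n$ with $\lambda^n(y)\in E$ and apply the key claim to $x=\lambda^n(y)$ to obtain $\lambda^{n+m_x}(y)\in\F(E)$, so $y\in\,\uparrow\!\F(E)$. Since $E$ is a nonempty finite set, $\lambda^{m_x}(x)\in\F(E)$ for any chosen $x\in E$ shows $\F(E)\neq\emptyset$.

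For part (b) I would take $m=1+\max_{x\in E}m_x$, which is finite because $E$ is finite; then each $x\in E$ lies in $\lambda^{-m_x}(\F(E))$ with $m_x\le m-1$, hence $E\subseteq\mathfrak T^*_m(\lambda,\F(E))$ (and a fortiori $E\subseteq\mathfrak T^*_{m'}(\lambda,\F(E))$ for every $m'\ge m$). The inclusion $\mathfrak T^*_n(\lambda,\F(E))\subseteq\mathfrak T^*_n(\lambda,E)$ is trivial from $\F(E)\subseteq E$. For $\mathfrak T^*_n(\lambda,E)\subseteq\mathfrak T^*_{n+m}(\lambda,\F(E))$ I would apply $\lambda^{-j}$ for $0\le j\le n-1$ to the inclusion $E\subseteq\mathfrak T^*_m(\lambda,\F(E))$ and use $\lambda^{-j}\bigl(\mathfrak T^*_m(\lambda,\F(E))\bigr)=\bigcup_{i=j}^{j+m-1}\lambda^{-i}(\F(E))\subseteq\mathfrak T^*_{n+m}(\lambda,\F(E))$, then take the union over $j$. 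Finally, as $\lambda$ is finitely many-to-one all these sets are finite, so passing to cardinalities in the chain of inclusions, dividing by $n$ and taking $\limsup_{n\to\infty}$ gives $\mathfrak h^*(\lambda,\F(E))\le\mathfrak h^*(\lambda,E)\le\mathfrak h^*(\lambda,\F(E))$, where the last inequality uses that $\limsup_n|\mathfrak T^*_{n+m}(\lambda,\F(E))|/n=\limsup_n|\mathfrak T^*_{n+m}(\lambda,\F(E))|/(n+m)=\mathfrak h^*(\lambda,\F(E))$ since $m$ is fixed.

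The only genuinely delicate point is the key claim, and within it the indispensability of $E\cap\mathrm{Per}(\lambda,X)=\emptyset$: if $E$ may meet the periodic points (e.g. if $E$ is a full periodic orbit) then $\F(E)$ can be empty and the statement fails, so one must make sure the pigeonhole argument really produces a periodic point lying \emph{inside} $E$. Everything after the claim is routine bookkeeping with the cotrajectories $\mathfrak T^*_n$.
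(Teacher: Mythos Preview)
Your proof is correct and follows essentially the same route as the paper's: both arguments rest on the inclusion $E\subseteq\,\uparrow\!\F(E)$, from which part (a) and the chain of inclusions in (b) follow formally. The paper simply declares this inclusion ``obvious'' and then writes the chain compactly as $\mathfrak T^*_n(\lambda,E)\subseteq\mathfrak T^*_n(\lambda,\mathfrak T^*_m(\lambda,\F(E)))\subseteq\mathfrak T^*_{n+m}(\lambda,\F(E))$, whereas you spell out the pigeonhole justification via the finiteness of $S_x=\{k\in\N:\lambda^k(x)\in E\}$ and track the explicit bound $m=1+\max_{x\in E}m_x$; this extra detail is welcome, since it is precisely here that the hypothesis $E\cap\mathrm{Per}(\lambda,X)=\emptyset$ is used.
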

\begin{proof} 
(a) Since obviously $E \subseteq\uparrow\!\F(E)$, we deduce that $\uparrow\! E=\uparrow\!\F(E)$ and $\F(E)\neq\emptyset$. 

\smallskip
(b) Let $m,n\in\N_+$. From the inclusion $E \subseteq \uparrow \! \F(E)$ and the fact that $E$ is finite, we have that $E\subseteq \mathfrak T^*_m(\lambda,\F(E))$ for some $m\in\N_+$. Hence 
\begin{equation}\label{Catena}
\mathfrak T^*_n(\lambda,\F(E)) \subseteq \mathfrak T^*_n(\lambda, E)\subseteq \mathfrak  T^*_n(\lambda, \mathfrak T^*_m(\lambda, \F(E)))\subseteq  \mathfrak T^*_{n+m} (\lambda, \F(E)).
\end{equation}
The last assertion follows from these inclusions and the definition of  $\mathfrak h^*$. 
\end{proof}

In this terminology Lemma \ref{red-notper} and Lemma \ref{no-rec} show that it suffices to consider finite antichains 
without periodic points for the computation of the contravariant set-theoretic entropy, in other words
\begin{equation}\label{antieq}
\mathfrak h^*(\lambda)=\sup\{\mathfrak h^*(\lambda,E): E\in[X]^{<\omega}, E\cap P(\lambda,X)=\emptyset, E\ \text{antichain}\}.
\end{equation}

\medskip
In the next lemma we collect the properties of finite antichains and in particular in item (d) we isolate a special property.

\begin{lemma}\label{h*:E->x} 
Let $X$ be a set, $\lambda:X\to X$ a finitely many-to-one surjective selfmap and $E\in[X]^{<\omega}$ an antichain with $E\cap \mathrm{Per}(\lambda,X)=\emptyset$. Then:
\begin{itemize}
 \item[(a)] for every $n\neq m$ in $\N_+$, $\lambda^{-n}(E)$ and $\lambda^{-m}(E)$ are disjoint antichains; in particular $\lambda^{-n}(E)\cap \mathfrak T_n^*(\lambda, E)=\emptyset$ and  $\uparrow \!  x \cap \uparrow \! y=\emptyset$ whenever $x,y\in E$ and $x\neq y$;
    \item[(b)] one has a partition $\mathfrak T_n^*(\lambda,E)=\stackrel{\circ}{\bigcup}_{x\in E} \mathfrak T_n^*(\lambda,\{x\})$ for every $n\in\N_+$;
   \item[(c)]  the sequence $\left\{\frac{|\mathfrak T^*_n(\lambda,E)|}{n}\right\}_{n\in\N_+}$ is increasing, hence convergent (or divergent to $\infty$); in particular
     \begin{itemize} 
        \item[(c$_1$)]  $|\mathfrak T_n^*(\lambda,E)|\geq  n|E|$ for every $n\in\N_+$;
        \item[(c$_2$)]  $\mathfrak h^*(\lambda,E)\geq |E|$;
        \item[(c$_3$)]  $\mathfrak h^*(\lambda, E)=\sum_{x\in E}\mathfrak h^*(\lambda, \{x\})$;
     \end{itemize}
      \item[(d)] for the finite antichain $E$, the following conditions are equivalent:  
     \begin{itemize} 
        \item[(d$_1$)]  $|\lambda^{-n}(E)|=|E|$ for every $n\in\N_+$;
        \item[(d$_2$)]  $|\mathfrak T_n^*(\lambda,E)|=n|E|$ for every $n\in\N_+$;
        \item[(d$_3$)]  $\mathfrak h^*(\lambda, E)=|E|$.
     \end{itemize}
\end{itemize}
\end{lemma}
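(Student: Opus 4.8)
**Proof proposal for Lemma \ref{h*:E->x}.**

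The plan is to establish the items in order, using the antichain hypothesis and the absence of periodic points as the key leverage at each stage. For item (a), I would first show that each $\lambda^{-n}(E)$ is an antichain: if $x, y \in \lambda^{-n}(E)$ were comparable, say $\lambda^k(x) = y$, then applying $\lambda^n$ gives two comparable points of $E$, contradicting that $E$ is an antichain (here one must check that $\lambda^n(x)$ and $\lambda^n(y)$ are genuinely distinct, which uses that $E$ contains no periodic point — otherwise $\lambda^k$ could act trivially on the $E$-level). For disjointness of $\lambda^{-n}(E)$ and $\lambda^{-m}(E)$ with $n < m$: if $z$ lay in both, then $\lambda^n(z) \in E$ and $\lambda^m(z) = \lambda^{m-n}(\lambda^n(z)) \in E$, so $E$ contains the comparable pair $\lambda^m(z) \leq \lambda^n(z)$; again using no-periodic-points these are distinct, contradicting the antichain property. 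The consequences $\lambda^{-n}(E) \cap \mathfrak T_n^*(\lambda,E) = \emptyset$ and $\uparrow\!x \cap \uparrow\!y = \emptyset$ for $x \neq y$ in $E$ then follow immediately, the latter because $\uparrow\!x = \bigcup_n \lambda^{-n}(\{x\})$ and a common point would give a comparability inside $E$.

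Item (b) is then essentially bookkeeping: $\mathfrak T_n^*(\lambda,E) = \bigcup_{j=0}^{n-1}\lambda^{-j}(E) = \bigcup_{j=0}^{n-1}\bigcup_{x\in E}\lambda^{-j}(\{x\}) = \bigcup_{x\in E}\mathfrak T_n^*(\lambda,\{x\})$, and the union over $x \in E$ is disjoint because $\mathfrak T_n^*(\lambda,\{x\}) \subseteq \uparrow\!x$ and the $\uparrow\!x$ are pairwise disjoint by (a). For item (c), monotonicity of $\{|\mathfrak T_n^*(\lambda,E)|/n\}$ is the crux: I would decompose $\mathfrak T_{n+1}^*(\lambda,E) = \mathfrak T_n^*(\lambda,E) \,\stackrel{\circ}{\cup}\, \lambda^{-n}(E)$ (disjointness from (a)), so $|\mathfrak T_{n+1}^*(\lambda,E)| = |\mathfrak T_n^*(\lambda,E)| + |\lambda^{-n}(E)|$; since $\lambda$ is surjective, $|\lambda^{-n}(E)| \geq |E|$, and a short induction gives $|\mathfrak T_n^*(\lambda,E)| \geq n|E|$ as well as $|\mathfrak T_n^*(\lambda,E)| = \sum_{j=0}^{n-1}|\lambda^{-j}(E)|$. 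The increasing-quotient claim is the statement that $|\mathfrak T_{n+1}^*|/(n+1) \geq |\mathfrak T_n^*|/n$, i.e. $n|\mathfrak T_{n+1}^*| \geq (n+1)|\mathfrak T_n^*|$, which rearranges to $n|\lambda^{-n}(E)| \geq |\mathfrak T_n^*(\lambda,E)|$; this needs the monotonicity of the fiber sizes $|\lambda^{-j}(E)|$ in $j$ — I would prove that $\lambda$ surjective forces $|\lambda^{-j}(E)| \leq |\lambda^{-(j+1)}(E)|$ (each point of $\lambda^{-j}(E)$ has a nonempty preimage, and these preimages are disjoint), whence $|\mathfrak T_n^*| = \sum_{j<n}|\lambda^{-j}(E)| \leq n|\lambda^{-n}(E)|$. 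Then (c$_1$)–(c$_3$) drop out: (c$_1$) from the induction, (c$_2$) by taking limits, and (c$_3$) by dividing the identity in (b) by $n$ and passing to the limit, each fiberwise limit existing by the same monotonicity argument applied to singletons.

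For item (d), the chain (d$_1$)$\Rightarrow$(d$_2$)$\Rightarrow$(d$_3$)$\Rightarrow$(d$_1$) is cleanest. If $|\lambda^{-n}(E)| = |E|$ for all $n$, then $|\mathfrak T_n^*(\lambda,E)| = \sum_{j<n}|\lambda^{-j}(E)| = n|E|$, giving (d$_2$); dividing by $n$ and taking the limit gives (d$_3$). Conversely, if $\mathfrak h^*(\lambda,E) = |E|$, then since $\{|\mathfrak T_n^*|/n\}$ is increasing with limit $|E|$ and $|\mathfrak T_1^*|/1 = |E|$, the sequence is constantly $|E|$, forcing $|\lambda^{-j}(E)| = |E|$ for every $j$, which is (d$_1$). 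The main obstacle I anticipate is the careful handling in item (a) of the interaction between comparability and the no-periodic-points hypothesis — one must be scrupulous that "$\lambda^k(x) \leq $" type relations really produce \emph{distinct} comparable points inside $E$ rather than collapsing to a single point, since the preorder $\leq_\lambda$ becomes a genuine order precisely when there are no periodic points beyond fixed points; everything downstream (disjointness of the $\uparrow\!x$, the partition in (b), and hence the monotonicity in (c)) rests on getting (a) exactly right.
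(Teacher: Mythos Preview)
Your proposal is correct and follows essentially the same route as the paper's proof: the paper dispatches (a) as ``easy to check'' and (d) as ``follows from (a) and (c)'', while for (c) it uses exactly your decomposition $|\mathfrak T^*_{n+1}(\lambda,E)| = |\mathfrak T^*_n(\lambda,E)| + |\lambda^{-n}(E)|$ together with the monotonicity $|\lambda^{-i}(E)| \leq |\lambda^{-j}(E)|$ for $i<j$ (your preimage argument) to deduce $(n+1)|\mathfrak T^*_n| \leq n|\mathfrak T^*_{n+1}|$. Your more explicit treatment of (a) and the cycle (d$_1$)$\Rightarrow$(d$_2$)$\Rightarrow$(d$_3$)$\Rightarrow$(d$_1$) simply fills in what the paper leaves to the reader.
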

\begin{proof} 
(a) is easy to check and (b) follows immediately from (a). 

\smallskip
(c) Item (b) gives $$|\mathfrak T^*_{n+1}(\lambda,E)| = \sum_{i=0}^n |\lambda^{-i}(E)|= |\mathfrak T^*_n(\lambda,E)| + |\lambda^{-n}(E)|.$$ Hence 
$$
(n+1)  |\mathfrak T^*_n(\lambda,E)| = n  |\mathfrak T^*_n(\lambda,E)| +  |\mathfrak T^*_n(\lambda,E)| \leq n( |\mathfrak T^*_n(\lambda,E)| +  |\lambda^{-n}(E)|) = n|\mathfrak T^*_{n+1}(\lambda,E)|,
$$
since 
\begin{equation}\label{Formula1}
 |\lambda^{-i}(E)| \leq  |\lambda^{-j}(E)|\ \text{for every}\ i,j\in\N, i < j.
\end{equation}
This proves the first assertion. 

(c$_1$) follows again from \eqref{Formula1}; indeed, it implies that $|E|\leq|\lambda^{-i}(E)|$ for every $i\in\N$, so $|\mathfrak T^*_n(\lambda,E)| = \sum_{i=0}^{n-1} |\lambda^{-i}(E)|\geq n|E|$.

(c$_2$) follows from (c$_1$). 

(c$_3$) follows from (b) and the fact that $\left\{\frac{|\mathfrak T^*_n(\lambda,E)|}{n}\right\}_{n\in\N_+}$, as well as $\left\{\frac{|\mathfrak T^*_n(\lambda,\{x\})|}{n}\right\}_{n\in\N_+}$, converges.

\smallskip
(d) follows from (a) and (c). 
 \end{proof}

\begin{definition} 
Let $X$ be a set, $\lambda:X\to X$ a finitely many-to-one surjective selfmap and $E,A\in[X]^{<\omega}$.
\begin{itemize} 
  \item[(a)] Call $E$ a {\em stratifiable antichain} if $E$ is an antichain satisfying the equivalent properties of Lemma \ref{h*:E->x}(d). 
  \item[(b)] Call $A$ a {\em stratifiable antichain of $E$} if $A$ is a stratifiable antichain and there exists $n_0\in\N_+$ such that 
\begin{equation}\label{33}
\mathfrak T^*_{n-n_0}(\lambda,A)  \subseteq \mathfrak T^*_{n}(\lambda, E)\subseteq \mathfrak T^*_{n}(\lambda,A) \cup D_A\ \text{for all}\ n \in \N,
n\geq n_0, 
\end{equation}
 where $D_A=\bigcup_{n\in\N_+}\lambda^n(A)\cap \uparrow\! E$. 
 \end{itemize} 
\end{definition}

Note that the first inclusion in item (b) yields $A \subseteq \mathfrak T^*_{n_0}(\lambda, E)$, so that $D_A=\bigcup_{n=0}^{n_0}\lambda^{n}(A)\cap \uparrow\! E$ is finite. 

\begin{corollary}\label{max-a-chain} Let $X$ be a set and $\lambda:X\to X$ a finitely many-to-one surjective selfmap. If $E\in[X]^{<\omega}$ and $E\cap P(\lambda,X)=\emptyset$, then $\mathfrak h^*(\lambda, E) < \infty$ if and only if there exists a stratifiable antichain $A$ of $E$; in such a case  $\mathfrak h^*(\lambda, E) = |A|=\mathfrak h^*(\lambda, A)$. 
\end{corollary}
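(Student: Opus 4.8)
The plan is to prove the two implications separately, extracting the displayed equalities from the easy one. For the easy direction, suppose $A$ is a stratifiable antichain of $E$, witnessed by some $n_0\in\N_+$ as in \eqref{33}. Being a stratifiable antichain, $A$ is a finite antichain with $A\cap\mathrm{Per}(\lambda,X)=\emptyset$ and, by Lemma \ref{h*:E->x}(d), $\mathfrak h^*(\lambda,A)=|A|$ and $|\mathfrak T^*_n(\lambda,A)|=n|A|$ for all $n$; moreover, as recorded right after the definition, the first inclusion in \eqref{33} forces $A\subseteq\mathfrak T^*_{n_0}(\lambda,E)$, so $D_A=\bigcup_{j=0}^{n_0}\lambda^j(A)\cap\uparrow\! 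E$ is finite. Dividing the chain \eqref{33} by $n$ and letting $n\to\infty$ then gives $|A|\le\liminf_n|\mathfrak T^*_n(\lambda,E)|/n$ and $\limsup_n|\mathfrak T^*_n(\lambda,E)|/n\le|A|$, whence $\mathfrak h^*(\lambda,E)=|A|=\mathfrak h^*(\lambda,A)<\infty$. So it remains only to produce such an $A$ under the hypothesis $\mathfrak h^*(\lambda,E)<\infty$.

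For the nontrivial direction, the first step is to pass from $E$ to $F:=\F(E)$. By Lemma \ref{no-rec}, $F$ is a non-empty finite antichain with $F\cap\mathrm{Per}(\lambda,X)=\emptyset$, $\uparrow\! F=\uparrow\! E$, $\mathfrak h^*(\lambda,F)=\mathfrak h^*(\lambda,E)<\infty$, and $E\subseteq\mathfrak T^*_m(\lambda,F)$ for some $m\in\N_+$. Since $F$ is a finite antichain without periodic points, Lemma \ref{h*:E->x}(a) gives that the sets $\lambda^{-j}(F)$, $j\in\N$, are pairwise disjoint, so $|\mathfrak T^*_n(\lambda,F)|=\sum_{j=0}^{n-1}|\lambda^{-j}(F)|$, and by \eqref{Formula1} the integers $c_j:=|\lambda^{-j}(F)|$ form a non-decreasing sequence, positive since $\lambda$ is surjective. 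The arithmetic core of the argument is the elementary observation that a non-decreasing sequence of positive integers whose Cesàro averages converge to a finite limit must be bounded, hence eventually constant: applied here, since those averages converge to $\mathfrak h^*(\lambda,F)<\infty$, we get $c_j=c$ for all $j\ge N_0$ and $\mathfrak h^*(\lambda,F)=c\in\N$.

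Next I would set $N:=\max\{N_0,\,m-1\}$ and $A:=\lambda^{-N}(F)$. By Lemma \ref{h*:E->x}(a), $A$ is an antichain; it has no periodic points (a $\lambda^N$-preimage of a non-periodic point is non-periodic), and $|\lambda^{-j}(A)|=|\lambda^{-(j+N)}(F)|=c=|A|$ for every $j$, so $A$ is a stratifiable antichain by Lemma \ref{h*:E->x}(d). To see $A$ is a stratifiable antichain \emph{of $E$}, I would verify \eqref{33} with $n_0:=N$: using surjectivity of $\lambda$ in the form $\lambda^k(\lambda^{-k}(S))=S$ one gets $\lambda^{-j}(F)=\lambda^{N-j}(A)$ for $0\le j\le N$, hence $\mathfrak T^*_{n-N}(\lambda,A)=\bigcup_{j=N}^{n-1}\lambda^{-j}(F)\subseteq\mathfrak T^*_n(\lambda,F)\subseteq\mathfrak T^*_n(\lambda,E)$ (the last inclusion because $F\subseteq E$), which is the first inclusion. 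For the second, given $x\in\lambda^{-j}(E)$ with $0\le j\le n-1$, choose $0\le k\le m-1$ with $\lambda^{j+k}(x)\in F$ (possible as $E\subseteq\mathfrak T^*_m(\lambda,F)$), i.e. $x\in\lambda^{-(j+k)}(F)$: if $j+k<N$ then $x\in\lambda^{N-j-k}(A)\cap\uparrow\! E\subseteq D_A$, and if $j+k\ge N$ then $x\in\lambda^{-(j+k-N)}(A)\subseteq\mathfrak T^*_n(\lambda,A)$, since $j+k-N\le(n-1)+(m-1)-N\le n-1$ by the choice $N\ge m-1$. This yields \eqref{33}, so $A$ is a stratifiable antichain of $E$, and the displayed equalities follow from the first paragraph. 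The step I expect to be the main obstacle is precisely this last one: turning the stratifiable antichain manufactured inside $F=\F(E)$ into a stratifiable antichain of the original set $E$, which is why $N$ must be taken large enough (at least $m-1$) and why one has to track surjectivity carefully when comparing the cotrajectories of $E$, of $F$ and of $A$.
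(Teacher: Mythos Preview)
Your proof is correct and follows essentially the same route as the paper's: pass to $F=\F(E)$ via Lemma~\ref{no-rec}, observe that the non-decreasing integer sequence $c_j=|\lambda^{-j}(F)|$ has Ces\`aro means converging to $\mathfrak h^*(\lambda,F)<\infty$ and is therefore eventually constant, and then take $A=\lambda^{-N}(F)$ for $N$ large. The paper's proof is terser---it simply chooses $n_0$ at which the sequence stabilizes, sets $A=\lambda^{-n_0}(\F(E))$, and asserts that $A$ is a stratifiable antichain of $E$ without verifying the two inclusions in~\eqref{33}; your explicit choice $N\ge m-1$ (with $E\subseteq\mathfrak T^*_m(\lambda,F)$) and the case split on $j+k$ supply precisely the details the paper omits.
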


\begin{proof} If $E$ admits a stratifiable antichain $A$, then obviously $\mathfrak h^*(\lambda, E) =\mathfrak h^*(\lambda, A) = |A| < \infty$. 

Suppose that $\mathfrak h^*(\lambda, E) =m < \infty$. Then $\mathfrak h^*(\lambda, E) =\mathfrak  h^*(\lambda, \F(E)) $ for the antichain $\F(E)$ by Lemma \ref{no-rec}(b). By Lemma \ref{h*:E->x}(c), $|\F(E)| \leq m$. Let $a_i=|\lambda^{-i}(\F(E))|$ for $i\in\N$. In view of \eqref{Formula1} the sequence $\{a_i\}_{i\in\N}$ is an increasing sequence of integers with $a_0\leq m$. By Lemma \ref {h*:E->x}(b), 
$$\mathfrak h^*(\lambda, E)= \mathfrak h^*(\lambda, \F(E)) = \lim_{n\to\infty} \frac{1}{n}\sum_{i=0}^{n-1}a_i = m,$$ 
hence we conclude that there exists $n_0\in\N$ such that $a_i = m$ for all $i\geq n_0$. Let $A = \lambda^{-n_0}(\F(E))$; in particular, $A\subseteq\uparrow\!E$ and $A$ is an antichain by Lemma \ref{h*:E->x}(a). By the choice of $n_0$, $A$ satisfies the equivalent conditions of item (d) of Lemma \ref{h*:E->x}. Thus $A$ is a stratifiable antichain of $E$. 
\end{proof}

At this stage we can show that, for a set $X$, a finitely many-to-one surjective selfmap $\lambda:X\to X$ and any $E\in[X]^{<\omega}$, the sequence $\left\{\frac{|\mathfrak T^*_n(\lambda,E)|}{n}\right\}_{n\in\N}$ converges; in particular, the $\limsup$ defining the contravariant entropy $\mathfrak h^*(\lambda,E)$ in Definition \ref{def-h*} is a limit.

\begin{proposition}\label{limexists} 
Let $X$ be a set and $\lambda: X \to X$ a finitely many-to-one surjective selfmap. For every $E\in[X]^{<\omega}$, the sequence $\left\{\frac{|\mathfrak T_n^*(\lambda,E)|}{n}\right\}_{n\in\N}$ converges.
\end{proposition}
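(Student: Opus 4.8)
The plan is to reduce the general case to that of a finite antichain disjoint from $\mathrm{Per}(\lambda,X)$, for which Lemma \ref{h*:E->x}(c) already guarantees that $\left\{\frac{|\mathfrak T^*_n(\lambda,E)|}{n}\right\}_{n\in\N_+}$ is \emph{increasing}, hence convergent in $\R_{\geq0}\cup\{\infty\}$. First, the trivial case $\mathfrak h^*(\lambda,E)=0$ is disposed of at once: since $\mathfrak h^*(\lambda,E)=\limsup_n\frac{|\mathfrak T^*_n(\lambda,E)|}{n}=0$ and all terms are non-negative, the sequence converges (to $0$). So from now on one may assume $\mathfrak h^*(\lambda,E)>0$.

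Under this assumption I would invoke Lemma \ref{red-notper} to obtain a finite $E'\subseteq\uparrow\! E$ with $E'\cap\mathrm{Per}(\lambda,X)=\emptyset$, some $n_0\in\N_+$, and a finite set $D_{E'}$ with $\mathfrak T^*_{n-n_0}(\lambda,E')\subseteq\mathfrak T^*_n(\lambda,E)\subseteq\mathfrak T^*_n(\lambda,E')\cup D_{E'}$ for all $n>n_0$, and then Lemma \ref{no-rec} to pass from $E'$ to the non-empty finite antichain $A:=\F(E')$ (which again misses $\mathrm{Per}(\lambda,X)$), obtaining some $m\in\N_+$ with $\mathfrak T^*_n(\lambda,A)\subseteq\mathfrak T^*_n(\lambda,E')\subseteq\mathfrak T^*_{n+m}(\lambda,A)$ for all $n$. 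Chaining these inclusions and passing to cardinalities gives, for all $n>n_0$,
$$|\mathfrak T^*_{n-n_0}(\lambda,A)|\ \leq\ |\mathfrak T^*_n(\lambda,E)|\ \leq\ |\mathfrak T^*_{n+m}(\lambda,A)|+|D_{E'}|.$$
By Lemma \ref{h*:E->x}(c) the limit $L:=\lim_n\frac{|\mathfrak T^*_n(\lambda,A)|}{n}$ exists in $\R_{\geq0}\cup\{\infty\}$. Dividing the displayed inequalities by $n$ and using $\frac{|\mathfrak T^*_{n-n_0}(\lambda,A)|}{n}=\frac{n-n_0}{n}\cdot\frac{|\mathfrak T^*_{n-n_0}(\lambda,A)|}{n-n_0}\to L$, likewise $\frac{|\mathfrak T^*_{n+m}(\lambda,A)|}{n}\to L$, together with $\frac{|D_{E'}|}{n}\to 0$, the squeeze theorem yields $\lim_n\frac{|\mathfrak T^*_n(\lambda,E)|}{n}=L$; in particular the $\limsup$ of Definition \ref{def-h*} is a genuine limit (necessarily equal to $\mathfrak h^*(\lambda,E)$).

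Once the two reduction lemmas are in hand this amounts essentially to index bookkeeping. The one point deserving care is to keep the squeeze uniform regardless of whether $L$ is finite or $+\infty$ — it works verbatim in both cases — so that there is no need to split into subcases according to the a priori unknown finiteness of $\mathfrak h^*(\lambda,E)$; this is exactly why Lemma \ref{red-notper}, whose sole hypothesis is $\mathfrak h^*(\lambda,E)>0$, is the right tool here rather than the finiteness-dependent Corollary \ref{max-a-chain}. The finitely many indices $n\leq n_0$ play no role in the limit.
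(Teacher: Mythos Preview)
Your proof is correct. The approach is close to the paper's but organized slightly differently and, in one respect, more economically. The paper first reduces (via the same Lemmas~\ref{red-notper} and~\ref{no-rec}) to the case where $E$ itself is an antichain disjoint from $\mathrm{Per}(\lambda,X)$, and then splits into two subcases: when $\mathfrak h^*(\lambda,E)=\infty$ it simply quotes the monotonicity from Lemma~\ref{h*:E->x}(c), while when $\mathfrak h^*(\lambda,E)<\infty$ it invokes Corollary~\ref{max-a-chain} to produce a \emph{stratifiable} antichain $A$ with $|\mathfrak T^*_n(\lambda,A)|=n|A|$ exactly, and sandwiches against that linear sequence. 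You instead keep the sandwich inequalities from the two reduction lemmas explicit and squeeze directly against $\tfrac{|\mathfrak T^*_n(\lambda,A)|}{n}$ for the (not necessarily stratifiable) antichain $A=\mathcal M(E')$; since Lemma~\ref{h*:E->x}(c) already guarantees convergence of that sequence in $[0,\infty]$, no finite/infinite dichotomy is needed and Corollary~\ref{max-a-chain} is bypassed entirely. This is a modest but genuine streamlining: the paper's route buys an explicit identification of the limit as $|A|$ for a stratifiable $A$, whereas yours reaches convergence with less machinery. Your separate disposal of the case $\mathfrak h^*(\lambda,E)=0$ also covers a point the paper leaves implicit, since Lemma~\ref{red-notper} is stated only under the hypothesis $\mathfrak h^*(\lambda,E)>0$.
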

\begin{proof} 
By Lemma \ref{red-notper} and Lemma \ref{no-rec}(b) we can assume that $E\cap P(\lambda,X)=\emptyset$ and that $E$ is an antichain.

If $\mathfrak h^*(\lambda,E)$ is infinite, the assertion easily follows from Lemma \ref{h*:E->x}(c) as the sequence  $\left\{\frac{|\mathfrak T_n^*(\lambda,E)|}{n}\right\}_{n\in\N_+}$ is increasing. 

Assume that $\mathfrak h^*(\lambda,E)$ is finite. By Corollary \ref{max-a-chain} there exists a stratifiable antichain $A\in[X]^{<\omega}$ of $E$, that is, $\mathfrak T_n^*(\lambda,A)=n|A|$ for every $n\in\N_+$, and there exists $n_0\in\N_+$ such that \eqref{33} holds. Therefore 
$$(n-n_0)|A|= |\mathfrak T_{n-n_0}^*(\lambda,A)|\leq |\mathfrak T_{n}^*(\lambda,E)|\leq |\mathfrak T_{n}^*(\lambda,A)|+|D_A|=n|A|+|D_A|,$$ 
and hence the sequence $\left\{\frac{|\mathfrak T_n^*(\lambda,E)|}{n}\right\}_{n\in\N_+}$ converges to $|A|=\mathfrak h^*(\lambda,E)$, as $D_A$ is finite.
\end{proof}

\begin{definition} Let $X$ be a set, $\lambda:X\to X$ a finitely many-to-one surjective selfmap and $E$ a subset of $X$. Then $y\in X$ is a \emph{ramification point of $\lambda$ over $E$} if $y\in\uparrow \! E$ and $|\lambda^{-1}(y)|>1$.
\end{definition}

The motivation to introduce this notion comes from the fact that $\mathfrak h^*(\lambda,E)=\infty$ for a finite subset $E$ of $X$ precisely when $\lambda$ has infinitely many ramification points over $E$, as the next proposition shows.

\begin{proposition}\label{ram-rec}
Let $X$ be a set, $\lambda:X\to X$ a finitely many-to-one surjective selfmap and $E\subseteq X\setminus P(\lambda,X)$ finite. The following conditions are equivalent:
\begin{itemize}
 \item[(a)] $\lambda$ has infinitely many ramification points over $E$; 
 \item[(b)] for every $m\in\N_+$ there exists a finite antichain $F_m\subseteq \uparrow \! E$ such that $|F_m|\geq m$;
 \item[(c)] $\mathfrak h^*(\lambda,E)=\infty$. 
 \end{itemize}
\end{proposition}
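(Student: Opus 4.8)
The plan is to reduce at once to the case that $E$ is a finite $\lambda$-antichain disjoint from $\mathrm{Per}(\lambda,X)$: by Lemma \ref{no-rec} we may replace $E$ by the antichain $\F(E)$, and this changes neither $\uparrow\! E$ (hence neither (a) nor (b)) nor $\mathfrak h^*(\lambda,E)$ (hence not (c)). Under this assumption I would set $a_i=|\lambda^{-i}(E)|$, which are finite positive integers since $\lambda$ is finitely many-to-one and surjective. Two facts will drive everything. First, by Lemma \ref{h*:E->x}(a),(b) the sets $\lambda^{-i}(E)$, $i\in\N$, are pairwise disjoint $\lambda$-antichains contained in $\uparrow\! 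E$, they exhaust $\uparrow\! E=\mathfrak T^*(\lambda,E)$, and $|\mathfrak T^*_n(\lambda,E)|=\sum_{i=0}^{n-1}a_i$; consequently $\mathfrak h^*(\lambda,E)=\lim_n\frac1n\sum_{i=0}^{n-1}a_i$. Second, surjectivity gives $|\lambda^{-1}(v)|\ge 1$ for every $v\in\uparrow\! E$, so $a_{i+1}=\sum_{v\in\lambda^{-i}(E)}|\lambda^{-1}(v)|\ge a_i$; telescoping and regrouping the nonnegative terms along the disjoint decomposition $\uparrow\! E=\bigcup_i\lambda^{-i}(E)$ yields
$$\lim_{n\to\infty}a_n\;=\;a_0+\sum_{v\in\uparrow\! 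E}\bigl(|\lambda^{-1}(v)|-1\bigr),$$
whose nonzero terms are indexed exactly by the ramification points of $\lambda$ over $E$, each such term being a positive integer.

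With this identity, (a)$\Leftrightarrow$(c) becomes essentially formal. If there are infinitely many ramification points, the sum is infinite, so $a_n\to\infty$, the averages $\frac1n\sum_{i<n}a_i$ diverge, and $\mathfrak h^*(\lambda,E)=\infty$; conversely, if there are only finitely many, the sum is a finite sum of finite terms, so $(a_n)$ is bounded and $\mathfrak h^*(\lambda,E)=\lim_n a_n<\infty$. The same dichotomy gives (c)$\Rightarrow$(b): $\mathfrak h^*(\lambda,E)=\infty$ forces $a_n\to\infty$, so for every $m\in\N_+$ one picks $i$ with $|\lambda^{-i}(E)|\ge m$ and sets $F_m=\lambda^{-i}(E)$, a finite antichain inside $\uparrow\! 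E$. For (b)$\Rightarrow$(c), given a finite antichain $F_m\subseteq\uparrow\! E$ with $|F_m|\ge m$, note $F_m\cap\mathrm{Per}(\lambda,X)=\emptyset$ (a periodic point of $\uparrow\! E$ would map into $E$ to a periodic point), so Lemma \ref{h*:E->x}(c$_2$) gives $\mathfrak h^*(\lambda,F_m)\ge|F_m|\ge m$; and since $F_m$ is a finite subset of $\uparrow\! E=\bigcup_k\mathfrak T^*_k(\lambda,E)$ there is $k$ with $F_m\subseteq\mathfrak T^*_k(\lambda,E)$, whence $\mathfrak T^*_n(\lambda,F_m)\subseteq\mathfrak T^*_{n+k}(\lambda,E)$ for all $n$; dividing by $n$ and letting $n\to\infty$ (the limits existing by Proposition \ref{limexists}) gives $\mathfrak h^*(\lambda,F_m)\le\mathfrak h^*(\lambda,E)$, so $\mathfrak h^*(\lambda,E)\ge m$ for every $m$. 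Combining (a)$\Leftrightarrow$(c) with (c)$\Rightarrow$(b) and (b)$\Rightarrow$(c) proves the three conditions equivalent.

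I do not expect a deep obstacle: each implication is short. The points that require care are (i) the reduction together with the partition claim — one really uses that $E$ is an antichain without periodic points to know the $\lambda^{-i}(E)$ partition $\uparrow\! E$ and to legitimately rearrange the telescoped series of nonnegative terms — and (ii) the monotonicity invoked in (b)$\Rightarrow$(c). The latter is the subtle spot, because $\mathfrak h^*$ admits no blanket ``monotonicity for subsets'' (cf.\ Example \ref{Anti-Cont}): the inequality $\mathfrak h^*(\lambda,F_m)\le\mathfrak h^*(\lambda,E)$ for $F_m\subseteq\uparrow\! E$ must be extracted from the cotrajectory inclusion $\mathfrak T^*_n(\lambda,F_m)\subseteq\mathfrak T^*_{n+k}(\lambda,E)$ rather than quoted. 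Throughout, surjectivity of $\lambda$ is what prevents $(a_i)$ from ever decreasing, and the hypothesis $E\cap\mathrm{Per}(\lambda,X)=\emptyset$ both enables the reduction and guarantees that the antichains $F_m$ produced in (c)$\Rightarrow$(b) avoid $\mathrm{Per}(\lambda,X)$.
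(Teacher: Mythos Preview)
Your proof is correct and takes a genuinely different route from the paper. The paper argues cyclically (a)$\Rightarrow$(b)$\Rightarrow$(c)$\Rightarrow$(a): for (a)$\Rightarrow$(b) it uses a pigeonhole/chain argument on the infinite set of ramification points to manufacture large antichains, declares (b)$\Rightarrow$(c) trivial, and for (c)$\Rightarrow$(a) it invokes the stratifiable-antichain machinery (Corollary~\ref{max-a-chain}). Your approach instead hinges on the telescoping identity $\lim_n a_n=a_0+\sum_{v\in\uparrow E}(|\lambda^{-1}(v)|-1)$, which makes (a)$\Leftrightarrow$(c) transparent and yields (c)$\Rightarrow$(b) for free by exhibiting $F_m=\lambda^{-i}(E)$; this is more elementary and avoids Corollary~\ref{max-a-chain} entirely for the core equivalence. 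Your (b)$\Rightarrow$(c) argument, via the cotrajectory inclusion $\mathfrak T^*_n(\lambda,F_m)\subseteq\mathfrak T^*_{n+k}(\lambda,E)$, actually supplies the justification the paper omits when it calls that step ``trivial''. The only external input you retain is Proposition~\ref{limexists} (to pass cleanly to limits), which is independent of Proposition~\ref{ram-rec}, so there is no circularity.
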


\begin{proof}  (a)$\Rightarrow$(b) Denote by $Y$ the set of all  ramification points over $E$. Assume for a contradiction that there exists $m\in \N_+$ such that every antichain $F$ over $E$ has  $ < m$ elements. Since $Y$ is infinite and every antichain in $Y$ has  $ < m$ elements, there exists a  chain $C = \{y_{1}\leq  \ldots \leq y_{m+1}\}$ in $Y$. Suppose $\lambda(z_i) = y_i$ for every $i$ and $z_i \ne y_{i+1}$ for all $i \leq m$. Then $\{z_1,\ldots,z_m\}\subseteq\uparrow\!E$ is an antichain, so we are done. 

\smallskip
(b)$\Rightarrow$(c) is trivial. 

\smallskip
(c)$\Rightarrow$(a) Assume that $\lambda$ has exactly $m$ ramification points $y_{1},\ldots , y_{m}$ over $E$. 
Since $\mathfrak h^*(\lambda,E)= \mathfrak h^*(\lambda,\F(E))$ by Lemma \ref{no-rec}, we can replace $E$ by $\F(M)$, so that 
we can assume in the sequel that $E$ is an antichain. For $i = 1,\ldots, m$ let $t_i$ be the smallest natural such that $\lambda^{t_i}(y_i) \in E$. Let $k = \max\{t_i:i = 1,\ldots, m \} + 1$ and $A = \lambda^{-k}(E)$. Then $A\subseteq \uparrow \! E$ is an antichain by Lemma \ref{h*:E->x}(a). Moreover $|\lambda^{-n}(A)| = |A|$ for every $n\in \N $ as $\mathfrak T^*_{k} (\lambda, E)$ contains all ramification points $y_{1},\ldots , y_{m}$ over $E$. So $\mathfrak h^*(\lambda,E)<\infty$ by Corollary \ref{max-a-chain}, a contradiction. 
\end{proof}

Now we recall a pair of notions introduced in \cite{AADGH} that help us in computing the contravariant set-theoretic entropy. 

\begin{definition}\cite{AADGH} Let $X$ be a set and $\lambda: X \to X$ a selfmap.
\begin{itemize}  
\item[(a)] A \emph{string} of  $\lambda$ in $X$ is an infinite sequence $S=\{s_n\}_{n\in\N}$ of pairwise distinct elements of $X$ such that $\lambda(s_n)=s_{n-1}$ for every $n\in\N_+$.
\item[(b)] The maximum number $ \mathfrak s(\lambda)$ of pairwise disjoint strings of $\lambda$ is called {\em string number} of $\lambda$.
\end{itemize}
\end{definition}  

Therefore $\mathfrak s(\lambda)$ tells us, roughly speaking, how many pairwise copies of the left shift $\varsigma$ we can detect within the dynamical system $\lambda: X \to X$. 

In the next lemma we intend $width(X)$ defined as $\infty$ whenever $\sup \{|A|: A \mbox{ finite antichain in }X\}$ is infinite. 

\begin{lemma} \label{h*=w}
Let $X$ be a set, $\lambda:X\to X$ a finitely many-to-one surjective selfmap. 
Then $$\mathfrak h^*(\lambda) = width(X\setminus P_*(\lambda,X),\leq_\lambda).$$ 
\end{lemma}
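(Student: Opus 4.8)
**Proof plan for Lemma \ref{h*=w}.**

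The strategy is to prove the two inequalities $\mathfrak h^*(\lambda) \le width(X\setminus P_*(\lambda,X),\leq_\lambda)$ and $\mathfrak h^*(\lambda) \ge width(X\setminus P_*(\lambda,X),\leq_\lambda)$ separately, relying heavily on the machinery built up in the lemmas preceding the statement. By \eqref{P*} we may pass to $\lambda\restriction_{X\setminus P_*(\lambda,X)}$, so we assume from the start that $P_*(\lambda,X)=\emptyset$ and write simply $\leq_\lambda$ for the order on $X$; note that once $P_*(\lambda,X)=\emptyset$, every periodic point $x$ has $\uparrow\! x$ infinite, and in fact the relevant computations take place on finite antichains avoiding $\mathrm{Per}(\lambda,X)$ by \eqref{antieq}.

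For the inequality $\mathfrak h^*(\lambda)\le width(X,\leq_\lambda)$: take any $D\in[X]^{<\omega}$ with $\mathfrak h^*(\lambda,D)>0$. By Lemma \ref{red-notper} and Lemma \ref{no-rec}(b) there is a finite antichain $E\subseteq\uparrow\! D$ with $E\cap\mathrm{Per}(\lambda,X)=\emptyset$ and $\mathfrak h^*(\lambda,D)=\mathfrak h^*(\lambda,E)$. If $\mathfrak h^*(\lambda,E)=\infty$, then by Proposition \ref{ram-rec} there are finite antichains over $E$ of arbitrarily large size, so $width(X,\leq_\lambda)=\infty$ and we are done. If $\mathfrak h^*(\lambda,E)<\infty$, then by Corollary \ref{max-a-chain} there is a stratifiable antichain $A$ of $E$ with $\mathfrak h^*(\lambda,E)=|A|$; since $A$ is a finite antichain in $X$, $|A|\le width(X,\leq_\lambda)$. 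Taking the supremum over all $D$ (and using \eqref{antieq}), we obtain $\mathfrak h^*(\lambda)\le width(X,\leq_\lambda)$.

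For the reverse inequality $\mathfrak h^*(\lambda)\ge width(X,\leq_\lambda)$: let $A$ be any finite antichain in $X$; we must produce $\mathfrak h^*(\lambda)\ge |A|$. After deleting the periodic points of $A$ (which, since $P_*(\lambda,X)=\emptyset$, contribute $\uparrow$-sets that are infinite but can be handled, or — more cleanly — one reduces to $A$ an antichain disjoint from $\mathrm{Per}(\lambda,X)$ using that such antichains are cofinal in width via \eqref{antieq}), we may assume $A\cap\mathrm{Per}(\lambda,X)=\emptyset$. Then by Lemma \ref{h*:E->x}(c$_2$) we have $\mathfrak h^*(\lambda,A)\ge |A|$, whence $\mathfrak h^*(\lambda)\ge |A|$. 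Taking the supremum over all finite antichains $A$ gives $\mathfrak h^*(\lambda)\ge width(X,\leq_\lambda)$, and the case $width(X,\leq_\lambda)=\infty$ follows by letting $|A|\to\infty$. Combining the two inequalities completes the proof.

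**Main obstacle.** The delicate point is the bookkeeping around periodic points and the reduction $P_*(\lambda,X)=\emptyset$: one must be careful that deleting $P_*(\lambda,X)$ does not change $width$ in a way that breaks the equality, and that every finite antichain contributing to $width(X\setminus P_*(\lambda,X),\leq_\lambda)$ can be replaced by (or already is) a finite antichain disjoint from $\mathrm{Per}(\lambda,X)$ on which Lemma \ref{h*:E->x}(c$_2$) and Corollary \ref{max-a-chain} apply. Once that reduction is cleanly set up, both inequalities are short and essentially formal consequences of the cited results; the rest is routine.
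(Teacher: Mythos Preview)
Your proposal is correct and follows essentially the same approach as the paper: reduce to $P_*(\lambda,X)=\emptyset$ via \eqref{P*}, use Lemma~\ref{h*:E->x}(c$_2$) for the inequality $\mathfrak h^*(\lambda)\ge width$, and use Proposition~\ref{ram-rec} together with Corollary~\ref{max-a-chain} for the reverse inequality. You are in fact more explicit than the paper about the periodic-point bookkeeping---the paper simply writes ``take a finite antichain $A$'' and invokes Lemma~\ref{h*:E->x}(c) without flagging the hypothesis $A\cap\mathrm{Per}(\lambda,X)=\emptyset$, so the obstacle you identify is real but handled loosely in the original as well.
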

\begin{proof} 
In view of \eqref{P*} we can assume that $P_*(\lambda,X) =\emptyset$. 

Take a finite antichain $A$ and note that $\mathfrak h^*(\f, A) \geq |A|$ by Lemma \ref{h*:E->x}(c). This proves that  $ \mathfrak  h^*(\lambda) \geq width(X) $.

If  $\mathfrak  h^*(\lambda) =\infty$, then by Proposition \ref{ram-rec} $X$ has arbitrarily large antichains, so $width(X) = \infty$ as well. Suppose now that $\mathfrak h^*(\lambda)=m<\infty$. To prove that $width(X)\geq\mathfrak h^*(\lambda)$ apply Corollary \ref{max-a-chain} to get an antichain of size $|A|=m$. 
\end{proof}

\begin{theorem}\label{ConsetE}  
Let $X$ be a set and $\lambda: X \to X$ a finitely many-to-one selfmap. Then $\mathfrak h^*(\lambda)= \mathfrak s(\lambda)$. 
\end{theorem}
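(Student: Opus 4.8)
The plan is to establish the two inequalities $\mathfrak s(\lambda)\le\mathfrak h^*(\lambda)$ and $\mathfrak h^*(\lambda)\le\mathfrak s(\lambda)$, after reducing to the surjective case. Every string $S=\{s_n\}_{n\in\N}$ of $\lambda$ satisfies $s_k=\lambda^n(s_{k+n})\in\lambda^n(X)$ for all $n\in\N$, hence $S\subseteq\mathrm{sc}(\lambda)=\bigcap_{n\in\N}\lambda^n(X)$; therefore $\mathfrak s(\lambda)=\mathfrak s(\lambda\restriction_{\mathrm{sc}(\lambda)})$, and since also $\mathfrak h^*(\lambda)=\mathfrak h^*(\lambda\restriction_{\mathrm{sc}(\lambda)})$ by definition, we may assume that $\lambda$ is a finitely many-to-one surjective selfmap (so $\mathrm{sc}(\lambda)=X$). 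For $\mathfrak s(\lambda)\le\mathfrak h^*(\lambda)$: given pairwise disjoint strings $S^{(1)},\dots,S^{(k)}$ with $S^{(i)}=\{s^{(i)}_n\}_{n\in\N}$, put $D=\{s^{(1)}_0,\dots,s^{(k)}_0\}\in[X]^{<\omega}$. Since $\lambda^{j}(s^{(i)}_j)=s^{(i)}_0\in D$, we have $s^{(i)}_j\in\lambda^{-j}(D)$, and disjointness of the strings makes $\{s^{(i)}_j:1\le i\le k,\ 0\le j<n\}$ a set of $kn$ distinct points contained in $\mathfrak T_n^*(\lambda,D)$; hence $\mathfrak h^*(\lambda,D)=\lim_{n}|\mathfrak T_n^*(\lambda,D)|/n\ge k$ (the limit exists by Proposition \ref{limexists}), so $\mathfrak h^*(\lambda)\ge k$. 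As $k$ can be any number of pairwise disjoint strings, $\mathfrak h^*(\lambda)\ge\mathfrak s(\lambda)$.

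For the reverse inequality I will prove: (i) if $E\in[X]^{<\omega}$ has $\mathfrak h^*(\lambda,E)=\ell<\infty$, then $\lambda$ admits $\ell$ pairwise disjoint strings; (ii) if $\mathfrak h^*(\lambda,E)=\infty$, then $\lambda$ admits arbitrarily many pairwise disjoint strings. Since $\mathfrak h^*(\lambda)=\sup_{E}\mathfrak h^*(\lambda,E)$, together these give $\mathfrak s(\lambda)\ge\mathfrak h^*(\lambda)$. In both cases Lemma \ref{red-notper} and Lemma \ref{no-rec} allow us to replace $E$ by a finite antichain disjoint from $\mathrm{Per}(\lambda,X)$; then $\uparrow\!E$ contains no periodic point either, since a periodic $y\in\uparrow\!E$ would force some $\lambda^n(y)\in E$ to be periodic. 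For (i): Corollary \ref{max-a-chain} yields a stratifiable antichain $A$ of $E$ with $|A|=\ell$, so $|\lambda^{-n}(A)|=|A|$ for every $n$, and $A\subseteq\uparrow\!E$ is a non-periodic antichain. By Lemma \ref{h*:E->x}(a) the sets $\uparrow\!a$ ($a\in A$) are pairwise disjoint, whence $\lambda^{-n}(A)=\bigsqcup_{a\in A}\lambda^{-n}(a)$; surjectivity of $\lambda$ makes every $\lambda^{-n}(a)$ nonempty, so the equality $|\lambda^{-n}(A)|=|A|$ forces $|\lambda^{-n}(a)|=1$ for all $a\in A$ and $n\in\N$. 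Writing $\lambda^{-n}(a)=\{\sigma_n^{(a)}\}$, one checks that $\lambda(\sigma_{n+1}^{(a)})=\sigma_n^{(a)}$ and that the $\sigma_n^{(a)}$ are pairwise distinct (otherwise $a$ would be periodic), so each $S_a=\{\sigma_n^{(a)}\}_{n\in\N}$ is a string, and $S_a\subseteq\uparrow\!a$ shows the $S_a$ are pairwise disjoint --- the desired $\ell$ strings.

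For (ii): by Proposition \ref{ram-rec}, for every $m\in\N_+$ there is a finite antichain $F_m\subseteq\uparrow\!E$ with $|F_m|\ge m$; being a subset of $\uparrow\!E$, $F_m$ is a non-periodic antichain, so the sets $\uparrow\!a$ ($a\in F_m$) are pairwise disjoint by Lemma \ref{h*:E->x}(a). Using surjectivity of $\lambda$ one builds a string out of each $a\in F_m$ by repeatedly choosing preimages inside $\uparrow\!a$ (the chosen elements are pairwise distinct because $a$ is non-periodic), obtaining $m$ pairwise disjoint strings; hence $\mathfrak s(\lambda)\ge m$ for all $m$. The step I expect to require the most care is (i): the counting argument giving $|\lambda^{-n}(a)|=1$ uses at once stratifiability of $A$, the pairwise disjointness of the $\uparrow\!a$ (hence that $A$ is a non-periodic antichain, a property one must trace back through Corollary \ref{max-a-chain}), and surjectivity of $\lambda$ (so that no $\lambda^{-n}(a)$ is empty); lining up all three, together with checking that the reductions of Lemmas \ref{red-notper}--\ref{no-rec} and the passage to $\mathrm{sc}(\lambda)$ leave the set of strings unchanged, is the delicate point.
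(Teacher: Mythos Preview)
Your proof is correct. The paper's argument is organized a bit differently: after reducing to the surjective case and to $P_*(\lambda,X)=\emptyset$, it invokes Lemma~\ref{h*=w} to get $\mathfrak h^*(\lambda)=\mathrm{width}(X,\le_\lambda)$ and then shows $\mathfrak s(\lambda)=\mathrm{width}(X,\le_\lambda)$ by the short observation that picking one point from each of a family of pairwise disjoint strings gives an antichain, and conversely each $\uparrow\!a$ for $a$ in an antichain contains a string by surjectivity. You bypass the width altogether: for $\mathfrak s\le\mathfrak h^*$ you count directly inside a cotrajectory, and for $\mathfrak h^*\le\mathfrak s$ you unfold the content of Lemma~\ref{h*=w} (which itself rests on Corollary~\ref{max-a-chain} and Proposition~\ref{ram-rec}) and then build strings exactly as the paper does. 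So the underlying machinery and the string-from-antichain construction are identical; the only real difference is that the paper isolates $\mathrm{width}$ as a named intermediate invariant, which makes the final step a one-liner, whereas your argument is self-contained at the cost of handling the finite and infinite cases separately. Your extra observation in case~(i) that stratifiability forces $|\lambda^{-n}(a)|=1$ (so the strings are uniquely determined, not merely chosen) is a nice sharpening that the paper does not need once it passes through width.
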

\begin{proof} 
We can assume that $\lambda$ is surjective, in view of the definition of $\mathfrak h^*$ and since $ \mathfrak s(\lambda)= \mathfrak s(\lambda\restriction_{\mathrm{sc}(\lambda)})$ by \cite{DGV}. Moreover, due to \eqref{P*} and the obvious counterpart for $\mathfrak s(\lambda)$, we can assume without loss of generality that $P_*(\lambda,X)=\emptyset$, so that $\mathfrak h^*(\lambda)=width(X)$ by Lemma \ref{h*=w}.  Hence it suffices to prove that $ \mathfrak s(\lambda) = width(X,\leq)$. 

Indeed, fix a family of pairwise disjoint strings $\{S_i: i \in I\} $ and for every $i \in I$ pick $a_i \in S_i$. Then   $A=  \{a_i: i \in I\}$ is an antichain in $X$.  Conversely, for every  antichain  $ A$ in $X$ the  family $\{\uparrow\! a: a \in A\}$ is pairwise disjoint and each $ \uparrow\! a$   contains a string (by the surjectivity of $\f$). This proves that $ \mathfrak s(\f) = width(X)$.
\end{proof}

Now we can establish the Logarithmic Law for $\mathfrak h^*$. 

\begin{proposition}[Logarithmic Law]\label{LL*} 
Let $X$ be a set and $\lambda: X \to X$ a finitely many-to-one selfmap. Then 
$\mathfrak h^*(\lambda^k) = |k| \mathfrak h^*(\lambda)$ for every $k\in\Z$.
\end{proposition}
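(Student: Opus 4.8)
The plan is to bypass the string-number characterization and argue directly with cotrajectories: the whole statement will fall out of the single identity $\mathfrak T^*_{nk}(\lambda,E)=\mathfrak T^*_n(\lambda^k,\mathfrak T^*_k(\lambda,E))$. I will first fix $k\in\N_+$ and reduce to the surjective case. Since $\lambda$ is finitely many-to-one, $\mathrm{sc}(\lambda)=\bigcap_{n\in\N}\lambda^n(X)$; as $\{\lambda^n(X)\}_{n\in\N}$ is a decreasing chain, $\bigcap_n\lambda^{kn}(X)=\bigcap_n\lambda^n(X)$, so $\mathrm{sc}(\lambda^k)=\bigcap_n(\lambda^k)^n(X)=\mathrm{sc}(\lambda)=:S$. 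Since $\lambda^k\restriction_S=(\lambda\restriction_S)^k$ and $\lambda\restriction_S$ is surjective, the defining formula $\mathfrak h^*(\lambda)=\mathfrak h^*(\lambda\restriction_{\mathrm{sc}(\lambda)})$ lets me replace $\lambda$ by $\lambda\restriction_S$, i.e. assume from now on that $\lambda$, and hence $\lambda^k$, is finitely many-to-one and surjective.

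Next I establish the key identity. Fix $E\in[X]^{<\omega}$ and set $F=\mathfrak T^*_k(\lambda,E)=\bigcup_{i=0}^{k-1}\lambda^{-i}(E)$, which is finite because $\lambda$ is finitely many-to-one. Using $(\lambda^k)^{-j}=\lambda^{-kj}$ one computes, for every $n\in\N_+$,
$$\mathfrak T^*_n(\lambda^k,F)=\bigcup_{j=0}^{n-1}\lambda^{-kj}\Big(\bigcup_{i=0}^{k-1}\lambda^{-i}(E)\Big)=\bigcup_{j=0}^{n-1}\bigcup_{i=0}^{k-1}\lambda^{-(kj+i)}(E)=\bigcup_{m=0}^{nk-1}\lambda^{-m}(E)=\mathfrak T^*_{nk}(\lambda,E),$$
where the third equality is just Euclidean division: $(j,i)\mapsto kj+i$ is a bijection of $\{0,\dots,n-1\}\times\{0,\dots,k-1\}$ onto $\{0,\dots,nk-1\}$. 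In particular, since $E\subseteq F$, this also gives the inclusion $\mathfrak T^*_n(\lambda^k,E)\subseteq\mathfrak T^*_{nk}(\lambda,E)$.

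Then I extract the two inequalities, both times invoking Proposition \ref{limexists} (for $\lambda$ and for $\lambda^k$) so that every $\limsup$ in sight is a genuine limit; note also that $\tfrac1n|\mathfrak T^*_{nk}(\lambda,E)|=k\cdot\tfrac1{nk}|\mathfrak T^*_{nk}(\lambda,E)|\to k\,\mathfrak h^*(\lambda,E)$, the subsequence $\{nk\}$ of the convergent sequence $\{\tfrac1m|\mathfrak T^*_m(\lambda,E)|\}$ having the same limit. From the inclusion above, dividing by $n$ and passing to the limit yields $\mathfrak h^*(\lambda^k,E)\le k\,\mathfrak h^*(\lambda,E)$, and taking the supremum over $E$ gives $\mathfrak h^*(\lambda^k)\le k\,\mathfrak h^*(\lambda)$. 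For the reverse inequality, the identity $\mathfrak T^*_{nk}(\lambda,E)=\mathfrak T^*_n(\lambda^k,F)$ gives
$$k\,\mathfrak h^*(\lambda,E)=\lim_{n\to\infty}\frac{|\mathfrak T^*_{nk}(\lambda,E)|}{n}=\lim_{n\to\infty}\frac{|\mathfrak T^*_n(\lambda^k,F)|}{n}=\mathfrak h^*(\lambda^k,F)\le\mathfrak h^*(\lambda^k),$$
so taking the supremum over $E$ yields $k\,\mathfrak h^*(\lambda)\le\mathfrak h^*(\lambda^k)$. This settles $k\in\N_+$. For $k=0$ we have $\lambda^0=id_X$ with $\mathrm{sc}(id_X)=X=\mathrm{Per}(id_X,X)$, so $\mathfrak h^*(id_X)=0=|0|\,\mathfrak h^*(\lambda)$ by Theorem \ref{setPinsker*}(a); and when $\lambda$ is a bijection and $k=-m<0$, the map $\lambda^{-1}$ is again finitely many-to-one, so the case already proved applied to it, together with $\mathfrak h^*(\lambda^{-1})=\mathfrak h^*(\lambda)$ from Remark \ref{h=h*}(b), gives $\mathfrak h^*(\lambda^k)=\mathfrak h^*((\lambda^{-1})^m)=m\,\mathfrak h^*(\lambda^{-1})=|k|\,\mathfrak h^*(\lambda)$.

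I expect no real conceptual difficulty here; the only points that need care are the bookkeeping in the key identity (correctly composing preimages as $(\lambda^k)^{-j}=\lambda^{-kj}$ and checking the index bijection $(j,i)\mapsto kj+i$), the reduction to the surjective case so that Definition \ref{def-h*} and Proposition \ref{limexists} actually apply to both $\lambda$ and $\lambda^k$, and remembering that a convergent sequence and its subsequence along $\{nk\}$ share the same limit. Everything else is formal manipulation.
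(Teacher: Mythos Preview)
Your proof is correct and takes a genuinely different route from the paper's. The paper invokes the string-number characterization $\mathfrak h^*(\lambda)=\mathfrak s(\lambda)$ from Theorem~\ref{ConsetE} and then observes (somewhat tersely) that every string of $\lambda$ decomposes into $k$ pairwise disjoint strings of $\lambda^k$; this is short but relies on all the machinery leading up to Theorem~\ref{ConsetE} (Lemmas~\ref{red-notper}--\ref{h*=w}, Corollary~\ref{max-a-chain}, etc.). You instead mimic the standard Logarithmic-Law argument for other entropies (compare Proposition~\ref{LL}), working directly with cotrajectories via the bookkeeping identity $\mathfrak T^*_{nk}(\lambda,E)=\mathfrak T^*_n(\lambda^k,\mathfrak T^*_k(\lambda,E))$. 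Your approach is more elementary and self-contained; the only nontrivial input you need is Proposition~\ref{limexists} (so that the $\limsup$'s are genuine limits and the subsequence $\{nk\}$ argument goes through), plus Remark~\ref{h=h*}(b) and Theorem~\ref{setPinsker*}(a) for the cases $k<0$ and $k=0$, which the paper also uses. The reduction to the surjective case via $\mathrm{sc}(\lambda^k)=\mathrm{sc}(\lambda)$ is handled correctly.
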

\begin{proof}  
Fix $k\in \N_+$. Making use of Theorem \ref{ConsetE}  we can prove $\mathfrak h^*(\lambda^k) = k \mathfrak h^*(\lambda)$. 
To this end it suffices to note that every string of $\lambda$ gives rise, in an obvious way, to a family of $k$ paiwise disjoint strings of $\lambda^k$.
To see that the formula holds for every $k\in\Z$, apply Remark \ref{h=h*}(b).
\end{proof}

\begin{remark} 
We give the definition of contravariant set-theoretic entropy $\mathfrak h^*$ for a finitely many-to-one selfmap $\lambda: X \to X$ of a set $X$
in two steps: first for surjective selfmaps and then in the general case of not necessarily surjective $\lambda$, by letting $\mathfrak h^*(\lambda) =\mathfrak h^*(\lambda\restriction_{\mathrm{sc}(\lambda)})$. It is possible to define $\mathfrak h^*$ in a single step taking in item (a) of Definition \ref{def-h*} a  {\em reduced $\lambda$-cotrajectory} instead of the usual one, by replacing the inverse images $\lambda^{-i}(D)$ in the definition of the $\lambda$-cotrajectory $\mathfrak T^*_n(\lambda, D)$ by {\em reduced inverse images} $\lambda^{-i}(D)\cap \mathrm{sc}(\lambda)$. Taking this reduced  $\lambda$-cotrajectory $\mathfrak T_n^*(\lambda,D\cap \mathrm{sc}(\lambda))=\bigcup_{i=0}^{n-1}(\lambda^{-i}(D)\cap \mathrm{sc}(\lambda))$ in place of $\mathfrak T^*_n(\lambda, D)$, one obtains
\begin{equation}\label{RedInvImage}
\mathfrak h^*(\lambda,D)=\limsup_{n\to\infty}\frac{|\mathfrak T_n^*(\lambda,D\cap \mathrm{sc}(\lambda))|}{n}.
\end{equation}
Our choice to define contravariant set-theoretic entropy by using reduced inverse images (i.e., restricting to the surjective core) is justified 
by the applications. Indeed, denote by $\widetilde{\mathfrak h}^*$ the  entropy defined by the formula  item (a) of Definition \ref{def-h*}, i.e.,  
using in \eqref{RedInvImage} the inverse images instead of the reduced ones. Here is a simple example of a finitely many-to-one selfmap $\lambda:X\to X$, such that $\widetilde{\mathfrak h}^*(\lambda)=\infty$, while ${\mathfrak h}^*(\lambda)=\mathfrak s(\lambda)=1$. For every $n\in\N_+$, take a set $X_n$ such that $|X_n|=2n$ (one can choose here also $|X_n|=n$) and define $X$ as the disjoint union $X=\N\stackrel{\circ}{\cup}\stackrel{\circ}{\bigcup}_{n\in\N_+}X_n$. Let $\lambda:X\to X$ be defined by $\lambda\restriction_\N=\varsigma$ (the left shift) and $\lambda(X_n)=n$ for every $n\in\N_+$. Then $|\lambda^{-1}(n)|=|X_n|+1=2n+1$ for every $n\in\N_+$. Therefore $\widetilde{\mathfrak h}^*(\lambda)=\infty$. On the other hand $\mathrm{sc}(\lambda)=\N$ and so $\mathfrak h^*(\lambda)=\mathfrak h^*(\varsigma)=1$.
The diagram representing $\lambda$ is the following.
\begin{equation*}
\xymatrix{
& & \ar@{-->}[d] \\
\ar[drr] & \ar[dr] & 3 \ar[d] & \ar[dl] & \ar[dll] \\
& \ar[dr] & 2 \ar[d] & \ar[dl] &  \\
& & 1 \ar[d] & & \\
& & 0 \ar@(dl,dr)[] & &
}
\end{equation*}
\bigskip
\end{remark}

\begin{remark} The string number of group endomorphisms was deeply studied in \cite{DGV} and \cite{GV}.
In particular, we would like to mention the main result of \cite{DGV}, which is a Dichotomy Theorem, showing that, for $G$ an abelian group and $\phi:G\to G$ an endomorphism, $\mathfrak s(\phi)$ is either zero or infinity.
\end{remark}

\begin{remark} Proposition \ref{ram-rec} implies that, if $\mathfrak h^*(\lambda)< \infty$ for a finitely many-to-one surjective selfmap $\lambda:X\to X$ of a set $X$, then for every $x\in X$ the selfmap $\lambda$ may have only finitely many  ramification points over $x$. Hence, if $\lambda$ has all its fibers of the same size (e.g., if $X$ is a group and $\lambda$ is a group endomorphism),  then $\mathfrak h^*$ may take only two values,  either $\mathfrak h^*(\lambda)=0$ or $\mathfrak h^*(\lambda)=\infty$ (this should be compared via Theorem \ref{ConsetE} with \cite{DGV}).
\end{remark}

\newpage
\section{Topological entropy}\label{top-sec}

\subsection{Topological entropy on compact spaces}\label{akm-sec}

The topological entropy was introduced by Adler, Konheim and McAndrew \cite{AKM} as follows.

For a compact space $X$, let ${\cov(X)}$ be the family of all open covers of $X$. For $\U\in \cov(X)$ the \emph{entropy} of $\U$ is $$H(\U)=\log N(\U),$$ where $N(\U)=\min\{|\mathcal V|:$  $\mathcal V$ is a finite subcover of $\U\}$ (when necessary, we write $N_X$ instead of simply $N$).
For $m\in\N_+$ and open covers $\U_1, \ldots ,\U_m\in\cov(X)$ let $$\U_1 \vee \ldots \vee \U_m=\left\{\bigcap _{i=1}^m U_i: U_i\in \U_i\right\}.$$
For a continuous selfmap $\psi:X\to X$, $\U\in \cov (X)$ and $n\in\N_+$, let $\psi^{-n}(\U)=\{\psi^{-n}(U): U\in \U\}$. Then $\psi^{-n}(\U_1\vee \ldots \vee\U_m)=\psi^{-n}(\U_1)\vee \ldots \vee \psi^{-n}(\U_m)$ for every $n,m\in\N_+$.
 
\medskip
The second part of the following lemma follows from Fekete Lemma \ref{fekete}.

\begin{lemma} 
Let $X$ be a compact space, $\U\in\cov(X)$ and for every $n\in\N_+$ let
$$c_n=H(\U\vee \psi^{-1}(\U) \vee \ldots \vee \psi^{-n+1}(\U)).$$ Then the sequence $\{c_n\}_{n\in\N_+}$ is subadditive, so the sequence $\{\frac{c_n}{n}\}_{n\in\N_+}$ converges and $$\lim_{n\to \infty}\frac{c_n}{n}=\inf_{n\in\N} \frac{c_n}{n}.$$
\end{lemma}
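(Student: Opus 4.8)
The plan is to deduce subadditivity of $\{c_n\}_{n\in\N_+}$ from two elementary properties of the cover function $N$, and then invoke Fekete Lemma \ref{fekete} for the convergence and the formula for the limit.

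First I would record the two auxiliary facts. For $\U,\mathcal V\in\cov(X)$ one has $N(\U\vee\mathcal V)\le N(\U)\, N(\mathcal V)$: if $\{U_1,\dots,U_a\}$ and $\{V_1,\dots,V_b\}$ are finite subcovers of $\U$ and $\mathcal V$ of minimal size, then the $ab$ sets $U_i\cap V_j$ form a finite subcover of $\U\vee\mathcal V$; taking logarithms gives $H(\U\vee\mathcal V)\le H(\U)+H(\mathcal V)$. Second, for any $\mathcal W\in\cov(X)$ and $n\in\N_+$ one has $N(\psi^{-n}(\mathcal W))\le N(\mathcal W)$, hence $H(\psi^{-n}(\mathcal W))\le H(\mathcal W)$: since $\psi^n$ is continuous, the $\psi^n$-preimage of a minimal finite subcover $\{W_1,\dots,W_k\}$ of $\mathcal W$ is the subcollection $\{\psi^{-n}(W_1),\dots,\psi^{-n}(W_k)\}$ of $\psi^{-n}(\mathcal W)$, which still covers $X$ because $\psi^{-n}(\bigcup_i W_i)=\bigcup_i\psi^{-n}(W_i)$; thus $N(\psi^{-n}(\mathcal W))\le k=N(\mathcal W)$.

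Next, using in addition the identity $\psi^{-n}(\U_1\vee\cdots\vee\U_m)=\psi^{-n}(\U_1)\vee\cdots\vee\psi^{-n}(\U_m)$ noted just before the lemma, I would split the big join defining $c_{n+m}$ after its $n$-th term. Concretely, for fixed $n,m\in\N_+$,
\begin{align*}
c_{n+m}&=H\Bigl(\bigl(\U\vee\cdots\vee\psi^{-n+1}(\U)\bigr)\vee\psi^{-n}\bigl(\U\vee\cdots\vee\psi^{-m+1}(\U)\bigr)\Bigr)\\
&\le H\bigl(\U\vee\cdots\vee\psi^{-n+1}(\U)\bigr)+H\Bigl(\psi^{-n}\bigl(\U\vee\cdots\vee\psi^{-m+1}(\U)\bigr)\Bigr)\\
&\le c_n+H\bigl(\U\vee\cdots\vee\psi^{-m+1}(\U)\bigr)=c_n+c_m,
\end{align*}
where the first inequality is the submultiplicativity of $N$ under $\vee$ and the second is the monotonicity of $H$ under preimages. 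Since each $c_n\ge 0$ (and, setting $c_0=0$, the inequality $c_{0+m}\le c_0+c_m$ trivially holds as well), the sequence $\{c_n\}$ is subadditive in the sense of Definition \ref{subadditive}, so Fekete Lemma \ref{fekete} yields that $\{c_n/n\}$ converges and $\lim_{n\to\infty}c_n/n=\inf_n c_n/n$.

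There is no genuine obstacle in this argument; the only point demanding a little care is the index bookkeeping when splitting the join and the verification that taking $\psi^n$-preimages distributes over $\vee$ and over arbitrary unions — which is precisely the already-stated distributivity identity together with the set-theoretic fact $\psi^{-n}(\bigcup_i A_i)=\bigcup_i\psi^{-n}(A_i)$. Everything else reduces to the two one-line estimates for $N$ and a direct appeal to Fekete's Lemma.
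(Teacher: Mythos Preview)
Your argument is correct and follows the standard route: establish subadditivity via $N(\U\vee\mathcal V)\le N(\U)N(\mathcal V)$ and $N(\psi^{-n}(\mathcal W))\le N(\mathcal W)$, then apply Fekete's Lemma. The paper does not give a proof of this lemma beyond the remark that the second part follows from Fekete's Lemma, so you have simply filled in the details the paper leaves implicit.
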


\begin{definition}{\cite{AKM}} 
Let $X$ be a compact space and $\psi:X\to X$ a continuous selfmap.
\begin{itemize}
\item[(a)] For $\U\in\cov(X)$ the {\em topological entropy of $\psi$ with respect to $\U$} is $$H_{top}(\psi,\U)=\lim_{n\to\infty} \frac{H(\U\vee \psi^{-1}(\U) \vee \ldots \vee \psi^{-n+1}(\U))}{n}.$$
\item[(b)] The {\em topological entropy} of $\psi$ is $$h_{top}(\psi)=\sup\{H_{top}(\psi,\U):\U \in \cov(X)\}.$$
\end{itemize}
\end{definition}

We give now the basic properties of the topological entropy for continuous selfmaps of compact spaces. 

\begin{lemma}\label{monot}
Let $X$ be a compact space and $\psi:X\to X$ a continuous selfmap. Let $Y$ be another compact space, $\phi :Y \to Y$ a continuous selfmap, and assume that there exists a continuous map $\alpha:X \to Y$, with $\alpha\psi = \phi\alpha$.
\begin{itemize}
\item[(a)] (Monotonicity for continuous images) If $\alpha$ is surjective, then $h_{top}(\psi) \geq h_{top}(\phi).$
\item[(b)] (Monotonicity for invariant subspaces) If $\alpha$ is injective, then $h_{top}(\psi)\leq h_{top}(\phi)$.
\end{itemize} 
\end{lemma}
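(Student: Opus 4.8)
The plan is to reduce both inequalities to the commutation of preimages with $\psi$ and $\phi$, plus two elementary remarks about the covering invariant $N$. First I would record the identity $\psi^{-1}(\alpha^{-1}(S))=\alpha^{-1}(\phi^{-1}(S))$ for every $S\subseteq Y$, which is immediate from $\alpha\psi=\phi\alpha$; since $\alpha^{-1}$ also commutes with finite intersections, this upgrades to
$$\bigvee_{i=0}^{n-1}\psi^{-i}(\alpha^{-1}(\mathcal W))=\alpha^{-1}\Bigl(\bigvee_{i=0}^{n-1}\phi^{-i}(\mathcal W)\Bigr)$$
for every $\mathcal W\in\cov(Y)$ and every $n\in\N_+$. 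Next I would note: (i) for \emph{any} continuous $\alpha$ and any $\mathcal W\in\cov(Y)$ the family $\alpha^{-1}(\mathcal W)$ lies in $\cov(X)$ and $N_X(\alpha^{-1}(\mathcal W))\le N_Y(\mathcal W)$, because pulling back a minimal finite subcover of $\mathcal W$ gives a finite cover of $X$; and (ii) if $\alpha$ is moreover surjective, then a finite subfamily of $\mathcal W$ whose preimages cover $X$ must already cover $Y=\alpha(X)$, so in that case $N_X(\alpha^{-1}(\mathcal W))=N_Y(\mathcal W)$. Hence $H(\alpha^{-1}(\mathcal W))\le H(\mathcal W)$ always, with equality when $\alpha$ is onto.

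For part (a) I would then fix $\mathcal V\in\cov(Y)$ and apply remark (ii) to each of the covers $\bigvee_{i=0}^{n-1}\phi^{-i}(\mathcal V)$, using the displayed identity, to get $H\bigl(\bigvee_{i=0}^{n-1}\psi^{-i}(\alpha^{-1}(\mathcal V))\bigr)=H\bigl(\bigvee_{i=0}^{n-1}\phi^{-i}(\mathcal V)\bigr)$ for all $n$. Dividing by $n$ and letting $n\to\infty$ (the limits exist since the relevant sequences are subadditive, by the lemma above) gives $H_{top}(\phi,\mathcal V)=H_{top}(\psi,\alpha^{-1}(\mathcal V))\le h_{top}(\psi)$, and taking the supremum over $\mathcal V\in\cov(Y)$ yields $h_{top}(\phi)\le h_{top}(\psi)$.

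For part (b) the first step is to observe that a continuous injection $\alpha$ of the compact space $X$ into $Y$ (Hausdorff) is a homeomorphism onto its image $Z=\alpha(X)$, which is a closed, $\phi$-invariant subspace of $Y$. Given $\U\in\cov(X)$, I would write each $\alpha(U)=V_U\cap Z$ with $V_U$ open in $Y$ and set $\mathcal V=\{V_U:U\in\U\}\cup\{Y\setminus Z\}\in\cov(Y)$. Injectivity of $\alpha$ gives $\alpha^{-1}(V_U)=U$ and $\alpha^{-1}(Y\setminus Z)=\emptyset$, so $\alpha^{-1}(\mathcal V)=\U\cup\{\emptyset\}$, which has the same $\psi$-entropy as $\U$ (adjoining the empty set changes no value of $N$). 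Combining the displayed identity with remark (i) then gives $H_{top}(\psi,\U)=H_{top}(\psi,\alpha^{-1}(\mathcal V))\le H_{top}(\phi,\mathcal V)\le h_{top}(\phi)$, and a supremum over $\U\in\cov(X)$ finishes part (b).

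I expect the main obstacle to be pinning down remarks (i) and (ii) exactly: seeing that surjectivity of $\alpha$ is precisely what turns the automatic inequality $N_X(\alpha^{-1}(\cdot))\le N_Y(\cdot)$ into an equality, and that injectivity together with compactness of $X$ (and Hausdorffness of $Y$, which is used to know $Z$ is closed and that $\alpha$ is an embedding) is precisely what lets one realize an arbitrary cover of $X$ as a pullback of a cover of $Y$ up to the harmless adjunction of $\emptyset$. Everything else — commuting $\alpha^{-1}$ past $\psi^{-i}$ and finite intersections, discarding empty members of covers, and passing to $\lim$ and $\sup$, both of which respect the inequalities — is routine bookkeeping.
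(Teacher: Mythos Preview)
The paper states this lemma without proof (it is a survey, and this is a classical fact from \cite{AKM,Wa}), so there is no in-paper argument to compare against. Your proof is correct and is essentially the standard one: pull covers back along $\alpha$, use $\alpha\psi=\phi\alpha$ to commute $\alpha^{-1}$ past the iterated refinements, and compare $N$-values via the observations (i) and (ii). The only tacit hypothesis you invoke is that $Y$ is Hausdorff (needed so that the continuous injection $\alpha$ from the compact space $X$ is a closed embedding), which is the standing convention for ``compact'' in this paper.
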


An easy consequence of monotonicity for continuous images is the {\em invariance} of $h_{top}$ under topological conjugation. 

\begin{theorem}[Invariance under conjugation]
Let $X$ be a compact space and $\psi:X\to X$ a continuous selfmap. If $\alpha:X\to Y$ is a homeomorphism between compact spaces, then $h_{top}(\alpha\phi\alpha^{-1})=h_{top}(\phi)$.
\end{theorem}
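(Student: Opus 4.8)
The plan is to deduce the statement directly from the two monotonicity properties recorded in Lemma~\ref{monot}, since $\alpha$, being a homeomorphism, is simultaneously a \emph{surjective} and an \emph{injective} continuous map. First I would set $\widetilde\phi=\alpha\phi\alpha^{-1}\colon Y\to Y$; it is continuous, being a composition of continuous maps, and it satisfies the intertwining identity $\alpha\phi=\widetilde\phi\alpha$. Reading Lemma~\ref{monot} with its ``$\psi$'' taken to be our $\phi$ on $X$, its ``$\phi$'' taken to be $\widetilde\phi$ on $Y$, and its ``$\alpha$'' our $\alpha$, part~(a) (applicable because $\alpha$ is surjective) gives $h_{top}(\phi)\geq h_{top}(\widetilde\phi)$, while part~(b) (applicable because $\alpha$ is injective) gives $h_{top}(\phi)\leq h_{top}(\widetilde\phi)$. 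Combining the two inequalities yields $h_{top}(\alpha\phi\alpha^{-1})=h_{top}(\widetilde\phi)=h_{top}(\phi)$, which is the claim. (Equivalently, one may invoke only part~(a): apply it to $\alpha$ to get $h_{top}(\phi)\geq h_{top}(\widetilde\phi)$, then apply it again to the homeomorphism $\alpha^{-1}\colon Y\to X$, which satisfies $\alpha^{-1}\widetilde\phi=\phi\alpha^{-1}$, to get $h_{top}(\widetilde\phi)\geq h_{top}(\phi)$.)

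For completeness I would also note a self-contained route that does not quote Lemma~\ref{monot}. A homeomorphism $\alpha$ induces a bijection $\cov(X)\to\cov(Y)$ by $\U\mapsto\alpha(\U)=\{\alpha(U):U\in\U\}$, under which finite subcovers of $\U$ correspond exactly to finite subcovers of $\alpha(\U)$; hence $N_X(\U)=N_Y(\alpha(\U))$ and $H(\U)=H(\alpha(\U))$. Since $\widetilde\phi^{-n}=\alpha\,\phi^{-n}\,\alpha^{-1}$ as maps of subsets, one gets $\widetilde\phi^{-i}(\alpha(\U))=\alpha(\phi^{-i}(\U))$ for every $i$, so that $\bigvee_{i=0}^{n-1}\widetilde\phi^{-i}(\alpha(\U))=\alpha\bigl(\bigvee_{i=0}^{n-1}\phi^{-i}(\U)\bigr)$. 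Taking cover-entropies, dividing by $n$ and letting $n\to\infty$ gives $H_{top}(\widetilde\phi,\alpha(\U))=H_{top}(\phi,\U)$ for every $\U\in\cov(X)$; since $\U\mapsto\alpha(\U)$ is a bijection onto $\cov(Y)$, taking suprema on both sides yields $h_{top}(\widetilde\phi)=h_{top}(\phi)$.

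The honest assessment is that there is essentially no hard part here: the result is a formal corollary, and the only things to verify are the trivial compatibilities $\alpha\phi=\widetilde\phi\alpha$, $\widetilde\phi^{-n}=\alpha\,\phi^{-n}\,\alpha^{-1}$, and the invariance of $N(\cdot)$ under the cover bijection induced by a homeomorphism. In the write-up I would present the two-line argument through Lemma~\ref{monot}(a)--(b) as \emph{the} proof and leave the explicit cover computation implicit, mentioning it at most as a remark.
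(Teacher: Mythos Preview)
Your proposal is correct and matches the paper's approach exactly: the paper does not give a written-out proof but simply remarks, just before stating the theorem, that invariance under conjugation is ``an easy consequence of monotonicity for continuous images,'' i.e., Lemma~\ref{monot}(a). Your two-line argument via Lemma~\ref{monot} (using (a) and (b), or (a) twice with $\alpha$ and $\alpha^{-1}$) is precisely what is intended, and the additional self-contained cover computation you include is a harmless elaboration.
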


The next theorem allows us to reduce to \emph{surjective} continuous selfmaps of compact spaces in the computation of the topological entropy.
For a compact space $X$ and a continuous selfmap $\psi:X\to X$, let $$E_\psi(X)=\bigcap_{n\in\N}\psi^n(X);$$

\begin{theorem}[Reduction to surjective selfmaps] \label{red-to-sur}
Let $X$ be a compact space and $\psi:X\to X$ be a continuous selfmap.  Then $E_\psi(X)$ is closed and $\psi$-invariant, the map $\psi\restriction_{E_\psi(X)}:E_\psi(X)\to E_\psi(X)$ is surjective and $h_{top}(\psi)=h_{top}(\psi\restriction_{E_\psi(X)})$.
\end{theorem}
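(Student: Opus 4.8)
The plan is to dispose of the three structural claims quickly and then prove the entropy equality $h_{top}(\psi)=h_{top}(\psi\restriction_{E_\psi(X)})$ by two inequalities, of which only one requires real work.

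First I would observe that $\psi(\psi^n(X))=\psi^{n+1}(X)$, so $X\supseteq\psi(X)\supseteq\psi^2(X)\supseteq\cdots$ is a descending chain of compact, hence closed, subsets; therefore $E_\psi(X)=\bigcap_{n\in\N}\psi^n(X)$ is closed, and $\psi(E_\psi(X))\subseteq\bigcap_{n\in\N}\psi^{n+1}(X)=E_\psi(X)$ gives $\psi$-invariance. For surjectivity of $\psi\restriction_{E_\psi(X)}$: given $y\in E_\psi(X)$, for each $n$ the fibre $\psi^{-1}(y)\cap\psi^n(X)$ is a non-empty closed, hence compact, subset of $\psi^n(X)$, and these sets decrease with $n$; by the finite intersection property their intersection contains some $x$, which then lies in $E_\psi(X)$ and satisfies $\psi(x)=y$. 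The inequality $h_{top}(\psi\restriction_{E_\psi(X)})\leq h_{top}(\psi)$ is then immediate from Monotonicity for invariant subspaces (Lemma \ref{monot}(b)) applied to the injective continuous inclusion $E_\psi(X)\hookrightarrow X$, which intertwines $\psi\restriction_{E_\psi(X)}$ with $\psi$.

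The substantial point is the reverse inequality $h_{top}(\psi)\leq h_{top}(\psi\restriction_{E_\psi(X)})$. Write $E=E_\psi(X)$, fix $\U\in\cov(X)$, and set $\U_E=\{U\cap E:U\in\U\}\in\cov(E)$, $\V^{(n)}=\bigvee_{i=0}^{n-1}\psi^{-i}(\U)$, and $a_n=N_E\bigl(\bigvee_{i=0}^{n-1}(\psi\restriction_E)^{-i}(\U_E)\bigr)$. Since restriction to $E$ commutes with finite intersections and with $\psi^{-i}$, a minimal subcover of $\bigvee_{i=0}^{n-1}(\psi\restriction_E)^{-i}(\U_E)$ lifts to $a_n$ members $W_1,\dots,W_{a_n}$ of $\V^{(n)}$ whose union $O_n$ is an open set containing $E$. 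As the compact sets $\psi^k(X)$ decrease to $E\subseteq O_n$, compactness yields $k_n\in\N$ with $\psi^k(X)\subseteq O_n$ for all $k\geq k_n$. The crucial identity $X=\psi^{-k}(\psi^k(X))$ now shows that for every $k\geq k_n$ the sets $\psi^{-k}(W_1),\dots,\psi^{-k}(W_{a_n})$ cover $X$ and form a subcover of $\psi^{-k}(\V^{(n)})=\bigvee_{i=k}^{k+n-1}\psi^{-i}(\U)$, so $N_X(\psi^{-k}(\V^{(n)}))\leq a_n$. Tiling a long join: for $m=k_n+\ell n$ we have $\V^{(m)}=\bigl(\bigvee_{i=0}^{k_n-1}\psi^{-i}\U\bigr)\vee\bigvee_{t=0}^{\ell-1}\psi^{-(k_n+tn)}(\V^{(n)})$, so by submultiplicativity $N_X(\V_1\vee\V_2)\leq N_X(\V_1)N_X(\V_2)$ we get $N_X(\V^{(m)})\leq C_n\,a_n^{\ell}$ with $C_n=N_X\bigl(\bigvee_{i=0}^{k_n-1}\psi^{-i}\U\bigr)$ fixed. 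Dividing $\log N_X(\V^{(m)})$ by $m=k_n+\ell n$ and letting $\ell\to\infty$ — the limit defining $H_{top}(\psi,\U)$ exists by Fekete Lemma \ref{fekete}, so it may be evaluated along this subsequence — gives $H_{top}(\psi,\U)\leq \frac{\log a_n}{n}$ for every $n$; letting $n\to\infty$ (Fekete again, now on $E$) yields $H_{top}(\psi,\U)\leq H_{top}(\psi\restriction_E,\U_E)\leq h_{top}(\psi\restriction_E)$. Taking the supremum over $\U\in\cov(X)$ finishes the proof.

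The main obstacle is exactly this tiling/exponential-rate estimate: the non-surjectivity of $\psi$ seems to threaten an accumulating "transient'' error, and the key realization that dissolves it is that pulling back, via $\psi^{-k}$, a cover of the forward image $\psi^k(X)$ automatically covers all of $X$ (because $\psi^{-k}(\psi^k(X))=X$), so across the $\ell$ tiles no error is incurred beyond the single fixed multiplicative constant $C_n$, which washes out after dividing by $m$. Everything else — the structural claims, the restriction-commutes-with-joins bookkeeping, and the two Fekete passages to the limit — is routine.
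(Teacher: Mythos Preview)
Your proof is correct. The paper, being a survey, states this theorem without proof, so there is no ``paper's own proof'' to compare against; your argument stands on its own and is complete. The structural claims are handled cleanly, and the main inequality is proved by a standard but well-executed tiling argument: lifting an $E$-minimal subcover to $\V^{(n)}$, using compactness of the decreasing chain $\psi^k(X)$ to find $k_n$, and exploiting $X=\psi^{-k}(\psi^k(X))$ to control $N_X(\psi^{-k}(\V^{(n)}))$ uniformly for $k\geq k_n$. The only cosmetic remark is that after obtaining $H_{top}(\psi,\U)\leq \frac{\log a_n}{n}$ for every $n$, you do not actually need the second Fekete passage---taking the infimum over $n$ already gives $H_{top}(\psi,\U)\leq \inf_n\frac{\log a_n}{n}=H_{top}(\psi\restriction_E,\U_E)$, and Fekete merely confirms that this infimum equals the limit. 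But this is a matter of phrasing, not substance.
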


\begin{theorem}[Logarithmic Law]  
Let $X$ be a compact space and $\psi:X\to X$ a continuous selfmap. Then:
\begin{itemize}
\item[(a)] $h_{top}(\psi^{k})=kh_{top}(\psi)$ for every $k\in  \N$;
\item[(b)]  if $\psi:X\to X$ is a homeomorphism, then $h_{top}(\psi^{-1})=h_{top}(\psi)$, so $h_{top}(\psi^{k})=|k|h_{top}(\psi)$ for every $k\in \Z$.
\end{itemize}
\end{theorem}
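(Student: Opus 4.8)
The plan is to establish (a) for $k\in\N_+$ by directly comparing the open covers produced by iterating $\psi$ with those produced by iterating $\psi^k$, and then to obtain the case $k=0$ and all of (b) as formal consequences. Fix $\U\in\cov(X)$ and set $a_m=H\big(\bigvee_{i=0}^{m-1}\psi^{-i}(\U)\big)$, so that $H_{top}(\psi,\U)=\lim_{m\to\infty}a_m/m$ (the limit exists by Fekete Lemma \ref{fekete}, as recalled above). For $k\in\N_+$ I would use two observations. First, since $(\psi^k)^{-j}(\U)=\psi^{-kj}(\U)$ and $\{0,k,\dots,(n-1)k\}\subseteq\{0,1,\dots,nk-1\}$, the cover $\bigvee_{i=0}^{nk-1}\psi^{-i}(\U)$ refines $\bigvee_{j=0}^{n-1}(\psi^k)^{-j}(\U)$; since $N$ is monotone under refinement this gives $H\big(\bigvee_{j=0}^{n-1}(\psi^k)^{-j}(\U)\big)\le a_{nk}$, and dividing by $n$ while using that $\{a_{nk}/(nk)\}_n$ is a subsequence of the convergent sequence $\{a_m/m\}_m$ yields $H_{top}(\psi^k,\U)\le k\,H_{top}(\psi,\U)$; taking the supremum over $\U$ gives $h_{top}(\psi^k)\le k\,h_{top}(\psi)$. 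Second, setting $\V=\bigvee_{l=0}^{k-1}\psi^{-l}(\U)\in\cov(X)$, the fact that $\psi^{-m}$ commutes with $\vee$ together with the reindexing $i=kj+l$ gives the exact equality $\bigvee_{j=0}^{n-1}(\psi^k)^{-j}(\V)=\bigvee_{i=0}^{nk-1}\psi^{-i}(\U)$, hence $H_{top}(\psi^k,\V)=k\,H_{top}(\psi,\U)$ and so $h_{top}(\psi^k)\ge k\,h_{top}(\psi)$. Together these prove (a) for $k\in\N_+$; for $k=0$ one has $\psi^0=id_X$ and $\bigvee_{j=0}^{n-1}id_X^{-j}(\U)=\U$, so $H_{top}(id_X,\U)=\lim_n H(\U)/n=0$ for all $\U$, whence $h_{top}(id_X)=0=0\cdot h_{top}(\psi)$.

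For (b), assume $\psi$ is a homeomorphism; then $\psi^{-(n-1)}$ is a homeomorphism, so it carries open covers to open covers and leaves $N(\cdot)$ unchanged. Applying it to $\bigvee_{j=0}^{n-1}(\psi^{-1})^{-j}(\U)=\bigvee_{j=0}^{n-1}\psi^{j}(\U)$ and reindexing $i=n-1-j$ produces $\bigvee_{i=0}^{n-1}\psi^{-i}(\U)$, so $N\big(\bigvee_{j=0}^{n-1}\psi^{j}(\U)\big)=N\big(\bigvee_{i=0}^{n-1}\psi^{-i}(\U)\big)$ for all $n$; dividing logarithms by $n$ and passing to the limit gives $H_{top}(\psi^{-1},\U)=H_{top}(\psi,\U)$ for every $\U$, hence $h_{top}(\psi^{-1})=h_{top}(\psi)$ after taking suprema. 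Finally, for $k\in\Z$ with $k<0$, writing $\psi^{k}=(\psi^{-1})^{-k}$ with $-k\in\N_+$ and applying (a) to the homeomorphism $\psi^{-1}$ gives $h_{top}(\psi^{k})=(-k)h_{top}(\psi^{-1})=(-k)h_{top}(\psi)=|k|h_{top}(\psi)$.

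The only point requiring genuine care is the bookkeeping with the joins: the refinement used for the bound $h_{top}(\psi^k)\le k\,h_{top}(\psi)$ and the exact identity with $\V$ used for the reverse bound. Once these are written out correctly, everything else reduces to monotonicity of $N$ under refinement, the stability of a limit under passing to subsequences, and — for (b) — the invariance of $N$ under homeomorphisms.
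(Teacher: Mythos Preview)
The paper states this theorem without proof (it is listed among the ``basic properties of the topological entropy for continuous selfmaps of compact spaces'' in \S\ref{akm-sec}, with an implicit reference to the standard literature such as \cite{AKM,Wa}). Your argument is the standard one and is correct: the two-sided estimate for (a) via refinement on one side and the exact identity with $\V=\bigvee_{l=0}^{k-1}\psi^{-l}(\U)$ on the other, together with the homeomorphism invariance of $N(\cdot)$ for (b), is exactly the textbook proof (cf.\ \cite[Theorem 7.10]{Wa}).
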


Let $(X_i,\varphi_{i,j},I)$ be an inverse system of compact spaces $X_i$ and let $X=\varprojlim X_j$. This means that $I$ is a directed set, for every $j\leq i$ in $I$, $\varphi_{i,j}:X_i\to X_j$ is a continuous selfmap such that $\varphi_{i,j}\circ\varphi_{k,i}=\varphi_{k,j}$ for every $k\leq j\leq i$ in $I$, and $\varphi_{i,i}=id_{X_i}$. For every $i\in I$ there exists a continuous map $\varphi_i:X\to X_i$ such that $\varphi_{i,j}\circ \varphi_i=\varphi_j$ if $j\leq i $ in $I$.
If for every $i\in I$ a continuous selfmap $\psi_i:X_i\to X_i$ is given such that $\psi_i\circ\varphi_{i,j}=\varphi_{i,j}\psi_j$ with $j\leq i$ in $I$, then there exists a unique continuous map $\psi:X\to X$ such that $\varphi_i\circ \psi=\psi_i\circ \varphi_i$ for every $i\in I$, that is, $\psi=\varprojlim \psi_i$.

\begin{proposition}[Continuity]\emph{\cite[Proposition 2.1]{S}}\label{cont-sp}
Let $(X_i,\varphi_{ij},I)$ be an inverse system of compact spaces $X_i$ with $\varphi_{ij},$ surjective for every $i,j\in I$, and let $\psi_i:X_i\to X_i$ be continuous maps such that $X=\varprojlim X_i$ and $\psi=\varprojlim \psi_i$. Then $h_{top}(\psi)=\sup_{i\in I} h_{top}(\psi_i)$.
\end{proposition}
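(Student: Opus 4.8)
The plan is to establish the two inequalities $h_{top}(\psi)\ge\sup_{i\in I}h_{top}(\psi_i)$ and $h_{top}(\psi)\le\sup_{i\in I}h_{top}(\psi_i)$ separately. For the first one I would use that, since the bonding maps $\varphi_{i,j}$ are surjective and the $X_i$ are compact, every projection $\varphi_i:X\to X_i$ is a continuous surjection, and that $\varphi_i\circ\psi=\psi_i\circ\varphi_i$ by the very construction of $\psi=\varprojlim\psi_i$. Then Monotonicity for continuous images (Lemma \ref{monot}(a)) applied to $\varphi_i$ gives $h_{top}(\psi)\ge h_{top}(\psi_i)$ for each $i\in I$, and taking the supremum yields the first inequality.

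For the reverse inequality the key point is that the open covers of $X$ of the form $\varphi_i^{-1}(\mathcal V)=\{\varphi_i^{-1}(V):V\in\mathcal V\}$, with $i\in I$ and $\mathcal V\in\cov(X_i)$, are cofinal in $\cov(X)$ with respect to refinement. To see this I would first note that the sets $\varphi_i^{-1}(U)$, with $i\in I$ and $U$ open in $X_i$, form a base of the topology of $X$: a finite intersection of such sets can be pulled back to a single index, using that $I$ is directed and the bonding maps are continuous. Given $\U\in\cov(X)$, each $x\in X$ then lies in some basic set $\varphi_{i(x)}^{-1}(U_x)$ contained in a member of $\U$; by compactness finitely many of them, over indices $i(x_1),\dots,i(x_m)$, already cover $X$. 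Picking an upper bound $i_0$ of these indices in $I$ and writing $\varphi_{i(x_k)}^{-1}(U_{x_k})=\varphi_{i_0}^{-1}(V_k)$ with $V_k=\varphi_{i_0,i(x_k)}^{-1}(U_{x_k})$ open in $X_{i_0}$, the surjectivity of $\varphi_{i_0}$ forces $\mathcal V:=\{V_1,\dots,V_m\}$ to cover $X_{i_0}$, and by construction $\varphi_{i_0}^{-1}(\mathcal V)$ refines $\U$. Since $\U\mapsto H_{top}(\psi,\U)$ is monotone under refinement of covers (because $N$, hence $H$, is monotone under refinement and refinement is preserved by $\vee$ and by preimages under $\psi$), it follows that $h_{top}(\psi)=\sup\{H_{top}(\psi,\varphi_i^{-1}(\mathcal V)):i\in I,\ \mathcal V\in\cov(X_i)\}$.

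It then remains to check $H_{top}(\psi,\varphi_i^{-1}(\mathcal V))=H_{top}(\psi_i,\mathcal V)$ for all such covers. From $\varphi_i\circ\psi=\psi_i\circ\varphi_i$ one gets $\psi^{-n}(\varphi_i^{-1}(\mathcal V))=\varphi_i^{-1}(\psi_i^{-n}(\mathcal V))$, and since $\varphi_i^{-1}$ commutes with finite joins of covers, $\bigvee_{j=0}^{n-1}\psi^{-j}(\varphi_i^{-1}(\mathcal V))=\varphi_i^{-1}\bigl(\bigvee_{j=0}^{n-1}\psi_i^{-j}(\mathcal V)\bigr)$. Moreover, for any $\mathcal W\in\cov(X_i)$ the surjectivity of $\varphi_i$ implies that a subfamily of $\varphi_i^{-1}(\mathcal W)$ covers $X$ if and only if the corresponding subfamily of $\mathcal W$ covers $X_i$, so $N_X(\varphi_i^{-1}(\mathcal W))=N_{X_i}(\mathcal W)$ and hence $H(\varphi_i^{-1}(\mathcal W))=H(\mathcal W)$. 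Putting these together gives $H_{top}(\psi,\varphi_i^{-1}(\mathcal V))=H_{top}(\psi_i,\mathcal V)\le h_{top}(\psi_i)\le\sup_{i\in I}h_{top}(\psi_i)$; taking the supremum over $i$ and $\mathcal V$ completes the proof of $h_{top}(\psi)\le\sup_{i\in I}h_{top}(\psi_i)$.

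The step I expect to be the main obstacle is the cofinality claim of the second paragraph: it is the only place where all three hypotheses genuinely enter --- compactness of $X$ to extract a finite subcover, directedness of $I$ to bring the chosen basic sets to a common index $i_0$, and surjectivity of the bonding maps (hence of $\varphi_{i_0}$) both to guarantee that the pulled-back sets $V_k$ actually cover $X_{i_0}$ and, later, to preserve $N$. The rest is routine bookkeeping with the definitions of $H$, $\vee$ and preimages.
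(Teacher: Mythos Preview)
Your proof is correct and is the standard argument for this result. Note, however, that the paper does not give its own proof of this proposition: it is stated with a reference to \cite[Proposition 2.1]{S} (Stoyanov), so there is no in-paper proof to compare against. Your argument is precisely the one that underlies the cited result --- the two inequalities via Monotonicity for continuous images and the cofinality of pulled-back covers, with surjectivity of the projections used both to make $\mathcal V$ an actual cover of $X_{i_0}$ and to preserve $N$ --- so it is exactly what the citation is pointing to.
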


\begin{theorem}[Weak Addition Theorem]\label{WAT-top}
Let $X_1,X_2$ be compact spaces and $\psi_i:X_i\to X_i$ a continuous selfmap for $i=1,2$. Then $$h_{top}(\psi_1\times \psi_2)= h_{top}(\psi_1) + h_{top}(\psi_2).$$
\end{theorem}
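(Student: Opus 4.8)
# Proof Proposal for the Weak Addition Theorem (Theorem \ref{WAT-top})

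The plan is to prove the two inequalities separately; the inequality $h_{top}(\psi_1\times\psi_2)\le h_{top}(\psi_1)+h_{top}(\psi_2)$ is routine, while the reverse one requires a reduction to the compact metric case. Write $T=\psi_1\times\psi_2$, and for $\mathcal U\in\cov(X_1)$, $\mathcal V\in\cov(X_2)$ set $\mathcal U\times\mathcal V=\{U\times V:U\in\mathcal U,\ V\in\mathcal V\}\in\cov(X_1\times X_2)$. Two observations underlie everything. First, $T^{-k}(\mathcal U\times\mathcal V)=\psi_1^{-k}(\mathcal U)\times\psi_2^{-k}(\mathcal V)$, hence $\bigvee_{k=0}^{n-1}T^{-k}(\mathcal U\times\mathcal V)=\mathcal A_n\times\mathcal B_n$ with $\mathcal A_n=\bigvee_{k=0}^{n-1}\psi_1^{-k}(\mathcal U)$ and $\mathcal B_n=\bigvee_{k=0}^{n-1}\psi_2^{-k}(\mathcal V)$. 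Second, by a standard application of the tube lemma, every open cover of $X_1\times X_2$ is refined by some product cover $\mathcal U\times\mathcal V$; since $H_{top}(T,-)$ is monotone under refinement, this yields $h_{top}(T)=\sup\{H_{top}(T,\mathcal U\times\mathcal V):\mathcal U\in\cov(X_1),\ \mathcal V\in\cov(X_2)\}$.

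For the upper bound, I would use that from minimal finite subcovers of $\mathcal A$ and $\mathcal B$ one obtains a subcover of $\mathcal A\times\mathcal B$, so $N(\mathcal A\times\mathcal B)\le N(\mathcal A)N(\mathcal B)$ and $H(\mathcal A\times\mathcal B)\le H(\mathcal A)+H(\mathcal B)$. Applying this to $\mathcal A_n,\mathcal B_n$, dividing by $n$ and letting $n\to\infty$ (the limits exist by Fekete Lemma \ref{fekete}) gives $H_{top}(T,\mathcal U\times\mathcal V)\le H_{top}(\psi_1,\mathcal U)+H_{top}(\psi_2,\mathcal V)\le h_{top}(\psi_1)+h_{top}(\psi_2)$; taking the supremum and using the cofinality of product covers gives $h_{top}(T)\le h_{top}(\psi_1)+h_{top}(\psi_2)$.

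The reverse inequality is the real content. Monotonicity for continuous images (Lemma \ref{monot}(a)) applied to the projections $\pi_i\colon X_1\times X_2\to X_i$, which satisfy $\pi_i\circ T=\psi_i\circ\pi_i$, only yields $h_{top}(T)\ge\max\{h_{top}(\psi_1),h_{top}(\psi_2)\}$, which is not enough. Instead I would reduce to compact metric spaces: each $X_i$ is an inverse limit $\varprojlim_j X_i^{(j)}$ of compact metric spaces along surjective bonding maps, with $\psi_i=\varprojlim_j\psi_i^{(j)}$ for suitable $\psi_i^{(j)}\colon X_i^{(j)}\to X_i^{(j)}$ (take the separable closed $\psi_i^{\ast}$-invariant unital subalgebras of $C(X_i)$, which form a directed cofinal family, each being the function algebra of a compact metric quotient carrying an induced selfmap). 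Then $X_1\times X_2=\varprojlim(X_1^{(i)}\times X_2^{(j)})$ and $T=\varprojlim(\psi_1^{(i)}\times\psi_2^{(j)})$, so by Continuity on inverse limits (Proposition \ref{cont-sp}) it suffices to prove the theorem for the metric factors. On a compact metric space $h_{top}$ coincides with Bowen's entropy (\S\ref{bowen-sec}), which I would compute using $(n,\varepsilon)$-separated sets with respect to the metric $d=\max\{d_1,d_2\}$ on the product: if $E_1$ is $(n,\varepsilon)$-separated for $\psi_1$ and $E_2$ for $\psi_2$, then $E_1\times E_2$ is $(n,\varepsilon)$-separated for $\psi_1\times\psi_2$, whence $s_n(\varepsilon,\psi_1\times\psi_2)\ge s_n(\varepsilon,\psi_1)\,s_n(\varepsilon,\psi_2)$; passing to $\liminf_n\tfrac1n\log(\cdot)$ and then $\varepsilon\to 0$ (using the $\liminf$ form of Bowen's formula, so that the inequality for the $\liminf$'s survives) gives $h_{top}(\psi_1\times\psi_2)\ge h_{top}(\psi_1)+h_{top}(\psi_2)$.

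The main obstacle is precisely this superadditivity. It cannot be obtained by naive cover-counting, since $N(\mathcal A\times\mathcal B)\ge N(\mathcal A)N(\mathcal B)$ fails in general; the metrizable reduction circumvents this by replacing the failed cover inequality with the true and easy statement that a product of separated sets is separated. The remaining points needing care are routine: verifying the inverse-limit presentations of $X_i$ and of $X_1\times X_2$ with the expected maps so that Proposition \ref{cont-sp} applies, and the bookkeeping in passing from $s_n(\varepsilon,\psi_1\times\psi_2)\ge s_n(\varepsilon,\psi_1)s_n(\varepsilon,\psi_2)$ to the entropy inequality.
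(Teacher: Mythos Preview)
The paper does not actually prove this theorem: it is a survey, and Theorem \ref{WAT-top} is stated without proof (the classical source is Goodwyn \cite{Go}, listed in the bibliography). So there is no ``paper's own proof'' to compare against.

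Your argument is correct and is essentially Goodwyn's original approach. The upper bound via product covers and $N(\mathcal A\times\mathcal B)\le N(\mathcal A)N(\mathcal B)$ is routine. For the lower bound you correctly identify the obstruction (the reverse inequality on $N$ fails for covers) and take the standard detour: reduce to the compact metric case via inverse limits and then use Bowen's separated sets, where the product of separated sets is separated. The inverse-limit presentation you sketch is valid --- the separable unital closed $\psi_i^{*}$-invariant $C^{*}$-subalgebras of $C(X_i)$ form a directed cofinal family whose Gelfand spectra give the required metric quotients with surjective bonding maps, and products of inverse limits are inverse limits of products, so Proposition \ref{cont-sp} applies as you say. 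The only point worth spelling out more fully is the ``$\liminf$ form of Bowen's formula'' you invoke: on a compact metric space both $\limsup_n$ and $\liminf_n$ of $\frac{1}{n}\log s_n(\varepsilon,\psi)$ are trapped between AKM cover entropies $H_{top}(\psi,\mathcal U)$ at comparable mesh scales (and those are genuine limits by subadditivity), so after letting $\varepsilon\to 0$ they agree with $h_{top}(\psi)$. With that made explicit, your superadditivity argument goes through cleanly.
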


 The Weak Addition Theorem concerns finite products in the category of compact spaces. We give now the following result covering its counterpart for finite coproducts.

\begin{proposition}
Let $X$ be a compact space and $\psi:X\to X$ a continuous selfmap. Let $X_1$, $X_2$ be compact $\psi$-invariant subspaces of $X$ such that $X=X_1\cup X_2$. Then $$h_{top}(\psi)=\max\{ h_{top}(\psi\restriction_{X_1}), h_{top}(\psi\restriction_{X_2})\}.$$
\end{proposition}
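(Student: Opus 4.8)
The plan is to prove the equality as a pair of inequalities. For the easy direction, $h_{top}(\psi)\geq\max\{h_{top}(\psi\restriction_{X_1}),h_{top}(\psi\restriction_{X_2})\}$, I would simply invoke Lemma \ref{monot}(b): for $i=1,2$ the $\psi$-invariance of $X_i$ makes $\psi\restriction_{X_i}$ a continuous selfmap of the compact space $X_i$, and the inclusion $X_i\hookrightarrow X$ is an injective continuous map of compact spaces commuting with $\psi\restriction_{X_i}$ and $\psi$, whence $h_{top}(\psi\restriction_{X_i})\leq h_{top}(\psi)$.

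The substance is the reverse inequality, for which I would run the standard cover-counting argument. Fix $\U\in\cov(X)$ and restrict it to $\U_i=\{U\cap X_i:U\in\U\}\in\cov(X_i)$. The first step I would isolate is the identity $\psi^{-j}(U)\cap X_i=(\psi\restriction_{X_i})^{-j}(U\cap X_i)$, valid because $X_i$ is $\psi$-invariant (so that for $x\in X_i$ the point $\psi^j(x)=(\psi\restriction_{X_i})^j(x)$ stays in $X_i$); iterating through the join, this shows that the trace on $X_i$ of $\U^{(n)}:=\U\vee\psi^{-1}(\U)\vee\cdots\vee\psi^{-n+1}(\U)$ is exactly the cover $\U_i^{(n)}:=\U_i\vee(\psi\restriction_{X_i})^{-1}(\U_i)\vee\cdots\vee(\psi\restriction_{X_i})^{-n+1}(\U_i)$. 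Then, choosing for each $i$ a subfamily $\mathcal V_i\subseteq\U^{(n)}$ of cardinality $N_{X_i}(\U_i^{(n)})$ whose traces cover $X_i$, the union $\mathcal V_1\cup\mathcal V_2\subseteq\U^{(n)}$ covers $X=X_1\cup X_2$, which gives $N_X(\U^{(n)})\leq N_{X_1}(\U_1^{(n)})+N_{X_2}(\U_2^{(n)})$.

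To conclude I would take logarithms, use the elementary bound $\log(a+b)\leq\log 2+\max\{\log a,\log b\}$, divide by $n$ and let $n\to\infty$: since the two limits $H_{top}(\psi\restriction_{X_i},\U_i)$ exist by Fekete Lemma \ref{fekete}, $\tfrac{\log 2}{n}\to 0$, and the maximum of two convergent sequences converges to the maximum of their limits, this yields $H_{top}(\psi,\U)\leq\max\{h_{top}(\psi\restriction_{X_1}),h_{top}(\psi\restriction_{X_2})\}$; taking the supremum over $\U\in\cov(X)$ finishes the proof. (If some $X_i$ is empty, then $X$ is the other one and the statement is trivial, so one may assume both nonempty, avoiding $\log 0$.) I do not anticipate a real obstacle here: the only place the hypotheses are used essentially is the invariance identity $\psi^{-j}(U)\cap X_i=(\psi\restriction_{X_i})^{-j}(U\cap X_i)$ — without $\psi$-invariance $\psi\restriction_{X_i}$ is not even a selfmap of $X_i$ — and everything else is the familiar passage in which a sum of counting functions collapses to a maximum on the exponential scale, exactly as in the proof of the Weak Addition Theorem \ref{WAT-top}.
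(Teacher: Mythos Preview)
Your proof is correct and follows essentially the same approach as the paper: both directions are handled identically, with Lemma~\ref{monot}(b) for $\geq$ and the key inequality $N_X(\U^{(n)})\leq N_{X_1}(\U_1^{(n)})+N_{X_2}(\U_2^{(n)})$ (derived via the invariance identity $\psi^{-j}(\U)_i=(\psi\restriction_{X_i})^{-j}(\U_i)$) for $\leq$. Your write-up is a bit more explicit about the $\log 2$ step and the degenerate empty case, but the argument is the same.
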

\begin{proof}
Let $\psi_i=\psi\restriction_{X_i}$ for $i=1,2$.
By the monotonicity from Lemma \ref{monot}(b) we have the inequality $h_{top}(\psi)\geq\max\{ h_{top}(\psi_1), h_{top}(\psi_2)\}$. To prove the converse inequality, we first fix a notation, namely for $\mathcal U\in\cov(X)$, let $\mathcal U_i=\mathcal U\restriction_{X_i}\in\cov(X_i)$ for $i=1,2$. Then, for $i=1,2$, $\U,\mathcal V\in\cov(X)$ and $n\in\N$, $$(\U\vee \mathcal V)_i=\U_i\vee\mathcal V_i\ \text{and}\ \psi^{-n}(\U)_i=\psi_i^{-n}(\U_i);$$
moreover $$N_X(U)\leq N_{X_1}(U_1)+N_{X_2}(U_2).$$
Therefore for every $n\in\N_+$ we have
\begin{equation*}
\begin{split}
N_X(\U\vee \psi^{-1}(\U) \vee \ldots \vee \psi^{-n+1}(\U))\leq N_{X_1}(\U_1\vee \psi_1^{-1}(\U_1) \vee \ldots \vee \psi_1^{-n+1}(\U_1))+\\
N_{X_2}(\U_2\vee \psi_2^{-1}(\U_2) \vee \ldots \vee \psi_2^{-n+1}(\U_2)).
\end{split}
\end{equation*}
Hence $H_{top}(\psi,\U)\leq \max\{H_{top}(\psi_1,\U_1),H_{top}(\psi_2,\U_2)\}$; in particular $h_{top}(\psi)\leq\max\{ h_{top}(\psi_1), h_{top}(\psi_2)\},$ and this concludes the proof.
\end{proof}

Let $(X,\psi)$ be a \emph{topological flow}, that is, a homeomorphism $\psi:X\to X$ of a compact Hausdorff space  $X$. A \emph{factor} $(\pi,(Y,\phi))$ of $(X,\psi)$ is a topological flow $(Y,\phi)$ together with a continuous surjective map $\pi:X\to Y$ such that $\pi\circ \psi=\phi\circ\pi$. In \cite{BL} (see also \cite{KerLi}) it is proved that a topological flow $(X,\psi)$ admits a largest factor with zero topological entropy, called \emph{topological Pinsker factor} in analogy with the Pinsker $\sigma$-algebra for the measure theoretic entropy recalled above.

\subsection{Topological entropy on metric spaces and uniform spaces}\label{bowen-sec}

The first step in this direction was done by Bowen \cite{B}, who defined entropy for uniformly continuous selfmaps $\psi: X \to X$ of a metric space $(X,d)$.
To recall his definition, we need the following notion. For $x\in X$, $n \in \N_+$ and $\varepsilon >0$ define the {\em Bowen's ball }
\begin{equation}\label{BB}
D_n(x, \varepsilon, \psi) = \bigcap_{k=0}^{n-1}\psi^{-1}(B_\varepsilon(\psi^k(x))),
\end{equation}
where $B_\varepsilon(x)$ denotes the ball centered at $x$ of radius $\varepsilon$, given by the metric $d$. 

\smallskip
Let $\ms{K}(X)$ be the family of all compact subsets of $X$. For every $K\in\ms{K}(X)$, $n\in \N_+$ and $\varepsilon >0$ let  
$$
r\sb{n}(\varepsilon,K,\psi)=\min\{|F|:F \subseteq X\ \text{and}\ K \subseteq \bigcup_{x\in F}D_n(x, \varepsilon, \psi)\}.
$$

Moreover a subset $F\subseteq X$ is said to be an $\left (n,\varepsilon \right)$-\emph{separated set} \emph{with respect to} $\psi$, if $D_n(x, \varepsilon, \psi)\cap D_n(y, \varepsilon, \psi)=\emptyset$ for each pair of distinct points $x,y\in F$. 

\medskip
For $\varepsilon>0$, $K\in \ms{K}(X)$ and $n\in\N_+$, set 
$$s\sb{n}(\varepsilon,K,\psi)=\max\{|F|:F\subseteq K\text{ and } F \text{ is } (n,\varepsilon)\text{-separated with respect to $\psi$}\}.$$
The numbers $r\sb{n}(V,K,\varepsilon)$ and $s\sb{n}(V,K,\varepsilon)$ are finite and well defined as $K$ is compact. Now define
$$
r(\varepsilon,K,\psi)= \limsup_{n\rightarrow \infty }\frac{\log r_n\left(\varepsilon,K,\psi\right)}{n}\ \text{and}\  s(\varepsilon,K,\psi)= \limsup_{n\rightarrow \infty }\frac{\log s_n\left(\varepsilon,K,\psi\right)}{n}
$$
for every $\varepsilon>0$ and $K\in \ms{K}(X)$. Furthermore, let
$$
h_r(K,\psi)=\sup\{r(\varepsilon,K,\psi):\varepsilon>0\}\  \text{and}\  h_s(K,\psi)=\sup\{s(\varepsilon,K,\psi):\varepsilon>0\}.
$$
Then $h\sb{r}(K,\psi)=h\sb{s}(K,\psi)$ by \cite{B}, so we obtain the notion of \emph{uniform entropy} $h_U(\psi)$ of $\psi$:
\begin{equation}
h_{U}(\psi)=\sup\{h\sb{r}(K,\psi):K\in \ms{K}(X)\}= \sup\{h\sb{s}(K,\psi):K\in \ms{K}(X)\}.
\end{equation}

When the metric space $(X,d)$ is compact, every continuous selfmap  $\psi: X \to X$ is uniformly continuous, so one can define both $h_U(\psi)$ and $h_{top}(\psi)$. These two values always coincide \cite[Theorem 7.8]{Wa}. In particular the values of $h_U$ on the three Bernoulli shifts coincide with the ones of $h_{top}$. 

\begin{example}\label{computeBerny}
Let us use the above fact to compute $h_U({}_K\beta)= h_{top}({}_K\beta)=\log |K| $ of the left Bernoulli shift of $G = K^\N$, where $K$ is a finite (discrete) set of size $m > 1$ and $K^\N$ carries the compact product topology. To this end we use the metric $d$ on $G$ defined by $d(x,y) = 1/\max\{n\in\N: x_0=y_0, \ldots,  x_n =y_n \}$ for distinct $x = (x_n)$, $y=(y_n)$ in $G$.  To ease the computation, one can identify $K$ with the cyclic group $\Z(m)$, so that $G$ becomes also a group and $d$ is an invariant metric.  Then $U_m =B_{1/m}(0)$ is an open subgroup of $G$ and $D_n(0,1/m,_K\! \beta)= U_{m + n}$, so $r_n(1/m,G, \ _K\! \beta) = |K|^{n+m}$ for every $n\in\N_+$. Therefore $r(1/m, G, \ _K\! \beta) = h_U(_K\! \beta)= \log |K|$.
\end{example}

Contracting selfmaps are obviously uniformly continuous. We see now that they always have uniform entropy zero. 

\begin{proposition}\label{non-expanding} 
Let $(X, d)$ be a metric space and $\psi$ a contracting selfmap. Then $h_U(\psi)=0$.
\end{proposition}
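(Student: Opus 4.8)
The plan is to exploit that a contracting selfmap is in particular \emph{non-expanding}, i.e., $d(\psi(x),\psi(y))\le d(x,y)$ for all $x,y\in X$, and to deduce from this that the Bowen balls $D_n(x,\varepsilon,\psi)$ do not depend on $n$ at all. Once this is established, all the covering numbers $r_n(\varepsilon,K,\psi)$ are equal to a single finite number depending only on $\varepsilon$ and $K$, and the vanishing of $h_U(\psi)$ follows by unwinding the definitions.

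First I would record the only consequence of contractivity that is actually needed: if $d(\psi(x),\psi(y))\le d(x,y)$ for all $x,y$, then, by an immediate induction, $d(\psi^k(x),\psi^k(y))\le d(x,y)$ for every $k\in\N$. Recalling that $y\in D_n(x,\varepsilon,\psi)$ precisely when $d(\psi^k(x),\psi^k(y))<\varepsilon$ for all $k=0,\ldots,n-1$, the inequality above shows that for such $x,y$ the condition with $k\ge 1$ is automatically implied by the condition with $k=0$. Hence
\[
D_n(x,\varepsilon,\psi)=D_1(x,\varepsilon,\psi)=B_\varepsilon(x)\qquad\text{for every }n\in\N_+,\ x\in X,\ \varepsilon>0 .
\]

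Next I would fix $K\in\ms{K}(X)$ and $\varepsilon>0$. By the previous step, $r_n(\varepsilon,K,\psi)$ equals the minimal cardinality of a set $F\subseteq X$ with $K\subseteq\bigcup_{x\in F}B_\varepsilon(x)$; this number is finite by compactness of $K$ and, crucially, independent of $n$. Denoting it by $N(\varepsilon,K)$, we get
\[
r(\varepsilon,K,\psi)=\limsup_{n\to\infty}\frac{\log r_n(\varepsilon,K,\psi)}{n}=\limsup_{n\to\infty}\frac{\log N(\varepsilon,K)}{n}=0 .
\]
Taking the supremum over $\varepsilon>0$ yields $h_r(K,\psi)=0$, and then the supremum over $K\in\ms{K}(X)$ yields $h_U(\psi)=0$, which is the claimed statement.

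There is no real obstacle in this argument: the whole point is the observation that contractivity (in fact already non-expansiveness) makes the Bowen balls stabilize immediately, after which everything is a routine manipulation of the definitions. It is perhaps worth noting explicitly that the proof uses only the non-expanding inequality $d(\psi(x),\psi(y))\le d(x,y)$, so the statement holds in this slightly more general form as well.
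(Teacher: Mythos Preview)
Your proof is correct and follows essentially the same route as the paper's: both arguments observe that the non-expanding inequality $d(\psi^k(x),\psi^k(y))\le d(x,y)$ forces $D_n(x,\varepsilon,\psi)=B_\varepsilon(x)$ for all $n$, so the covering numbers $r_n(\varepsilon,K,\psi)$ are constant in $n$ and the entropy vanishes. The paper's proof is just a slightly terser version of yours (it also mentions $s_n$ alongside $r_n$, but since $h_r=h_s$ this is immaterial).
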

\begin{proof} 
Since $\psi$ is contracting, $d(\psi^n(x), \psi^n(y))\leq d(x,y)$ for every $n \in \N$ and every pair of points $x,y \in X$, hence $D_n(x,\varepsilon,\psi) = B_\varepsilon(x)$ for every $n\in\N_+$. Thus $r_n(\varepsilon, K,\psi)$ and $s_n(\varepsilon, K,\psi)$ do not depend on $n\in\N_+$, so $r(\varepsilon, K,\psi) = s(\varepsilon, K,\psi) = 0$, and hence $h_U(\psi)=0$ as well. 
\end{proof}

This proposition should be compared with Remark \ref{Base}(c), where a stronger property is established in the case of endomorphisms of locally compact groups. 

\begin{example}\label{non-expandingEx} 
Let $K$ be a compact totally disconnected abelian group, such that for every prime $p$ the topologically $p$-primary component $K_p$ of $K$ is a finitely generated $\J_p$-module. Then every continuous endomorphism $\psi: K \to K$ has $h_{top}(\psi) = 0$. Indeed, as $K$ is a metric space, we can apply the above proposition if we prove that $\psi$ is contracting. Since a base of the 
neighborhoods of 0 in $K$ is the family of subgroups $\{nK: n \in \N_+\}$, it suffices to notice that $\psi(nK)\subseteq nK$ for every $n \in \N_+$.
\end{example}

The notion of uniform entropy was extended in a natural way by Hood \cite{hood} to the case of uniformly continuous selfmaps of uniform spaces. 
We give below only the definitions without going deeper in this direction, since we intend to apply this notion of entropy mainly for special uniformly continuous selfmaps of locally compact spaces, namely, continuous endomorphisms of locally compact groups (that are always uniformly continuous). This special case is considered in full detail in \S \ref{lc-sec}.  

To extend the definition of uniform entropy $h_U$ to the case of a uniformly continuous selfmap $\psi: X \to X$ of a uniform space $(X,\U)$, fix an entourage $V$ of the uniform structure, $x\in X$ and $n\in \N_+$. Define the counterpart of the Bowen's ball with respect to $U$ by $D_n(x,V,\psi)= \bigcap_{k=0}^{n-1}\psi^{-1}(V(\psi^k(x)))$. Using these open sets in place of $D_n(x, \varepsilon, \psi)$, one can define the finite  numbers $r\sb{n}(V,K,\psi)$ and $s\sb{n}(V,K,\psi)$ for every compact subset $K$ of $X$ and for every entourage $V\in\U$ as above:
$$
r(V,K,\psi)= \limsup_{n\rightarrow \infty }\frac{\log r_n\left(V,K,\psi\right)}{n}\ \text{and}\ s(V,K,\psi)= \limsup_{n\rightarrow \infty }\frac{\log s_n\left(V,K,\psi\right)}{n}. 
$$
Then 
$$
h_r(K,\psi)=\sup\{r(V,K,\psi):V\in \U\}\ \text{and}\  h_s(K,\psi)=\sup\{s(V,K,\psi):V\in \U\}
$$
coincide by \cite[Lemma 1]{hood}, so one obtains the notion of \emph{uniform entropy} $h_U(\psi)$ of $\psi$:

\begin{equation}
h\sb{U}(\psi)=\sup\{h\sb{r}(K,\psi):K\in \ms{K}(X)\}= \sup\{h\sb{s}(K,\psi):K\in \ms{K}(X)\}.
\end{equation}

It is not difficult to prove that $h_U$ is monotone for continuous images, invariant under topological conjugation, and satisfies the Logarithmic Law. It can be shown that this extended notion of entropy, in the case of a compact space and its unique compatible uniform structure, coincides with the topological entropy $h_{top}$ (see \cite{DSV} for details).

\medskip
One can use the above setting to define topological entropy for continuous selfmaps of non-compact Tichonoff spaces as follows. First instances of this construction were already discussed in \cite{DSV}. Here we adopt the more general idea to make use of functorial uniformities on a Tichonoff space (see \cite{B1}) for the definition of these entropies. This new functorial point of view distinguishes our approach from that in \cite{DSV}.  
 
Let $\mathbf{Unif}$ denote the category of uniform spaces and uniformly continuous maps. A {\em functorial uniformity} on the category $\mathbf{Tich}$ of Tichonoff spaces is nothing else but a functor $F: \mathbf{Tich} \to \mathbf{Unif}$ such that the uniform space $FX$ has the same underling set as the topological space $X$ and the uniformly continuous map $Ff$, for a continuous map $f: X \to Y$ in  $\mathbf{Tich}$, is the same set-map as $f$ (using a more rigorous categorial language, $F$ commutes with the forgetful functors  $\mathbf{Tich}\to  \mathbf{Set}$ and  $\mathbf{Unif} \to  \mathbf{Set}$).
 
For every functor $F$ as above one can define a (topological) entropy $h_F$ in  $\mathbf{Tich}$ by letting $h_F(f) = h_U(Ff)$ for every selfmap  $f: X\to X$ in  $\mathbf{Tich}$. In this way $h_F$ inherits for free the nice properties of $h_U$ (e.g., Monotonicity for continuous images, Invariance under conjugation and Logarithmic Law). 
 
\begin{example}\label{newen}  
Here are several examples of functorial uniformities. 
\begin{itemize}
\item[(a)] For $ X \in \mathbf{Tich}$ let  $\mathcal F X$ denote $X$ equipped with the universal (fine) uniform structure on $X$. Since every continuous map from $X$ to a uniform space is uniformly continuous for the universal uniform structure $\mathcal{F}$ (see, for instance, \cite[15G (5)]{GJ}), the assignment $X \mapsto \mathcal F X$ is a functorial uniformity. The entropy $h _{\mathcal F}$ was defined in \cite[Definition 2.21]{DSV}.
\item[(b)] For $ X \in \mathbf{Tich}$ let  $\mathcal T X$ denote $X$ equipped with the finest totally bounded uniform structure on $X$ (generated by all finite open covers of $X$). The assignment $X \mapsto \mathcal T X$ is a functorial uniformity (actually, this functor is a composition of $\mathcal F$ and the functor $\mathbf{Unif} \to  \mathbf{Unif} $ of the finest totally bounded uniformity for uniform spaces). The entropy $h _{\mathcal T}$ coincides with the entropy $h\sb{T\sb{f}}$ introduced by Hofer \cite{hof}.
 \item[(c)] Further examples of functorial uniformities can be found in \cite{B1}.
\end{itemize}
Another resource to define entropy for topological spaces can be the use of quasi-uniformities in place of uniformities (a rich background on functorial
quasi-uniformities can be found in Brummer's papers \cite{B-1,BQ,B0}). To this end one has to define first entropy in the category of quasi-uniform spaces. 
\end{example}

An alternative way to define entropy in $\mathbf{Tich}$ was proposed by Hofer \cite{hof} as follows. He took the functor $\mathbf{Tich} \to \mathbf{Comp}$
of the Stone-\v{C}ech compactification $X\mapsto \beta X$. Then he defined his entropy for a selfmap $f: X\to X$ in $\mathbf{Tich} $ as $h_{top}(\beta f)$, where $\beta f$ is the continuous extension of $f$ to $\beta X$.

\medskip
In concluding this subsection we mention the approach to the uniform entropy $h_U$ by means of covers proposed in \cite{hof}, similar to the one in the definition of the topological entropy $h_{top}$. The equivalence of these two approaches was pointed out  in \cite{DSV}.

\subsection{Topological entropy on compact groups}\label{cg-sec}

This subsection is dedicated to the topological entropy in the category $\mathbf{CompGrp}$ of compact groups and their endomorphisms. Yet we start by a pair of well known results of Bowen  \cite{B} in the locally compact case. He proved that an endomorphism $\psi$ of a Lie group $G$ and its derivative at the identity have the same topological entropy. We are going to combine it with the following theorem from \cite{B} which allows one to compute the topological entropy of a linear map of $\R^n$ in terms of its eigenvalues. 

\begin{theorem}\label{BF}
Let $n\in\N_+$ and $\psi:\R^n\to \R^n$ a continuous endomorphism. Then 
$$h_U(\psi)=\sum_{|\lambda_i|>1}\log|\lambda_i|,$$ 
where $\{\lambda_i:i=1,\ldots,n\}$ is the family of all eigenvalues of $\psi$.
\end{theorem}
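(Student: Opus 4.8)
The plan is to reduce the computation to a direct estimate of Bowen's balls for a linear map, exploiting the fact that on $\R^n$ we may choose any convenient norm (all norms being equivalent, and the uniform entropy being invariant under the induced uniform isomorphisms, by the monotonicity/invariance properties of $h_U$ already recorded). First I would pass to the complexification and put $\psi$ in Jordan form over $\mathbb C$; since $h_U$ is invariant under topological conjugation by linear isomorphisms, we may assume $\psi$ is block-triangular with the eigenvalues $\lambda_i$ on the diagonal. Then I would split $\R^n$ (or $\mathbb C^n$) as $E_+ \oplus E_0 \oplus E_-$, the $\psi$-invariant subspaces corresponding to eigenvalues of modulus $>1$, $=1$, and $<1$ respectively. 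On $E_-$ the restriction of $\psi$ is (after choosing an adapted norm, absorbing the nilpotent part with the standard $\varepsilon$-rescaling trick) a contraction, hence contributes $0$ by Proposition \ref{non-expanding}. On $E_0$ one shows the restriction contributes $0$ as well: with an adapted norm the map is an isometry up to an arbitrarily small error coming from the nilpotent parts, so iterated preimages of an $\varepsilon$-ball stay inside a ball of comparable radius and $r_n(\varepsilon,K,\psi)$ grows subexponentially.

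The heart of the argument is the lower and upper bound on $E_+$. Choose coordinates so that on $E_+$ the matrix is upper triangular with $|\lambda_i|>1$ on the diagonal and off-diagonal entries made smaller than any prescribed $\delta>0$ (again by conjugating with a diagonal rescaling). For a fixed compact $K$, say a box, and fixed small $\varepsilon$, I would estimate the number of Bowen $(n,\varepsilon)$-balls needed to cover $K$: because $\psi^{-1}$ expands each coordinate direction by roughly $|\lambda_i|^{-1}<1$ going forward, i.e. $\psi$ expands by $|\lambda_i|$, the set $D_n(x,\varepsilon,\psi)$ is essentially a box with side-lengths $\varepsilon \prod$-factors $\sim |\lambda_i|^{-n}$ in direction $i$, up to multiplicative errors controlled by $\delta$ and by the Jordan blocks (which contribute only polynomially in $n$). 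Hence $r_n(\varepsilon,K,\psi) \asymp \prod_{|\lambda_i|>1}|\lambda_i|^{n}$ up to subexponential factors, giving $r(\varepsilon,K,\psi)=\sum_{|\lambda_i|>1}\log|\lambda_i|$ for all small $\varepsilon$ and all $K$ containing an open set; the reverse inequality via $s_n$ (separated sets) is symmetric. Combining the three subspaces through the product behaviour of $h_U$ (Weak Addition, in its uniform-space form) yields the claimed value.

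I would then note the alternative, shorter route that the paper's context invites: once the result is known for $\psi$ diagonalizable with real positive eigenvalues $>1$ — where $D_n(0,\varepsilon,\psi)$ is literally the box $\prod_i(-\varepsilon\lambda_i^{-n+1},\varepsilon\lambda_i^{-n+1})$ — the general case follows from invariance under conjugation, the contraction case (Proposition \ref{non-expanding}) for the modulus-$<1$ part, a perturbation argument for the modulus-$1$ part, and the Logarithmic Law to handle powers. The main obstacle is the modulus-$1$ block together with nontrivial Jordan structure: one must argue carefully that a unipotent (or elliptic-unipotent) perturbation of an isometry still has zero uniform entropy, since the polynomial growth of Jordan blocks must be shown not to accumulate into exponential growth in the Bowen-ball count. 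This is handled by the adapted-norm trick making the off-diagonal contribution $\delta$-small uniformly in $n$, so that the iterates $\psi^{-k}$ restricted to $E_0$ stay within a factor $(1+\delta)^{n}$, and then letting $\delta\to 0$ after $\varepsilon\to 0$; spelling out the order of quantifiers there is the one delicate point of the proof.
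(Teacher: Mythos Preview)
The paper does not give its own proof of Theorem \ref{BF}: it states the result as a well-known theorem of Bowen and cites \cite{B}, so there is no in-paper argument to compare against. Your outline is essentially Bowen's original argument (Jordan form, splitting into expanding/neutral/contracting subspaces, direct volume/covering estimates on Bowen balls), and the ingredients you invoke from the paper --- invariance under conjugation for $h_U$ and Proposition \ref{non-expanding} for the contracting block --- are exactly the right ones.

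One caution on the glue step: you appeal to ``Weak Addition, in its uniform-space form'' to combine the three blocks, but the paper only records the Weak Addition Theorem for $h_{top}$ on compact spaces (Theorem \ref{WAT-top}) and for $k$ on totally disconnected locally compact groups (Theorem \ref{neeeew}); it explicitly leaves the general locally compact case open (Question \ref{newQ}). So within the paper's framework you cannot cite a ready-made product formula for $h_U$ on $\R^n$. This is not a real obstacle, since in your situation $\psi$ preserves the direct-sum decomposition and the Bowen ball $D_n(x,\varepsilon,\psi)$ with respect to a sup-norm adapted to $E_+\oplus E_0\oplus E_-$ is literally the product of the three factor Bowen balls; the covering and separation counts then factor exactly, and you get additivity by a one-line direct computation rather than by invoking a general theorem. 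It would be worth making that explicit. Your identification of the genuinely delicate point --- showing the $E_0$ block contributes zero when there are nontrivial Jordan blocks, via the $\delta$-rescaling of the nilpotent part and the order of limits $\varepsilon\to 0$ then $\delta\to 0$ --- is accurate and is indeed where the care in Bowen's proof goes.
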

 
We enunciate now the Kolmogorov-Sinai Formula giving the value of the topological entropy of a continuous automorphism of $\T^n$ as its Mahler measure. It can be deduced easily from the above mentioned two results of Bowen applied to the torus $\T^n$: 

\begin{theorem}[Kolmogorov-Sinai Formula]\label{KSF} 
Let $n\in\N_+$ and $\psi:\T^n\to\T^n$ a topological automorphism. Then $$h_{top}(\psi)=m(\psi).$$
\end{theorem}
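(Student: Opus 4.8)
The plan is to combine the two results of Bowen recalled immediately above (the reduction of the topological entropy of a Lie group endomorphism to its derivative at the identity, and the eigenvalue formula of Theorem~\ref{BF}) with elementary linear algebra about the matrix of a torus automorphism. First I would note that a topological automorphism $\psi\colon\T^n\to\T^n$ is induced, via the covering homomorphism $\R^n\to\R^n/\Z^n=\T^n$, by a linear automorphism $\widetilde\psi$ of $\R^n$ whose matrix $A=A_\psi$ lies in $\mathrm{GL}_n(\Z)$ (so $\det A=\pm 1$); moreover $A$ is precisely the derivative $d\psi_e$ of $\psi$ at the identity of the Lie group $\T^n$. Consequently the eigenvalues $\lambda_1,\dots,\lambda_n$ of $\psi$ are exactly those of $A$ acting on $\R^n$, and the characteristic polynomial $p_\psi(t)=\det(tI-A)$ is monic with integer coefficients; in particular its leading coefficient is $s=1$.

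Next I would invoke Bowen's theorem that an endomorphism of a Lie group has the same topological entropy as its derivative at the identity. Applied to $G=\T^n$ this gives $h_{top}(\psi)=h_U(d\psi_e)=h_U(A)$, where now $A$ is viewed as a continuous endomorphism of $\R^n$. Theorem~\ref{BF} then evaluates the right-hand side: $h_U(A)=\sum_{|\lambda_i|>1}\log|\lambda_i|$, the sum running over the eigenvalues of $A$ lying outside the unit circle.

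Finally I would identify this with the Mahler measure. Since $p_\psi(t)$ is monic, the defining formula $m(f(t))=\log|s|+\sum_{|\lambda_i|>1}\log|\lambda_i|$ for the logarithmic Mahler measure collapses to $m(\psi)=m(p_\psi(t))=\sum_{|\lambda_i|>1}\log|\lambda_i|$, which is exactly the value just obtained for $h_{top}(\psi)$. This yields $h_{top}(\psi)=m(\psi)$, as claimed.

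The genuinely substantive ingredients are Bowen's Lie-group reduction and Theorem~\ref{BF}, both of which we are permitted to assume; everything else is routine bookkeeping (identifying $A$, checking $\det A=\pm1$ and that $p_\psi$ is monic, so that no spurious $\log|s|$ term appears). If one wanted a self-contained proof, the main obstacle would be precisely Bowen's reduction step, i.e.\ showing that the compact quotient $\T^n$ and the ambient $\R^n$ exhibit the same exponential growth rate of $(n,\varepsilon)$-spanning (or $(n,\varepsilon)$-separated) sets; this is where the $\Z^n$-periodicity of $A$ and the compactness of a fundamental domain are used, and it is the only place measure-theoretic or covering-space input would be needed.
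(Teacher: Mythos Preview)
Your proposal is correct and follows exactly the approach the paper itself indicates: the sentence preceding Theorem~\ref{KSF} says the formula ``can be deduced easily from the above mentioned two results of Bowen applied to the torus $\T^n$,'' and you have simply carried out that deduction in detail (identifying $d\psi_e$ with the integer matrix $A_\psi$, applying Theorem~\ref{BF}, and noting that the leading coefficient $s=1$ so the Mahler measure reduces to the eigenvalue sum). The paper also notes in Remark~\ref{YF->KSF} an alternative route via the Yuzvinski Formula and the Addition Theorem, but your direct Bowen-based argument is the one it explicitly points to here.
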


The following fundamental computation of the topological entropy of topological automorphisms of $\widehat\Q^n$ was given in \cite{Y}. A more transparent proof, emphasizing better the topological aspects, can be found in \cite{LW}.

\begin{theorem}[Yuzvinski Formula]\label{YF}
Let $n\in\N_+$ and $\psi:\widehat\Q^n\to\widehat \Q^n$ a topological automorphism. Then $$h_{top}(\psi)=m(\psi).$$
\end{theorem}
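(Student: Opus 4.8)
The plan is to reduce the computation of $h_{top}(\psi)$ for a topological automorphism $\psi:\widehat\Q^n\to\widehat\Q^n$ to the structure theory of $\psi$ via the rational endomorphism $\phi=\widehat\psi:\Q^n\to\Q^n$ it dualizes to. By the material in \S\ref{bernoulli-sec}, $\phi$ acts $\Q$-linearly on $\Q^n$ with primitive characteristic polynomial $p_\phi(t)\in\Z[t]$, and $m(\psi)=m(p_\psi(t))=m(p_\phi(t))$. So the target value is $m(p_\phi(t))=\log s+\sum_{|\lambda_i|>1}\log|\lambda_i|$, where $s$ is the leading coefficient of the primitive integral characteristic polynomial and the $\lambda_i$ are the (complex) eigenvalues. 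First I would fix a basis realizing the rational canonical form of $\phi$, so that $\widehat\Q^n$ splits (up to the relevant exact sequences under duality) into pieces corresponding to the irreducible factors of $p_\phi(t)$; by the Addition Theorem for $h_{top}$ on compact groups (stated in \S\ref{cg-sec}) and the additivity of the logarithmic Mahler measure over factors, it suffices to treat the case where $p_\phi(t)$ is a power of a single $\Z$-irreducible primitive polynomial $f(t)=st^k+\cdots+a_0$.

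In that reduced case the driving example is the one worked out at the end of \S\ref{bernoulli-sec}: for an algebraic number $\alpha$ with minimal polynomial $f$ over $\Q$, multiplication by $\alpha$ on $\Q(\alpha)\cong\Q^k$ is conjugate to $\phi$, and dually $\psi$ is the corresponding automorphism of $\widehat{\Q(\alpha)}$. The key step is then to compute $h_{top}$ of this dual "multiplication by $\alpha$" automorphism. I would do this by embedding $\alpha$ into the ring of adeles: $\Q(\alpha)$ sits as a discrete cocompact subgroup of $\mathbb{A}_{\Q(\alpha)}$, so $\widehat{\Q(\alpha)}$ is a quotient of (or isogenous to) $\mathbb{A}_{\Q(\alpha)}/\Q(\alpha)$, and multiplication by $\alpha$ acts on each local component $\Q(\alpha)_v=\prod_{w\mid v}(\Q(\alpha))_w$. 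The entropy at the archimedean places is handled by Bowen's Theorem~\ref{BF} (it contributes $\sum_{v\mid\infty}\log^+|\alpha|_v=\sum_{|\lambda_i|>1}\log|\lambda_i|$), and the non-archimedean places contribute $\sum_{v\nmid\infty}\log^+|\alpha|_v$, which by the product formula equals $\log|N_{\Q(\alpha)/\Q}(a_0/s)|$-type corrections; combined these reassemble exactly into the Mahler measure $m(f)=\log s+\sum_{|\lambda_i|>1}\log|\lambda_i|$. Alternatively, one can bypass the adeles and follow the approach of \cite{LW}: compute the entropy directly on $\widehat\Q^n$ by choosing an explicit neighborhood basis of $0$ adapted to the matrix $A_\phi$ and its denominators, counting Bowen balls at the relevant places.

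The main obstacle is the non-archimedean contribution. Over $\R^n$, Theorem~\ref{BF} already gives the archimedean part cleanly, but on $\widehat\Q^n$ the automorphism $\psi$ genuinely "expands" $p$-adically whenever $p\mid s$ or $p$ divides the constant term, and one must show that the Bowen-ball count at those primes produces precisely $\log s$ (equivalently $\sum_{|\lambda_i^{(p)}|>1}\log|\lambda_i|_p$ over all bad primes $p$) and not merely a bound. This requires either the adelic product formula or a careful lattice-counting argument tracking how the denominators of iterates $A_\phi^n$ grow; getting the constant exactly right, rather than up to an error that vanishes after dividing by $n$, is where the work lies. Once the single-irreducible-power case is done, the Addition Theorem and the additivity of $m$ over polynomial factors close the argument with no further difficulty. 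I would present the adelic route as the main line and cite \cite{Y,LW} for the detailed local computations.
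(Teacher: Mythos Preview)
The paper does not give its own proof of the Yuzvinski Formula: it states the result and attributes the original proof to \cite{Y}, pointing to \cite{LW} for ``a more transparent proof, emphasizing better the topological aspects.'' So there is no in-paper argument to compare against; your sketch is in fact an outline of the Lind--Ward adelic approach that the paper cites as the preferred route.

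As a sketch of that approach, your plan is sound. The reduction via rational canonical form plus the Addition Theorem~\ref{AT-top} to a single $\Q$-irreducible block is standard (note that for a block with $p_\phi(t)=f(t)^m$ you still need a filtration argument to get down to $m=1$, but the Addition Theorem handles this too). The identification $\widehat{\Q^k}\cong \mathbb A_\Q^k/\Q^k$ and the splitting of entropy into local contributions is exactly the Lind--Ward mechanism; the archimedean places give $\sum_{|\lambda_i|>1}\log|\lambda_i|$ via Theorem~\ref{BF}, and the non-archimedean places contribute the remaining $\log s$ through the $p$-adic valuations of the eigenvalues, with the product formula tying the two together. Your remark that the hard part is pinning down the non-archimedean contribution exactly (not just asymptotically) is accurate: this is where \cite{LW} does the real work, computing $p$-adic entropy as $\sum_i \log^+|\lambda_i|_p$ and invoking $\prod_v|\lambda_i|_v=1$. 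Your phrase ``quotient of (or isogenous to) $\mathbb A_{\Q(\alpha)}/\Q(\alpha)$'' is slightly imprecise --- it is an isomorphism, via the trace form --- but this does not affect the argument.
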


The Kolmogorov-Sinai Formula can be deduced from the Yuzvinski Formula and the following Addition Theorem \ref{AT-top}, as we see in Remark \ref{YF->KSF}. 

\begin{theorem}[Addition Theorem]\emph{\cite{B,Y}} \label{AT-top}
Let $K$ be a compact group, $\psi:K\to K$ a continuous endomorphism, $N$ a closed $\psi$-invariant normal subgroup of $G$ and $\overline\psi:K/N\to K/N$ the continuous endomorphism induced by $\psi$. Then $$h_{top}(\psi)=h_{top}(\psi\restriction_N) + h_{top}(\overline\psi).$$
\end{theorem}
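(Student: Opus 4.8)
The plan is to prove the two inequalities $h_{top}(\psi)\ge h_{top}(\psi\restriction_N)+h_{top}(\overline\psi)$ and $h_{top}(\psi)\le h_{top}(\psi\restriction_N)+h_{top}(\overline\psi)$ separately, working with Bowen's description of $h_{top}=h_U$ on the compact group $K$ via spanning and separated sets (\S\ref{bowen-sec}). Throughout I would fix a bi-invariant metric on $K$ (obtained by averaging a right-invariant one over $K$) when $K$ is metrizable, and argue verbatim with the translation-invariant entourages $\{(x,y):x^{-1}y\in U\}$, $U$ a neighbourhood of $e$, of the unique uniformity of $K$ in general. Write $\pi\colon K\to K/N$ for the quotient map. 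The key structural fact, used in both directions, is that bi-invariance forces every Bowen ball to be a left translate $D_n(x,\varepsilon,\phi)=x\cdot V_n(\varepsilon,\phi)$ of the dynamical neighbourhood $V_n(\varepsilon,\phi)=\bigcap_{k<n}\phi^{-k}(B_\varepsilon(e))$ of the identity, that $V_n(\varepsilon,\psi)\cap N=V_n(\varepsilon,\psi\restriction_N)$, and that $\psi$ carries the coset $gN$ to the coset $\psi(g)N$ (so that, after translating both cosets to $N$, the induced fibre dynamics is exactly $\psi\restriction_N$).

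For the lower bound I would start from an $(n,\varepsilon)$-separated set $S$ for $\overline\psi$ in $K/N$ and an $(n,\varepsilon)$-separated set $T$ for $\psi\restriction_N$ in $N$ (both of maximal size, for the same $\varepsilon$ and the quotient, resp. restricted, metric), lift $S$ to a set of representatives $\widetilde S\subseteq K$, and check that $\widetilde S\cdot T$ is an $(n,\varepsilon)$-separated set for $\psi$ in $K$ of cardinality $|S|\cdot|T|$: projecting to $K/N$ and using $\bar d(\pi x,\pi y)\le d(x,y)$ separates two elements with distinct $\widetilde S$-components via some $k<n$, while left-invariance of $d$ together with $\psi^k(\tilde s t)=\psi^k(\tilde s)\psi^k(t)$ separates two elements with equal $\widetilde S$-components. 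This gives $s_n(\varepsilon,K,\psi)\ge s_n(\varepsilon,K/N,\overline\psi)\cdot s_n(\varepsilon,N,\psi\restriction_N)$ for all $n$; taking $\tfrac1n\log$, then $\liminf_n$ (using $\liminf(a_n+b_n)\ge\liminf a_n+\liminf b_n$), then $\sup_{\varepsilon>0}$ — which is legitimate because the two summands are monotone in $\varepsilon$, so the supremum of their sum equals the sum of the suprema — and invoking the (equivalent) $\liminf$-form of Bowen's entropy yields $h_{top}(\psi)\ge h_{top}(\overline\psi)+h_{top}(\psi\restriction_N)$.

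For the upper bound I would invoke Bowen's fibre-entropy inequality for the factor map $\pi$, namely
\[
h_{top}(\psi)\ \le\ h_{top}(\overline\psi)+\sup_{y\in K/N}h^{\mathrm{fib}}\bigl(\psi,\pi^{-1}(y)\bigr),
\]
where $h^{\mathrm{fib}}(\psi,\pi^{-1}(y))=\sup_{\varepsilon>0}\limsup_n\tfrac1n\log N_n(\varepsilon,\pi^{-1}(y))$ and $N_n(\varepsilon,F)$ is the least number of Bowen balls $D_n(x,\varepsilon,\psi)$ (with $x\in K$) covering the compact set $F$. Then I would bound each fibre term: writing $\pi^{-1}(y)=\tilde y\,N$ for $\tilde y\in\pi^{-1}(y)$, any cover of $N$ by Bowen balls $\nu_i V_n(\varepsilon,\psi\restriction_N)$ of $N$ produces the cover of $\tilde y N$ by the Bowen balls $\tilde y\nu_i V_n(\varepsilon,\psi)=D_n(\tilde y\nu_i,\varepsilon,\psi)$ of $K$, since $V_n(\varepsilon,\psi\restriction_N)=V_n(\varepsilon,\psi)\cap N\subseteq V_n(\varepsilon,\psi)$; hence $N_n(\varepsilon,\pi^{-1}(y))\le r_n(\varepsilon,N,\psi\restriction_N)$ and therefore $h^{\mathrm{fib}}(\psi,\pi^{-1}(y))\le h_U(\psi\restriction_N)=h_{top}(\psi\restriction_N)$. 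Combining this with the fibre inequality gives the upper bound, and the two inequalities together prove the Addition Theorem.

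The main obstacle is the fibre-entropy inequality itself: it is the genuinely hard input — essentially the content of Bowen's paper \cite{B}, and the reason the Addition Theorem is stated there without an elementary proof — and establishing it rigorously requires either Bowen's combinatorial covering argument relating orbits of $\psi$ to those of $\overline\psi$ plus the fibre contribution, or the measure-theoretic route through the identification $h_{top}=h_{mes}$ for surjective endomorphisms (\S\ref{mes-top}) together with the Abramov--Rokhlin formula for conditional measure entropy, disintegrating the Haar measure of $K$ over $K/N$ into translates of the Haar measure of $N$. As a sanity check one can note that for a compact Lie group $K$ the statement is transparent: by Bowen's theorem that $h_{top}(\psi)$ equals the topological entropy of the differential $d\psi_e$ on the Lie algebra $\mathfrak k$, by Theorem \ref{BF}, and by the block-triangular form of $d\psi_e$ with respect to the $\psi$-invariant subalgebra $\mathrm{Lie}(N)$, the eigenvalues of modulus $>1$ of $d\psi_e$ on $\mathfrak k$, on $\mathrm{Lie}(N)$, and on $\mathfrak k/\mathrm{Lie}(N)$ account exactly for $h_{top}(\psi)$, $h_{top}(\psi\restriction_N)$, and $h_{top}(\overline\psi)$ respectively.
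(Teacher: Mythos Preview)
The paper does not supply its own proof of this theorem: it is stated with citations to Bowen \cite{B} and Yuzvinski \cite{Y} and no argument is given (this is a survey, and the abstract explicitly says that results are either proved or referred to appropriate sources). So there is no in-paper proof to compare against; what can be said is that your outline is precisely the route of the cited reference \cite{B}, and it is correct.

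Your lower-bound argument is clean and complete as written: bi-invariance of the metric (available on any metrizable compact group, with the uniform-entourage version covering the general case) makes the product $\widetilde S\cdot T$ genuinely $(n,\varepsilon)$-separated, and the cardinality count is immediate. The only mildly delicate point is your appeal to the $\liminf$ form of Bowen's entropy; this is indeed legitimate on compact spaces (it follows, for instance, from the equality $h_U=h_{top}$ and the subadditivity of $\log N(\cdot)$ for the open-cover definition), but it is worth a one-line justification rather than a parenthetical.

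For the upper bound you correctly isolate Bowen's fibre-entropy inequality as \emph{the} nontrivial input and observe that the fibre contribution over every coset equals $h_{top}(\psi\restriction_N)$ by left-translating. This is exactly Bowen's own strategy in \cite{B}, and your honest assessment that this inequality is where the work lies is right: any self-contained proof must either reproduce Bowen's covering/counting lemma or pass through the measure-theoretic identification $h_{top}=h_{mes}$ and the Abramov--Rokhlin formula, as you say. Your Lie-group sanity check via the differential and Theorem~\ref{BF} is also correct and is essentially how Bowen originally verified special cases.
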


\begin{remark}\label{YF->KSF}
The Kolmogorov-Sinai Formula \ref{KSF} follows from Yuzvinski Formula \ref{YF} and Addition Theorem \ref{AT-top}. Indeed,  every $\psi:\T^n\to \T^n$ can be lifted to an endomorphism $\widetilde \psi: \widehat\Q^n \to \widehat\Q^n$ along the projection $q: \widehat\Q^n \to \T^n$. 

Moreover $h_{top}(\widetilde\psi\restriction_{\ker q})=0$ by Proposition \ref{non-expanding}. In fact, $\ker q=(\prod_p\J_p)^n$. A local base of the linear topology of this group is given by the family of characteristic subgroups $\{W_m:m\in\N_+\}$, where, enumerating the primes as $\{p_n:n\in\N_+\}$, for every $m\in\N_+$ we let $$W_m=(p_1^{k_1}\dots p_m^{k_m})\ker q=(p_1^{k_1}\J_{p_1}\times\ldots\times p_m^{k_m}\J_{p_m})^n\times(\prod_{l>m}\J_{p_l})^n.$$
Since each $W_m$ is a fully invariant subgroup of $\ker q$, Example \ref{non-expandingEx} (see also Proposition \ref{non-expanding}) can be applied to give $h_{top}(\widetilde\psi\restriction_{\ker q})=0$. The Addition Theorem \ref{AT-top} yields  
$$
h_{top}(\psi) = h_{top}(\psi \restriction_N)+h_{top}(\psi) = h_{top}(\widetilde \psi)=m(\widetilde \psi) =m (\psi).
$$
\end{remark}
  
\begin{example}[Bernoulli normalization] 
Let $n\in\N$. Then $h_{top}(\overline\beta_{\Z(n)})=h_{top}({}_{\Z(n)}\beta)=\log n=h_{mes}(\overline\beta_{\Z(n)})=h_{mes}({}_{\Z(n)}\beta)$, according to Example \ref{computeBerny}.
\end{example}

The next result, showing continuity for inverse limits of the topological entropy in the case of continuous endomorphisms of compact groups, is a particular case of Proposition \ref{cont-sp} for continuous selfmaps of compact spaces.
  
\begin{corollary}[Continuity] \emph{\cite{P}}\label{invlim}
Let $K$ be a compact group and $\psi:K\to K$ a continuous endomorphism. If $\{N_i:i\in I\}$ is a directed system of closed $\psi$-invariant normal subgroups of $K$ with $\bigcap_{i\in I} N_i=0$, then $K\cong \varprojlim K/N_i$ and $h_{top}(\psi)=\sup_{i\in I} h_{top}(\overline \psi_i)$, where $\overline \psi_i:K/N_i\to K/N_i$ is the continuous endomorphism induced by $\psi$.
\end{corollary}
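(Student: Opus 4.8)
The plan is to deduce this from Proposition~\ref{cont-sp} once the inverse system has been set up correctly, so the work splits into an algebraic–topological identification $K\cong\varprojlim K/N_i$ and a routine compatibility check for $\psi$.

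First I would fix the directed-set structure: order $I$ by putting $j\preceq i$ whenever $N_i\subseteq N_j$; the hypothesis that $\{N_i:i\in I\}$ is a directed system of subgroups (for any $i,j$ there is $k$ with $N_k\subseteq N_i\cap N_j$) says precisely that $(I,\preceq)$ is directed, which legitimizes forming the inverse limit. For $j\preceq i$ let $\varphi_{ij}\colon K/N_i\to K/N_j$ be the canonical projection; each $\varphi_{ij}$ is a continuous \emph{surjective} group homomorphism, $\varphi_{ij}\varphi_{ki}=\varphi_{kj}$ for $k\preceq j\preceq i$, and $\varphi_{ii}=id$, so $(K/N_i,\varphi_{ij},I)$ is an inverse system of compact groups with surjective bonding maps. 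The canonical homomorphism $\theta\colon K\to\varprojlim K/N_i$, $x\mapsto(xN_i)_{i\in I}$, is continuous; its kernel is $\bigcap_{i\in I}N_i=\{e_K\}$, so $\theta$ is injective, and its image surjects onto each $K/N_i$, hence is dense in $\varprojlim K/N_i$. Since $K$ is compact, $\theta(K)$ is compact, hence closed in the compact Hausdorff space $\varprojlim K/N_i$ (a closed subspace of $\prod_{i}K/N_i$), hence equal to it; and a continuous bijection from a compact space onto a Hausdorff space is a homeomorphism. Thus $\theta$ is a topological isomorphism.

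Next I would check that $\psi$ corresponds to $\varprojlim\overline\psi_i$ under $\theta$. Since $N_i$ is $\psi$-invariant, $\psi$ induces a continuous endomorphism $\overline\psi_i\colon K/N_i\to K/N_i$, $xN_i\mapsto\psi(x)N_i$, and $\overline\psi_j\circ\varphi_{ij}=\varphi_{ij}\circ\overline\psi_i$ for $j\preceq i$; hence there is a unique continuous selfmap $\varprojlim\overline\psi_i$ of $\varprojlim K/N_i$ commuting with all projections, and a componentwise check gives $\theta\circ\psi=(\varprojlim\overline\psi_i)\circ\theta$. Now apply Proposition~\ref{cont-sp} to the system $(K/N_i,\varphi_{ij},\overline\psi_i)$ — legitimate precisely because the $\varphi_{ij}$ are surjective — to get $h_{top}(\varprojlim\overline\psi_i)=\sup_{i\in I}h_{top}(\overline\psi_i)$; combining with $h_{top}(\psi)=h_{top}(\varprojlim\overline\psi_i)$ (Invariance under conjugation along the isomorphism $\theta$) yields the claim.

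The only genuinely nontrivial ingredient is the identification $K\cong\varprojlim K/N_i$, where compactness of $K$ is used twice (to turn the dense embedding $\theta$ into a surjection, and then into a homeomorphism); everything else is bookkeeping, and there is no real obstacle since Proposition~\ref{cont-sp} is quoted wholesale. The one point to watch is that the directedness hypothesis on $\{N_i\}$ is exactly what is needed for $(I,\preceq)$ to be a directed index set.
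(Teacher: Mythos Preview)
Your proposal is correct and follows exactly the approach the paper indicates: the paper does not give a separate proof but simply remarks that the corollary ``is a particular case of Proposition~\ref{cont-sp} for continuous selfmaps of compact spaces,'' and you have carefully filled in the details --- setting up the inverse system with surjective bonding maps, verifying the identification $K\cong\varprojlim K/N_i$ via compactness, and checking the compatibility needed to invoke Proposition~\ref{cont-sp}.
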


\begin{theorem}[Uniqueness Theorem]\emph{\cite{S}}\label{UT-top}
The topological entropy $h_{top}:\mathrm{Flow}_\mathbf{CompGrp}\to \R_{\geq0}\cup\{\infty\}$ is the unique function such that:
\begin{itemize}
\item[(a)] $h_{top}$ is monotone for restriction to closed invariant subgroups;
\item[(b)] if $(K,\psi)\in\mathrm{Flow}_\mathbf{Comp}$, then $h_{top}(\psi)=h_{top}(\psi\restriction_{E_\psi(K)})$;
\item[(c)] $h_{top}$ satisfies the Logarithmic Law;
\item[(d)] if $\psi$ is an inner automorphism of $K\in\mathbf{CompGrp}$, then $h_{top}(\psi)=0$;
\item[(e)] $h_{top}$ is continuous on inverse limits;
\item[(f)] $h_{top}$ satisfies the Addition Theorem;
\item[(g)] for $F$ a finite group, $h_{top}(\overline\beta_F)=\log|F|$;
\item[(h)] $h_{top}$ satisfies the Yuzvinski Formula.
\end{itemize}
\end{theorem}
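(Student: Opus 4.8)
The plan is to fix an arbitrary function $h\colon \mathrm{Flow}_{\mathbf{CompGrp}}\to\R_{\geq 0}\cup\{\infty\}$ satisfying (a)--(h) and to prove $h=h_{top}$. Since $h_{top}$ itself satisfies (a)--(h) (by Lemma~\ref{monot}, Theorem~\ref{red-to-sur}, the Logarithmic Law, the vanishing of $h_{top}$ on inner automorphisms, Corollary~\ref{invlim}, Theorem~\ref{AT-top}, the Bernoulli normalization of Example~\ref{computeBerny}, and Theorem~\ref{YF}, respectively), it suffices to show that (a)--(h) alone determine $h(\psi)$ for every compact group $K$ and every continuous $\psi\colon K\to K$. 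I will use freely that (f) forces $h$ to be monotone also for Hausdorff quotients (since $h(\psi)=h(\psi\restriction_N)+h(\overline\psi)\geq h(\overline\psi)$), and that, consequently, $h$ is invariant under isomorphisms of flows.

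\smallskip\noindent\emph{First reductions.} By (b) I may assume $\psi$ surjective, passing to $(E_\psi(K),\psi\restriction_{E_\psi(K)})$. By (e) I may assume $K$ metrizable: for each closed normal $M\trianglelefteq K$ with $K/M$ metrizable the subgroup $N_M:=\bigcap_{n\in\N}\psi^{-n}(M)$ is closed, normal and $\psi$-invariant, $K/N_M$ embeds into a countable product of copies of $K/\psi^{-n}(M)\cong K/M$ (by surjectivity of $\psi$) and is therefore metrizable, the $N_M$ form a filter base with $\bigcap_M N_M=\{e\}$, and (e) gives $h(\psi)=\sup_M h(\overline\psi_M)$, likewise for $h_{top}$. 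Finally the identity component $K_0$ is closed, normal and $\psi$-invariant, so (f) yields $h(\psi)=h(\psi\restriction_{K_0})+h(\overline\psi)$ with $\overline\psi$ acting on the profinite group $K/K_0$. Thus it remains to treat (i) $K$ compact connected metrizable and (ii) $K$ profinite metrizable.

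\smallskip\noindent\emph{The connected case.} Here the closed commutator subgroup $S=\overline{[K_0,K_0]}$ is characteristic, hence $\psi$-invariant, and $K_0/S$ is compact connected abelian; by (f), $h(\psi\restriction_{K_0})=h(\psi\restriction_S)+h(\overline\psi)$. For the abelian quotient one runs the argument of Remark~\ref{YF->KSF} via Continuity: lift tori $\T^n$ to $\widehat\Q^n$ along the projection with profinite kernel, apply (h) and the vanishing on that kernel supplied by case (ii); equivalently, Pontryagin duality identifies $h$ on compact abelian groups with an entropy function on discrete abelian groups satisfying the algebraic analogues of (c), (e), (f), (g) and the Algebraic Yuzvinski Formula, and the Uniqueness Theorem for the algebraic entropy applies. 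For $S$: modulo its finite center $\psi\restriction_S$ becomes an automorphism of a centerless compact connected semisimple group; when $S$ has finitely many simple factors a power of this automorphism is inner, so (c) and (d) give $h(\psi\restriction_S)=0$, while when infinitely many simple factors are permuted one squeezes the value to $\infty=h_{top}$ using monotonicity (a) against Bernoulli subflows $\overline\beta_F$ together with (g).

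\smallskip\noindent\emph{The profinite case, and the main obstacle.} What remains --- and where the real weight lies --- is to show that (a)--(h) determine $h(\psi)$ for a continuous surjective endomorphism $\psi$ of a metrizable profinite group $P$. Continuity is of limited help: the basic positive-entropy examples, the Bernoulli shifts $\overline\beta_F$ on $F^{\Z}$ and ${}_F\beta$ on $F^{\N}$ with $F$ finite, admit no proper open $\psi$-invariant normal subgroups, so they are not inverse limits of finite flows. My plan is to decompose $(P,\psi)$ along a descending chain of closed $\psi$-invariant normal subgroups whose consecutive subquotients are either of zero entropy --- then removed via (c)/(d), using that endomorphisms of finite groups, and ``contracting'' endomorphisms of profinite groups, have entropy $0$ --- or are standard Bernoulli factors, whose entropy is fixed by the normalization (g) together with Continuity (e); the Addition Theorem (f) then reassembles the pieces into $h(\psi)=h_{top}(\psi)$. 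The genuinely hard point is establishing such a ``Bernoulli-by-zero-entropy'' decomposition for an arbitrary, possibly non-abelian, profinite flow --- a Pinsker-type structure theorem; in the abelian case it is subsumed, once more via Pontryagin duality, by the Uniqueness Theorem for the algebraic entropy on discrete abelian groups, whose own proof rests exactly on the dual counterparts of (c), (e), (f), (g), (h).
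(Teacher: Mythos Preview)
The paper does not prove the uniqueness direction of this theorem: it cites Stoyanov \cite{S} for the result, and the paragraph following the statement only checks that $h_{top}$ satisfies (a)--(h). So there is no proof in the paper against which to compare; your proposal is an attempt at what the paper defers to the reference.

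Your overall architecture --- reduce to surjective via (b), to metrizable via (e), split off the connected component via (f), then treat the connected and profinite pieces separately, with the connected piece further split into its semisimple and abelian parts --- is indeed Stoyanov's. But there is a genuine gap: in the profinite case you yourself flag the ``Bernoulli-by-zero-entropy'' decomposition as ``the genuinely hard point'' and do not carry it out. That structure theorem is the substance of Stoyanov's argument for totally disconnected groups; without it the proof is a plan, not a proof. Your fallback to the algebraic Uniqueness Theorem via Pontryagin duality handles only the \emph{abelian} profinite case and is legitimate (Theorem~\ref{UT} is proved independently using length functions, so there is no circularity), but the non-abelian profinite case remains open in your write-up.

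There is also a looseness in the semisimple step. When the induced permutation of simple factors has an infinite orbit, the resulting subquotient is a \emph{twisted} shift $(x_n)_{n\in\Z}\mapsto(\alpha(x_{n-1}))_{n\in\Z}$ on $L^{\Z}$ for a compact simple Lie group $L$ and some $\alpha\in\Aut(L)$, not literally $\overline\beta_F$ with $F$ finite. To force $h=\infty$ there from (a) and (g) you still need to exhibit, inside this twisted shift, closed invariant subgroups on which the restriction is conjugate to $\overline\beta_F$ for arbitrarily large finite $F$ (e.g.\ via finite $\alpha$-invariant subgroups of $L$, which requires an argument), or else pass to a power to untwist. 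This is fixable, but as written it is asserted rather than shown.
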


Items (a), (b) and (c) are valid for the general case of compact spaces (see \S \ref{akm-sec}), (d) is proved in Proposition \ref{inner} below (where a stronger property is established), (e), (f) and (h) were given above. The computation of (g) is in Example \ref{computeBerny}.  

\begin{remark} 
As mentioned in  Remark \ref{YF->KSF}, the Kolmogorov-Sinai Formula \ref{KSF} can be deduced from the Yuzvinski Formula \ref{YF} in the presence of the Addition Theorem \ref{AT-top}. It was proved by Stoyanov \cite{S} that the Yuzvinski Formula cannot be replaced by Kolmogorov-Sinai Formula in the Uniqueness Theorem \ref{UT-top}.
\end{remark}

\subsection{The topological e-spectrum of compact abelian groups}\label{tes-sec}

In this subsection we study the set of all possible values of the topological entropy of all continuous endomorphisms of a compact abelian group. 

\begin{definition}
For a compact abelian group $K$, the \emph{topological $e$-spectrum} of $K$ is
$$
{\bf E}_{top}(K)=\{h_{top}(\psi): \psi\in \End(K),\ \mbox{continuous}\} \subseteq \R_{\geq0}\cup \{\infty\}.
$$
\end{definition}  

The next example follows from Example \ref{exdgsz} and the Bridge Theorem \ref{BT} given in the following sections.

\begin{example}
\begin{itemize} 
\item[(a)] If $K$ is a totally disconnected compact abelian group, then $${\bf E}_{top}(K)\subseteq \{\log n: n\in \N_+\}\cup \{\infty\},$$ 
so ${\bf E}_{top}(K)\cap \R_{\geq0}$ is discrete in $\R_{\geq0}$ and $\inf ({\bf E}_{top}(K)\setminus \{0\})\geq\log 2$.
\item[(b)] If $K$ is an abelian pro-$p$-group, then ${\bf E}_{top}(K)\subseteq \{n\log p: n\in \N\}\cup \{\infty\},$  so ${\bf E}_{top}(K)\cap \R_{\geq0}$ is {\em uniformly} discrete in $\R_{\geq0}$.
\end{itemize}
\end{example}

Let $${\bf E}_{top}= \bigcup\{{\bf E}_{top}(K):K \mbox{ compact abelian group}\}.$$

\begin{problem}\label{inftop=0} 
Is $\inf({\bf E}_{top}\setminus \{0\})=0$?
\end{problem}

Note that $(\R_{\geq0}\cup\{\infty\},+)$ is a monoid, with the convention that $r+\infty=\infty$ for every $r\in\R_{\geq0}\cup\{\infty\}$.

\begin{lemma} 
The set ${\bf E}_{top}$ is a submonoid of $\R_{\geq0}\cup\{\infty\}$.
\end{lemma}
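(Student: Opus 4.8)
The plan is to show that ${\bf E}_{top}$ is closed under addition and contains $0$; since it is a subset of the monoid $(\R_{\geq0}\cup\{\infty\},+)$, this is all that is required. The element $0$ belongs to ${\bf E}_{top}$ because for any compact abelian group $K$ the identity endomorphism $id_K$ is continuous with $h_{top}(id_K)=0$ (e.g. by the Logarithmic Law, $h_{top}(id_K)=k\,h_{top}(id_K)$ for all $k\in\N$, forcing the value $0$; alternatively $h_{top}(id_K)=0$ by the Addition Theorem applied to a trivial decomposition, or simply because $id_K$ has finite trajectories of every open cover). So the substance of the lemma is closure under $+$.

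For closure, let $x_1,x_2\in{\bf E}_{top}$, say $x_i=h_{top}(\psi_i)$ for a continuous endomorphism $\psi_i:K_i\to K_i$ of a compact abelian group $K_i$, $i=1,2$. Form the product group $K=K_1\times K_2$, which is again a compact abelian group, and the continuous endomorphism $\psi=\psi_1\times\psi_2:K\to K$. By the Weak Addition Theorem for the topological entropy (Theorem~\ref{WAT-top}),
$$
h_{top}(\psi)=h_{top}(\psi_1\times\psi_2)=h_{top}(\psi_1)+h_{top}(\psi_2)=x_1+x_2.
$$
Hence $x_1+x_2=h_{top}(\psi)\in{\bf E}_{top}(K)\subseteq{\bf E}_{top}$. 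This handles also the case where one (or both) of the $x_i$ equals $\infty$, with the convention $r+\infty=\infty$, since Theorem~\ref{WAT-top} is stated with values in $\R_{\geq0}\cup\{\infty\}$. Therefore ${\bf E}_{top}$ is a submonoid of $(\R_{\geq0}\cup\{\infty\},+)$.

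I do not expect any real obstacle here: the only two ingredients are the Weak Addition Theorem (already available as Theorem~\ref{WAT-top}) and the vanishing of $h_{top}$ on the identity (or on any suitable zero-entropy example, such as a Bernoulli normalization with $|F|=1$). The only point worth a moment's care is making sure the product construction stays inside the class of compact abelian groups — which it does, finite products of compact abelian groups being compact abelian — and recording the $\infty$ conventions so that the additive closure statement is literally correct in the extended monoid.
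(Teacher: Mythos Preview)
Your proof is correct and follows essentially the same approach as the paper: the paper's proof simply notes that $0\in{\bf E}_{top}$ and invokes the Weak Addition Theorem~\ref{WAT-top} to conclude that ${\bf E}_{top}$ is closed under addition.
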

\begin{proof}
Clearly $0\in\mathbf E_{top}$ and the Weak Addition Theorem \ref{WAT-top} implies that ${\bf E}_{top}$ is a subsemigroup of $\R_{\geq0}\cup\{\infty\}$.
\end{proof}

The following fact is known. It can be deduced from its algebraic counterpart, namely, Theorem \ref{LD} below, and from the Bridge Theorem \ref{BT}.

\begin{fact}\label{LD-top}
The following conditions are equivalent:
\begin{itemize}
\item[(a)] ${\bf E}_{top}= \R_{\geq0}\cup\{\infty\}$;
\item[(b)] $\inf ({\bf E}_{top}\setminus \{0\})=0$;
\item[(c)] $\inf (\bigcup_{n\in\N_+}{\bf E}_{top}(\T^n)\setminus \{0\})=0$.
\end{itemize}
If $\inf ({\bf E}_{top}\setminus \{0\})>0$, then ${\bf E}_{top}$ is countable.
\end{fact}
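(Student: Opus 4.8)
The two implications (a)$\Rightarrow$(b) and (c)$\Rightarrow$(b) are immediate, the latter because $\bigcup_{n\in\N_+}{\bf E}_{top}(\T^n)\subseteq{\bf E}_{top}$; so it remains to prove (b)$\Rightarrow$(a), (b)$\Rightarrow$(c), and the final countability assertion. The plan is to transport the whole question to the discrete side via Pontryagin duality, where it is settled by the algebraic counterpart Theorem \ref{LD}.

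The transport works as follows. The Bridge Theorem \ref{BT} gives $h_{top}(\psi)=h_{alg}(\widehat\psi)$ for every continuous endomorphism $\psi$ of a compact abelian group $K$, where $\widehat\psi:\widehat K\to\widehat K$ is the dual endomorphism of the discrete abelian group $\widehat K$. Since $K\mapsto\widehat K$ is a bijective correspondence between compact abelian groups and discrete abelian groups, taking $\End(K)$ bijectively onto $\End(\widehat K)$, one reads off ${\bf E}_{top}(K)={\bf E}_{alg}(\widehat K)$ for each $K$, hence ${\bf E}_{top}={\bf E}_{alg}$, where ${\bf E}_{alg}=\bigcup\{{\bf E}_{alg}(G):G\ \text{abelian}\}$; and since $\widehat{\T^n}\cong\Z^n$, in the same way $\bigcup_{n\in\N_+}{\bf E}_{top}(\T^n)=\bigcup_{n\in\N_+}{\bf E}_{alg}(\Z^n)$. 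Thus conditions (a), (b), (c) for ${\bf E}_{top}$ are, verbatim, the conditions for ${\bf E}_{alg}$ and $\bigcup_n{\bf E}_{alg}(\Z^n)$, which are equivalent by Theorem \ref{LD}; and the implication ``$\inf({\bf E}_{alg}\setminus\{0\})>0\Rightarrow{\bf E}_{alg}$ countable'' of Theorem \ref{LD} transports to the last assertion of the Fact.

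It is worth recording what the statement really amounts to. Up to duality, the values of $h_{top}$ on the tori $\T^n$ are the algebraic entropies $h_{alg}$ on $\Z^n$, which by the (algebraic form of the) Kolmogorov--Sinai Formula \ref{KSF} are exactly the logarithmic Mahler measures $m(f)$ of monic $f\in\Z[t]$; hence $\inf\bigl(\bigcup_{n\in\N_+}{\bf E}_{top}(\T^n)\setminus\{0\}\bigr)=\mathfrak L$, the Lehmer constant of \eqref{LNumber}, so (c) is literally Lehmer Problem. For a general compact abelian group one splits $\psi$ along the connected component $K_0$ by the Addition Theorem \ref{AT-top}: the entropy on the totally disconnected quotient $K/K_0$ lies in $\{\log n:n\in\N_+\}\cup\{\infty\}$, hence is $\geq\log 2>\mathfrak L$ whenever positive, while on $K_0$ one reduces through Continuity on inverse limits (Corollary \ref{invlim}) and the Weak Addition Theorem \ref{WAT-top} to the finite‑rank pieces, whose positive entropies are again bounded below by $\mathfrak L$; so $\inf({\bf E}_{top}\setminus\{0\})=\mathfrak L$ as well, and all of (a)--(c) are equivalent to $\mathfrak L=0$. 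When $\mathfrak L>0$ the countability follows because the set of logarithmic Mahler measures of integer polynomials is countable and, thanks to the positive lower bound $\mathfrak L$, it governs the finitely many ``atoms'' into which any finite value of $h_{top}$ decomposes. The one step that is not pure duality bookkeeping is precisely this last reduction from arbitrary compact abelian groups to the torus building blocks (and the consequent countability), which rests on the structure theory of discrete abelian groups and the countability of the set of Mahler measures; that is exactly the content already packaged in Theorem \ref{LD}, so the clean route is to quote it rather than redo it here.
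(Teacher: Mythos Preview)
Your proof is correct and follows exactly the approach the paper indicates: transport everything to the discrete side via the Bridge Theorem \ref{BT} and invoke the algebraic counterpart. One small citation slip: the countability assertion ``$\inf({\bf E}_{alg}\setminus\{0\})>0\Rightarrow{\bf E}_{alg}$ countable'' is not part of Theorem \ref{LD} itself but of the theorem stated immediately after it (showing ${\bf E}_{alg}\subseteq\mathbb A\cup\{\infty\}$); otherwise the argument is precisely what the paper has in mind.
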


As a consequence of Fact \ref{LD-top} and the Kolmogorov-Sinai Formula \ref{KSF}, we conclude that $$\mathfrak L=\inf ({\bf E}_{top}\setminus \{0\})$$ (with
$\mathfrak L$ as in (\ref{LNumber})); hence Problem \ref{inftop=0} is equivalent to Lehmer Problem \ref{L-pb}.

\medskip
We introduce two classes of compact abelian groups related to the topological e-spectrum, namely,
$${\mathfrak E}_0=\{\text{compact abelian groups}\ K\ \text{with}\ {\bf E}_{top}(K)=\{0\}\}$$
and
$${\mathfrak E}_{<\infty}=\{\text{compact abelian groups}\ K\ \text{with}\ \infty\not \in {\bf E}_{top}(K)\}.$$
Obviously, ${\mathfrak E}_0\subseteq {\mathfrak E}_{<\infty}$ and $\widehat\Q^n\in {\mathfrak E}_{<\infty}$ for every $n\in \N$ by Yuzvinski Formula \ref{YF}.

\smallskip
We recall the following known results.

\begin{theorem}     
\begin{itemize}
\item[(a)] \emph{\cite{ADS}} If $K$ is a compact abelian group and $K\in {\mathfrak E}_0$, then $w(K)\leq { \mathfrak c}$ and $\dim K=0$ (i.e., $K$ is totally disconnected).
\item[(b)] There are $2^{ \mathfrak c}$ pairwise non-isomorphic compact abelian groups $K\in {\mathfrak E}_0$ with $w(K)={ \mathfrak c}$.
\end{itemize}
\end{theorem}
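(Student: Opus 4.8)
The approach is to move the whole question to the discrete side through Pontryagin duality and then feed it into a realization theorem for endomorphism rings of abelian $p$-groups.

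First I would set up the reduction. Pontryagin duality restricts to a bijection between the isomorphism classes of totally disconnected compact abelian groups and the isomorphism classes of torsion abelian groups, under which $w(K)$ corresponds to $|\widehat K|$ and the continuous endomorphisms of $K$ correspond to the endomorphisms of $G:=\widehat K$; moreover the Bridge Theorem \ref{BT} gives $h_{top}(\psi)=h_{alg}(\widehat\psi)$, and $h_{alg}(\widehat\psi)=\ent(\widehat\psi)$ since $G$ is torsion. As item (a) already forces every member of $\mathfrak E_0$ to be totally disconnected of weight at most $\mathfrak c$, this shows that the statement to be proved is equivalent to its algebraic counterpart: there exist $2^{\mathfrak c}$ pairwise non-isomorphic torsion abelian groups $G$ with $|G|=\mathfrak c$ such that $\ent(\phi)=0$ for every $\phi\in\End(G)$. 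From now on I would work on the discrete side.

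Next I would pin down a sufficient condition on a $p$-group $G$ (for a fixed prime $p$) guaranteeing that all its endomorphisms have vanishing algebraic entropy, the natural candidate being $\End(G)=\J_p\cdot 1_G\oplus E_s(G)$, where $E_s(G)$ is the ideal of small endomorphisms. Indeed, a multiplication $\mu_u$ by $u\in\J_p$ leaves every finite subgroup $F$ of $G$ invariant, so $\mathfrak T_n(\mu_u,F)=F$ for all $n$ and $\ent(\mu_u)=0$; a small endomorphism $\sigma$ has $\ent(\sigma)=0$ because its trajectories on finite subgroups stay inside subgroups of controlled size; and for a general $\phi=\mu_u+\sigma$ one argues directly on $\mathfrak T_n(\phi,F)$, keeping it confined between trajectories one already controls. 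This is the step I expect to carry the real weight: the algebraic entropy is not additive on sums of endomorphisms, so the last point does not reduce to the first two, and the vanishing of $\ent$ on small endomorphisms has to be drawn from the Pierce structure theory of $E_s(G)$, leaning on the study of $\ent$ on torsion abelian groups in \cite{DGSZ} (compare Example \ref{exdgsz}).

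Finally I would invoke a prescribed-endomorphism-ring theorem to produce the family: in the spirit of Corner's and Dugas--G\"obel's realization results (equivalently via Shelah's Black Box, which operates in ZFC since $\mathfrak c^{\aleph_0}=\mathfrak c$), there are $2^{\mathfrak c}$ pairwise non-isomorphic reduced $p$-groups $G$ of cardinality $\mathfrak c$ with $\End(G)=\J_p\cdot 1_G\oplus E_s(G)$. Dualizing this family gives $2^{\mathfrak c}$ pairwise non-isomorphic compact abelian groups $K=\widehat G$ with $w(K)=\mathfrak c$ (non-isomorphic compact groups correspond to non-isomorphic discrete duals), and by the previous paragraph together with the Bridge Theorem each satisfies $\mathbf E_{top}(K)=\{0\}$, i.e. $K\in\mathfrak E_0$. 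A variant leaning more on the machinery already recalled is to cite only a rigid system $\{G_i:i<\mathfrak c\}$ of such $p$-groups with $\mathrm{Hom}(G_i,G_j)\subseteq E_s$ for $i\neq j$, and then take $G_S=\bigoplus_{i\in S}G_i$ over the $2^{\mathfrak c}$ subsets $S\subseteq\mathfrak c$ of size $\mathfrak c$: rigidity makes the $G_S$ pairwise non-isomorphic, while the Weak Addition Theorem \ref{WAT-top} and Continuity on inverse limits, Corollary \ref{invlim}, applied on the dual side show that every continuous endomorphism of $\widehat{G_S}$ still has zero topological entropy.
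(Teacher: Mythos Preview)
Your main line is correct and is exactly what the paper does: the paper states only that item (b) ``can be deduced from \cite{DGSZ} and the Bridge Theorem \ref{BT}'', and you have correctly unpacked this deduction---Pontryagin duality plus the Bridge Theorem reduces the question to producing $2^{\mathfrak c}$ pairwise non-isomorphic $p$-groups $G$ of size $\mathfrak c$ with $\ent(\phi)=0$ for every $\phi\in\End(G)$, and the criterion $\End(G)=\J_p\cdot 1_G\oplus E_s(G)$ together with the realization theorems of Corner/Dugas--G\"obel type supplies exactly that, as carried out in \cite{DGSZ}.

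One caution about your closing variant: the Weak Addition Theorem \ref{WAT-top} and Continuity \ref{invlim} only control \emph{diagonal} endomorphisms $\psi_1\times\psi_2$ of a product, not an arbitrary continuous endomorphism of $\widehat{G_S}=\prod_{i\in S}\widehat{G_i}$. Rigidity makes the off-diagonal pieces small rather than zero, so to conclude $\ent\equiv 0$ on $G_S$ you still need the ``scalar plus small implies zero entropy'' analysis from \cite{DGSZ} that you already invoked in the main argument; the variant therefore does not bypass that step.
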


Item (b) of the above theorem can be deduced from \cite{DGSZ} and the Bridge Theorem \ref{BT}.

The next example shows that for non-abelian compact groups the property in item (a) of the above theorem may fail.

\begin{example}{\cite{ADS}}
Let $K=SO_3(\R)$. Then $K$ is compact, connected, with $\dim K =3$ and every non-trivial endomorphism $\psi$ of $K$ is an internal automorphism, so $h_{top}(\psi)=0$.
\end{example}

The topological entropy can detect finite-dimensionality of connected compact abelian groups; indeed, the following result holds.

\begin{theorem}\emph{\cite{ADS}}
Let $K$ be a compact abelian group.
\begin{itemize}
\item[(a)] If $K\in {\mathfrak E}_{<\infty}$, then $\dim K<\infty$.
\item[(b)] If $\dim K<\infty$ and $K$ is connected, then $K\in {\mathfrak E}_{<\infty}$; moreover, $K\in\mathfrak E_0$ if and only if $K=0$.
\end{itemize}
\end{theorem}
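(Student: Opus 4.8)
The plan is to transfer both statements to the algebraic side via the Bridge Theorem \ref{BT}: for a continuous endomorphism $\psi$ of a compact abelian group $K$ one has $h_{top}(\psi)=h_{alg}(\widehat\psi)$, where $\widehat\psi:\widehat K\to\widehat K$ is the dual endomorphism of the discrete abelian group $\widehat K$. I will freely use the standard Pontryagin duality facts that $\dim K$ is finite if and only if the torsion-free rank $r_0(\widehat K)$ is finite (in which case they are equal), that $K$ is connected precisely when $\widehat K$ is torsion-free, and that a torsion-free abelian group of rank $n$ embeds into its divisible hull $\Q^n$. For an abelian group $G$ write $\mu_G:G\to G$ for multiplication by $2$; since the dual of $\mu_K$ is $\mu_{\widehat K}$, the Bridge Theorem \ref{BT} gives $h_{top}(\mu_K)=h_{alg}(\mu_{\widehat K})$.

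For (a) I argue by contraposition: if $\dim K$ is infinite I claim that already $h_{top}(\mu_K)=\infty$, so $K\notin{\mathfrak E}_{<\infty}$. As $r_0(\widehat K)$ is infinite, for each $k\in\N_+$ one can pick $k$ independent elements $x_1,\dots,x_k\in\widehat K$; the subgroup $F_k=\langle x_1,\dots,x_k\rangle$ is free of rank $k$ (the map $\Z^k\to\widehat K$ sending the $i$-th generator to $x_i$ is injective by independence) and is $\mu_{\widehat K}$-invariant. Monotonicity of $h_{alg}$ for invariant subgroups gives $h_{alg}(\mu_{\widehat K})\ge h_{alg}(\mu_{F_k})$, and, as $F_k\cong\Z^k$, the Weak Addition Theorem yields $h_{alg}(\mu_{F_k})=h_{alg}(\mu_{\Z^k})=k\,h_{alg}(\mu_{\Z})=k\log 2$, where $h_{alg}(\mu_{\Z})=\log 2$ because the $n$-fold sum $\{0,1\}+\{0,2\}+\dots+\{0,2^{n-1}\}$ of the $\mu_{\Z}$-iterates of $\{0,1\}$ equals $\{0,1,\dots,2^{n}-1\}$ (alternatively, apply the Algebraic Kolmogorov--Sinai Formula to the polynomial $t-2$). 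Letting $k\to\infty$ gives $h_{alg}(\mu_{\widehat K})=\infty$, hence $h_{top}(\mu_K)=\infty$ by the Bridge Theorem \ref{BT}.

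For (b), let $K$ be connected with $\dim K=n<\infty$, so $\widehat K$ is torsion-free of rank $n$ and $\widehat K\le\Q^n$ with $\Q^n$ its divisible hull. To see $K\in{\mathfrak E}_{<\infty}$: any $\phi\in\End(\widehat K)$, regarded as a homomorphism $\widehat K\to\widehat K\hookrightarrow\Q^n$, extends, since $\Q^n$ is divisible and hence injective in the category of abelian groups, to some $\widetilde\phi:\Q^n\to\Q^n$; then $\widehat K$ is $\widetilde\phi$-invariant with $\widetilde\phi\restriction_{\widehat K}=\phi$, so Monotonicity for subgroups gives $h_{alg}(\phi)\le h_{alg}(\widetilde\phi)=m(\widetilde\phi)<\infty$ by the Algebraic Yuzvinski Formula. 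By the Bridge Theorem \ref{BT}, every continuous endomorphism of $K$ has finite topological entropy, i.e., $\infty\notin{\bf E}_{top}(K)$. For the ``moreover'' part, $K=0$ gives ${\bf E}_{top}(K)=\{0\}$ trivially; conversely, if $K\ne0$ then $\widehat K$ is a nonzero torsion-free group, so it contains an element of infinite order, i.e., a $\mu_{\widehat K}$-invariant copy of $\Z$, and as in (a) $h_{top}(\mu_K)=h_{alg}(\mu_{\widehat K})\ge h_{alg}(\mu_{\Z})=\log 2>0$, whence ${\bf E}_{top}(K)\ne\{0\}$ and $K\notin{\mathfrak E}_0$.

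The computations here --- the algebraic entropy of $\mu$ on $\Z$ and on $\Z^k$, and the extension of a homomorphism to the divisible hull --- are routine; the argument really rests on two imported ingredients, the Bridge Theorem \ref{BT}, which moves everything to the discrete side, and the Algebraic Yuzvinski Formula (valid for arbitrary, not only invertible, endomorphisms of $\Q^n$), which supplies the finite upper bound in (b). The one conceptual point worth stressing is that the single endomorphism ``multiplication by $2$'' already witnesses $\infty\in{\bf E}_{top}(K)$ whenever $\dim K$ is infinite, so that no Bernoulli-shift type construction is needed; if one wishes to invoke the Algebraic Yuzvinski Formula only for automorphisms, one first replaces $\widetilde\phi$ by its surjective core and checks, via the same divisible-hull argument, that the induced endomorphism there is an automorphism of a finite-dimensional $\Q$-vector space.
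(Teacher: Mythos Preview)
The paper does not supply its own proof of this theorem; it is quoted from \cite{ADS} without argument, so there is nothing to compare your proposal against line by line. That said, your proof is correct and is exactly the kind of argument one would expect given the tools assembled in the surrounding sections: you pass to the discrete dual via the Bridge Theorem~\ref{BT}, exploit the dictionary $\dim K = r_0(\widehat K)$ and ``$K$ connected $\Leftrightarrow$ $\widehat K$ torsion-free'', and then use the Algebraic Yuzvinski Formula~\ref{AYF} for the upper bound in (b) and the single explicit endomorphism $\mu_K$ (multiplication by $2$) to witness the lower bounds in (a) and in the ``moreover'' clause.

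Two small remarks that would tighten the write-up. First, in (b) you invoke Monotonicity to get $h_{alg}(\phi)\le h_{alg}(\widetilde\phi)$, but the paper already records the sharper statement as Proposition~\ref{AA_}: for torsion-free $G$ one has $h_{alg}(\phi)=h_{alg}(\widetilde\phi)$, so you can cite that directly and drop the injectivity/extension discussion. Second, the computation $h_{alg}(\mu_\Z)=\log 2$ is Example~\ref{id-abelian}(c), so you can simply cite it rather than redo the trajectory count. The closing sentence about reducing to the surjective core in case one only trusts the Algebraic Yuzvinski Formula for automorphisms is unnecessary here, since Theorem~\ref{AYF} in the paper is stated for arbitrary endomorphisms of $\Q^n$.
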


The next example shows that the property in item (a) of the above theorem may fail for non-abelian connected compact groups.

\begin{example}{\cite{ADS}}
Let $K=\prod_{n=1}^\infty SO_n(\R)$, (more generally, $K=\prod_{n=1}^\infty L_n$, where $L_n$ are pairwise non-isomorphic connected compact simple Lie groups). Then $h_{top}(\psi)=0$ for every continuous endomorphism $\psi$ of $K$, yet $\dim K=\infty$.
\end{example}

In contrast with the connected case (see item (b) of Theorem \ref{Cartagena}), the class $ {\mathfrak E}_0$ may contain a plenty of 
totally disconnected compact abelian groups. Actually, the  totally disconnected compact abelian groups in $\mathfrak E_{<\infty}$ are already in 
the smaller class $\mathfrak E_{0}$, as item (a) of the next theorem says. This means that for a  totally disconnected compact abelian group $K$ one has the dichotomy 
$$
\mbox{either}\ {\bf E}_{top}(K)= \{0\}\ (\mbox{i.e., }K\in {\mathfrak E}_0)\ \mbox{or}\ \{0,\infty\} \subseteq  {\bf E}_{top}(K).
$$ 
Item (b) of the theorem characterizes the totally disconnected compact abelian groups $K$ which fail to satisfy the intermediate ``smallness" 
property ${\bf E}_{top}(K) = \{0,\infty\}$ for the topological e-spectrum: 

\begin{theorem}\label{Cartagena}{\em \cite{ADS}}
Let $K$ be an infinite totally disconnected compact abelian group. 
Then 
\begin{itemize}
\item[(a)] $K\in {\mathfrak E}_0$ if and only if $K\in  {\mathfrak E}_{<\infty}$.
\item[(b)]   If $K\not\in {\mathfrak E}_0$,  $($i.e., $\{0,\infty\} \subseteq  {\bf E}_{top}(K))$,  then the following conditions are equivalent: 
\begin{itemize}
\item[(b$_1$)]  ${\bf E}_{top}(K)\ne  \{0,\infty\}$; 
\item[(b$_2$)]  $K$ has a direct summand of the form $\Z_{p^n}^\N$ for some $n\in \N$ and some prime $p$; 
\item[(b$_3$)]  ${\bf E}_{top}(K)\supseteq \{m\log p: m\in \N\}$ for some prime $p$;
\item[(b$_4$)]  some of the Ulm-Kaplanski invariants $\alpha_{n,p}(\widehat{K})$ of its Pontryagin dual $\widehat{K}$ is infinite. 
\end{itemize}
\end{itemize}
\end{theorem}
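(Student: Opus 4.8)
The plan is to transport the whole statement to the algebraic side via Pontryagin duality. Since $K$ is an infinite totally disconnected compact abelian group, $G:=\widehat K$ is an infinite discrete torsion abelian group, and the Bridge Theorem \ref{BT} gives $h_{top}(\psi)=\ent(\widehat\psi)$ for every continuous $\psi\in\End(K)$; as duality is a contravariant equivalence, $\mathbf E_{top}(K)=\mathbf E_{alg}(G):=\{\ent(\phi):\phi\in\End(G)\}$, the summands $\Z_{p^n}^{\N}$ of $K$ correspond to summands $\cong\Z(p^n)^{(\N)}$ of $G$, and the $\alpha_{n,p}(\widehat K)$ are the Ulm–Kaplansky invariants of $G$. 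I would then use the primary decomposition $G=\bigoplus_p G_p$: each $G_p$ is fully invariant, so every $\phi\in\End(G)$ restricts to it, and the Weak Addition Theorem together with Continuity on direct limits for $\ent$ gives $\ent(\phi)=\sum_p\ent(\phi\restriction_{G_p})$; conversely $G_p$ is a direct summand of $G$, so $\mathbf E_{alg}(G_p)\subseteq\mathbf E_{alg}(G)$. This reduces everything to $p$-groups plus a little bookkeeping over $p$.

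\textbf{The routine implications.}
In (a), $\mathfrak E_0\subseteq\mathfrak E_{<\infty}$ is immediate; in (b), (b$_3$)$\Rightarrow$(b$_1$) is immediate. For (b$_2$)$\Rightarrow$(b$_4$) I would use that Ulm–Kaplansky invariants add over direct sums and $\alpha_{n,p}(\Z(p^n)^{(\N)})=\aleph_0$. For (b$_2$)$\Rightarrow$(b$_3$): given a direct summand $\Z(p^n)^{(\N)}\cong(\Z(p^n)^m)^{(\N)}$, the Bernoulli shift $\beta_F^\oplus$ with $F=\Z(p^n)^m$, extended by $0$ on a complement, has $\ent=\log|F|=mn\log p$ by Bernoulli normalization, so $\mathbf E_{top}(K)$ contains a full arithmetic progression $\{mn\log p:m\in\N\}$; and, splitting $\N$ into infinitely many infinite pieces and putting a block shift of growing size on each, one gets an endomorphism with $\ent=\infty$. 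These same constructions settle the nontrivial half of (a): if some $\psi$ has $h_{top}(\psi)>0$, then by additivity over primary components $\ent(\phi\restriction_{G_p})>0$ for some $p$; if this value is $\infty$ we are done, and if it is positive and finite the dichotomy below produces an infinite $\alpha_{n,p}(G_p)$, hence a $\Z(p^n)^{(\N)}$-summand, hence $\infty\in\mathbf E_{alg}(G_p)\subseteq\mathbf E_{top}(K)$ — so $K\notin\mathfrak E_{<\infty}$, i.e. $\mathfrak E_{<\infty}\subseteq\mathfrak E_0$.

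\textbf{The structural heart, and the main obstacle.}
What is left are two equivalences about a $p$-group $G_p=\widehat{K_p}$. The first, ``$\alpha_{n,p}(G_p)$ infinite for some $n$ $\iff$ $G_p$ has a direct summand $\cong\Z(p^n)^{(\N)}$'', I would handle by classical $p$-group structure theory: if $\alpha_{n,p}(G_p)$ is infinite, a basic subgroup $B$ of $G_p$ has $\Z(p^n)^{(\N)}$ as a direct summand, so $\Z(p^n)^{(\N)}$ is pure in $G_p$ (being a summand of the pure subgroup $B$) and bounded, hence a direct summand of $G_p$ by Kulikov's theorem. The second — and this is where I expect the real work — is the \emph{sparseness dichotomy}: if all Ulm–Kaplansky invariants of $G_p$ are finite, then every endomorphism of $G_p$ has entropy $0$ or $\infty$ (this is (b$_1$)$\Rightarrow$(b$_4$), and together with the first equivalence it also closes (a)). The hard part is that under this hypothesis $G_p$ is infinite but ``thin'' — necessarily unbounded, the prototype being $\bigoplus_n\Z(p^n)$ — and the invariant subgroups one can leverage as candidate ``entropy engines'' are essentially the bounded layers $G_p[p^k]\cong(\text{finite})\oplus\Z(p^k)^{(\N)}$; one must show that a positive-entropy action on such a layer cannot be localized, because the shift-type behaviour it forces on the socle propagates up the height filtration of $G_p$ and, the group being unbounded, makes the trajectories of suitable finite subgroups grow with no bounded linear rate, i.e. $\ent=\infty$. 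Making this propagation argument rigorous — tracking heights and basic subgroups and invoking the Addition Theorem for $\ent$ — is the technical core, carried out in \cite{ADS}, and the part I would expect to occupy most of the proof; everything else is the duality reduction above, the Bernoulli normalization, and the Kulikov summand lemma.
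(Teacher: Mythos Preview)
Your overall architecture --- Bridge Theorem, primary decomposition, and a structural dichotomy for $p$-groups --- is exactly the paper's route, and the routine implications you list are handled the same way. Two points of divergence are worth noting.

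First, the ``sparseness dichotomy'' you isolate as the technical core is not proved in \cite{ADS} from scratch but is imported from \cite{DGSZ}: part (a) is \cite[Corollary 1.20]{DGSZ} (positive entropy on a torsion abelian group forces the existence of an endomorphism with infinite entropy), and the implication (b$_1$)$\Rightarrow$(b$_2$) uses \cite[Theorem 1.19]{DGSZ} (a \emph{reduced} $p$-group admitting an endomorphism of finite positive entropy has an infinite bounded direct summand). The paper does not go via (b$_4$) for this step; it goes straight to the summand and then reads off (b$_4$) by duality. Your heuristic for the dichotomy sketches only the reduced, unbounded case (``the prototype being $\bigoplus_n\Z(p^n)$''), but the paper's argument makes an explicit preliminary reduction you omit: write $G_p=D\oplus R$ with $D$ divisible and $R$ reduced, observe $\mathbf E_{alg}(D)\subseteq\{0,\infty\}$ (Example \ref{Exaaaample}(b)), so a finite positive $\ent(\phi)$ forces $\ent(\phi\restriction_D)=0$ and, by the Addition Theorem, $\ent(\bar\phi)$ finite positive on $R$; only then does \cite[Theorem 1.19]{DGSZ} apply. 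Without this split your propagation sketch does not cover, e.g., $G_p=\Z(p^\infty)^{(\kappa)}\oplus R$.

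Second, a small gap in your (b$_2$)$\Rightarrow$(b$_3$): the Bernoulli shifts you build on $\Z(p^n)^{(\N)}$ realise only the values $\{mn\log p:m\in\N\}$, not all of $\{m\log p:m\in\N\}$. The paper simply records $\mathbf E_{top}(\Z(p^n)^\N)=\{m\log p:m\in\N\}$; to get the missing values one needs a further argument (e.g.\ via the Addition Theorem and the filtration by $p^k\Z(p^n)^{(\N)}$).
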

The proof heavily relies on the Bridge Theorem \ref{BT} and properties of the algebraic entropy $\ent$ established in \cite{DGSZ}.
This is why we defer it to \S \ref{LAAAAST}, dedicated to the Bridge Theorem \ref{BT}  and its applications. 

\begin{example} 
According to item (b$_4$) of the above theorem, a typical example of a  totally disconnected compact abelian group satisfying ${\bf E}_{top}(K)= \{0,\infty\}$ is the group $K = \prod_{n=1}^\infty\Z(p^n)^{a_n}$ for a prime $p$ and for any sequence $\{a_n\}_{n\in\N}$ of natural numbers. 
\end{example}

\subsection{Topological entropy on locally compact groups}\label{lc-sec}

Let $(X,\mathcal U)$ be a locally compact uniform space endowed with a Borel measure $\mu$ and let $\psi: X \to X$ be a uniformly continuous selfmap. In the case when $X$ is metrizable, Bowen introduced in \cite[Section 2]{B} another entropy function $k$, assuming that $\mu$ satisfies a compatibility condition with respect to $\psi$. Hood \cite{hood} extended the definition of $k$ to uniformly continuous selfmaps of uniform locally compact spaces as follows. 
Call the Borel measure $\mu$ on $X$ {\em $\psi$-homogeneous}, if
\begin{enumerate}[(a)]
\item $\mu(K) < \infty$ for all compact sets $K$ in $X$;
\item $\mu(K_0) > 0$ for some compact subset $K_0$ of $X$;
\item for each $U \in \mathcal U$ there exist $V \in \mathcal U$ and $c>0 $ such that
$$
\mu(D_n(y, V , \psi)) \leq c \mu(D_n(x, U, \psi)),
$$ 
for all $n\in \N_+$ and all $x,y \in X$,
\end{enumerate}
where $D_n(x, U, \psi)$ is Bowen's ball defined in \eqref{BB}. 

It follows from (c) that for $x\in X$ the value
\begin{equation}\label{ccc}
k(\psi,U)=\limsup_{n\to\infty}-\frac{\log\mu(D_n(x, U , \psi))}{n}
\end{equation}
does not depend on the particular choice of $x$. Moreover $k(\psi,U) \leq k(\psi,V)$ whenever $U\supseteq V$. Hence we can define the {\em rate of decay} of the measure of Bowen's ball as
$$
k(\mu, \psi)=\lim_{U\in \mathcal U}k(\psi,U).
$$
Bowen proved that 
\begin{equation}\label{Bowen_Formula}
h_s(K,\psi) = k(\mu, \psi)
\end{equation}
for every compact subset $K$ of $X$ with $\mu(K)>0$, in particular, $h_U(\psi) = k(\mu, \psi)$ in view of (b). 

Note that the more precise equality \eqref{Bowen_Formula} implies that \emph{$h_s(K,\psi) $ does not depend on the compact set $K$ as far as $\mu(K) > 0$}. This allows for a simpler computation of $h_U$ by choosing an appropriate compact set with $\mu(K) > 0$.

\smallskip
In what follows we discuss the entropy $k$ for a locally compact group $G$, endowed with a right Haar measure $\mu$ and considered in its right uniform structure. Let $\C(G)$ be the family of all compact neighborhoods of $e_G$ in $G$.
For $n\in\N_+$, the {\em $n$-th $\psi$-cotrajectory} of a fixed $U\in \C(G)$ is
\begin{equation}\label{cotraj}
C_n(\psi,U)=U\cap \psi^{-1}(U)\cap \dots\cap \psi^{-n+1}(U).
\end{equation}
For $x\in G$, $n\in \N_+$ and $U \in \C(G)$, one has $D_n(x,U,\psi)=C_n(\psi,U)x$ (here we write $U$ in $D_n(x,U,\psi)$ in place of the entourage $W_U=\{(y,x)\in G \times G: yx^{-1}\in U)\}$ in the right uniformity, determined by $U$). Therefore the right invariance of $\mu$ yields that $\mu$ is $\psi$-homogeneous for every continuous endomorphism $\psi$ of $G$ and $\mu(D_n(x,U,\psi))= \mu (C_n(\psi,U))$. Now $k(\psi,U)$, with $U\in\C(G)$ (generalizing \eqref{ccc} from the metric case) is defined by
\begin{equation}\label{h_mu(U)} 
k(\psi,U)=\limsup_{n\to\infty}-\frac{\log\mu(C_n(\psi,U))}{n},
\end{equation}
and
\begin{equation}\label{h_mu} 
k(\psi)=\sup\{k(\psi,U):U\in \C(G)\}.
\end{equation}

As we mentioned, a reformulation in terms of locally compact groups of the results from \cite{hood} gives, for a locally compact group $G$ and a continuous endomorphism $\psi:G\to G$,
$$k(\psi)=h_U(\psi).$$
In view of this equality we use the symbol $k$ also to denote $h_U$, and we call it topological entropy.

\medskip
The following easy remark simplifies the computation of the supremum \eqref{h_mu} in many cases.

\begin{remark}\label{Base}
Let $G$ be a locally compact group, $\mu$ a right Haar measure on $G$ and $\psi:G\to G$ a continuous endomorphism.
\begin{itemize}
   \item[(a)] To see that the value of $k(\psi,U)$ computed in \eqref{h_mu(U)} does not depend on the choice of the right Haar measure $\mu$ on $G$ it suffices to recall that if $\mu'$ is another right Haar measure on $G$, then there exists a constant $c>0$ such that $\mu'= c\mu$, so the limit in (\ref{h_mu(U)}) does not change. 
   \item[(b)] For $U,U'\in \C(G)$, with $U \subseteq U' $, one has $C_n(\psi,U) \subseteq C_n(\psi,U')$ for every $n\in\N_+$; so $k(\psi,U)\geq k(\psi,U')$. In this sense $k(\psi,U)$ is monotone with respect to $U$. Therefore in \eqref{h_mu} one can take the neighborhoods $U$ from a base of $\C(G)$.  
   \item[(c)] Using the formula (\ref{h_mu(U)}) one can obtain a short proof of the fact that a continuous endomorphism  $\psi$ of $G$, such that $\psi(U) \subseteq U$ for all members 
   $U$ of a base of $\C(G)$ has zero topological entropy (compare with Proposition \ref{non-expanding}). Indeed, the hypothesis $\psi(U) \subseteq U$ leads to $C_n(\psi,U)= U$, so obviously $k(\psi,U) = 0$. 
\end{itemize}
\end{remark}

It is well known that when the locally compact group $G$ is totally disconnected, then $\C(G)$ has a base of clopen compact subgroups (see \cite{HR}). Therefore in view of Remark \ref{Base}(b) one can take in \eqref{h_mu} clopen compact subgroups $U$ in this case. More precisely, for a topological group $G$, let $\mathcal B(G)$ be the family of all open compact subgroups of $G$. Then (b) entails the following useful property: 

\begin{lemma}\label{B(G)}
Let $G$ be a totally disconnected locally compact group. Then 
$$k(\psi)=\sup\{k(\psi,U):U\in \mathcal B(G)\}.$$
\end{lemma}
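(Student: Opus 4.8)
The plan is to reduce the statement to the already established formula $k(\psi)=\sup\{k(\psi,U):U\in \C(G)\}$, combined with the antitonicity of $U\mapsto k(\psi,U)$ recorded in Remark \ref{Base}(b) and the classical structure theorem of van Dantzig for totally disconnected locally compact groups. The only genuine input is the latter; the rest is an elementary manipulation of suprema over a cofinal subfamily.

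First I would invoke van Dantzig's theorem: in a totally disconnected locally compact group $G$, the family $\mathcal B(G)$ of open compact subgroups is a base of neighborhoods of $e_G$ (see \cite{HR}). In particular each $U\in\mathcal B(G)$ is clopen and compact, hence $\mathcal B(G)\subseteq\C(G)$, and for every $V\in\C(G)$ there is some $U\in\mathcal B(G)$ with $U\subseteq V$. In other words $\mathcal B(G)$ is a base of $\C(G)$ in the sense used in Remark \ref{Base}(b).

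With this in hand, both inequalities are immediate. Since $\mathcal B(G)\subseteq\C(G)$, passing to the supremum over the smaller family can only decrease it, so $\sup\{k(\psi,U):U\in\mathcal B(G)\}\le\sup\{k(\psi,U):U\in\C(G)\}=k(\psi)$. Conversely, fix $V\in\C(G)$ and, using van Dantzig's theorem, pick $U\in\mathcal B(G)$ with $U\subseteq V$. By Remark \ref{Base}(b) one has $k(\psi,U)\ge k(\psi,V)$, whence $\sup\{k(\psi,U):U\in\mathcal B(G)\}\ge k(\psi,V)$. Taking the supremum over all $V\in\C(G)$ gives $\sup\{k(\psi,U):U\in\mathcal B(G)\}\ge k(\psi)$, and combining the two inequalities yields the claimed equality.

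I expect the only point requiring care — though it is entirely standard — to be the appeal to van Dantzig's theorem guaranteeing that the open compact subgroups are cofinal downward in $\C(G)$; everything else is the routine observation that, for an antitone function, the supremum over a base agrees with the supremum over the whole directed family.
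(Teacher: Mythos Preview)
Your proof is correct and is essentially identical to the paper's own argument: the paper observes (in the text immediately preceding the lemma) that van Dantzig's theorem gives $\mathcal B(G)$ as a base of $\C(G)$ and then invokes Remark \ref{Base}(b), which is exactly what you do. Your write-up merely makes the two inequalities explicit, which the paper leaves implicit in the phrase ``(b) entails the following useful property''.
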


This lemma allows one to eliminate the Haar measure $\mu$ completely and see that in \eqref{h_mu(U)} one has actually a {\em limit}:

\begin{proposition}\label{no-mu}
Let $G$ be a totally disconnected locally compact group and let $U\in\mathcal B(G)$. Then 
$$k(\psi,U)=\lim_{n\to\infty}\frac{\log [U: C_n(\psi,U)]}{n}.$$
If $G$ is compact, then $$k(\psi,U)=\lim_{n\to\infty}\frac{\log [G: C_n(\psi,U)]}{n}.$$
\end{proposition}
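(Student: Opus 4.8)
The plan is to remove the Haar measure from \eqref{h_mu(U)} and then to show, as a separate and slightly more delicate point, that the resulting $\limsup$ is actually a limit. First I would record the structural facts. Since $\psi$ is a continuous endomorphism and $U$ is open, each $\psi^{-i}(U)$ is an open (hence closed) subgroup of $G$, so $C_n(\psi,U)=\bigcap_{i=0}^{n-1}\psi^{-i}(U)$ is an open subgroup of the compact group $U$, whence $d_n:=[U:C_n(\psi,U)]<\infty$. Fixing a right Haar measure $\mu$ on $G$, the compact open set $U$ is the disjoint union of $d_n$ right cosets of $C_n(\psi,U)$, all of the same measure by right invariance, so $\mu(U)=d_n\,\mu(C_n(\psi,U))$ with $0<\mu(U)<\infty$. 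Hence $-\frac{\log\mu(C_n(\psi,U))}{n}=\frac{\log d_n}{n}-\frac{\log\mu(U)}{n}$, and since the last term vanishes, \eqref{h_mu(U)} gives $k(\psi,U)=\limsup_{n\to\infty}\frac{\log d_n}{n}$.

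Next I would prove that this $\limsup$ is a limit. Write $C_n=C_n(\psi,U)$. A direct computation gives $C_{n+1}=U\cap\psi^{-1}(C_n)$, and since $C_{n+1}\subseteq C_n\subseteq U$ this rewrites as $C_{n+1}=C_n\cap\psi^{-1}(C_n)$; moreover $\psi(C_n)\subseteq C_{n-1}$ for $n\ge 2$, because $x\in C_n$ forces $\psi^{i}(x)\in U$ for $0\le i\le n-1$, whence $\psi^{j}(\psi(x))\in U$ for $0\le j\le n-2$. Now set $e_n:=[C_n:C_{n+1}]$, a positive integer since $C_{n+1}$ is open in the compact group $C_n$. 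The assignment $xC_{n+1}\mapsto\psi(x)C_n$ is a well-defined injection of $C_n/C_{n+1}=C_n/(C_n\cap\psi^{-1}(C_n))$ into the left coset space $C_{n-1}/C_n$: both well-definedness and injectivity amount to the equivalence $y^{-1}x\in C_n\cap\psi^{-1}(C_n)\iff\psi(x)C_n=\psi(y)C_n$, using that $\psi(x),\psi(y)\in C_{n-1}$. Therefore $e_n\le e_{n-1}$ for every $n\ge 2$, so $\{e_n\}_{n\ge 1}$ is a non-increasing sequence of positive integers and hence eventually equals some $\ell\in\N_+$. Since $U=C_1\supseteq C_2\supseteq\cdots$, multiplicativity of the index yields $\log d_n=\sum_{i=1}^{n-1}\log e_i$, and the eventual constancy of $\{e_i\}$ forces $\frac{\log d_n}{n}\to\log\ell$. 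Thus the $\limsup$ above is a genuine limit, and $k(\psi,U)=\lim_{n\to\infty}\frac{\log[U:C_n(\psi,U)]}{n}$.

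For the compact case one only has to note that then $G$ itself is a compact open subgroup with $[G:U]<\infty$, and $[G:C_n(\psi,U)]=[G:U]\cdot[U:C_n(\psi,U)]$; dividing $\log$ by $n$, the constant term $\log[G:U]$ disappears in the limit, so $\lim_{n\to\infty}\frac{\log[G:C_n(\psi,U)]}{n}=k(\psi,U)$ as well. I expect the main obstacle to be precisely the passage from $\limsup$ to $\lim$: the sequence $\log d_n$ is in general \emph{not} subadditive --- for instance, for $G=\Q_p$, $\psi$ the multiplication by $p^{-1}$ and $U=\J_p$ one gets $C_n(\psi,U)=p^{n-1}\J_p$, hence $d_n=p^{\,n-1}$ --- so Fekete Lemma \ref{fekete} cannot be invoked directly, and it is the monotonicity of the increments $e_n=[C_n:C_{n+1}]$, a soft but slightly fiddly coset-counting argument, that carries the proof.
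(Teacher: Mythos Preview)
Your proof is correct. The first and third paragraphs (eliminating the Haar measure via $\mu(U)=d_n\,\mu(C_n)$, and passing from $[U:C_n]$ to $[G:C_n]$ in the compact case) are exactly the paper's argument.

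For the existence of the limit you take a genuinely different and in fact stronger route. The paper appeals to subadditivity and Fekete's Lemma~\ref{fekete}, but only \emph{after} assuming $G$ compact: there it argues that $\log[G:C_n(\psi,U)]$ is subadditive, whence the limit exists, and the same then follows for $\log[U:C_n(\psi,U)]$ since the two differ by the constant $\log[G:U]$. As written, the paper's proof therefore establishes the limit only in the compact case, leaving the general totally disconnected locally compact case as a $\limsup$. Your argument instead shows directly that the increments $e_n=[C_n:C_{n+1}]$ form a non-increasing sequence of positive integers via the coset injection $xC_{n+1}\mapsto\psi(x)C_n$ into $C_{n-1}/C_n$; this is precisely the mechanism of Proposition~\ref{C=C_n->ent*=0} (proved there for the adjoint entropy in the discrete setting), transplanted here. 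Eventual constancy of $\{e_n\}$ then gives convergence of $\frac{1}{n}\sum_{i=1}^{n-1}\log e_i$ without any compactness assumption on $G$, so your approach closes the gap and proves the statement in its full generality. Your $\Q_p$ example is apt: it confirms that $\log[U:C_n(\psi,U)]$ is not subadditive in the non-compact case, so Fekete really cannot be invoked directly there.
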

\begin{proof}
Let $u= \mu(U)>0$ and $n\in\N_+$. The open subgroup $C_n(\psi,U)$ of the compact subgroup $U$ of $G$ has finite index $[U:C_n(\psi,U)]$, so $\mu(C_n(\psi,U)) ={u}[U:C_n(\psi,U)]^{-1}$. Therefore 
$$k(\psi,U)=\limsup_{n\to\infty}\frac{\log [U: C_n(\psi,U) ]-\log u }{n}=\limsup_{n\to\infty}\frac{\log [U: C_n(\psi,U) ] }{n},$$
as $\lim_{n\to\infty} \frac{\log u}{n} = 0$. 

Assume now that $G$ is compact. Then $C_n(\psi,U)$ has finite index in $G$. From $[G: C_n(\psi,U)]= [G:U][U: C_n(\psi,U) ]$ for every $n\in\N_+$, and the equality $\lim_{n\to\infty}\frac{\log [G:U]}{n}=0$ one can easily deduce
$$
\lim_{n\to\infty}\frac{\log [G: C_n(\psi,U)]}{n}= \limsup_{n\to\infty}\frac{\log [G:U][U: C_n(\psi,U) ]}{n}=\limsup_{n\to\infty}\frac{\log [U: C_n(\psi,U) ]}{n}=k(\psi,U).$$
As both sequences $\log [G:C_n(\psi,U) ]=\log [G:U][U: C_n(\psi,U) ]$ and $\log[U: C_n(\psi,U)]$ are subadditive, by Fekete Lemma \ref{fekete} the two $\limsup$ are actually $\lim$.
\end{proof} 

A topological group $G$ is said to be {\em SIN} (small invariant neighborhoods), if $G$ has a local base at $e_G$ consisting of invariant neighborhoods $U$ (i.e., $U^x= U$ for all $x\in G$).  Compact groups are known to be SIN. 

\begin{proposition}\label{inner}
Let $G$ be a SIN locally compact group. Every conjugation $\f_a: x\mapsto x^a$ of $G$ has zero topological entropy. In particular every conjugation of a compact group has zero topological entropy. 
\end{proposition}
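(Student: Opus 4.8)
The plan is to reduce the statement to the computation of the cotrajectories $C_n(\f_a, U)$ for a well-chosen base of compact neighborhoods of $e_G$, exploiting the SIN hypothesis. First I would recall that, since $G$ is SIN, there is a local base $\mathcal{N}$ at $e_G$ consisting of compact invariant neighborhoods, i.e.\ neighborhoods $U$ with $U^x = U$ for every $x \in G$; such a base exists because SIN groups have a local base of invariant neighborhoods and, $G$ being locally compact, we may intersect each such neighborhood with a fixed compact neighborhood of $e_G$ to make it compact as well (the intersection of an invariant set with a compact invariant set is still invariant). By Remark \ref{Base}(b), the supremum \eqref{h_mu} defining $k(\f_a)$ may be computed over $U$ ranging in this base $\mathcal{N}$.

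Next I would compute the cotrajectories. The conjugation $\f_a$ is the continuous endomorphism (in fact automorphism) $x \mapsto x^a = a^{-1} x a$ (or $a x a^{-1}$, depending on the convention fixed in the paper), and its iterates are $\f_a^k(x) = x^{a^k}$. For $U \in \mathcal{N}$ invariant, one has $\f_a^{-k}(U) = \{x : x^{a^k} \in U\} = \{x : x \in U^{a^{-k}}\} = U^{a^{-k}} = U$, using invariance. Hence
$$
C_n(\f_a, U) = U \cap \f_a^{-1}(U) \cap \cdots \cap \f_a^{-n+1}(U) = U
$$
for every $n \in \N_+$. This is precisely the situation covered by Remark \ref{Base}(c): since $\f_a(U) = U^{\,a} = U \subseteq U$ for all members $U$ of the base $\mathcal{N}$ of $\C(G)$, we get $C_n(\f_a, U) = U$, so $k(\f_a, U) = \limsup_{n\to\infty} -\frac{\log \mu(C_n(\f_a,U))}{n} = \limsup_{n\to\infty} -\frac{\log \mu(U)}{n} = 0$ (as $\mu(U)$ is a fixed positive constant, being a nonempty open set). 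Taking the supremum over $U \in \mathcal{N}$ gives $k(\f_a) = 0$, and since $k = h_U = h_{top}$ on locally compact groups, $h_{top}(\f_a) = 0$.

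Finally, the "in particular" clause is immediate: every compact group is SIN (as noted just before the statement), so every conjugation of a compact group has zero topological entropy. I do not expect a genuine obstacle here; the only point requiring a little care is the very first step — producing a base of neighborhoods that are simultaneously compact \emph{and} invariant — which is why I would spell out the intersection-with-a-fixed-compact-neighborhood trick rather than merely invoking "SIN". Everything after that is a direct application of Remark \ref{Base}(c) (or, equivalently, of Proposition \ref{non-expanding} in the metrizable case, though the measure-theoretic formulation handles the general locally compact case uniformly).
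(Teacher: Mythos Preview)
Your proposal is correct and follows essentially the same approach as the paper: reduce to a base of compact invariant neighborhoods via Remark \ref{Base}(b), observe that invariance gives $C_n(\f_a,U)=U$, and conclude $k(\f_a,U)=0$ as in Remark \ref{Base}(c). Your write-up is in fact more explicit than the paper's one-line proof; the only small wrinkle is your construction of compact invariant neighborhoods (intersecting an invariant neighborhood with a merely compact one need not stay invariant), but this is easily fixed by taking the closure of an invariant neighborhood lying inside a compact one.
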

\begin{proof} 
By Lemma \ref{B(G)} it suffices to note that $k(\f_a,U) = 0$ for every compact invariant neighborhood $U$ of $e_G$ in $G$. 
\end{proof}

The next examples show that this property fails in locally compact groups that are not SIN (see items (a), (c) and (d)), but may be available even if the group is not SIN (see item (b)). 

\begin{example}\label{exlctop}
\begin{itemize}
\item[(a)] Let $p$ be a prime and let $\Q_p$ denote the (locally compact) field of $p$-adic integers. Denote by $\xi$ the automorphism of $
\Q_p$ defined by $\xi(x) = \frac{x}{p}$ for every $x\in\Q_p$. Let $G$ be the semi-direct product of $\langle \xi \rangle$ and $\Q_p$ with respect to the action of $\langle \xi \rangle$ on $\Q_p$ determined by $\xi$. Denote by $N$ the open normal subgroup $\{0\} \times \Q_p$ of $G$. Clearly, the conjugation $\f_\xi$ by $\xi$ in $G$ has $N$ as $\xi$-invariant subgroup and $\f_\xi\restriction_N$ coincides (up to the isomorphism $\eta: N \cong \Q_p$) with the automorphism $\xi$ of $\Q_p$. Therefore the $(n+1)$-th $\phi_\xi$-trajectory of the compact open subgroup $U= \eta^{-1}(\mathbb J_p)$ coincides exactly with $p^nU$, hence $k(\f_\xi,U)= \log p$. Therefore $k(\f_\xi) = \log p > 0$. 
\item[(b)] The group $G$ of item (a) is center-free. Now consider a nilpotent group, namely the Heisenberg group 
$$\mathbb H_{\Q_p}=\begin{pmatrix} 
1 & \Q_p & \Q_p \\
0 & 1 & \Q_p \\
0 & 0 & 1
\end{pmatrix}.$$
over the field $\Q_p$ of $p$-adic integers. 
Consider the compact open subgroup $U=\begin{pmatrix} 
1 & \J_p & \J_p \\
0 & 1 & \J_p \\
0 & 0 & 1
\end{pmatrix}$ of $\mathbb H_{\Q_p}$ and fix the element $\alpha_n =\begin{pmatrix} 1 & p^{-n} & 0 \\
0 & 1 & 0 \\
0 & 0 & 1
\end{pmatrix}\in \mathbb H_{\Q_p}$ for $n \in \N$. Then $U\cap \f_{\alpha_n}(U)  =\begin{pmatrix} 
1 & \J_p & \J_p \\
0 & 1 & p^n\J_p \\
0 & 0 & 1
\end{pmatrix}$. Therefore 
$$
\bigcap_{n=1}^\infty \f_{\alpha_n} (U) = \begin{pmatrix} 
1 & \J_p & \J_p \\
0 & 1 & 0 \\
0 & 0 & 1
\end{pmatrix}
$$ 
has empty interior, so $G$ is not SIN. We see now that in spite of this, the topological entropy of the conjugations in $\mathbb H_{\Q_p}$ are zero, i.e., Proposition \ref{inner} cannot be inverted even for totally disconnected locally compact nilpotent groups of class two. 

Let $\gamma = \begin{pmatrix} 1 & t & r \\
0 & 1 & s \\
0 & 0 & 1
\end{pmatrix}\in \mathbb H_{\Q_p}$. Then there exists $m\in \N$ such that $p^mt, p^ms \in \J_p$, consequently $p^mjt, p^mjs \in \J_p$ for every $j\in \Z$. Hence 
for every 
$$\xi =  \begin{pmatrix} 
1 & a & b \\
0 & 1 & c \\
0 & 0 & 1
\end{pmatrix}
\in V =\begin{pmatrix}
1 & p^m \J_p & \J_p \\
0 & 1 & p^m\J_p \\
0 & 0 & 1
\end{pmatrix}
$$
one has 
$$\f_\gamma(\xi) =  \begin{pmatrix} 
1 & a & b + tc - sa \\
0 & 1 & c \\
0 & 0 & 1
\end{pmatrix} \in U.
$$
Analogously $(\f_\gamma)^j(V)\subseteq U$ for all $j \in\N$. Hence  $V \subseteq C_n(\f_\gamma, U)$, and consequently  
$[U:C_n(\f_\gamma, U)]\leq [U:V]=p^m$ for all $n \in \N_+$. 
Therefore  $k( \f_\gamma) = 0$. 

\item[(c)] Let 
$$
G=\begin{pmatrix}
\R_{>0} & \R  \\
0  & 1
\end{pmatrix}
$$  
be the group of all real-entry $2 \times 2$ matrices of the form $\alpha=\begin{pmatrix} 
x & y \\
0 & 1
\end{pmatrix}$ with $x>0$, and let $\xi =\begin{pmatrix}
a & 0  \\
0  & 1 \end{pmatrix}$, with $a> 1$. Let us see that $\f_\xi$ has  topological entropy $\log a$. To this end fix a compact neighborhood $U$ of the identity of the form $U=\begin{pmatrix}
I & J  \\
0  & 1
\end{pmatrix},$ where $I = [\frac{1}{2}, \frac{3}{2}]$ is a compact neighborhood of 1 in $\R_{>0}$ and $J= [-1, 1]$ is a compact symmetric neighborhood of $0$ in $\R$. Since $\f_\xi(U) = \begin{pmatrix}
I & a^{-1}J  \\
0  & 1 \end{pmatrix},$ we can easily deduce that $C_n(\f_\xi, U) = \begin{pmatrix}
I & a^{-n+1}J  \\
0  & 1
\end{pmatrix}.$ 
Write briefly $C_n= C_n(\f_\xi, U) $ and let $f_{C_n}(x,y)$ be the characteristic function of the (rectangular) subset $C_n \subseteq \R^2$. 
Denoting by $\mu_r$ and  $\mu_l$ the right and left Haar measures in $G$ (see \cite[(15.17)(g)]{HR}), one has
\begin{equation}\label{intergrale1}
\mu_r(C_n) = \int_\infty^\infty\int_\infty^\infty \frac{f_{C_n}(x,y)}{x^2}dx\ dy = 2a^{-n+1}\int_{1/2}^{3/2}\frac{dx}{x^2}=\frac{8a^{-n+1}}{3},
\end{equation}
and 
\begin{equation}\label{intergrale2}
\mu_l(C_n) = \int_\infty^\infty\int_\infty^\infty \frac{f_{C_n}(x,y)}{|x|}dx\ dy = 2a^{-n+1}\int_{1/2}^{3/2}\frac{dx}{|x|}=2a^{-n+1}\log 3.
\end{equation}
Using (\ref{h_mu(U)}) and (\ref{intergrale1}), we compute 
$$
k(\f_\xi,U) = \lim_{n\to\infty}  -\frac{\log (\frac{8a^{-n+1}}{3})}{n}=\lim_{n\to\infty}  \frac{(n-1) \log a - \log 8 + \log 3}{n} = \log a.
$$
It is easy to see that using (\ref{intergrale2}) instead of (\ref{intergrale1}) gives the same limit $ \log a$. We can deduce now that $k( \f_\xi) = \log a > 0$.  This gives an example of a connected locally compact group with conjugations of positive topological entropy.     
\end{itemize}
\end{example}

Motivated by item (b) above, one can ask the following

\begin{question}\label{conj1}
Is it true that all conjugations of locally compact nilpotent groups have zero topological entropy? 
\end{question}

For a locally compact field $K$ let $K^*$ be the multiplicative group of all non-zero elements of $K$. Provide the matrix group
$G_K=\begin{pmatrix}
K^{*} & K  \\
0  & 1
\end{pmatrix}$
with the topology inherited from $K^4$. Then $G_K$ is a closed subgroup of the locally compact group $GL_2(K)$. 
With a slight modification of the arguments of (a) and (c), one can prove that both groups $G_{\Q_p}$ and  $G_{\R}$
have a conjugation of a positive topological entropy. This motivates our next: 

\begin{question}\label{conj2} Let $K$ be a non-discrete  locally compact field. 
\begin{itemize}
\item[(a)] Is it true that the group $G_K$ has a conjugation with positive topological entropy?
\item[(b)] Is it true that all conjugations of $G_K$ have finite topological entropy?
\item[(c)] Are (a) and/or (b) true for the larger groups $GL_n(K)$ with $n\geq 2$?
\item[(d)] Does there exist a locally compact group with a  conjugation of infinite topological entropy? Can such a group be chosen connected (totally disconnected)?
\end{itemize}
\end{question}

As the Addition Theorem is one of the fundamental properties of every entropy function, we start considering its weaker form for the topological entropy in this context of continuous endomorphisms of locally compact groups; it holds with the additional hypothesis on the group to be totally disconnected.

\begin{theorem}[Weak Addition Theorem]\label{neeeew} 
Let $G_1,G_2$ be totally disconnected locally compact groups and $\psi_i:G_i\to G_i$ continuous endomorphisms for $i=1,2$. Then $k(\psi_1\times\psi_2)=k(\psi_1)+k(\psi_2)$.
\end{theorem}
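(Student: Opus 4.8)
The plan is to exploit the measure-free description of $k$ on totally disconnected locally compact groups given by Lemma \ref{B(G)} and Proposition \ref{no-mu}, which reduces the statement to a simple multiplicativity property of indices of cotrajectories.

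First I would set $G = G_1 \times G_2$ and $\psi = \psi_1 \times \psi_2$, noting that $G$ is again a totally disconnected locally compact group and $\psi$ a continuous endomorphism. The key preliminary observation is that the open compact subgroups of the form $U_1 \times U_2$, with $U_i \in \mathcal B(G_i)$, form a cofinal subfamily of $\mathcal B(G)$ with respect to reverse inclusion: any $W \in \mathcal B(G)$ is an open neighbourhood of $(e_{G_1}, e_{G_2})$, hence contains a box neighbourhood $V_1 \times V_2$, and since each $G_i$ is totally disconnected locally compact each $V_i$ contains some $U_i \in \mathcal B(G_i)$, so $U_1 \times U_2 \subseteq W$. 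Since $k(\psi, -)$ is antitone in the subgroup by Remark \ref{Base}(b), combining this with Lemma \ref{B(G)} gives
$$k(\psi) = \sup\{k(\psi, U_1 \times U_2) : U_1 \in \mathcal B(G_1),\ U_2 \in \mathcal B(G_2)\}.$$

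The core computation is then straightforward. For $U_i \in \mathcal B(G_i)$ and $j \geq 0$ one has $\psi^{-j}(U_1 \times U_2) = \psi_1^{-j}(U_1) \times \psi_2^{-j}(U_2)$, so intersecting over $j = 0, \dots, n-1$ in \eqref{cotraj} yields $C_n(\psi, U_1 \times U_2) = C_n(\psi_1, U_1) \times C_n(\psi_2, U_2)$, whence
$$[U_1 \times U_2 : C_n(\psi, U_1 \times U_2)] = [U_1 : C_n(\psi_1, U_1)] \cdot [U_2 : C_n(\psi_2, U_2)].$$
Taking logarithms, dividing by $n$, and passing to the limit (all three limits exist by Proposition \ref{no-mu}) gives $k(\psi, U_1 \times U_2) = k(\psi_1, U_1) + k(\psi_2, U_2)$. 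Finally, taking suprema over $U_1$ and $U_2$: the inequality $k(\psi) \leq k(\psi_1) + k(\psi_2)$ is immediate since each summand is bounded by the corresponding $k(\psi_i)$; for the reverse inequality one picks, for given $\varepsilon > 0$, subgroups $U_i$ with $k(\psi_i, U_i) > k(\psi_i) - \varepsilon$ (or arbitrarily large when $k(\psi_i) = \infty$) and forms $U_1 \times U_2$, reading off $k(\psi, U_1 \times U_2) > k(\psi_1) + k(\psi_2) - 2\varepsilon$. This yields $k(\psi_1 \times \psi_2) = k(\psi_1) + k(\psi_2)$, with the usual conventions when a value is $\infty$.

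I expect the only mildly delicate point to be the cofinality of the box subgroups inside $\mathcal B(G)$, together with the reduction of the supremum in Lemma \ref{B(G)} to this cofinal family; the rest is direct manipulation of the cotrajectories $C_n(\psi, U)$ and their indices. It is precisely the passage to totally disconnected groups (so that $k$ is computed via open compact subgroups and indices rather than via Haar measures of Bowen balls) that makes the multiplicativity transparent; this is why the hypothesis of total disconnectedness is used.
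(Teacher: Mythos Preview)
Your proof is correct and follows essentially the same core computation as the paper's sketch: both rely on the identity $C_n(\psi,U_1\times U_2)=C_n(\psi_1,U_1)\times C_n(\psi_2,U_2)$, the multiplicativity of indices, and the fact (Proposition~\ref{no-mu}) that the $\limsup$ defining $k(\psi_i,U_i)$ is an honest limit when $U_i\in\mathcal B(G_i)$, so that $k(\psi,U_1\times U_2)=k(\psi_1,U_1)+k(\psi_2,U_2)$.

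The one genuine difference is in how the passage to the supremum is organised. The paper first treats the metrizable case, where countable decreasing bases of $\mathcal B(G_i)$ make the sup a sequential limit, and then reduces the general case to this one by writing $G_1,G_2$ as inverse limits of metrizable quotients. You bypass this reduction entirely by observing that the box subgroups $U_1\times U_2$ are cofinal in $\mathcal B(G_1\times G_2)$ and that $k(\psi,-)$ is antitone (Remark~\ref{Base}(b)), so the supremum in Lemma~\ref{B(G)} may be computed over boxes directly; then $\sup_{U_1,U_2}\bigl(k(\psi_1,U_1)+k(\psi_2,U_2)\bigr)=k(\psi_1)+k(\psi_2)$ is immediate. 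This is more elementary and avoids any appeal to inverse-limit continuity; the paper's route, on the other hand, would generalise more readily if one did not have the index formula of Proposition~\ref{no-mu} available (e.g.\ in settings where only the measure-theoretic definition is at hand).
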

\begin{proof} {\sl We give only a sketch of a proof.}
Consider first the case when $G_1,G_2$ are metrizable. As mentioned before, the $\limsup$ defining $k(\psi_i,U_i)$ for an open compact subgroup $U_i$ of $G_i$ is a limit. Then $k(\psi_1 \times \psi_2,U_1\times U_2) = k(\psi_1,U_1) + k(\psi_2,U_2)$. Since both $k(\psi_i.-)$, $i=1,2$, are monotone, 
letting $U_1$ and $U_2$ run over a countable decreasing chain of neighborhoods of the neutral element in $G_i$ we are done. 
To prove the non-metrizable case, it suffices to present the groups as inverse limits of metrizable ones.
\end{proof}

\begin{question}\label{newQ} 
Does the Weak Addition Theorem \ref{neeeew} extend to all locally compact groups?   
\end{question}

\newpage
\section{Algebraic entropy}\label{alg-sec}

\subsection{Definition and basic properties}\label{def-sec}

The notion of algebraic entropy was sketched for the first time by Adler, Konheim and McAndrew \cite{AKM} for endomorphisms of abelian groups, even if the natural setting for their notion is that of endomorphisms of torsion abelian groups. This algebraic entropy, called $\ent$, was later studied by Weiss \cite{We}, who gave its basic properties and the relation with the topological entropy using the Pontryagin duality. Recently, it was deeply investigated in \cite{DGSZ}.

\smallskip
A different definition of algebraic entropy was given by Peters \cite{P} for automorphisms of abelian groups. It was appropriately modified in \cite{DG} to fit for all endomorphisms of abelian groups. The latter definition of algebraic entropy coincides with the one by Weiss on endomorphisms of torsion abelian groups and can be extended to endomorphisms of arbitrary groups, as we do in this section.

\smallskip
Let $G$ be a group and $\phi:G\to G$ an endomorphism. For a finite subset $F$ of $G$ and $n\in\N_+$, the \emph{$n$-th $\phi$-trajectory} of $F$ is
$$T_n(\phi,F)=F\cdot \phi(F)\cdot\ldots\cdot\phi^{n-1}(F).$$
The \emph{algebraic entropy of $\phi$ with respect to $F$} is $$H_{alg}(\phi,F)=\limsup_{n\to\infty}\frac{\log|T_n(\phi,F)|}{n},$$ and the \emph{algebraic entropy} of $\phi$ is $$h_{alg}(\phi)=\sup\{H_{alg}(\phi,F):F\in[G]^{<\omega}\}.$$

First of all we show that the $\limsup$ defining $H_{alg}(\phi,F)$ is a limit. We start proving that $\{\log|T_n(\phi,F)|\}_{n\in\N_+}$ is a subadditive sequence in order to apply Fekete Lemma \ref{fekete}.

\begin{lemma}\label{subadd}
Let $G$ be a group, $\phi:G\to G$ an endomorphism and $F\in[G]^{<\omega}$. For $n\in\N_+$, let $c_n=\log|T_n(\phi,F)|$. Then $c_{n+m}\leq c_n+c_m$ for every $n,m\in\N_+$. 

Therefore the limit $H_{alg}(\phi,F)=\lim_{n\to\infty}\frac{\log|T_n(\phi,F)|}{n}$ exists and $H_{alg}(\phi,F)=\inf_{n\in\N_+}\frac{\log|T_n(\phi,F)|}{n}$.
\end{lemma}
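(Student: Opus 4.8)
The plan is to establish the factorization $T_{n+m}(\phi,F)=T_n(\phi,F)\cdot\phi^n(T_m(\phi,F))$ directly from the definition, and then to bound the cardinality of a product of finite subsets of a group by the product of their cardinalities. First I would unwind the definition: by definition
$$T_{n+m}(\phi,F)=F\cdot\phi(F)\cdot\ldots\cdot\phi^{n+m-1}(F),$$
and splitting the product after the $(n-1)$-th factor gives
$$T_{n+m}(\phi,F)=\bigl(F\cdot\phi(F)\cdot\ldots\cdot\phi^{n-1}(F)\bigr)\cdot\bigl(\phi^{n}(F)\cdot\phi^{n+1}(F)\cdot\ldots\cdot\phi^{n+m-1}(F)\bigr).$$
The first bracket is exactly $T_n(\phi,F)$. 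For the second bracket, since $\phi$ is an endomorphism it respects products, so $\phi^{n}(F)\cdot\phi^{n+1}(F)\cdot\ldots\cdot\phi^{n+m-1}(F)=\phi^{n}\bigl(F\cdot\phi(F)\cdot\ldots\cdot\phi^{m-1}(F)\bigr)=\phi^{n}(T_m(\phi,F))$. This yields the claimed factorization.

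Next I would invoke two elementary cardinality facts. The map $\phi^{n}$ is a function, hence $|\phi^{n}(T_m(\phi,F))|\leq|T_m(\phi,F)|$. And for any two non-empty finite subsets $A,B$ of a group, the product set $AB=\{ab:a\in A,\ b\in B\}$ satisfies $|AB|\leq|A|\cdot|B|$, because $AB$ is the image of $A\times B$ under multiplication. (Here all sets involved are finite, since $F$ is finite and a finite product of finite sets is finite, so $c_n=\log|T_n(\phi,F)|$ is a well-defined non-negative real number.) Combining these with the factorization,
$$|T_{n+m}(\phi,F)|=\bigl|T_n(\phi,F)\cdot\phi^{n}(T_m(\phi,F))\bigr|\leq|T_n(\phi,F)|\cdot|\phi^{n}(T_m(\phi,F))|\leq|T_n(\phi,F)|\cdot|T_m(\phi,F)|.$$
Taking logarithms gives $c_{n+m}\leq c_n+c_m$, i.e.\ $\{c_n\}_{n\in\N_+}$ is subadditive.

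Finally, since $\{c_n\}$ is a subadditive sequence of non-negative reals, Fekete Lemma \ref{fekete} applies and gives that $\lim_{n\to\infty}\frac{c_n}{n}$ exists and equals $\inf_{n\in\N_+}\frac{c_n}{n}$, which is precisely the assertion that $H_{alg}(\phi,F)=\lim_{n\to\infty}\frac{\log|T_n(\phi,F)|}{n}=\inf_{n\in\N_+}\frac{\log|T_n(\phi,F)|}{n}$.

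There is essentially no serious obstacle here; the only point requiring a little care is keeping the order of the factors straight in the non-abelian setting (the factorization is $T_n\cdot\phi^n(T_m)$, not $\phi^n(T_m)\cdot T_n$), and noting that the submultiplicativity $|AB|\leq|A||B|$ holds for product sets in an arbitrary — possibly non-abelian — group. Fekete's Lemma as stated is insensitive to commutativity, so nothing else changes relative to the abelian case.
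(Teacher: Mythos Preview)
Your proof is correct and follows essentially the same approach as the paper: establish the factorization $T_{n+m}(\phi,F)=T_n(\phi,F)\cdot\phi^n(T_m(\phi,F))$, bound cardinalities via $|AB|\leq|A||B|$ and $|\phi^n(X)|\leq|X|$, take logarithms, and apply Fekete's Lemma. Your write-up is in fact slightly more explicit than the paper's about why the cardinality estimates hold and about the order of factors in the non-abelian setting.
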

\begin{proof}
By definition 
\begin{align*}
T_{n+m}(\phi,F) &=F\cdot\phi(F)\cdot\ldots\cdot\phi^{n-1}(F)\cdot\phi^n(F)\cdot\ldots\cdot\phi^{n+m-1}(F)\\
&=T_n(\phi,F)\cdot\phi^n(T_m(\phi,F)).
\end{align*}
Consequently,
\begin{align*}
c_{n+m} &=\log|T_{n+m}(\phi,F)|\\
&\leq \log(|T_n(\phi,F)||T_m(\phi,F)|)\\
&=\log|T_n(\phi,F)|+\log|T_m(\phi,F)|\\
&=c_n+c_m.
\end{align*}
This proves that the sequence $\{c_n\}_{n\in\N_+}$ is subadditive. By Fekete Lemma \ref{fekete} the sequence $\{\frac{c_n}{n}\}_{n\in\N_+}$ has limit and $\lim_{n\to\infty}\frac{c_n}{n}=\inf_{n\in\N_+}\frac{c_n}{n}$.
\end{proof}

\begin{remark}
For a group $G$ and $\phi:G\to G$ an endomorphism, a finite subset $F$ of $G$ and $n\in\N_+$, one could define the $n$-th $\phi$-trajectory of $F$ also as $$T_n^{\#}(\phi,F)=\phi^{n-1}(F)\cdot\ldots\cdot\phi(F)\cdot F,$$ changing the order of the factors with respect to the above definition.
With these trajectories it is possible to define another entropy function letting $$H_{alg}^{\#}(\phi,F)=\limsup_{n\to\infty}\frac{\log|T_n^{\#}(\phi,F)|}{n},$$ and $$h_{alg}^{\#}(\phi)=\sup\{H_{alg}^{\#}(\phi,F):F\in[G]^{<\omega}\}.$$
In the same way as for the algebraic entropy, one can see that the $\limsup$ defining $H_{alg}^{\#}(\phi,F)$ is a limit, as $\{\log|T_n^{\#}(\phi,F)|\}_{n\in\N_+}$ is a subadditive sequence and so Fekete Lemma \ref{fekete} applies.

Wo do not go further in this direction, but the first question one should answer before studying this entropy function is the following.
Do $h_{alg}^{\#}$ and $h_{alg}$ coincide? Obviously they coincide on the identity map and on abelian groups.
\end{remark}

\begin{example}
As one would expect, it is clear by the definition that $h_{alg}(e_G)=0$  for every group $G$, where $e_G:G\to G$ denotes the endomorphism of $G$ which is identically $e_G$.
\end{example}

The next lemma shows that the function $H_{alg}(\phi,-)$ is monotone, so the computation of the algebraic entropy can be carried out by using only finite subsets containing the identity of the group.

\begin{lemma}\label{0inF}
Let $G$ be a group and $\phi:G\to G$ an endomorphism. Then:
\begin{itemize}
\item[(a)] $H_{alg}(\phi,F)\leq H_{alg}(\phi,F')$ for every $F,F'\in[G]^{<\omega}$ with $F\subseteq F'$.
\item[(b)] Hence $$h_{alg}(\phi)=\sup\{H_{alg}(\phi,F):F\in[G]^{<\omega}, e_G\in F\}.$$
\end{itemize}
\end{lemma}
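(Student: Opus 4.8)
The plan is to prove part (a) directly from the definitions, and then obtain part (b) as an immediate formal consequence. For part (a), fix $F, F' \in [G]^{<\omega}$ with $F \subseteq F'$. The key observation is that the $n$-th $\phi$-trajectory is monotone in its second argument: since $\phi^j(F) \subseteq \phi^j(F')$ for every $j \geq 0$, multiplying these inclusions in order gives
$$
T_n(\phi,F) = F\cdot \phi(F)\cdot \ldots \cdot \phi^{n-1}(F) \subseteq F'\cdot \phi(F')\cdot \ldots \cdot \phi^{n-1}(F') = T_n(\phi,F')
$$
for every $n \in \N_+$. Hence $|T_n(\phi,F)| \leq |T_n(\phi,F')|$, so $\log|T_n(\phi,F)| \leq \log|T_n(\phi,F')|$ for all $n$, and passing to the limit (which exists by Lemma \ref{subadd}) yields $H_{alg}(\phi,F) \leq H_{alg}(\phi,F')$.

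For part (b), the inequality ``$\geq$'' is trivial since the supremum on the right is taken over a subfamily of $[G]^{<\omega}$. For ``$\leq$'', take an arbitrary $F \in [G]^{<\omega}$ and set $F' = F \cup \{e_G\}$; then $e_G \in F'$ and $F \subseteq F'$, so by part (a) we get $H_{alg}(\phi,F) \leq H_{alg}(\phi,F') \leq \sup\{H_{alg}(\phi,F'') : F'' \in [G]^{<\omega}, e_G \in F''\}$. Taking the supremum over all $F \in [G]^{<\omega}$ on the left gives $h_{alg}(\phi) \leq \sup\{H_{alg}(\phi,F') : F' \in [G]^{<\omega}, e_G \in F'\}$, which completes the argument.

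There is essentially no obstacle here: the only point requiring a moment's care is making sure the trajectory really is monotone under $\subseteq$, which uses nothing more than the fact that for subsets $A \subseteq A'$ and $B \subseteq B'$ of a group one has $A\cdot B \subseteq A'\cdot B'$, applied inductively to the $n$ factors $\phi^j(F) \subseteq \phi^j(F')$. One should also note in passing that adjoining $e_G$ to $F$ does not change the cardinality estimates in any essential way — indeed $e_G \in \phi^j(\{e_G\})$ for all $j$, so $T_n(\phi, F\cup\{e_G\})$ contains $T_n(\phi,F)$ and the inclusion in part (a) is all that is needed. Everything else is a routine passage to the limit using Lemma \ref{subadd} and the elementary monotonicity of $\sup$.
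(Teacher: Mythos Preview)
Your proof is correct and is exactly the natural elementary argument; the paper in fact states this lemma without proof, treating the monotonicity of $F\mapsto T_n(\phi,F)$ and the resulting reduction to sets containing $e_G$ as obvious, so your write-up simply fills in the routine details the authors omitted.
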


\begin{remark}\label{locfin}
\begin{itemize}
\item[(a)] A group $G$ is locally finite if every finite subset of $G$ generates a finite subgroup (i.e., every finite subset of $G$ is contained in a finite subgroup of $G$). Every locally finite group is obviously torsion, while the converse holds true under the hypothesis that the group is abelian (but the solution of Burnside's problem shows that even groups of finite exponent fail to be locally finite). 
\item[(b)] Let $G$ be a locally finite group and $\phi:G\to G$ an endomorphism. The monotonicity of the function $H_{alg}(\phi,-)$ from Lemma \ref{0inF}(a) implies that in this case $h_{alg}(\phi)=\sup\{H_{alg}(\phi,F):F\ \text{finite subgroup of}\ G\}$.
\item[(c)] If $G$ is a locally finite group, then $h_{alg}(id_G)=0$. Moreover, if $\phi:G\to G$ is a locally quasi-periodic endomorphism (i.e., for every $x\in G$ there exist $n\neq m$ in $G$ such that $\phi^n(x)=\phi^m(x)$), then $h_{alg}(\phi)=0$.
\item[(d)] The definition of algebraic entropy by Adler, Konheim and McAndrew \cite{AKM} was given for endomorphisms $\phi$ of abelian groups $G$ exactly in this way, that is, using only finite subgroups: 
$$
\ent(\phi)=\sup\{H_{alg}(\phi,F):F\ \text{finite subgroup of}\ G\}.
$$
Clearly, $\ent(\phi)=\ent(\phi\restriction_{t(G)})$; therefore, as we underlined above, this algebraic entropy works for endomorphisms of torsion abelian groups. Moreover the equality $\ent(\phi)=h_{alg}(\phi\restriction_{t(G)})$ shows that $h_{alg}$ can be seen as a generalization of $\ent$ out of the realm of torsion abelian groups.
\end{itemize}
\end{remark}

We state now all the typical basic properties considered for the algebraic entropy. The proofs are the same given for the abelian case in \cite{DG}, as they work in the same way in the general case. 

\begin{lemma}[Monotonicity for subgroups and quotients]\label{restriction_quotient}
Let $G$ be a group, $\phi:G\to G$ an endomorphism and $H$ a $\phi$-invariant subgroup of $G$.
\begin{itemize}
\item[(a)] Then $h_{alg}(\f)\geq h_{alg}(\f\restriction_H)$.
\item[(b)] If $H$ is normal and $\overline\phi:G/H\to G/H$ is the endomorphism induced by $\phi$, then $h_{alg}(\f)\geq h_{alg}(\overline{\f})$.
\end{itemize}
\end{lemma}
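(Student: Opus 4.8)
The plan is to prove both parts directly from the definition of $h_{alg}$ via trajectories of finite subsets, exploiting the fact (Lemma \ref{0inF}) that the supremum defining $h_{alg}$ is achieved over finite subsets, and that the suprema are monotone in the finite subset. The key observation throughout is that the two operations under consideration — restricting to a $\phi$-invariant subgroup and passing to a $\phi$-invariant normal quotient — interact very cleanly with the formation of $n$-th $\phi$-trajectories.

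For part (a), let $H \leq G$ be $\phi$-invariant, so that $\phi(H) \subseteq H$ and hence $\phi\restriction_H$ is a genuine endomorphism of $H$. First I would observe that for every finite subset $F \subseteq H$ and every $n \in \N_+$ one has exactly
$$T_n(\phi\restriction_H, F) = F \cdot (\phi\restriction_H)(F) \cdot \ldots \cdot (\phi\restriction_H)^{n-1}(F) = F \cdot \phi(F) \cdot \ldots \cdot \phi^{n-1}(F) = T_n(\phi, F),$$
since $\phi^k(F) \subseteq H$ for all $k$ and the group operations of $H$ and $G$ agree on $H$. Consequently $H_{alg}(\phi\restriction_H, F) = H_{alg}(\phi, F)$ for every $F \in [H]^{<\omega}$. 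Taking the supremum over all $F \in [H]^{<\omega}$ on the left gives $h_{alg}(\phi\restriction_H)$; on the right we get a supremum over the smaller family $[H]^{<\omega} \subseteq [G]^{<\omega}$, which is therefore $\leq \sup\{H_{alg}(\phi,F): F \in [G]^{<\omega}\} = h_{alg}(\phi)$. This yields $h_{alg}(\phi\restriction_H) \leq h_{alg}(\phi)$.

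For part (b), let $H$ be a normal $\phi$-invariant subgroup and let $\pi: G \to G/H$ be the quotient map, $\overline\phi$ the induced endomorphism, so $\pi \circ \phi = \overline\phi \circ \pi$. The key point is that $\pi$ is a surjective homomorphism, hence for any finite $\overline F \subseteq G/H$ one can pick a finite $F \subseteq G$ with $\pi(F) = \overline F$ (choosing one preimage per element), and then for every $n$,
$$\pi\bigl(T_n(\phi, F)\bigr) = \pi(F)\cdot\pi(\phi(F))\cdot\ldots\cdot\pi(\phi^{n-1}(F)) = \overline F \cdot \overline\phi(\overline F)\cdot\ldots\cdot\overline\phi^{n-1}(\overline F) = T_n(\overline\phi, \overline F),$$
using $\pi \circ \phi^k = \overline\phi^k \circ \pi$ and the fact that $\pi$ is a homomorphism (so it respects products of subsets). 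Since $\pi$ restricted to $T_n(\phi,F)$ is a surjection onto $T_n(\overline\phi,\overline F)$, we get $|T_n(\overline\phi,\overline F)| \leq |T_n(\phi,F)|$, hence $H_{alg}(\overline\phi,\overline F) \leq H_{alg}(\phi, F) \leq h_{alg}(\phi)$. Taking the supremum over all finite $\overline F \subseteq G/H$ gives $h_{alg}(\overline\phi) \leq h_{alg}(\phi)$.

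I do not anticipate a serious obstacle here; the lemma is essentially bookkeeping. The only points requiring a little care are: (i) checking that $\phi\restriction_H$ and $\overline\phi$ are well-defined endomorphisms, which is exactly what $\phi$-invariance (and normality, for the quotient) guarantees; (ii) being careful that in a non-abelian group the $n$-th trajectory is an ordered product $F \cdot \phi(F) \cdots \phi^{n-1}(F)$, but since a homomorphism sends the product of subsets (in a fixed order) to the product of their images (in the same order), both identities above hold verbatim regardless of commutativity; and (iii) noting that the $\limsup$ in $H_{alg}$ is in fact a limit by Lemma \ref{subadd}, though this is not even needed — the inequalities above hold termwise in $n$ and pass to the $\limsup$. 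If desired, one can remark that part (a) also follows from a general Monotonicity-for-factors style argument, but the direct computation is cleanest.
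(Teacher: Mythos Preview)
Your proof is correct and follows essentially the same approach as the paper's own proof: for (a) you use that $T_n(\phi\restriction_H,F)=T_n(\phi,F)$ for $F\in[H]^{<\omega}$, and for (b) you lift a finite subset of $G/H$ to $G$ and use that $\pi(T_n(\phi,F))=T_n(\overline\phi,\pi(F))$. The paper's argument is exactly this, just stated more tersely; your additional remarks on well-definedness, ordered products in the non-abelian setting, and the termwise inequalities are all sound and do not diverge from the paper's method.
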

\begin{proof}
(a) For every $F\in[H]^{<\omega}$, obviously $H_{alg}(\phi\restriction_H,F)=H_{alg}(\phi,F)$, so $H_{alg}(\phi\restriction_H, F)\leq h(\phi)$. Hence $h_{alg}(\phi\restriction_H)\leq h_{alg}({\phi})$.

\smallskip
(b) Now assume that $H$ is normal and $F\in[G/H]^{<\omega}$ and $F=\pi(F_0)$ for some $F_0\in[G]^{<\omega}$, where $\pi:G\to G/H$ is the canonical projection. Then $\pi(T_n(\phi,F_0))= T_n(\overline{\phi},F)$ for every $n\in\N_+$. Therefore $h_{alg}(\phi)\geq H_{alg}(\phi,F_0)\geq H_{alg}(\overline{\phi}, F)$ and by the arbitrariness of $F$ this proves $h_{alg}(\phi)\geq h_{alg}(\overline{\phi})$.
\end{proof}

\begin{lemma}[Invariance under conjugation]\label{conj} Let $G$ and $H$ be groups, and $\phi:G\to G$ and $\psi:H\to H$ endomorphisms. If there exists an isomorphism $\xi : G \to H$, then $h_{alg}(\phi)=h_{alg}(\xi\phi\xi^{-1})$.
\end{lemma}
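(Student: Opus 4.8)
The plan is to transport every ingredient of the definition of $h_{alg}$ through the isomorphism $\xi$. Write $\psi:=\xi\phi\xi^{-1}:H\to H$, so that $\psi\xi=\xi\phi$ as maps $G\to H$; it suffices to show $h_{alg}(\phi)=h_{alg}(\psi)$. The very first step is an easy induction on $k$: from $\psi\xi=\xi\phi$ one gets $\psi^k\xi=\xi\phi^k$ for every $k\in\N$, equivalently $\xi(\phi^k(A))=\psi^k(\xi(A))$ for every subset $A\subseteq G$.

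Next I would fix $F\in[G]^{<\omega}$ and put $F':=\xi(F)$, which lies in $[H]^{<\omega}$ since $\xi$ is a bijection (in particular $|F'|=|F|$). Because $\xi$ is a group homomorphism it commutes with products of subsets, $\xi(A\cdot B)=\xi(A)\cdot\xi(B)$, so combining this with the commutation identity above gives, for every $n\in\N_+$,
$$\xi(T_n(\phi,F))=\xi\bigl(F\cdot\phi(F)\cdot\ldots\cdot\phi^{n-1}(F)\bigr)=F'\cdot\psi(F')\cdot\ldots\cdot\psi^{n-1}(F')=T_n(\psi,F').$$
Since $\xi$ is injective this yields $|T_n(\phi,F)|=|T_n(\psi,F')|$ for all $n$, whence $H_{alg}(\phi,F)=H_{alg}(\psi,F')$ straight from the definition (both limits existing by Lemma~\ref{subadd}).

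Finally, since $\xi$ is bijective, the assignment $F\mapsto\xi(F)$ is a bijection from $[G]^{<\omega}$ onto $[H]^{<\omega}$; taking the supremum of the equalities $H_{alg}(\phi,F)=H_{alg}(\psi,\xi(F))$ over all $F\in[G]^{<\omega}$ gives $h_{alg}(\phi)=h_{alg}(\psi)$, which is the assertion. There is essentially no obstacle in this argument: the only point deserving (minimal) care is the commutation identity $\psi^k\xi=\xi\phi^k$, which should be dispatched by induction at the outset, after which everything reduces to the fact that a group isomorphism preserves cardinalities, products of subsets, and the iterates of the conjugated endomorphism.
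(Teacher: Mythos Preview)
Your proof is correct and follows essentially the same approach as the paper: transport the trajectories through $\xi$ to obtain $\xi(T_n(\phi,F))=T_n(\psi,\xi(F))$, conclude equality of cardinalities from injectivity, and then take suprema. Your write-up is in fact more careful than the paper's (you spell out the induction $\psi^k\xi=\xi\phi^k$ and the bijection $[G]^{<\omega}\to[H]^{<\omega}$), but the argument is the same.
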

\begin{proof}
Let $\psi=\xi\phi\xi^{-1}$.
For $F\in[G]^{<\omega}$ and $n\in\N_+$, $T_n(\psi,\xi(F))=\xi(F)\cdot\xi(\phi(F))\cdot\ldots\cdot(\phi^{n-1}(F))$. Since $\xi$ is an isomorphism, $|T_n(\phi,F)|=|T_n(\psi,\xi(F))|$, and so $H_{alg}(\phi,F)=H_{alg}(\psi,\xi(F))$. This proves that $h_{alg}(\phi)=h_{alg}(\psi)$.
\end{proof}

\begin{proposition}[Logarithmic Law]\label{LL}
Let $G$ be a group and $\phi:G\to G$ an endomorphism. For every $k\in\N_+$, $h(\phi^k) = k h(\phi)$. If $\phi$ is an automorphism, then $h_{alg}(\phi^k) = |k|h_{alg}(\phi)$ for every $k\in\Z$, $k\neq 0$.
\end{proposition}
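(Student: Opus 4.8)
The plan is to prove the identity for a positive exponent $k$ first, reducing (via Lemma \ref{0inF}) to finite subsets $F$ with $e_G\in F$, and then to derive the general nonzero-integer case from the automorphism hypothesis through the key intermediate equality $h_{alg}(\phi^{-1})=h_{alg}(\phi)$. Throughout, the proof follows the abelian case in \cite{DG}, the only genuinely new point being the last equality, where non-commutativity forces an extra twist.

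For $k\in\N_+$ and $F\in[G]^{<\omega}$ with $e_G\in F$, note that $T_n(\phi^k,F)=F\cdot\phi^k(F)\cdots\phi^{(n-1)k}(F)$. Since every $\phi^{\ell}(F)$ contains $e_G$, the missing factors can be inserted, giving $T_n(\phi^k,F)\subseteq F\cdot\phi(F)\cdots\phi^{nk-1}(F)=T_{nk}(\phi,F)$; hence $H_{alg}(\phi^k,F)\le\lim_n\frac{\log|T_{nk}(\phi,F)|}{n}=k\,H_{alg}(\phi,F)\le k\,h_{alg}(\phi)$, where the subsequential limit equals the full limit because the latter exists by Lemma \ref{subadd}. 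Taking the supremum over $F$ yields $h_{alg}(\phi^k)\le k\,h_{alg}(\phi)$. For the reverse inequality I would replace $F$ by the larger finite set $F'=T_k(\phi,F)$ and compute directly: from $\phi^{jk}(F')=\phi^{jk}(F)\cdot\phi^{jk+1}(F)\cdots\phi^{jk+k-1}(F)$, concatenating the $n$ blocks gives $T_n(\phi^k,F')=T_{nk}(\phi,F)$ for every $n$, so $H_{alg}(\phi^k,F')=k\,H_{alg}(\phi,F)$ and therefore $h_{alg}(\phi^k)\ge k\,H_{alg}(\phi,F)$. The supremum over $F$ then gives $h_{alg}(\phi^k)=k\,h_{alg}(\phi)$.

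For the automorphism case it suffices to establish $h_{alg}(\phi^{-1})=h_{alg}(\phi)$: for $k<0$ one applies the already-proved first part to the endomorphism $\phi^{-1}$, obtaining $h_{alg}(\phi^k)=h_{alg}\bigl((\phi^{-1})^{-k}\bigr)=(-k)\,h_{alg}(\phi^{-1})=|k|\,h_{alg}(\phi)$, and for $k>0$ it is already done. To prove $h_{alg}(\phi^{-1})=h_{alg}(\phi)$, fix $F$ with $e_G\in F$ and track $|T_n(\phi^{-1},F)|$ under two successive bijections of $G$. First apply the automorphism $\phi^{n-1}$, which sends $T_n(\phi^{-1},F)=F\cdot\phi^{-1}(F)\cdots\phi^{-(n-1)}(F)$ onto $\phi^{n-1}(F)\cdot\phi^{n-2}(F)\cdots\phi(F)\cdot F$ (this is $T_n^{\#}(\phi,F)$). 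Then apply the inversion $g\mapsto g^{-1}$, which reverses the order of the factors and turns each $\phi^{\ell}(F)$ into $\phi^{\ell}(F^{-1})$, producing exactly $F^{-1}\cdot\phi(F^{-1})\cdots\phi^{n-1}(F^{-1})=T_n(\phi,F^{-1})$. Both maps are bijections, so $|T_n(\phi^{-1},F)|=|T_n(\phi,F^{-1})|$, whence $H_{alg}(\phi^{-1},F)=H_{alg}(\phi,F^{-1})$; taking the supremum over $F$ and using that $F\mapsto F^{-1}$ is a bijection of $[G]^{<\omega}$ gives $h_{alg}(\phi^{-1})=h_{alg}(\phi)$.

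The only step demanding real care is this final one: in a non-abelian group $T_n(\phi^{-1},F)$ is not merely a reversed copy of $T_n(\phi,F)$, so the naive ``symmetry'' argument valid in the abelian setting (where $T_n(\phi,-)$ and $T_n^{\#}(\phi,-)$ agree) is unavailable, and one genuinely needs to compose the conjugation by the automorphism $\phi^{n-1}$ with the group inversion in order to line up cardinalities. Everything else is a routine adaptation of the proofs in \cite{DG}.
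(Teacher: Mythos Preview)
Your proof is correct. For the positive-exponent part it is essentially the paper's argument: both use the key identity $T_n(\phi^k,T_k(\phi,F))=T_{nk}(\phi,F)$ to get $h_{alg}(\phi^k)\ge k\,h_{alg}(\phi)$, and your inclusion $T_n(\phi^k,F)\subseteq T_{nk}(\phi,F)$ (for $e_G\in F$) is equivalent to the paper's monotonicity step $H_{alg}(\phi^k,F)\le H_{alg}(\phi^k,F_1)$.

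For the automorphism case your route genuinely differs from the paper's, and in fact repairs it. The paper asserts the set equality $T_n(\phi^{-1},F)=\phi^{-n+1}(T_n(\phi,F))$ and reads off $|T_n(\phi^{-1},F)|=|T_n(\phi,F)|$; but $\phi^{-n+1}(T_n(\phi,F))=\phi^{-(n-1)}(F)\cdots\phi^{-1}(F)\cdot F$ has the same factors as $T_n(\phi^{-1},F)=F\cdot\phi^{-1}(F)\cdots\phi^{-(n-1)}(F)$ in the \emph{opposite} order, so the claimed set equality is valid only in the abelian setting of \cite{DG} and does not literally carry over. Your extra step of composing $\phi^{n-1}$ with the inversion $g\mapsto g^{-1}$ is exactly what is needed: it yields $|T_n(\phi^{-1},F)|=|T_n(\phi,F^{-1})|$ rather than $|T_n(\phi,F)|$, and then the bijection $F\mapsto F^{-1}$ on $[G]^{<\omega}$ recovers $h_{alg}(\phi^{-1})=h_{alg}(\phi)$. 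This is the correct non-abelian argument.
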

\begin{proof} Let $F\in[G]^{<\omega}$ and $n\in \N_+$. Let $F_1=T_k(\phi,F)$ and note that $T_n(\phi^k,F_1)=T_{kn}(\phi,F)$. Then 
$$\frac{h_{alg}(\phi^k)}{k}\geq \frac{H_{alg}(\phi^k,F_1)}{k}=\lim_{n\to\infty}\frac{\log|T_n(\phi^k,F_1)|}{kn} =\lim_{n\to\infty}\frac{\log|T_{kn}(\phi,F)|}{kn}=H_{alg}(\phi,F).$$ 
We can conclude that $h(\phi^k)\geq k h(\phi)$. 
By the previous chain of equalities, $$H_{alg}(\phi^k,F)\leq H_{alg}(\phi^k,F_1)=kH_{alg}(\phi,F)\leq k h_{alg}(\phi);$$
so we have also the converse inequality $h_{alg}(\phi^k)\leq k h_{alg}(\phi)$, hence equality holds.

Now assume that $\phi$ is an automorphism. It suffices to prove that $h_{alg}(\phi^{-1})=h_{alg}(\phi)$. Let $F\in[G]^{<\omega}$ and $n\in \N_+$. Note that $T_n(\phi^{-1},F)=\phi^{-n+1}(T_n(\phi,F))$; in particular, $|T_n(\phi^{-1}, F)|=|T_n(\phi,F)|$, as $\phi$ is an automorphism. This yields $H_{alg}(\phi^{-1},F)=H_{alg}(\phi,F)$, hence $h_{alg}(\phi^{-1})=h_{alg}(\phi)$.
\end{proof}

\begin{corollary}\label{0<->0}
Let $G$ be a group and $\phi:G\to G$ an endomorphism. Then:
\begin{itemize}
\item[(a)]$h_{alg}(\f)=0$ if and only if $h_{alg}(\f^k)=0$ for some $k\in\N_+$;
\item[(b)]$h_{alg}(\f)=\infty$ if and only if $h_{alg}(\f^k)=\infty$ for some $k\in\N_+$;
\item[(c)] if $\f^k = \f$ for some $k>1$, then either  $h_{alg}(\f)=0$ or $h_{alg}(\f)=\infty$; in particular, either  $h_{alg}(id_G)=0$ or $h_{alg}(id_G)=\infty$.  
\end{itemize}
\end{corollary}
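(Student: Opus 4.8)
The plan is to obtain all three statements as immediate consequences of the Logarithmic Law (Proposition \ref{LL}), which gives $h_{alg}(\phi^k) = k\, h_{alg}(\phi)$ for every $k\in\N_+$, combined with the arithmetic of the monoid $\R_{\geq0}\cup\{\infty\}$ (with the convention $k\cdot\infty=\infty$ for $k\geq1$ and $k\cdot0=0$). No genuine obstacle is expected; the only point requiring a little care is to handle the value $\infty$ consistently, i.e.\ to note that one may ``cancel'' the positive integer factor $k$ precisely because $k\geq1$.

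For item (a): if $h_{alg}(\phi)=0$, then $h_{alg}(\phi^k)=k\cdot 0=0$ for every $k\in\N_+$, in particular for some $k$. Conversely, if $h_{alg}(\phi^k)=0$ for some $k\in\N_+$, then $k\, h_{alg}(\phi)=0$; since $k\geq1$, the only $x\in\R_{\geq0}\cup\{\infty\}$ with $kx=0$ is $x=0$, hence $h_{alg}(\phi)=0$. Item (b) is handled in exactly the same way: if $h_{alg}(\phi)=\infty$ then $h_{alg}(\phi^k)=k\cdot\infty=\infty$ for all $k$, while if $h_{alg}(\phi^k)=\infty$ for some $k$, a finite value of $h_{alg}(\phi)$ would force $k\, h_{alg}(\phi)$ to be finite, a contradiction, so $h_{alg}(\phi)=\infty$.

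For item (c): assume $\phi^k=\phi$ with $k>1$. Applying $h_{alg}$ to both sides and using Proposition \ref{LL} yields $k\, h_{alg}(\phi)=h_{alg}(\phi^k)=h_{alg}(\phi)$. If $h_{alg}(\phi)$ is a finite real $c$, then $kc=c$, so $(k-1)c=0$ with $k-1\geq1$, whence $c=0$; otherwise $h_{alg}(\phi)=\infty$. Thus $h_{alg}(\phi)\in\{0,\infty\}$. The final assertion follows by taking $\phi=id_G$ and $k=2$, since $id_G^2=id_G$.
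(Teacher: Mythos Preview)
Your proof is correct and is exactly the argument the paper intends: the corollary is stated immediately after Proposition \ref{LL} (the Logarithmic Law) without proof, and your derivation from $h_{alg}(\phi^k)=k\,h_{alg}(\phi)$ together with the arithmetic of $\R_{\geq0}\cup\{\infty\}$ is precisely what is implicit.
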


\begin{proposition}[Continuity]\label{dirlim}
Let $G$ be a group and $\phi:G\to G$ an endomorphism. If $G$ is direct limit of $\phi$-invariant subgroups $\{G_i:i\in I\}$, then $h_{alg}(\phi)=\sup_{i\in I}h_{alg}(\phi\restriction_{G_i})$.
\end{proposition}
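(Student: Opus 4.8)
The statement splits into the two inequalities $h_{alg}(\phi)\geq \sup_{i\in I}h_{alg}(\phi\restriction_{G_i})$ and $h_{alg}(\phi)\leq \sup_{i\in I}h_{alg}(\phi\restriction_{G_i})$. The first is immediate: each $G_i$ is a $\phi$-invariant subgroup of $G$, so Monotonicity for subgroups (Lemma \ref{restriction_quotient}(a)) gives $h_{alg}(\phi)\geq h_{alg}(\phi\restriction_{G_i})$ for every $i\in I$, and taking the supremum over $i$ yields the inequality.

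For the reverse inequality I would argue at the level of a single finite subset. Recall that ``$G$ is the direct limit of the $\phi$-invariant subgroups $\{G_i:i\in I\}$'' means that $\{G_i\}$ is a directed family (with respect to inclusion) of subgroups with $G=\bigcup_{i\in I}G_i$. Fix $F\in[G]^{<\omega}$, say $F=\{x_1,\dots,x_k\}$. For each $j$ choose $i_j\in I$ with $x_j\in G_{i_j}$; by directedness there is a single index $i\in I$ with $G_{i_j}\subseteq G_i$ for all $j$, hence $F\subseteq G_i$. Since $G_i$ is $\phi$-invariant we have $\phi^m(F)\subseteq G_i$ for all $m\geq 0$, so the trajectory $T_n(\phi,F)=F\cdot\phi(F)\cdot\ldots\cdot\phi^{n-1}(F)$ is entirely contained in $G_i$ and coincides with the trajectory $T_n(\phi\restriction_{G_i},F)$ computed inside $G_i$, for every $n\in\N_+$. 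Consequently $H_{alg}(\phi,F)=H_{alg}(\phi\restriction_{G_i},F)\leq h_{alg}(\phi\restriction_{G_i})\leq \sup_{j\in I}h_{alg}(\phi\restriction_{G_j})$.

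Finally, taking the supremum over all $F\in[G]^{<\omega}$ on the left gives $h_{alg}(\phi)\leq \sup_{j\in I}h_{alg}(\phi\restriction_{G_j})$, and together with the first inequality this proves the asserted equality. There is no real obstacle here; the only point to state carefully is that $\phi$-invariance of $G_i$ makes the $n$-th $\phi$-trajectory of a subset $F\subseteq G_i$ literally the same set whether it is formed in $G$ or in $G_i$, so that $H_{alg}(\phi,F)$ and $H_{alg}(\phi\restriction_{G_i},F)$ agree (this is the same observation already used in the proof of Lemma \ref{restriction_quotient}(a)), and that the directedness of the family $\{G_i\}$ is exactly what lets a finite subset be captured by one member.
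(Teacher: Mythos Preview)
Your proof is correct and follows essentially the same approach as the paper's own proof: one inequality from Monotonicity for subgroups (Lemma \ref{restriction_quotient}(a)), the other by observing that any finite $F\subseteq G$ lies in some $G_i$ by directedness, so that $H_{alg}(\phi,F)=H_{alg}(\phi\restriction_{G_i},F)\leq h_{alg}(\phi\restriction_{G_i})$. Your write-up is simply a bit more explicit about why the finite set lands in a single $G_i$ and why the trajectories computed in $G$ and in $G_i$ agree.
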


\begin{proof}
By Lemma \ref{restriction_quotient}, $h_{alg}(\f)\geq h_{alg}(\f\restriction_{G_i})$ for every $i\in I$ and so $h_{alg}(\f)\geq\sup_{i\in I}h_{alg}(\f\restriction_{G_i})$.

To prove the converse inequality let $F\in[G]^{<\omega}$. Since $G=\varinjlim\{G_i:i\in I\}$ and $\{G_i:i\in I\}$ is a directed family, there exists $j\in I$ such that $F\subseteq G_j$. Then $H_{alg}(\f,F)=H_{alg}(\f\restriction_{G_j},F)\leq h_{alg}(\f\restriction_{G_j})$. This proves that $h_{alg}(\f)\leq\sup_{i\in I}h_{alg}(\f\restriction_{G_i})$.
\end{proof}

%
%
%
%

\begin{theorem}[Weak Addition Theorem]\label{WAT}
Let $G_1$ and $G_2$ be groups, and $\phi_i:G_i\to G_i$ an endomorphism for $i=1,2$. Then $h_{alg}(\phi_1\times\phi_2)=h_{alg}(\phi_1)+h_{alg}(\phi_2)$.
\end{theorem}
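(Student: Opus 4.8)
The plan is to reduce everything to the behaviour of trajectories on ``box'' subsets $F_1\times F_2$ of $G_1\times G_2$, and the crucial (elementary) observation is that multiplication of subsets in a direct product is componentwise: $(A_1\times A_2)\cdot(B_1\times B_2)=(A_1B_1)\times(A_2B_2)$ for $A_i,B_i\subseteq G_i$. Writing $\phi=\phi_1\times\phi_2$, one has $\phi^k(F_1\times F_2)=\phi_1^k(F_1)\times\phi_2^k(F_2)$, so iterating the displayed identity gives
\begin{equation*}
T_n(\phi,F_1\times F_2)=T_n(\phi_1,F_1)\times T_n(\phi_2,F_2)\qquad\text{for every }n\in\N_+,
\end{equation*}
and hence $|T_n(\phi,F_1\times F_2)|=|T_n(\phi_1,F_1)|\cdot|T_n(\phi_2,F_2)|$. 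Taking logarithms, dividing by $n$ and letting $n\to\infty$ (the limits exist by Lemma \ref{subadd}), I would conclude
\begin{equation}\label{eq:box}
H_{alg}(\phi,F_1\times F_2)=H_{alg}(\phi_1,F_1)+H_{alg}(\phi_2,F_2)
\end{equation}
for all $F_1\in[G_1]^{<\omega}$, $F_2\in[G_2]^{<\omega}$.

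For the inequality $h_{alg}(\phi)\le h_{alg}(\phi_1)+h_{alg}(\phi_2)$, let $F\in[G_1\times G_2]^{<\omega}$ be arbitrary and let $\pi_i\colon G_1\times G_2\to G_i$ be the canonical projections. Then $F\subseteq \pi_1(F)\times\pi_2(F)$, so by Monotonicity (Lemma \ref{0inF}(a)) and \eqref{eq:box},
\begin{equation*}
H_{alg}(\phi,F)\le H_{alg}(\phi,\pi_1(F)\times\pi_2(F))=H_{alg}(\phi_1,\pi_1(F))+H_{alg}(\phi_2,\pi_2(F))\le h_{alg}(\phi_1)+h_{alg}(\phi_2);
\end{equation*}
taking the supremum over $F$ gives the claim.

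For the reverse inequality I would apply \eqref{eq:box} directly to box sets and take suprema: ranging $F_1$ and $F_2$ independently,
\begin{equation*}
h_{alg}(\phi)\ge\sup_{F_1,F_2}H_{alg}(\phi,F_1\times F_2)=\sup_{F_1,F_2}\bigl(H_{alg}(\phi_1,F_1)+H_{alg}(\phi_2,F_2)\bigr)=h_{alg}(\phi_1)+h_{alg}(\phi_2),
\end{equation*}
where the last equality is the standard fact that the supremum of a sum of two quantities depending on independent variables is the sum of the suprema (with the convention $r+\infty=\infty$, which also covers the case in which one of the two entropies is infinite). Combining the two inequalities yields the Weak Addition Theorem.

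There is really no serious obstacle here: the only place where non-commutativity of $G_1,G_2$ could conceivably intervene is the componentwise product formula for subsets of a direct product, and that identity holds verbatim regardless of commutativity, so the proof is literally the same as in the abelian case treated in \cite{DG}. The one point that deserves a line of justification is that the $\limsup$ in the definition of $H_{alg}$ is an honest limit, which is exactly Lemma \ref{subadd}, and this is what lets \eqref{eq:box} split additively.
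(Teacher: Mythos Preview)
Your proof is correct and follows essentially the same approach as the paper: both establish the key identity $T_n(\phi,F_1\times F_2)=T_n(\phi_1,F_1)\times T_n(\phi_2,F_2)$ for box sets, derive \eqref{eq:box}, and then use that every finite subset of $G_1\times G_2$ is contained in a box to handle both inequalities via monotonicity. You spell out a few details more explicitly (the componentwise product formula, the use of projections, and the appeal to Lemma~\ref{subadd} so the limit splits additively), but the structure of the argument is the same.
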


\begin{proof}
Fix $F_i\in[G_i]^{<\omega}$, for $i=1,2$. Then, for every $n\in\N_+$,
$$T_n(\phi,F_1\times F_2)=T_n(\phi_1,F_1)\times T_n(\phi_2, F_2),$$ 
and so 
\begin{equation}\label{times-eq}
h_{alg}(\phi) \geq H_{alg}(\phi,F_1\times F_2)=H_{alg}(\phi_1,F_1)+H_{alg}(\phi_2,F_2).
\end{equation} 
Consequently, $h_{alg}(\phi)\geq h_{alg}(\phi_1)+h_{alg}(\phi_2)$.
Since every $F\in[G]^{<\omega}$ is contained in some $F_1\times F_2$, for $F_i\in[G_i]^{<\omega}$, $i=1,2$, so $H_{alg}(\phi,F)\leq H_{alg}(\phi,F_1\times F_2)$, hence \eqref{times-eq} proves also that $h_{alg}(\phi)\leq h_{alg}(\phi_1)+h_{alg}(\phi_2)$.
\end{proof}

\subsection{Examples, Normalizations, Addition Theorem and Uniqueness}\label{ab-sec}

The right Bernoulli shift is a fundamental example in the theory of algebraic entropy, in the same way as the left Bernoulli shift is in the theory of topological and measure entropy. We calculate its algebraic entropy in the next example.

\begin{example}[Bernoulli normalization]\label{beta}
For any group $K$, $$h_{alg}(\beta_K^\oplus)=\log|K|,$$ with the usual convention that $\log|K|=\infty$, if $|K|$ is infinite.

To compute this value, write $G=K^{(\N)}=\bigoplus_{i\in\N}K_i$, where $K_i=K$ for every $i\in\N$. To simplify the notations, in this example denote $\beta_K^\oplus$ simply by $\beta_K$.
Let $F\in [K_0]^{<\omega}$. For $n\in\N_+$, $T_n(\beta_K,F)=F\times\beta_K(F)\times\ldots\times \beta^{n-1}_K(F)=F^{n}\subseteq\bigoplus_{i=1}^nK_n$. Therefore $$H_{alg}(\beta_K,F)=\lim_{n\to\infty}\frac{\log|F|^n}{n}=\log|F|,$$ and so $$h_{alg}(\beta_K)\geq \sup\{H_{alg}(\beta_K,F):F\in[K_0]^{<\omega}\}=\sup\{\log|F|:F\in[K_0]^{<\omega}\}=\log|K|.$$
If $K$ is infinite, then $\log|K|=\infty$ and the wanted equality holds. So assume $|K|$ to be finite.
Every $F\in [G]^{<\omega}$ is contained in some finite subgroup of the form $F_k=K_1\oplus\ldots\oplus K_k$ with $k\in\N$, and so $$h_{alg}(\beta_K)=\sup\{H_{alg}(\beta_K,F_k):k\in\N\}.$$
Since $T_n(\beta_K,F_k)=K_0\oplus \ldots\oplus K_{k+n-1}=F_{k+n-1}$, we have $|T_n(\beta_K,F_k)|=|K|^{k+n}$ and so
$$H_{alg}(\beta_K,F_k)=\lim_{n\to\infty}\frac{\log|K|^{k+n}}{n}=\log|K|.$$
\end{example}

In item (a) of the next example we see that the identity map of an abelian group has zero algebraic entropy, as one would expect. On the other hand, the algebraic entropy of the identity map of a non-abelian group can be strictly positive (i.e., infinite -- see Corollary \ref{0<->0}(c)), as we see in Proposition \ref{h(id)>0} below.

\begin{example}\label{id-abelian}
\begin{itemize}
  \item[(a)] If $G$ is an abelian group, then $h_{alg}(\mbox{id}_G)=0$; indeed, $H(\mbox{id}_G,F)=0$ for every $F\in [G]^{<\omega}$, since $|T_n(id_G, F)|\leq (n+1)^{|F|}$ for every $n\in\N_+$. 
 \item[(b)]  If $G$ is a torsion abelian group and $\phi:G\to G$ an endomorphism, then $h_{alg}(\f)=0$ if and only if every element of $G$ is contained in a $\phi$-invariant finite subgroup \cite{DGSZ}. 
  \item[(c)] For $k\in\N_+$ the endomorphism $\varphi_k:\Z\to \Z$ given by the multiplication by $k$ has algebraic entropy $h_{alg}(\varphi_k)=\log k$. 
  More generally, assume that $G$ is an arbitrary group and 
  $\f: G \to G$ is a power endomorphism, i.e., all subgroups of $G$ are $\f$-invariant. Then for every $x\ne e_G$ in $G$ there exists an integer $m_x$ such that   $\f(x)= x^{m_x}$, hence $\f\restriction_{\langle x\rangle} = \varphi_{m_x}$. Therefore $h_{alg}(\f_{\langle x\rangle})= h_{alg}(\varphi_{m_x})=\log m_x$. From the monotonicity of $h_{alg}$ given by Lemma \ref{restriction_quotient} we deduce that $h_{alg}(\f) = \infty$ if $C= \sup_{x\in G} m_x$ is infinite; otherwise $h_{alg}(\f) \geq  \log C$. 
%
\end{itemize}
\end{example}

%
Item (b) of Example \ref{id-abelian} inspires the following

\begin{problem}\label{LaMadre_di_tutti_i_problemi}
If $G$ is a locally finite group and $\phi:G\to G$ an endomorphism, prove or disprove that $h_{alg}(\f)=0$ if and only if every element of $G$ is contained in a $\phi$-invariant finite subgroup of $G$ (i.e., $G$ is a direct limit of finite $\phi$-invariant subgroups).
 \end{problem}
 
\begin{example}\label{Exaaaample}  
For an abelian group $G$ let $G[m]=\{x\in G: mx=0\}$ for $m\in \N_+$. If $G$ is torsion, then 
$$G=\bigcup_{m\in\N_+}  G[m] \ = \ \varinjlim\{ G[m]:m\in\N_+\}, $$
so for $\phi:G\to G$ an endomorphism $h_{alg}(\phi)= \sup_{m\in\N_+}h_{alg}(\phi\restriction_{G[m]}).$
\begin{itemize}
  \item[(a)] In particular if $G$ is a $p$-group, then $h_{alg}(\phi)= \sup_{n\in\N_+}h_{alg}(\phi\restriction_{G[p^n]})$. Moreover $h_{alg}(\phi) >0$ if and only if $h_{alg}(\phi\restriction_{G[p]})>0$ (see \cite[Proposition 1.18]{DGSZ}). 
 \item[(b)]  Using (a), one can prove that if $G$ is also divisible and $h_{alg}(\phi) >0$, then $h_{alg}(\phi) = \infty$. Indeed, from $h_{alg}(\phi\restriction_{G[p]})>0$ and Example \ref{id-abelian}(b) deduce that the orbit $T(\langle x\rangle, \f)$ of some element $x \in G[p]$ is infinite, so isomorphic to the Bernoulli shift $\beta_{\Z(p)}$. For every $n\in \N_+$ find an $y_n\in G$ with $p^{n-1} y_n= x$,  then $T(\langle y_n\rangle, \f)$ is isomorphic to the Bernoulli shift $\beta_{\Z(p^n)}$, hence $h_{alg}(\phi\restriction_{G[p^n]})\geq h_{alg}(\beta_{\Z(p^n)}) = n \log p$. 
\end{itemize}
\end{example}

The next proposition shows that the study of the algebraic entropy for torsion-free abelian groups can be reduced to the case of divisible ones. It was announced for the first time by Yuzvinski \cite{juz67} (for a proof see \cite{DG}). 

\begin{proposition}\label{AA_} 
Let $G$ be a torsion-free abelian group, $\phi:G\to G$ an endomorphism and denote by $\widetilde\phi$ the extension of $\phi$ to the divisible hull $D(G)$ of $G$. Then $h(\phi)=h(\widetilde\phi)$.
\end{proposition}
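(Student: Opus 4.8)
The plan is to realise $D(G)$ as a directed union of $\widetilde\phi$-invariant subgroups, each of which — together with the restriction of $\widetilde\phi$ — is isomorphic as a flow to $(G,\phi)$, and then to invoke Continuity together with Invariance under conjugation. First I would recall the standard description of the divisible hull of a torsion-free abelian group: $D(G)$ may be identified with $\bigcup_{k\in\N_+}\tfrac1k G$, where $\tfrac1k G=\{x\in D(G):kx\in G\}$, and $\widetilde\phi$ is the unique extension of $\phi$ to $D(G)$, which is $\Q$-linear. The family $\{\tfrac1k G:k\in\N_+\}$ is directed, since $\tfrac1k G\cup\tfrac1m G\subseteq\tfrac1{km}G$, and has union $D(G)$, so $D(G)=\varinjlim\{\tfrac1k G:k\in\N_+\}$.

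Next I would check that each $\tfrac1k G$ is $\widetilde\phi$-invariant: by $\Q$-linearity $\widetilde\phi(\tfrac1k G)=\tfrac1k\widetilde\phi(G)=\tfrac1k\phi(G)\subseteq\tfrac1k G$. Hence the Continuity of the algebraic entropy on direct limits (Proposition \ref{dirlim}) applies and gives $h_{alg}(\widetilde\phi)=\sup_{k\in\N_+}h_{alg}(\widetilde\phi\restriction_{\frac1k G})$.

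The core observation is that for every $k\in\N_+$ the flow $(\tfrac1k G,\widetilde\phi\restriction_{\frac1k G})$ is isomorphic to $(G,\phi)$. Indeed, multiplication by $k$ gives a group isomorphism $\mu_k\colon\tfrac1k G\to G$, $x\mapsto kx$ (injectivity uses that $G$, hence $D(G)$, is torsion-free), and for $y\in G$ one has $\mu_k\bigl(\widetilde\phi(\mu_k^{-1}(y))\bigr)=\mu_k\bigl(\widetilde\phi(\tfrac1k y)\bigr)=\mu_k(\tfrac1k\phi(y))=\phi(y)$, so $\mu_k\circ(\widetilde\phi\restriction_{\frac1k G})\circ\mu_k^{-1}=\phi$. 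By Invariance under conjugation (Lemma \ref{conj}), $h_{alg}(\widetilde\phi\restriction_{\frac1k G})=h_{alg}(\phi)$ for every $k$, and combining this with the previous paragraph gives $h_{alg}(\widetilde\phi)=h_{alg}(\phi)$. (If one prefers to separate the two inequalities: $h_{alg}(\widetilde\phi)\geq h_{alg}(\phi)$ is already immediate from Monotonicity for subgroups, Lemma \ref{restriction_quotient}(a), applied to the $\widetilde\phi$-invariant subgroup $G\leq D(G)$ with $\widetilde\phi\restriction_G=\phi$; and for the reverse inequality one notes that any finite $F\subseteq D(G)$ lies in some $\tfrac1k G$, whence $H_{alg}(\widetilde\phi,F)=H_{alg}(\widetilde\phi\restriction_{\frac1k G},F)\leq h_{alg}(\widetilde\phi\restriction_{\frac1k G})=h_{alg}(\phi)$, and one takes the supremum over $F$.)

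I expect no genuine obstacle here: the only points requiring attention are the $\Q$-linearity (equivalently, the uniqueness) of the extension $\widetilde\phi$ — which is exactly what makes each $\tfrac1k G$ invariant and $\mu_k$ a flow isomorphism — and the routine bookkeeping that $\{\tfrac1k G:k\in\N_+\}$ is a directed system with union $D(G)$.
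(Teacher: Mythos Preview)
Your argument is correct. The paper itself does not prove this proposition: it merely states the result and refers the reader to \cite{DG} for a proof. So there is no ``paper's own proof'' to compare against; your proof via the directed system $\{\tfrac1k G:k\in\N_+\}$ of $\widetilde\phi$-invariant subgroups, each conjugate to $(G,\phi)$ via multiplication by $k$, together with Continuity (Proposition~\ref{dirlim}) and Invariance under conjugation (Lemma~\ref{conj}), is clean and complete.
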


The algebraic counterpart of the Kolmogorov-Sinai Formula was recently proved in \cite{V}.

\begin{theorem}[Algebraic Kolmogorov-Sinai Formula]\label{AKSF} 
Let $n\in\N_+$ and $\phi:\Z^n\to\Z^n$ an endomorphism. Then $$h_{alg}(\phi)=m(\phi).$$
\end{theorem}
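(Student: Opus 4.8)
The plan is to reduce, via invariance under conjugation (Lemma~\ref{conj}), to a linear map on a lattice in $\R^n$ which is block–diagonal with respect to the splitting of $\R^n$ into the expanding and the non‑expanding part of $\phi$, and then to prove $h_{alg}(\phi)\le m(\phi)$ and $h_{alg}(\phi)\ge m(\phi)$ by two independent lattice–point estimates. Write $A=A_\phi\in M_n(\Z)$ for the matrix of $\phi$ and $\lambda_1,\dots,\lambda_n$ for its eigenvalues with multiplicity; since $p_\phi$ is monic, $m(\phi)=\sum_{|\lambda_i|>1}\log|\lambda_i|$. Choose $P\in\mathrm{GL}_n(\R)$ with $PAP^{-1}=A_>\oplus A_\le$, where the eigenvalues of $A_>\in\mathrm{GL}_{d_>}(\R)$ all have modulus $>1$ and those of $A_\le$ all have modulus $\le1$; then $|\det A_>|=e^{m(\phi)}$, the operator norms $\|A_\le^{\,j}\|$ grow at most polynomially in $j$, and $A_>^{-1}$ has spectral radius $<1$. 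Put $L=P\Z^n$, a full lattice in $\R^n=\R^{d_>}\oplus\R^{d_\le}$; as $P\colon\Z^n\to L$ is a group isomorphism conjugating $\phi$ to multiplication by $A_>\oplus A_\le$, Lemma~\ref{conj} gives $h_{alg}(\phi)=h_{alg}(A_>\oplus A_\le\text{ on }L)$. (If $d_>=0$ the upper bound below already forces $h_{alg}(\phi)=0=m(\phi)$, so assume $d_>\ge1$, hence $|\det A_>|>1$.)

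For the upper bound, by monotonicity of $H_{alg}(\cdot,-)$ (Lemma~\ref{0inF}) it suffices to bound $H_{alg}$ over the cofinal family $F=P([-N,N]^n\cap\Z^n)$, for which $0\in F$ and $\pi_>(F)$ affinely spans $\R^{d_>}$. Then $K_\infty=\sum_{m\ge0}A_>^{-m}\operatorname{conv}(\pi_>F)$ is a bounded convex body with nonempty interior (the Minkowski series converges since $A_>^{-1}$ is eventually contracting). Every trajectory point $\sum_{j<k}(A_>\oplus A_\le)^{\,j}f_j$ has its $\R^{d_>}$–component in $\sum_{j<k}A_>^{\,j}\operatorname{conv}(\pi_>F)=A_>^{\,k-1}\sum_{m<k}A_>^{-m}\operatorname{conv}(\pi_>F)\subseteq A_>^{\,k-1}K_\infty$ and its $\R^{d_\le}$–component in a ball of radius $\rho_k\le\sum_{j<k}\|A_\le^{\,j}\|\cdot\operatorname{rad}(\pi_\le F)=\operatorname{poly}(k)$; hence $T_k(A_>\oplus A_\le,F)$ is contained in the body $R_k=(A_>^{\,k-1}K_\infty)\times B(0,\rho_k)$, whose $\R^n$–volume is $\le|\det A_>|^{\,k-1}\operatorname{vol}(K_\infty)\cdot\operatorname{poly}(k)=e^{m(\phi)k}e^{o(k)}$. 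Since $T_k\subseteq L$, and $A_>^{\,k-1}K_\infty$ contains a Euclidean ball of radius $\sigma_{\min}(A_>^{\,k-1})\cdot r_0\to\infty$, enlarging $R_k$ by a fixed fundamental parallelepiped $\Pi$ of $L$ only multiplies its volume by $1+o(1)$ in the expanding factor while contributing a polynomial factor in the other; as the translates $\ell+\Pi\ (\ell\in T_k)$ are pairwise disjoint and lie in $R_k+\Pi$, a routine volume comparison gives $|T_k|\le\operatorname{vol}(R_k+\Pi)/\operatorname{vol}(\Pi)=e^{m(\phi)k}e^{o(k)}$, so $h_{alg}(\phi)\le m(\phi)$.

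For the lower bound let $\pi_>\colon L\to\R^{d_>}$ be the coordinate projection and $\Gamma=\pi_>(L)$: a finitely generated subgroup of $\R^{d_>}$ which $\R$–spans $\R^{d_>}$ and is invariant under $A_>$ (because $A_>\oplus A_\le$ preserves $L$ and the splitting). The map $\pi_>\colon L\to\Gamma$ is a surjective homomorphism intertwining $A_>\oplus A_\le$ with $A_>\!\restriction_\Gamma$, so monotonicity for quotients (Lemma~\ref{restriction_quotient}(b)) gives $h_{alg}(\phi)\ge h_{alg}(A_>\!\restriction_\Gamma)$. Fix $\delta>0$, then $R\ge\|A_>\|\delta$, and a finite $F\subseteq\Gamma$ that is $\delta$–dense in $B(0,R)$ (take $F=\Gamma\cap B(0,R)$ if $\Gamma$ is discrete, a finite $\delta$–net otherwise). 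A greedy ``digit expansion'' shows $T_k(A_>,F)=\sum_{j<k}A_>^{\,j}F$ is $\delta$–dense in $A_>^{\,k-1}B(0,R)$: given $\gamma$ with $\delta_0:=A_>^{-(k-1)}\gamma\in B(0,R)$, pick $f_0\in F$ with $\|\delta_0-f_0\|\le\delta$, set $r_0=\delta_0-f_0$, and for $1\le j<k$ pick $f_j\in F$ with $\|A_>r_{j-1}-f_j\|\le\delta$ (legitimate, since $\|A_>r_{j-1}\|\le\|A_>\|\delta\le R$) and set $r_j=A_>r_{j-1}-f_j$; unwinding yields $\gamma=\sum_{j<k}A_>^{\,k-1-j}f_j+r_{k-1}$ with $\sum_{j<k}A_>^{\,k-1-j}f_j\in T_k(A_>,F)$ and $\|r_{k-1}\|\le\delta$. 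Comparing volumes, $|T_k(A_>,F)|\ge\operatorname{vol}(A_>^{\,k-1}B(0,R))/\operatorname{vol}(B(0,\delta))=|\det A_>|^{\,k-1}c$ with $c>0$ fixed; since this $H_{alg}$ is a genuine limit (Lemma~\ref{subadd}), $h_{alg}(A_>\!\restriction_\Gamma)\ge\log|\det A_>|=m(\phi)$. Combining the two bounds gives $h_{alg}(\phi)=m(\phi)$.

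The main obstacle is the lattice–point bookkeeping in the upper bound: one cannot simply say ``number of lattice points $\lesssim$ volume'', because the trajectory body $R_k$ is only polynomially thick in the $d_\le$ non‑expanding directions, so it must be compared with the explicit body $A_>^{\,k-1}K_\infty\times B(0,\rho_k)$ and one must exploit that this body is genuinely ``fat'' (inradius $\to\infty$) in the $d_>$ expanding directions, which makes the unavoidable fattening by a fundamental domain asymptotically negligible there; everything else is straightforward once one has passed to the real block–diagonal form. An alternative, more structural route first reduces to $\Q^n$ by Proposition~\ref{AA_}, then, via the primary decomposition of $\Q^n$ as a $\Q[t]$–module together with Theorem~\ref{WAT} and Lemma~\ref{conj}, to computing $h_{alg}$ of multiplication by an algebraic integer on a lattice — and the two estimates above are exactly what powers that computation.
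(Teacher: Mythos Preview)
Your proof is correct and takes a genuinely different route from the paper's. The paper does not argue Theorem~\ref{AKSF} directly: in the paragraph following Theorem~\ref{AYF} it simply observes that the Algebraic Kolmogorov--Sinai Formula is a consequence of the Algebraic Yuzvinski Formula~\ref{AYF} together with Proposition~\ref{AA_}, since the extension $\widetilde\phi:\Q^n\to\Q^n$ has the same algebraic entropy as $\phi$ and the same (monic) characteristic polynomial over~$\Z$. Your argument, by contrast, is a self-contained lattice-point count in~$\R^n$: you conjugate over~$\R$ to split off the expanding block $A_>$, bound the trajectory from above by a product body of volume $e^{m(\phi)k}e^{o(k)}$ (using that the inradius of $A_>^{k-1}K_\infty$ tends to infinity, so fattening by a fundamental domain is asymptotically negligible), and bound it from below by a digit-expansion density argument on the projection to the expanding subspace. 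The trade-off is that the paper's deduction is two lines but rests on the substantially harder Theorem~\ref{AYF} (proved in~\cite{GV}), whereas your route is longer and more hands-on but entirely elementary and independent of~\ref{AYF}. One small presentational point: in your lower bound, $\delta$ cannot be taken arbitrarily small when the closure of $\Gamma$ is a proper closed subgroup of $\R^{d_>}$---you need $\delta$ at least its covering radius---but since the argument only requires \emph{some} triple $(\delta,R,F)$ with $R\ge\|A_>\|\delta$, this does not affect correctness.
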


The following is the algebraic counterpart of the Yuzvinski Formula \ref{YF}, which was given a direct proof recently in \cite{GV}. 

\begin{theorem}[Algebraic Yuzvinski Formula] \label{AYF} 
Let $n\in\N_+$ and $\phi:\Q^n\to\Q^n$ an endomorphism. Then $$h_{alg}(\phi)=m(\phi).$$
\end{theorem}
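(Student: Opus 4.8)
The plan is to reduce the statement to the two facts that are already available: the Algebraic Kolmogorov--Sinai Formula \ref{AKSF} for endomorphisms of $\Z^n$, and the behaviour of $h_{alg}$ under the passage to the divisible hull (Proposition \ref{AA_}). The point is that $\Q^n$ is the divisible hull of any finitely generated subgroup of full rank, so morally one wants to write $\phi$ as a ``rescaled integer endomorphism'' and push the computation of $m(\phi)$ through the relevant invariances. Concretely, first I would clear denominators: choose $s\in\N_+$ so that the matrix $sA_\phi$ has integer entries, i.e. $s\phi$ restricts to an endomorphism of $\Z^n$. Then $p_{s\phi}(t)=s^n p_\phi(t/s)$, and the roots of $p_{s\phi}$ are exactly $s\lambda_1,\dots,s\lambda_n$ where $\lambda_1,\dots,\lambda_n$ are the roots of $p_\phi$; this lets me relate $m(s\phi)$ to $m(\phi)$ explicitly. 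By Theorem \ref{AKSF} applied to $s\phi\restriction_{\Z^n}$ and by Proposition \ref{AA_} (the extension of $s\phi$ from $\Z^n$ to $D(\Z^n)=\Q^n$ does not change $h_{alg}$), we get $h_{alg}(s\phi)=m(s\phi)$ already over $\Q^n$.

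Next I would transfer this from $s\phi$ back to $\phi$. This is where the Algebraic Yuzvinski Formula genuinely differs from a formal manipulation: $h_{alg}$ is \emph{not} multiplicative in such rescalings ($h_{alg}(s\phi)\neq s^n h_{alg}(\phi)$ in general, unlike the Logarithmic Law which concerns iteration $\phi^k$, not scalar multiples). So the correct route is to prove directly the additivity/compatibility that makes the Mahler-measure expression behave well. Two natural strategies: (i) use the multiplicativity of the characteristic polynomial over a block-triangular decomposition together with the Addition Theorem \ref{AT-alg} (referenced earlier as one of the fundamental properties, though its precise statement comes after this excerpt) to reduce to the case where $p_\phi$ is irreducible over $\Q$, i.e. $\phi$ is (conjugate to) multiplication by an algebraic number $\alpha$ on $\Q(\alpha)\cong\Q^n$; then by the discussion in \S\ref{bernoulli-sec} the endomorphism $\varphi_\alpha$ has $m(\varphi_\alpha)=m(\alpha)$, and one computes $h_{alg}(\varphi_\alpha)$ on $\Q(\alpha)$ using a lattice $\Z[\alpha]$ inside $\Q(\alpha)$ and carefully estimating $|T_n(\varphi_\alpha,F)|$ for $F$ a finite generating set of such a lattice; or (ii) work over a $\Z$-order $R\subseteq\Q(\alpha)$ on which $\alpha$ acts, note $\Q(\alpha)=\varinjlim \tfrac1m R$, apply Continuity \ref{dirlim}, and reduce each $\tfrac1m R$ to $R\cong\Z^n$ via a conjugation (Lemma \ref{conj}), landing back in Theorem \ref{AKSF}.

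I would organise the write-up around strategy (i)$\to$(ii) combined: split $p_\phi(t)$ into its $\Q$-irreducible factors, invoke the Addition Theorem to write $h_{alg}(\phi)=\sum_j h_{alg}(\phi_j)$ matching $m(\phi)=\sum_j m(\phi_j)$ (logarithmic Mahler measure is additive over factorisation), and for each irreducible factor identify $\phi_j$ up to conjugation with multiplication by an algebraic number $\alpha$ on $\Q(\alpha)$. For the irreducible case, fix a $\Z$-lattice $L=\Z^{n_j}\subseteq\Q(\alpha)$ with $\alpha L\subseteq \tfrac1s L$; then $\Q(\alpha)=\bigcup_m \tfrac1{s^m}L$ is a directed union of $\varphi_\alpha$-invariant subgroups, so by Continuity \ref{dirlim} it suffices to compute $h_{alg}(\varphi_\alpha\restriction_{\frac1{s^m}L})$, and each such restriction is conjugate (by the isomorphism $x\mapsto s^m x$ of abelian groups) to an \emph{integer} endomorphism of $\Z^{n_j}$ whose characteristic polynomial is $p_{\phi_j}$ itself (conjugation does not change the characteristic polynomial); Theorem \ref{AKSF} then gives the value $m(p_{\phi_j})=m(\phi_j)$, uniformly in $m$, and the sup over $m$ is the same number.

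The main obstacle I anticipate is the bookkeeping in the reduction to the irreducible case: one must be sure that the Addition Theorem \ref{AT-alg} can be applied to the $\phi$-invariant subspace corresponding to a factor of $p_\phi(t)$ (it can, since rational canonical form over $\Q$ furnishes a $\phi$-invariant rational subspace for each primary factor, with induced endomorphism on the quotient having complementary characteristic polynomial), and that the additivity of the logarithmic Mahler measure $m(fg)=m(f)+m(g)$ matches this decomposition exactly. A secondary delicate point is verifying that the conjugating isomorphism $x\mapsto s^m x$ is an \emph{abelian group} isomorphism intertwining $\varphi_\alpha\restriction_{\frac1{s^m}L}$ with an endomorphism of $L\cong\Z^{n_j}$ — this is immediate since multiplication by $\alpha$ commutes with multiplication by the scalar $s^m$ — but one must then check that the resulting integer matrix indeed has $p_{\phi_j}$ as characteristic polynomial, which follows because similar matrices (here, conjugate by a scalar) share the characteristic polynomial. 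Everything else is a routine invocation of Theorems \ref{AKSF}, \ref{AT-alg}, Proposition \ref{dirlim}, Lemma \ref{conj} and Proposition \ref{AA_}.
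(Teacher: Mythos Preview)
The paper does not prove Theorem \ref{AYF}; it quotes it from \cite{GV}. More importantly, within the paper's logical order both Theorem \ref{AKSF} and the Addition Theorem \ref{AT} are \emph{consequences} of Theorem \ref{AYF} (see the paragraph after Theorem \ref{AYF} and the sentence introducing Theorem \ref{AT}). Your plan invokes both of these to prove \ref{AYF}, so relative to this paper it is circular.

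Setting the circularity aside, there is a genuine mathematical gap in your lattice argument, precisely in the case that distinguishes \ref{AYF} from \ref{AKSF}. If the primitive characteristic polynomial of $\phi_j$ has leading coefficient $s>1$ (equivalently, $\alpha$ is not an algebraic integer), then the subgroups $\tfrac{1}{s^m}L$ are \emph{not} $\varphi_\alpha$-invariant: you only have $\alpha\cdot\tfrac{1}{s^m}L\subseteq\tfrac{1}{s^{m+1}}L$, so there is no restriction $\varphi_\alpha\restriction_{\frac{1}{s^m}L}$ to compute and Continuity (Proposition \ref{dirlim}) does not apply. Even if you patched this, the target of your conjugation is an endomorphism of $\Z^{n_j}$, whose characteristic polynomial is necessarily \emph{monic}; no invocation of Theorem \ref{AKSF} can ever produce the term $\log s$ in $m(\phi_j)$. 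Thus your reduction recovers exactly the case $s=1$ (where \ref{AYF} and \ref{AKSF} already agree via Proposition \ref{AA_}) and says nothing about the essential new content of \ref{AYF}. The actual proof in \cite{GV} proceeds by passing to the locally compact groups $\Q_p^n$ and computing $h_{alg}$ there (cf.\ \S\ref{alc-sec}); the $\log s$ contribution arises from the $p$-adic places, not from any $\Z^n$-computation.
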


Let us see that the Algebraic Kolmogorov-Sinai Formula \ref{AKSF} follows from the Algebraic Yuzvinski Formula \ref{AYF}. Indeed, if $\phi:\Z^n\to\Z^n$ is an endomorphism, then its extension $\widetilde\phi: \Q^n \to \Q^n$ has the same algebraic entropy as $\f$ by Proposition \ref{AA_}. It remains to note that both endomorphisms have the same characteristic polynomial $p_\phi(t)=p_{\widetilde\phi}(t)$ over $\Z$, so the leading coefficient $s$ of $p_{\widetilde\phi}(t)$ is $1$.  

\begin{remark}\label{AYF0} 
The first attempt for a direct proof of the Algebraic Yuzvinski Formula \ref{AYF} was given by Zanardo in \cite{Z-yuz}. In these notes he considered an automorphism $\phi:\Q^n\to \Q^n$ whose characteristic polynomial over $\Q$ has integer coefficients. First he proved the inequality $h_{alg}(\phi)\leq m(\phi)$. He verified also the converse inequality in two particular cases, namely, when $\phi$ has a unique eigenvalue $\lambda$ such that $|\lambda|>1$ and when $n=2$. These are corollaries of an interesting result of the same paper, that is, 
\begin{equation}\label{geqmax}
h_{alg}(\phi)\geq\max\{\log|\lambda|:\lambda\ \text{eigenvalue of}\ \phi\}.
\end{equation}

The techniques used in \cite{Z-yuz} are from linear algebra, as well as those used in \cite{DKZ}, where the formula in \eqref{geqmax} was extended to all endomorphisms $\phi:\Q^n\to\Q^n$. This was used in \cite{DKZ} to prove the ``case zero'' of the Algebraic Yuzvinski Formula \ref{AYF}, that is, $h(\phi)=0$ if and only if $m(\phi)=0$ for every endomorphism $\phi:\Q^n\to \Q^n$.
\end{remark}

\smallskip
The next theorem is highly non-trivial. It was first proved for torsion abelian groups in \cite{DGSZ}. The proof of the general case given in \cite{DG} requires the Algebraic Yuzvinski Formula \ref{AYF}.

\begin{theorem}[Addition Theorem]\emph{\cite{DG}}\label{AT}
Let $G$ be an abelian group, $\phi:G\to G$ an endomorphism, $H$ a $\phi$-invariant normal subgroup of $G$ and $\overline\phi:G/H\to G/H$ the endomorphism induced by $\phi$. Then $$h_{alg}(\phi)=h_{alg}(\phi\restriction_H)+ h_{alg}(\overline\phi).$$
\end{theorem}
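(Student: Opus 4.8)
The plan is to treat the theorem as a chain of equalities, starting from two tractable base cases and bootstrapping to the general situation. By Lemma~\ref{restriction_quotient} one already has $h_{alg}(\phi)\ge h_{alg}(\phi\restriction_H)$ and $h_{alg}(\phi)\ge h_{alg}(\overline\phi)$, so the entire content is that these two lower bounds \emph{add up}, together with the matching upper bound $h_{alg}(\phi)\le h_{alg}(\phi\restriction_H)+h_{alg}(\overline\phi)$. I would establish the equality first for $G$ torsion and for $G$ a $\Q$-vector space, and then deduce it for an arbitrary abelian $G$ by passing through the torsion subgroup and divisible hulls, using Continuity (Proposition~\ref{dirlim}) at every reduction. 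The basic tool throughout is a comparison of trajectories: writing $\pi\colon G\to G/H$, one wants, for a given $F\in[G]^{<\omega}$, to sandwich $|T_n(\phi,F)|$ between $|T_n(\phi\restriction_H,F_H)|\cdot|T_n(\overline\phi,\pi(F))|$-type quantities (up to subexponential error), for suitable fixed finite sets $F_H\subseteq H$.

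Concretely, for the upper bound I would fix $F\in[G]^{<\omega}$ with $e_G\in F$, pick $C\in[G]^{<\omega}$ with $\pi\restriction_C$ injective and $\pi(C)=\pi(F)$, write each $f\in F$ as $f=c_f+h_f$ with $c_f\in C$, $h_f\in H$, and set $F_H=\{h_f:f\in F\}\cup\{e_G\}\in[H]^{<\omega}$. Commutativity of $G$ gives $T_n(\phi,F)\subseteq T_n(\phi,C)+T_n(\phi\restriction_H,F_H)$, so it suffices to bound $|T_n(\phi,C)|$ by $|T_n(\overline\phi,\pi(F))|$ times a subexponential factor, i.e.\ to show that the fibres of $\pi\restriction_{T_n(\phi,C)}$ over $T_n(\overline\phi,\pi(F))$ grow subexponentially (tracking how two preimages of the same point of $G/H$ differ by a controlled element of $H$). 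This fibre‑count is the delicate combinatorial core of the bound; it is the argument of \cite{DGSZ} for finite subgroups, extended to finite subsets in \cite{DG}, and I would reproduce it, obtaining on passing to $\limsup$ that $H_{alg}(\phi,F)\le H_{alg}(\phi\restriction_H,F_H)+H_{alg}(\overline\phi,\pi(F))$.

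For the lower bound I would dispose of the base cases. When $G$ is torsion, the Addition Theorem — in the equivalent form for $\ent$ and finite subgroups — is the main result of \cite{DGSZ}, which I invoke. When $G=\Q^n$ is finite‑dimensional over $\Q$, the maps $\phi\restriction_H$ and $\overline\phi$ act on the finite‑dimensional $\Q$-vector spaces $H$ and $G/H$; their primitive integer characteristic polynomials are multiplicative by Gauss's lemma, hence so are their Mahler measures, giving $m(\phi)=m(\phi\restriction_H)+m(\overline\phi)$, and the Algebraic Yuzvinski Formula~\ref{AYF} converts this into $h_{alg}(\phi)=h_{alg}(\phi\restriction_H)+h_{alg}(\overline\phi)$. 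Moving from $\Q^n$ to an arbitrary $\Q$-vector space, and then to an arbitrary torsion‑free $G$, is handled by Proposition~\ref{AA_} (replace each torsion‑free group by its divisible hull $D(\,\cdot\,)$, noting $D(G)/D(H)$ is again the divisible hull of $G/H$ modulo its torsion) together with an approximation argument based on Proposition~\ref{dirlim} — this approximation, exhibiting enough of the relevant trajectories inside finite‑dimensional $\phi$-invariant pieces, is one of the delicate points.

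Finally I would splice the base cases for a general abelian $G$ with $\phi$-invariant $H$: run a diagram chase through the torsion subgroup $t(G)$, using that $G/t(G)$ is torsion‑free and $t(H)=H\cap t(G)$, apply the torsion base case to the torsion layers and the torsion‑free base case (via $D(\,\cdot\,)$ and Yuzvinski) to the torsion‑free layers, and reassemble the equality with Continuity (all the sups involved being directed, so additive). The main obstacle is exactly this splicing together with the fact that the reductions invoke the theorem for auxiliary short exact sequences: the proof must be organized as a genuine induction (first $G$ torsion‑free, then $G$ torsion, then the mixed case) so that no circularity arises, and it is here that the Algebraic Yuzvinski Formula enters as the indispensable input beyond \cite{DGSZ}. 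Commutativity of $G$ is used essentially at several points — characteristic polynomials and Mahler measure in the $\Q$-vector space case, the divisible hull in Proposition~\ref{AA_}, the structure of torsion abelian groups in \cite{DGSZ} — which is consistent with the Addition Theorem not being expected to hold for non‑abelian groups.
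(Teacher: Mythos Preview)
The paper does not prove this theorem; it only cites \cite{DG}, noting that the torsion case is \cite{DGSZ} and that the general case ``requires the Algebraic Yuzvinski Formula''. Your overall architecture --- torsion base case via \cite{DGSZ}, the $\Q^n$ base case via multiplicativity of Mahler measure together with Theorem~\ref{AYF}, then bootstrapping through $t(G)$ and divisible hulls using Propositions~\ref{AA_} and~\ref{dirlim} --- matches that description and is the right shape for the argument.

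There is, however, a real gap in your ``upper bound'' paragraph. You assert that the fibres of $\pi\restriction_{T_n(\phi,C)}$ over $T_n(\overline\phi,\pi(F))$ grow subexponentially, attributing this to a combinatorial argument transplanted from \cite{DGSZ}. But the \cite{DGSZ} argument works precisely because one computes with finite \emph{subgroups}: when $F\le G$ is a subgroup, $T_n(\phi,F)$ is a subgroup, $T_n(\phi,F)\cap H$ is a subgroup of $H$, and the fibre size is an index that one can control multiplicatively. For a mere section $C$ of $\pi(F)$, two elements $\sum_i\phi^i(c_i)$ and $\sum_i\phi^i(c_i')$ in the same fibre differ by an element of $H$, yet the individual terms $\phi^i(c_i-c_i')$ need not lie in $H$; the cancellation placing the sum in $H$ is global, so there is no fixed finite $E\subseteq H$ with the difference in $T_n(\phi\restriction_H,E)$, and no reason the fibre count is subexponential. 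This is not a technicality to be ``reproduced'': it is exactly the obstruction that forces the structural reductions (torsion layer, divisible hull, finite-dimensional $\Q$-pieces, Yuzvinski) to carry \emph{both} inequalities, not just the lower one. You should reorganize so that the upper bound is also obtained through the base cases and the splicing, rather than as a standalone combinatorial step.
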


At this stage one could consider the Addition Theorem in full generality:

\begin{problem}\label{PAT}
Does the Addition Theorem hold for every group $G$, every endomorphism $\phi:G\to G$ and every $\phi$-invariant normal subgroup $H$ of $G$?
\end{problem}

Clearly the Addition Theorem holds for simple groups. Moreover one can start with the nilpotent case, which is close to the abelian one and where a natural weaker form can be isolated (see item (a)):

\begin{question}\label{Qnilpotent}
Let $G$ be a group and $\phi:G\to G$ an endomorphism.
\begin{itemize}
\item[(a)] Consider $\phi\restriction_{Z(G)}:Z(G)\to Z(G)$ and the endomorphism $\overline\phi:G/Z(G)\to G/Z(G)$ induced by $\phi$. Is it true that $h_{alg}(\phi)=h_{alg}(\phi\restriction_{Z(G)})+h_{alg}(\overline\phi)$?
\item[(b)] Does the Addition Theorem hold for nilpotent groups?
\item[(c)] Does a positive answer to (a) imply a positive answer to (b)?
\end{itemize}
\end{question}

We conjecture that conjugations of the nilpotent groups have zero algebraic entropy (this is an easy consequence of a positive answer to (a)).

\medskip
A counterpart of the above question (appropriate for solvable groups) can be obtained replacing the center by the derived subgroup:

\begin{question}\label{Qsolvable} Let $G$ be a group and $\phi:G\to G$ an endomorphism. 
\begin{itemize}
\item[(a)] Consider $\phi\restriction_{G'}:G'\to G'$ and the endomorphism $\overline\phi:G/G'\to G/G'$ induced by $\phi$. Is it true that $h_{alg}(\phi)=h_{alg}(\phi\restriction_{G'})+h_{alg}(\overline\phi)$?
\item[(b)] Does the Addition Theorem hold for solvable groups?
\item[(c)] Does a positive answer to (a) imply a positive answer to (b)?
\end{itemize}
\end{question}

This question suggests a weaker form of Problem \ref{PAT}, where $H$ is a {\em fully invariant} (so normal) subgroup. 

\smallskip
The next lemma shows a relation between Question \ref{Qnilpotent}(a) and Question \ref{Qsolvable}(a) for nilpotent groups of class $2$.

\begin{lemma}
For nilpotent groups of class $2$, a positive answer to Question \ref{Qsolvable}(a) implies a positive answer to Question \ref{Qnilpotent}(a).
\end{lemma}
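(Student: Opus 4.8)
The plan is a short diagram chase resting on three elementary facts about a nilpotent group $G$ of class $2$: first, $G'\leq Z(G)$ and $G'$ is fully invariant, hence $\phi$-invariant; second, both $Z(G)$ and $G/G'$ are abelian; third, by the third isomorphism theorem $(G/G')/(Z(G)/G')\cong G/Z(G)$, and this identification is compatible with the endomorphisms induced by $\phi$. As in the statement of Question \ref{Qnilpotent}(a) I assume that $Z(G)$ is $\phi$-invariant, so that the induced endomorphism $\overline\phi\colon G/Z(G)\to G/Z(G)$ exists; for Question \ref{Qsolvable}(a) no such hypothesis is needed, since $G'$ is automatically $\phi$-invariant.

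First I would apply the assumed positive answer to Question \ref{Qsolvable}(a) to the endomorphism $\phi$ of the (possibly non-abelian) group $G$ itself, obtaining
$$h_{alg}(\phi)=h_{alg}(\phi\restriction_{G'})+h_{alg}(\phi_1),$$
where $\phi_1\colon G/G'\to G/G'$ is the endomorphism induced by $\phi$. Since $G$ has class $2$, $G/G'$ is abelian and $Z(G)/G'$ is a (necessarily normal) $\phi_1$-invariant subgroup of it, with $(G/G')/(Z(G)/G')\cong G/Z(G)$ and the endomorphism induced by $\phi_1$ on this quotient being precisely $\overline\phi$. Applying the Addition Theorem \ref{AT} for abelian groups to the pair $Z(G)/G'\leq G/G'$ gives
$$h_{alg}(\phi_1)=h_{alg}(\phi_1\restriction_{Z(G)/G'})+h_{alg}(\overline\phi).$$

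Next I would invoke the Addition Theorem \ref{AT} a second time, now inside the abelian group $Z(G)$ with respect to its $\phi$-invariant subgroup $G'$; since the endomorphism induced by $\phi\restriction_{Z(G)}$ on $Z(G)/G'$ coincides with $\phi_1\restriction_{Z(G)/G'}$, this yields
$$h_{alg}(\phi\restriction_{Z(G)})=h_{alg}(\phi\restriction_{G'})+h_{alg}(\phi_1\restriction_{Z(G)/G'}).$$
Substituting the last two displays into the first gives
$$h_{alg}(\phi)=h_{alg}(\phi\restriction_{G'})+h_{alg}(\phi_1\restriction_{Z(G)/G'})+h_{alg}(\overline\phi)=h_{alg}(\phi\restriction_{Z(G)})+h_{alg}(\overline\phi),$$
which is the positive answer to Question \ref{Qnilpotent}(a).

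The only point requiring care — and it is bookkeeping rather than a genuine obstacle — is the precise identification of the various induced endomorphisms, in particular checking that the canonical isomorphism $(G/G')/(Z(G)/G')\cong G/Z(G)$ intertwines the endomorphism induced by $\phi_1$ with $\overline\phi$; this is a routine application of the third isomorphism theorem. All the real content is packaged into the hypothesis (Question \ref{Qsolvable}(a) applied to $G$) and into the two uses of the already established Addition Theorem \ref{AT} for abelian groups; no new estimates on $h_{alg}$ are needed.
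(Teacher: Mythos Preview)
Your proof is correct and follows essentially the same approach as the paper's: apply the hypothesis to split $h_{alg}(\phi)$ along $G'$, then use the abelian Addition Theorem \ref{AT} twice (once inside $Z(G)$ with subgroup $G'$, once inside $G/G'$ with subgroup $Z(G)/G'$) and reassemble. The paper makes the identification of induced endomorphisms explicit via Lemma \ref{conj} (invariance under conjugation), which is exactly the bookkeeping you flag at the end; otherwise the arguments are identical up to the order in which the two applications of Theorem \ref{AT} are presented.
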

\begin{proof}
Let $G$ be a group and $\phi:G\to G$ an endomorphism. Let $\overline\phi_{G/G'}:G/G'\to G/G'$ be the endomorphism induced by $\phi$.
\begin{equation*}
\xymatrix{
G' \ar[r]\ar[d]^{\phi\restriction_{G'}} & G \ar[r]\ar[d]^{\phi} & G/G' \ar[d]^{\overline\phi_{G/G'}} \\
G' \ar[r] & G \ar[r] & G/G'
}
\end{equation*}
By hypothesis 
\begin{equation}\label{eq1}
h_{alg}(\phi)=h_{alg}(\phi\restriction_{G'})+h_{alg}(\overline\phi_{G/G'}).
\end{equation}
Since $G$ is nilpotent of class $2$, $G/Z(G)$ is abelian and so $G'\subseteq Z(G)$. Consider the endomorphism $\overline\phi_{Z(G)/G'}:Z(G)/G'\to Z(G)/G'$ induced by $\phi$. 
\begin{equation*}
\xymatrix{
G' \ar[r]\ar[d]^{\phi\restriction_{G'}} & Z(G) \ar[r]\ar[d]^{\phi\restriction_{Z(G)}} & Z(G)/G' \ar[d]^{\overline\phi_{Z(G)/G'}} \\
G' \ar[r] & Z(G) \ar[r] & Z(G)/G'
}
\end{equation*}
Since $Z(G)$ is abelian, Addition Theorem \ref{AT} implies 
\begin{equation}\label{eq2}
h_{alg}(\phi\restriction_{Z(G)})=h_{alg}(\phi\restriction_{G'})+h_{alg}(\overline\phi_{Z(G)/G'}).
\end{equation}
Let $\overline\phi_{(G/G')/(Z(G)/G')}:(G/G')/(Z(G)/G')\to (G/G')/(Z(G)/G')$ be the endomorphism induced by $\phi$.
\begin{equation*}
\xymatrix{
Z(G)/G' \ar[r]\ar[d]^{\overline\phi_{G/G'}\restriction_{Z(G)/G'}} & G/G' \ar[r]\ar[d]^{\overline\phi_{G/G'}} & (G/G')/(Z(G)/G') \ar[d]^{\overline\phi_{(G/G')/(Z(G)/G')}} \\
Z(G)/G' \ar[r] & G/G' \ar[r] & (G/G')/(Z(G)/G')
}
\end{equation*}
Let $\overline\phi_{G/Z(G)}:G/Z(G)\to G/Z(G)$ be the endomorphism induced by $\phi$.
\begin{equation*}
\xymatrix{
Z(G)/G' \ar[r]\ar[d]^{\overline\phi_{Z(G)/G'}} & G/G' \ar[r]\ar[d]^{\overline\phi_{G/G'}} & G/Z(G) \ar[d]^{\overline\phi_{G/Z(G)}} \\
Z(G)/G' \ar[r] & G/G' \ar[r] & G/Z(G)
}
\end{equation*}
Since $\overline\phi_{G/G'}\restriction_{Z(G)/G'}$ is conjugated to $\overline\phi_{Z(G)/G'}$ and $\overline\phi_{(G/G')/(Z(G)/G')}$ is conjugated to $\overline\phi_{G/Z(G)}$, Lemma \ref{conj} applies; since the quotient $G/G'$ is abelian, together with Addition Theorem \ref{AT}, they give
\begin{align}\label{eq3}
h_{alg}(\overline\phi_{G/G'})&=h_{alg}(\overline\phi_{G/G'}\restriction_{Z(G)/G'})+h_{alg}(\overline\phi_{(G/G')/(Z(G)/G')}) \notag\\
&=h_{alg}(\overline\phi_{Z(G)/G'})+h_{alg}(\overline\phi_{G/Z(G)}).
\end{align}
Now \eqref{eq1}, \eqref{eq2} and \eqref{eq3} give $$h_{alg}(\phi)=h_{alg}(\phi\restriction_{Z(G)})+h_{alg}(\overline\phi_{G/Z(G)}).$$
\begin{equation*}
\xymatrix{
Z(G) \ar[r]\ar[d]^{\phi\restriction_{Z(G)}} & G \ar[r]\ar[d]^{\phi} & G/Z(G) \ar[d]^{\overline\phi_{G/Z(G)}} \\
Z(G) \ar[r] & G \ar[r] & G/Z(G)
}
\end{equation*}
This is precisely the positive answer to Question \ref{Qnilpotent}(a).
\end{proof}

Making use of a technique introduced by V\' amos \cite{V1} (based on length functions), one can see that uniqueness is available for the algebraic entropy in the category of all abelian groups:

\begin{theorem}[Uniqueness Theorem]\label{UT}\emph{\cite{DG}}
The algebraic entropy $h_{alg}:\Flow_\abg\to\R_{\geq0}\cup\{\infty\}$ is the unique function such that:
\begin{itemize}
\item[(a)] $h_{alg}$ is invariant under conjugation;
\item[(b)] $h_{alg}$ is continuous on direct limits;
\item[(c)] $h_{alg}$ satisfies the Addition Theorem;
\item[(d)] for $K$ a finite abelian group, $h_{alg}(\beta_K^\oplus)=\log|K|$;
\item[(e)] $h_{alg}$ satisfies the Algebraic Yuzvinski Formula.
\end{itemize}
\end{theorem}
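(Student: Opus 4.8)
The plan is to show that the five listed properties force $h_{alg}$ to be unique among functions $h:\Flow_\abg\to\R_{\geq0}\cup\{\infty\}$. Since $h_{alg}$ itself satisfies all of (a)--(e), it suffices to prove that any other $h$ with these properties agrees with $h_{alg}$ on every flow $(G,\phi)$. The overall strategy is to reduce, using (b) Continuity and (c) the Addition Theorem, from an arbitrary abelian group to finitely generated ones, then to reduce a finitely generated $(G,\phi)$ to the two ``building block'' cases: the torsion part and the torsion-free (hence, after passing to the divisible hull, $\Q^n$-type) part, on which the value is pinned down by (d) the Bernoulli normalization and (e) the Algebraic Yuzvinski Formula respectively. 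The guiding idea, as the remark before the theorem indicates, is that V\'amos's technique with length functions on the category of abelian groups provides the abstract machine that turns the axioms into uniqueness.

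First I would record the easy consequences: from (a), (b), (c) one deduces Monotonicity for subgroups and quotients (the argument is the standard one: a subgroup is a direct limit of finitely generated ones and one uses the Addition Theorem on the inclusion, exactly as in the set-theoretic case in Remark \ref{Rem3Feb}(a)), and then one deduces the Weak Addition Theorem for finite products. Using (b) again, $h(\phi)=\sup h(\phi\restriction_{G_i})$ over the directed family of all $\phi$-invariant finitely generated subgroups $G_i$ that are ``large enough'' — more precisely, over the family of $\phi$-invariant subgroups of the form $\langle T_\infty(\phi,F)\rangle$ for $F\in[G]^{<\omega}$, each of which is finitely generated when $G$ is and countable in general; the key point is that every finite $F$ sits inside such a subgroup, so the supremum over these recovers $h(\phi)=\sup_F H_{alg}(\phi,F)$ for both $h$ and $h_{alg}$. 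This reduces everything to finitely generated abelian groups.

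Next, for $(G,\phi)$ with $G$ finitely generated, write $t(G)$ for the (finite) torsion subgroup, which is $\phi$-invariant, and apply the Addition Theorem (c) to the exact sequence $0\to t(G)\to G\to G/t(G)\to 0$: this gives $h(\phi)=h(\phi\restriction_{t(G)})+h(\bar\phi)$, and likewise for $h_{alg}$. On $t(G)$, which is finite, one shows $h(\phi\restriction_{t(G)})$ is determined by (a) and (d): a finite abelian group with an endomorphism decomposes, up to the invariance under conjugation and continuity/addition, into eventually-periodic pieces (contributing $0$) and shift-like pieces built from the Bernoulli blocks $\beta_K^\oplus$ with $K$ finite — here one uses that on a finite group the trajectory eventually stabilizes, forcing $h(\phi\restriction_{t(G)})=0$ in fact, since $t(G)$ is a direct limit (trivially) of finite $\phi$-invariant subgroups; the genuinely shift-like behaviour only appears on infinite torsion groups, handled already in the reduction step via the $\beta_K^\oplus$ normalization. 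On $G/t(G)\cong\Z^n$, Proposition \ref{AA_} lets us pass to the divisible hull $\Q^n$ without changing the value of either $h$ or $h_{alg}$ (note: one must check $h$ shares this property, which follows from Monotonicity and a direct-limit argument, since $D(\Z^n)=\bigcup \frac1{k!}\Z^n$), and then (e) the Algebraic Yuzvinski Formula gives $h(\bar\phi)=m(\bar\phi)=h_{alg}(\bar\phi)$. Assembling the pieces, $h(\phi)=h_{alg}(\phi)$ on all finitely generated groups, hence on all abelian groups by Continuity.

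The main obstacle I anticipate is the bookkeeping in the reduction to the ``pure'' torsion and torsion-free cases being genuinely forced by the axioms and not by any extra knowledge about $h_{alg}$: one has to be careful that Continuity (b) is applied only to directed systems of genuine $\phi$-invariant subgroups, that the Addition Theorem (c) is invoked only for $\phi$-invariant (normal, automatic in the abelian setting) subgroups, and that the passage $\Z^n\rightsquigarrow\Q^n$ and the decomposition of torsion $p$-groups into $\beta$-blocks really only use (a)--(e). This is precisely where the length-function technology of V\'amos does the work — it guarantees that a function satisfying Additivity, upper continuity on directed unions, and the prescribed normalizations on the ``irreducible'' flows is uniquely extended — so I would structure the core of the argument around exhibiting $h$ as such a length-like function and quoting \cite{V1}. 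I would refer to \cite{DG} for the detailed verification that the classes of flows on which the normalizations (d) and (e) are imposed suffice to pin down the function, since that is the technical heart and reproducing it in full would be lengthy.
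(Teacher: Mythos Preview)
The paper does not give its own proof of this theorem; it cites \cite{DG} and only indicates, in the sentence preceding the statement, that the argument rests on V\'amos's length-function technique. So there is no detailed proof here to compare against beyond that hint.

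That said, your sketch has a genuine structural gap. The reduction via Continuity (b) does \emph{not} land you on finitely generated abelian groups: the smallest $\phi$-invariant subgroup containing a finite set $F$ is $\langle \bigcup_{n}\phi^n(F)\rangle$, which is finitely generated as a $\Z[X]$-module (with $X$ acting as $\phi$) but typically infinitely generated as an abelian group. The correct reduction is therefore to finitely generated $\Z[X]$-modules, and this is exactly what V\'amos's machinery handles: an additive, upper-continuous function on $\Mod_{\Z[X]}$ is determined by its values on the cyclic modules $\Z[X]/P$ as $P$ runs over $\mathrm{Spec}\,\Z[X]$. The axioms then pin down those values: maximal ideals $(p,f)$ give finite flows (value $0$); height-one primes $(p)$ give the Bernoulli shifts $\beta_{\Z(p)}^\oplus$ (axiom (d)); height-one primes $(f)$ give, after tensoring with $\Q$, flows on $\Q^n$ (axiom (e)); and $(0)$ gives infinite entropy. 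Your torsion/torsion-free split is a shadow of this prime decomposition, but it does not by itself explain why an infinite torsion abelian group with a non-periodic endomorphism has its $h$-value forced by the Bernoulli normalization --- that genuinely needs the $\Z[X]$-module filtration, not just the abelian-group structure.

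A smaller gap: you assert $h=0$ on finite flows without deriving it from (a)--(e). It does follow --- any flow on a finite $F$ is a quotient of $(F^{(\N)},\beta_F^\oplus)$ via $(x_i)\mapsto\sum_i\psi^i(x_i)$, so $h(\psi)\leq\log|F|<\infty$ by (c) and (d), and then quasi-periodicity of $\psi$ together with the Logarithmic Law (which, as the paper notes, is itself a consequence of the five axioms) forces $h(\psi)=0$ --- but this step needs to be made explicit, since your later argument for $h(\bar\phi)=0$ on $\Q^n/\Z^n$ depends on it.
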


In particular the Logarithmic Law follows from these five axioms (see \cite{DG} and \cite{Salce}).

\medskip
The problem of uniqueness can be considered also in a more general setting, but this depends on the solution of Problem \ref{PAT}. For example, if the Addition theorem holds for all solvable groups, then Uniqueness Theorem \ref{UT} obviously extends to flows on solvable groups. As far as larger classes of groups (containing non-solvable ones) are concerned, it is not clear if the above axioms can be sufficient to guarantee the uniqueness of the algebraic entropy.

\subsection{The growth of groups}\label{growth-sec}

Among the most important notions of growth in group theory is the one of growth of a finitely generated group; its definition was given independently by Schwarzc \cite{Sch} and Milnor \cite{M1}. It is a combinatorial form of the geometric notion of growth studied by Efremovich \cite{Efr} in the field of Riemannian manifolds. The connection between the two growth functions is given in \cite{Sch}. For a general reference on this topic see \cite{dlH} and \cite{CC}.

After the publication of \cite{M1}, the growth function of groups was intensively investigated, and several fundamental results were obtained by Wolf \cite{Wolf}, Milnor \cite{M2}, Bass \cite{Bass}, Tits \cite{Tits} and Adyan \cite{Ad}.

\medskip
Let $G$ be a finitely generated group and let $S$ be a finite set of generators of $G$. We can suppose without loss of generality that $S$ is symmetric, that is, $S=S^{-1}$, and that $e_G\not \in S$; so we assume these properties of $S$ for the whole \S \ref{growth-sec}. 

\smallskip
For every $g\in G\setminus \{e_G\}$, denote by $l_S(g)$ the length of the shortest word in the alphabet $S$ representing $g$; moreover let $l_S(e_G)=0$. For $n\in\N$, let $\gamma_S(n)$ be the number of elements $g\in G$ such that $l_S(g) \leq n$; in the notations of algebraic entropy we have that  $$\gamma_S(n)=|T_n(id_G,S\cup\{e_G\})|\ \text{for every}\ n\in\N_+.$$ The function $\gamma_S:\N\to\N$ is called \emph{growth function} of $G$ with respect to $S$.

\smallskip
If $G$ is a finite group and $S$ a set of generators of $G$, then $\gamma_S$ stabilizes at some $n_0\in\N_+$; moreover, choosing $S=G\setminus\{e_G\}$, in this case $\gamma_S(n)=\gamma_S(1)=|G|$ for all $n\in\N_+$.

On the other hand, for an infinite group $G$ generated by a finite subset $S$, the growth function $\gamma_S$ is monotone increasing, that is, $\gamma_S(n)<\gamma_S(n+1) $ for every $n\in\N$. Moreover, according to (the proof of) Lemma \ref{subadd}, $\gamma_S$ is submultiplicative: 

\begin{fact}\label{subgamma}
Let $S$ be a generating set of a group $G$. Then
$\gamma_S(m+n) \leq \gamma_S(m)\cdot \gamma_S(n)$ for every $m,n \in\N_+$. 
\end{fact}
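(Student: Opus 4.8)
*Let $S$ be a generating set of a group $G$. Then $\gamma_S(m+n)\leq \gamma_S(m)\cdot\gamma_S(n)$ for every $m,n\in\N_+$.*

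The plan is to reduce this to the identification $\gamma_S(n)=|T_n(id_G,S\cup\{e_G\})|$ already recorded in the text, and then invoke the trajectory identity from the proof of Lemma \ref{subadd}. Writing $F=S\cup\{e_G\}$, the subadditivity computation there gives $T_{m+n}(id_G,F)=T_m(id_G,F)\cdot id_G^{\,m}(T_n(id_G,F))=T_m(id_G,F)\cdot T_n(id_G,F)$, so that $|T_{m+n}(id_G,F)|\leq |T_m(id_G,F)|\cdot|T_n(id_G,F)|$, which is exactly the claim. So the only thing that genuinely needs checking is that $\gamma_S(n)$ really does equal $|T_n(id_G,F)|$, i.e.\ that the set of elements of word length $\leq n$ coincides with the $n$-fold product set $F\cdot F\cdots F$ ($n$ factors).

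First I would verify the inclusion $\{g:l_S(g)\leq n\}\subseteq F_{(n)}$: if $l_S(g)=k\leq n$, write $g=s_1\cdots s_k$ with $s_i\in S$; padding with $n-k$ copies of $e_G$ gives $g\in F_{(n)}$ (here $e_G\in F$ is exactly what makes the padding legal, and this is why one takes $S\cup\{e_G\}$ rather than $S$). Conversely, any product $f_1\cdots f_n$ with $f_i\in S\cup\{e_G\}$ is, after deleting the factors equal to $e_G$, a word of length $\leq n$ in the alphabet $S$, hence represents an element of word length $\leq n$; thus $F_{(n)}\subseteq\{g:l_S(g)\leq n\}$. Combining the two inclusions, $\gamma_S(n)=|T_n(id_G,F)|$ for every $n\in\N_+$ (for $n=0$ both sides equal $1$, since $F_{(0)}=\{e_G\}$).

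There is essentially no obstacle here; the statement is a bookkeeping consequence of the two observations the text has set up (the translation of $\gamma_S$ into trajectory language, and the trajectory identity $T_{m+n}=T_m\cdot\phi^m(T_n)$ with $\phi=id_G$). The only mild subtlety worth stating explicitly is the role of adjoining $e_G$ to $S$, which is what converts "words of length exactly $k\leq n$" into "products of exactly $n$ factors from $F$"; once that is noted, the proof is:
\[
\gamma_S(m+n)=|T_{m+n}(id_G,F)|=|T_m(id_G,F)\cdot T_n(id_G,F)|\leq |T_m(id_G,F)|\cdot|T_n(id_G,F)|=\gamma_S(m)\cdot\gamma_S(n).
\]
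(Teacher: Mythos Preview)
Your proof is correct and follows exactly the approach the paper indicates: the paper simply states that the fact follows ``according to (the proof of) Lemma \ref{subadd}'', i.e.\ by applying the trajectory identity $T_{m+n}(\phi,F)=T_m(\phi,F)\cdot\phi^m(T_n(\phi,F))$ with $\phi=id_G$ and $F=S\cup\{e_G\}$, together with the already-recorded identification $\gamma_S(n)=|T_n(id_G,S\cup\{e_G\})|$. Your explicit verification of that identification (via the padding-with-$e_G$ argument) is a helpful elaboration of a step the paper leaves implicit.
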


Two maps $\gamma, \gamma': \N \to \N$ are in the relation $\gamma \preceq \gamma'$ if there exist $n_0,C\in\N_+$ such that $\gamma(n) \leq \gamma'(Cn)$ for every $n\geq n_0$. 
Moreover $\gamma$ and $\gamma'$ are \emph{equivalent} (write $\gamma \sim \gamma'$) if $\gamma\preceq\gamma'$ and $\gamma'\preceq\gamma$; indeed, $\sim$ is an equivalence relation.

\begin{example}
\begin{itemize}
\item[(a)] For every $\alpha, \beta\in\R_{\geq0}$, $n^\alpha\sim n^\beta$ if and only if $\alpha=\beta$.
\item[(b)] For every $a,b\in\R_{>1}$, $a^n\sim b^n$.
\end{itemize}
\end{example}

In particular, item (b) shows that all exponentials are equivalent with respect to $\sim$. 
So it is possible to give the following definitions.
A map $\gamma: \N \to \N$ is called 
\begin{itemize}
\item[(a)] \emph{polynomial} if $\gamma(n) \preceq n^d$ for some $d\in\N_+$;
\item[(b)] \emph{exponential} if $\gamma(n) \sim e^n$;
\item[(c)] \emph{intermediate} if $\gamma(n)\succ n^d$ for every $d\in\N_+$ and $\gamma(n)\prec e^n$.
\end{itemize}

\medskip
Fact \ref{subgamma} implies that $\gamma_S(n)\preceq\gamma_S(1)^n$ for every $n\in\N$, so the growth of $\gamma_S$ is at most exponential. Item (b) of the following fact shows that $\gamma_S$ is at least polynomial for an infinite group. These observations give sense to Definition \ref{growth-def} below.

\begin{fact}
Let $G$ be a finitely generated group and $S$ a finite set of generators of $G$. Then:
\begin{itemize}
\item[(a)] $\gamma_S(n)\sim 1$ if and only of $G$ is finite;
\item[(b)] $n\preceq \gamma_S(n)$ if and only if $G$ is infinite.
\end{itemize}
\end{fact}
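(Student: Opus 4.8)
The plan is to reduce everything to one elementary dichotomy for the growth function $\gamma_S$: it is bounded precisely when $G$ is finite. First I would record the two basic features of $\gamma_S$ to be used throughout: since $l_S(e_G)=0$ we have $\gamma_S(0)=1$, and since every word of length $\le n$ is also a word of length $\le n+1$, the sequence $\{\gamma_S(n)\}_{n\in\N}$ is non-decreasing (and consists of positive integers, as $e_G$ always lies in the ball of radius $n$). Next, because $S$ generates $G$, every $g\in G$ has a finite length, so $G=\bigcup_{n\in\N}\{g\in G: l_S(g)\le n\}$ and therefore $\sup_{n\in\N}\gamma_S(n)=|G|$, the supremum of a non-decreasing sequence of positive integers (finite or $\infty$). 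From this I obtain: $\gamma_S$ is bounded if and only if $|G|<\infty$, and in the bounded case $\gamma_S$ is eventually constant with value $|G|$.

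For (a): if $G$ is finite, the above shows $\gamma_S$ is eventually the constant $|G|$, hence bounded, hence equivalent to the constant function $1$, i.e. $\gamma_S\sim 1$; conversely, $\gamma_S\sim 1$ forces $\gamma_S$ to be bounded, so $|G|=\sup_n\gamma_S(n)<\infty$ and $G$ is finite.

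For (b): if $G$ is infinite I would first argue that $\gamma_S$ is \emph{strictly} increasing (this is already recorded in the text just before the statement; the one-line reason is that if the ball of radius $n$ coincided with the ball of radius $n+1$, then, multiplying by $S$, all balls of radius $\ge n$ would coincide and hence equal $\bigcup_m\{g:l_S(g)\le m\}=G$, contradicting $|G|=\infty$). Since $\gamma_S(0)=1$ and the values are integers, strict monotonicity gives $\gamma_S(n)\ge n+1>n$ for every $n$, which is exactly $n\preceq\gamma_S(n)$ with the constant $C=1$. Conversely, $n\preceq\gamma_S(n)$ means $n\le\gamma_S(Cn)$ for all large $n$, so $\gamma_S$ is unbounded and $|G|=\sup_n\gamma_S(n)=\infty$, i.e. $G$ is infinite.

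I do not expect a genuine obstacle here; the two places asking for slight care are the reading of the equivalence relation $\sim$ (so that ``$\gamma_S\sim 1$'' is understood as ``$\gamma_S$ bounded'', the single equivalence class of the constant map) and the strict monotonicity of $\gamma_S$ for infinite $G$, which the ball-doubling remark above settles. An alternative but slightly less economical route to (b) would invoke the sub-multiplicativity of $\gamma_S$ together with its monotonicity in place of the direct estimate $\gamma_S(n)\ge n+1$.
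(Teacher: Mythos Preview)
The paper states this as a \emph{Fact} without proof, so there is no argument to compare against; your proposal supplies a complete and correct proof along entirely standard lines. The key inputs you use---that $\sup_n\gamma_S(n)=|G|$ and that $\gamma_S$ is strictly increasing when $G$ is infinite---are exactly the ones the surrounding text records (the latter is asserted two paragraphs above the Fact), and your ball-doubling justification for strict monotonicity is the usual one.

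One small caveat worth making explicit in your write-up: with the paper's definition of $\preceq$ (namely $\gamma\preceq\gamma'$ iff $\gamma(n)\le\gamma'(Cn)$ for large $n$, with no additive constant), the literal reading of ``$\gamma_S\sim 1$'' would force $\gamma_S(n)\le 1$ eventually, i.e.\ $G$ trivial rather than merely finite. You already flag this and adopt the intended reading ``$\gamma_S$ bounded'' (equivalently, $\gamma_S$ lies in the $\sim$-class of constant functions); that is the right move, but it would be worth saying so plainly rather than leaving it as a parenthetical, since the discrepancy is in the paper's own definitions rather than in your argument.
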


It is easy to see that for two finite generating sets $S$ and $S'$ of a group $G$, the equivalence 
\begin{equation}\label{equiv}
\gamma_S\sim\gamma_{S'} 
\end{equation}
holds.
Define the \emph{growth function} $\gamma^G$ of $G$ as the equivalence class $[\gamma_S]_\sim$ of the growth function $\gamma_S$ of $G$ with respect to $S$.

\smallskip
The next basic property holds.

\begin{lemma}\label{finind-gr}
Let $G$ be a finitely generated group and $H$ a finite-index subgroup of $G$. Then $\gamma^G=\gamma^H$.
\end{lemma}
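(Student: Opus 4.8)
The plan is to prove that passing to a finite-index subgroup does not change the equivalence class of the growth function, i.e. $\gamma^G = \gamma^H$ when $[G:H] = k < \infty$. Since $\sim$ is an equivalence relation and the growth class of a finitely generated group is independent of the chosen finite symmetric generating set (by \eqref{equiv}), it suffices to produce one finite symmetric generating set $S$ of $G$ and one finite symmetric generating set $T$ of $H$ for which $\gamma_S \sim \gamma_T$. I will pick them compatibly. First I would fix a symmetric generating set $S_0$ of $G$ and a transversal $\{e_G = t_1, t_2, \ldots, t_k\}$ of $H$ in $G$; then $H$ is generated by the finite (symmetrizable) set $T = \{\, t_i s t_j^{-1} : s \in S_0 \cup \{e_G\},\ 1 \le i,j \le k,\ t_i s t_j^{-1} \in H \,\}$ — this is the standard Reidemeister--Schreier type generating set. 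One should also enlarge $S_0$ to $S = S_0 \cup \{t_1,\dots,t_k\} \cup \{t_1^{-1},\dots,t_k^{-1}\}$, still finite and symmetric, to make the two estimates clean.

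The two inequalities $\gamma^G \preceq \gamma^H$ and $\gamma^H \preceq \gamma^G$ come from comparing word lengths. For $\gamma^H \preceq \gamma^G$: every element $h \in H$ of $S$-length $\le n$ in $G$ is in particular an element of $G$ of length $\le n$, but we need to bound its $T$-length. Here the point is that there is a constant $C_1$ (depending only on $k$ and $S_0$) such that $l_T(h) \le C_1 \, l_S(h)$ for all $h \in H$: a geodesic word $s_{i_1}\cdots s_{i_m}$ for $h$ over $S$, read from the left, can be rewritten using the Schreier transversal — at each prefix one inserts $t_i t_i^{-1}$ to fold the partial product back into a transversal coset, converting each letter into one generator of $T$. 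This gives $|\{h\in H: l_T(h)\le C_1 n\}| \ge |\{h\in H: l_S(h)\le n\}|$; but in fact a cleaner route is to note that $H$ meets the $S$-ball of radius $n$ in at most $\gamma_S(n)$ points and that each point of the $T$-ball of radius $n$ in $H$ lies in the $S$-ball of radius $C_2 n$ (since each $T$-generator has bounded $S$-length), giving $\gamma_T(n) \le \gamma_S(C_2 n)$ directly, hence $\gamma^H \preceq \gamma^G$. For the reverse $\gamma^G \preceq \gamma^H$: any $g \in G$ with $l_S(g)\le n$ can be written $g = h\, t_i$ with $h \in H$ and $1\le i\le k$, and one shows $l_T(h)$ is bounded by $C_3 l_S(g) + C_3$ for a constant $C_3$ depending only on the diameter of the finite set $T$ inside $G$ and on $k$ — again by the Schreier rewriting argument, folding $g$'s geodesic word into $H$ one letter at a time. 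Therefore the $S$-ball of radius $n$ in $G$ is contained in $\bigcup_{i=1}^k (B_T^H(C_3 n + C_3))\, t_i$, giving $\gamma_S(n) \le k\cdot \gamma_T(C_3 n + C_3) \le \gamma_T(C_4 n)$ for suitable $C_4$ and large $n$ (absorbing the additive constant into the multiplicative one using monotonicity of $\gamma_T$), so $\gamma^G \preceq \gamma^H$.

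The main obstacle, and the only genuinely technical point, is the uniform bound $l_T(h) \le C\, l_S(g) + C$ in the Schreier rewriting: one must argue carefully that writing a geodesic $S$-word for $g$ as $g = s_{i_1} \cdots s_{i_m}$ and processing prefixes $p_\ell = s_{i_1}\cdots s_{i_\ell}$, with $p_\ell = h_\ell\, \tau(\ell)$ for the transversal representative $\tau(\ell)$ of the coset $H p_\ell$, each step $h_{\ell} = h_{\ell-1}\,\big(\tau(\ell-1)\, s_{i_\ell}\, \tau(\ell)^{-1}\big)$ multiplies $h_{\ell-1}$ by a single element of $T$; hence $l_T(h_m) \le m = l_S(g)$, and then accounting for the final transversal factor contributes at most one more $T$-generator times the bounded $S$-diameter — nothing here exceeds a fixed constant times $l_S(g)$. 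Once that estimate is in place, both $\preceq$ relations follow by the ball-counting inequalities above, and combining them with the well-definedness of the growth class finishes the proof that $\gamma^G = \gamma^H$.
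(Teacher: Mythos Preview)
The paper states this lemma without proof, as a basic known fact. Your Reidemeister--Schreier argument is the standard textbook proof and is correct in outline.

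Two small points deserve care. First, you define $T$ using letters $s \in S_0 \cup \{e_G\}$ but then perform the Schreier rewriting on words over the enlarged alphabet $S = S_0 \cup \{t_i^{\pm 1}\}$; for $s_{i_\ell} \in S \setminus S_0$ the factor $\tau(\ell-1)\, s_{i_\ell}\, \tau(\ell)^{-1}$ need not lie in $T$ as you defined it. This is harmless: either build $T$ from $S$ rather than from $S_0$, or do the rewriting on $S_0$-words (since $\gamma_S \sim \gamma_{S_0}$ anyway). Second, the paper uses the strict relation $\gamma \preceq \gamma'$ meaning $\gamma(n) \le \gamma'(Cn)$, with no multiplicative constant allowed on the right. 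Your final step ``$k\cdot \gamma_T(C_3 n + C_3) \le \gamma_T(C_4 n)$'' disposes of the inner additive $C_3$ by monotonicity, as you note, but silently drops the outer factor $k$. To absorb it you need $k\,\gamma_T(m) \le \gamma_T(C' m)$ for some $C'$; this holds because the Cayley graph of the infinite group $H$ contains a geodesic ray (K\"onig's lemma on a locally finite infinite connected graph), so $B_T\big((2k{-}1)m\big)$ contains $k$ pairwise disjoint translates of $B_T(m)$, giving $\gamma_T\big((2k{-}1)m\big) \ge k\,\gamma_T(m)$. With that one extra sentence the argument is complete.
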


The  equivalence in \eqref{equiv} ensures the correctness of the following definition.

\begin{definition}\label{growth-def}
Let $G$ be a finitely generated group and let $S$ be a generating set of $G$. Then $G$ has
\begin{itemize}
\item[(a)] \emph{polynomial growth} if $\gamma_S$ is polynomial;
\item[(b)] \emph{exponential growth} if $\gamma_S$ is exponential;
\item[(c)] \emph{intermediate growth} if $\gamma_S$ is intermediate.
\end{itemize}
\end{definition}

\begin{example}\label{exgrowth}
\begin{itemize}
\item[(a)] The abelian groups have polynomial growth (see Example \ref{id-abelian}(a)); more precisely, for $G$ a finitely generated abelian group and $S$ a finite set of generators of $G$, $\gamma_S(n)\sim n^{r_0(G)}$ (this follows for example from the general Bass-Guivarch Formula \ref{BGF} below).
\item[(b)] The free groups with at least two generators have exponential growth.
\end{itemize}
\end{example}

Let $G$ be a finitely generated group and $S$ a finite set of generators of $G$. The \emph{growth exponent} of $G$ is $$\delta_G=\limsup_{n\to \infty}\frac{\log\gamma_S(n)}{\log n}.$$
It is easy to see that $\delta_G$ is finite if and only if $G$ has polynomial growth.
We see in Corollary \ref{grexp} below that $\delta_G$ does not depend on the choice of $S$ and that it is a limit. 
Moreover $\delta_G$ is a positive integer when it is finite.

\medskip
Milnor \cite{M3} proposed the following problem. We divide it in two parts as in \cite{Gri}.

\begin{problem}\label{Milnor-pb}\emph{\cite{M3}}
Let $G$ be a finitely generated group and $S$ be a finite set of generators of $G$.
\begin{itemize}
\item[(a)] Is the growth function $\gamma_S$ necessarily equivalent either to a power of $n$ or to the exponential function $2^n$?
\item[(b)] In particular, is the {growth exponent} $\delta_G$ either a well defined integer or infinity?
For which groups is $\delta_G$ finite?
\end{itemize}
\end{problem}

Part (a) of Problem \ref{Milnor-pb} was solved negatively by Grigorchuk in \cite{Gri1,Gri2,Gri3,Gri4}, where he constructed his famous examples of finitely generated groups $\mathbb G$ with intermediate growth.

\medskip
We discuss now part (b). Recall that a group $G$ is \emph{virtually nilpotent} if it contains a nilpotent finite-index subgroup. Milnor conjectured that $\delta_G$ is finite if and only if $G$ is virtually nilpotent. The same conjecture was formulated by Wolf \cite{Wolf} and Bass \cite{Bass}. Part (b) of Problem \ref{Milnor-pb} was solved by Gromov \cite{Gro}, confirming Milnor's conjecture:

\begin{theorem}[Gromov Theorem]\label{GT}\emph{\cite{Gro}}
A finitely generated group $G$ has polynomial growth if and only if $G$ is virtually nilpotent.
\end{theorem}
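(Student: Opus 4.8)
The plan is to treat the two implications separately. The implication ``virtually nilpotent $\Rightarrow$ polynomial growth'' is elementary (Wolf, Bass) and can be given in full, while the converse is Gromov's deep theorem, so here I would only lay out the architecture and refer to \cite{Gro} (and the later simplifications) for the details.

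For ``virtually nilpotent $\Rightarrow$ polynomial growth'', first pass to a nilpotent finite-index subgroup $H$ of $G$; by Lemma \ref{finind-gr} one has $\gamma^G=\gamma^H$, so it suffices to treat a finitely generated nilpotent group $H$, say with lower central series $H=H_1\geq H_2\geq\cdots\geq H_{c+1}=\{e_H\}$, where $H_{i+1}=[H,H_i]$. Then the Bass--Guivarc'h Formula \ref{BGF} gives $\gamma_S(n)\sim n^{d}$ with $d=\sum_{i\geq 1} i\cdot r_0(H_i/H_{i+1})$; in particular $\gamma^H$ is polynomial. The upper bound $\gamma_S(n)\preceq n^{d}$ is proved by induction on the nilpotency class, writing each element of word length $\leq n$ in a Mal'cev-type normal form whose exponents are bounded by a polynomial in $n$; the matching lower bound comes from exhibiting enough pairwise distinct such normal forms.

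For the converse, suppose $\gamma_S(n)\preceq n^{d}$ and argue by induction on $d$. If $d=0$ then $G$ is finite (an infinite group would force $n\preceq\gamma_S(n)$), hence virtually nilpotent. For the inductive step the crucial claim is: \emph{if $G$ is infinite with polynomial growth, then some finite-index subgroup $G'\leq G$ surjects onto $\Z$}. Granting it, write $1\to N\to G'\to\Z\to 1$. A theorem of Milnor, valid precisely because $G'$ has polynomial growth, shows $N$ is finitely generated, and a counting argument shows $N$ has polynomial growth of degree $\leq d-1$; by the inductive hypothesis $N$ is virtually nilpotent, and a Wolf-type argument (a polynomial-growth extension of a virtually nilpotent group by $\Z$ is again virtually nilpotent) upgrades $G'$, hence $G$, to virtually nilpotent.

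The real content, and the main obstacle, is the claim. Gromov's argument proceeds by rescaling the word metric and passing to a pointed Gromov--Hausdorff limit $(Y,y_0)=\lim_{n}(G,\tfrac1n d,e_G)$, whose existence rests on a precompactness criterion fed by the polynomial growth bound (uniform doubling). One checks that $Y$ is a proper geodesic, connected, locally connected, finite topological dimensional, homogeneous space, and that $\mathrm{Isom}(Y)$ is a locally compact group acting transitively on it; by the Montgomery--Zippin--Yamabe solution of Hilbert's fifth problem, $\mathrm{Isom}(Y)$ is a Lie group with finitely many components. The quasi-action of $G$ on $Y$ yields a homomorphism from a finite-index subgroup of $G$ into this Lie group with \emph{infinite} image; since a polynomial-growth group contains no non-abelian free subgroup, the Tits alternative forces that image to be virtually solvable, and an infinite finitely generated virtually solvable linear group of polynomial growth has a finite-index subgroup with infinite abelianization, i.e. one surjecting onto $\Z$. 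Pulling back gives the claim. I expect this passage through the asymptotic cone and the structure theory of locally compact groups to be by far the hardest part; a proof avoiding both is Kleiner's, via finite-dimensionality of the space of harmonic functions of polynomial growth on the Cayley graph, which could be cited as an alternative route.
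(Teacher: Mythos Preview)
Your sketch is a reasonable outline of the actual proof of Gromov's theorem as it appears in the literature, but there is nothing to compare it against in this paper: the theorem is simply \emph{stated} with a citation to \cite{Gro} and no proof is given. The surrounding text only remarks that the easier implication was first proved by Wolf \cite{Wolf}, with the precise degree computed by Guivarc'h and Bass (Theorem \ref{BGF}), and that the hard direction is Gromov's. So the ``paper's own proof'' you are meant to match does not exist here.

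That said, your outline of the two implications is accurate in spirit. A few minor points: for the easy direction you invoke the full Bass--Guivarc'h formula, which is more than needed (and is stated separately as Theorem \ref{BGF} in the paper); a cruder polynomial upper bound by induction on the nilpotency class suffices. For the hard direction your architecture (asymptotic cone, Montgomery--Zippin, Tits alternative, induction on the degree via a surjection to $\Z$) is the standard one, and your mention of Kleiner's alternative route is appropriate. One small slip: the statement ``a theorem of Milnor \ldots shows $N$ is finitely generated'' is not quite how the argument goes in Gromov's paper---finite generation of the kernel in a polynomial-growth extension by $\Z$ is a separate lemma (sometimes attributed to Milnor--Wolf in the solvable case), and in the general case one needs the polynomial growth hypothesis together with a short combinatorial argument. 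But since the paper does not attempt a proof, none of this affects the comparison.
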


The easier implication that a nilpotent finitely generated group has polynomial growth was first proved by Wolf \cite{Wolf}. Moreover Guivarch \cite{Gui} and Bass \cite{Bass} computed, independently and with different proofs, the exact order of polynomial growth:

\begin{theorem}[Bass-Guivarch Formula]\label{BGF}
Let $G$ be a nilpotent finitely generated group with lower central series $$G = G_1 \supseteq G_2 \supseteq \ldots \supseteq G_k=\{e_G\}.$$ 
In particular the quotient group $G_m/G_{m+1}$ is a finitely generated abelian group for every $m=1,\ldots, k-1$. Then 
$\gamma^G=[n^d]_\sim$ for 
$$
d= \sum_{m=1}^{k-1} n\cdot r_0(G_m/G_{m+1}).
$$
\end{theorem}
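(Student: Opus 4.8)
The plan is to prove the Bass--Guivarch Formula \ref{BGF} by induction on the nilpotency class $k$, using the Addition Theorem \ref{AT} for the algebraic entropy of $\mathrm{id}_G$ in an appropriate reformulation. The crucial observation is that the growth function $\gamma_S(n)=|T_n(\mathrm{id}_G,S\cup\{e_G\})|$ is, up to the equivalence $\sim$, exactly the object governed by $h_{alg}(\mathrm{id}_G)$; more precisely, for a group with polynomial growth one has $\gamma^G=[n^d]_\sim$ where $d=\delta_G$ is the growth exponent, and $\delta_G$ behaves additively under extensions in the same way $h_{alg}$ does. So the first step is to establish a ``growth exponent version'' of the Addition Theorem: if $1\to N\to G\to G/N\to 1$ is an exact sequence of finitely generated groups with $N$ finitely generated (which holds automatically here since the quotients in the lower central series are finitely generated and $G$ is polycyclic), then $\delta_G=\delta_N+\delta_{G/N}$ provided all three groups have polynomial growth. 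This is essentially the multiplicativity $\gamma_S^G(n)\sim \gamma^N(n)\cdot\gamma^{G/N}(n)$ for polynomial growth, which follows from the standard fibration estimate $\gamma^{G/N}(n)\preceq\gamma^G(n)\preceq\gamma^{G/N}(n)\cdot\gamma^N(Cn)$ combined with Lemma \ref{finind-gr} and the fact that the growth of $N$ with respect to its own generating set is comparable to its growth as a subgroup of $G$ (using that $N$ is a finite-index subgroup inside the subgroup it generates in bounded-word-length balls --- here one needs $N$ finitely generated, guaranteed in the polycyclic setting).

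Granting this, the induction proceeds as follows. For $k=1$ the group $G=G_1$ is a finitely generated abelian group, and by Example \ref{exgrowth}(a) we have $\gamma^G=[n^{r_0(G)}]_\sim$, which matches the claimed formula $d=1\cdot r_0(G_1/G_2)=r_0(G_1)$ since $G_2=\{e_G\}$. For the inductive step, consider the central extension
\begin{equation*}
1\longrightarrow G_k\longrightarrow G\longrightarrow G/G_k\longrightarrow 1,
\end{equation*}
where $G_k$ is the last nontrivial term and hence central; but this choice does not lower the class in a way that interacts cleanly with the index $d$, so instead I would filter through the lower central series term by term, peeling off $G_{m}/G_{m+1}$. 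Concretely, set $G^{(m)}=G/G_{m+1}$; then $G^{(1)}=G/G_2$ is abelian with growth exponent $r_0(G_1/G_2)$, and each step $G^{(m)}\to G^{(m-1)}$ has kernel isomorphic to $G_m/G_{m+1}$, a finitely generated abelian group sitting centrally in $G^{(m)}/(\text{lower terms})$; but $G_m/G_{m+1}$ occurs with ``weight $m$'' because it lies at depth $m$ in the series. The weighting is exactly what the Bass--Guivarch computation of the growth of a single homogeneous layer produces: a finitely generated free abelian group of rank $r$ appearing in the $m$-th layer of a nilpotent group contributes $m\cdot r$ to the polynomial degree, because the $n$-ball in $G$ reaches elements of that layer only via commutators of length up to $m$, so a coordinate there ranges over roughly $n^m$ values. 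I would isolate this weighted-layer computation as the genuine combinatorial core, deriving it directly from the collection process / Mal'cev coordinates: choosing a Mal'cev basis adapted to the lower central series, every element of $G$ has normal form with exponents $a_1,\dots,a_r$, and $l_S(g)\le n$ forces $|a_i|\preceq n^{w(i)}$ and is implied by $|a_i|\preceq n^{w(i)}$ (up to constants), where $w(i)$ is the weight of the $i$-th basis element; hence $\gamma_S(n)\sim \prod_i n^{w(i)} = n^{\sum_i w(i)}=n^d$ with $d=\sum_{m=1}^{k-1} m\cdot r_0(G_m/G_{m+1})$.

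The main obstacle is the combinatorial heart just described: proving the two-sided estimate $l_S(g)\asymp \sum_i |a_i|^{1/w(i)}$ (equivalently, that the $n$-ball in $G$ is, up to bounded distortion, the box $\prod_i[-n^{w(i)},n^{w(i)}]$ in Mal'cev coordinates). The upper bound on $l_S$ in terms of coordinates is elementary --- write $g$ as a product of basis powers and expand each power using commutator identities, tracking degrees. The lower bound, that a short word cannot have a large coordinate in a deep layer, is the delicate direction and requires the collection process together with an induction on the class: one pushes the computation into the abelian quotients $G_m/G_{m+1}$, where one uses the already-known abelian case (Example \ref{exgrowth}(a)), and controls the commutator contributions by the inductive hypothesis applied to $G/G_{m+1}$. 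Once this estimate is in place, the formula $\gamma^G=[n^d]_\sim$ with the stated $d$ is immediate, and the consistency with the extension-additivity of $\delta$ (first step) serves only as a sanity check rather than as a logical necessity. I would also remark, to close the loop with the surrounding text, that since $\mathrm{id}_G$ has $h_{alg}(\mathrm{id}_G)=0$ for these groups (they have subexponential, indeed polynomial, growth), the Bass--Guivarch degree $d$ is the natural ``polynomial-scale'' refinement of the algebraic entropy, exactly as announced in \S\ref{growth-sec}.
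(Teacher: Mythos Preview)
The paper does not prove Theorem \ref{BGF}; it is stated as a cited result of Guivarch \cite{Gui} and Bass \cite{Bass}. So there is no in-paper proof to compare against, and your sketch is being measured only against the classical arguments.

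Your proposal contains a genuine error in its first phase. The claimed ``growth exponent Addition Theorem'' --- that $\delta_G=\delta_N+\delta_{G/N}$ for an extension of finitely generated groups of polynomial growth, with $\delta_N$ computed from a generating set of $N$ itself --- is false. The Heisenberg group $H$ already witnesses this: $\delta_H=4$, while $Z(H)\cong\Z$ has $\delta_{Z(H)}=1$ and $H/Z(H)\cong\Z^2$ has $\delta_{H/Z(H)}=2$, and $4\neq 1+2$. The point is precisely that $Z(H)$ is \emph{distorted} in $H$: its intrinsic growth is linear, but its growth in the word metric of $H$ is quadratic. Your fibration estimate $\gamma^G(n)\preceq\gamma^{G/N}(n)\cdot\gamma^N(Cn)$ uses the intrinsic growth of $N$ and therefore gives only an upper bound, not the claimed equality. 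You seem to sense something is off (``this choice does not lower the class in a way that interacts cleanly with the index $d$''), but you misdiagnose the problem and never explicitly retract the additivity claim; the later discussion of weights flatly contradicts it.

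The second phase of your sketch --- via Mal'cev coordinates and the two-sided estimate $l_S(g)\asymp\sum_i|a_i|^{1/w(i)}$ --- is the correct route and is essentially Bass's original argument. Your identification of the lower bound (short words cannot reach large coordinates in deep layers) as the delicate direction is accurate, and the collection process is indeed the tool. If you rewrite the proof to start directly from Mal'cev coordinates, drop the false additivity step entirely, and carry out the box-counting estimate carefully, you will have a correct proof. As a side remark, the connection you draw to $h_{alg}(\mathrm{id}_G)$ and the Addition Theorem \ref{AT} is misleading here: that theorem is stated only for abelian groups, and in any case algebraic entropy vanishes on all groups of polynomial growth, so it cannot distinguish the degrees.
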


The following is an important consequence of Gromov Theorem \ref{GT}, Bass-Guivarch Formula \ref{BGF} and Lemma \ref{finind-gr}.

\begin{corollary}\label{grexp}
Let $G$ be a finitely generated group. Then:
\begin{itemize}
\item[(a)] the $\limsup$ in the definition of growth exponent is a limit;
\item[(b)] if $G$ has polynomial growth, then $\delta_G\in\N$, otherwise $\delta_G=\infty$;
\item[(c)] $\delta_G$ does not depend on the choice of the finite set of generators of $G$.
\end{itemize}
\end{corollary}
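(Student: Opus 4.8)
The statement to prove is Corollary \ref{grexp}: for a finitely generated group $G$, (a) the $\limsup$ in the definition of the growth exponent $\delta_G$ is in fact a limit, (b) if $G$ has polynomial growth then $\delta_G\in\N$, otherwise $\delta_G=\infty$, and (c) $\delta_G$ does not depend on the choice of finite generating set. I would organize the argument around the dichotomy supplied by Gromov Theorem \ref{GT}, treating the polynomial and non-polynomial cases separately, since all three assertions reduce to understanding $\gamma_S(n)$ up to the equivalence relation $\sim$.

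\textbf{Plan for part (c) and the non-polynomial case.} First I would note that by \eqref{equiv}, any two finite generating sets $S,S'$ of $G$ give equivalent growth functions $\gamma_S\sim\gamma_{S'}$, and an elementary check shows that $\gamma\sim\gamma'$ implies $\limsup_n \frac{\log\gamma(n)}{\log n}=\limsup_n\frac{\log\gamma'(n)}{\log n}$: indeed if $\gamma(n)\leq\gamma'(Cn)$ for $n\geq n_0$ then $\frac{\log\gamma(n)}{\log n}\leq \frac{\log\gamma'(Cn)}{\log(Cn)}\cdot\frac{\log(Cn)}{\log n}$ and $\frac{\log(Cn)}{\log n}\to 1$. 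This already gives (c). For the ``otherwise'' half of (b): if $G$ does not have polynomial growth, then by Gromov Theorem \ref{GT} $G$ is not virtually nilpotent, so again $\gamma_S$ is not polynomial, meaning $\gamma_S(n)\not\preceq n^d$ for every $d$; hence $\frac{\log\gamma_S(n)}{\log n}$ is unbounded above and $\delta_G=\infty$. (One should be slightly careful here: ``not polynomial growth'' in Definition \ref{growth-def} means $\gamma_S$ is not $\preceq n^d$ for any $d$, which is exactly the negation needed, so no separate argument about intermediate vs.\ exponential is required.)

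\textbf{Plan for the polynomial case.} Suppose $G$ has polynomial growth. By Gromov Theorem \ref{GT}, $G$ is virtually nilpotent; let $H$ be a nilpotent finite-index subgroup. By Lemma \ref{finind-gr}, $\gamma^G=\gamma^H$, so it suffices to compute $\delta_H$ with $H$ finitely generated nilpotent. Then Bass--Guivarch Formula \ref{BGF} gives $\gamma^H=[n^d]_\sim$ with $d=\sum_{m=1}^{k-1} m\cdot r_0(H_m/H_{m+1})\in\N$ (using the lower central series of $H$). Therefore $\gamma_S\sim n^d$, and since $\frac{\log n^d}{\log n}=d$ is a genuine limit equal to $d$, the invariance-under-$\sim$ remark above shows $\delta_G=\lim_n\frac{\log\gamma_S(n)}{\log n}=d\in\N$. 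This simultaneously establishes (a) and the first half of (b) in the polynomial case; combined with the non-polynomial case treated above, part (a) is complete (in the non-polynomial case, $\frac{\log\gamma_S(n)}{\log n}\to\infty$ is itself a limit in $[0,\infty]$, using monotonicity of $\gamma_S$ from the fact that $G$ is infinite).

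\textbf{Main obstacle.} The real content is entirely imported: Gromov Theorem \ref{GT} and the Bass--Guivarch Formula \ref{BGF} do all the heavy lifting, so the ``hard part'' is not hard here — it is only the bookkeeping of verifying that the passage $\gamma\rightsquigarrow \limsup\frac{\log\gamma(n)}{\log n}$ is well-defined on $\sim$-equivalence classes and yields an actual limit when the representative is $n^d$. The one point demanding a little care is making sure the definition of ``polynomial growth'' used is precisely the negation of ``$\delta_G$ infinite'', i.e., that $\gamma_S\preceq n^d$ for some $d$ is equivalent to $\limsup\frac{\log\gamma_S(n)}{\log n}<\infty$; this is a short computation with logarithms and the monotonicity/submultiplicativity of $\gamma_S$ from Fact \ref{subgamma}.
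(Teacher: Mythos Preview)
Your approach is essentially the same as the paper's: the paper gives no detailed proof, stating only that the corollary is ``an important consequence of Gromov Theorem \ref{GT}, Bass-Guivarch Formula \ref{BGF} and Lemma \ref{finind-gr}'', and you correctly unpack this by passing to a nilpotent finite-index subgroup via Lemma \ref{finind-gr}, reading off $d\in\N$ from Bass--Guivarch, and handling the non-polynomial branch via Gromov.

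One small caution: in the non-polynomial case of (a) you assert that $\frac{\log\gamma_S(n)}{\log n}\to\infty$ ``using monotonicity of $\gamma_S$''. Monotonicity and submultiplicativity alone do not force the \emph{liminf} to be infinite; what is really being used here is the stronger (``gap'') form of Gromov's theorem---if $\gamma_S(n_k)\leq n_k^d$ along any subsequence then $G$ is already virtually nilpotent---which is implicit in Gromov's proof (and made explicit by van den Dries--Wilkie). The paper glosses over this just as you do, so it is not a divergence from the paper's argument, but your stated justification for that step is not quite the right one.
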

%
%

By Fact \ref{subgamma} the sequence $\{\log\gamma_S(n)\}_{n\in\N}$ is subadditive, so an application of Fekete Lemma \ref{fekete} gives

\begin{lemma}\label{2nov} Let $G$ be a finitely generated group and $S$ a finite set of generators of $G$. Then the limit $$\lambda_S=\lim_{n\to \infty} \frac{\log \gamma_S(n)}{n}$$ always exists.
\end{lemma}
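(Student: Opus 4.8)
The statement to prove is Lemma \ref{2nov}: for a finitely generated group $G$ with finite symmetric generating set $S$ (with $e_G\notin S$), the limit $\lambda_S=\lim_{n\to\infty}\frac{\log\gamma_S(n)}{n}$ exists.

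The plan is to invoke Fekete's Lemma \ref{fekete} directly on the sequence $a_n=\log\gamma_S(n)$. The only hypothesis of Fekete's Lemma that needs checking is subadditivity, i.e. $a_{n+m}\le a_n+a_m$ for all $n,m\in\N$; nonnegativity is clear since $\gamma_S(n)\ge 1$ for all $n$, so $\log\gamma_S(n)\ge 0$. Subadditivity of $\{a_n\}$ is exactly the logarithmic form of submultiplicativity of $\gamma_S$, which is the content of Fact \ref{subgamma}: $\gamma_S(m+n)\le\gamma_S(m)\cdot\gamma_S(n)$. Taking logarithms gives $a_{m+n}=\log\gamma_S(m+n)\le\log(\gamma_S(m)\gamma_S(n))=\log\gamma_S(m)+\log\gamma_S(n)=a_m+a_n$ for all $m,n\in\N_+$, and the case where $m$ or $n$ is $0$ is immediate since $\gamma_S(0)=1$ (only $e_G$ has length $0$), so $a_0=0$. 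Hence $\{a_n\}_{n\in\N}$ is a subadditive sequence of nonnegative reals.

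Applying Fekete's Lemma \ref{fekete} then yields that $\left\{\frac{a_n}{n}\right\}_{n\in\N}$ converges, with $\lim_{n\to\infty}\frac{a_n}{n}=\inf_{n\in\N}\frac{a_n}{n}$; that is, $\lambda_S=\lim_{n\to\infty}\frac{\log\gamma_S(n)}{n}$ exists (and equals the infimum). One should note the harmless bookkeeping point that Fekete's Lemma is stated for sequences indexed by $\N$ while Fact \ref{subgamma} is stated for $m,n\in\N_+$; this is resolved by the observation above that $a_0=0$, so no separate argument is needed.

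There is essentially no obstacle here: the lemma is a one-line consequence of Fact \ref{subgamma} (submultiplicativity, itself following from the proof of Lemma \ref{subadd}) together with Fekete's Lemma \ref{fekete}. The only thing to be slightly careful about is making the index-set match explicit and recording that the limit coincides with $\inf_n\frac{\log\gamma_S(n)}{n}$, which is what the subsequent development (e.g. comparison with the growth exponent $\delta_G$ and with the algebraic entropy $h_{alg}(id_G)$, which vanishes on abelian groups but can be positive in general) will want to use.
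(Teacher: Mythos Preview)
Your proof is correct and follows exactly the paper's own argument: the paper simply notes that Fact \ref{subgamma} makes $\{\log\gamma_S(n)\}$ subadditive and then invokes Fekete's Lemma \ref{fekete}. Your additional bookkeeping about $a_0=0$ and the index set is harmless but not needed in the paper's presentation.
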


The number $\lambda_S$ is called \emph{growth rate} of $G$ with respect to $S$. 

\smallskip
In the same way as the growth exponent $\delta_G$ characterizes the finitely generated groups $G$ of polynomial growth, the growth rate $\lambda_S$ of $G$ with respect to a finite set of generators $S$ of $G$ characterizes the finitely generated groups $G$ of exponential growth in the following sense: 

\begin{remark}\label{exp<->gamma>0}
Let $G$ be a finitely generated group and let $S$ be a finite set of generators of $G$. 
\begin{itemize}
\item[(a)] The growth function $\gamma_S$ has exponential growth if and only if $\lambda_S>0$. For a proof see Proposition \ref{exp} which covers this case.
\item[(b)] Since the growth function $\gamma_S$ does not depend on the choice of $S$, 
the positivity of $\lambda_S$ does not depend on the choice of the finite set of generators $S$, it is a feature related to the group $G$.
\item[(c)] Let us see that in contrast with item (b) (and unlike $\delta_G$) $\lambda_S$ {\em depends} on the choice of $S$. Indeed, it may occur the case that $G$ is a finitely generated group, $S$ and $S'$ are finite subsets of generators of $G$, and $\lambda_S\neq\lambda_{S'}$.
For example let $G$ be the free group with two generators $a$ and $b$; then $S=\{a^{\pm 1},b^{\pm 1}\}$ gives $\gamma_S(n)= 1+ 2\cdot (3^n-1)$ and $S'=\{a^{\pm 1},b^{\pm 1},(ab)^{\pm 1}\}$ gives $\gamma_{S'}(n)=1+ 2\cdot (4^n-1)$, so $\lambda_S=\log 3$ and $\lambda_{S'}=\log 4$.
\end{itemize}
\end{remark}

\medskip
In the notations of algebraic entropy, for a finitely generated group $G$ and a finite set of generators $S$ of $G$, we have that 
$$
\lambda_S=H_{alg}(id_G,S\cup\{e_G\}).
$$
In particular in the context of algebraic entropy we can see immediately a deep difference between the abelian case and the non-abelian case. Indeed the identity map of an abelian group has always zero algebraic entropy by Example \ref{id-abelian}(a). In the non-abelian case, in view of Remark \ref{exp<->gamma>0}(a), for a finitely generated group $G$ the identity map has zero algebraic entropy when $G$ has either polynomial growth (i.e., it is virtually nilpotent by Gromov Theorem \ref{GT}) or intermediate growth. On the other hand $h_{alg}(id_G)>0$ when $G$ has exponential growth, and in this case $h_{alg}(id_G)=\infty$ (see Corollary \ref{0<->0}(c)).
In particular we have proved the following

\begin{proposition}\label{h(id)>0}
Let $G$ be a finitely generated group. Then:
\begin{itemize}
\item[(a)]$G$ has either polynomial growth or intermediate growth if and only if $h_{alg}(id_G)=0$;
\item[(b)]$G$ has exponential growth if and only if $h_{alg}(id_G)=\infty$.
\end{itemize}
\end{proposition}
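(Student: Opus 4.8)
The plan is to reduce everything to the growth rate $\lambda_S=H_{alg}(id_G,S\cup\{e_G\})$, exploiting the identification $\gamma_S(n)=|T_n(id_G,S\cup\{e_G\})|$ already recorded above, together with Corollary \ref{0<->0}(c), the monotonicity in Lemma \ref{0inF}, and Remark \ref{exp<->gamma>0}(a). Fix a finite symmetric generating set $S$ of $G$ with $e_G\notin S$ and put $S'=S\cup\{e_G\}$. Since $id_G^2=id_G$, Corollary \ref{0<->0}(c) gives at once the dichotomy $h_{alg}(id_G)\in\{0,\infty\}$, so (a) and (b) describe complementary cases and it suffices to decide exactly when $h_{alg}(id_G)=0$.

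Next I would compute $h_{alg}(id_G)$ in terms of $\lambda_S$. On one hand $h_{alg}(id_G)\geq H_{alg}(id_G,S')=\lambda_S$, so $\lambda_S>0$ forces $h_{alg}(id_G)=\infty$. On the other hand, assume $\lambda_S=0$; by Lemma \ref{0inF}(b) it is enough to bound $H_{alg}(id_G,F)$ for an arbitrary $F\in[G]^{<\omega}$ with $e_G\in F$. Because $G=\langle S\rangle$, every element of $F$ has finite $S$-length, so $F\subseteq S'_{(m)}=T_m(id_G,S')$ for some $m\in\N_+$; by monotonicity (Lemma \ref{0inF}(a)), $H_{alg}(id_G,F)\leq H_{alg}(id_G,S'_{(m)})$. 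Since $id_G$ is the identity, $T_n(id_G,S'_{(m)})=(S'_{(m)})_{(n)}=S'_{(mn)}=T_{mn}(id_G,S')$, whence $H_{alg}(id_G,S'_{(m)})=\lim_{n\to\infty}\tfrac{\log\gamma_S(mn)}{n}=m\lambda_S=0$. Therefore $h_{alg}(id_G)=0$. This shows $h_{alg}(id_G)=0$ if and only if $\lambda_S=0$, and $h_{alg}(id_G)=\infty$ otherwise.

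Finally I would invoke Remark \ref{exp<->gamma>0}(a): $\gamma_S$ has exponential growth precisely when $\lambda_S>0$. Combined with the observation that Fact \ref{subgamma} gives $\gamma_S(n)\preceq\gamma_S(1)^n$, so the growth of $G$ is at most exponential and hence is polynomial, intermediate, or exponential (three mutually exclusive cases), we get that ``$G$ has polynomial or intermediate growth'' is exactly ``$\lambda_S=0$'' while ``$G$ has exponential growth'' is exactly ``$\lambda_S>0$''. Putting this together with the computation of the previous paragraph yields (a) and (b).

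I do not expect a genuine obstacle: the argument is essentially a bookkeeping assembly of results already established, and the deep inputs (the dichotomy $h_{alg}(id_G)\in\{0,\infty\}$, and the equivalence of $\lambda_S>0$ with exponential growth) are supplied by the cited statements. The only point needing a small computation is the identity $h_{alg}(id_G)=\sup_{m\in\N_+}m\lambda_S$, which rests on the monotonicity of $H_{alg}(id_G,-)$ and on finite generation making the subsets $S'_{(m)}$ cofinal in $[G]^{<\omega}$.
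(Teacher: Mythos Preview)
Your proof is correct and follows essentially the same approach as the paper: both reduce to the identification $\lambda_S=H_{alg}(id_G,S\cup\{e_G\})$, invoke Remark \ref{exp<->gamma>0}(a) to translate between $\lambda_S>0$ and exponential growth, and use Corollary \ref{0<->0}(c) for the dichotomy $h_{alg}(id_G)\in\{0,\infty\}$. You supply one detail the paper leaves implicit, namely the cofinality argument showing that $\lambda_S=0$ forces $H_{alg}(id_G,F)=0$ for every finite $F$ via $F\subseteq S'_{(m)}$ and $H_{alg}(id_G,S'_{(m)})=m\lambda_S$; the paper simply asserts this direction in the discussion preceding the proposition.
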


So Grigorchuk group $\mathbb G$ is an example of a finitely generated group with $h_{alg}(id_{\mathbb G})=0$ and intermediate growth.

\smallskip
Let $\mathfrak P$ denote the class of all groups $G$ with $h_{alg}(id_G)=0$. By Lemma \ref{restriction_quotient} and Theorem \ref{WAT} the class $\mathfrak P$ is stable under taking subgroups, quotients and finite direct products (but not with respect to free products, see Example \ref{Exa}). Moreover a group belongs to $\mathfrak P$ precisely when all its finitely generated subgroups belong to $\mathfrak P$. Therefore the above proposition shows that $\mathfrak P$ is characterized also by the property that $G \in \mathfrak P$ if and only if all   finitely generated subgroups if $G$ have either polynomial growth or intermediate growth. This may allow us to extend the notion of having polynomial growth or intermediate growth also to all groups, not only the finitely generated ones. 
As a byproduct we find some natural permanence properties of this notion. 

Note that $\mathfrak P$ contains all locally nilpotent groups. Since both $\mathfrak P$ and the class of amenable groups (i.e., groups admitting a right invariant finitely additive probability measure) are stable under taking subgroups, quotients and finite products, and both do not contain the free groups, it make sense to consider the following

\begin{problem}
Compare $\mathfrak P$ with the class of amenable groups. Is one of them contained in the other?
\end{problem}

We do not know whether the class $\mathfrak P$ is stable under extensions.

\smallskip
We formulate the following

\begin{conjecture}
Let $G$ be a (finitely generated) group with $h_{alg}(id_G)=\infty$ (i.e., $G$ has exponential growth). Then:
\begin{itemize}
\item[(a)] $h_{alg}(\phi)=\infty$ for every automorphism $\phi:G\to G$.
\end{itemize}
One should start considering the following weaker form:
\begin{itemize}
\item[(b)] $h_{alg}(\phi)=\infty$ for every internal automorphism $\phi:G\to G$.
\end{itemize}
\end{conjecture}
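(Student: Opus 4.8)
The final statement is a conjecture, so what follows is a plan of attack: it carries part (b) through in full and pinpoints where part (a) stalls. Throughout I would take $G$ finitely generated (the general case reduces to finitely generated $\phi$-invariant subgroups by the Continuity property, Proposition~\ref{dirlim}), fix a symmetric finite generating set $S$ of $G$ with $e_G\notin S$, and use that exponential growth means $\lambda_S:=H_{alg}(id_G,S\cup\{e_G\})>0$ (Remark~\ref{exp<->gamma>0}(a)); replacing $S$ by $S_{(N)}$ shows $H_{alg}(id_G,S_{(N)}\cup\{e_G\})=N\lambda_S$, which is precisely how $h_{alg}(id_G)=\infty$ is witnessed (Proposition~\ref{h(id)>0}).

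For the inner case (b) I would argue as follows. Let $\phi=\phi_a\colon x\mapsto x^a=a^{-1}xa$ and, enlarging $S$ if needed, assume $a^{\pm1}\in S$. Put $F=\{g\in G: l_S(g)\le N+1\}$. The crucial feature is that $\phi_a$-trajectories telescope: since $\phi_a^j(x)=a^{-j}xa^j$, multiplying out $T_n(\phi_a,F)=F\cdot(a^{-1}Fa)\cdots(a^{-(n-1)}Fa^{n-1})$ cancels all interior powers of $a$ and gives $T_n(\phi_a,F)=(Fa^{-1})^{n-1}F\,a^{n-1}$, hence $|T_n(\phi_a,F)|=|(Fa^{-1})^{n}|$ because right translation is a bijection. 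Next I would show $(Fa^{-1})^{n}\supseteq\{ga^{-1}: l_S(g)\le Nn\}$: splitting a shortest word for such a $g$ into $n$ consecutive blocks $g=u_1u_2\cdots u_n$ with each $u_j$ a product of at most $N$ elements of $S$ (so $u_j\in F$ and $au_j\in F$), one has $ga^{-1}=(u_1a^{-1})(au_2a^{-1})(au_3a^{-1})\cdots(au_na^{-1})$ with every factor in $Fa^{-1}$. Therefore $|T_n(\phi_a,F)|\ge\gamma_S(Nn)$, so by Lemma~\ref{2nov}
\[
H_{alg}(\phi_a,F)=\lim_{n\to\infty}\frac{\log|T_n(\phi_a,F)|}{n}\ge\lim_{n\to\infty}\frac{\log\gamma_S(Nn)}{n}=N\lambda_S,
\]
and letting $N\to\infty$ yields $h_{alg}(\phi_a)\ge\sup_N N\lambda_S=\infty$. (If $a\in Z(G)$ then $\phi_a=id_G$ and the same estimate simply recovers $h_{alg}(id_G)=\infty$.)

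For the general automorphism (a), no telescoping is available, and this is where I expect the real difficulty. The obvious attempt — choose a ``$\phi$-adapted'' finite set $F=\{e_G\}\cup\bigcup_{j=0}^{m}\phi^j(S_{(N)})$ and compare $T_n(\phi,F)$ to a re-indexed trajectory of a power $\phi^q$ — only reproduces $h_{alg}(\phi)\ge\frac1q h_{alg}(\phi^q)=h_{alg}(\phi)$ via the Logarithmic Law, so it is circular. Passing to the mapping torus $G\rtimes_\phi\Z$ (finitely generated, containing $G$, hence of exponential growth, and in which $\phi$ is realized as the conjugation $\phi_t$, already covered by (b)) is also inconclusive, because Monotonicity for invariant subgroups (Lemma~\ref{restriction_quotient}) only gives $h_{alg}(\phi)\le h_{alg}(\phi_t)=\infty$ — the wrong direction. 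So a genuinely new ingredient is needed: either a direct trajectory construction showing that iterating an automorphism cannot ``de-spread'' the exponential growth already carried by $id_G$, or a non-abelian Addition Theorem (Problem~\ref{PAT}) applied to the mapping torus. I would regard this last point as the crux; part (b) above, by contrast, should go through as written.
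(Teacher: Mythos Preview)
The paper states this as an open \emph{Conjecture} and offers no proof of either part; there is therefore no ``paper's own proof'' to compare against. Your proposal goes strictly beyond the paper: you give a complete argument for (b) and a correct diagnosis of why (a) resists the same method.

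Your proof of (b) is correct. The telescoping identity
\[
T_n(\phi_a,F)=F\cdot a^{-1}Fa\cdot a^{-2}Fa^2\cdots a^{-(n-1)}Fa^{n-1}=(Fa^{-1})^{n-1}F\,a^{n-1}
\]
is valid, and right translation gives $|T_n(\phi_a,F)|=|(Fa^{-1})^n|$. The inclusion $(Fa^{-1})^n\supseteq\{ga^{-1}:l_S(g)\le Nn\}$ is verified exactly as you wrote: with $a^{\pm1}\in S$ and $F$ the ball of radius $N+1$, each block $u_j$ and each $au_j$ lies in $F$, so every factor $(u_1a^{-1}),(au_2a^{-1}),\ldots$ lies in $Fa^{-1}$. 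Hence $H_{alg}(\phi_a,F)\ge N\lambda_S$ for every $N$, and $h_{alg}(\phi_a)=\infty$. This settles part (b) of the conjecture. (One cosmetic point: your preliminary remark that $H_{alg}(id_G,S_{(N)}\cup\{e_G\})=N\lambda_S$ holds with equality may require a small parity adjustment, since $(S_{(N)}\cup\{e_G\})_{(n)}=\bigcup_{k\le n}S_{(kN)}$ need not exhaust the full ball of radius $Nn$; but this is only motivation and is not used in the actual proof of (b).)

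Your analysis of (a) is also sound. The mapping-torus observation is sharp: in $G\rtimes_\phi\Z$ the automorphism $\phi$ becomes the inner automorphism $\phi_t$, so (b) gives $h_{alg}(\phi_t)=\infty$; since $\phi_t\restriction_G=\phi$ and the induced map on the quotient $\Z$ is the identity (entropy zero), the non-abelian Addition Theorem of Problem~\ref{PAT} would immediately yield $h_{alg}(\phi)=\infty$. So you have reduced (a) to that open problem, which is a genuine contribution not present in the paper.
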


Let $G$ be a finitely generated group and $S$ a finite set of generators of $G$. 
The exponential growth rate $\lambda_S$ of $G$ with respect to $S$ is called also \emph{entropy} by some authors and for instance in \cite{GroLP}. As explained in \cite{dlH}, the reason for this terminology is the following result in differential geometry. If $G$ is the fundamental group of a compact Riemannian manifold $M$ of diameter $d$ and $S$ is an appropriate finite generating set of $G$, then $\lambda_S$ is a lower bound for the topological entropy of the geodesic flow of the manifold multiplied by $2d$ \cite{Manni}.

An analogous result was given by Bowen \cite{Blast}. Indeed, he proved that the topological entropy $h_{top}(\f)$ of a continuous selfmap $\f:M\to M$ of a compact manifold $M$ satisfies $h_{top}(\f) \geq \log GR(\f_*)$, where $GR(\f_*)$ is the \emph{growth rate} of the endomorphism $\f_*:\pi_1 (M)\to \pi_1 (M)$ of  the fundamental group $\pi_1(m)$ of $M$ induced by $\f$. The growth rate $GR$ is briefly studied also in the recent manuscript \cite{Falconer}.  It presents properties typical of entropy functions, so one should start to compare  the growth rate $GR$ with the algebraic entropy $h_{alg}$ on endomorphisms of finitely generated groups. 
 
For a finitely generated group $G$, the \emph{uniform exponential growth rate} (or \emph{algebraic entropy}) of $G$ is defined as
$$
\lambda(G)=\inf\{\lambda_S:S\ \text{finite set of generators of}\ G\}
$$
(see for instance \cite{dlH-ue}). By Remark \ref{exp<->gamma>0}(a) the group $G$ has exponential growth if and only if $\lambda_S>0$. Moreover $G$ has \emph{uniform exponential growth} if $\lambda(G)>0$.
Gromov \cite{GroLP} asked whether every finitely generated group of exponential growth is also of uniform exponential growth, and many examples were found in this direction. It is worth to mention here that this problem was recently solved by Wilson \cite{Wilson}, who gave examples of groups of exponential growth but not of uniform exponential growth. 

De la Harpe \cite{dlH-ue}, in relation to Gromov's question, asked which are the finitely generated groups that realize their uniform exponential growth rate, that is, which finitely generated groups $G$ have a finite set of generators $S$ such that $\lambda_S=\lambda(G)$. For example free groups realize their uniform exponential growth rate, as well as several other classes of finitely generated groups. On the other hand Sambusetti \cite{Sam} gave an example of a finitely generated group $G$ such that $\lambda_S>\lambda(G)$ for every finite set of generators $S$ of $G$.

In this context some authors considered also the relations between the cardinality of a finite set of generators $S$ of a finitely generated group $G$ and the exponential growth rate $\lambda_S$ of $G$ with respect to $S$ (see for instance \cite{ArzLys} and \cite{Zuddas}).

\subsection{The growth of algebraic flows and the Pinsker subgroup}\label{Growth-sec}

Let $G$ be a group, $\phi:G\to G$ an endomorphism and $F\in[G]^{<\omega}$.
Consider the function $$\gamma_{\phi,F}:\N_+\to\N_+\ \text{defined by}\ \gamma_{\phi,F}(n)=|T_n(\phi,F)|\ \text{for every}\ n\in\N_+.$$
Since
\begin{equation}\label{2}
|F|\leq\gamma_{\phi,F}(n)\leq|F|^n\mbox{ for every }n\in\N_+,
\end{equation}
the growth of $\gamma_{\phi,F}$ is always at most exponential; moreover, $H(\phi,F)\leq \log |F|$.

\smallskip
The following notions were considered in \cite{DG0} in the abelian case.

\begin{definition}
Let $G$ be a group, $\phi:G\to G$ an endomorphism and $F\in[G]^{<\omega}$. We say that:
\begin{itemize}
\item[(a)] $\phi$ has \emph{polynomial growth at $F$} (denoted by $\f\in\pgp_F$) if $\gamma_{\phi,F}$ is polynomial;
\item[(b)] $\phi$ has \emph{exponential growth at $F$} (denoted by $\phi\in\mathrm{Exp}_F$) if $\gamma_{\phi,F}$ is exponential;
\item[(c)] $\phi$ has \emph{intermediate growth at $F$} if $\gamma_{\phi,F}$ is intermediate.
\end{itemize}
\end{definition}

This definition generalizes the classical one of growth of a finitely generated group recalled in Definition \ref{growth-def}. Indeed, if $G$ is a finitely generated group and $S$ is a finite symmetric set of generators of $G$, then $\gamma_S=\gamma_{id_G,S}$.
In other words, that $id_G$ has polynomial (respectively, exponential or intermediate) growth at $S\cup\{e_G\}$ means precisely that $G$ has polynomial (respectively, exponential or intermediate) growth.

So one can consider the following general problem.

\begin{problem}\label{gg-pb}
Let $G$ be a group, $\phi:G\to G$ an endomorphism and $F\in[G]^{<\omega}$. When does $\phi$ have polynomial (respectively, exponential or intermediate) growth at $F$?
\end{problem}

One could start, as a natural generalization of the classical problem, considering a finitely generated group $G$, an automorphism $\phi:G\to G$ and a finite symmetric set of generators $S$ of $G$. 

\bigskip
It is quite natural to study the relation of this growth problem with the algebraic entropy. We start from the following characterization of exponential growth, which was given in \cite{DG0} for the abelian case. It generalizes Remark \ref{exp<->gamma>0}(a) to arbitrary endomorphisms.

\begin{proposition}\label{exp}
Let $G$ be a group, $\phi:G\to G$ an endomorphism and $F\in[G]^{<\omega}$. Then $H_{alg}(\phi,F)>0$ if and only if $\phi\in\mathrm{Exp}_F$.
\end{proposition}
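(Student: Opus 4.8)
The plan is to establish both implications directly from the definition of $H_{alg}(\phi,F)$ as $\lim_{n\to\infty}\frac{\log|T_n(\phi,F)|}{n}$ (a genuine limit by Lemma \ref{subadd}), recalling that $\gamma_{\phi,F}(n)=|T_n(\phi,F)|$ and that $\gamma_{\phi,F}$ is submultiplicative by (the proof of) Lemma \ref{subadd}, with $|F|\le\gamma_{\phi,F}(n)\le|F|^n$ by \eqref{2}. The easy direction is $\phi\in\mathrm{Exp}_F\Rightarrow H_{alg}(\phi,F)>0$: if $\gamma_{\phi,F}(n)\sim e^n$, then by definition of $\sim$ there are $C,n_0\in\N_+$ with $\gamma_{\phi,F}(n)\geq e^{n/C}$ for all $n\geq n_0$, whence $\frac{\log\gamma_{\phi,F}(n)}{n}\geq\frac1C$ for $n\geq n_0$, and passing to the limit gives $H_{alg}(\phi,F)\geq 1/C>0$.

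For the converse I would argue the contrapositive: suppose $\phi\notin\mathrm{Exp}_F$ and show $H_{alg}(\phi,F)=0$. If $F$ generates a finite set of trajectories everything is trivial, so the content is when $\gamma_{\phi,F}$ is unbounded. The key observation is that submultiplicativity of $\gamma=\gamma_{\phi,F}$ forces a dichotomy: either $\lambda:=\lim_n\frac{\log\gamma(n)}{n}=\inf_n\frac{\log\gamma(n)}{n}>0$, or $\lambda=0$. In the first case I claim $\gamma$ actually has exponential growth, i.e. $\gamma\succeq e^n$: indeed $\gamma(n)\geq\gamma(\lfloor n/m\rfloor)^{?}$ is the wrong direction, so instead use $\lambda=\inf_n\frac{\log\gamma(n)}{n}$, which gives $\gamma(n)\geq e^{\lambda n}$ for \emph{every} $n$, hence $\gamma(n)\succeq (e^\lambda)^n\sim e^n$ by Example \ref{exgrowth}-type equivalence of exponentials (item (b) of the Example comparing $a^n\sim b^n$). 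Combined with $\gamma(n)\leq\gamma(1)^n$ (submultiplicativity) we get $\gamma\sim e^n$, i.e. $\phi\in\mathrm{Exp}_F$, contradicting our assumption. Therefore $\lambda=0$, which is exactly $H_{alg}(\phi,F)=0$.

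So the whole argument reduces to the submultiplicative dichotomy $\lambda>0\Rightarrow\gamma\sim e^n$, and the only genuinely delicate point is the lower bound $\gamma(n)\geq e^{\lambda n}$ for all $n$ (not just asymptotically), which is precisely the $\inf=\lim$ half of Fekete's Lemma \ref{fekete} applied to the subadditive sequence $\{\log\gamma(n)\}$; everything else is bookkeeping with the relations $\preceq,\sim$. The main obstacle, such as it is, is being careful that the definition of ``exponential growth'' used here ($\gamma(n)\sim e^n$) is equivalent to ``$\gamma(n)\succeq e^n$'' in the presence of the automatic upper bound $\gamma(n)\preceq\gamma(1)^n\sim e^n$; once that is noted the proof is short. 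I would also remark that this gives, as a byproduct, that the growth of $\phi$ at $F$ is never strictly between polynomial and the bound $\gamma(1)^n$ in the ``positive entropy'' regime — it is genuinely exponential — which is the natural generalization of Remark \ref{exp<->gamma>0}(a).
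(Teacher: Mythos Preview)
Your proof is correct and follows essentially the same strategy as the paper: both directions amount to linking $H_{alg}(\phi,F)$ with exponential lower bounds $\gamma_{\phi,F}(n)\geq b^n$, using the upper bound $\gamma_{\phi,F}(n)\leq |F|^n$ from \eqref{2} for the other half of $\sim$.

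For the implication $H_{alg}(\phi,F)>0\Rightarrow\phi\in\mathrm{Exp}_F$, your use of the $\inf=\lim$ conclusion of Fekete's lemma (already recorded in Lemma~\ref{subadd}) to obtain $\gamma(n)\geq e^{\lambda n}$ for \emph{every} $n$ in one stroke is a nice shortcut; the paper instead only gets $\gamma(n)>e^{na/2}$ for large $n$ from the limit and then patches the finitely many small $n$ by hand via $\gamma(n)\geq 2\geq(\sqrt[m]{2})^n$. Your contrapositive wrapper is unnecessary, though: the core of your argument is precisely the direct implication ``$\lambda>0\Rightarrow\gamma\sim e^n$'', so you may as well present it that way. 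One small imprecision in the easy direction: from $e^n\preceq\gamma$ the definition of $\preceq$ only yields $e^n\leq\gamma(Cn)$ for $n\geq n_0$, not $\gamma(n)\geq e^{n/C}$ for all large $n$; but since the limit $\lim_m\frac{\log\gamma(m)}{m}$ exists, the subsequence $m=Cn$ already forces it to be $\geq 1/C$, so this is harmless.
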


\begin{proof}  Note that both $H_{alg}(\phi,F)>0$ and $\f\in\mathrm{Exp}_F$ imply $|F|\geq2$; indeed, if $|F|=1$, then $\gamma_{\phi,F}(n)=1$ for every $n\in\N_+$.

Assume that $H_{alg}(\phi,F)=a>0$. Consequently there exists $m\in\N_+$ such that $\log\gamma_{\phi,F}(n)>n\cdot \frac{a}{2}$ for every $n>m$. Then $\gamma_{\phi,F}(n)>e^{n\cdot \frac{a}{2}}$ for every $n>m$. Since $|F|\geq2$, $\gamma_{\phi,F}(n)\geq2$ for every $n\in\N_+$; in particular, $\gamma_{\phi,F}(n)\geq(\sqrt[m]{2})^n$ for every $n\leq m$. For $b=\min\{\sqrt[m]{2},e^{\frac{a}{2}}\}$, we have $\gamma_{\phi,F}(n)\geq b^n$ for every $n\in\N_+$, and this proves that $\phi\in\mathrm{Exp}_F$.

Suppose now that $\phi\in\mathrm{Exp}_F$. Then there exists $b\in\R_{>1}$ such that $\gamma_{\phi,F}(n)\geq b^n$ for every $n\in\N_+$. Hence $H(\phi,F)\geq \log b>0$.
\end{proof}

On the other hand it is clear that:

\begin{lemma}\label{pol->0}
Let $G$ be a group, $\phi:G\to G$ an endomorphism and $F\in[G]^{<\omega}$. If $\phi\in\pgp_F$, then $H_{alg}(\phi,F)=0$.
\end{lemma}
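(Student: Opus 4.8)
Let $G$ be a group, $\phi:G\to G$ an endomorphism and $F\in[G]^{<\omega}$. If $\phi\in\pgp_F$, then $H_{alg}(\phi,F)=0$.

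This is the easy converse companion of Proposition \ref{exp}, and the plan is to exploit the fact that polynomial growth at $F$ means $\gamma_{\phi,F}(n)=|T_n(\phi,F)|\preceq n^d$ for some $d\in\N_+$, i.e.\ there are constants $C,n_0\in\N_+$ with $\gamma_{\phi,F}(n)\le (Cn)^d$ for all $n\ge n_0$. I would feed this directly into the definition $H_{alg}(\phi,F)=\lim_{n\to\infty}\frac{\log|T_n(\phi,F)|}{n}$ (which is a genuine limit by Lemma \ref{subadd}). Taking logarithms of the inequality gives $\log|T_n(\phi,F)|\le d\log(Cn)=d\log C+d\log n$ for all $n\ge n_0$, and hence
\[
0\le \frac{\log|T_n(\phi,F)|}{n}\le \frac{d\log C+d\log n}{n}\xrightarrow[n\to\infty]{}0,
\]
since $\frac{\log n}{n}\to 0$. (The lower bound $0$ is immediate because $|T_n(\phi,F)|\ge 1$, or more precisely $|T_n(\phi,F)|\ge|F|\ge 1$ by \eqref{2}.) By the squeeze theorem the limit is $0$, so $H_{alg}(\phi,F)=0$.

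The only point worth a sentence of care is unwinding the definition of ``$\gamma_{\phi,F}$ is polynomial''. According to the definition of a polynomial map, $\gamma_{\phi,F}(n)\preceq n^d$ means there exist $C,n_0\in\N_+$ with $\gamma_{\phi,F}(n)\le\gamma_{\phi,F}'(Cn)$ where $\gamma'(m)=m^d$; so $\gamma_{\phi,F}(n)\le (Cn)^d$ for $n\ge n_0$, which is exactly what the computation above uses. There is no real obstacle here — the statement is essentially a one-line consequence of $\lim_{n\to\infty}\frac{\log n}{n}=0$ together with the existence of the limit defining $H_{alg}(\phi,F)$. If one wanted to avoid invoking Lemma \ref{subadd} for the existence of the limit, the same bound shows $\limsup_{n\to\infty}\frac{\log|T_n(\phi,F)|}{n}=0$ directly, which already forces $H_{alg}(\phi,F)=0$ regardless of how the entropy with respect to $F$ is defined (as a $\limsup$ or as a limit).
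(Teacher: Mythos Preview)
Your proof is correct and is exactly the natural elaboration of what the paper treats as obvious: the paper gives no proof at all for this lemma, introducing it with ``On the other hand it is clear that'', so your argument is precisely the routine computation the authors leave to the reader.
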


In general the converse implication is not true even for the identity. Indeed, if $\phi$ has intermediate growth at $F$, then it has $H_{alg}(\phi,F)=0$ by Proposition \ref{exp}. So the identity map of Grigorchuk's group $\mathbb G$ has intermediate growth at the finite set $S$ of generators of $\mathbb G$, yet $H_{alg}(\phi,S)=0$. Nevertheless one can consider the following

\begin{problem}
For which groups $G$, endomorphisms $\phi:G\to G$ and $F\in[G]^{<\omega}$ does $H_{alg}(\phi,F)=0$ imply $\phi\in\pgp_F$? Is this true for nilpotent groups? And for the wider class $\mathfrak P$?
\end{problem}

If Problem \ref{LaMadre_di_tutti_i_problemi} can be resolved positively, then the above property is obviously available in all locally finite groups $G$. Indeed, $H_{alg}(\phi,F)=0$ for some  endomorphisms $\phi:G\to G$ and $F\in[G]^{<\omega}$ would imply that $F$ is contained in a finite $\f$-invariant subgroup, so that the function $\gamma_{\f,F}$ is actually a constant (so trivially $\phi\in\pgp_F$).

A motivation to this problem is given by the abelian case. Indeed, the following is the main result of \cite{DG0} and solves completely Problem \ref{gg-pb} in the abelian case.

\begin{theorem}[Dichotomy Theorem]\emph{\cite{DG0}}\label{DT}
Let $G$ be a group, $\phi:G\to G$ an endomorphism and $F\in[G]^{<\omega}$. Then $\phi$ has either exponential or polynomial growth at $F$.
\end{theorem}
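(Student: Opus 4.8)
# Proof proposal

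The plan is to reduce the \emph{abelian} Dichotomy Theorem to a statement about a single finitely generated abelian subgroup, where the growth of $\gamma_{\phi,F}$ can be controlled by the characteristic polynomial of the relevant linear map. First I would observe that, although $G$ is stated to be an arbitrary group, the proof can only go through in the abelian setting (this is what is cited from \cite{DG0}); so I assume $G$ abelian and write it additively. Fix $F\in[G]^{<\omega}$ and, by Lemma~\ref{0inF}, assume $0\in F$. Let $H=\langle \mathfrak T(\phi,F)\rangle$ be the subgroup generated by the whole $\phi$-trajectory of $F$; this is a $\phi$-invariant subgroup, and since $F$ is finite, $H$ is \emph{countable} but in general not finitely generated. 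The key reduction is that $\gamma_{\phi,F}(n)=|T_n(\phi,F)|$ depends only on the finitely generated $\phi$-invariant subgroup $H_n=\langle T_n(\phi,F)\rangle$, and the chain $H_1\subseteq H_2\subseteq\cdots$ exhausts $H$.

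The heart of the argument is the finitely generated case: let $A$ be a finitely generated abelian group and $\phi:A\to A$ an endomorphism, $F\subseteq A$ finite. One has the torsion subgroup $t(A)$, which is finite, and $A/t(A)\cong\Z^r$ is free. By the Weak Addition Theorem / monotonicity machinery (Lemma~\ref{restriction_quotient}, Theorem~\ref{WAT}) it suffices to analyze the free quotient: on $t(A)$ the trajectory is bounded, so $\gamma_{\phi,F}$ restricted there is eventually constant, hence polynomial of degree $0$. On $\Z^r$, $\phi$ acts as an integer matrix $A_\phi$ with characteristic polynomial $p_\phi(t)$. Here I would invoke the dichotomy at the level of eigenvalues: by Kronecker's Theorem~\ref{Kr} (together with the Algebraic Kolmogorov--Sinai Formula~\ref{AKSF} and \eqref{geqmax} from Remark~\ref{AYF0}), either all eigenvalues of $A_\phi$ lie on or inside the unit circle with the ones on the circle being roots of unity — in which case the norms $\|A_\phi^n v\|$ grow polynomially in $n$, so $|T_n(\phi,F)|\le C n^d$ for suitable $C,d$, giving polynomial growth at $F$ — or some eigenvalue has modulus $>1$, in which case $H_{alg}(\phi,F)\ge m(\phi)>0$, so by Proposition~\ref{exp}, $\phi\in\mathrm{Exp}_F$. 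The subtlety on the ``polynomial'' side is controlling the lattice points $T_n(\phi,F)$ rather than just the real orbit: one writes each element of $T_n(\phi,F)$ as a sum $\sum_{j=0}^{n-1}\phi^j(f_j)$ with $f_j\in F$, bounds its coordinates by $\sum_{j<n}\|A_\phi^j\|\cdot\max_{f\in F}\|f\|$, and uses that this bound is $O(n^d)$ when the spectral radius is $\le 1$ with unipotent-up-to-roots-of-unity behaviour; the number of lattice points in a box of side $O(n^d)$ in $\Z^r$ is $O(n^{dr})$.

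Finally I would patch the finitely generated case back to the general $F$: since $\gamma_{\phi,F}=\gamma_{\phi\restriction_{H_n},F}$ for all large enough $n$ (once $H_n$ stabilizes in the sense that computing $T_n$ never leaves $H_{n}$, which holds for every $n$ by construction $T_n(\phi,F)\subseteq H_n\subseteq H_{n+1}$), and since $H=\bigcup_n H_n$ with $\phi\restriction_H$ the direct limit of $\phi\restriction_{H_n}$, the growth type of $\gamma_{\phi,F}$ is determined inside any single finitely generated $\phi$-invariant subgroup containing $F$ — for instance, one can take the finitely generated $\phi$-invariant subgroup $\langle \mathfrak T(\phi,\langle F\rangle_{\mathrm{fg}})\rangle$ is not finitely generated, so instead one argues directly: the Dichotomy for $\phi$ at $F$ follows because $H_{alg}(\phi,F)>0$ forces $\mathrm{Exp}_F$ by Proposition~\ref{exp}, while $H_{alg}(\phi,F)=0$ must be shown to force polynomial growth, and the latter is reduced to the finitely generated case by noting that the relevant eigenvalue obstruction lives already in $\langle T_{n}(\phi,F)\rangle\otimes\Q$ for $n$ large. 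The main obstacle I anticipate is precisely ruling out \emph{intermediate} growth when $H_{alg}(\phi,F)=0$: Proposition~\ref{exp} and Lemma~\ref{pol->0} already sandwich things, so the content is showing that an endomorphism of a \emph{finitely generated abelian} group cannot produce intermediate growth at a finite set — and this is exactly where the eigenvalue dichotomy (Kronecker, via the Algebraic Yuzvinski Formula and \eqref{geqmax}) does the decisive work, converting a ``subexponential but superpolynomial'' scenario into a contradiction with the lattice-point count above.
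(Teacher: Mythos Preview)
Your proposal has the right core ingredients --- the eigenvalue dichotomy via Kronecker's Theorem and the lattice-point count for the polynomial side --- and these are indeed what ultimately drive the proof (the paper confirms that the weak form of the Algebraic Yuzvinski Formula in Remark~\ref{AYF0}, i.e.\ essentially \eqref{geqmax}, is the decisive input). But two steps are genuine gaps, not just presentational issues.

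First, the reduction to the finitely generated case is not established. You correctly observe that $H=\langle\mathfrak T(\phi,F)\rangle$ need not be finitely generated, and your fallback (``the relevant eigenvalue obstruction lives already in $\langle T_n(\phi,F)\rangle\otimes\Q$ for $n$ large'') presupposes that the rank of $\langle T_n(\phi,F)\rangle$ stabilizes when $H_{alg}(\phi,F)=0$. That stabilization is itself a non-trivial lemma: nothing you have written rules out subexponential growth of $|T_n(\phi,F)|$ with unbounded rank. Second, even once you are on $\Z^r$ with $F$ generating it as a $\phi$-module, the assertion ``some $|\lambda|>1$ implies $H_{alg}(\phi,F)\geq m(\phi)>0$'' is not what \eqref{geqmax} says: that inequality bounds $h_{alg}(\phi)=\sup_{F'}H_{alg}(\phi,F')$, not $H_{alg}(\phi,F)$ for your specific $F$. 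Passing from the former to the latter requires an argument linking the generating property of $F$ to positivity of $H_{alg}(\phi,F)$, and the naive sumset bound (writing elements of $T_n(\phi,F')$ as $\Z$-combinations of elements of $T_{n+N}(\phi,F)$) blows up and gives nothing.

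The paper does not prove the theorem in-line; it attributes the result to \cite{DG0} and explains that the argument is organized around the \emph{Pinsker subgroup}: one proves that $\mathbf P(G,\phi)=\pgp(G,\phi)$ (the largest $\phi$-invariant subgroup with zero entropy coincides with the largest one with polynomial growth), and the Dichotomy is deduced from this equality. That structural framework is precisely what absorbs the two difficulties above --- it replaces the ad hoc reduction to a single finitely generated piece by a global statement about invariant subgroups. Your lattice-point argument for the polynomial side (when all eigenvalues are roots of unity or zero) is correct and is surely a component of the full proof, but the architecture around it needs to be the Pinsker-subgroup one rather than a direct finitely-generated reduction.
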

 
In other words $H_{alg}(\phi,F)=0$ if and only if $\phi\in\pgp_F$.

\smallskip
The proof of this surprising dichotomy is based on the following notion, introduced imitating its counterparts for measure entropy and topological entropy.

\begin{definition}\cite{DG0}\label{pinsker-def}
Let $G$ be a group and $\phi:G\to G$ an endomorphism. The \emph{Pinsker subgroup} of $G$ with respect to $\phi$ is the greatest $\f$-invariant subgroup $\P(G,\f)$ of $G$ such that $h_{alg}(\f\restriction_{\P(G,\f)})=0$.
\end{definition}

It is clear, that the subgroup $\P(G,\f)$ is unique, if it exists. It is easy to see that $\P(G,\f)$ exists for an  endomorphism $\phi:G\to G$ if and only if whenever $H,K$ are $\phi$-invariant subgroups of $G$, with $h_{alg}(\phi\restriction_H)=0=h_{alg}(\phi\restriction_K)$ then also $h_{alg}(\phi\restriction_{\langle H,K\rangle})=0$. In case this property holds true, $\P(G,\f)$ is the subgroup of $G$ generated by all $\phi$-invariant subgroups $H$ of $G$ such that $h_{alg}(\phi\restriction_H)=0$.

In \cite{DG0} the existence of the subgroup $\P(G,\f)$ is established, as well as that of the greatest $\f$-invariant subgroup $\pgp(G,\f)$ of $G$ where the restriction of $\f$ has polynomial growth. Since $\phi\in\pgp_F$ immediately yields $H_{alg}(\phi,F)=0$ by Lemma \ref{pol->0}, this entails $\pgp(G,\f)\subseteq \P(G,\f)$. Actually, the equality $$\P(G,\f) = \pgp(G,\f)$$ holds for every abelian group $G$ and every endomorphism $\phi:G\to G$, and the proof of the Dichotomy Theorem \ref{DT} is essentially based on it. Moreover in the proof of this equality the Algebraic Yuzvinski Formula \ref{AYF} applies (more precisely, that its weaker form described in Remark \ref{AYF0} is enough for this proof).

\bigskip
The next example shows that the Pinsker subgroup may not exist in the non-abelian case even for the identity map.

\begin{example}\label{Exa} Let $G$ be the free non-abelian group of two generators $x_1$ and $x_2$. Let $H_i$ be the cyclic subgroup of $G$ generated by $x_i$, $i=1,2$. Then for $G = \langle H_1,H_2\rangle$ and for $\f = id_G$ one has $h_{alg}(\phi\restriction_{H_1})=0=h_{alg}(\phi\restriction_{H_2})$. Nevertheless, $\f$ has 
exponential growth, in particular $h_{alg}(\f) > 0$. Note, that the example can be used also to see that the counter-part of the similar property for polynomial growth fails as well (i.e., both $\phi\restriction_{H_1}$ and $\phi\restriction_{H_2}$ have polynomial growth, while $\f$ has exponential growth).  
\end{example}

Nevertheless the Pinsker subgroup exists for locally quasi-periodic endomorphisms $\phi$ of locally finite groups $G$ as $\mathbf{P}(G,\phi)=G$ in this case by Remark \ref{locfin}(c). On the other hand, one may impose some restraints on the group, so that {\em all} endomorphisms admit a Pinsker subgroup: 

\begin{question} For which groups $G$ every endomorphisms $\phi:G\to G$ admits a Pinsker subgroup? 
Is this true for nilpotent groups? What about the class $\mathfrak P$?
\end{question}

In this line, a positive solution of Problem \ref{LaMadre_di_tutti_i_problemi} would entail that for every endomorphism $\phi:G\to G$ of a locally finite group $G$ the subgroup generated by all finite $\f$-invariant subgroups of $G$ is precisely the Pinsker subgroup of $G$ with respect to $\f$.

\subsection{The algebraic e-spectrum of abelian groups}\label{aes-sec}

In analogy with the topological e-spectrum introduced for the topological entropy in \S \ref{tes-sec}, we give the following

\begin{definition} 
For a group $G$ the \emph{algebraic $e$-spectrum} of $G$ is $${\bf E}_{alg}(G)=\{h_{alg}(\phi): \phi\in \End(G)\} \subseteq \R_{\geq0}\cup \{\infty\}.$$
\end{definition}  

\begin{example}\label{exdgsz}\cite[Proposition 1.3]{DGSZ}
\begin{itemize} 
\item[(a)] If $G$ is a torsion abelian group or isomorphic to a subgroup of $\Q$, then 
$${\bf E}_{alg}(G)\subseteq \{\log n: n\in \N_+\}\cup \{\infty\},$$ 
so ${\bf E}_{alg}(G)\cap \R_{\geq0}$ is discrete in $\R_{\geq0}$ and $\inf ({\bf E}_{alg}(K)\setminus \{0\})=\log 2$.
\item[(b)] If $p$ is a prime and $G$ is an abelian $p$-group, then ${\bf E}_{alg}(G)\subseteq \{n\log p: n\in \N\}\cup \{\infty\},$  so ${\bf E}_{alg}(G)\cap \R_{\geq0}$ is {\em uniformly} discrete in $\R_{\geq0}$.
\end{itemize}
\end{example}

Let 
\begin{equation}\label{***}
{\bf E}_{alg}= \bigcup\{{\bf E}_{alg}(G):G\ \mbox{abelian group}\}.
\end{equation}

As in the topological case, the following natural question arises.

\begin{problem}\label{h>0-pb}
Is $\inf ({\bf E}_{alg}\setminus \{0\})=0$?
\end{problem}

By means of appropriate reductions, it is possible to restrict to endomorphisms of $\Q^n$:

\begin{theorem}\label{epsilon}\emph{\cite{DG}}
We have $\inf ({\bf E}_{alg}\setminus \{0\})=\inf(\bigcup_{n\in\N_+}\mathbf E_{alg}(\Q^n)\setminus\{0\})$.
\end{theorem}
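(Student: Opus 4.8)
## Proof Proposal for Theorem \ref{epsilon}

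The plan is to establish the equality by proving two inequalities. The inequality $\inf ({\bf E}_{alg}\setminus \{0\}) \leq \inf(\bigcup_{n\in\N_+}\mathbf E_{alg}(\Q^n)\setminus\{0\})$ is immediate, since every $\Q^n$ is an abelian group, so $\bigcup_{n\in\N_+}\mathbf E_{alg}(\Q^n) \subseteq {\bf E}_{alg}$ by the definition \eqref{***}; hence the positive values realized on the $\Q^n$ form a subset of ${\bf E}_{alg}\setminus\{0\}$, and passing to infima reverses the inclusion. The real content is the reverse inequality, which amounts to showing that for every abelian group $G$ and every endomorphism $\phi\colon G\to G$ with $h_{alg}(\phi)>0$, there exist $n\in\N_+$ and an endomorphism $\psi\colon\Q^n\to\Q^n$ with $0<h_{alg}(\psi)\leq h_{alg}(\phi)$ (or at least with $h_{alg}(\psi)$ approaching the relevant infimum).

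First I would reduce from an arbitrary abelian group to a more tractable class by a sequence of standard moves. Given $\phi\colon G\to G$ with $h_{alg}(\phi)>0$: by Continuity (Proposition \ref{dirlim}), $h_{alg}(\phi)=\sup_i h_{alg}(\phi\restriction_{G_i})$ over the finitely generated $\phi$-invariant subgroups, so some finitely generated $\phi$-invariant subgroup already carries positive entropy; by Monotonicity (Lemma \ref{restriction_quotient}) its entropy is at most $h_{alg}(\phi)$, so without loss of generality $G$ is finitely generated, say $G\cong\Z^r\oplus t(G)$ with $t(G)$ finite. Now apply the Addition Theorem \ref{AT} to the $\phi$-invariant subgroup $t(G)$: since $t(G)$ is finite, $h_{alg}(\phi\restriction_{t(G)})=0$, so $h_{alg}(\phi)=h_{alg}(\overline\phi)$ for the induced endomorphism $\overline\phi$ of $G/t(G)\cong\Z^r$. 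This places us on a free abelian group of finite rank. Finally, by Proposition \ref{AA_} (the Yuzvinski reduction to the divisible hull), passing from $\Z^r$ to its divisible hull $\Q^r$ and extending $\overline\phi$ to $\widetilde{\overline\phi}\colon\Q^r\to\Q^r$ preserves the algebraic entropy. Thus every positive value in ${\bf E}_{alg}$ arising this way is already in $\mathbf E_{alg}(\Q^r)$ for some $r$, and equality of infima follows.

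The one point requiring care is that the chain of reductions above must be arranged so that each step only \emph{decreases} (or preserves) the entropy while keeping it positive — the issue being that Continuity gives a supremum, not that every piece realizes the value. This is handled by the elementary observation that for any $\varepsilon>0$ one can pick a finitely generated $\phi$-invariant subgroup $G_0$ with $h_{alg}(\phi\restriction_{G_0})>0$ and $h_{alg}(\phi\restriction_{G_0})\le h_{alg}(\phi)$; running the remaining (entropy-preserving) reductions on $G_0$ then produces an endomorphism of some $\Q^n$ whose entropy lies in $(0,h_{alg}(\phi)]$. Taking infima over all $(G,\phi)$ with $h_{alg}(\phi)>0$ yields $\inf(\bigcup_n \mathbf E_{alg}(\Q^n)\setminus\{0\})\le\inf({\bf E}_{alg}\setminus\{0\})$, completing the argument. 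The main obstacle is not any single deep fact — all the heavy machinery (Addition Theorem, Proposition \ref{AA_}) is quoted — but rather the bookkeeping: ensuring the torsion part is genuinely split off as a $\phi$-invariant subgroup (which works because after passing to a finitely generated group $t(G)$ is finite, hence automatically $\phi$-invariant and of zero entropy) and confirming that the divisible-hull extension in Proposition \ref{AA_} applies to the torsion-free quotient $G/t(G)$, which is torsion-free by construction.
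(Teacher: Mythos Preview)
The paper does not supply its own proof of Theorem \ref{epsilon} (it is cited from \cite{DG}), so I evaluate your argument on its merits. There is a genuine gap in your first reduction: you claim that Continuity (Proposition \ref{dirlim}) gives $h_{alg}(\phi)=\sup_i h_{alg}(\phi\restriction_{G_i})$ over the finitely generated $\phi$-invariant subgroups, implicitly asserting that $G$ is the directed union of such $G_i$. This is false in general. For the right Bernoulli shift $\beta_\Z^\oplus$ on $\Z^{(\N)}$, the $\phi$-invariant subgroup generated by any nonzero element already has infinite rank, so the only finitely generated $\phi$-invariant subgroup is $\{0\}$ and your supremum is vacuously $0$. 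Consequently your step ``without loss of generality $G$ is finitely generated, say $G\cong\Z^r\oplus t(G)$ with $t(G)$ finite'' is unjustified.

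The repair is to reorder the reductions and supply one missing idea. Split off torsion first, on the full group, via the Addition Theorem \ref{AT}: if $h_{alg}(\phi\restriction_{t(G)})>0$ then by Example \ref{exdgsz}(a) it is at least $\log 2$, and $\log 2=h_{alg}(\varphi_2)\in\mathbf E_{alg}(\Q)$, so that case is finished. Otherwise pass to the torsion-free quotient and then to its divisible hull via Proposition \ref{AA_}, landing in a $\Q$-vector space $V$. Dispose of $h_{alg}(\phi)=\infty$ trivially (again $\log 2$ works). For $0<h_{alg}(\phi)<\infty$ the key point --- used verbatim in the paper's proof of the theorem immediately following Theorem \ref{LD} --- is that each $\phi$-cyclic subspace $\langle T(\phi,x)\rangle_\Q$ is finite-dimensional, since otherwise the restriction would be conjugate to $\beta_\Q^\oplus$, which has $h_{alg}=\infty$. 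Only now is $V$ genuinely the directed union of finite-dimensional $\phi$-invariant subspaces, Continuity applies, and some such subspace $\cong\Q^n$ carries entropy in $(0,h_{alg}(\phi)]$.
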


It follows immediately from Theorem \ref{epsilon} and the Algebraic Yuzvinski Formula \ref{AYF} that 
$$
\mathfrak L=\inf ({\bf E}_{alg}\setminus \{0\});
$$
therefore Problem \ref{h>0-pb} is equivalent to Lehmer Problem \ref{L-pb}.

\medskip
We give now some properties of the set $\mathbf E_{alg}$.

\begin{lemma}\label{semiclosed}
The set ${\bf E}_{alg}$ is a submonoid of $\R_{\geq0}\cup\{\infty\}$. If $\{a_n\}_{n\in\N_+}\subseteq\mathbf E_{alg}\setminus\{0\}$, then $\sum_{n\in\N_+}a_n\in\mathbf E_{alg}$.
\end{lemma}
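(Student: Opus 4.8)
The plan is to deduce both claims from the Weak Addition Theorem \ref{WAT} and from Continuity on direct limits (Proposition \ref{dirlim}), by building explicit abelian flows realizing the required entropy values.

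First I would dispose of the submonoid claim. That $0 \in \mathbf E_{alg}$ is immediate: $h_{alg}(\mathrm{id}_{\Z}) = 0$ by Example \ref{id-abelian}(a) (one could even take the trivial group). For closure under addition, given $a,b \in \mathbf E_{alg}$ pick abelian groups $G_1, G_2$ and endomorphisms $\phi_i : G_i \to G_i$ with $h_{alg}(\phi_1) = a$ and $h_{alg}(\phi_2) = b$; then $G_1 \times G_2$ is abelian and the Weak Addition Theorem \ref{WAT} gives $h_{alg}(\phi_1 \times \phi_2) = a + b$, so $a + b \in \mathbf E_{alg}$. With the convention $r + \infty = \infty$ recorded before the lemma, this also covers the case when $a$ or $b$ equals $\infty$. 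Hence $\mathbf E_{alg}$ is a submonoid of $(\R_{\geq 0} \cup \{\infty\}, +)$.

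For the series statement, given $\{a_n\}_{n \in \N_+} \subseteq \mathbf E_{alg} \setminus \{0\}$ I would, for each $n$, fix an abelian group $G_n$ and an endomorphism $\phi_n : G_n \to G_n$ with $h_{alg}(\phi_n) = a_n$, and then set $G = \bigoplus_{n \in \N_+} G_n$ and $\phi = \bigoplus_{n \in \N_+} \phi_n$, an endomorphism of the abelian group $G$. For $k \in \N_+$ the partial sum $H_k = \bigoplus_{n=1}^{k} G_n$ is a $\phi$-invariant subgroup with $\phi \restriction_{H_k} = \phi_1 \times \cdots \times \phi_k$, the family $\{H_k : k \in \N_+\}$ is directed by inclusion, and $G = \varinjlim_k H_k$. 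By Continuity (Proposition \ref{dirlim}), $h_{alg}(\phi) = \sup_k h_{alg}(\phi \restriction_{H_k})$, and by induction on $k$ from the Weak Addition Theorem \ref{WAT} one gets $h_{alg}(\phi \restriction_{H_k}) = \sum_{n=1}^{k} a_n$. Since the partial sums are non-decreasing in $\R_{\geq 0} \cup \{\infty\}$, this yields $h_{alg}(\phi) = \sup_k \sum_{n=1}^{k} a_n = \sum_{n \in \N_+} a_n$, so $\sum_{n \in \N_+} a_n \in \mathbf E_{alg}$.

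There is no serious obstacle here; the points that merely need a line of justification are that $\bigoplus_{n=1}^{\infty} G_n$ is the direct limit of its finite partial sums with $\phi$-invariant inclusions (so Proposition \ref{dirlim} is applicable), that $\phi$ indeed restricts to the product endomorphism on each $H_k$, and that the Weak Addition Theorem \ref{WAT}, formulated for two factors, iterates to any finite number of factors. I would also remark that the hypothesis $a_n \neq 0$ plays no role in this argument, so the same construction shows $\sum_{n} a_n \in \mathbf E_{alg}$ for an arbitrary sequence in $\mathbf E_{alg}$.
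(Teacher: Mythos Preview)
Your proof is correct and follows essentially the same approach as the paper: both use the Weak Addition Theorem \ref{WAT} for the submonoid claim, and for the series both form $G=\bigoplus_n G_n$ with $\phi=\bigoplus_n\phi_n$ and apply Continuity on direct limits (Proposition \ref{dirlim}) to the chain of finite partial sums. Your write-up is in fact a bit more careful than the paper's, and your closing observation that the hypothesis $a_n\neq 0$ is not actually needed is correct.
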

\begin{proof}
Clearly, $0\in\mathbf E_{alg}$. The Weak Addition Theorem \ref{WAT} implies that $\mathbf E_{alg}$ is a subsemigroup of $\R_{\geq0}\cup\{\infty\}$.

To prove the second statement, let $b_n=a_1+\ldots+a_n$ for every $n\in\N_+$ and $b=\lim_{n\to\infty} b_n$. Then $b\in\mathbf E_{alg}$. 
In fact, for every $n\in\N_+$ let $G_n$ be an abelian group and $\phi_n:G_n\to G_n$ an endomorphism such that $h_{alg}(\phi_n)=a_n$. Moreover for every $n\in\N_+$ let $\psi_n=\phi_1\times\ldots\times\phi_n:G_1\times\ldots\times G_n\to G_1\times\ldots\times G_n$. Let $G=\bigoplus_{n\in\N_+}G_n$ and $\phi=\bigoplus_{n\in\N_+}\phi_n$; then $h_{alg}(\phi)=\sup_{n\in\N_+}h_{alg}(\phi_n)$ by Proposition \ref{dirlim}. Since $h_{alg}(\phi)=b$, we can conclude that $b\in\mathbf E_{alg}$.
\end{proof}

The next proposition shows that the study of the monoid $\mathbf E_{alg}(G)$ for torsion-free abelian groups $G$ can be reduced to the case of divisible ones. It follows immediately from Proposition \ref{AA_}.

\begin{proposition}\label{AA}
Let $G$ be a torsion-free abelian group and $\phi:G\to G$ an endomorphism. Then $\mathbf E_{alg}(G)=\mathbf E_{alg}(D(G))$.
\end{proposition}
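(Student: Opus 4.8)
The plan is to prove the two inclusions $\mathbf E_{alg}(G)\subseteq \mathbf E_{alg}(D(G))$ and $\mathbf E_{alg}(D(G))\subseteq \mathbf E_{alg}(G)$ separately, with Proposition \ref{AA_} as the main engine. Throughout I would use the standard facts that $D(G)$ may be identified with $G\otimes\Q$, that $G$ spans $D(G)$ over $\Q$ with $D(G)/G$ torsion, that every endomorphism $\phi$ of $G$ has a unique extension $\widetilde\phi\in\End(D(G))$ (explicitly $\widetilde\phi(x)=\tfrac1n\phi(nx)$ whenever $nx\in G$, and $\widetilde\phi$ is $\Q$-linear), and that for any subgroup $H$ with $G\subseteq H\subseteq D(G)$ one has $D(H)=D(G)$ because $D(H)$ is divisible and contains $G$.

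For the first inclusion I would simply take an arbitrary $\phi\in\End(G)$, form its extension $\widetilde\phi\in\End(D(G))$, and invoke Proposition \ref{AA_} to get $h_{alg}(\phi)=h_{alg}(\widetilde\phi)\in\mathbf E_{alg}(D(G))$. Since $\phi$ is arbitrary this gives $\mathbf E_{alg}(G)\subseteq\mathbf E_{alg}(D(G))$, and this direction is genuinely immediate.

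For the converse inclusion — which I expect to be the real content — I would start from an arbitrary $\psi\in\End(D(G))$ and first pass to a $\psi$-invariant subgroup that still spans $D(G)$: put $H=\sum_{n\in\N}\psi^n(G)$, so that $G\subseteq H\subseteq D(G)$, $\psi(H)\subseteq H$, hence $D(H)=D(G)$ and $\psi$ is the unique extension of $\psi\restriction_H$; Proposition \ref{AA_} applied to $H$ then yields $h_{alg}(\psi)=h_{alg}(\psi\restriction_H)$. The remaining, and crucial, task is to realise this value by an endomorphism of $G$ itself. When $G$ has finite rank $n$, $D(G)\cong\Q^n$, and the Algebraic Yuzvinski Formula \ref{AYF} identifies $h_{alg}(\psi)$ with the Mahler measure of the primitive characteristic polynomial of $\psi$; the problem then becomes the purely arithmetic one of producing, for every primitive integer polynomial of degree $n$, a monic integer polynomial of degree at most $n$ with the same Mahler measure — such a monic polynomial being the characteristic polynomial of an endomorphism of $\Z^n$, whose entropy value one transfers back to $G$ through $D(G)=\Q^n$ by Proposition \ref{AA_} and Monotonicity (Lemma \ref{restriction_quotient}). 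For $G$ of arbitrary rank I would reduce to the countably generated case using Continuity on direct limits (Proposition \ref{dirlim}): writing $D(G)$ as the directed union of the $\psi$-invariant subgroups generated by the full trajectories $\bigcup_{n\in\N}\psi^n(F)$ of finite subsets $F\subseteq D(G)$, one gets $h_{alg}(\psi)=\sup_F h_{alg}(\psi\restriction_{\langle\bigcup_n\psi^n(F)\rangle})$.

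The main obstacle is precisely this second inclusion. The naive idea of restricting $\psi$ to a copy of $G$ inside $D(G)$ fails outright — for instance multiplication by $1/2$ on $\Q$ leaves no subgroup isomorphic to $\Z$ invariant — so the value $h_{alg}(\psi)$ must be reproduced by a genuinely different endomorphism of $G$. The two delicate points are therefore the arithmetic lemma comparing Mahler measures of primitive and of monic integer polynomials of a given degree, and the bookkeeping needed to carry the finite‑rank argument along the direct‑limit reduction when the trajectory subgroups $\langle\bigcup_n\psi^n(F)\rangle$ are not of finite rank over $\Q$.
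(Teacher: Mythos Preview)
Your first paragraph, giving $\mathbf E_{alg}(G)\subseteq\mathbf E_{alg}(D(G))$ via the unique extension $\phi\mapsto\widetilde\phi$ and Proposition~\ref{AA_}, is exactly the paper's argument: the paper simply writes ``It follows immediately from Proposition~\ref{AA_}'', and that sentence delivers precisely this inclusion and nothing more.

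The reverse inclusion, on which you spend the rest of the proposal, is \emph{false} as stated, so the obstacles you flag are not merely technical. Take any torsion-free abelian group $G$ of rank~$2$ with $\End(G)=\Z$; such ``rigid'' groups are classical (for instance a suitable subgroup of $\Q^2$ in which $e_1$, $e_2$ and $e_1+e_2$ are given pairwise incomparable types). Every endomorphism of $G$ is then a multiplication $\varphi_n$ with $n\in\Z$, and by Proposition~\ref{AA_} together with the Algebraic Yuzvinski Formula~\ref{AYF} one gets $h_{alg}(\varphi_n)=m\bigl((t-n)^2\bigr)=2\log\max(1,|n|)$; hence $\mathbf E_{alg}(G)=\{2\log k:k\in\N_+\}$. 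On the other hand $\mathbf E_{alg}(D(G))=\mathbf E_{alg}(\Q^2)$ contains $\log 2$, witnessed by the diagonal endomorphism with eigenvalues $1$ and $2$, and $\log 2\notin\{2\log k:k\in\N_+\}$. Thus $\mathbf E_{alg}(G)\subsetneq\mathbf E_{alg}(D(G))$, and no ``arithmetic lemma'' of the kind you outline can exist in general: the group $G$ may simply have too few endomorphisms to realise every value in $\mathbf E_{alg}(D(G))$. Note that every later use of Proposition~\ref{AA} in the paper invokes only the inclusion $\mathbf E_{alg}(G)\subseteq\mathbf E_{alg}(D(G))$, which is the part that genuinely follows from Proposition~\ref{AA_}.
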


The following theorem gives equivalent conditions with respect to a positive answer to Problem \ref{h>0-pb}. It is the algebraic counterpart of Fact \ref{LD-top}.

\begin{theorem}\label{LD}
The following conditions are equivalent:
\begin{itemize}
  \item[(a)] ${\bf E}_{alg}=\R_{\geq0}\cup \{\infty\}$;
  \item[(b)] $\inf ({\bf E}_{alg}\setminus \{0\})=0$;
  \item[(c)] $\inf(\bigcup_{n\in\N_+}{\bf E}_{alg}(\Z^n)\setminus \{0\})=0$.
\end{itemize}
\end{theorem}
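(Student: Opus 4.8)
The plan is to prove the cycle of implications (a)$\Rightarrow$(b)$\Rightarrow$(c)$\Rightarrow$(a), where (a)$\Rightarrow$(b) is trivial (if $\mathbf{E}_{alg}=\R_{\geq0}\cup\{\infty\}$ then in particular $\mathbf{E}_{alg}\setminus\{0\}$ contains values arbitrarily close to $0$, so its infimum is $0$). The implication (b)$\Rightarrow$(c) is also essentially immediate from Theorem~\ref{epsilon}: that theorem states $\inf(\mathbf{E}_{alg}\setminus\{0\})=\inf\bigl(\bigcup_{n\in\N_+}\mathbf{E}_{alg}(\Q^n)\setminus\{0\}\bigr)$, so I would first reduce the $\Q^n$-statement to a $\Z^n$-statement. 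For this, note that for an endomorphism $\psi:\Q^n\to\Q^n$, after clearing denominators one can find a $\Z$-sublattice $L\cong\Z^n$ of $\Q^n$ which is $(k\psi)$-invariant for a suitable $k\in\N_+$; then $\psi$ is conjugate to the extension of $k\psi\restriction_L$ to the divisible hull (up to the multiplication-by-$k$ scaling of the characteristic polynomial), and via the Algebraic Yuzvinski Formula~\ref{AYF} one controls $h_{alg}(\psi)$ in terms of Mahler measures of integer polynomials. More efficiently, I would invoke the Algebraic Kolmogorov--Sinai Formula~\ref{AKSF} and the Algebraic Yuzvinski Formula~\ref{AYF} directly: both give $h_{alg}=m(\phi)$, so $\bigcup_n\mathbf{E}_{alg}(\Z^n)\setminus\{0\}=\{m(p(t)):p(t)\in\Z[t]\text{ primitive}\}\setminus\{0\}$ and similarly for $\Q^n$ the same set arises (every endomorphism of $\Q^n$ has a primitive integer characteristic polynomial by the definitions recalled in \S\ref{bernoulli-sec}). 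Hence $\inf\bigl(\bigcup_n\mathbf{E}_{alg}(\Q^n)\setminus\{0\}\bigr)=\inf\bigl(\bigcup_n\mathbf{E}_{alg}(\Z^n)\setminus\{0\}\bigr)$, and combined with Theorem~\ref{epsilon} this yields (b)$\Leftrightarrow$(c) at once.

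For the remaining and most substantial implication (c)$\Rightarrow$(a), I would argue as follows. Assume $\inf\bigl(\bigcup_{n\in\N_+}\mathbf{E}_{alg}(\Z^n)\setminus\{0\}\bigr)=0$; I must show every $r\in\R_{\geq0}\cup\{\infty\}$ lies in $\mathbf{E}_{alg}$. By hypothesis and the previous paragraph, $\mathfrak{L}=\inf(\mathbf{E}_{alg}\setminus\{0\})=0$, i.e.\ $\mathbf{E}_{alg}$ contains a sequence of nonzero values tending to $0$. By Lemma~\ref{semiclosed}, $\mathbf{E}_{alg}$ is a submonoid of $(\R_{\geq0}\cup\{\infty\},+)$ closed under countable sums of nonzero elements; in particular $\infty=\sum_{n}\varepsilon$ (for any fixed $\varepsilon>0$ in $\mathbf{E}_{alg}$, e.g.\ $\log 2=h_{alg}(\beta_{\Z(2)}^\oplus)$) lies in $\mathbf{E}_{alg}$, so it remains to realize every $r\in\R_{>0}$. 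Fix such $r$. Using that $\inf(\mathbf{E}_{alg}\setminus\{0\})=0$, I choose a sequence $\{a_k\}_{k\in\N_+}\subseteq\mathbf{E}_{alg}\setminus\{0\}$ greedily so that $\sum_k a_k=r$ exactly: having chosen $a_1,\dots,a_{k-1}$ with partial sum $s_{k-1}<r$, pick $a_k\in\mathbf{E}_{alg}\setminus\{0\}$ with $a_k\leq r-s_{k-1}$ (possible since values arbitrarily close to $0$ are available) and, moreover, such that the leftover $r-s_k$ can be forced to $0$ in the limit. Concretely, since $\mathbf{E}_{alg}$ is dense near $0$, at each step one can insist $a_k>\tfrac12(r-s_{k-1})$ when $r-s_{k-1}$ is small enough, which makes $r-s_k<\tfrac12(r-s_{k-1})$, so $s_k\to r$; then Lemma~\ref{semiclosed} gives $r=\sum_k a_k\in\mathbf{E}_{alg}$.

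The genuinely delicate point — and where I expect the main obstacle — is the density argument in the last step: I need, for arbitrarily small $\delta>0$, values of $\mathbf{E}_{alg}$ lying in the interval $(0,\delta)$, but to build a greedy sum converging \emph{exactly} to $r$ I really need values of $\mathbf{E}_{alg}$ that are \emph{dense in a neighborhood of $0$}, not merely accumulating at $0$. This is a strictly stronger statement than $\inf(\mathbf{E}_{alg}\setminus\{0\})=0$, so I would need to bootstrap it. The key is closure under multiplication by positive integers via the Logarithmic Law (Proposition~\ref{LL}): if $a\in\mathbf{E}_{alg}$ then $ka\in\mathbf{E}_{alg}$ for all $k\in\N_+$ (take $\phi^k$), and together with additivity (Lemma~\ref{semiclosed}) this shows $\mathbf{E}_{alg}$ contains the submonoid generated by any accumulating-at-$0$ sequence. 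A standard lemma then yields that a submonoid of $\R_{\geq0}$ closed under multiplication by $\N_+$ and containing elements arbitrarily close to $0$ is dense in $\R_{\geq0}$: given $0<a<\delta$ in the monoid and any target $t>0$, the multiples $a,2a,3a,\dots$ form an arithmetic progression with gap $a<\delta$ hitting within $\delta$ of $t$. Feeding this density into the greedy construction completes (c)$\Rightarrow$(a), closes the cycle, and proves the theorem. I would also remark that, exactly as in the topological case (Fact~\ref{LD-top}), the failure of these equivalent conditions forces $\mathbf{E}_{alg}$ to be countable, since then $\mathbf{E}_{alg}\setminus\{0\}$ is a discrete subset of $\R_{>0}$ bounded below by $\mathfrak{L}>0$ on each bounded interval.
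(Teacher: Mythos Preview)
Your approach is essentially the same as the paper's: establish density of $\mathbf{E}_{alg}$ in $\R_{\geq0}$ via the Logarithmic Law (multiples $ka$ of a small element $a$ hit every interval of length $>a$), then realize an arbitrary $r>0$ as a countable sum using Lemma~\ref{semiclosed}. The paper organizes the implications as (a)$\Rightarrow$(b), (c)$\Rightarrow$(b) trivial, (b)$\Rightarrow$(a), (b)$\Rightarrow$(c), but this is only a cosmetic difference from your cycle.

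One imprecision worth flagging in your (b)$\Rightarrow$(c) step: the identity $\bigcup_n\mathbf{E}_{alg}(\Z^n)\setminus\{0\}=\{m(p(t)):p(t)\in\Z[t]\text{ primitive}\}\setminus\{0\}$ is not correct as stated. Endomorphisms of $\Z^n$ have \emph{monic} characteristic polynomials, so you only get $\{m(p):p\text{ monic}\}$; it is the $\Q^n$ side that gives all primitive polynomials. The two sets are not obviously equal. The paper handles this more carefully: if the leading coefficient $s$ of $p_\phi$ satisfies $|s|>1$, then $m(\phi)\geq\log 2$, so for the infimum to be $0$ one may restrict to monic polynomials, and those are realized over $\Z^n$ via Proposition~\ref{AA_}. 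Your conclusion that the two infima agree is correct, but the route to it needs this observation rather than an identification of the sets.

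Finally, your closing remark on countability when $\mathfrak{L}>0$ is not part of Theorem~\ref{LD} and your justification (``discrete subset bounded below by $\mathfrak{L}$'') does not by itself yield countability; the paper proves this separately (the theorem following Theorem~\ref{LD}) via a transfinite application of the Addition Theorem showing $\mathbf{E}_{alg}\subseteq\mathbb{A}\cup\{\infty\}$.
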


\begin{proof} (a)$\Rightarrow$(b) and (c)$\Rightarrow$(b) are obvious.

\smallskip
(b)$\Rightarrow$(a) First we verify that under our hypothesis, $\mathbf E_{alg}\setminus \{0\}$ is dense in $\R_{\geq0}$. To this end, consider 
an interval $(a,b)\subseteq \R_{\geq0}$ and let $\varepsilon=b-a>0$. By our hypothesis, there exists $r \in {\bf E}_{alg} \cap (0,\varepsilon)$.  
Since $h_{alg}(\phi)<\varepsilon=a-b$, there exists $k\in\N_+$ such that $k r\in(a,b)$. By Proposition \ref{LL}, $k r \in\mathbf E_{alg}$, and this proves our claim.

Let $r\in\R_{\geq0}$. If $r=0$ we are done, as $0\in\mathbf E_{alg}$. Assume that $r>0$. Let $a_0=b_0=0$.  We are going to define inductively a sequence $\{a_n\}_{n\in\N}\subseteq \mathbf E_{alg}$ such that $b_n= \sum_{m=0}^na_m$ satisfies
\begin{equation}\label{SsS}
b_k \in \left(r-\frac{1}{k},r\right) \mbox{ for all }k \in \N_+.
\end{equation}
Since \eqref{SsS} clearly implies $\sum_{n\in\N_+}a_n=r$, Lemma \ref{semiclosed} would imply $r\in\mathbf E_{alg}$, and this concludes the proof since obviously $\infty\in \mathbf E_{alg}$.

Assume that $n>0$ and the elements $a_1, \ldots, a_{n-1}$ are defined so that  (\ref{SsS}) holds for all $1 \leq k \leq n-1$ (obviously, for $n=1$ this condition becomes vacuous, so imposes no restraint on $b_0=0$). By the density of $\mathbf E_{alg}\setminus\{0\}$ in $\R_{\geq0}$ proved in the previous step of the proof, there exists $a_n \in \mathbf E_{alg} \cap (r- b_{n-1} -\frac{1}{n},r- b_{n-1})$. Clearly, $b_n= \sum_{k=0}^na_k$ satisfies \eqref{SsS}, so this ends up the inductive definition. 

\smallskip
(b)$\Rightarrow$(c) By Theorem \ref{epsilon}, $\inf ({\bf E}_{alg}\setminus \{0\})=\inf(\mathbf E_{alg}(\Q^n)\setminus\{0\})$. For an endomorphism $\phi:\Q^n\to\Q^n$, let $p_\phi(x)=sx^n+\ldots$ be the characteristic polynomial of $\phi$ over $\Z$, with 
 $s$ positive. The Algebraic Yuzvinski Formula \ref{AYF} gives $h_{alg}(\phi)=\log s+\sum_{|\lambda_i|>1}\log|\lambda_i|$, where $\{\lambda_i:i=1,\ldots,n\}$ is the the family of all eigenvalues of $\phi$. If $s\neq 1$, then $\log s\geq \log 2$. So one may assume without loss of generality that $s=1$. In this case, there exists a free $\phi$-invariant subgroup $F\cong\Z^n$ of $\Q^n$ and $h_{alg}(\phi)=h_{alg}(\phi\restriction_{\Z^n})$ in view of Proposition \ref{AA_}. This gives (c).
\end{proof}

Let $\mathbb A_0$ be the set of all algebraic numbers $\alpha$ with $ |\alpha|\geq1$.
Then $\mathbb A_0$ is countable and  $\alpha\in \mathbb A_0$ yields $\overline\alpha \in  \mathbb A_0$, so 
$|\alpha|\in\mathbb A_0 $ as well (since $|\alpha|=\sqrt{\alpha\overline\alpha}$); in particular, $|\alpha| \in\mathbb A_0 \cap \R_{\geq 1}$. Then 
$$
\mathbb A=\bigcup_{n\in\N_+}\{\log|\alpha|:\alpha\in\mathbb A_0 \}
$$ 
 is a dense countable submonoid of $(\R_{\geq0},0)$.

\smallskip
We see now that a negative answer to Problem \ref{h>0-pb} would imply that $\mathbf E_{alg}$ is countable.

\begin{theorem}
If $\inf(\mathbf E_{alg}\setminus\{0\})>0$, then ${\bf E}_{alg}\subseteq \mathbb A\cup\{\infty\}$, so in particular it is countable. Moreover this inclusion is proper.
\end{theorem}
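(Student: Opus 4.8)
The plan is to establish the inclusion ${\bf E}_{alg}\subseteq\mathbb A\cup\{\infty\}$ by a three-stage reduction, and then to deduce that the inclusion is proper (and the set countable) directly from the hypothesis. I would first record the base case of finite-dimensional rational vector spaces. For an endomorphism $\phi:\Q^n\to\Q^n$ the Algebraic Yuzvinski Formula \ref{AYF} gives $h_{alg}(\phi)=m(\phi)=\log s+\sum_{|\lambda_i|>1}\log|\lambda_i|$, where $s\in\N_+$ is the positive leading coefficient of the primitive characteristic polynomial $p_\phi(t)\in\Z[t]$ and the $\lambda_i$ are its roots. Since $s\geq1$ we have $\log s=\log|s|\in\mathbb A$ (taking $\alpha=s\in\mathbb A_0$), and each $\lambda_i$ with $|\lambda_i|>1$ lies in $\mathbb A_0$, so $\log|\lambda_i|\in\mathbb A$; as $\mathbb A$ is a submonoid of $(\R_{\geq0},0)$, the finite sum lies in $\mathbb A$. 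Hence $\bigcup_{n\in\N_+}{\bf E}_{alg}(\Q^n)\subseteq\mathbb A$.

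Next I would treat an arbitrary $\Q$-vector space $V$ with a $\Q$-linear $\psi:V\to V$, viewed as a $\Q[t]$-module via $t\cdot v=\psi(v)$. Filter $V$ by the directed family $\{V_\alpha\}$ of its finitely generated $\Q[t]$-submodules; each $V_\alpha$ is $\psi$-invariant and $V=\varinjlim V_\alpha$, so Continuity (Proposition \ref{dirlim}) yields $h_{alg}(\psi)=\sup_\alpha h_{alg}(\psi\restriction_{V_\alpha})$. Assume $h_{alg}(\psi)<\infty$. Using Monotonicity \ref{restriction_quotient} and directedness, choose an increasing chain $V_{\beta_1}\subseteq V_{\beta_2}\subseteq\cdots$ of finitely generated $\Q[t]$-submodules with $h_{alg}(\psi\restriction_{V_{\beta_n}})\to h_{alg}(\psi)$. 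By the Addition Theorem \ref{AT} the increments $d_n:=h_{alg}(\psi\restriction_{V_{\beta_{n+1}}})-h_{alg}(\psi\restriction_{V_{\beta_n}})=h_{alg}(\overline\psi_{V_{\beta_{n+1}}/V_{\beta_n}})$ lie in ${\bf E}_{alg}$, and $\sum_n d_n=h_{alg}(\psi)-h_{alg}(\psi\restriction_{V_{\beta_1}})<\infty$; since $\inf({\bf E}_{alg}\setminus\{0\})>0$, all but finitely many $d_n$ vanish, so $h_{alg}(\psi)=h_{alg}(\psi\restriction_{V_{\beta_N}})$ for some $N$. Because this value is finite, $V_{\beta_N}$ admits no free $\Q[t]$-summand: a copy of $(\Q[t],\ \text{multiplication by }t)$ is conjugate to the right Bernoulli shift $\beta_\Q^\oplus$ of $\Q^{(\N)}$, which has $h_{alg}(\beta_\Q^\oplus)=\log|\Q|=\infty$ by Example \ref{beta}, and Monotonicity \ref{restriction_quotient} would force $h_{alg}(\psi\restriction_{V_{\beta_N}})=\infty$. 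Therefore $V_{\beta_N}$ is a torsion finitely generated $\Q[t]$-module, hence finite-dimensional over $\Q$, i.e. isomorphic to some $\Q^m$, and by the base case $h_{alg}(\psi)\in\mathbb A$.

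Now I would assemble the general case. Let $G$ be an abelian group and $\phi:G\to G$. The torsion subgroup $t(G)$ is $\phi$-invariant, so the Addition Theorem \ref{AT} gives $h_{alg}(\phi)=h_{alg}(\phi\restriction_{t(G)})+h_{alg}(\overline\phi)$ with $\overline\phi$ acting on the torsion-free group $G/t(G)$. Writing $t(G)=\bigoplus_p G_p$ as the direct sum of its $\phi$-invariant $p$-primary components and using Continuity \ref{dirlim} together with the Weak Addition Theorem \ref{WAT}, $h_{alg}(\phi\restriction_{t(G)})=\sum_p h_{alg}(\phi\restriction_{G_p})$; if this is finite, the hypothesis again forces all but finitely many summands to vanish, each surviving term lies in $\{n\log p:n\in\N\}\subseteq\mathbb A$ by Example \ref{exdgsz}(b), and so $h_{alg}(\phi\restriction_{t(G)})\in\mathbb A$. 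For the torsion-free quotient, Proposition \ref{AA} identifies ${\bf E}_{alg}(G/t(G))$ with ${\bf E}_{alg}(D(G/t(G)))$, reducing $h_{alg}(\overline\phi)$ to the $\Q$-vector space case treated above. Hence if $h_{alg}(\phi)<\infty$ both summands are finite and lie in $\mathbb A$, so $h_{alg}(\phi)\in\mathbb A$; otherwise $h_{alg}(\phi)=\infty$. This proves ${\bf E}_{alg}\subseteq\mathbb A\cup\{\infty\}$, which is countable since $\mathbb A$ is. Properness is immediate from the hypothesis: $\inf\big((\mathbb A\cup\{\infty\})\setminus\{0\}\big)=\inf(\mathbb A\setminus\{0\})=0$ (for instance $\log\sqrt[n]{2}=\tfrac{\log2}{n}\to0$ with $\sqrt[n]{2}\in\mathbb A_0$), while $\inf({\bf E}_{alg}\setminus\{0\})>0$, so the two sets cannot coincide.

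The main obstacle is the reduction in the second paragraph, namely passing from a supremum over arbitrarily large rational vector spaces to a single finite-dimensional block. This is exactly where the Addition Theorem \ref{AT} and the positivity hypothesis must be combined: positivity of $\inf({\bf E}_{alg}\setminus\{0\})$ converts the a priori merely convergent series of nonnegative ``entropy increments'' into one with only finitely many nonzero terms, after which the finite-dimensional structure theorem over $\Q[t]$ and the infiniteness of $h_{alg}(\beta_\Q^\oplus)$ (Example \ref{beta}) rule out a free part. Everything else is bookkeeping with Continuity, the Addition Theorem, Proposition \ref{AA}, and the already-established spectrum computations for $p$-groups and for $\Q^n$.
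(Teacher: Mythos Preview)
Your proof is correct and follows the same overall architecture as the paper: reduce via the Addition Theorem \ref{AT} and Proposition \ref{AA} to the torsion part (handled by the known spectrum) plus a $\Q$-vector space, settle $\Q^n$ by the Algebraic Yuzvinski Formula \ref{AYF}, and then use the hypothesis $\inf(\mathbf E_{alg}\setminus\{0\})>0$ to collapse the infinite-dimensional case to a finite-dimensional one. The properness argument is the same as the paper's.

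The one genuine difference is in how the infinite-dimensional $\Q$-vector space is handled. The paper builds a transfinite ascending chain $\{V_\alpha\}_{\alpha<\kappa}$ with $\dim V_{\alpha+1}/V_\alpha<\infty$ by hand and runs a transfinite induction, invoking the hypothesis at each limit ordinal. You instead filter by finitely generated $\Q[t]$-submodules, extract a countable chain whose entropies converge to $h_{alg}(\psi)$, telescope via the Addition Theorem, and use the hypothesis once to make the increment series finitely supported; then the structure theorem over the PID $\Q[t]$ (no free summand because $h_{alg}(\beta_\Q^\oplus)=\infty$) delivers finite $\Q$-dimension. Your route is a bit more streamlined in that it avoids transfinite induction and packages the ``finite-dimensional quotient'' step into the module structure theorem; the paper's route is more self-contained in that it never invokes that structure theorem. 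A minor point: your decomposition of $t(G)$ into $p$-primary components works, but the paper simply quotes Example \ref{exdgsz}(a) to get $h_{alg}(\phi\restriction_{t(G)})\in\log\N_+\subseteq\mathbb A$ in one line.
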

\begin{proof} 
We verify that $\mathbf E_{alg}(G)\subseteq \mathbb A$ for every abelian group $G$. So let $G$ be an abelian group and $\phi:G\to G$ an endomorphism. By the Addition Theorem \ref{AT}, $h_{alg}(\phi)=h_{alg}(\phi\restriction_{t(G)})+h_{alg}(\overline\phi)$, where $\overline\phi:G/t(G)\to G/t(G)$ is the endomorphism induced by $\phi$. Now, $h_{alg}(\phi\restriction_{t(G)})\in\log\N\subseteq \mathbb A$. So it suffices to verify that $h_{alg}(\overline\phi)\in \mathbb A$; in other words, we can assume without loss of generality that $G$ is torsion-free. By Proposition \ref{AA} we can assume also that $G$ is divisible, that is, $G=V$ is a vector space over $\Q$.

Assume first that $V$ is a finite dimensional vector space over $\Q$, that is, $V\cong \Q^n$. By the Algebraic Yuzvinski Formula \ref{AYF}, $h_{alg}(\phi)=\log s+\sum_{|\lambda_i|>1}\log|\lambda_i|$, where $s$ is the leading coefficient of the characteristic polynomial of $\phi$ over $\Z$ and $\{\lambda_i:i=1,\ldots,n\}$ is the family of all the eigenvalues of $\phi$. Obviously, $h_{alg}(\phi)\in \mathbb A$.

Let now $V$ be an arbitrary vector space over $\Q$ of infinite dimension $\kappa$. If $h_{alg}(\phi)=\infty$ we are done, so we suppose that $h_{alg}(\phi)<\infty$. 

First we construct by transfinite induction an ordinal chain $\{V_\alpha\}_{\alpha<\kappa}$ of $\phi$-invariant subspaces of $V$ such that 
\begin{itemize}
\item[(i)] $V=\bigcup_{\alpha<\kappa}V_\alpha$; 
\item[(ii)] $\dim V_{\alpha + 1}/V_\alpha <\infty$ whenever $\alpha < \kappa$; and 
\item[(iii)] $V_\beta=\bigcup_{\alpha<\beta}V_\alpha$ whenever $\beta < \kappa$ is a limit ordinal.
\end{itemize}

Let $V_0 = 0$. Assume $0< \beta < \kappa$  so that all $V_\alpha$, for $\alpha<\beta$ are defined satisfying (ii) and (iii). If is $\beta$ a limit ordinal let $V_\beta=\bigcup_{\alpha<\beta}V_\alpha$. If $\beta=\alpha+1$ for some ordinal $\alpha$, let $\pi_\alpha:V\to V/V_\alpha$ be the canonical projection and let $\overline\phi_\alpha:V/V_\alpha\to V/V_\alpha$ be the endomorphism induced by $\phi$. 
Pick $x_\beta\in V\setminus V_\alpha$, so that  $\pi_\alpha(x_\beta)\in V/V_\alpha\setminus\{0\}$. Then $W_\beta=\langle T(\overline\phi_\alpha,\pi_\alpha(x_\beta))\rangle$ is finite-dimensional, since otherwise the flow $(W, \overline\phi_\alpha \restriction_{W_\beta})$ would be conjugated to the right Bernoulli shift $\beta_\Q^\oplus$, which would contradict $$h_{alg}(\overline\phi_\alpha \restriction_{W_\beta})\leq h_{alg}(\overline\phi_\alpha) \leq h_{alg}(\phi)<\infty = h_{alg}(\beta_\Q^\oplus)$$ (see Example \ref{beta} for the last equality). Then  $V_\beta = \pi_\alpha^{-1}(\langle T(\overline\phi_\alpha,\pi_\alpha(x_\beta))\rangle)$ is a subspace of $V$ containing $V_\alpha$ with $V_\beta/V_\alpha \cong W_\alpha$, so $\dim V_\beta/V_\alpha< \infty$. This ends up the inductive construction of the chain $\{V_\alpha\}_{\alpha<\kappa}$. 

For completeness, let $V_\kappa = V$ and for every $\alpha\leq \kappa$ let $\phi_\alpha=\phi\restriction_{V_\alpha}$. We prove by transfinite induction that $h_{alg}(\phi_\alpha)\in\mathbb A$. This is obvious for $\alpha = 0$ as $\f_0 =0$. Assume $0 < \beta \leq \kappa$ and $h_{alg}(\phi_\gamma)\in\mathbb A$ for all $\gamma < \beta$. Let $\overline \phi_{\beta,\gamma}:V_\beta/V_\gamma\to V_\beta/V_\gamma$ be the endomorphism induced by $\phi_\beta$ Êfor all $\gamma < \beta$. 
If $\beta=\gamma+1$ for some ordinal $\gamma$, then $h_{alg}(\phi_\beta)=h_{alg}(\phi_\gamma)+h_{alg}(\overline\phi_{\beta,\gamma})$ by the Addition Theorem \ref{AT}. As $h_{alg}(\phi_\gamma)\in\mathbb A$ by the inductive hypothesis, and $h_{alg}(\overline\phi_{\beta,\gamma})\in\mathbb A$ by $\dim V_\beta/V_\gamma<\infty $, we conclude that $h_{alg}(\phi_\beta)\in\mathbb A$ as well. 

If $\beta$ is a limit ordinal, then $V_\beta=\varinjlim_{\gamma<\beta} V_\alpha$ and $\phi_\beta=\varinjlim_{\gamma<\beta}\phi_\gamma$. So Proposition \ref{dirlim} gives $h_{alg}(\phi_\beta)=\sup_{\gamma<\beta}h_{alg}(\phi_\gamma)$; in particular $h_{alg}(\phi_\beta)$ is the limit of the non-decreasing net $\{h_{alg}(\phi_\gamma)\}_{\gamma<\beta}$. By the Addition Theorem \ref{AT}, $h_{alg}(\phi_\beta)=h_{alg}(\phi_\gamma)+h_{alg}(\overline\phi_{\beta,\gamma})$ for every $\gamma<\beta$. So, if the net $\{h_{alg}(\phi_\gamma)\}_{\gamma<\beta}$ converges properly to $h_{alg}(\phi_\beta)$, then the net $\{h_{alg}(\overline\phi_{\beta,\gamma})\}_{\gamma<\beta}$ converges properly to $0$, against the hypothesis that $\inf(\mathbf E_{alg}\setminus\{0\})>0$. Therefore there exists $\gamma<\beta$ such that $h_{alg}(\phi_\beta)=h_{alg}(\phi_\gamma)$. By inductive hypothesis $h_{alg}(\phi_\beta)\in\mathbb A$. Hence $h_{alg}(\phi)= h_{alg}(\phi_\kappa)\in\mathbb A$.

\smallskip
Finally, the inclusion ${\bf E}_{alg}\subseteq \mathbb A\cup\{\infty\}$ is proper since $\mathbb A$, contrary to ${\bf E}_{alg}\setminus \{\infty\}$, is dense in $\R_{\geq0}$. 
\end{proof}

If $\inf ({\bf E}_{alg}\setminus \{0\})>0$, then obviously $\log|\alpha|\not\in\mathbf E_{alg}$ whenever $\log|\alpha|< \inf({\bf E}_{alg}\setminus \{0\})$, so we have the following consequence.

\begin{corollary} The following conditions are equivalent:
\begin{itemize}
  \item[(a)] $\inf ({\bf E}_{alg}\setminus \{0\})>0$;
  \item[(b)] there exists an algebraic number $\alpha$ with $|\alpha| > 1$, such that $0< \log|\alpha|\not\in \mathbf E_{alg}$;
\end{itemize}
\end{corollary}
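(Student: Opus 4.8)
The plan is to derive both implications directly from \textbf{Theorem \ref{LD}} (the list of equivalences characterising $\inf(\mathbf E_{alg}\setminus\{0\})=0$) together with the density and countability of the submonoid $\mathbb A\subseteq(\R_{\geq0},0)$ recorded just above the corollary. No new machinery is needed; the whole argument is a short contrapositive packaging of facts already proved.

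First I would handle (a)$\Rightarrow$(b). Assume $c:=\inf(\mathbf E_{alg}\setminus\{0\})>0$. Since $\mathbb A$ is dense in $\R_{\geq0}$, the open interval $(0,c)$ contains some element of $\mathbb A$, i.e.\ there is an algebraic number $\alpha$ with $|\alpha|\geq1$ and $\log|\alpha|\in(0,c)$; as $\log|\alpha|>0$ this forces $|\alpha|>1$. Now $\log|\alpha|$ is a nonzero real strictly below $\inf(\mathbf E_{alg}\setminus\{0\})$, hence $\log|\alpha|\notin\mathbf E_{alg}$ (it cannot lie in $\mathbf E_{alg}\setminus\{0\}$ by the definition of the infimum, and it is $\neq0$). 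This produces the witness required in (b).

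For (b)$\Rightarrow$(a) I would argue by contraposition: suppose (a) fails, so $\inf(\mathbf E_{alg}\setminus\{0\})=0$. By Theorem \ref{LD} (equivalence of its conditions (a) and (b)) this gives $\mathbf E_{alg}=\R_{\geq0}\cup\{\infty\}$. In particular $\log|\alpha|\in\mathbf E_{alg}$ for \emph{every} algebraic number $\alpha$ with $|\alpha|>1$, so no witness as in (b) exists; thus (b) fails. This completes the equivalence.

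There is essentially no obstacle here: every ingredient — the density and countability of $\mathbb A$, the equivalences of Theorem \ref{LD}, and the elementary remark that a nonzero value below $\inf(\mathbf E_{alg}\setminus\{0\})$ cannot belong to $\mathbf E_{alg}$ — has already been established in the excerpt, so the proof is a two-line deduction in each direction. The only point worth stating carefully is the passage "$\log|\alpha|\in(0,c)\Rightarrow|\alpha|>1$", to make sure the witness meets the strict inequality $|\alpha|>1$ demanded in (b).
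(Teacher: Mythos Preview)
Your proof is correct and follows essentially the same approach as the paper, which merely records the observation ``if $\inf ({\bf E}_{alg}\setminus \{0\})>0$, then obviously $\log|\alpha|\not\in\mathbf E_{alg}$ whenever $\log|\alpha|< \inf({\bf E}_{alg}\setminus \{0\})$'' before stating the corollary without further proof. Your write-up simply makes explicit the two ingredients the paper leaves implicit: the density of $\mathbb A$ for (a)$\Rightarrow$(b), and the appeal to Theorem~\ref{LD} for the contrapositive of (b)$\Rightarrow$(a).
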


Recall that $\log|\alpha|\not\in\mathbf E_{alg}$ means that $\log|\alpha|$ cannot be the entropy of any abelian group endomorphism.

\medskip

Let us conclude with a variation of Problem \ref{h>0-pb} (equivalent to Lehmer's
problem) based on a different approach to \eqref{***}. 

For a class $\mathcal V$ of groups let 
$$
{\mathbf E}_{alg}^{\mathcal V} = \bigcup \{{\mathbf E}_{alg}(G): G \in \mathcal V \},
$$
so that ${\mathbf E}_{alg}^{\mathcal A} = {\mathbf E}_{alg}$ where ${\mathcal A}$ is the class of abelian groups. 
 By Example \ref{Exaaaample}(a), ${\mathbf E}_{alg}^{\mathcal{TA}} \setminus\{0\} = \log 2$ for  the class $\mathcal{TA}$ of torsion abelian groups. 

\begin{problem} Is there a class  ${\mathcal V}$ of groups containing $\mathcal{A}$ such that $\inf ({\mathbf E}_{alg}^{\mathcal V} \setminus\{0\} )= 0$ ? 
\end{problem}

\subsection{Algebraic entropy on locally compact groups}\label{alc-sec}

In \cite{Pet1} Peters gave a further generalization of the entropy he defined in \cite{P}, using Haar measure, for topological automorphisms of locally compact abelian groups. In \cite{V} Virili modified Peters's definition, in the same way as in \cite{DG} for the discrete case, obtaining a new notion of algebraic entropy for endomorphisms $\phi$ of locally compact abelian groups $G$. We give it now removing the hypothesis that the group has to be abelian.

Let $G$ be a locally compact group, $\mu$ a right Haar measure on $G$ and $U\in\mathcal C(G)$; the \emph{algebraic entropy of $\phi$ with respect to $U$} is $$H_{alg}(\phi,U)=\limsup_{n\to\infty}\frac{\log\mu(T_n(\phi,U))}{n},$$
and the \emph{algebraic entropy} of $\phi$ is $$h_{alg}(\phi)=\sup\{H_{alg}(\phi,U):U\in\mathcal C(G)\}.$$

\smallskip
The algebraic entropy defined in the discrete case is a particular case of this definition. Indeed, discrete groups are locally compact and in the discrete case it is possible to choose as Haar measure the cardinality of subsets. So we call this entropy still algebraic entropy and adopt the same notation $h_{alg}$ as in the discrete case.

\begin{example}\label{idG}
Let $G$ be a locally compact abelian group.
\begin{itemize}
\item[(a)]It was proved in \cite[Example 2.5]{V} that $h_{alg}(id_G)=0$ in case $G$ is an abelian group, as a consequence of \cite[Theorem 7.9]{EG} stating that, for $\mu$ a Haar measure on $G$, $U\in\C(G)$ and $n\in\N_+$, the map $\N_+\to \R_{\geq 0}\cup\{\infty\}$ given by $n\mapsto\mu(U_{(n)})$ is polynomial.
\item[(b)] If $\phi:G\to G$ is continuous endomorphism and $U\in\mathcal C(G)$ is such that $\phi(U)\subseteq U$, then $T_n(\f,U) = U_{(n)}$ for every $n\in\N_+$, so $H_{alg}(\phi,U)\leq H_{alg}(id_G,U)=0$ by item (a).
\end{itemize}
\end{example}

We recall in Lemma \ref{mon} and Proposition \ref{prop} the basic properties proved in \cite{V} and \cite{GV} in the abelian case, which hold also in the general case. We omit their proofs as they are the same as those given in the abelian case.

\begin{lemma}\label{mon}
Let $G$ be a locally compact group and $\phi:G\to G$ a continuous endomorphism.
\begin{itemize}
\item[(a)] If $C,C'\in\mathcal C(G)$ and $C\subseteq C'$, then $H_{alg}(\phi,C)\leq H_{alg}(\phi,C')$.
\item[(b)] If $\mathcal C$ is a cofinal subfamily of $\mathcal C(G)$, then $h_{alg}(\phi)=\sup\{H_{alg}(\phi,U):U\in\mathcal C\}$.
\item[(c)] If $H$ is an open $\phi$-invariant subgroup of $G$, then $H_{alg}(\phi\restriction_H,C)= H_{alg}(\phi,C)$ for every $C\in\c(H)$. In particular $h_{alg}(\phi\restriction_H)\leq h_{alg}(\phi)$.
\end{itemize}
\end{lemma}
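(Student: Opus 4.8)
\textbf{Proof proposal for Lemma \ref{mon}.}

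The plan is to prove the three items directly from the definition of $H_{alg}(\phi,-)$ and elementary properties of the right Haar measure $\mu$, in essentially the same way as in the discrete case but now working with measures of compact neighborhoods instead of cardinalities of finite subsets.

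For item (a), the key observation is that the trajectory map is monotone: if $C \subseteq C'$ then $\phi^i(C) \subseteq \phi^i(C')$ for every $i \in \N$, hence $T_n(\phi,C) = C \cdot \phi(C) \cdots \phi^{n-1}(C) \subseteq C' \cdot \phi(C') \cdots \phi^{n-1}(C') = T_n(\phi,C')$ for every $n \in \N_+$. Since $\mu$ is a (monotone) measure, $\mu(T_n(\phi,C)) \leq \mu(T_n(\phi,C'))$, and passing to the $\limsup$ of $\tfrac{1}{n}\log$ of both sides yields $H_{alg}(\phi,C) \leq H_{alg}(\phi,C')$. One must only note that all $T_n(\phi,C)$ are compact (finite products of compact sets are compact, via the multiplication map $G\times G\to G$), so the measures involved are finite and the logarithms are well-defined (with the usual convention if some $\mu(T_n)=0$, though since $C$ is a neighborhood of $e_G$ this does not occur).

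For item (b), recall that $h_{alg}(\phi) = \sup\{H_{alg}(\phi,U) : U \in \mathcal C(G)\}$ by definition. Given a cofinal subfamily $\mathcal C \subseteq \mathcal C(G)$, the inequality $\sup\{H_{alg}(\phi,U):U\in\mathcal C\} \leq h_{alg}(\phi)$ is trivial. For the reverse, fix an arbitrary $U \in \mathcal C(G)$; by cofinality there exists $U' \in \mathcal C$ with $U \subseteq U'$, and item (a) gives $H_{alg}(\phi,U) \leq H_{alg}(\phi,U') \leq \sup\{H_{alg}(\phi,V):V\in\mathcal C\}$. Taking the supremum over $U \in \mathcal C(G)$ gives the claim. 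For item (c), when $H$ is an open $\phi$-invariant subgroup, $\phi\restriction_H$ maps $H$ to $H$, so for $C \in \mathcal C(H)$ the trajectory $T_n(\phi\restriction_H, C)$ computed inside $H$ literally coincides with $T_n(\phi,C)$ computed inside $G$ (it is the same subset of $G$). The point to check is that the restriction of a right Haar measure of $G$ to the open subgroup $H$ is a right Haar measure of $H$ (this is standard: an open subgroup is also closed, and restriction of Haar measure to an open subgroup is Haar on that subgroup), so by Remark \ref{Base}(a)-type reasoning the value $H_{alg}(\phi\restriction_H, C)$ does not depend on which normalization we use and equals $H_{alg}(\phi,C)$. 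Since $\mathcal C(H) \subseteq \mathcal C(G)$, taking suprema gives $h_{alg}(\phi\restriction_H) = \sup_{C\in\mathcal C(H)} H_{alg}(\phi,C) \leq \sup_{U\in\mathcal C(G)} H_{alg}(\phi,U) = h_{alg}(\phi)$.

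The only genuinely non-routine point is the measure-theoretic bookkeeping in item (c) — namely that the restriction of the right Haar measure of $G$ to an open subgroup $H$ is again a right Haar measure of $H$ (up to the irrelevant positive scalar), so that the entropy computed in $H$ agrees with that computed in $G$. Everything else is monotonicity of trajectories plus monotonicity of $\mu$, and cofinality. I expect this to be the main (mild) obstacle only in the sense of requiring a careful statement; the proofs are otherwise the verbatim analogues of the discrete-group arguments in \cite{DG}, which is why the authors state that they omit them.
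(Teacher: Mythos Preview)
Your proposal is correct and follows exactly the expected route: monotonicity of trajectories plus monotonicity of the Haar measure for (a), a cofinality argument for (b), and for (c) the observation that $T_n(\phi\restriction_H,C)=T_n(\phi,C)$ together with the fact that the restriction of a right Haar measure to an open subgroup is again a right Haar measure (so Remark \ref{Base}(a) applies). The paper itself omits the proof, noting that it is identical to the abelian case treated in \cite{V} and \cite{GV}; your argument is precisely that standard verification.
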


Let  $G$ be a locally compact group, $\mu$ a right Haar measure on $G$, $\phi:G\to G$ a continuous endomorphism and $C\in \mathcal C(G)$. If the sequence $\left\{\frac{\log\mu(T_n(\phi,C))}{n}\right\}_{n\in\N_+}$ is convergent, we say that {\em the $\phi$-trajectory of $C$ converges}. In particular, if the $\phi$-trajectory of $C$ converges, then the $\limsup$ in the definition of $H_A(\phi,C)$ becomes a limit:
$$
H_{alg}(\phi,C)=\lim_{n\to\infty}\frac{\log\mu(T_n(\phi,C))}{n}.
$$

If $G$ is a compact or discrete group, then the $\phi$-trajectory of $C$ converges for every continuous endomorphism
$\phi:G\to G$ and $C\in\c(G)$. Indeed, the compact case is obvious as the values of the measure form a bounded subset of the reals (so the above sequence always converges to $0$). For the discrete case we refer to the previous section.

\begin{proposition}\label{prop}
 Let $G$ and $H$ be locally compact groups, $\phi:G\to G$ and $\psi:H\to H$ continuous endomorphisms. 
\begin{itemize}
\item[(a)]\emph{(Invariance under conjugation)} If there exists a topological isomorphism $\alpha : G \to H$ such that $\phi = \alpha^{-1}\psi\alpha$, then 
$H_{alg}(\phi,C)=H_{alg}(\psi,\alpha (C))$ for every $C\in\c(G)$. In particular $h_{alg}(\phi) = h_{alg}(\psi)$. 
\item[(b)] \emph{(Weak Addition Theorem)} For every $C_1\in\mathcal (G)$ and $C_2\in \mathcal C(H)$,
\begin{equation}\label{ATPP}
H_{alg}(\phi\times\psi,C_1\times C_2)\leq H_{alg}(\phi,C_1)+H_{alg}(\psi,C_2).
\end{equation}
In particular $h_{alg}(\phi \times \psi)\leq h_{alg}(\phi)+h_{alg}(\psi)$. Furthermore, if the $\phi$-trajectory of $C_1$ converges, then equality holds in \eqref{ATPP}.
 \item[(c)] Let $C\in\mathcal C(G)$. Then 
\begin{equation*}
H_{alg}(\phi\times \phi,C\times C)= 2H_{alg}(\phi,C).
\end{equation*}
In particular $h_{alg}(\phi\times \phi)=2 h_{alg}(\phi).$
\item[(d)] If $\phi$ is a topological automorphism, then $$h_{alg}(\phi^{-1} ) = h_{alg}(\phi)-\log (\mod_G(\phi)).$$ 
\item[(e)] \emph{(Continuity)} Suppose that $\{ H_i  :  i \in I\}$ is a directed system of open $\phi$-invariant subgroups of $G$ such that $G=\varinjlim H_i$. Then $h_{alg}(\phi)=\sup_{i\in I}h_{alg}(\phi\restriction_{H_i})$.
\end{itemize}
\end{proposition}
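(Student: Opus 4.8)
The plan is to reduce each of the five assertions to a statement about Haar measures of trajectories and then to exploit the uniqueness of the (right) Haar measure; throughout I fix a right Haar measure $\mu_G$ on each locally compact group $G$ that occurs. For (a), from $\phi=\alpha^{-1}\psi\alpha$ one gets $\phi^k=\alpha^{-1}\psi^k\alpha$, hence $\alpha(\phi^k(C))=\psi^k(\alpha(C))$, and since $\alpha$ is a homomorphism $\alpha(T_n(\phi,C))=T_n(\psi,\alpha(C))$ for every $n\in\N_+$. The set function $A\mapsto\mu_H(\alpha(A))$ is a right-invariant Radon measure on $G$, so it equals $c\,\mu_G$ for some $c>0$; applying this with $A=T_n(\phi,C)$, taking logarithms, dividing by $n$ and letting $n\to\infty$ annihilates the constant and gives $H_{alg}(\phi,C)=H_{alg}(\psi,\alpha(C))$. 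As $\alpha$ is a homeomorphism it carries $\mathcal C(G)$ bijectively onto $\mathcal C(H)$, so passing to suprema yields $h_{alg}(\phi)=h_{alg}(\psi)$.

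The core of (b), (c) and (e) is the behaviour of trajectories under products and under restriction to open subgroups. Since $(\phi\times\psi)^k=\phi^k\times\psi^k$, one has $(\phi\times\psi)^k(C_1\times C_2)=\phi^k(C_1)\times\psi^k(C_2)$, and multiplying such boxes coordinatewise gives the key identity $T_n(\phi\times\psi,C_1\times C_2)=T_n(\phi,C_1)\times T_n(\psi,C_2)$. Taking $\mu_G\times\mu_H$ as right Haar measure on $G\times H$, the measure of this box is $\mu_G(T_n(\phi,C_1))\cdot\mu_H(T_n(\psi,C_2))$; writing $a_n=\log\mu_G(T_n(\phi,C_1))$ and $b_n=\log\mu_H(T_n(\psi,C_2))$, inequality \eqref{ATPP} is precisely $\limsup_n\frac{a_n+b_n}{n}\le\limsup_n\frac{a_n}{n}+\limsup_n\frac{b_n}{n}$, with equality as soon as $a_n/n$ converges, i.e. when the $\phi$-trajectory of $C_1$ converges. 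The estimate $h_{alg}(\phi\times\psi)\le h_{alg}(\phi)+h_{alg}(\psi)$ then follows because every $U\in\mathcal C(G\times H)$ sits inside the box $\pi_1(U)\times\pi_2(U)$, after which one applies the monotonicity of $H_{alg}(\,\cdot\,,-)$ from Lemma \ref{mon}(a). Part (c) is the instance $H=G$, $\psi=\phi$, $C_1=C_2=C$: there $\mu_{G\times G}(T_n(\phi\times\phi,C\times C))=\mu_G(T_n(\phi,C))^2$, so $H_{alg}(\phi\times\phi,C\times C)=2H_{alg}(\phi,C)$ with no convergence hypothesis, giving $h_{alg}(\phi\times\phi)\ge 2h_{alg}(\phi)$, while the reverse inequality is (b). For (e), the inequality $\sup_i h_{alg}(\phi\restriction_{H_i})\le h_{alg}(\phi)$ is Lemma \ref{mon}(c); conversely, any $C\in\mathcal C(G)$ is compact, the $H_i$ are open and directed, so $C\subseteq H_i$ for some $i$, whence $C\in\mathcal C(H_i)$ and $H_{alg}(\phi,C)=H_{alg}(\phi\restriction_{H_i},C)\le h_{alg}(\phi\restriction_{H_i})$ by Lemma \ref{mon}(c); taking the supremum over $C$ finishes the argument.

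I expect (d) to be the main obstacle. Since $\phi$ is a topological automorphism, $\mu_G(\phi^k(A))=\mod_G(\phi)^k\mu_G(A)$ for every $k\in\Z$. Applying $\phi^{n-1}$ to $T_n(\phi^{-1},U)=U\cdot\phi^{-1}(U)\cdots\phi^{-(n-1)}(U)$ turns it into the \emph{reversed} trajectory $\phi^{n-1}(U)\phi^{n-2}(U)\cdots\phi(U)\,U=T^{\#}_n(\phi,U)$, so $\mu_G(T_n(\phi^{-1},U))=\mod_G(\phi)^{-(n-1)}\mu_G(T^{\#}_n(\phi,U))$ and hence $H_{alg}(\phi^{-1},U)=-\log\mod_G(\phi)+H^{\#}_{alg}(\phi,U)$. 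When $G$ is abelian the order of the factors is irrelevant, $T^{\#}_n(\phi,U)=T_n(\phi,U)$, and one gets $h_{alg}(\phi^{-1})=h_{alg}(\phi)-\log\mod_G(\phi)$ immediately --- this is the computation of \cite{V,GV} that transfers verbatim. In the non-abelian case one must still identify the exponential growth rate of the reversed trajectories with that of the usual ones: reducing to symmetric $U$ (allowed by Lemma \ref{mon}(a)) gives $(T_n(\phi,U))^{-1}=T^{\#}_n(\phi,U)$, and it then remains to compare $\mu_G(A)$ with $\mu_G(A^{-1})$ on the compact sets $A=T_n(\phi,U)$ through the modular function $\Delta_G$, using $\Delta_G\circ\phi=\Delta_G$ to keep the correction under control as $n\to\infty$. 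This last step is the one point where the commutativity-free statement is genuinely not a verbatim copy of the abelian proof, and I would isolate it as a preliminary lemma before assembling (d).
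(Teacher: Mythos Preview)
The paper does not give a proof of this proposition at all: it writes ``We omit their proofs as they are the same as those given in the abelian case'' and refers to \cite{V} and \cite{GV}. Your arguments for (a), (b), (c) and (e) are correct and are precisely the abelian computations transported verbatim; in particular your use of the product identity $T_n(\phi\times\psi,C_1\times C_2)=T_n(\phi,C_1)\times T_n(\psi,C_2)$, the monotonicity Lemma~\ref{mon}, and the cofinality of boxes in $\mathcal C(G\times H)$ is exactly what the omitted abelian proof does.

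For (d) you have actually been more careful than the paper. Your chain $\phi^{n-1}\bigl(T_n(\phi^{-1},U)\bigr)=T_n^{\#}(\phi,U)$ and, for symmetric $U$, $T_n^{\#}(\phi,U)=(T_n(\phi,U))^{-1}$, is correct, and it shows that the abelian argument yields in general only
\[
h_{alg}(\phi^{-1})=h_{alg}^{\#}(\phi)-\log\mod_G(\phi),
\]
so that (d) as stated is \emph{equivalent} to $h_{alg}(\phi)=h_{alg}^{\#}(\phi)$ for the automorphism $\phi$. The paper itself, in the Remark following Lemma~\ref{subadd}, records this equality as an open question (even in the discrete case). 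Your proposed fix via $\Delta_G\circ\phi=\Delta_G$ does not close the gap: writing $x\in T_n(\phi,U)$ as $u_0\phi(u_1)\cdots\phi^{n-1}(u_{n-1})$ gives $\Delta_G(x)=\prod_i\Delta_G(u_i)\in[m^n,M^n]$ with $m=\min_U\Delta_G$, $M=\max_U\Delta_G$, so the ratio $\mu_G(A^{-1})/\mu_G(A)$ for $A=T_n(\phi,U)$ is only controlled up to an exponential factor that survives division by $n$. In other words, the obstruction you isolate is real, and the paper's blanket claim that the proof ``is the same as in the abelian case'' is, for item (d), at best an overstatement outside the unimodular (in particular abelian) setting.
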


Item (d) shows that the equality $h_{alg}(\phi^{-1} ) = h_{alg}(\phi)$ may fail for an automorphism of a non-compact locally compact group. Peters's entropy $h_\infty(\phi)$, defined in  \cite{Pet1} for an automorphism $\f$ of a locally compact abelian group, is nothing else but $h_{alg}(\phi^{-1} )$ in our terms. Hence his notion substantially differs from the algebraic entropy considered here.

\medskip
Using absolutely similar arguments as in Proposition \ref{no-mu}, one obtains Proposition \ref{h_mu(U)-index1} giving a measure-free formula also for the algebraic entropy in totally disconnected locally compact groups. 

For a group $G$ and an endomorphism $\phi:G\to G$, the $n$-th $\phi$-trajectory $T_n(\phi,H)$ of a subgroup $H$ of $G$ need not be a subgroup. So we use the following generalization of the concept of index. For a group $G$, a subgroup $H$ of $G$ and a subset $Y$ of $G$ of the form $Y=HZ$ for some subset $Z$ of $G$, let $[Y:H]=|\{Hz:z\in Z\}|$ be the {\em index} of $H$ in $Y$.

Note that, if $G$ is a totally disconnected locally compact group, $\phi:G\to G$ a continuous endomorphism and $U\in\mathcal B(G)$, then $T_n(\phi,U)=U\phi(T_{n-1}(\phi,U))$ is a compact subset of $G$ while each coset $Ux$ is open, hence $[T_n(\phi,U):U]$ is finite.
After this observation we can state the following proposition.

\begin{proposition}\label{h_mu(U)-index1}
Let $G$ be a totally disconnected locally compact group, $\phi:G\to G$ a continuous endomorphism and $U\in\mathcal B(G)$. Then
$$H_{alg}(\phi,U)=\lim_{n\to\infty}\frac{\log [T_n(\phi,U):U]}{n}.$$
\end{proposition}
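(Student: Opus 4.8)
The plan is to mimic the argument of Proposition~\ref{no-mu}, using the right Haar measure $\mu$ on $G$ only as a bookkeeping device and then eliminating it. Fix a right Haar measure $\mu$ on $G$ and set $u = \mu(U) \in (0,\infty)$, which is finite since $U$ is compact and positive since $U$ is a non-empty open set. First I would observe, as in the remark preceding Proposition~\ref{h_mu(U)-index1}, that for each $n\in\N_+$ the trajectory $T_n(\phi,U)=U\cdot\phi(U)\cdots\phi^{n-1}(U)$ is a compact set which is a union of finitely many right cosets of the open subgroup $U$ (it has the form $UZ$ for a finite $Z$), so $[T_n(\phi,U):U]$ is a well-defined positive integer and
$$\mu(T_n(\phi,U)) = u\cdot [T_n(\phi,U):U],$$
by right invariance of $\mu$ (the cosets $Uz$ are pairwise disjoint or equal and each has measure $u$).

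Next I would substitute this into the definition of $H_{alg}(\phi,U)$:
$$H_{alg}(\phi,U)=\limsup_{n\to\infty}\frac{\log\mu(T_n(\phi,U))}{n}=\limsup_{n\to\infty}\frac{\log u + \log[T_n(\phi,U):U]}{n}=\limsup_{n\to\infty}\frac{\log[T_n(\phi,U):U]}{n},$$
the last equality because $\frac{\log u}{n}\to 0$. It remains to upgrade this $\limsup$ to a genuine limit. For that I would show that the sequence $a_n = \log\mu(T_n(\phi,U)) = \log u + \log[T_n(\phi,U):U]$ is subadditive: from the identity $T_{n+m}(\phi,F)=T_n(\phi,F)\cdot\phi^n(T_m(\phi,F))$ used in Lemma~\ref{subadd}, together with the fact that for compact sets $A,B$ one has $\mu(AB)\le \mu(A)\mu(B)/u$ when $A,B$ are unions of right $U$-cosets (indeed $[AB:U]\le [A:U]\,[B:U]$ since $AB$ is covered by the $[A:U]\,[B:U]$ cosets $U z_i z_j'$ where $A=\bigcup U z_i$, $B=\bigcup U z_j'$ — note $\phi^n(B)$ need not be $U$-coset-decomposable on the left, so I will instead phrase the subadditivity directly in terms of the index, arguing $[T_{n+m}(\phi,U):U]\le [T_n(\phi,U):U]\cdot[\phi^n(T_m(\phi,U))\cdot\text{(its }U\text{-saturation)}:U]$, and the right factor is at most $[T_m(\phi,U):U]$ because applying $\phi^n$ and left-translating cannot increase the number of $U$-cosets needed). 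Then $\{\log[T_n(\phi,U):U]\}_{n\in\N_+}$ is subadditive and Fekete Lemma~\ref{fekete} gives that $\frac{\log[T_n(\phi,U):U]}{n}$ converges to its infimum, hence equals the $\limsup$ above.

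The step I expect to be the main obstacle is the subadditivity of $n\mapsto [T_n(\phi,U):U]$: unlike in Proposition~\ref{no-mu}, where the cotrajectories $C_n(\psi,U)$ are genuine subgroups, here $T_n(\phi,U)$ is only a finite union of cosets, and one must check carefully that $[T_n(\phi,U)\cdot\phi^n(T_m(\phi,U)):U]$ — the index of $U$ in the $U$-saturation of the product — is bounded by $[T_n(\phi,U):U]\cdot[T_m(\phi,U):U]$. The cleanest route is to bypass the index computation entirely and instead invoke the already-established subadditivity of $a_n=\log\mu(T_n(\phi,U))$ (which follows from $\mu(T_n(\phi,U)\cdot\phi^n(T_m(\phi,U)))\le \mu(T_n(\phi,U))\cdot\mu(T_m(\phi,U))/u$, a measure statement proved exactly as in Lemma~\ref{subadd} once one notes $\mu(U)=u$), then translate back through $a_n=\log u + \log[T_n(\phi,U):U]$; since the additive constant $\log u$ does not affect subadditivity up to a bounded perturbation, $\{\log[T_n(\phi,U):U]+\log u\}_n$ is subadditive, Fekete applies to it, and dividing by $n$ and letting $n\to\infty$ kills the constant, yielding the asserted limit.
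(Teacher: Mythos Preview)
Your overall architecture matches the paper's intent exactly: the paper simply says to argue ``absolutely similar[ly] as in Proposition~\ref{no-mu}'', i.e.\ convert $\mu(T_n(\phi,U))$ into $u\cdot[T_n(\phi,U):U]$, kill the constant $\log u$, and invoke subadditivity plus Fekete to turn the $\limsup$ into a limit. Your first two displayed computations carry this out correctly.

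The gap is in the subadditivity step, and it is a genuine one. Your claimed measure inequality
\[
\mu\bigl(T_n(\phi,U)\cdot\phi^n(T_m(\phi,U))\bigr)\le \mu(T_n(\phi,U))\cdot\mu(T_m(\phi,U))/u
\]
is false in general. Take $G=\Q_p$, $U=\J_p$, and $\phi(x)=x/p$ (this is Example~\ref{ExaQ_p} of the paper). Then $T_n(\phi,U)=p^{-(n-1)}\J_p$, so $[T_n(\phi,U):U]=p^{n-1}$; hence $[T_{n+m}:U]=p^{n+m-1}>p^{n+m-2}=[T_n:U]\cdot[T_m:U]$, and the sequence $\log[T_n(\phi,U):U]$ is \emph{not} subadditive. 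The reason the analogy with Lemma~\ref{subadd} breaks is precisely the point you glossed over: in the discrete case one uses $|\phi^n(F)|\le|F|$, but for Haar measure on a locally compact group the endomorphism $\phi$ need not be measure-nonincreasing --- here $\mu(\phi(\J_p))=\mu(p^{-1}\J_p)=p\cdot\mu(\J_p)$. So neither your direct index argument nor your measure argument establishes subadditivity, and Fekete cannot be applied as written. (The paper's one-line pointer to Proposition~\ref{no-mu} does not address this either; the existence of the limit in this setting requires a separate argument, e.g.\ an almost-subadditivity bound $b_{n+m}\le b_n+b_m+C$ with a constant independent of $n,m$, which your proposal does not supply.)
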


Similarly, one can remove the measure also in the definition of the modular function in a totally disconnected locally compact group $G$. 
Indeed, if $\f$ is an automorphism of $G$ and $U$ is an open compact subgroup of $G$, then $\mod(\f) = \frac{\mu(\f(U))}{\mu(U)}$. Here we can use the fact that $\f(U)$ is an open compact subgroup of $G$ as well, hence $U\cap \f(U)$ is an open subgroup in both $U$ and $\f(U)$. Since these subgroups are compact, both indexes $[U: U\cap \f(U)]$ and $[\f(U):U\cap \f(U)]$ are finite. Hence 
$$\mu(U) =[U: U\cap \f(U)] \cdot \mu(U\cap \f(U)) \mbox{ and }\mu(\f(U)) =[\f(U): U\cap \f(U)] \cdot \mu(U\cap \f(U)).$$
This gives 
$$\mod(\f) = \frac{\mu(\f(U))}{\mu(U)}=\frac{[\f(U): U\cap \f(U)]}{[U: U\cap \f(U)]}.$$



In the next example we compute the algebraic entropy of endomorphisms of $\Q_p$.

\begin{example}\label{ExaQ_p}
Let $p$ be a prime, $\xi \in \Q_p$ and let $\varphi_\xi: \Q_p \to \Q_p$ be defined by $\varphi_\xi(\eta) = \xi \eta$ for all $\eta \in \Q_p$. 

If $\xi \in \J_p$, then $h_{alg}(\varphi_\xi) =0$. Indeed, $\mathcal B(\Q_p)$ is given by the 
subgroups $p^k\J_p$ which are $\f$-invariant, and $\mathcal B(\Q_p)$ is cofinal in $\mathcal C(\Q_p)$, so Example \ref{idG}(b) and Lemma \ref{mon}(b) apply. 

If $\xi = p^{- k} \kappa$ for some $\kappa \in \J_p\setminus p\J_p$ and $k \in \N_+$, then $T_n(\varphi_\xi,\J_p) = p^{- k(n-1)}\J_p$ for every $n\in\N_+$. By Proposition \ref{h_mu(U)-index1}, $H_{alg}(\varphi_\xi,U) =\lim \frac{\log [T_n(\varphi_\xi,U):U]}{n}= k \log p$. Therefore $h_{alg}(\varphi_\xi)=k\log p$ by Lemma \ref{mon}(b).
\end{example}

The following example shows that 
that for $G$ a totally disconnected locally compact group and $\phi:G\to G$ a continuous endomorphism, the algebraic entropy of $\phi$ is in general not equal to $\sup\{H_{alg}(\phi,U):U\in\mathcal B(G)\}$, contrarily to the case of the topological entropy in Lemma \ref{B(G)}. By Lemma \ref{mon}(b) the equality holds true when $\mathcal B(G)$ is cofinal in $\mathcal C(G)$. When the locally compact group $G$ is abelian, this occurs precisely when $G$ has an open compact subgroup $K$ such that $G/K$ is (discrete) torsion (i.e., $G$ is a union of compact subgroups).

\begin{example}\label{noBG}
Let  $G = \Q_p \times D$, where $p$ is a prime and $D$ is $\Q_p$ endowed with the discrete topology. Let $\varphi_2:G\to G$ be the continuous endomorphism defined by $\varphi_2(x)=2x$ for every $x\in G$. Then $h_{alg} (\varphi_2\restriction_{\Q_p})=0$ by Example \ref{ExaQ_p}. 
Moreover $h_{alg}(\varphi_2\restriction_D)=\infty$. Then also $h_{alg}(\varphi_2)=\infty$ by Proposition \ref{prop}(b). 

On the other hand $\sup\{H_{alg}(\phi,U):U\in\mathcal B(G)\}=0$. Indeed, if $U$ is a compact open subgroup of $G$, then $\pi_2(U)$ is a compact subgroup of $D$, where $\pi_2:G\to D$ is the canonical projection, hence $\pi_2(U)=0$. Therefore $U\subseteq \Q_p$, that is, $H_{alg}(\phi,U)=0$, as 
$h_{alg} (\varphi_2\restriction_{\Q_p})=0$ (see above). 
\end{example}

In analogy with Example \ref{exlctop} one can consider the groups $\mathbb H_{\Q_p}$ and $
G=\begin{pmatrix}
\R_{>0} & \R  \\
0  & 1
\end{pmatrix}$,
 and compute the algebraic entropy of their conjugations. Again, in analogy with the topological case, one can consider the matrix group $G_K=\begin{pmatrix}
K^{*} & K  \\
0  & 1
\end{pmatrix}$
over a non-discrete locally compact field $K$, and calculate the algebraic entropy of the conjugations of $G_K$. More precisely one can ask the counterpart of the questions in Question \ref{conj2} for the algebraic entropy.

\medskip
The algebraic entropy for continuous endomorphisms of locally compact abelian groups was introduced in \cite{V} in order to prove the Algebraic Kolmogorov-Sinai Formula \ref{AKSF}. It was then used for the proof of the Algebraic Yuzvinski Formula \ref{AYF} in \cite{GV}. Indeed, in both cases one goes from the discrete case of endomorphisms of $\Z^n$ (respectively, $\Q^n$) to continuous endomorphisms of the locally compact abelian group $\R^n$ (respectively, $\Q_p^n$).

In particular the following result from \cite{V} holds.

\begin{theorem}\label{VF}
Let $n\in\N_+$ and $\phi:\R^n\to \R^n$ a continuous endomorphism. Then 
$$h_{alg}(\phi)=\sum_{|\lambda_i|>1}\log|\lambda_i| = h_U(\f),$$ 
where $\{\lambda_i:i=1,\ldots,n\}$ is the family of all eigenvalues of $\phi$.
\end{theorem}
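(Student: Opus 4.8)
The plan is to prove the formula $h_{alg}(\phi)=\sum_{|\lambda_i|>1}\log|\lambda_i|$ for a continuous endomorphism $\phi:\R^n\to\R^n$ by reducing it, via the structure theory of linear maps, to a handful of model cases, and then invoking the properties of $h_{alg}$ already established (Invariance under conjugation, the Weak Addition Theorem, and continuity/monotonicity from Lemma~\ref{mon} and Proposition~\ref{prop}). The equality $h_{alg}(\phi)=h_U(\phi)$ should then follow by comparison with Bowen's Theorem~\ref{BF}, which states exactly that $h_U(\phi)=\sum_{|\lambda_i|>1}\log|\lambda_i|$; so it suffices to compute $h_{alg}(\phi)$ and observe that the two sums agree. (Alternatively one can prove the whole statement through Pontryagin duality, relating $\phi$ on $\R^n$ to its dual on $\widehat{\R^n}\cong\R^n$ and using the topological entropy side, but the direct computation of $h_{alg}$ seems cleaner and is what was done in the source paper.)

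First I would put $\phi$ (after a real linear change of basis, which is a conjugation and hence does not change $h_{alg}$ by Proposition~\ref{prop}(a)) into real Jordan form, so that $\R^n$ decomposes as a direct sum of $\phi$-invariant subspaces $V_j$ on each of which $\phi$ acts as a single real Jordan block, with eigenvalue $\lambda_j$ (real) or a conjugate pair $\lambda_j,\bar\lambda_j$ (complex). Since this is a finite direct product of flows, the Weak Addition Theorem (Proposition~\ref{prop}(b), noting that in this setting the relevant trajectories converge because $\R^n$ is $\sigma$-compact and one can choose cubes as the compact neighbourhoods, making the $\limsup$ a genuine limit) reduces the problem to computing $h_{alg}(\phi\restriction_{V_j})$ for each block, and I must show this equals $\log|\lambda_j|$ if $|\lambda_j|>1$ (counted with the appropriate multiplicity for a complex pair) and $0$ if $|\lambda_j|\le 1$. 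The case $|\lambda|\le 1$ is handled by Example~\ref{idG}(b)/Remark~\ref{Base}-type reasoning: for a contracting or isometric block one can find arbitrarily small compact neighbourhoods $U$ (cubes) with $\phi(U)\subseteq U$ up to a bounded factor, so $T_n(\phi,U)$ has polynomially bounded measure and $H_{alg}(\phi,U)=0$; combined with Lemma~\ref{mon}(b) and the fact that such $U$ are cofinal, this gives $h_{alg}(\phi\restriction_{V_j})=0$. For the expanding case $|\lambda|>1$, I would compute directly with $U$ a cube: the measure of the trajectory $T_n(\phi,U)=U+\phi(U)+\dots+\phi^{n-1}(U)$ grows like $|\det(\phi\restriction_{\text{expanding part}})|^n$ on the relevant subspace, i.e. $\mu(T_n(\phi,U))\asymp (|\lambda|^n)^{(\dim_{\R})}$ up to polynomial factors coming from the Jordan nilpotent part, yielding $H_{alg}(\phi,U)\to \log|\det|$, which over a real block with eigenvalue pair $\lambda,\bar\lambda$ of (complex) multiplicity $k$ equals $2k\log|\lambda|=\sum\log|\lambda_i|$ over the eigenvalues in that block; taking the supremum over cubes gives equality rather than just an inequality.

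The main obstacle I anticipate is the careful handling of the \emph{unipotent / nilpotent part} of each Jordan block: one must check that the polynomial-in-$n$ volume distortion caused by a nontrivial nilpotent summand $N$ (so $\phi = \lambda(\mathrm{Id}+N)$ on the block) does not affect the exponential rate — i.e. that $\mu(T_n(\phi,U))$ is squeezed between $c_1 n^{-a}|\lambda|^{nd}$ and $c_2 n^{b}|\lambda|^{nd}$ for suitable constants, so that after dividing by $n$ and taking logs the polynomial factors wash out. This is where one needs the estimate that iterating a cube under a unipotent linear map enlarges it only polynomially, which is essentially the content of the cited fact (\cite[Theorem 7.9]{EG} used in Example~\ref{idG}(a)) that $n\mapsto\mu(U_{(n)})$ is polynomial for $\phi=\mathrm{id}$, applied after scaling out the eigenvalue. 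Once the block computation is pinned down, the assembly via Proposition~\ref{prop}(b) is routine, and the identification with $h_U(\phi)$ is immediate from Theorem~\ref{BF}. I would also double-check that no measure-normalization subtlety arises: changing the Haar (Lebesgue) measure on $\R^n$ by a constant does not change $H_{alg}$, as noted in Remark~\ref{Base}(a) for the topological case and which holds verbatim here.
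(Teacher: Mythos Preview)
The paper does not actually prove this theorem: it is stated as a result ``from \cite{V}'' (Virili) and no argument is supplied in the text, so there is no in-paper proof to compare against. Your Jordan-block decomposition followed by a block-by-block measure estimate is the natural route and is in the spirit of the tools the paper assembles (Proposition~\ref{prop}(a),(b), Example~\ref{idG}, and the Emerson--Greenleaf polynomial-growth input); it is also the approach one expects in \cite{V}, though that cannot be confirmed from the present paper alone.

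One point deserves more care than you give it. To invoke equality in Proposition~\ref{prop}(b) you need that the $\phi$-trajectory of your chosen $C$ \emph{converges}, i.e.\ that $\frac{1}{n}\log\mu(T_n(\phi,C))$ has a genuine limit, not just a $\limsup$. You assert this follows from $\sigma$-compactness and the choice of cubes, but that is not an argument: convergence is exactly what your subsequent block computation establishes (polynomial growth for $|\lambda|\le 1$, exponential rate $|\lambda|^{d}$ times polynomial corrections for $|\lambda|>1$), so logically you should either (i) first prove the single-block formula together with convergence of the trajectory, and only then assemble via Proposition~\ref{prop}(b), or (ii) prove directly that for a cube $C$ in $\R^n$ and any linear $\phi$ the sequence $\log\mu(T_n(\phi,C))$ is subadditive up to a bounded additive error (which gives the limit by Fekete). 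Either fix is routine, but as written your use of the Weak Addition Theorem with equality precedes the justification it needs. The remaining ingredients---conjugation to real Jordan form, the Emerson--Greenleaf bound for the unipotent distortion, and the identification with $h_U$ via Theorem~\ref{BF}---are correctly identified.
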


\bigskip
For open problems see \cite{V} and \cite{GV}. Moreover we leave the following

\begin{problem}
Does the Addition Theorem for the algebraic entropy of continuous endomorphisms of locally compact (abelian) groups
 hold true ? 
\end{problem}

More details about the validity of the Addition Theorem for $h_{alg}$ and for $h_\infty$ can be found in \cite{DSV}.

\newpage
\section{Adjoint entropy}\label{adj-sec}

\subsection{Definition and basic properties}\label{adj-def}

In analogy to the algebraic entropy $\ent$, in \cite{DGS} the adjoint algebraic entropy of endomorphisms of abelian groups $G$ was introduced ``replacing" the family $\mathcal F(G)$ of all finite subgroups of $G$ with the family $\CC(G)$ of all finite-index subgroups of $G$. We give here the same definition in the more general setting of endomorphisms of arbitrary groups.

\smallskip
Let $G$ be a group and $N\in \CC(G)$. For an endomorphism $\phi:G\to G$ and $n\in\N_+$, the \emph{$n$-th $\phi$-cotrajectory of $N$}, as defined in \eqref{cotraj}, is $$C_n(\phi,N)=N\cap\phi^{-1}(N)\cap\ldots\cap\phi^{-n+1}(N).$$ The \emph{$\phi$-cotrajectory of $N$} is $$C(\phi,N)=\bigcap_{n\in\N}\phi^{-n}(N).$$ Then $C(\phi,N)$ is the maximum $\phi$-invariant subgroup of $N$.

Since the map induced by $\phi^n$ on the partitions $\{\phi^{-n}(N)g:g\in G\}\to \{Ng:g\in G\}$ is injective, it follows that $\phi^{-n}(N)\in\CC(G)$ for every $n\in\N$. Therefore $C_n(\phi,N)\in \CC(G)$ for every $n\in\N_+$, because $\CC(G)$ is closed under finite intersections. 

\begin{definition}
Let $G$ be a group and $\phi:G\to G$ an endomorphism.
\begin{itemize}
\item[(a)] The \emph{adjoint algebraic entropy of $\phi$ with respect to $N$} is 
\begin{equation}\label{H*}
H^\star(\phi,N)={\limsup_{n\to \infty}\frac{\log[G:C_n(\phi,N)]}{n}}.
\end{equation}
\item[(b)] The \emph{adjoint algebraic entropy of $\phi$} is $$\aent(\phi)=\sup\{H^\star(\phi,N):N\in\CC(G)\}.$$
\end{itemize}
\end{definition}

For a group $G$, denote by $\CC^\lhd(G)$ the family of all normal finite-index subgroups of $G$.

\begin{fact}\label{pullbacklhd}
Let $G$ be a group and $H$ a subgroup of $G$. 
\begin{enumerate}[(a)]
\item If $N\in\CC^\lhd(G)$, then $N\cap H\in\CC^\lhd(H)$.
\item If $\phi:G\to G$ is an endomorphism and $N\in\CC^\lhd(G)$, then $C_n(\phi,N)\in\CC^\lhd(G)$ for every $n\in\N_+$.
\end{enumerate}
\end{fact}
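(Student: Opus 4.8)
\textbf{Proof proposal for Fact \ref{pullbacklhd}.}

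The statement is elementary, so the plan is to verify the two items directly from the definitions, keeping track of normality and finiteness of index at each step. For item (a), I would start with $N\in\CC^\lhd(G)$, so $N\trianglelefteq G$ with $[G:N]<\infty$. The subgroup $N\cap H$ is normal in $H$ because for any $h\in H$ one has $h(N\cap H)h^{-1}=hNh^{-1}\cap hHh^{-1}=N\cap H$, using $hNh^{-1}=N$ (since $N\trianglelefteq G$ and $h\in G$) and $hHh^{-1}=H$ (since $h\in H$). For the index, the standard second-isomorphism-style argument gives an injection of the coset space $H/(N\cap H)$ into $G/N$ via $h(N\cap H)\mapsto hN$; hence $[H:N\cap H]\le[G:N]<\infty$, so $N\cap H\in\CC^\lhd(H)$.

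For item (b), fix an endomorphism $\phi:G\to G$ and $N\in\CC^\lhd(G)$. The key preliminary observation, already noted in the paragraph preceding the Definition, is that $\phi^{-n}(N)\in\CC(G)$ for every $n\in\N$, because the map $\{\phi^{-n}(N)g:g\in G\}\to\{Ng:g\in G\}$ induced by $\phi^n$ is well defined and injective, so $[G:\phi^{-n}(N)]\le[G:N]<\infty$. I would additionally check that $\phi^{-n}(N)$ is \emph{normal}: for $g\in G$ and $x\in\phi^{-n}(N)$ we have $\phi^n(gxg^{-1})=\phi^n(g)\phi^n(x)\phi^n(g)^{-1}\in N$ since $\phi^n(x)\in N$ and $N\trianglelefteq G$; hence $gxg^{-1}\in\phi^{-n}(N)$, so $\phi^{-n}(N)\trianglelefteq G$. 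Thus each $\phi^{-i}(N)\in\CC^\lhd(G)$ for $0\le i\le n-1$. Since $\CC^\lhd(G)$ is closed under finite intersections (an intersection of finitely many normal subgroups is normal, and an intersection of finitely many finite-index subgroups has finite index — e.g. by Poincaré's theorem, or directly by the injection $G/(A\cap B)\hookrightarrow G/A\times G/B$), the subgroup
$$C_n(\phi,N)=N\cap\phi^{-1}(N)\cap\ldots\cap\phi^{-n+1}(N)$$
lies in $\CC^\lhd(G)$ for every $n\in\N_+$, which is the assertion.

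There is essentially no obstacle here: the only points requiring a line of care are the normality of $\phi^{-n}(N)$ (which uses that $N$ is normal, not merely of finite index) and the finiteness of the index of a finite intersection. Both are routine. I would present item (b) so that it also records the analogous, already-known fact that $\phi^{-n}(N)\in\CC(G)$ when $N\in\CC(G)$ is only assumed of finite index, since that is what is used in the definition of $H^\star(\phi,N)$; the normal version in Fact \ref{pullbacklhd}(b) is the refinement needed when one wants $C_n(\phi,N)$ normal.
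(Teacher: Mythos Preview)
Your proof is correct; both items are handled by the standard direct verifications (normality via conjugation, finite index via the injection of coset spaces, and closure of $\CC^\lhd(G)$ under finite intersections). The paper itself states this result as a \emph{Fact} without proof, so there is no alternative argument to compare against---your write-up is exactly the routine justification one would supply.
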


Recall that for a group $G$ and a subgroup $H$ of $G$, the \emph{heart} $H_G$ of $H$ in $G$ is the maximal normal subgroup of $G$ contained in $H$, that is, $H_G=\bigcap_{g\in G}g Hg^{-1}$.

\begin{fact}\label{heart} 
Let $G$ be a group and $H$ a subgroup of $G$. If $H\in\CC(G)$, then $H_G\in\CC^\lhd(G)$.
\end{fact}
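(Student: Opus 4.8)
\textbf{Proof proposal for Fact \ref{heart}.} The plan is to reduce the problem to a standard counting argument about the action of $G$ on the finite coset space $G/H$. First I would observe that since $H\in\CC(G)$, the index $[G:H]=k$ is finite, so $G$ acts on the left coset space $\Omega=\{gH:g\in G\}$, a set with $|\Omega|=k$, by left translation. This action is a group homomorphism $\rho:G\to \mathrm{Sym}(\Omega)$, and the target is a finite group of order $k!$.

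Next I would identify the kernel of $\rho$ with the heart $H_G$. Indeed, $g\in\ker\rho$ precisely when $gxH=xH$ for every $x\in G$, i.e.\ $x^{-1}gx\in H$ for every $x\in G$, i.e.\ $g\in\bigcap_{x\in G}xHx^{-1}=H_G$. (The conventions for left versus right cosets and for $xHx^{-1}$ versus $x^{-1}Hx$ only permute the bookkeeping and do not affect the conclusion, since $\bigcap_{x\in G}xHx^{-1}=\bigcap_{x\in G}x^{-1}Hx$.) Thus $H_G=\ker\rho$ is a normal subgroup of $G$, and by the first isomorphism theorem $G/H_G$ embeds into $\mathrm{Sym}(\Omega)$, whence $[G:H_G]\leq k!<\infty$. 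Therefore $H_G\in\CC^\lhd(G)$, which is exactly the assertion.

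I do not anticipate a genuine obstacle here: the statement is the classical fact that a subgroup of finite index contains a normal subgroup of finite index, and the only points that need care are the (purely notational) left/right coset conventions and the remark that $H_G$ as defined in the excerpt, $H_G=\bigcap_{g\in G}gHg^{-1}$, coincides with $\ker\rho$. If one prefers to avoid the symmetric group altogether, an alternative is to note directly that $H_G=\bigcap_{g\in G}gHg^{-1}$ is an intersection of conjugates of $H$, each of index $k$, and that there are at most $k$ distinct such conjugates (the conjugate $gHg^{-1}$ depends only on the coset of $g$ in $G/N_G(H)$, and $H\subseteq N_G(H)$ gives at most $[G:H]=k$ of them); then $[G:H_G]\leq k^{k}<\infty$ by the standard bound on the index of a finite intersection of finite-index subgroups. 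Either route closes the proof in a few lines.
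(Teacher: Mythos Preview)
Your argument is correct: this is exactly the classical normal-core argument, and both routes you sketch (the action on cosets with kernel $H_G$ and $G/H_G\hookrightarrow\mathrm{Sym}(\Omega)$, or the direct bound via finitely many conjugates) are valid and standard. The paper, however, does not supply a proof at all --- it records the statement as a ``Fact'' and uses it immediately afterwards --- so there is nothing to compare against; your write-up would serve perfectly well as the omitted justification.
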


In view of Fact \ref{heart}, we see in Lemma \ref{N<M->HN>HM} that to compute the adjoint algebraic entropy it suffices to consider normal finite-index subgroups.

\begin{lemma}\label{N<M->HN>HM}
Let $G$ be a group and $\phi:G\to G$ an endomorphism. Then $H^\star(\phi,-)$ is antimonotone, that is, $H^\star(\phi,N)\leq H^\star(\phi,M)$ if $M\subseteq N$ in $\CC(G)$.
Therefore $$\aent(\phi)=\sup\{H^\star(\phi,N):N\in\CC^\lhd(G)\}.$$
\end{lemma}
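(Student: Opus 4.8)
Let $G$ be a group and $\phi:G\to G$ an endomorphism. Then $H^\star(\phi,-)$ is antimonotone, and consequently $\aent(\phi)=\sup\{H^\star(\phi,N):N\in\CC^\lhd(G)\}$.

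The plan is to reduce everything to the elementary observation that enlarging $N$ can only shrink the corresponding cotrajectories' indices. First I would take $M\subseteq N$ with $M,N\in\CC(G)$. Then for each $i\in\N$ one has $\phi^{-i}(M)\subseteq\phi^{-i}(N)$, since preimages respect inclusion. Intersecting over $i=0,1,\dots,n-1$ gives $C_n(\phi,M)\subseteq C_n(\phi,N)$ for every $n\in\N_+$. Since both are finite-index subgroups of $G$ (as noted in the paragraph preceding the definition, $\CC(G)$ is closed under the operations involved), the inclusion $C_n(\phi,M)\subseteq C_n(\phi,N)\subseteq G$ together with multiplicativity of the index yields $[G:C_n(\phi,N)]\leq[G:C_n(\phi,M)]$. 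Taking $\log$, dividing by $n$, and passing to the $\limsup$ as $n\to\infty$ gives $H^\star(\phi,N)\leq H^\star(\phi,M)$, which is exactly antimonotonicity.

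For the second assertion, I would argue that $\CC^\lhd(G)$ is cofinal in $\CC(G)$ under reverse inclusion, and then combine this with antimonotonicity. Given $N\in\CC(G)$, by Fact \ref{heart} its heart $N_G=\bigcap_{g\in G}gNg^{-1}$ lies in $\CC^\lhd(G)$, and by definition $N_G\subseteq N$. By the antimonotonicity just proved, $H^\star(\phi,N)\leq H^\star(\phi,N_G)$. Hence
\[
\aent(\phi)=\sup_{N\in\CC(G)}H^\star(\phi,N)\leq\sup_{N\in\CC(G)}H^\star(\phi,N_G)\leq\sup_{M\in\CC^\lhd(G)}H^\star(\phi,M),
\]
while the reverse inequality is trivial since $\CC^\lhd(G)\subseteq\CC(G)$. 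This gives the stated equality.

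There is essentially no hard step here — the whole argument is the monotonicity of preimages, multiplicativity of the index, and one invocation of Fact \ref{heart}. The only point that warrants a line of care is the claim that all the subgroups $C_n(\phi,M)$, $C_n(\phi,N)$, $N_G$ are genuinely of finite index (so that the indices appearing in the $\limsup$ are finite and the inequality $[G:C_n(\phi,N)]\leq[G:C_n(\phi,M)]$ is meaningful); but this is already recorded in the excerpt (preimages under $\phi$ of finite-index subgroups have finite index, $\CC(G)$ is closed under finite intersections, and Fact \ref{heart} for the heart). So I would just cite those facts rather than reprove them.
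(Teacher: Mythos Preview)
Your proof is correct and follows exactly the approach of the paper, which simply says ``The first part is clear, the second follows from Fact \ref{heart}.'' You have merely spelled out the details (monotonicity of preimages, multiplicativity of the index, cofinality of $\CC^\lhd(G)$ via the heart) that the paper leaves implicit.
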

\begin{proof} 
The first part is clear, the second follows from Fact \ref{heart}.
\end{proof}

\begin{proposition}\label{C=C_n->ent*=0} Let $G$ be a group, $\phi:G\to G$ an endomorphism and $N\in\CC^\lhd(G)$. Then, for 
$D_n:=\frac{C_n(\phi,N)}{C_{n+1}(\phi,N)} $, the sequence $\alpha_n=\left|D_n\right|$ is stationary; more precisely, there exists a natural number $\alpha>0$ such that $\alpha_n= \alpha$ for all $n$ large enough. In particular,
\begin{itemize}
\item[(a)] the limit $H^\star(\phi,N)$ does exist and it coincides with $\log\alpha$, and
\item[(b)] $H^\star(\phi,N)=0$ if and only if $C(\phi,N)=C_n(\phi,N)$ for some $n\in\N_+$.
\end{itemize}
\end{proposition}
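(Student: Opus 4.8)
The plan is to exploit the nested chain $N = C_1 \supseteq C_2 \supseteq \cdots$ of normal finite-index subgroups, where $C_n = C_n(\phi,N)$, and to understand the successive quotients $D_n = C_n/C_{n+1}$. First I would record the basic structural facts: since $C_{n+1}(\phi,N) = N \cap \phi^{-1}(C_n(\phi,N))$, the endomorphism $\phi$ maps $C_{n+1}$ into $C_n$, and hence induces a homomorphism $\overline{\phi}_n : C_{n+1}/C_{n+2} \to C_n/C_{n+1}$, i.e. $D_{n+1} \to D_n$. The key claim is that each $\overline{\phi}_n$ is \emph{injective}. Indeed, if $x \in C_{n+1}$ and $\phi(x) \in C_{n+2}$, then $\phi(x) \in C_{n+2} \subseteq \phi^{-1}(\cdots)$; unwinding the definition of $C_{n+2}$ as $\bigcap_{j=0}^{n+1}\phi^{-j}(N)$ one gets $x \in \bigcap_{j=0}^{n+1}\phi^{-j}(N) \cap \phi^{-1}(N)$... more carefully: $x \in C_{n+1}$ means $x \in \phi^{-j}(N)$ for $j=0,\dots,n$, and $\phi(x) \in C_{n+2}$ means $\phi(x) \in \phi^{-j}(N)$ for $j = 0,\dots,n+1$, i.e. $x \in \phi^{-j}(N)$ for $j=1,\dots,n+2$; combining, $x \in \phi^{-j}(N)$ for $j=0,\dots,n+2$, so $x \in C_{n+3} \subseteq C_{n+2}$. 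Thus $\ker \overline{\phi}_n$ is trivial, proving injectivity.

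From injectivity of $D_{n+1} \hookrightarrow D_n$ we get $\alpha_{n+1} = |D_{n+1}| \leq |D_n| = \alpha_n$, so $\{\alpha_n\}$ is a non-increasing sequence of positive integers (each $\alpha_n$ is finite since $[G:C_n] < \infty$ and $\alpha_n = [C_n:C_{n+1}]$ divides it). A non-increasing sequence of positive integers is eventually constant, so there is $\alpha \in \N_+$ and $n_0$ with $\alpha_n = \alpha$ for all $n \geq n_0$. This is the heart of the statement; everything else is bookkeeping.

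For item (a): by the tower law, $[G:C_{n+1}] = [G:C_1]\cdot \prod_{k=1}^{n} \alpha_k$, so $\log[G:C_{n+1}] = \log[G:N] + \sum_{k=1}^n \log\alpha_k$. Dividing by $n+1$ and letting $n\to\infty$, the constant term and the finitely many initial terms $\log\alpha_k$ with $k < n_0$ contribute nothing in the limit, while the remaining terms each equal $\log\alpha$; hence the Cesàro-type limit $\frac{1}{n}\log[G:C_n] \to \log\alpha$. So the $\limsup$ in \eqref{H*} is a genuine limit equal to $\log\alpha$. For item (b): $H^\star(\phi,N) = 0$ iff $\log\alpha = 0$ iff $\alpha = 1$ iff eventually $C_n = C_{n+1}$; and once $C_n = C_{n+1}$ for one $n$, the defining recursion $C_{m+1} = N \cap \phi^{-1}(C_m)$ forces $C_m = C_n$ for all $m \geq n$ by induction, so the descending chain stabilizes at $C_n$, which is therefore $\bigcap_{m}\phi^{-m}(N) = C(\phi,N)$. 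Conversely if $C(\phi,N) = C_n(\phi,N)$ for some $n$ then $C_m = C_n$ for all $m \geq n$, so $\alpha_m = 1$ eventually, giving $\alpha = 1$ and $H^\star(\phi,N) = 0$.

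I do not anticipate a serious obstacle here; the main point requiring care is the injectivity argument for $\overline{\phi}_n$, where one must keep the index shifts in the intersections $C_n = \bigcap_{j=0}^{n-1}\phi^{-j}(N)$ straight — it is easy to be off by one. A secondary point worth stating cleanly is finiteness of $\alpha_n$: since $N \in \CC^\lhd(G)$ and each $\phi^{-j}(N) \in \CC(G)$ (noted in the text before the definition), $C_n \in \CC(G)$, so $\alpha_n = [C_n : C_{n+1}]$ is finite. One could alternatively deduce monotonicity of $\{\alpha_n\}$ from Lemma~\ref{N<M->HN>HM} applied suitably, but the direct injectivity argument is cleaner and self-contained.
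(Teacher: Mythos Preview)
Your approach is correct and is essentially the same strategy as the paper's: embed $D_{n+1}$ into $D_n$ via the map induced by $\phi$, so that $\alpha_{n+1}\mid\alpha_n$ and the sequence stabilises. The paper realises this embedding more circuitously, first rewriting $D_n\cong (C_n\cdot\phi^{-n}(N))/\phi^{-n}(N)$ by the second isomorphism theorem and then pushing forward along the injective map $\tilde\phi:G/\phi^{-n}(N)\to G/\phi^{-n+1}(N)$; your direct construction of $\overline\phi_n:C_{n+1}/C_{n+2}\to C_n/C_{n+1}$ is cleaner and yields the same divisibility.

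There is, however, exactly the off-by-one slip you warned yourself about. For injectivity of $\overline\phi_n:D_{n+1}\to D_n$, the kernel condition is $x\in C_{n+1}$ with $\phi(x)\in C_{n+1}$ (the denominator on the target side), not $\phi(x)\in C_{n+2}$. With the correct hypothesis, $\phi(x)\in C_{n+1}=\bigcap_{j=0}^{n}\phi^{-j}(N)$ gives $x\in\bigcap_{j=1}^{n+1}\phi^{-j}(N)$, which combined with $x\in C_{n+1}$ yields $x\in C_{n+2}$ directly. Your computation assumed the stronger (and unwarranted) hypothesis $\phi(x)\in C_{n+2}$, so as written it does not cover the case $\phi(x)\in C_{n+1}\setminus C_{n+2}$. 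Once this index is fixed, the remainder of your argument for (a) and (b) is fine.
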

\begin{proof} Let $c_n=[G:C_n(\phi,N)]$ and $\gamma_{n}=\log c_n$ for every $n\in\N_+$. Since $C_{n+1}(\phi,N)$ is a normal subgroup of $C_n(\phi,N)$, it follows that 
$$
\frac{G}{C_{n}(\phi,N)}\cong\frac{\frac{G}{C_{n+1}(\phi,N)}}{D_n}.
$$ 
Then ${c_n}\alpha_{n}= c_{n+1}$, so $c_{n} | c_{n+1}$, for every $n\in\N_+$. 

We verify that
\begin{equation}\label{alphaeq}
\alpha_{n+1}|\alpha_n \ \text{for every}\ n\in\N,n>1.
\end{equation}
Fix $n\in\N$, $n>1$.  We intend to prove that $D_n$ is isomorphic to a subgroup of $D_{n-1}$ and so $\alpha_{n}|\alpha_{n-1}$.

First note that $D_n\cong \frac{C_n(\phi,N)\cdot\phi^{-n}(N)}{\phi^{-n}(N)}$.
From 
$$
C_n(\phi,N)=N\cap \phi^{-1}(C_{n-1}(\phi,N))\subseteq\phi^{-1}(C_{n-1}(\phi,N)),
$$ 
it follows that 
$$
\frac{C_n(\phi,N)\cdot\phi^{-n}(N)}{\phi^{-n}(N)}\subseteq A_n=\frac{\phi^{-1}(C_{n-1}(\phi,N))\cdot\phi^{-n}(N)}{\phi^{-n}(N)}.
$$
 Since the homomorphism $\tilde\phi:\frac{G}{\phi^{-n}(N)}\to \frac{G}{\phi^{-n+1}(N)}$, induced by $\phi$, is injective, also its restriction to $A_n$ is injective, and the image of $A_n$ is contained in $L_n= \frac{C_{n-1}(\phi,N)\cdot\phi^{-n+1}(N)}{\phi^{-n+1}(N)}$, which is isomorphic to $D_{n-1}$. Summarizing, 
 $$
D_n\cong\frac{C_n(\phi,N)\cdot\phi^{-n}(N)}{\phi^{-n}(N)}\leq A_n \rightarrowtail L_n\cong D_{n-1},$$
which concludes the proof of \eqref{alphaeq}.

Clearly $c_{n+1}=c_1\cdot\alpha_2\cdot\ldots\cdot\alpha_n$, as $c_{n+1}=\alpha_{n}\cdot c_n$, for every $n\in\N_+$. By \eqref{alphaeq} there exist $m\in\N_+$ and $\alpha\in\N_+$ such that $\alpha_n=\alpha$ for every $n\geq m$. Let $a_0=c_1\cdot \alpha_2\cdot \ldots\cdot \alpha_m$, so that $c_{n+1}=c_n\alpha$ 
(and consequently $\gamma_{n+1}=\gamma_n + \log \alpha$) for all $n\geq m$. Therefore
\begin{equation}\label{(*)}
c_n=a_0\cdot\alpha^{n-N}, \mbox{ and so } \gamma_n= \log a_0 + (n-N)\log \alpha,  \mbox{ for all } n\geq m.
\end{equation}
From \eqref{(*)} one can immediately see that $\lim_{n\to\infty} \frac{\gamma_n}{n} = \log \alpha$ (i.e., the limit in \eqref{H*} exists and coincides with $\log\alpha$). 

Note that in \eqref{(*)} either $\alpha> 1$ or $\alpha=1$; in the latter case the sequence $0<c_1\leq c_2\leq\ldots$ is stationary, or equivalently $C(\phi,N)=C_n(\phi,N)$ for some $n\in\N_+$.
\end{proof}

We start with some basic examples.

\begin{example}\label{examplestar}
\begin{itemize}
\item[(a)] For any group $G$, $\ent^\star(id_G)=\ent^\star(0_G)=0$.
\item[(b)] If an endomorphism $\phi:G\to G$ is nilpotent (i.e., there exists $n\in\N_+$ such that $\phi^n=0$) or periodic (i.e., there exists $n\in\N_+$ such that $\phi^n=id_G$), then $\ent^\star(\phi)=0$.
\item[(c)] If $\phi$ is quasi-periodic (i.e., there exist $n,m\in\N_+$, $n\neq m$, such that $\phi^n=\phi^m$), then $\ent^\star(\phi)=0$.
\item[(d)] If $D$ is a divisible group, then $D$ has no proper subgroups of finite index, and so $\ent^\star(\phi)=0$ for every endomorphism $\phi:D\to D$. Note that in the abelian case, $D$ is divisible if and only if  $\CC(D)=\{D\}$ (this fail in the non-abelian case, take for example $S(X)$ for some infinite set $X$).
\end{itemize}
\end{example}

Moreover the following general property holds.

\begin{lemma}\label{phi-inv}
Let $G$ be a group and $\phi:G\to G$ an endomorphism. If $N$ is a $\phi$-invariant normal subgroup of $G$ of finite index, then $H^\star(\phi,N)=0$.
\end{lemma}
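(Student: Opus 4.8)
The statement to prove is Lemma \ref{phi-inv}: if $N$ is a $\phi$-invariant normal subgroup of $G$ of finite index, then $H^\star(\phi,N)=0$.

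The plan is to observe that $\phi$-invariance of $N$ means $\phi(N)\subseteq N$, hence $N\subseteq\phi^{-1}(N)$, and iterating, $N\subseteq\phi^{-n}(N)$ for every $n\in\N$. Therefore the $n$-th $\phi$-cotrajectory of $N$ collapses:
\begin{equation*}
C_n(\phi,N)=N\cap\phi^{-1}(N)\cap\ldots\cap\phi^{-n+1}(N)=N\quad\text{for every }n\in\N_+.
\end{equation*}

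From this the conclusion is immediate. First, since $C_n(\phi,N)=N$ for all $n$, the cotrajectory $C(\phi,N)=\bigcap_{n\in\N}\phi^{-n}(N)$ equals $N$ as well, so in particular $C(\phi,N)=C_n(\phi,N)$ for some $n\in\N_+$ (indeed for every $n$); by Proposition \ref{C=C_n->ent*=0}(b) this gives $H^\star(\phi,N)=0$. Alternatively, and even more directly, one may simply compute from the definition \eqref{H*}:
\begin{equation*}
H^\star(\phi,N)=\limsup_{n\to\infty}\frac{\log[G:C_n(\phi,N)]}{n}=\limsup_{n\to\infty}\frac{\log[G:N]}{n}=0,
\end{equation*}
because $[G:N]$ is finite and independent of $n$, so $\log[G:N]/n\to 0$. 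Note that Proposition \ref{C=C_n->ent*=0} requires $N$ normal, which is part of the hypothesis; but for the direct computation one does not even need normality of $N$, only that $[G:N]$ is finite.

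There is essentially no obstacle here: the only point to be careful about is the direction of the inclusion coming from $\phi$-invariance (it is $N\subseteq\phi^{-1}(N)$, not the reverse), and that the intersection defining $C_n(\phi,N)$ therefore contributes nothing beyond $N$ itself. I would present the argument in two short lines: establish $C_n(\phi,N)=N$ for all $n$, then invoke either Proposition \ref{C=C_n->ent*=0}(b) or the trivial limit computation to conclude $H^\star(\phi,N)=0$.
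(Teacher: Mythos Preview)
Your proof is correct and follows essentially the same approach as the paper: the paper's one-line argument observes that $\phi(N)\subseteq N$ implies $C(\phi,N)=N$, then concludes via Proposition~\ref{C=C_n->ent*=0}(b). Your version simply makes this explicit by writing out $C_n(\phi,N)=N$ for all $n$, and your alternative direct limit computation is a fine substitute.
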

\begin{proof}
Since $\phi(N)\subseteq N$, it follows that $C(\phi,N)=N$, hence the thesis.
\end{proof}

We give now the basic properties of the adjoint algebraic entropy. They were proved in \cite{DGS} in the abelian case.

\begin{lemma}[Invariance under conjugation]\label{cbi}
Let $G$ be a group and $\phi:G\to G$ an endomorphism. If $H$ is another group and $\xi:G\to H$ an isomorphism, then $\ent^\star(\xi\phi\xi^{-1})=\ent^\star(\phi)$.
\end{lemma}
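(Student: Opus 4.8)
The plan is to reduce everything to a direct computation of cotrajectories, exploiting that $\xi$ is a group isomorphism (hence carries subgroups to subgroups and preserves indices). First I would observe that $\CC(G)$ and $\CC(H)$ are interchanged bijectively by $N\mapsto \xi(N)$: since $\xi$ is an isomorphism, $\xi(N)$ is a subgroup of $H$ with $[H:\xi(N)]=[G:N]$, so $\xi$ restricts to an index-preserving bijection $\CC(G)\to\CC(H)$, and similarly $\xi^{-1}$ goes the other way. Thus the two suprema defining $\aent(\xi\phi\xi^{-1})$ and $\aent(\phi)$ are taken over matched families, and it suffices to show $H^\star(\xi\phi\xi^{-1},\xi(N))=H^\star(\phi,N)$ for every $N\in\CC(G)$.

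The key step is the commutation of $\xi$ with the cotrajectory operation. Writing $\psi=\xi\phi\xi^{-1}$, I would prove by induction on $n$ that $C_n(\psi,\xi(N))=\xi(C_n(\phi,N))$ for all $n\in\N_+$. The base case $n=1$ is trivial. For the inductive step, the crucial identity is $\psi^{-k}(\xi(N))=\xi(\phi^{-k}(N))$, which follows from $\psi^k=\xi\phi^k\xi^{-1}$ together with the fact that for an isomorphism $\xi$ one has $(\xi\phi^k\xi^{-1})^{-1}(\xi(N))=\xi((\phi^k)^{-1}(N))$ (here the preimage under a composite is the composite of preimages, and $\xi^{-1}$ of $\xi(S)$ is $S$). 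Intersecting over $k=0,\ldots,n-1$ and using that $\xi$, being a bijection, commutes with finite intersections, yields $C_n(\psi,\xi(N))=\bigcap_{k=0}^{n-1}\psi^{-k}(\xi(N))=\bigcap_{k=0}^{n-1}\xi(\phi^{-k}(N))=\xi\bigl(\bigcap_{k=0}^{n-1}\phi^{-k}(N)\bigr)=\xi(C_n(\phi,N))$.

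Having this, I would conclude that $[H:C_n(\psi,\xi(N))]=[H:\xi(C_n(\phi,N))]=[G:C_n(\phi,N)]$, again because $\xi$ is an isomorphism and so preserves indices. Dividing by $n$ and passing to the $\limsup$ gives $H^\star(\psi,\xi(N))=H^\star(\phi,N)$, and then taking suprema over the matched families $\CC(G)$ and $\CC(H)=\xi(\CC(G))$ gives $\aent(\psi)=\aent(\phi)$, i.e.\ $\ent^\star(\xi\phi\xi^{-1})=\ent^\star(\phi)$.

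There is essentially no hard obstacle here: the argument is a routine "transport of structure" along the isomorphism $\xi$, entirely parallel to the proof of Invariance under conjugation for $h_{alg}$ (Lemma~\ref{conj}) given earlier in the excerpt. The only point requiring a moment of care is making sure that preimages behave well under the isomorphism — i.e.\ that $\xi(\phi^{-k}(N))=(\xi\phi^k\xi^{-1})^{-1}(\xi(N))$ — but this is immediate once one notes that for a bijection $f$ and any map $g$, $(f\circ g\circ f^{-1})^{-1}(f(S))=f(g^{-1}(S))$; bijectivity of $\xi$ is exactly what is needed (a mere homomorphism would not suffice, which is why the hypothesis is an isomorphism rather than just a surjection). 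I would present the induction explicitly but keep the index-preservation and interchange-of-families remarks brief.
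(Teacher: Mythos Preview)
Your proof is correct and follows essentially the same approach as the paper's: both establish the key identity $C_n(\xi\phi\xi^{-1},\xi(N))=\xi(C_n(\phi,N))$ via $\psi^k=\xi\phi^k\xi^{-1}$, deduce that the indices $[H:C_n(\psi,\xi(N))]$ and $[G:C_n(\phi,N)]$ agree, and conclude by matching the suprema through the bijection $N\mapsto\xi(N)$. The only cosmetic differences are that the paper parametrizes by $N\in\CC^\lhd(H)$ and pulls back via $\xi^{-1}$ (invoking Lemma~\ref{N<M->HN>HM} to restrict to normal subgroups), whereas you parametrize by $N\in\CC(G)$ and push forward via $\xi$; either direction works equally well.
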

\begin{proof}
Let $N$ be a normal subgroup of $H$ and call $\theta=\xi\phi\xi^{-1}$. Then $\xi^{-1}(H)$ is normal in $G$ and:
\begin{itemize}
\item[-] $H/N$ is finite if and only if $G/\xi^{-1}(N)$ is finite;
\item[-] $C_n(\theta,N)=\xi(C_n(\phi,\xi^{-1}(N))$ for every $n\in\N_+$, since $\theta^n=\xi\phi^n\xi^{-1}$ for every $n\in\N$.
\end{itemize}
Therefore for every $n\in\N_+$, 
$$\left|\frac{G}{C_n(\theta,N)}\right|=\left|{\frac{\xi(G)}{\xi(C_n(\phi,\xi^{-1}(N)))}}\right|=\left|{\frac{G}{C_n(\phi,\xi^{-1}(N))}}\right|.$$ Consequently $H^\star(\theta,N)=H^\star(\phi,\xi^{-1}(N))$ for every $N\in\CC^\lhd(H)$, and hence $\ent^\star(\theta)=\ent^\star(\phi)$.
\end{proof}

\begin{lemma}[Logarithmic Law]\label{ll}
\begin{itemize}
\item[(a)] Let $G$ be a group and $\phi:G\to G$ an endomorphism. Then $\ent^\star(\phi^k)=k\cdot \ent^\star(\phi)$ for every $k\in\N_+$.
\item[(b)] If $\phi\in\Aut(G)$, then $\ent^\star(\phi)=\ent^\star(\phi^{-1})$. Consequently $\ent^\star(\phi^k)=|k|\cdot \ent^\star(\phi)$ for every $k\in\Z$.
\end{itemize}
\end{lemma}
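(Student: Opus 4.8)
The plan is to prove the Logarithmic Law for $\ent^\star$ in two steps, mirroring the proofs for the other entropies in the paper ($h_{alg}$ in Proposition \ref{LL}, $\mathfrak h$ in Theorem \ref{Properies}(e), $\mathfrak h^*$ in Proposition \ref{LL*}), but now working with cotrajectories of finite-index subgroups and exploiting the key structural fact already established in Proposition \ref{C=C_n->ent*=0}, namely that for $N\in\CC^\lhd(G)$ the sequence $\{\log[G:C_n(\phi,N)]/n\}$ actually converges to $\log\alpha$ for some integer $\alpha\geq 1$.

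First I would prove part (a), that $\ent^\star(\phi^k)=k\,\ent^\star(\phi)$ for $k\in\N_+$. Fix $N\in\CC^\lhd(G)$ and set $M=C_k(\phi,N)=N\cap\phi^{-1}(N)\cap\dots\cap\phi^{-k+1}(N)$. By Fact \ref{pullbacklhd}(b), $M\in\CC^\lhd(G)$. The crucial identity to verify is
$$
C_n(\phi^k,M)=C_{kn}(\phi,N)\quad\text{for every }n\in\N_+,
$$
which follows by unwinding both sides: $C_n(\phi^k,M)=\bigcap_{j=0}^{n-1}\phi^{-kj}(M)=\bigcap_{j=0}^{n-1}\bigcap_{i=0}^{k-1}\phi^{-kj-i}(N)=\bigcap_{l=0}^{kn-1}\phi^{-l}(N)=C_{kn}(\phi,N)$. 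Taking indices and dividing by $n$, and using that the limit in \eqref{H*} exists (Proposition \ref{C=C_n->ent*=0}),
$$
H^\star(\phi^k,M)=\lim_{n\to\infty}\frac{\log[G:C_{kn}(\phi,N)]}{n}=k\lim_{n\to\infty}\frac{\log[G:C_{kn}(\phi,N)]}{kn}=k\,H^\star(\phi,N).
$$
Hence $\ent^\star(\phi^k)\geq H^\star(\phi^k,M)=k\,H^\star(\phi,N)$, and taking the supremum over $N\in\CC^\lhd(G)$ (legitimate by Lemma \ref{N<M->HN>HM}) gives $\ent^\star(\phi^k)\geq k\,\ent^\star(\phi)$. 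For the reverse inequality, given any $N'\in\CC^\lhd(G)$, observe $C_k(\phi,N')\subseteq N'$, so by antimonotonicity (Lemma \ref{N<M->HN>HM}) and the chain of equalities just displayed with $N=N'$, $M=C_k(\phi,N')$: $H^\star(\phi^k,N')\leq H^\star(\phi^k,M)=k\,H^\star(\phi,N')\leq k\,\ent^\star(\phi)$. Supremizing over $N'$ yields $\ent^\star(\phi^k)\leq k\,\ent^\star(\phi)$.

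For part (b), assuming $\phi\in\Aut(G)$, it suffices to show $\ent^\star(\phi^{-1})=\ent^\star(\phi)$; the formula $\ent^\star(\phi^k)=|k|\,\ent^\star(\phi)$ for $k\in\Z$ then follows by combining this with part (a). Fix $N\in\CC^\lhd(G)$; since $\phi$ is an automorphism, $\phi^{n-1}(N)\in\CC^\lhd(G)$. I would establish
$$
C_n(\phi^{-1},N)=\phi^{n-1}\bigl(C_n(\phi,\phi^{-n+1}(N))\bigr),
$$
or, more symmetrically, that $\phi^{n-1}$ maps $C_n(\phi^{-1},N)=\bigcap_{i=0}^{n-1}\phi^{i}(N)$ isomorphically onto $\bigcap_{i=0}^{n-1}\phi^{i-n+1}(N)=\bigcap_{j=0}^{n-1}\phi^{-j}(N)=C_n(\phi,N)$. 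Since $\phi^{n-1}$ is an automorphism of $G$ carrying the normal subgroup $C_n(\phi^{-1},N)$ onto $C_n(\phi,N)$, it induces an isomorphism $G/C_n(\phi^{-1},N)\cong G/C_n(\phi,N)$, hence $[G:C_n(\phi^{-1},N)]=[G:C_n(\phi,N)]$ for every $n$. Therefore $H^\star(\phi^{-1},N)=H^\star(\phi,N)$, and taking suprema over $\CC^\lhd(G)$ gives $\ent^\star(\phi^{-1})=\ent^\star(\phi)$.

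The routine but slightly delicate point — the main obstacle — is the careful verification of the two subgroup identities for cotrajectories (the $C_n(\phi^k,C_k(\phi,N))=C_{kn}(\phi,N)$ identity and the image identity $\phi^{n-1}(C_n(\phi^{-1},N))=C_n(\phi,N)$), since one must keep track of indices of iterated preimages and make sure normality is preserved at each stage; here Fact \ref{pullbacklhd} and the observation that $\phi$ being an automorphism makes $\phi^{-n}(N)$ and $\phi^{n}(N)$ both normal finite-index are what make everything go through. Once these index equalities are in hand, the entropy statements are immediate from the existence of the limit defining $H^\star$ (Proposition \ref{C=C_n->ent*=0}) together with the antimonotonicity in Lemma \ref{N<M->HN>HM}, exactly as in the proof of Proposition \ref{LL}.
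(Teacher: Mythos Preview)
Your proof is correct and follows essentially the same approach as the paper's: the key identity $C_n(\phi^k,C_k(\phi,N))=C_{kn}(\phi,N)$ for part (a), combined with antimonotonicity, and the automorphism argument for part (b) are exactly what the paper does. One small slip: in part (b) the direction is reversed --- it is $\phi^{n-1}$ that carries $C_n(\phi,N)$ onto $C_n(\phi^{-1},N)$ (equivalently $\phi^{-(n-1)}$ goes the other way), not the direction you wrote; but this does not affect the conclusion that the two indices coincide.
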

\begin{proof}
(a) For $N\in\CC^\lhd(G)$, fixed $k\in\N$, for every $n\in\N$ we have
\begin{equation}\label{Cnkf=Cnfk}
{C_{nk}(\phi,N)}=C_n(\phi^k,C_k(\phi,N)).
\end{equation}
Then
\begin{align*}
H^\star(\phi,N)&=\lim_{n\to\infty}\frac{\log[G:C_{nk}(\phi,N)]}{nk}=\lim_{n\to\infty}\frac{\log[G:C_n(\phi^k,C_k(\phi,N))]}{nk}=\\
&=\frac{H^\star(\phi^k,C_k(\phi,N))}{k}\geq \frac{H^\star(\phi^k,N)}{k},
\end{align*}
where in the second equality we have applied \eqref{Cnkf=Cnfk} and in the last inequality Lemma \ref{N<M->HN>HM}. 
The equality $k\cdot H^\star(\phi,N)=H^\star(\phi^k,C_k(\phi,N))$ implies $k\cdot \ent^\star(\phi)\leq\ent^\star(\phi^k)$, while the inequality $k\cdot H^\star(\phi,N)\geq H^\star(\phi^k,N)$ yields $k\cdot \ent^\star(\phi)\geq \ent^\star(\phi^k)$ and this concludes the proof.

\smallskip
(b) For every $n\in\N_+$ and every $N\in\CC^\lhd(G)$, we have
\begin{align*} 
\phi^{n-1}(C_n(\phi,N)) &=\phi^{n-1}(N\cap\phi^{-1}(N)\cap\ldots\cap\phi^{-n+1}(N))=\\
&=\phi^{n-1}(N)\cap\phi^{n-2}(N)\cap\ldots\cap \phi (N)\cap N=C_n(\phi^{-1},N),
\end{align*}
and so 
$$[G:C_n(\phi,N)]=\left|\frac{G}{C_n(\phi,N)}\right|=\left|{\frac{\phi^{n-1}(G)}{\phi^{n-1}(C_n(\phi,N))}}\right|=\left|\frac{G}{C_n(\phi^{-1},N)}\right|=[G:C_n(\phi^{-1},N)].$$ 
Therefore $H^\star(\phi,N)=H^\star(\phi^{-1},N)$, and hence $\ent^\star(\phi)=\ent^\star(\phi^{-1})$.
\end{proof}

Now we give various weaker forms of the Addition Theorem for the adjoint algebraic entropy.

\begin{lemma}[Monotonicity for quotients]\label{quotient}
Let $G$ be a group, $\phi:G\to G$ an endomorphism and $H$ a $\phi$-invariant normal subgroup of $G$. Then $\ent^\star(\phi)\geq \ent^\star(\overline\phi)$, where $\overline \phi:G/H\to G/H$ is the endomorphism induced by $\phi$.
\end{lemma}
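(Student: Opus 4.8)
The plan is to mimic the standard "monotonicity for quotients'' argument used for the algebraic entropy (Lemma \ref{restriction_quotient}(b)), but now working with cotrajectories and the generalized index instead of trajectories. Let $\pi:G\to G/H$ be the canonical projection. The key observation is that pulling back a finite-index subgroup of $G/H$ along $\pi$ gives a finite-index subgroup of $G$ of the same index, and that this operation is compatible with taking cotrajectories. Concretely, I would fix $\overline N\in\CC^\lhd(G/H)$ and set $N=\pi^{-1}(\overline N)$, which lies in $\CC^\lhd(G)$ with $[G:N]=[G/H:\overline N]$.

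The main step is to show $\pi^{-1}(C_n(\overline\phi,\overline N))=C_n(\phi,N)$ for every $n\in\N_+$. This follows from the commuting square $\pi\circ\phi=\overline\phi\circ\pi$: indeed $\phi^{-k}(N)=\phi^{-k}(\pi^{-1}(\overline N))=(\pi\circ\phi^k)^{-1}(\overline N)=(\overline\phi^k\circ\pi)^{-1}(\overline N)=\pi^{-1}(\overline\phi^{-k}(\overline N))$, and since $\pi^{-1}$ commutes with finite intersections, intersecting over $k=0,\ldots,n-1$ yields $\pi^{-1}(C_n(\overline\phi,\overline N))=C_n(\phi,N)$. Because $H\subseteq N$ and hence $H\subseteq C_n(\phi,N)$, the projection $\pi$ induces a bijection $G/C_n(\phi,N)\to (G/H)/C_n(\overline\phi,\overline N)$, so $[G:C_n(\phi,N)]=[G/H:C_n(\overline\phi,\overline N)]$ for every $n$. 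Taking $\limsup$ over $n/\log$ gives $H^\star(\phi,N)=H^\star(\overline\phi,\overline N)$, whence $\ent^\star(\phi)\geq H^\star(\phi,N)=H^\star(\overline\phi,\overline N)$. Finally, taking the supremum over all $\overline N\in\CC^\lhd(G/H)$ and invoking Lemma \ref{N<M->HN>HM} (which says $\ent^\star(\overline\phi)=\sup\{H^\star(\overline\phi,\overline N):\overline N\in\CC^\lhd(G/H)\}$) yields $\ent^\star(\phi)\geq\ent^\star(\overline\phi)$.

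I don't expect a genuine obstacle here; the only point requiring mild care is keeping track of normality (so that $N=\pi^{-1}(\overline N)$ is normal and the cotrajectories $C_n(\phi,N)$ remain normal, which is guaranteed by Fact \ref{pullbacklhd}), and making sure the index computation uses the ordinary index of subgroups — here $C_n(\phi,N)$ is an honest subgroup containing $H$, so no recourse to the generalized index of subsets is needed. One should also note that $\overline\phi$ is well defined precisely because $H$ is $\phi$-invariant, which is in the hypothesis. This is really a one-paragraph proof once the pullback identity $\pi^{-1}(C_n(\overline\phi,\overline N))=C_n(\phi,N)$ is in hand.
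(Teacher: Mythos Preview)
Your proof is correct and follows essentially the same pullback strategy as the paper: take $\overline N\in\CC^\lhd(G/H)$, set $N=\pi^{-1}(\overline N)\in\CC^\lhd(G)$, and compare $C_n(\phi,N)$ with $C_n(\overline\phi,\overline N)$. Your version is in fact a bit sharper, since you establish the equality $\pi^{-1}(C_n(\overline\phi,\overline N))=C_n(\phi,N)$ (hence $H^\star(\phi,N)=H^\star(\overline\phi,\overline N)$), whereas the paper only records the inclusion $C_n(\overline\phi,N/H)\supseteq (C_n(\phi,N)\cdot H)/H$ and the resulting inequality of indices; but the underlying idea is the same.
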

\begin{proof}
Let $N/H\in\CC^\lhd(G/H)$; then $N\in\CC^\lhd(G)$. Fixed $n\in\N_+$,
$$C_n\left(\overline \phi, \frac{N}{H}\right)\supseteq \frac{C_n(\phi,N)\cdot H}{H},$$
and so 
$$\left|\frac{\frac{G}{H}}{C_n\left(\overline\phi,\frac{N}{H}\right)}\right|\leq 
\left|\frac{\frac{G}{H}}{\frac{C_n(\phi,N)\cdot H}{H}}\right|=\left|\frac{G}{C_n(\phi,N)\cdot H}\right|\leq \left|\frac{G}{C_n(\phi,N)}\right|.$$
This yields $H^\star(\overline\phi, N/H)\leq H^\star(\phi,N)$ and so $\ent^\star(\overline\phi)\leq\ent^\star(\phi)$.
\end{proof}

The adjoint entropy is preserved under taking some special quotients. More precisely let $R(G) = \bigcap_{N\in \CC^\lhd(G)}N$ (this is the smallest normal subgroup of $G$ such that $G/R(G)$ is residually finite). Now, if $H$ is a $\phi$-invariant normal subgroup of $G$
contained in $R(G)$, then $\ent^\star(\phi)= \ent^\star(\overline\phi)$, as every $N \in \CC(G)$ contains $R(G)$. This fact reduces the study of the adjoint algebraic entropy to endomorphisms of residually finite groups.

In general $\ent^\star$ fails to be monotone with respect to taking restrictions to $\phi$-invariant subgroups (i.e., if $G$ is an abelian group, $\phi:G\to G$ an endomorphism and $H$ a $\phi$-invariant subgroup of $G$, then the inequality $\ent^\star(\phi)\geq\ent^\star(\phi\restriction_H)$ may fail), as the following easy example shows. 

\begin{example}\label{noAT*}\cite{DGS}
Let $G$ be an abelian group that admits an endomorphism $\phi:G\to G$ with $\ent^\star(\phi)>0$. Then the divisible hull $D$ of $G$ has $\ent^\star(D)=0$ by Example \ref{examplestar}(d).
\end{example}

\begin{lemma}[Monotonicity for subgroups]\label{subgroup}
Let $G$ be a group, $\phi:G\to G$ an endomorphism and $H$ a $\phi$-invariant subgroup of $G$. If $H\in\CC(G)$, then $\ent^\star(\phi)=\ent^\star(\phi\restriction_H)$.
\end{lemma}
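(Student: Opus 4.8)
The plan is to prove the two inequalities $\aent(\phi\restriction_H)\le\aent(\phi)$ and $\aent(\phi)\le\aent(\phi\restriction_H)$ separately. For the first, I would take an arbitrary $M\in\CC^\lhd(H)$ (it suffices to use normal finite-index subgroups of $H$ by Lemma \ref{N<M->HN>HM}). Since $[G:H]<\infty$ and $[H:M]<\infty$ we get $[G:M]<\infty$, but $M$ need not be normal or even of the correct shape in $G$; so I would pass to the heart $M_G$, which lies in $\CC^\lhd(G)$ by Fact \ref{heart} (applied in $G$ — note $M$ has finite index in $G$). Then I would compare $C_n(\phi\restriction_H, M)$ with $C_n(\phi, M_G)\cap H$, observing that $C_n(\phi,M_G)\cap H\subseteq C_n(\phi\restriction_H,M)$ because $M_G\subseteq M\subseteq H$ and $H$ is $\phi$-invariant (so $\phi^{-i}(M_G)\cap H = (\phi\restriction_H)^{-i}(M_G\cap H)\subseteq (\phi\restriction_H)^{-i}(M)$). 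From this containment, $[H:C_n(\phi\restriction_H,M)] \le [H: C_n(\phi,M_G)\cap H]\le [G:C_n(\phi,M_G)]$, and dividing by $n$ and passing to the $\limsup$ gives $H^\star(\phi\restriction_H,M)\le H^\star(\phi,M_G)\le\aent(\phi)$; taking the supremum over $M$ yields $\aent(\phi\restriction_H)\le\aent(\phi)$.

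For the reverse inequality $\aent(\phi)\le\aent(\phi\restriction_H)$, I would start from an arbitrary $N\in\CC^\lhd(G)$ and set $N'=N\cap H$, which lies in $\CC^\lhd(H)$ by Fact \ref{pullbacklhd}(a). The key point is that, since $[G:H]<\infty$, the index $[G:C_n(\phi,N)]$ and $[H:C_n(\phi,N)\cap H]=[H:C_n(\phi\restriction_H,N')]$ differ only by a bounded multiplicative factor: indeed $[G:C_n(\phi,N)\cap H] = [G:H]\cdot[H:C_n(\phi,N)\cap H]$ and $[G:C_n(\phi,N)\cap H]\ge [G:C_n(\phi,N)]$, while also $[G:C_n(\phi,N)\cap H]\le [G:C_n(\phi,N)]\cdot[G:H]$ by the standard estimate for the index of an intersection. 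I must also check that $C_n(\phi,N)\cap H = C_n(\phi\restriction_H, N\cap H)$, which again uses $\phi$-invariance of $H$: $\phi^{-i}(N)\cap H = (\phi\restriction_H)^{-i}(N\cap H)$ for each $i$, hence the intersection over $i<n$ matches. Taking logarithms, dividing by $n$, and letting $n\to\infty$, the bounded factor $\log[G:H]$ washes out, so $H^\star(\phi,N)=H^\star(\phi\restriction_H,N\cap H)\le\aent(\phi\restriction_H)$; the supremum over $N$ finishes the proof.

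The main obstacle I anticipate is bookkeeping with indices of non-normal subgroups and the interaction of intersection with counterimages: one must justify carefully that $\phi^{-i}(N)\cap H=(\phi\restriction_H)^{-i}(N\cap H)$ (true since $H$ is $\phi$-invariant, so $\phi\restriction_H$ really maps into $H$ and the restriction of a preimage is the preimage of the restriction here), and that the ``submultiplicativity of indices'' estimate $[G:A\cap B]\le[G:A][G:B]$ is available for subgroups that need not be normal — which it is, by the usual injection of $G/(A\cap B)$ into $G/A\times G/B$. Once these two lemmas are in hand, both inequalities reduce to the observation that multiplying an index by the fixed finite number $[G:H]$ is invisible in the limit $\tfrac1n\log(\cdot)$, exactly as in the compact-group computation of Proposition \ref{no-mu}. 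I would present the two lemmas first, then the two inequalities, keeping the index estimates explicit but brief.
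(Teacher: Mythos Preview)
Your proposal is correct and follows essentially the same strategy as the paper's proof: both inequalities rest on the identity $C_n(\phi,N)\cap H = C_n(\phi\restriction_H,N\cap H)$ (which the paper also states, for the heart $N_G$) together with the observation that the fixed multiplicative factor $[G:H]$ vanishes in the limit $\frac{1}{n}\log(\cdot)$. The only minor difference is that for the inequality $\aent(\phi)\le\aent(\phi\restriction_H)$ the paper, starting from $M\in\CC^\lhd(G)$, sets $N=M\cap H$ and then passes again to the heart $N_G$ in $G$ before comparing indices, whereas you work directly with $N\in\CC^\lhd(G)$ and $N\cap H$; your route is slightly more streamlined but not substantively different.
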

\begin{proof}
Let $N\in\CC^\lhd(H)$. Since $N$ has finite index in $H$ and $H$ has finite index in $G$, it follows that $N$ has finite index in $G$ as well. Then $N_G\in\CC^\lhd(G)$. Since $N_G\subseteq N$, Lemma \ref{N<M->HN>HM} yields $H^\star(\phi,N)\leq H^\star(\phi,N_G)$ and $H^\star(\phi\restriction_H,N)\leq H^\star(\phi\restriction_H,N_G)$. It is possible to prove by induction on $n\in\N_+$ that $$C_n(\phi\restriction_H,N_G)=C_n(\phi,N_G)\cap H.$$
Then, for every $n\in\N_+$,
\begin{equation*}
\frac{G}{C_n(\phi,N_G)} \geq\frac{H\cdot C_n(\phi,N_G)}{C_n(\phi,N_G)}\cong\frac{H}{C_n(\phi,N_G)\cap H}=\frac{H}{C_n(\phi\restriction_H,N_G)},
\end{equation*}
and so $\ent^\star(\phi)\geq H^\star(\phi,N_G)\geq H^\star(\phi\restriction_H,N_G)\geq H^\star(\phi\restriction_H,N)$, that implies $\ent^\star(\phi)\geq\ent^\star(\phi\restriction_H)$.

Let now $M\in\CC^\lhd(G)$. Then $N=M\cap H\in\CC^\lhd(H)$. For $c_n=[G:C_n(\phi,N_G)]$ and $c_n'= [H:C_n(\phi\restriction_H,N_G)]$ we proved in the first part of the proof that $c_n \geq c_n'$. On the other hand, one can easily see that $c_n/c_n'\leq [G:H]$ is bounded. Therefore 
$$
H^*(\phi\restriction_{H},N_G)=\lim_{n\to \infty}\frac{\log c_n'}{n}=\lim_{n\to\infty} \frac{\log c_n}{n}=H^\star(\phi,N_G)\geq H^\star(\phi,N)\geq H^\star(\phi,M).
$$
We can conclude that $\ent^\star(\phi)=\ent^\star(\phi\restriction_H)$.
\end{proof}

The next result shows the additivity of the adjoint algebraic entropy for finite direct products. 

\begin{lemma}[Weak Addition Theorem]\label{poorAT}
Let $G_1$ and $G_2$ be groups, and $\phi_i:G_i\to G_i$ an endomorphism for $i=1,2$.
Then $\ent^\star(\phi_1\times\phi_2) = \ent^\star(\f_1) + \ent^\star(\f_2)$.   
\end{lemma}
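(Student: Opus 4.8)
The plan is to prove the two inequalities $\ent^\star(\phi_1\times\phi_2) \geq \ent^\star(\phi_1) + \ent^\star(\phi_2)$ and $\ent^\star(\phi_1\times\phi_2) \leq \ent^\star(\phi_1) + \ent^\star(\phi_2)$ separately, working throughout with normal finite-index subgroups (which suffices by Lemma \ref{N<M->HN>HM}) and exploiting Proposition \ref{C=C_n->ent*=0}, which tells us $H^\star(\phi,N)$ is a genuine limit equal to $\log\alpha$ for a stationary sequence of indices. Write $\phi = \phi_1\times\phi_2$ on $G = G_1\times G_2$.

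For the lower bound, I would take $N_1\in\CC^\lhd(G_1)$ and $N_2\in\CC^\lhd(G_2)$, set $N = N_1\times N_2\in\CC^\lhd(G)$, and observe that $\phi^{-n}(N) = \phi_1^{-n}(N_1)\times\phi_2^{-n}(N_2)$, hence $C_n(\phi,N) = C_n(\phi_1,N_1)\times C_n(\phi_2,N_2)$. Taking indices multiplies, so $[G:C_n(\phi,N)] = [G_1:C_n(\phi_1,N_1)]\cdot[G_2:C_n(\phi_2,N_2)]$; dividing by $n$ and passing to the limit (legitimate since all three limits exist by Proposition \ref{C=C_n->ent*=0}) gives $H^\star(\phi,N) = H^\star(\phi_1,N_1) + H^\star(\phi_2,N_2)$. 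Taking suprema over $N_1$ and $N_2$ yields $\ent^\star(\phi_1\times\phi_2)\geq\ent^\star(\phi_1)+\ent^\star(\phi_2)$.

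For the upper bound, I would start from an arbitrary $N\in\CC^\lhd(G)$ and reduce to product subgroups. Let $\pi_i : G\to G_i$ be the projections and put $N_i = \pi_i(N)\cap N_i'$ — more simply, let $M_1 = N\cap G_1$ and $M_2 = N\cap G_2$ (identifying $G_i$ with its canonical copy in $G$); then $M_1\times M_2 \subseteq N$ and each $M_i\in\CC^\lhd(G_i)$, with $M_i$ being $\phi_i$-related appropriately. By the antimonotonicity from Lemma \ref{N<M->HN>HM}, $H^\star(\phi,N)\leq H^\star(\phi,M_1\times M_2)$, and the latter equals $H^\star(\phi_1,M_1)+H^\star(\phi_2,M_2)\leq\ent^\star(\phi_1)+\ent^\star(\phi_2)$ by the computation already done in the lower-bound step. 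Taking the supremum over $N$ gives the desired inequality.

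The main obstacle I anticipate is the bookkeeping in the upper-bound reduction: one must check that $M_1\times M_2$ indeed has finite index in $G$ (it does, since $[G:M_1\times M_2] \leq [G:N]\cdot[N:M_1\times M_2]$ and $[G_1:M_1],[G_2:M_2]$ are finite because $M_i$ contains $N\cap G_i$ which has finite index in $G_i$), that $M_1\times M_2$ is normal, and that the identity $C_n(\phi, M_1\times M_2) = C_n(\phi_1,M_1)\times C_n(\phi_2,M_2)$ genuinely holds — this last point just needs $\phi^{-k}(M_1\times M_2) = \phi_1^{-k}(M_1)\times\phi_2^{-k}(M_2)$, which is immediate from the product structure. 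Once these routine verifications are in place the proof is essentially the multiplicativity of the index under products combined with the existence of the limits, so no deeper difficulty should arise; the only care needed is that everything is phrased for normal finite-index subgroups so that Proposition \ref{C=C_n->ent*=0} applies to guarantee honest limits rather than limsups.
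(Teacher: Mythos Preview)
Your proof is correct and follows essentially the same approach as the paper: both reduce an arbitrary $N\in\CC^\lhd(G_1\times G_2)$ to the product $N_1\times N_2$ with $N_i=N\cap G_i$ (invoking Fact~\ref{pullbacklhd}), use antimonotonicity to compare, and exploit the identity $C_n(\phi_1\times\phi_2,N_1\times N_2)=C_n(\phi_1,N_1)\times C_n(\phi_2,N_2)$ to split the index multiplicatively. Your version is simply more explicit about the two inequalities and about invoking Proposition~\ref{C=C_n->ent*=0} to justify that the limits exist, whereas the paper compresses all of this into two lines.
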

\begin{proof}
By Fact \ref{pullbacklhd}, every $N\in\CC^\lhd(G)$ contains $N'=N_1\times N_2$, where $N_i=N\cap G_i\in\CC^\lhd(G_i)$ for $i=1,2$. Moreover $C_n(\f, N')\cong C_n(\f_1,N_1)\oplus C_n(\f_2,N_2)$. 
\end{proof}

\subsection{Examples, Addition Theorem and Dichotomy}\label{adj-mp}

We collect in this subsection examples and fundamental results from \cite{DGS} about the adjoint algebraic entropy in the abelian case.

\begin{example}
Let $G$ be an abelian group.
\begin{itemize}
\item[(a)] Let $m\in\Z$ and $\varphi_m:G\to G$ the endomorphism of $G$ defined by $x\mapsto mx$ for every $x\in G$. Then $\ent^\star(\varphi_{m})=0$, since all subgroups of $G$ are $\varphi_{m}$-invariant and so Lemma \ref{phi-inv} applies.
\item[(b)] If $\End(G)\subseteq \mathbb Q$, then $\ent^\star(\phi)=0$ for every endomorphism $\phi:G\to G$.
\end{itemize}
\end{example}

The values of the adjoint algebraic entropy of the Bernoulli shifts were calculated in \cite[Proposition 6.1]{DGS} applying \cite[Corollary 6.5]{G0} and the Pontryagin duality; a direct computation can be found in \cite{G}. So, in contrast with what occurs for the algebraic entropy, we have:

\begin{example}\label{beta*}
For $K=\Z(p)$, where $p$ is a prime, $\ent^\star(\beta_K^\oplus)=\ent^\star( {}_K\beta^\oplus)=\ent^\star(\overline\beta_K^\oplus)=\infty.$
\end{example}

As an application of Example \ref{beta*} we can now give an example witnessing  the lack of continuity of the adjoint algebraic entropy
under taking inverse limits. 
 
\begin{example}\label{no-lim}\cite{DGS} 
Let $p$ be a prime, $G=\Z(p)^\N$ and consider $\beta_{\Z(p)}:G\to G$. For every $i\in\N$, let $$H_i=\underbrace{0\times\ldots\times 0}_i\times\Z(p)^{\N\setminus\{0,\ldots,i-1\}}\subseteq G.$$ Each subgroup $H_i$ is $\beta_{\Z(p)}$-invariant. The induced endomorphism $\overline{\beta_{\Z(p)}}_i:G/H_i\to G/H_i$ has $\ent^\star(\overline{\beta_{\Z(p)}}_i)=0$, since $G/H_i$ is finite. Moreover $G=\displaystyle\lim_{\longleftarrow}G/H_i$, as $\{(G/H_i,p_i)\}_{i\in\N}$ is an inverse system, where $p_i:G/H_{i+1}\cong\Z(p)^{i+1}\to G/H_{i}\cong \Z(p)^i$ is the canonical projection for every $i\in\N$.
By Example \ref{beta*} $\ent^\star(\beta_{\Z(p)})=\infty$, while $\sup_{i\in\N}\ent^\star(\overline{\beta_{\Z(p)}}_i)=0$.
\end{example}

\begin{theorem}[Addition Theorem]\label{AT*} 
Let $G$ be a bounded abelian group, $\phi:G\to G$ an endomorphism, $H$ a $\phi$-invariant subgroup of $G$ and $\overline\phi:G/H\to G/H$ the endomorphism induced by $\phi$. Then $$\ent^\star(\phi)=\ent^\star(\phi\restriction_H)+\ent^\star(\overline\phi).$$
\end{theorem}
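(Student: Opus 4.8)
The plan is to prove the two inequalities $\ent^\star(\phi)\leq\ent^\star(\phi\restriction_H)+\ent^\star(\overline\phi)$ and $\ent^\star(\phi)\geq\ent^\star(\phi\restriction_H)+\ent^\star(\overline\phi)$ separately, working throughout with the cotrajectories $C_n(\phi,N)$ for $N\in\CC^\lhd(G)$ and using Lemma \ref{N<M->HN>HM} to restrict to normal finite-index subgroups, and Proposition \ref{C=C_n->ent*=0} to know that $H^\star(\phi,N)$ is a genuine limit equal to $\log\alpha$ where $\alpha$ is the eventual constant value of $[C_n(\phi,N):C_{n+1}(\phi,N)]$. The subadditivity direction $\ent^\star(\phi)\leq\ent^\star(\phi\restriction_H)+\ent^\star(\overline\phi)$ is the more robust one: fix $N\in\CC^\lhd(G)$ and set $N_H=N\cap H\in\CC^\lhd(H)$ and $\overline N=(N H)/H\in\CC^\lhd(G/H)$. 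One checks the inclusion chain relating $C_n(\phi,N)$ to $C_n(\phi\restriction_H,N_H)$ and $C_n(\overline\phi,\overline N)$: the quotient $G/C_n(\phi,N)$ is squeezed by $[G:C_n(\phi,N)]\leq [G/H : C_n(\overline\phi,\overline N)]\cdot[H:C_n(\phi\restriction_H,N'_H)]$ for a suitable $N'_H\supseteq$ (or comparable to) $N_H$ obtained by intersecting with $H$. Passing to $\tfrac1n\log$ and taking the limits (which exist by Proposition \ref{C=C_n->ent*=0}) gives $H^\star(\phi,N)\leq H^\star(\overline\phi,\overline N)+H^\star(\phi\restriction_H,N_H)\leq\ent^\star(\overline\phi)+\ent^\star(\phi\restriction_H)$, and taking the supremum over $N$ finishes this half. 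Here I would use that $\ent^\star(\overline\phi)\geq H^\star(\overline\phi,\overline N)$ directly and not worry about whether $\overline N$ ranges over all of $\CC^\lhd(G/H)$.

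For the superadditivity direction I would exploit the hypothesis that $G$ is \emph{bounded} in an essential way, since this is exactly where Example \ref{noAT*} shows the statement can fail without it and where $\ent^\star(\phi)\geq\ent^\star(\phi\restriction_H)$ already needs justification (Lemma \ref{subgroup} only covers $H$ of finite index). Boundedness lets me decompose $G$, up to the structure theory of bounded abelian groups, into a direct sum of homogeneous pieces $\Z(p^k)^{(\kappa)}$ and, more importantly, lets me approximate $H$ and $G$ by finite-index subgroups: for $mG=0$ the subgroup $mG+H=H$ and one has control over the lattice of finite-index subgroups via the finitely many ``levels" $G[p^j]$. Concretely, to get $\ent^\star(\phi)\geq\ent^\star(\phi\restriction_H)$, given $N_H\in\CC^\lhd(H)$ with $H^\star(\phi\restriction_H,N_H)$ large, I would produce $N\in\CC^\lhd(G)$ with $N\cap H\subseteq N_H$ (possible because $H$ is a direct summand up to bounded issues, or because $[G:H_0]<\infty$ can be arranged on a bounded piece) and then show $C_n(\phi,N)\cap H\subseteq C_n(\phi\restriction_H,N_H)$, so that the index $[H:C_n(\phi\restriction_H,N_H)]$ divides $[G:C_n(\phi,N)]$ up to a bounded factor, yielding $H^\star(\phi,N)\geq H^\star(\phi\restriction_H,N_H)$. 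For the ``$+\ent^\star(\overline\phi)$" term I would choose $N$ simultaneously lying over a given $\overline N\in\CC^\lhd(G/H)$ witnessing $\ent^\star(\overline\phi)$, and combine the two constructions into a single finite-index subgroup whose cotrajectory index grows at rate at least the sum; the multiplicativity $[G:C_n(\phi,N)]=[G:C_n(\phi,N)\cdot H]\cdot[C_n(\phi,N)\cdot H:C_n(\phi,N)]$ is the bookkeeping identity that makes this precise.

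The main obstacle I anticipate is the superadditivity (lower bound) direction, specifically producing one finite-index subgroup $N$ of $G$ that simultaneously ``sees" the growth coming from $H$ and the growth coming from $G/H$ — the cotrajectories of $\phi$ on $G$ do not split as a product of cotrajectories on $H$ and on $G/H$ because $C_n(\phi,N)$ need not be a semidirect-product-compatible subgroup, so the short exact sequence $1\to H\to G\to G/H\to 1$ interacts with $\phi$-invariance in a non-split way. This is precisely where I expect the boundedness of $G$ to be indispensable: it guarantees that $\CC^\lhd(G)$ is ``large enough'' and that subgroups of the form $N=N'\cap N''$ with $N'$ adapted to $H$ and $N''$ adapted to the quotient still have finite index and control both indices, and it rules out divisible obstructions of the Example \ref{noAT*} type. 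I would handle the technical core by reducing first to $G$ of prime exponent $p$ (via the filtration by $G[p^j]$ and the already-established Logarithmic Law / Weak Addition Theorem from Lemmas \ref{ll} and \ref{poorAT}, together with continuity-type arguments on the bounded pieces) and then running the direct-sum/linear-algebra argument over $\mathbb F_p$, where finite-index subgroups are cofinite-dimensional subspaces and the cotrajectory index becomes an $\mathbb F_p$-dimension count that is visibly additive along the exact sequence.
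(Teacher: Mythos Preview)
The paper does not actually prove Theorem \ref{AT*}; it is quoted from \cite{DGS} without argument. The proof intended there is almost certainly \emph{not} the direct cotrajectory computation you outline, but the duality route that the surrounding material sets up: by the Bridge Theorem \ref{ent*=ent^} one has $\ent^\star(\phi)=\ent(\widehat\phi)$, and since $G$ is bounded the Pontryagin dual $\widehat G$ is a bounded (hence torsion) abelian group. The short exact sequence $0\to H\to G\to G/H\to 0$ dualizes to $0\to (G/H)^{\wedge}\to\widehat G\to\widehat H\to 0$ with $(G/H)^{\wedge}=H^{\perp}$ a $\widehat\phi$-invariant subgroup, and the Addition Theorem for $\ent$ on torsion abelian groups (from \cite{DGSZ}) gives $\ent(\widehat\phi)=\ent(\widehat\phi\restriction_{H^{\perp}})+\ent(\text{induced map on }\widehat H)$; applying the Bridge Theorem twice more yields the statement. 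Boundedness enters exactly once, to guarantee that $\widehat G$ is torsion so that the torsion Addition Theorem applies.

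Your direct approach is a genuinely different strategy, and parts of it are sound: you are right that boundedness is needed precisely to lift a given $N_H\in\CC^\lhd(H)$ to some $N\in\CC^\lhd(G)$ with $N\cap H\subseteq N_H$ (this is the injectivity of $(\Z/m\Z)^k$ in $\Z/m\Z$-modules), and the bookkeeping identity $[G:C_n(\phi,N)]=[G:C_n(\phi,N)H]\cdot[H:C_n(\phi\restriction_H,N_H)]$ together with $H\cap C_n(\phi,N)=C_n(\phi\restriction_H,N_H)$ does push the \emph{superadditivity} direction through. But your ``robust'' subadditivity step contains a real gap: the inclusion you get from the exact sequence is $(C_n(\phi,N)H)/H\subseteq C_n(\overline\phi,\overline N)$, which yields $[G:C_n(\phi,N)H]\geq[G/H:C_n(\overline\phi,\overline N)]$, i.e.\ the \emph{opposite} inequality to the one you claim. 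What is missing is control of the defect $\epsilon_n=[C_n(\overline\phi,\overline N):\pi(C_n(\phi,N))]$, and nothing in your outline bounds $\frac1n\log\epsilon_n$. The suggested reduction to exponent $p$ via the filtration $G[p^j]$ does not obviously help here, and the ``linear algebra over $\mathbb F_p$'' endgame is not spelled out enough to see how it would close this gap. The duality proof sidesteps this entirely by transferring the whole question to $\ent$, where the Addition Theorem is already available.
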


For the adjoint algebraic entropy, the Weak Addition Theorem holds in general by Lemma \ref{poorAT}. On the other hand, the Monotonicity for invariant subgroups fails even for torsion abelian groups by Example \ref{noAT*}; in particular, the Addition Theorem fails in general. Moreover the argument used in \cite{DGS} in the proof of Theorem \ref{AT*} does not work out of the class of bounded abelian groups. 

\medskip
While the algebraic entropy may take finite positive values, the adjoint algebraic entropy takes values only in $\{0,\infty\}$ as shown by \cite[Theorem 7.6]{DGS}:

\begin{theorem}[Dichotomy Theorem]\label{dichotomy}
Let $G$ be an abelian group and $\phi:G\to G$ an endomorphism. Then 
\begin{center}
either $\ent^\star(\phi)=0$ or $\ent^\star(\phi)=\infty$.
\end{center}
\end{theorem}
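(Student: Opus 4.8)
The plan is to prove a dichotomy for $\ent^\star$ in the abelian case by reducing to two complementary situations and invoking the already-recorded tools: the Logarithmic Law (Lemma \ref{ll}), Monotonicity for subgroups of finite index (Lemma \ref{subgroup}), Monotonicity for quotients (Lemma \ref{quotient}), the Weak Addition Theorem (Lemma \ref{poorAT}), the description of $H^\star(\phi,N)$ via the stabilizing sequence $\alpha_n$ (Proposition \ref{C=C_n->ent*=0}), and the computation $\ent^\star(\beta_K^\oplus)=\infty$ for $K=\Z(p)$ (Example \ref{beta*}). The key idea is: if $\ent^\star(\phi)>0$, then one can locate inside $G$ a $\phi$-invariant (or suitably controlled) subgroup on which $\phi$ contains, up to conjugation and finite-index adjustments, a copy of the Bernoulli shift $\beta_{\Z(p)}^\oplus$, forcing $\ent^\star(\phi)=\infty$.

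First I would reduce to a torsion group of prime exponent. Using the structure of abelian groups, write $G$ and its endomorphism in terms of the torsion subgroup $t(G)$ and the torsion-free quotient; since an endomorphism with $\ent^\star(\phi)>0$ must produce arbitrarily large indices $[G:C_n(\phi,N)]$ for some $N\in\CC^\lhd(G)$, and such quotients are finite abelian groups, the relevant ``action'' lives on finite sections of $G$. Passing to a finite-index subgroup via Lemma \ref{subgroup} and to a quotient via Lemma \ref{quotient}, one may assume $N$ itself is contained in a section where $G/C(\phi,N)$ is an infinite abelian group of bounded exponent; splitting this bounded group into its $p$-primary pieces and using the Weak Addition Theorem \ref{poorAT}, one reduces to the case where $G/C(\phi,N)$ is an infinite elementary abelian $p$-group. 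By the Logarithmic Law one can further replace $\phi$ by a power to arrange that the stabilized value $\alpha=\alpha_n$ of Proposition \ref{C=C_n->ent*=0} is attained; the hypothesis $\ent^\star(\phi)>0$ exactly says $\alpha>1$, i.e. the chain $C_1(\phi,N)\supsetneq C_2(\phi,N)\supsetneq\cdots$ of finite-index subgroups is strictly decreasing for infinitely many steps, with successive quotients $D_n=C_n(\phi,N)/C_{n+1}(\phi,N)$ all of the same size $\alpha\geq p$.

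Next I would extract a Bernoulli-shift copy. The strictly descending chain $\{C_n(\phi,N)\}$ together with the injections $D_{n+1}\hookrightarrow D_n$ from the proof of Proposition \ref{C=C_n->ent*=0} provides, after stabilization, a uniform ``one new $\Z(p)$-coordinate per step'' structure, and the map $\phi$ shifts these coordinates. Dualizing (or arguing directly on the profinite completion $\varprojlim G/C_n(\phi,N)$ with the induced continuous map), this precisely exhibits the left Bernoulli shift $_{\Z(p)}\beta$ on a quotient or a subgroup of a quotient; by Lemma \ref{quotient} (and, on the dual side, Fact \ref{dualbeta} together with the duality relation between $\ent^\star$ and the algebraic entropy referenced in \S\ref{top-adj}) this yields $\ent^\star(\phi)\geq\ent^\star(\beta_{\Z(p)}^\oplus)=\infty$ by Example \ref{beta*}, hence $\ent^\star(\phi)=\infty$. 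The case $\ent^\star(\phi)=0$ needs nothing. Combining, $\ent^\star(\phi)\in\{0,\infty\}$.

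The main obstacle will be the extraction step: making rigorous the passage from ``the cotrajectory chain shrinks by a constant factor $\alpha>1$ at each step'' to ``a genuine Bernoulli shift $_{\Z(p)}\beta$ appears as a subquotient flow''. One has to be careful that the subgroups $C_n(\phi,N)$ need not be $\phi$-invariant individually, that $\phi$ only maps $C_{n+1}$ into $\phi^{-1}$-preimages rather than cleanly shifting an internal direct sum, and that the monotonicity lemmas available here are one-sided (Lemma \ref{subgroup} needs finite index, Lemma \ref{quotient} needs a quotient). The cleanest route is probably to do the whole argument on the Pontryagin dual, where finite-index subgroups become finite subgroups, $\ent^\star$ becomes the algebraic entropy $\ent$, and the desired structure becomes the statement that an infinite trajectory of a finite $p$-subgroup under a group endomorphism contains (up to the known reductions in \cite{DGSZ}) a copy of $\beta_{\Z(p)}^\oplus$ — precisely the mechanism already used in Example \ref{Exaaaample}(b). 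I would therefore organize the proof to reduce to that dual statement and then quote the relevant result from \cite{DGSZ} rather than rebuilding the combinatorics by hand.
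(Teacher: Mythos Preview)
The paper does not supply a proof of this theorem: it is stated as \cite[Theorem~7.6]{DGS} with no argument given here, so there is no proof in the present paper against which to compare your attempt.

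On its own merits, your outline identifies the right target---produce $\beta_{\Z(p)}^\oplus$ as a quotient flow of $(G,\phi)$ and then invoke Lemma~\ref{quotient} together with Example~\ref{beta*}---and you correctly flag the extraction step as the crux. However, neither of your two suggested routes closes that gap. For the direct route, the subgroups $C_n(\phi,N)$ are not $\phi$-invariant (only $\phi(C_{n+1})\subseteq C_n$ holds, since $C_{n+1}\subseteq\phi^{-1}(C_n)$), so the ``one new $\Z(p)$-coordinate per step'' picture does not assemble into a $\phi$-invariant subgroup $H$ with $G/H$ carrying the right Bernoulli shift; this is precisely the obstacle you name, and it is not overcome. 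For the duality route, the Bridge Theorem~\ref{ent*=ent^} gives $\ent^\star(\phi)=\ent(\widehat\phi)$, and an infinite trajectory of an order-$p$ element in $\widehat G$ does yield a copy of $(\Z(p)^{(\N)},\beta_{\Z(p)}^\oplus)$ as a \emph{subflow} of $(\widehat G,\widehat\phi)$; but monotonicity of $\ent$ for invariant subgroups then only gives $\ent(\widehat\phi)\geq\log p$, not $\infty$. The amplification mechanism of Example~\ref{Exaaaample}(b) that you invoke to pass from $\log p$ to $\infty$ requires \emph{divisibility} of the ambient torsion group (to find $y_n$ with $p^{n-1}y_n=x$), and $t(\widehat G)$ is not divisible in general---already for $G=\Z(p)^{(\N)}$ one has $\widehat G=\Z(p)^\N$ of exponent $p$. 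What is actually needed, and what \cite{DGS} supplies, is an argument exploiting the compactness of $\widehat G$ (equivalently, the abundance of finite-index subgroups of $G$ beyond those in a single cotrajectory chain) to produce, for every $m$, finite subgroups $F_m\leq\widehat G$ with $H_{alg}(\widehat\phi,F_m)\geq m\log p$. Your sketch does not provide this step, so as written it establishes only $0<\ent^\star(\phi)\Rightarrow\ent^\star(\phi)\geq\log p$, which is strictly weaker than the dichotomy.
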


We leave open the problem of extending this result out of the realm of abelian groups:

\begin{problem}
Does the Dichotomy Theorem for the adjoint algebraic entropy hold also in the general case of arbitrary groups?
What about nilpotent groups? And the groups belonging to $\mathfrak P$?
\end{problem}

\subsection{Topological adjoint entropy}\label{top-adj}

The particular ``binary behavior'' of the values of the adjoint algebraic entropy (given by Theorem \ref{dichotomy}) seems to be caused by the fact that the family of finite-index subgroups can be very large. So in \cite{G} only a part of it was taken making recourse to an appropriate topology in the following way. For a topological group $(G,\tau)$, consider the subfamily $$\CC_\tau(G)=\{N\in\CC(G):N\ \text{$\tau$-open}\}$$ of $\CC(G)$ consisting of all $\tau$-open finite-index subgroups of $G$. 

\begin{definition}
For a topological group $(G,\tau)$ and a continuous endomorphism $\psi:(G,\tau)\to (G,\tau)$, the \emph{topological adjoint entropy} of $\psi$ with respect to $\tau$ is
\begin{equation}\label{5}
\ent_{\tau}^\star(\psi)=\sup \{ H^\star(\psi,N):N\in\CC_\tau(G)\}.
\end{equation}
\end{definition}

Roughly speaking, $\aent_{\tau}$ is a variant of $\aent$ but taken only with respect to \emph{some} finite-index subgroups, namely, the $\tau$-open ones. Clearly, for $(G,\tau)$ a topological group and $\psi:(G,\tau)\to(G,\tau)$ a continuous endomorphism, $\aent(\psi)\geq\aent_\tau(\psi)$.
For the discrete topology $\delta_G$ of $G$, we have $\aent_{\delta_G}(\psi)=\aent(\psi)$, so the notion of topological adjoint entropy extends that of adjoint algebraic entropy. 

\medskip
For a compact group $(K,\tau)$, let $\CC_\tau^\lhd(K)$ be the subfamily of $\CC_\tau(K)$ consisting of all normal $\tau$-open subgroups of $K$.
If $H$ is a $\tau$-open subgroup of $K$, then $H_K$ is $\tau$-open as well. Therefore if $H\in\CC_\tau(K)$, then $H_K\in\CC_\tau^\lhd(K)$.
This observation, in view of the antimonotonicity of $H^\star(\psi,-)$ proved in Lemma \ref{N<M->HN>HM}, gives the possibility to consider only normal $\tau$-open subgroups for the computation of the topological adjoint entropy:

\begin{lemma}\label{normal}
Let $(K,\tau)$ be a compact group and $\psi:(K,\tau)\to (K,\tau)$ a continuous endomorphism. Then 
$$
\aent_\tau(\psi)=\sup\{H^\star(\psi,N):N\in\CC_\tau^\lhd(K)\}.
$$
\end{lemma}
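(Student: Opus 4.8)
The plan is to establish this via the antimonotonicity of $H^\star(\psi,-)$ stated in Lemma \ref{N<M->HN>HM}, which is exactly the analogue of what was done for the adjoint algebraic entropy, now restricted to the subfamily $\CC_\tau(K)$ of $\tau$-open finite-index subgroups. First I would recall the observation made immediately before the statement: if $H$ is a $\tau$-open subgroup of the compact group $K$, then its heart $H_K=\bigcap_{g\in K}gHg^{-1}$ is again $\tau$-open. This is where compactness enters: $H$ has finite index (being $\tau$-open in a compact group, or simply by hypothesis), so there are only finitely many distinct conjugates $gHg^{-1}$, each of which is $\tau$-open (conjugation is a homeomorphism), and hence their finite intersection $H_K$ is $\tau$-open. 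Thus $H\in\CC_\tau(K)$ implies $H_K\in\CC_\tau^\lhd(K)$, and of course $H_K\subseteq H$.

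Then the proof is a two-line inequality chain. On the one hand, $\CC_\tau^\lhd(K)\subseteq\CC_\tau(K)$, so trivially
$$
\sup\{H^\star(\psi,N):N\in\CC_\tau^\lhd(K)\}\leq\sup\{H^\star(\psi,N):N\in\CC_\tau(K)\}=\aent_\tau(\psi).
$$
On the other hand, for every $N\in\CC_\tau(K)$ we have $N_K\in\CC_\tau^\lhd(K)$ with $N_K\subseteq N$, so by the antimonotonicity of $H^\star(\psi,-)$ from Lemma \ref{N<M->HN>HM} (note $H^\star(\psi,-)$ only depends on the cotrajectory indices and the same argument as in the abelian case applies, $\psi$ being continuous so that $\psi^{-n}(N)$ is $\tau$-open of finite index, hence in $\CC_\tau(K)$, and $\CC_\tau(K)$ is closed under finite intersections) we get $H^\star(\psi,N)\leq H^\star(\psi,N_K)\leq\sup\{H^\star(\psi,M):M\in\CC_\tau^\lhd(K)\}$. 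Taking the supremum over $N\in\CC_\tau(K)$ yields the reverse inequality $\aent_\tau(\psi)\leq\sup\{H^\star(\psi,N):N\in\CC_\tau^\lhd(K)\}$, and the two inequalities give the claimed equality.

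There is no real obstacle here; the only point requiring a word of care is the implicit claim that the definitions make sense, i.e. that $C_n(\psi,N)\in\CC_\tau(K)$ for $N\in\CC_\tau(K)$, so that $H^\star(\psi,N)$ is well defined in this restricted setting — this follows because $\psi$ is continuous (so $\psi^{-i}(N)$ is $\tau$-open of finite index, as in the discussion preceding the definition of $\aent_\tau$) and $\CC_\tau(K)$ is closed under finite intersections, exactly paralleling the corresponding remark for $\CC(G)$ in \S\ref{adj-def}. If one wants to be fully rigorous about the antimonotonicity in this topological setting, one simply reruns the proof of Lemma \ref{N<M->HN>HM}: for $M\subseteq N$ in $\CC_\tau(K)$ one has $C_n(\psi,M)\subseteq C_n(\psi,N)$ for all $n$, hence $[K:C_n(\psi,M)]\geq[K:C_n(\psi,N)]$, and passing to the $\limsup$ of $\tfrac1n\log[K:C_n(\psi,\cdot)]$ gives $H^\star(\psi,M)\geq H^\star(\psi,N)$. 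I would keep the written proof short, citing Lemma \ref{N<M->HN>HM} and the heart observation, since both the statement and its proof are direct transcriptions of the discrete (non-topological) case handled earlier in \S\ref{adj-def}.
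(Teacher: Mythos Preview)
Your proposal is correct and follows exactly the paper's approach: the paper states the lemma as an immediate consequence of the observation (made just before the statement) that the heart $H_K$ of a $\tau$-open subgroup is again $\tau$-open, combined with the antimonotonicity of $H^\star(\psi,-)$ from Lemma \ref{N<M->HN>HM}. Your two-inequality argument and the remark on why $H_K$ is $\tau$-open (finitely many conjugates, each open) spell out precisely what the paper leaves implicit.
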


\medskip
As pointed out in \cite{G}, the topological adjoint entropy appeared in some non-explicit way in the proof of the classical Bridge Theorem of Weiss.
More precisely, from the proof of that theorem one can deduce the following

\begin{theorem}\label{ent*=htop}\emph{\cite{G}}
If $(G,\tau)$ is a totally disconnected compact abelian group and $\psi:(G,\tau)\to(G,\tau)$ is a continuous endomorphism, then $$\aent_\tau(\psi)=h_{top}(\psi).$$
\end{theorem}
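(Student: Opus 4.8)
The plan is to peel both sides of the claimed equality down to one and the same quantity, namely the asymptotic growth of the indices $[G:C_n(\psi,U)]$ of the cotrajectories $C_n(\psi,U)=U\cap\psi^{-1}(U)\cap\dots\cap\psi^{-n+1}(U)$ defined in \eqref{cotraj}, ranging $U$ over the open subgroups of $G$. Concretely, I would show two things: first, that for a totally disconnected compact abelian group $(G,\tau)$ the family $\CC_\tau(G)$ of $\tau$-open finite-index subgroups used in the definition of $\aent_\tau$ coincides with the family $\mathcal B(G)$ of open compact subgroups used in the measure-free description of the topological entropy; and second, that for each such $U$ the two local entropies $H^\star(\psi,U)$ and $k(\psi,U)$ are equal.

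For the first point I would argue that, $G$ being abelian, normality is automatic, so $\CC_\tau(G)=\CC_\tau^\lhd(G)$ and no appeal to Lemma \ref{normal} is needed. An open subgroup $U$ of the compact group $G$ is closed (its complement is a union of open cosets), hence compact, so $U\in\mathcal B(G)$; moreover the open cosets $gU$ cover $G$, so by compactness $[G:U]<\infty$, whence $U\in\CC_\tau(G)$. Conversely every member of $\CC_\tau(G)$ is by definition a $\tau$-open subgroup, hence lies in $\mathcal B(G)$. Therefore $\CC_\tau(G)=\mathcal B(G)$ and
$$
\aent_\tau(\psi)=\sup\{H^\star(\psi,N):N\in\CC_\tau(G)\}=\sup\{H^\star(\psi,U):U\in\mathcal B(G)\}.
$$

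For the second point, fix $U\in\mathcal B(G)$. Since $G$ is abelian and $[G:U]<\infty$, we have $U\in\CC^\lhd(G)$, so Proposition \ref{C=C_n->ent*=0} applies and yields that $H^\star(\psi,U)=\lim_{n\to\infty}\frac{\log[G:C_n(\psi,U)]}{n}$, with $C_n(\psi,U)$ the cotrajectory from \eqref{cotraj}. On the other hand, Proposition \ref{no-mu} (the compact case) states precisely that $k(\psi,U)=\lim_{n\to\infty}\frac{\log[G:C_n(\psi,U)]}{n}$ for the very same cotrajectory. Hence $H^\star(\psi,U)=k(\psi,U)$ for every $U\in\mathcal B(G)$. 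Taking suprema over $\mathcal B(G)$, using Lemma \ref{B(G)} (valid since $G$ is totally disconnected locally compact) and the identity $k(\psi)=h_U(\psi)=h_{top}(\psi)$ on compact groups, I get
$$
\aent_\tau(\psi)=\sup_{U\in\mathcal B(G)}H^\star(\psi,U)=\sup_{U\in\mathcal B(G)}k(\psi,U)=k(\psi)=h_{top}(\psi).
$$

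There is no deep obstacle here: the statement is a bookkeeping identification of the two definitions, made possible by the preceding reductions (elimination of the Haar measure in Proposition \ref{no-mu}, and the fact that the $\limsup$ in $H^\star$ is actually a limit by Proposition \ref{C=C_n->ent*=0}). The only place that needs a little care is the equality $\CC_\tau(G)=\mathcal B(G)$, i.e. the remark that for subgroups of a compact group "open" is the same as "closed of finite index", together with the (harmless) use of commutativity to discard normality considerations; everything else is a direct substitution.
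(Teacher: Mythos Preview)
Your proof is correct and follows essentially the same route as the paper. The paper does not prove Theorem \ref{ent*=htop} directly (it is cited from \cite{G}), but in \S\ref{LAAAAST} it proves the non-abelian generalization by exactly your argument: identify $\CC_\tau(G)=\mathcal B(G)$, invoke Proposition \ref{no-mu} to get $k(\psi,U)=H^\star(\psi,U)$ for each $U\in\mathcal B(G)$, take suprema via Lemma \ref{B(G)}, and finish with $k=h_{top}$ on compact groups; your additional appeal to Proposition \ref{C=C_n->ent*=0} to justify that the $\limsup$ in $H^\star$ is a limit is harmless but not strictly needed, since Proposition \ref{no-mu} already provides the limit.
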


In particular the topological adjoint entropy takes every value in $\log\N_+\cup\{\infty\}$. Therefore the topological adjoint entropy comes as an alternative form of topological entropy for continuous endomorphisms of topological groups, with the advantage that it is defined for every continuous endomorphism of every topological group, and not only in the compact case.

\newpage
\section{Connections among entropies}\label{conn-sec}

\subsection{Measure entropy and topological entropy}\label{mes-top}

There is a nice connection between the measure entropy and topological entropy in compact metric spaces.
More precisely, if $X$ is a compact metric space and $\psi: X \to X$ is a continuous surjective selfmap, then the set $M(X,\psi)$ of all $\psi$-invariant
Borel probability measures $\mu$ on $X$ (i.e., making $\psi:(X,\mu) \to (X,\mu)$ measure preserving) is non-empty; this fact is known as Krylov-Bogolioubov Theorem \cite{BK} (see also \cite[Corollary 6.9.1]{Wa}). 

Denote by $h_\mu$ the measure entropy with respect to $\mu\in M(X,\psi)$.
The inequality $h_{\mu}(\psi)\leq h_{top}(\psi)$ for every $\mu \in M(X,\psi)$ is due to Goodwyn \cite{Goo}. Moreover the {\em variational principle} (\cite[Theorem 8.6]{Wa}) holds true:
$$
h_{top}(\psi)=  \sup \{h_{\mu}(\psi):  \mu\in M(X,\psi)\}.
$$
The surjectivity of $\psi$ is important, since the continuous image $\psi(X)$ of the compact space $X$ is compact, so closed in $X$. 
If $\psi(X) \ne X$, then for the open non-empty set $U=X \setminus \psi(X)$ of $X$ and $\mu \in M(X,\psi)$ one would obtain $\mu(U) = 0$, as
$1 = \mu(X) = \mu(\psi(X))$; this is a contradiction, since $\mu(U)>0$ by the regularity of $\mu$. 

\smallskip
Passing to the case of groups, it is well known that every compact group has its unique Haar measure. Halmos \cite{Halmos} noticed that in this case surjectivity is not only a necessary, but also a sufficient condition for a continuous endomorphism to be measure preserving: 

\begin{fact}[Halmos' Paradigm]\label{Halmos}
Let $K$ be a compact group and $\psi:K\to K$ a surjective continuous endomorphism. Then $\psi$ is measure preserving.
\end{fact}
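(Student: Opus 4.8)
The plan is to show that a surjective continuous endomorphism $\psi\colon K\to K$ of a compact group pushes the Haar measure $\mu$ forward to a measure $\psi_*\mu$ that satisfies the defining properties of the (normalized) Haar measure on $K$, and then invoke uniqueness of the Haar measure. Recall $\psi_*\mu$ is defined by $(\psi_*\mu)(B)=\mu(\psi^{-1}(B))$ for every Borel set $B$; the claim ``$\psi$ is measure preserving'' is exactly the assertion $\psi_*\mu=\mu$, so this is the natural target. First I would observe that $\psi_*\mu$ is a Borel probability measure on $K$: it is nonnegative, countably additive because preimages commute with countable unions and intersections, and $(\psi_*\mu)(K)=\mu(\psi^{-1}(K))=\mu(K)=1$. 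Regularity of $\psi_*\mu$ follows from regularity of $\mu$ together with continuity of $\psi$ (preimages of open sets are open, preimages of compact sets are closed hence compact in $K$).

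The crucial step is \emph{left-invariance} of $\psi_*\mu$. Fix $a\in K$. Since $\psi$ is surjective, we may choose $b\in K$ with $\psi(b)=a$. Then for any Borel set $B\subseteq K$,
\begin{equation}
\psi^{-1}(aB)=\{x\in K:\psi(x)\in aB\}=\{x:\ a^{-1}\psi(x)\in B\}=\{x:\ \psi(b^{-1}x)\in B\}=b\,\psi^{-1}(B),
\end{equation}
using that $\psi$ is a homomorphism so $\psi(b^{-1}x)=\psi(b)^{-1}\psi(x)=a^{-1}\psi(x)$. Hence
\begin{equation}
(\psi_*\mu)(aB)=\mu\bigl(\psi^{-1}(aB)\bigr)=\mu\bigl(b\,\psi^{-1}(B)\bigr)=\mu\bigl(\psi^{-1}(B)\bigr)=(\psi_*\mu)(B),
\end{equation}
where the third equality is left-invariance of $\mu$. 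Thus $\psi_*\mu$ is a left-invariant regular Borel probability measure on $K$, so by the uniqueness part of the existence--uniqueness theorem for Haar measure on compact groups, $\psi_*\mu=\mu$, i.e.\ $\mu(\psi^{-1}(B))=\mu(B)$ for all Borel $B$, which is precisely the statement that $\psi$ is measure preserving.

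The main obstacle — really the only place the hypothesis of surjectivity enters — is in the invariance computation: without surjectivity one cannot find the preimage $b$ of an arbitrary $a\in K$, and indeed the conclusion genuinely fails (as the discussion just above the statement explains, a non-surjective $\psi$ would force $\mu\bigl(K\setminus\psi(K)\bigr)=0$ while that set is nonempty open, contradicting regularity). Everything else is routine verification that $\psi_*\mu$ is a well-defined regular Borel probability measure, which I would dispatch briefly. If one prefers, an equivalent and slightly slicker route is to phrase it via the functional $f\mapsto\int_K f\circ\psi\,d\mu$ on $C(K)$, check it is a positive normalized left-invariant functional, and apply uniqueness of the Haar integral; but the measure-theoretic argument above is self-contained and direct.
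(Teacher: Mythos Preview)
Your proof is correct and is precisely the argument the paper has in mind: the paper's own proof is the single sentence ``The proof easily follows from the uniqueness of the Haar measure of $K$,'' and you have filled in exactly those details. The key computation $\psi^{-1}(aB)=b\,\psi^{-1}(B)$ (using surjectivity to choose $b$ with $\psi(b)=a$) is the standard one, so there is no divergence in approach.
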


The proof easily follows from the uniqueness of the Haar measure of $K$. 

\smallskip
According to Fact \ref{Halmos}, both $h_{top}$ and $h_{mes}$ are available for surjective continuous endomorphisms of compact groups.
It was proved that they coincide by Berg \cite{Berg} for surjective endomorphisms of tori, by Aoki \cite{Aoki} for automorphisms of compact abelian groups. The proof of the general case was obtained by Stoyanov \cite{S} as an application of the Uniqueness Theorem \ref{UT-top}.

\begin{theorem}\emph{\cite{S}}\label{A-S}
Let $K$ be a compact group and $\psi:K\to K$ a surjective continuous endomorphism. Then $h_{mes}(\psi)=h_{top}(\psi)$.
\end{theorem}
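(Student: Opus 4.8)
The statement to prove is Theorem \ref{A-S}: for a compact group $K$ and a surjective continuous endomorphism $\psi:K\to K$, one has $h_{mes}(\psi)=h_{top}(\psi)$, where $h_{mes}$ is computed with respect to the Haar measure $\mu$ of $K$ (which makes sense by Halmos' Paradigm \ref{Halmos}). The plan is to invoke the variational principle for compact metric spaces together with a reduction to the metric (equivalently, second countable) case via the Continuity property of both entropies on inverse limits. I would \emph{not} attempt a direct combinatorial comparison of measurable partitions with open covers; instead I would exploit that the two entropies already agree on the large supply of examples controlled by the structure theory of compact groups, and then use the Uniqueness Theorem \ref{UT-top} as Stoyanov did, or alternatively the variational principle — I will sketch both routes and then commit to the cleaner one.

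\textbf{Route via the variational principle.} First reduce to the case when $K$ is metrizable. A compact group $K$ is the inverse limit $K\cong\varprojlim K/N_i$ over its directed family of closed normal subgroups $N_i$ with $\bigcap_i N_i=\{e_K\}$ such that each $K/N_i$ is metrizable (a compact group has arbitrarily small closed normal subgroups with metrizable — indeed Lie — quotient, by the solution of Hilbert's fifth problem; in the abelian case this is elementary via Pontryagin duality). One must check the $N_i$ can be chosen $\psi$-invariant; if $\psi$ is surjective this requires a small argument (replace $N_i$ by $C(\psi,N_i)=\bigcap_{n}\psi^{-n}(N_i)$, or pass to a cofinal subfamily), and then $\psi$ induces surjective continuous endomorphisms $\overline\psi_i$ on $K/N_i$. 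Corollary \ref{invlim} gives $h_{top}(\psi)=\sup_i h_{top}(\overline\psi_i)$, and the analogous continuity statement for the measure entropy (Monotonicity for factors together with the fact that $h_{mes}(\psi)=\sup_i h_{mes}(\overline\psi_i)$ for an inverse limit of measure-preserving systems, a standard fact from \cite{Wa}) gives $h_{mes}(\psi)=\sup_i h_{mes}(\overline\psi_i)$. Hence it suffices to prove $h_{mes}(\overline\psi_i)=h_{top}(\overline\psi_i)$, i.e.\ to treat the metrizable case. When $K$ is metrizable, $\psi$ is surjective, and $\mu$ is the Haar measure, $\mu$ is $\psi$-invariant (Halmos' Paradigm) so $\mu\in M(K,\psi)$; Goodwyn's inequality gives $h_{mes}(\psi)=h_\mu(\psi)\le h_{top}(\psi)$. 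For the reverse inequality one uses that the Haar measure is the \emph{maximal} measure: by the variational principle $h_{top}(\psi)=\sup\{h_\nu(\psi):\nu\in M(K,\psi)\}$, so it remains to show $h_\nu(\psi)\le h_\mu(\psi)$ for every $\psi$-invariant Borel probability measure $\nu$. This is the one genuinely nontrivial analytic input; it follows from the fact that the Haar measure is the unique measure of maximal entropy for a surjective endomorphism of a compact group (Berg for tori, then the general case), which can be proved by a convexity/ergodic-decomposition argument: decompose $\nu$ into ergodic components and use that the "averaging over translates" operator pushes any invariant measure toward $\mu$ without decreasing entropy.

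\textbf{Route via the Uniqueness Theorem (the one I would actually write up).} Observe that $\psi\mapsto h_{mes}(\psi)$, viewed as a function on $\mathrm{Flow}_{\mathbf{CompGrp}}$ (restricted to surjective endomorphisms, and extended to arbitrary continuous $\psi$ by $h_{mes}(\psi):=h_{mes}(\psi\restriction_{E_\psi(K)})$, which is legitimate since $E_\psi(K)=\bigcap_n\psi^n(K)$ is a closed $\psi$-invariant subgroup on which $\psi$ acts surjectively, compare Theorem \ref{red-to-sur}), satisfies axioms (a)--(h) of the Uniqueness Theorem \ref{UT-top}: monotonicity for closed invariant subgroups and the reduction (b) are standard measure-theoretic facts; the Logarithmic Law (c) is the Logarithmic Law for $h_{mes}$; inner automorphisms have $h_{mes}=0$ because they are conjugate (via a rotation, which is measure preserving) to the identity on the relevant quotient, or directly because the Haar measure is translation invariant; continuity on inverse limits (e) is the standard inverse-limit property of $h_{mes}$; the Addition Theorem (f) for $h_{mes}$ is the Abramov--Rokhlin formula in the group setting (a closed normal $\psi$-invariant subgroup $N$ gives a measure-theoretic extension $K\to K/N$ with fibre entropy $h_{mes}(\psi\restriction_N)$); Bernoulli normalization (g) $h_{mes}(\overline\beta_F)=\log|F|$ is the classical computation recalled in the Coin tossing example; and the Yuzvinski Formula (h) for $h_{mes}$ of automorphisms of $\widehat\Q^n$ is Yuzvinski's original theorem \cite{Y}, which was in fact stated for the measure entropy. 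By the Uniqueness Theorem \ref{UT-top} this function must coincide with $h_{top}$, which is exactly the claim. \textbf{The main obstacle} in either route is the same: verifying the Addition Theorem (Abramov--Rokhlin formula) for the measure entropy in the context of compact groups, and — if one takes the variational-principle route — proving that the Haar measure is the measure of maximal entropy. In a survey one would simply cite Stoyanov \cite{S} for the packaging via Theorem \ref{UT-top} and Berg/Yuzvinski for the underlying computations, so the write-up is short: assemble the axioms, cite, and conclude; the substance is entirely in the references.
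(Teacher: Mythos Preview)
Your committed route (via the Uniqueness Theorem \ref{UT-top}) is exactly the approach the paper attributes to Stoyanov: the paper does not supply its own proof of Theorem \ref{A-S}, it simply records that the general case ``was obtained by Stoyanov \cite{S} as an application of the Uniqueness Theorem \ref{UT-top}''. Your axiom checklist is the right one, and your identification of the Abramov--Rokhlin formula and Yuzvinski's original (measure-theoretic) computation as the substantive inputs is accurate; the only quibble is your justification of axiom (d) --- an inner automorphism is not literally conjugate to the identity, so you would cite Stoyanov (or argue via the SIN property and invariant partitions) rather than the sketch you gave.
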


For other results in this direction see the paper of Berg \cite{Berg}, showing that Haar measure maximizes the measure theoretic entropy of a continuous automorphism of a compact metrizable group, and under conditions of finiteness and ergodicity it does so uniquely.

\subsection{Topological entropy and algebraic entropy}\label{top-alg}

Recall that the Pontryagin dual of a torsion abelian group is a totally disconnected compact abelian group, and that the Pontryagin dual of a countable abelian group is a metrizable compact abelian group. Weiss \cite{We} and Peters \cite{P} proved respectively the two items of the following result.

\begin{theorem}\emph{\cite{W,P}}\label{BT-WP}
If $\phi:G\to G$ is an endomorphism of an abelian group $G$, then $h_{alg}(\phi)=h_{top}(\widehat{\phi})$ provided one of the following conditions hold: 
\begin{itemize}
\item[(a)] $G$ is torsion; or 
\item[(b)] $G$ is countable and $\phi$ is an automorphism.  
\end{itemize}
\end{theorem}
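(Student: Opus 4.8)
The plan is to prove the two items separately, since they have rather different flavours, and in both cases the strategy is to reduce to a situation where one can compute both entropies directly and compare them — ideally via the ``Bridge Theorem'' philosophy of matching finite subgroups on one side with finite-index subgroups on the dual side. Throughout I will use that $\widehat{G}$ is totally disconnected compact when $G$ is torsion (resp. compact metrizable when $G$ is countable), that Pontryagin duality is a contravariant equivalence exchanging subgroups of $G$ with quotients of $\widehat{G}$ (and finite subgroups of $G$ with finite quotients, i.e. open finite-index subgroups of $\widehat{G}$), and the measure-free formula of Proposition \ref{no-mu}, namely $k(\widehat\phi, U) = \lim_n \frac{\log [U : C_n(\widehat\phi, U)]}{n}$ for $U$ an open compact subgroup, together with $h_{top}(\widehat\phi) = \sup\{k(\widehat\phi,U) : U \in \mathcal B(\widehat{G})\}$ from Lemma \ref{B(G)}.

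For item (a), the key duality is this: a finite subgroup $F \leq G$ with $e_G \in F$ corresponds under duality to an open finite-index subgroup $U_F = F^\perp \leq \widehat{G}$, and the $n$-th $\phi$-trajectory $T_n(\phi, F) = F \cdot \phi(F)\cdots \phi^{n-1}(F)$ (which is a finite \emph{subgroup} when $G$ is abelian and $F$ is, since we may assume $F$ generated by a finite subgroup by Remark \ref{locfin}(b)) dualizes to the $n$-th $\widehat\phi$-cotrajectory $C_n(\widehat\phi, U_F) = U_F \cap \widehat\phi^{-1}(U_F) \cap \cdots \cap \widehat\phi^{-n+1}(U_F)$; indeed $\widehat\phi^{-j}(F^\perp) = (\phi^j(F))^\perp$ and $\perp$ turns joins into intersections. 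Hence $|T_n(\phi,F)| = [\,\widehat{G} : C_n(\widehat\phi, U_F)\,]$ (using $|H| = [\widehat{G} : H^\perp]$ for finite $H$), so $H_{alg}(\phi, F) = \lim_n \frac{\log |T_n(\phi,F)|}{n} = \lim_n \frac{\log[\widehat{G} : C_n(\widehat\phi, U_F)]}{n} = k(\widehat\phi, U_F)$ by Proposition \ref{no-mu} (the compact case). Taking suprema: on the left over all finite $F \ni e_G$ (which by Remark \ref{locfin}(b) and Lemma \ref{0inF} suffices for $h_{alg}$), on the right over the cofinal family of $U_F$'s, which one must check is cofinal in $\mathcal B(\widehat{G})$ — this holds because the finite subgroups of the torsion group $G$ run over a directed family with union $G$, so their annihilators form a base of neighbourhoods of $0$ in $\widehat{G}$, hence by Lemma \ref{B(G)} and Remark \ref{Base}(b) give all of $h_{top}(\widehat\phi)$. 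This yields $h_{alg}(\phi) = h_{top}(\widehat\phi)$.

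For item (b), $G$ is countable and $\phi$ an automorphism, so $\widehat{G}$ is a metrizable compact abelian group and $\widehat\phi$ a topological automorphism. Here the cleanest route is to invoke the known theorems rather than redo the duality bookkeeping: reduce to finitely generated pieces using Continuity. Write $G = \varinjlim G_i$ as a direct limit of finitely generated $\phi$-invariant subgroups — but an arbitrary f.g. subgroup need not be $\phi$-invariant, so instead take $G_i = \langle T(\phi, S_i) \cup T(\phi^{-1}, S_i)\rangle$ for $S_i$ ranging over finite subsets, which \emph{is} $\phi$-invariant (this uses that $\phi$ is an automorphism), countable, and f.g.\ is \emph{not} guaranteed — so one actually gets $G$ as a directed union of $\phi$-invariant, at most countable subgroups, and then within each such subgroup repeats; ultimately one reduces, via Proposition \ref{dirlim} for $h_{alg}$ and Corollary \ref{invlim} (Continuity) for $h_{top}$ applied to the dual inverse system, to the case where $G$ is finitely generated. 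A finitely generated abelian group is $\Z^r \oplus t(G)$ with $t(G)$ finite; by the Addition Theorem \ref{AT} (algebraic) and Addition Theorem \ref{AT-top} (topological) one splits off the torsion part — handled by item (a) — and is left with $\phi$ an automorphism of $\Z^r$, whose dual is an automorphism of $\widehat{\Z^r} = \T^r$. For this case $h_{alg}(\phi) = m(\phi)$ by the Algebraic Kolmogorov--Sinai Formula \ref{AKSF} (or \ref{AYF}), while $h_{top}(\widehat\phi) = m(\widehat\phi)$ by the Kolmogorov--Sinai Formula \ref{KSF}, and $m(\phi) = m(\widehat\phi)$ since $\widehat\phi$ acts by the transpose matrix (as recalled in the ``Mahler measure and endomorphisms'' subsection), which has the same characteristic polynomial. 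Hence equality.

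The main obstacle will be the reduction step in item (b): making the limit argument precise requires care because the natural $\phi$-invariant subgroups generated by orbits of finite sets need not be finitely generated, and one must interleave ``pass to a $\phi$-invariant countable subgroup'' with ``exhaust it by f.g.\ $\phi$-invariant subgroups'', checking at each stage that both $h_{alg}$ (via Proposition \ref{dirlim}) and $h_{top}$ of the dual (via Corollary \ref{invlim}, noting that duality sends the direct system of inclusions to an inverse system of surjections with $\bigcap N_i = 0$) behave continuously, and that the duals of the relevant quotients are the relevant subgroups. A secondary subtlety is verifying the annihilator correspondence $\widehat{\phi}^{-j}(F^\perp) = (\phi^j F)^\perp$ and that $T_n(\phi,F)$ is genuinely a subgroup (true in the abelian case once $F$ is taken to be a finite subgroup), but these are routine. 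I would present item (a) in full via the explicit duality computation above and item (b) by the reduction-plus-known-formulas route, flagging that both items are in fact special cases of a more general Bridge Theorem proved later in the paper.
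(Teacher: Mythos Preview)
The paper does not itself prove Theorem~\ref{BT-WP}; it is stated as a result of Weiss and Peters. What the paper \emph{does} prove is the stronger Bridge Theorem~\ref{BT}, and its argument is entirely different from yours: one defines $h(\phi):=h_{top}(\widehat\phi)$ and checks the five axioms of the Uniqueness Theorem~\ref{UT} (Invariance, Continuity, Addition Theorem, Bernoulli normalization, Algebraic Yuzvinski Formula), all of which follow from the corresponding properties of $h_{top}$ in Section~\ref{top-sec} transported through Pontryagin duality.

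Your item~(a) is correct and is essentially Weiss's original argument; the paper alludes to exactly this computation (annihilator duality plus Proposition~\ref{no-mu}) in \S\ref{top-adj} when discussing the topological adjoint entropy.

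Your item~(b), however, has a genuine gap. The reduction to finitely generated $\phi$-invariant subgroups on which $\phi$ restricts to an \emph{automorphism} cannot be carried out in general, and your ``repeat within each subgroup'' step is circular: the subgroups $\langle T(\phi,S)\cup T(\phi^{-1},S)\rangle$ are again countable and you are back where you started. Concretely, take $G=\Z^{(\Z)}$ and $\phi=\overline\beta_\Z^\oplus$ the two-sided shift (a countable group, an automorphism): any $\phi$-invariant subgroup containing a nonzero element contains an entire forward orbit and is therefore not finitely generated, so the only finitely generated $\phi$-invariant subgroup is $\{0\}$. Even dropping the $\phi^{-1}$-invariance requirement does not help here, and in any case the Kolmogorov--Sinai Formula~\ref{KSF} you invoke is stated only for automorphisms of $\T^n$. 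The route the paper points to (in the paragraph preceding Theorem~\ref{BT}) is different: split off $t(G)$ via the Addition Theorems, pass to the divisible hull on the torsion-free part using Proposition~\ref{AA_} (and its topological analogue), and then reduce to automorphisms of $\Q^n$ where the Yuzvinski Formulas~\ref{YF} and~\ref{AYF} apply---handling separately the case where some orbit in the $\Q$-vector space is infinite-dimensional, in which both entropies are $\infty$ via an embedded Bernoulli shift. Alternatively, and more cleanly, one bypasses all of this via the Uniqueness Theorem as the paper does.
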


In \cite{DG-bridge} this theorem was generalized to all endomorphisms of all abelian groups by means of a direct proof, using the properties of the topological entropy and the algebraic entropy. The core of the proof is the reduction to the case of an automorphism of $\Q^n$, where the Yuzvinski Formula \ref{YF} and the Algebraic Yuzvinski Formula \ref{AYF} apply. Using the Uniqueness Theorem \ref{UT} for the algebraic entropy one can obtain an alternative proof:

\begin{theorem}[Bridge Theorem]\emph{\cite{DG-bridge}}\label{BT}
Let $G$ be an abelian group and $\phi:G\to G$ an endomorphism. Then $$h_{alg}(\phi)=h_{top}(\widehat \phi).$$
\end{theorem}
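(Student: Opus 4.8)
The plan is to prove the Bridge Theorem \ref{BT} by invoking the Uniqueness Theorem \ref{UT}, which says that the algebraic entropy $h_{alg}$ is the unique function on $\Flow_\abg$ satisfying the five axioms: invariance under conjugation, continuity on direct limits, the Addition Theorem, the Bernoulli normalization $h_{alg}(\beta_K^\oplus)=\log|K|$ for finite $K$, and the Algebraic Yuzvinski Formula. So the strategy is to define the function
$$
h':\Flow_\abg\to\R_{\geq0}\cup\{\infty\},\qquad h'(\phi)=h_{top}(\widehat\phi),
$$
and verify that $h'$ satisfies all five axioms; the conclusion $h_{alg}=h'$ is then immediate. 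Throughout I would use the standard facts about Pontryagin duality recalled in \S\ref{bernoulli-sec}: $\widehat{\cdot}$ is a (contravariant) involutive equivalence between discrete abelian groups and compact abelian groups, it is exact, it turns subgroups into quotients and quotients into subgroups, it turns direct limits into inverse limits, and it turns finite direct sums into finite direct products.

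The verification proceeds axiom by axiom. For invariance under conjugation: if $\xi:G\to H$ is an isomorphism then $\widehat\xi:\widehat H\to\widehat G$ is a topological isomorphism and $\widehat{\xi\phi\xi^{-1}}=\widehat{\xi^{-1}}\,\widehat\phi\,\widehat\xi$, so $h'(\xi\phi\xi^{-1})=h_{top}(\widehat{\xi^{-1}}\widehat\phi\widehat\xi)=h_{top}(\widehat\phi)=h'(\phi)$ by Invariance of $h_{top}$ under conjugation. For the Bernoulli normalization: by Fact \ref{dualbeta}(a), $\widehat{\beta_K^\oplus}={}_K\beta$ for finite abelian $K$, and $h_{top}({}_K\beta)=\log|K|$ by Example \ref{computeBerny}, so $h'(\beta_K^\oplus)=\log|K|$. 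For the Algebraic Yuzvinski Formula: if $\phi:\Q^n\to\Q^n$ is an endomorphism, then $\widehat{\Q^n}=\widehat\Q^{\,n}$ and $\widehat\phi:\widehat\Q^n\to\widehat\Q^n$ is a continuous endomorphism with the same characteristic polynomial $p_\phi(t)=p_{\widehat\phi}(t)$ (as explained in \S\ref{bernoulli-sec}); applying the Yuzvinski Formula \ref{YF} to $\widehat\phi$ gives $h'(\phi)=h_{top}(\widehat\phi)=m(\widehat\phi)=m(p_\phi(t))=m(\phi)$. Here one must be a little careful: Yuzvinski's Formula \ref{YF} as stated is for automorphisms, so for a general endomorphism $\phi$ one would first reduce via the surjective-core reduction (Theorem \ref{red-to-sur} for $h_{top}$) or, more cleanly, invoke the topological counterpart of Proposition \ref{AA_} together with Theorem \ref{red-to-sur}; alternatively one just cites the fact that the topological entropy of a continuous endomorphism of $\widehat\Q^n$ equals $m(\phi)$, which is exactly how \ref{YF} is used in the literature.

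The two remaining axioms are where the real content sits. For continuity on direct limits: if $G=\varinjlim\{G_i:i\in I\}$ with $G_i$ a directed family of $\phi$-invariant subgroups, then $\widehat G\cong\varprojlim\{\widehat{G_i}:i\in I\}$ as compact groups, with surjective bonding maps $\widehat{G}\to\widehat{G_i}$ dual to the inclusions $G_i\hookrightarrow G$, and $\widehat\phi=\varprojlim\widehat{\phi\restriction_{G_i}}$; by Continuity of $h_{top}$ on inverse limits (Corollary \ref{invlim}, or Proposition \ref{cont-sp}), $h'(\phi)=h_{top}(\widehat\phi)=\sup_i h_{top}(\widehat{\phi\restriction_{G_i}})=\sup_i h'(\phi\restriction_{G_i})$. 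One technical point to check is that the kernels $N_i=(\widehat{G/G_i})$ of $\widehat G\to\widehat{G_i}$ form a directed system with $\bigcap_i N_i=0$, which follows from $\bigcup_i G_i=G$ by duality. For the Addition Theorem: given $\phi:G\to G$ and a $\phi$-invariant subgroup $H$ with induced $\overline\phi$ on $G/H$, duality gives a short exact sequence $0\to\widehat{G/H}\to\widehat G\to\widehat H\to 0$ of compact groups in which $\widehat{G/H}$ is a closed $\widehat\phi$-invariant subgroup, the restriction $\widehat\phi\restriction_{\widehat{G/H}}$ is (topologically conjugate to) $\widehat{\overline\phi}$, and the induced map on the quotient $\widehat G/\widehat{G/H}\cong\widehat H$ is $\widehat{\phi\restriction_H}$; applying the topological Addition Theorem \ref{AT-top} yields
$$
h'(\phi)=h_{top}(\widehat\phi)=h_{top}(\widehat{\overline\phi})+h_{top}(\widehat{\phi\restriction_H})=h'(\overline\phi)+h'(\phi\restriction_H),
$$
which is the algebraic Addition Theorem for $h'$.

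The main obstacle is not any single axiom but making sure that all the dualization bookkeeping is airtight — in particular that the topological Addition Theorem \ref{AT-top} is genuinely available (it is, being quoted from \cite{B,Y} earlier) and that one is allowed to use the Yuzvinski Formula \ref{YF} (equivalently \ref{AT-top}) for non-automorphism endomorphisms of $\widehat\Q^n$, since \ref{YF} is stated only for automorphisms. If one wanted to avoid that subtlety entirely, a cleaner route is to first establish the Bridge Theorem for \emph{automorphisms} of $\Q^n$ directly from \ref{YF} and \ref{AYF}, then bootstrap to all endomorphisms of all abelian groups using the very same five-axiom argument; this is essentially the strategy of \cite{DG-bridge} alluded to in the paragraph preceding the statement. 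I expect the write-up to be short once the duality dictionary (subobject $\leftrightarrow$ quotient, colimit $\leftrightarrow$ limit, $\bigoplus\leftrightarrow\prod$ on finite index sets, characteristic polynomial preserved) is stated up front.
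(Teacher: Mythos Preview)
Your proposal is correct and follows exactly the same approach as the paper: define $h'(\phi)=h_{top}(\widehat\phi)$ and verify the five axioms of the Uniqueness Theorem \ref{UT} by pulling back, via Pontryagin duality, the corresponding properties of $h_{top}$ (Invariance, Continuity on inverse limits, Addition Theorem \ref{AT-top}, Bernoulli normalization, Yuzvinski Formula \ref{YF}). The paper's proof is essentially a one-paragraph sketch of precisely this argument; your write-up is simply a more detailed version, and your flag about \ref{YF} being stated only for automorphisms is a genuine nicety that the paper's proof silently elides.
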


\begin{proof}
Define $h_G$ on $\End(G)$ by letting $h_G(\f)= h_{top}(\widehat \phi)$ for every $\f \in \End(G)$. In order to apply Uniqueness Theorem \ref{UT} for $h_{alg}$, that gives the required equality, we have to check for $h_G$ the five axioms characterizing $h_{alg}$. This is clear as in Section \ref{top-sec} it is proved that $h_{top}$ satisfies Invariance under conjugation, Continuity on inverse limits and Addition Theorem, as well as the Yuzvinski Formula and Bernoulli normalization, so Pontryagin duality applies.
\end{proof}

At this stage one has to consider the following open question, asking for a generalization of the Bridge Theorem to all continuous endomorphisms of locally compact abelian groups.

\begin{problem}
Does the Bridge Theorem extend to all locally compact abelian groups?
\end{problem}

Theorem \ref{BF} and Theorem \ref{VF} give immediately that the Bridge Theorem holds for every continuous endomorphism of $\R^n$.

\begin{example}\label{ExaBTQ_p}
Let us check now that the Bridge Theorem holds for endomorphisms of the group $\Q_p$. Indeed, let $\f: \Q_p \to \Q_p$ be a continuous endomorphism. Then there exists some $\xi \in \Q_p$  such that $\f(\eta) = \xi \eta$ for all $\eta \in \Q_p$. By Example \ref{ExaQ_p}(a), $h_{alg}(\f) =0$ if $\xi\in\J_p$. Otherwise, if $\xi = p^{- k} \kappa$ for some $\kappa \in \J_p\setminus p\J_p$ and $k \in \N_+$, then $h_{alg}(\f) =k \log p$. Note that in the first case one can deduce also $k(\f) = 0$ by Remark \ref{Base}(c).Thus, $h_{alg}(\f)= k(\f) = 0$ in this case. In the second case, one can use the fact that $\f^{-1}(\J_p) = p^k\J_p$ to compute $C_n(\f, \J_p) = p^{k(n-1)}\J_p$, so $[\J_p: C_n(\f, \J_p)]= p^{k(n-1)}$. This gives $k(\f)=k(\f, \J_p) = k\log p$. So, the equality $h_{alg}(\f)= k(\f)$ holds true always. Since $\Q_p$ is autodual, i.e., $\widetilde \Q_p \cong \Q_p$, and $\widetilde \f$ is conjugated to $\f$, this shows that the Bridge Theorem holds for $\Q_p$. 
\end{example}

This example can be extended to $\Q_p^n$ for every $n\in\N_+$, in view of \cite[Fact A]{GV} for the algebraic entropy and \cite[Theorem 2]{LW} for the topological entropy. With this motivation 
and using the formulas in Proposition \ref{no-mu} and Proposition \ref{h_mu(U)-index1}, as well as the nice properties of Pontryagin duality, and arguing as in the proof of Weiss Bridge Theorem (see \cite{G} or \cite{DSV}), it is possible to prove the following  

\begin{theorem}\emph{\cite{DG-bridge}}
Let $G$ be a totally disconnected locally compact abelian group such that $\widehat G$ is totally disconnected as well, and let $\psi:G\to G$ be a continuous endomorphism. Then $k(\psi)=h_{alg}(\widehat\psi)$. 
\end{theorem}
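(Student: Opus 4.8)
The plan is to mimic the proof of the Bridge Theorem \ref{BT} (and Weiss' original argument) but now tracking the measure-free descriptions of the two entropies available in the totally disconnected case. Fix a totally disconnected locally compact abelian group $G$ with $\widehat G$ also totally disconnected, and a continuous endomorphism $\psi:G\to G$. Since $G$ is totally disconnected and locally compact, $\mathcal B(G)$ (the family of open compact subgroups) is a base of $\mathcal C(G)$, so by Proposition \ref{no-mu} we have $k(\psi)=\sup\{k(\psi,U):U\in\mathcal B(G)\}$ with $k(\psi,U)=\lim_n \frac{\log[U:C_n(\psi,U)]}{n}$. Dually, $\widehat\psi:\widehat G\to\widehat G$ is a continuous endomorphism of a totally disconnected locally compact abelian group, so Proposition \ref{h_mu(U)-index1} gives $h_{alg}(\widehat\psi)=\sup\{H_{alg}(\widehat\psi,\widehat U):\widehat U\in\mathcal B(\widehat G)\}$ provided $\mathcal B(\widehat G)$ is cofinal in $\mathcal C(\widehat G)$ — and here is the first delicate point I would have to address: by Example \ref{noBG} this cofinality can fail in general, so I expect one needs the hypothesis that $\widehat G$ is totally disconnected \emph{and} a union of compact subgroups, equivalently (by Pontryagin duality) that $G$ has an open compact subgroup. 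I would therefore either impose/record this as part of the hypothesis or show it follows from "$\widehat G$ totally disconnected" in the locally compact setting, using that the annihilators of open compact subgroups of $G$ are exactly the compact open subgroups of $\widehat G$.

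The core computation is the duality between cotrajectories downstairs and trajectories upstairs. For $U\in\mathcal B(G)$ let $U^\perp=\{\chi\in\widehat G:\chi(U)=0\}$; this is an open compact subgroup of $\widehat G$, and the assignment $U\mapsto U^\perp$ is an order-reversing bijection between open compact subgroups of $G$ and of $\widehat G$, with $\widehat{G/U}\cong U^\perp$ and $\widehat U\cong \widehat G/U^\perp$. The key identities to establish, by induction on $n$ using $\widehat\psi(\chi)=\chi\circ\psi$, are
\[
(\psi^{-k}(U))^\perp=\widehat\psi^{\,k}(U^\perp)\quad\text{and hence}\quad \big(C_n(\psi,U)\big)^\perp=T_n(\widehat\psi,U^\perp)=U^\perp+\widehat\psi(U^\perp)+\cdots+\widehat\psi^{n-1}(U^\perp).
\]
Taking annihilators and using finiteness of the relevant indices (each $C_n(\psi,U)$ is open of finite index in $U$, and $T_n(\widehat\psi,U^\perp)$ is an open subgroup of $\widehat G$ containing the compact open $U^\perp$ with finite index over it), Pontryagin duality for finite abelian groups yields
\[
[U:C_n(\psi,U)]=\big|\,U/C_n(\psi,U)\,\big|=\big|\,(C_n(\psi,U))^\perp/U^\perp\,\big|=[T_n(\widehat\psi,U^\perp):U^\perp].
\]
Dividing by $n$ and letting $n\to\infty$ gives $k(\psi,U)=H_{alg}(\widehat\psi,U^\perp)$ for every $U\in\mathcal B(G)$; taking suprema over $\mathcal B(G)$ on the left and over $\mathcal B(\widehat G)$ on the right (which are matched by $U\leftrightarrow U^\perp$) and invoking Proposition \ref{no-mu}, Proposition \ref{h_mu(U)-index1}, and the cofinality discussed above produces $k(\psi)=h_{alg}(\widehat\psi)$.

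The main obstacle, as flagged, is the cofinality issue for $h_{alg}$ in $\widehat G$: without it the supremum defining $h_{alg}(\widehat\psi)$ genuinely exceeds $\sup\{H_{alg}(\widehat\psi,\widehat U):\widehat U\in\mathcal B(\widehat G)\}$ (Example \ref{noBG}), and then the clean duality with $k(\psi)$ breaks down — this is presumably exactly why the theorem is stated with the two-sided total-disconnectedness hypothesis and why "the validity of this result in full generality is an open problem." A secondary, more routine obstacle is verifying the finite-index/openness bookkeeping so that all indices appearing are finite and the finite-duality step is legitimate; this uses that an open compact subgroup of a locally compact abelian group has finite index in any larger compact subgroup, and that $T_n(\widehat\psi,U^\perp)$, being a finite union of cosets of the compact open subgroup $U^\perp$, is itself open and compact. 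Once these are in place the argument is, as in the proof of Theorem \ref{BT}, essentially formal.
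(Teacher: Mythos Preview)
Your approach is correct and is exactly the one the paper sketches: use the measure-free formulas of Propositions \ref{no-mu} and \ref{h_mu(U)-index1}, match them via the annihilator bijection $U\leftrightarrow U^\perp$ between $\mathcal B(G)$ and $\mathcal B(\widehat G)$, and verify $(C_n(\psi,U))^\perp=T_n(\widehat\psi,U^\perp)$ termwise (your identity $(\psi^{-k}(U))^\perp=\widehat\psi^{\,k}(U^\perp)$ holds because $U^\perp$ is compact, so the a priori dense image $\widehat\psi^{\,k}(U^\perp)\subseteq(\psi^{-k}(U))^\perp$ is already closed).

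One small correction to your cofinality discussion: the equivalence you state is off. For an LCA group $H$ with an open compact subgroup, $\mathcal B(H)$ is cofinal in $\mathcal C(H)$ iff $H$ is a union of compact subgroups, and by duality $\widehat G$ is a union of compact subgroups iff $G$ is totally disconnected (not merely iff $G$ has an open compact subgroup, which is automatic here). So the cofinality you need on the $\widehat G$ side is guaranteed precisely by the hypothesis that $G$ is totally disconnected; the hypothesis that $\widehat G$ is totally disconnected is what lets you invoke Proposition \ref{h_mu(U)-index1} as stated and ensures, symmetrically, that $\mathcal B(G)$ is a neighborhood base so Lemma \ref{B(G)} applies. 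With that adjustment your argument goes through verbatim.
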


\begin{remark} 
As already mentioned above, Peters introduced in \cite{Pet1} an entropy for a locally compact abelian group $G$
with a Haar measure $\mu$ and  a topological automorphism $\psi:G\to G$, letting 
$$
h_\infty(\psi)=\limsup \left\{\lim_{n\to\infty}\frac{\log\mu(T_n(\psi^{-1},C))}{n}:C\in\c(G)\right\}.
$$
In other words $h_\infty(\psi)=h_{alg}(\psi^{-1})$. According to Proposition \ref{prop}(d),  
$h_{alg}(\psi)\neq h_{\infty}(\psi)$ for a topological automorphism $\psi$ of a locally abelian group $G$ precisely when $\mod_G(\psi)\neq 1$.
Therefore,  in the general case of continuous endomorphisms $\f$ of locally compact abelian groups (i.e., with $\mod_G(\psi)\neq 1$), the Bridge Theorem cannot simultaneously hold true for the algebraic entropy and for Peters's entropy.
%
%
%
For more details on this discussion see \cite{DSV}.
\end{remark}

As an application of the Bridge Theorem we prove Theorem \ref{Cartagena}. 

\medskip
\noindent {\bf Proof of Theorem \ref{Cartagena}.} A complete proof of this theorem will appear in \cite{ADS}. Here we provide a brief sketch of the proof for reader's convenience. 

Item (a) and the equivalence (b$_1$)$\Leftrightarrow$(b$_2$) can be deduced from \cite{DGSZ} as follows. 
According to \cite[Corollary 1.20]{DGSZ}, if a torsion abelian group admits an endomorphism with positive algebraic entropy, then  it admits also an endomorphism with infinite algebraic entropy. Now the Bridge Theorem  \ref{BT} applies to give (a). 

As far as the  implication (b$_1$)$\Rightarrow$(b$_2$) is concerned, one can 
note first that $K = \prod_p K_p$, where each $K_p$ is a pro-$p$-group, so $G_p=\widehat{K}_p$ is a discrete $p$-group. The assumption ${\bf E}_{top}(K)\ne  \{0,\infty\}$, along with the Addition Theorem \ref{AT-top} and the Continuity on inverse limits \ref{invlim}, implies that  ${\bf E}_{top}(K_p)\ne  \{0,\infty\}$
for some prime $p$. By the Bridge Theorem  \ref{BT}, the group $G_p$ has an endomorphism $\phi$ with finite positive algebraic entropy $\ent(\phi)$. 
Let $D$ be the maximal divisible subgroup of $G_p$, so that $G_p = D \oplus R$, where $R$ is a reduced $p$-group. Since $D$ is fully invariant, 
$\phi$ induces and endomorphism $f = \phi\restriction _D$. As a divisible group, $D$ satisfies ${\bf E}_{alg}(D)\subseteq  \{0,\infty\}$ (see Example \ref{Exaaaample}(b)). Hence, 
$\ent(\phi)< \infty$ implies $\ent(f)=0$. Let $\bar \phi: R \to R$ be the induced endomorphism. The Addition Theorem \ref{AT} implies $\ent( \bar \phi) = \ent(\phi) > 0$, finite. According to \cite[Theorem 1.19]{DGSZ}, a reduced $p$-group with an endomorphism of finite positive entropy has an infinite bounded direct summand. Hence $R$ (so also $G_p$) has a direct summand of the form $\Z(p^n)^{(\N)}$ for some $n\in \N$ and some prime $p$, so $K_p$ (hence, also $K$) has a direct summand of the form $\Z(p^n)^\N$. 

The implication  (b$_2$)$\Rightarrow$(b$_1$) uses the fact that the class $\mathfrak A$ of compact abelian groups $K$ with ${\bf E}_{top}(K)\ne  \{0,\infty\}$
has the property that if a group $L$ has a (topological) direct summand from $\mathfrak A$, then $L \in \mathfrak A$. Since $\Z(p^n)^\N \in \mathfrak A$, we are done. 

(b$_4$)$\Leftrightarrow$(b$_2$) follows from Pontryagin duality, while (b$_3$)$\Rightarrow$(b$_1$) is trivial.  For (b$_2$)$\Rightarrow$(b$_3$) note that ${\bf E}_{top}(\Z(p^n)^\N)= \{m\log p: m\in \N\}$. 
$\Box$

\subsection{The entropy of generalized shifts and set-theoretic entropies}\label{genshift}

For a set $X$, a selfmap $\lambda:X\to X$ and a fixed non-trivial group $K$, the \emph{generalized shift}  $\sigma_\lambda:K^X \to K^X$ is defined by $\sigma_\lambda(f) = f\circ \lambda $ for $f\in K^X$ \cite{AKH} (see also \cite{AADGH,FD}). Then $K^{(X)}$ is $\sigma_\lambda$-invariant precisely when $\lambda$ is finitely many-to-one. We denote $\sigma_\lambda\restriction_{K^{(X)}}$ by $\sigma_\lambda^\oplus$. 

\smallskip
Both the left and the two-sided Bernoulli shifts can be obtained as generalized shifts, while the right Bernoulli shift requires a little adjustment:

\begin{example}\label{bernoulli}
Let $K$ be a non-trivial finite abelian group, and consider the Bernoulli shifts $\beta_K,\ _K\beta:K^\N\to K^\N$ and $\overline\beta_K:K^\Z\to K^\Z$. Then:
\begin{itemize}
\item[(a$_1$)] $_K\beta=\sigma_{\rho}$, with $\rho:\N\to\N$ defined by $n\mapsto n+1$ for every $n\in\N$;
\item[(a$_2$)] $\overline\sigma_K=\sigma_{\overline\varsigma}$, with $\overline\sigma:\Z\to\Z$ defined by $n\mapsto n-1$ for every $n\in\Z$;
\item[(a$_3$)] let $\varsigma:\N\to\N$ be defined by $n\mapsto n-1$ for every $n\in\N_+$ and $0\mapsto 0$; then $\sigma_{\varsigma}:K^\N\to K^\N$ is given by
$(x_0,x_1,x_2,\ldots)\mapsto (x_0,x_0,x_1,\ldots)$. Moreover $h_{alg}(\beta_K)=h_{alg}(\sigma_{\varsigma})$, since $\beta_K\restriction_{K^{\N_+}}=\sigma_{\varsigma}\restriction_{K^{\N_+}}$ and $K^\N/K^{\N_+}\cong K$ is finite, so it is possible to apply the Addition Theorem \ref{AT}. Then also $h_{alg}(\beta_K^\oplus)=h_{alg}(\sigma_{\varsigma}^\oplus)$.
\end{itemize}
\end{example}

\medskip
For a finite subset $Y$ of $X$, let $G_Y=K^{(Y)}\subseteq K^{(X)}$. We denote by $\Delta G_Y$ the diagonal subgroup of $G_Y$ (so $|\Delta G_Y|=|K|$).

\begin{lemma}\label{invariant} Let $X$ be a set, $\lambda:X\to X$ a finitely many-to-one selfmap, $K$ a finite group and $F\in[X]^{<\omega}$. Then: 
\begin{itemize}
\item[(a)] $G_F$ is $\sigma^\oplus_\lambda$-invariant if and only if $F$ is $\lambda^{-1}$-invariant.
\item[(b)] $(\sigma_\lambda^\oplus)^n(G_F)=\bigoplus_{i\in F}\Delta G_{\lambda^{-n}(i)}\subseteq G_{\lambda^{-n}(F)}$ for every $n\in\N_+$;
\item[(c)] if $F$ is an antichain, then $H_{alg}(\sigma_\lambda^\oplus,G_F)=|F|\log|K|$.
\end{itemize}
\end{lemma}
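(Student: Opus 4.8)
The plan is to establish the three claims in sequence, with (b) doing most of the work and (a) and (c) following essentially by inspection. Throughout, recall that $\sigma_\lambda(f) = f \circ \lambda$, so that for $f \in K^{(X)}$ supported on a finite set $S$, the composite $f \circ \lambda$ is supported on $\lambda^{-1}(S)$; this is finite precisely because $\lambda$ is finitely many-to-one, which is why $K^{(X)}$ is $\sigma_\lambda$-invariant under that hypothesis.

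For item (a), the point is simply that $\sigma_\lambda^\oplus(G_F) = \{f \circ \lambda : \supp f \subseteq F\}$, and $\supp(f\circ \lambda) = \lambda^{-1}(\supp f)$. Thus $\sigma_\lambda^\oplus(G_F) = \{g \in K^{(X)} : \supp g \subseteq \lambda^{-1}(F), \ g \text{ constant on fibers } \lambda^{-1}(i)\} \subseteq G_{\lambda^{-1}(F)}$. Hence $\sigma_\lambda^\oplus(G_F) \subseteq G_F$ forces $\lambda^{-1}(F) \subseteq F$, i.e.\ $F$ is $\lambda^{-1}$-invariant; conversely if $\lambda^{-1}(F) \subseteq F$ then clearly $\sigma_\lambda^\oplus(G_F) \subseteq G_{\lambda^{-1}(F)} \subseteq G_F$. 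I would state this cleanly and move on.

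For item (b), I would prove by induction on $n$ that $(\sigma_\lambda^\oplus)^n(G_F)$ consists exactly of those functions supported inside $\lambda^{-n}(F)$ that are constant on each fiber $(\lambda^n)^{-1}(i) = \lambda^{-n}(i)$ for $i \in F$. The base case $n=1$ is the computation in the previous paragraph. For the inductive step, apply $\sigma_\lambda^\oplus$ to a function $g$ constant on the fibers $\lambda^{-n}(i)$: composing with $\lambda$ pulls the support back to $\lambda^{-(n+1)}(F)$ and the value on $\lambda^{-(n+1)}(i) = \lambda^{-1}(\lambda^{-n}(i))$ is the (single) value $g$ takes on $\lambda^{-n}(i)$, so the result is constant on $\lambda^{-(n+1)}(i)$; surjectivity onto all such functions is immediate since the value on each $\lambda^{-n}(i)$ can be prescribed freely. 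This identifies $(\sigma_\lambda^\oplus)^n(G_F)$ with $\bigoplus_{i\in F} \Delta G_{\lambda^{-n}(i)}$, the internal direct sum being legitimate because the sets $\lambda^{-n}(i)$, $i \in F$, are pairwise disjoint ($\lambda^n$ is a function), and each summand $\Delta G_{\lambda^{-n}(i)} \cong K$ is the diagonal of $G_{\lambda^{-n}(i)}$. The inclusion into $G_{\lambda^{-n}(F)}$ is then obvious since $\lambda^{-n}(F) = \bigcup_{i \in F} \lambda^{-n}(i)$.

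For item (c), suppose $F$ is an antichain (with respect to $\leq_\lambda$, and avoiding periodic points so that the relevant fibers behave well — though for the counting argument what matters is disjointness). The trajectory is $T_n(\sigma_\lambda^\oplus, G_F) = G_F \cdot \sigma_\lambda^\oplus(G_F) \cdots (\sigma_\lambda^\oplus)^{n-1}(G_F)$. Using (b), the $k$-th factor lives on $\lambda^{-k}(F)$, and by Lemma \ref{h*:E->x}(a) applied to the antichain $F$ the sets $\lambda^{-k}(F)$ for $k = 0, 1, \dots, n-1$ are pairwise disjoint. Since all groups involved are subgroups of the abelian group $K^{(X)}$, the product is the direct sum $\bigoplus_{k=0}^{n-1} (\sigma_\lambda^\oplus)^k(G_F)$, and by (b) each summand is a direct sum of $|F|$ copies of the diagonal $\Delta G_{\lambda^{-k}(i)} \cong K$. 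Hence $|T_n(\sigma_\lambda^\oplus, G_F)| = |K|^{n|F|}$, so $H_{alg}(\sigma_\lambda^\oplus, G_F) = \lim_{n\to\infty} \frac{1}{n}\log|K|^{n|F|} = |F| \log |K|$. The main obstacle, if any, is bookkeeping the disjointness of the fibers $\lambda^{-k}(i)$ carefully across both the index $k$ and the index $i \in F$ — this is exactly where the antichain hypothesis (via Lemma \ref{h*:E->x}(a)) is used, and I would make sure the statement of (c) as used assumes $F \cap \mathrm{Per}(\lambda,X) = \emptyset$ or otherwise invoke the disjointness appropriately; the rest is routine.
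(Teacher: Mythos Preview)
Your proposal is correct and follows essentially the same route as the paper: for (a) and (b) the paper simply declares them ``clear'' and cites \cite{AADGH}, while you spell out the support-tracking argument; for (c) both you and the paper invoke Lemma~\ref{h*:E->x}(a) to get pairwise disjointness of the sets $\lambda^{-k}(F)$, deduce that the trajectory is an internal direct sum, and read off $|T_n(\sigma_\lambda^\oplus,G_F)|=|K|^{n|F|}$. Your flag that the antichain $F$ should avoid $\mathrm{Per}(\lambda,X)$ for Lemma~\ref{h*:E->x}(a) to apply is a valid observation (the paper's statement is slightly imprecise on this point, though in the only application, Theorem~\ref{genshiftsum}, the antichains used are explicitly taken in $X\setminus\mathrm{Per}(\lambda,X)$).
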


\begin{proof} 
(a) and (b) are clear and  proved in {\cite{AADGH}}.

\smallskip
(c) If $k\in\N_+$, then $(\sigma_\lambda^\oplus)^k(G_F)=\bigoplus_{i\in F}\Delta G_{\lambda^{-k}(i)}$ by (b); in particular $|(\sigma_\lambda^\oplus)^k(G_F)|=|K|^{|F|}$. Moreover $\lambda^{-n}(F)\cap \mathfrak T_n^*(\lambda,F)=\emptyset$ for every $n\in\N_+$ by Lemma \ref{h*:E->x}(a), and so $T_n(\sigma_\lambda^\oplus,G_F)=G_F\oplus G_{\lambda^{-1}(F)}\oplus\ldots\oplus G_{\lambda^{-n+1}(F)}$ for every $n\in\N_+$. Therefore $|T_n(\sigma_\lambda^\oplus,G_F)|=|K|^{n|F|}$, and hence $H(\sigma_\lambda^\oplus,G_F)=|F|\log|K|$.
\end{proof}

There is a simple relation between the algebraic entropy of the generalized shift  $\sigma_\lambda^\oplus$ and the contravariant set-theoretical entropy $\mathfrak h^*(\lambda)$ introduced in Section \ref{stent-sec}:

\begin{theorem}\label{genshiftsum} 
Let $X$ be a set, $\lambda:X\to X$ a finitely many-to-one selfmap and $K$ a finite group. Then
$$h_{alg}(\sigma_\lambda^\oplus)= \mathfrak h^*(\lambda) \log |K|.$$
\end{theorem}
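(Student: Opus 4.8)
The plan is to reduce the computation of $h_{alg}(\sigma_\lambda^\oplus)$ to the contravariant set-theoretic entropy $\mathfrak h^*(\lambda)$ by comparing the finite subsets of $K^{(X)}$ used in the definition of $h_{alg}$ with the finite subsets of $X$ used in the definition of $\mathfrak h^*$. First I would reduce to the surjective case: since $K^{(X)}$ is $\sigma_\lambda^\oplus$-invariant precisely when $\lambda$ is finitely many-to-one, and by definition $\mathfrak h^*(\lambda)=\mathfrak h^*(\lambda\restriction_{\mathrm{sc}(\lambda)})$, I would first check that the surjective core $\mathrm{sc}(\lambda)=\bigcap_n\lambda^n(X)$ induces a corresponding reduction on the algebraic side: the subgroup $K^{(\mathrm{sc}(\lambda))}$ captures all the trajectory growth, so $h_{alg}(\sigma_\lambda^\oplus)=h_{alg}(\sigma_{\lambda\restriction_{\mathrm{sc}(\lambda)}}^\oplus)$. (This uses Monotonicity and the fact that elements supported outside $\mathrm{sc}(\lambda)$ eventually leave any finite region under $\sigma_\lambda^\oplus$, contributing nothing.) So from now on assume $\lambda$ is surjective.

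Next I would establish the inequality $h_{alg}(\sigma_\lambda^\oplus)\geq \mathfrak h^*(\lambda)\log|K|$. Given a finite subset $D$ of $X$, take $F=G_D=K^{(D)}$. By Lemma \ref{invariant}(b), $(\sigma_\lambda^\oplus)^i(G_D)$ is contained in $G_{\lambda^{-i}(D)}$, but more importantly it is \emph{exactly} $\bigoplus_{j\in D}\Delta G_{\lambda^{-i}(j)}$. Using the antichain reductions from \S\ref{cstent-sec} (Lemma \ref{red-notper}, Lemma \ref{no-rec}, and the fact that it suffices to consider finite antichains without periodic points, equation \eqref{antieq}), I may assume $D$ is an antichain disjoint from $\mathrm{Per}(\lambda,X)$. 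Then by Lemma \ref{h*:E->x}(a) the sets $\lambda^{-i}(D)$ are pairwise disjoint, so $T_n(\sigma_\lambda^\oplus,G_D)=G_D\cdot\sigma_\lambda^\oplus(G_D)\cdots(\sigma_\lambda^\oplus)^{n-1}(G_D)$ is a \emph{direct sum} of subgroups supported on the disjoint pieces $\lambda^{-i}(D)$, and by Lemma \ref{invariant}(b) the $i$-th piece has order $|K|^{|D|}$; hence $|T_n(\sigma_\lambda^\oplus,G_D)|=|K|^{n|D|}$ exactly when $D$ is a stratifiable antichain, and in general $\log|T_n(\sigma_\lambda^\oplus,G_D)|=\log|K|\cdot|\mathfrak T_n^*(\lambda,D)|$ by Lemma \ref{h*:E->x}(b). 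Thus $H_{alg}(\sigma_\lambda^\oplus,G_D)=\mathfrak h^*(\lambda,D)\log|K|$, and taking suprema over $D$ gives the inequality $\geq$. The case $\mathfrak h^*(\lambda)=\infty$ is handled the same way via arbitrarily large antichains (Proposition \ref{ram-rec}).

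For the reverse inequality $h_{alg}(\sigma_\lambda^\oplus)\leq \mathfrak h^*(\lambda)\log|K|$, take an arbitrary $F\in[K^{(X)}]^{<\omega}$. Each element of $F$ has finite support, so $F\subseteq G_Y$ for some finite $Y\subseteq X$, and by Lemma \ref{0inF}-type monotonicity $H_{alg}(\sigma_\lambda^\oplus,F)\leq H_{alg}(\sigma_\lambda^\oplus,G_Y)$. Now $T_n(\sigma_\lambda^\oplus,G_Y)\subseteq K^{(\mathfrak T_n^*(\lambda,Y))}$ since $(\sigma_\lambda^\oplus)^i(G_Y)$ is supported on $\lambda^{-i}(Y)$; therefore $|T_n(\sigma_\lambda^\oplus,G_Y)|\leq|K|^{|\mathfrak T_n^*(\lambda,Y)|}$, whence $H_{alg}(\sigma_\lambda^\oplus,G_Y)\leq \mathfrak h^*(\lambda,Y)\log|K|\leq \mathfrak h^*(\lambda)\log|K|$. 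Taking the supremum over $F$ finishes the proof. The main obstacle I anticipate is the bookkeeping in the surjective-core reduction and making sure the antichain reductions of \S\ref{cstent-sec} transfer cleanly to the subgroups $G_D$ — in particular verifying that passing from $D$ to $\uparrow\!D$-type replacements on the set side corresponds to cofinal/bounded comparisons of the groups $T_n(\sigma_\lambda^\oplus,G_D)$ on the algebraic side, so that the $\limsup$'s (which are limits by Lemma \ref{subadd} and Proposition \ref{limexists}) genuinely agree. Everything else is a direct translation between disjoint unions of fibers and direct sums of copies of $K$.
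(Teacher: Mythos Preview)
Your overall strategy --- reduce to surjective $\lambda$, bound above via $T_n(\sigma_\lambda^\oplus,G_Y)\subseteq G_{\mathfrak T_n^*(\lambda,Y)}$, bound below via antichains --- is exactly the paper's approach, and your upper bound and surjective-core reduction are correct as written.

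There is, however, a genuine computational error in your lower bound. You correctly note (via Lemma~\ref{invariant}(b)) that the $i$-th piece $(\sigma_\lambda^\oplus)^i(G_D)=\bigoplus_{j\in D}\Delta G_{\lambda^{-i}(j)}$ has order $|K|^{|D|}$, and that for an antichain $D$ without periodic points these pieces have pairwise disjoint supports. But then $|T_n(\sigma_\lambda^\oplus,G_D)|=|K|^{n|D|}$ \emph{always}, not just when $D$ is stratifiable; consequently $H_{alg}(\sigma_\lambda^\oplus,G_D)=|D|\log|K|$ (this is precisely Lemma~\ref{invariant}(c)). Your claimed general formula $\log|T_n(\sigma_\lambda^\oplus,G_D)|=|\mathfrak T_n^*(\lambda,D)|\cdot\log|K|$ is false: the image $(\sigma_\lambda^\oplus)^i(G_D)$ is only the \emph{diagonal} subgroup of $G_{\lambda^{-i}(D)}$, of size $|K|^{|D|}$, not the full $G_{\lambda^{-i}(D)}$ of size $|K|^{|\lambda^{-i}(D)|}$. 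So in general $H_{alg}(\sigma_\lambda^\oplus,G_D)=|D|\log|K|\leq \mathfrak h^*(\lambda,D)\log|K|$, with equality only when $D$ is stratifiable.

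The fix is immediate and you have already cited all the needed ingredients: since $H_{alg}(\sigma_\lambda^\oplus,G_D)=|D|\log|K|$ for any antichain $D\subseteq X\setminus\mathrm{Per}(\lambda,X)$, you need antichains with $|D|$ approaching $\mathfrak h^*(\lambda)$. When $\mathfrak h^*(\lambda,E)<\infty$ for some $E$, Corollary~\ref{max-a-chain} hands you a stratifiable antichain $A$ with $|A|=\mathfrak h^*(\lambda,E)$; when $\mathfrak h^*(\lambda,E)=\infty$, Proposition~\ref{ram-rec} gives arbitrarily large antichains. (Alternatively, Lemma~\ref{h*=w} says directly that $\sup_D|D|=\mathrm{width}(X\setminus P_*(\lambda,X))=\mathfrak h^*(\lambda)$.) The paper organizes this by decomposing $\mathfrak h^*(\lambda,E)=\sum_{x\in E}\mathfrak h^*(\lambda,\{x\})$ and then handling each $\uparrow\!x$ separately, but your more direct route works once the formula is corrected.
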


\begin{proof} We can assume without loss of generality that $\lambda$ is surjective. Indeed $h_{alg}(\sigma_\lambda^\oplus)=h_{alg}(\sigma_{\lambda\restriction_{\mathrm{sc}(\lambda)}}^\oplus)$ was proved in \cite{AADGH}. Moreover $\mathfrak h^*(\lambda)=\mathfrak h^*(\lambda\restriction_{\mathrm{sc}(\lambda)})$ by definition.

\smallskip
Every $H\in [K^{(X)}]^{<\omega}$ is contained in $G_F=K^F\subseteq K^{(X)}$ for some $F\in[X]^{<\omega}$. So 
$$h_{alg}(\sigma_\lambda^\oplus)=\sup\{H_{alg}(\sigma_\lambda^\oplus,G_F):F\in[X]^{<\omega}\},$$ 
and it suffices to compute $H_{alg}(\sigma_\lambda^\oplus,G_F)$. So fix $F\in[X]^{<\omega}$ and let $n\in\N_+$. Then, by Lemma \ref{invariant},
\begin{align*}
T_n(\sigma_\lambda^\oplus,G_F)&=G_F+ \sigma_\lambda^\oplus(G_F)+\ldots+(\sigma_\lambda^\oplus)^{n-1}(G_F)\\
&\subseteq G_{F\cup\lambda^{-1}(F)\cup\ldots\cup\lambda^{-n+1}(F)}\\
&=G_{\mathfrak T_n^*(\lambda,F)}.
\end{align*}
Therefore 
$$
|T_n(\sigma_\lambda^\oplus,G_F)|\leq|G_{\mathfrak T_n^*(\lambda,F)}|=|K|^{|\mathfrak T_n^*(\lambda,F)|}.
$$
In view of the definitions, this gives 
$H_{alg}(\sigma_\lambda^\oplus,G_F)\leq\mathfrak h^*(\lambda,F)\cdot\log|K|\leq h^*(\lambda)\cdot\log|K|$, so
$$
h_{alg}(\sigma_\lambda)=\sup\{H_{alg}(\sigma_\lambda^\oplus,G_F):F\in[X]^{<\omega}\}\leq
\mathfrak h^*(\lambda)\cdot\log|K|;
$$
consequently $h_{alg}(\sigma_\lambda^\oplus)\leq \mathfrak h^*(\lambda) \log |K|.$

\smallskip
To prove the converse inequality we have to verify that $\mathfrak h^*(\lambda,E)\log|K|\leq h_{alg}(\sigma_\lambda^\oplus)$ for every $E\in[X]^{<\omega}$. In view of \eqref{antieq} we can assume without loss of generality that $E\subseteq X\setminus \mathrm{Per}(\lambda,X)$ and that $E$ is an antichain. By Lemma \ref{h*:E->x}(c$_3$), 
$$
\mathfrak h^*(\lambda,E)=\sum_{x\in E}\mathfrak h^*(\lambda,\{x\}).
$$ 
Since $\uparrow\! x$ is inversely $\lambda$-invariant for every $x\in E$, Lemma \ref{invariant} implies that $G_{\uparrow\! x}$ is $\sigma_\lambda^\oplus$-invariant for every $x\in E$. Moreover $G_{\uparrow\! x}\cap G_{\uparrow\! y}=0$ for every $x,y\in E$ with $x\neq y$, since $\uparrow\! x\cap \uparrow\! y=\emptyset$ by Lemma \ref{h*:E->x}(a). Therefore 
$$
h_{alg}(\sigma_\lambda^\oplus\restriction_{G_{\uparrow\! E}})=\sum_{x\in E} h_{alg}(\sigma_\lambda^\oplus\restriction_{G_{\uparrow\! x}}).
$$
So, it suffices to prove that, for every $x\in E$,
\begin{equation}\label{hx-eq}
\mathfrak h^*(\lambda,\{x\})\log|K|\leq h_{alg}(\sigma_\lambda^\oplus\restriction_{G_{\uparrow\! x}}).
\end{equation}
Indeed, this implies $\mathfrak h^*(\lambda,E)\log|K|\leq h_{alg}(\sigma_\lambda^\oplus\restriction_{G_{\uparrow\! E}})\leq h_{alg}(\sigma_\lambda^\oplus)$. So the last part of the proof is dedicated to verify \eqref{hx-eq} for a fixed $x\in E$.

First assume that there exist infinitely many ramification points of $\lambda$ over $x$. By Proposition \ref{ram-rec} for every $m\in\N_+$ there exists an antichain $F_m\in[X]^{<\omega}$ such that $F_m\subseteq \uparrow\! x$ and $|F_m|\geq m$.
By Lemma \ref{invariant}(c) 
$$
H_{alg}(\sigma_\lambda^\oplus\restriction_{G_{\uparrow\! x}}, G_{F_m})=H_{alg}(\sigma_\lambda^\oplus, G_{F_m})=|F_m|\log|K|\geq m\log|K|.
$$ 
Since this holds for every $m\in\N_+$, we can conclude that $h_{alg}(\sigma_\lambda^\oplus\restriction_{G_{\uparrow\! x}})=h_{alg}(\sigma_\lambda^\oplus)=\infty$, so in this case \eqref{hx-eq} holds true.

Assume now that there exist finitely many ramification points of $\lambda$ over $x$. Then $\mathfrak h^*(\lambda,\{x\})<\infty$ according to Proposition \ref{ram-rec}, so by Corollary \ref{max-a-chain}, there exists a stratifiable antichain $F\subseteq \uparrow\! x$ of $\{x\}$, hence $\mathfrak h^*(\lambda,\{x\})=\mathfrak h^*(\lambda,F)=|F|$. By Lemma \ref{invariant}(c) $$H_{alg}(\sigma_\lambda^\oplus\restriction_{G_{\uparrow\! x}}, G_{F})=H_{alg}(\sigma_\lambda^\oplus, G_{F})=|F|\log|K|=\mathfrak h^*(\lambda,\{x\})\log|K|.$$ In particular this gives the required inequality in \eqref{hx-eq}.
\end{proof}

Since $\mathfrak h^*(\lambda)=\mathfrak s(\lambda)$ in view of Theorem \ref{ConsetE}, Theorem \ref{genshiftsum} is a generalization to not necessarily abelian groups of the main theorem of \cite{AADGH}, stating that, for $\lambda:X\to X$ a finitely many-to-one selfmap of $X$ and $K$ a finite abelian group, $h_{alg}(\sigma_\lambda^\oplus)=\mathfrak s(\lambda)\log|K|$.

\medskip
In the sequel $X$ is an infinite set, $\lambda:X\to X$ an arbitrary selfmap and $K$ a finite set. Then $K^X$ is a compact space considered with the product topology and the generalized shift $\sigma_\lambda:K^X\to K^X$ is continuous. Theorem \ref{genshiftprod} below computes the topological entropy $h_{top}(\sigma_\lambda)$ of  $\sigma_\lambda$ in terms of the set-theoretical entropy $\mathfrak h(\lambda)$ (i.e, the  infinite orbit number $\mathfrak o(\lambda)$) recalled in Section \ref{cstent-sec}. This theorem was proved in a slightly different way in \cite{FD}. 

\begin{theorem}\label{genshiftprod} {\rm  \cite{FD}}
Let $X$ be a set, $\lambda:X\to X$ a selfmap and $K$ a finite non-empty set. Consider $K^X$ endowed with the product topology. Then 
\begin{equation}\label{BT_}
h_{top}(\sigma_\lambda)=\mathfrak h(\lambda)\log|K|= \mathfrak o(\lambda)\log|K|.
\end{equation}
\end{theorem}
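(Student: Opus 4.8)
The plan is to compute $h_{top}(\sigma_\lambda)$ directly from the Adler--Konheim--McAndrew definition, using a cofinal family of clopen partitions of $K^X$ and recognising the relevant block count as the size of the covariant trajectory $\mathfrak T_n(\lambda,Y)$. For a finite $Y\subseteq X$ let $\mathcal U_Y$ be the partition of $K^X$ into the cylinders $C^Y_a=\{f\in K^X: f\restriction_Y=a\}$, $a\in K^Y$; this is a clopen partition with $N(\mathcal U_Y)=|K|^{|Y|}$. First I would verify that $\{\mathcal U_Y:Y\in[X]^{<\omega}\}$ is cofinal in $\cov(K^X)$ under refinement: given an open cover, compactness of $K^X$ together with the fact that cylinders on finite sets form a base yields finitely many cylinders refining it, and the partition $\mathcal U_Y$ for $Y$ the union of their (finite) supports then refines the cover. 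Since $H_{top}(\psi,\mathcal U)\le H_{top}(\psi,\mathcal U')$ whenever $\mathcal U'$ refines $\mathcal U$, this gives $h_{top}(\sigma_\lambda)=\sup\{H_{top}(\sigma_\lambda,\mathcal U_Y):Y\in[X]^{<\omega}\}$.

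Next I would compute the joins. Since the $x$-coordinate of $\sigma_\lambda^k(f)$ equals $f(\lambda^k(x))$, the block of $\bigvee_{k=0}^{n-1}\sigma_\lambda^{-k}(\mathcal U_Y)$ containing $f$ is determined precisely by the restriction of $f$ to $Y\cup\lambda(Y)\cup\cdots\cup\lambda^{n-1}(Y)=\mathfrak T_n(\lambda,Y)$; moreover every assignment on $\mathfrak T_n(\lambda,Y)$ is realised. Hence $\bigvee_{k=0}^{n-1}\sigma_\lambda^{-k}(\mathcal U_Y)=\mathcal U_{\mathfrak T_n(\lambda,Y)}$ (after deleting the empty members) and $N\big(\bigvee_{k=0}^{n-1}\sigma_\lambda^{-k}(\mathcal U_Y)\big)=|K|^{|\mathfrak T_n(\lambda,Y)|}$. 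Therefore
$$
H_{top}(\sigma_\lambda,\mathcal U_Y)=\lim_{n\to\infty}\frac{|\mathfrak T_n(\lambda,Y)|\,\log|K|}{n}=\mathfrak h(\lambda,Y)\,\log|K|,
$$
the last equality being the definition of the covariant set-theoretic entropy of $\lambda$ with respect to $Y$. Taking the supremum over $Y\in[X]^{<\omega}$ gives $h_{top}(\sigma_\lambda)=\mathfrak h(\lambda)\log|K|$, and $\mathfrak h(\lambda)=\mathfrak o(\lambda)$ by Theorem \ref{setE}, which is exactly \eqref{BT_}. Note that this argument uses nothing about $\lambda$ beyond its being a selfmap, so it covers the general case in the statement.

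The routine items (continuity and compactness of $\sigma_\lambda:K^X\to K^X$, monotonicity of $H_{top}$ under refinement, and the degenerate case $|K|=1$, where both sides are $0$) I would dispatch in a line. The step needing the most care is the cofinality of $\{\mathcal U_Y\}$ combined with the identity $\bigvee_{k<n}\sigma_\lambda^{-k}(\mathcal U_Y)=\mathcal U_{\mathfrak T_n(\lambda,Y)}$: this is where the product topology and the dynamics of $\lambda$ meet, and one must check both that no coordinate outside $\mathfrak T_n(\lambda,Y)$ is constrained and that every value pattern on $\mathfrak T_n(\lambda,Y)$ actually occurs, so that the block count is exactly $|K|^{|\mathfrak T_n(\lambda,Y)|}$ and not merely a bound. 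An alternative, more structural route would decompose $X$ into the $\lambda$-classes $E_j$ of \eqref{(P)}, write $\sigma_\lambda=\prod_j\sigma_{\lambda\restriction_{E_j}}$, and combine the Weak Addition Theorem \ref{WAT-top} with Continuity on inverse limits (Proposition \ref{cont-sp}) to reduce to a single wandering orbit, where $\sigma_{\lambda\restriction_{E_j}}\cong {}_K\beta$ has entropy $\log|K|$ by Example \ref{computeBerny}; but that path still needs the vanishing $h_{top}(\sigma_{\lambda\restriction_{E}})=0$ on quasi-periodic classes via the Reduction to surjective selfmaps (Theorem \ref{red-to-sur}), so I would keep the direct cover computation as the main line.
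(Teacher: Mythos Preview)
Your argument is correct. The key identity $N\big(\bigvee_{k<n}\sigma_\lambda^{-k}(\mathcal U_Y)\big)=|K|^{|\mathfrak T_n(\lambda,Y)|}$ is exactly the heart of the matter, and your verification that every pattern on $\mathfrak T_n(\lambda,Y)$ is realised (so that this is an equality and not just an upper bound) is the right thing to check.

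The paper's proof reaches the same computation by a different route. Rather than working with the Adler--Konheim--McAndrew cover definition, it equips $K$ with a group structure (harmless, since $h_{top}$ depends only on the topology), passes to the Bowen--Hood uniform entropy $k(\sigma_\lambda)$ via $h_{top}=h_U=k$, and uses the open compact subgroups $U_F=K^{X\setminus F}$ as a neighbourhood base at the identity. The cotrajectory identity $C_n(\sigma_\lambda,U_F)=U_{\mathfrak T_n(\lambda,F)}$ (from the lemma $\sigma_\lambda^{-n}(U_F)=U_{\lambda^n(F)}$) together with Proposition~\ref{no-mu} then gives $[K^X:C_n(\sigma_\lambda,U_F)]=|K|^{|\mathfrak T_n(\lambda,F)|}$, which is the same count you obtain. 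Your $\mathcal U_Y$ is precisely the partition of $K^X$ into cosets of the paper's $U_Y$, so the two computations are dual formulations of the same fact. What your approach buys is that it stays entirely within the AKM framework and never needs to impose a group structure on $K$ or invoke the measure-free index formula; what the paper's approach buys is a cleaner appeal to the machinery of \S\ref{lc-sec} already developed for totally disconnected compact groups.
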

\begin{proof} 
According to \S \ref{bowen-sec}, $h_{top}(\sigma_\lambda)=h_U(\sigma_\lambda)=k(\sigma_\lambda)$, so we are going to prove that $k(\sigma_\lambda)= \mathfrak h(\lambda)\log|K|$. We may assume without loss of generality that $K$ is a group. Then $\sigma_\lambda$ is an endomorphism
and for a subset $Y\ne \emptyset $ of $X$ we may identify the subproduct $K^Y$ with a subgroup of $K^X$ in an obvious way. 
The next lemma is needed in the proof of Theorem \ref{genshiftprod}.

\begin{lemma}\emph{\cite{G0}}
Let $X$ be a set, $\lambda:X\to X$ a selfmap, $K$ a finite group, $F\in[X]^{<\omega}$ and $n\in\N_+$. Denoting $U_F=K^{X\setminus F}$, then
$$\sigma_\lambda^{-n}(U_F)=U_{\lambda^n(F)}.$$
\end{lemma}

To continue the proof of Theorem \ref{genshiftprod} note that the family $\{U_F:F\in[X]^{<\omega}\}$, with $U_F=K^{X\setminus F}$, is a base of the neighborhoods of the identity in $K^X$ consisting of open compact subgroups. So $k(\sigma_\lambda)=\sup\{k(\sigma_\lambda,U_F):F\in[X]^{<\omega}\}$, and it suffices to compute $k(\sigma_\lambda,U_F)$ using Proposition \ref{no-mu}. So fix $F\in[X]^{<\omega}$ and let $n\in\N_+$. Then 
\begin{align*}
C_n(\sigma_\lambda,U_F)&=U_F\cap \sigma_\lambda^{-1}(U_F)\cap\ldots\cap\sigma_\lambda^{-n+1}(U_F)\\
&=U_{F\cup\lambda(F)\cup\ldots\cup\lambda^{n-1}(F)}=\; U_{\mathfrak T_n(\lambda,F)}.
\end{align*}
Therefore 
$$
[K^X:C_n(\sigma_\lambda,U_F)]=[K^X:U_{\mathfrak T_n(\lambda,F)}]=|K|^{|\mathfrak T_n(\lambda,F)|}.
$$
By the definitions, $k(\sigma_\lambda,U_F)=\mathfrak h(\lambda,F)\cdot\log|K|$, so
$$
k(\sigma_\lambda)=\sup\{k(\sigma_\lambda,U_F):F\in[X]^{<\omega}\}=\sup\{\mathfrak h(\lambda,F)\cdot\log|K|:F\in[X]^{<\omega}\}=\mathfrak h(\lambda)\cdot\log|K|;
$$
this concludes the proof in view of Theorem \ref{setE}.
\end{proof}

\begin{remark}\label{LaastRemark}
\begin{itemize}
\item[(a)]  The algebraic entropy of the generalized shift $\sigma_\lambda$ considered on the product $K^X$ was calculated in \cite{G0}; it is either zero or infinity, depending on the properties of $\lambda$, and more precisely on the string number, the infinite orbit number and other two analogous functions called ladder number and periodic ladder number.
\item[(b)] The Bridge Theorems as well as Theorems \ref{genshiftsum} and \ref{genshiftprod} can be viewed as results on {\em preservation of entropy along functors}. Indeed, in the last case one takes the functor $F_K: \mathbf{Set}\to \mathbf{Comp}$  defined by $F_K(X) = K^X$ and for $\lambda: X \to Y$ the map $\sigma_\lambda: K^Y \to K^X$ defined by $ \sigma_\lambda(f) =f  \circ  \lambda $ for $f \in K^Y$. 
Then the entropy $\mathfrak h$ in $\mathbf{Set}$ and the entropy $h_{top}$ in $\mathbf{Comp}$ are connected by (\ref{BT_}), which can be interpreted as  preservation of the 
entropies up to a multiplicative constant $\log |K|$. A similar interpretation can be given to Theorem \ref{genshiftsum}, but in this case the category $\mathbf{Set}$ must be replaced by its 
non-full subcategory $\mathbf{Set}_{\mathrm{fin}}$ having as objects all sets and as morphisms only the finitely many-to one maps.  
\end{itemize}
\end{remark}

Motivated by item (b) of Remark \ref{LaastRemark} one can consider the usual Stone-\v Cech compactification functor $\beta: \mathbf{Set}\to \mathbf{Comp}$ (as usual, every set $X$ is considered as a discrete topological space) and ask:

\begin{question}
Is there an appropriate entropy function in $\mathbf{Set}$ so that the functor $\beta$ preserves the entropies?  
\end{question}

Let us mention that the counterpart of this problem in the case of the one-point Alexandroff compactificiation $A(X)$ of an infinite discrete space $X$ and the 
assignment $\lambda \mapsto A(\lambda)$,  where $A(\lambda): A(X) \to A(X)$ is the unique continuous extension of $\lambda$ to $A(X)$, has been resolved
in \cite{FD} as follows. First of all, a necessary condition for the continuity of $A(\lambda)$ is $\lambda$ to be finitely many-to-one (i.e., $\lambda: X \to X$ is in $\mathbf{Set}_\mathrm{fin}$).
In such a case, one has always $h_{top}(A(\lambda))=0$, so the unique possible entropy function in $ \mathbf{Set}_\mathrm{fin}$ such that $\lambda \mapsto A(\lambda)$ preserves entropy is the constant zero.   

\begin{remark} 
Let $X$ be a set and $\lambda:X\to X$ a selfmap. If one considers a finite abelian group $K$, then one may want to apply the Pontryagin duality and the Bridge Theorem in order to calculate the algebraic entropy of the dual $\widehat{\sigma_\lambda}$ of $\sigma_\lambda$ instead of the topological entropy of $\sigma_\lambda$. But $\widehat{\sigma_\lambda}$ is not a generalized shift in general.
Indeed, one can prove that:
\begin{itemize}
\item[(a)] If $\lambda$ is a bijection, then $\widehat{\sigma_\lambda^\oplus}=\sigma_{\lambda^{-1}}$ is a generalized shift.
\item[(b)] Assume now that $\lambda$ is a finitely many-to-one selfmap. Let $\chi=(\chi_i)_{i\in X}\in K^X\cong \widehat{K^{(X)}}$. Then 
$$\widehat{\sigma_\lambda^\oplus}(\chi)=\chi\circ\sigma_\lambda^\oplus=\left(\sum_{j\in\lambda^{-1}(i)}\chi_j\right)_{i\in X}.$$
\item[(c)] Consider $\sigma_\lambda:K^X\to K^X$ and let $\chi\in \widehat{K^X}\cong K^{(X)}$. Then there exists a smallest finite subset $F_\chi$ of $X$ such that $\chi(K^{X\setminus F_\chi})=0$. Since $\widehat{\sigma_\lambda}(\chi)=\chi\circ\sigma_\lambda$, $\widehat{\sigma_\lambda}(\chi)(K^{X\setminus F_\chi})=0$ as well. Identify $\chi=(\chi_i)_{i\in F_\chi}$. Then $$\widehat{\sigma_\lambda}(\chi)=\left(\sum_{j\in F_\chi\cap\lambda^{-1}(i)}\chi_j\right)_{i\in F_\chi}.$$
\end{itemize}
Hence taking the dual of a generalized shift in items (b) and (c) we do not obtain a generalized shift.
\end{remark}

\subsection{Adjoint entropy vs topological entropy and algebraic entropy}\label{LAAAAST}

Using Proposition \ref{no-mu}, it is possible to deduce the following result giving a relation between the topological adjoint entropy and the topological entropy. It generalizes Theorem \ref{ent*=htop} to totally disconnected compact groups that are not necessarily abelian.

\begin{theorem}
Let $(K,\tau)$ be a totally disconnected compact group and $\psi:(K,\tau)\to (K,\tau)$ a continuous endomorphism. Then $$\aent_\tau(\psi)=k(\psi)=h_{top}(\psi).$$
\end{theorem}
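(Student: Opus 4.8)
The plan is to establish the two equalities $\aent_\tau(\psi)=k(\psi)$ and $k(\psi)=h_{top}(\psi)$ separately. The second one is already available: for a totally disconnected locally compact group we have declared $k(\psi)=h_U(\psi)$, and on a compact space $h_U$ coincides with $h_{top}$ (see \S\ref{bowen-sec}), so $k(\psi)=h_{top}(\psi)$ holds with no further work. Hence the real content is the first equality, and the key tool will be the measure-free formula from Proposition \ref{no-mu}, namely
$$
k(\psi,U)=\lim_{n\to\infty}\frac{\log[G:C_n(\psi,U)]}{n}
$$
for $U\in\mathcal B(K)$ a clopen compact (hence finite-index, since $K$ is compact) subgroup, together with Lemma \ref{B(G)} giving $k(\psi)=\sup\{k(\psi,U):U\in\mathcal B(K)\}$.

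First I would compare the two index families. On one side, $\aent_\tau(\psi)=\sup\{H^\star(\psi,N):N\in\CC_\tau^\lhd(K)\}$ by Lemma \ref{normal}, where $N$ ranges over normal $\tau$-open finite-index subgroups; and by Proposition \ref{C=C_n->ent*=0} the $\limsup$ defining $H^\star(\psi,N)$ is a genuine limit. On the other side, $k(\psi)=\sup\{k(\psi,U):U\in\mathcal B(K)\}$ with $k(\psi,U)$ also a limit by Proposition \ref{no-mu}. The point to observe is that for a compact group the $n$-th $\psi$-cotrajectory $C_n(\psi,U)$ appearing in \eqref{cotraj} is literally the same object as the one in \eqref{H*}: both are $U\cap\psi^{-1}(U)\cap\cdots\cap\psi^{-n+1}(U)$. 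Thus whenever $U$ is a $\tau$-open finite-index subgroup, $H^\star(\psi,U)=k(\psi,U)$ term by term. So the two suprema are taken of the same function $U\mapsto k(\psi,U)=H^\star(\psi,U)$, but over a priori different index sets: $\mathcal B(K)$ (clopen compact subgroups) versus $\CC_\tau(K)$ (finite-index $\tau$-open subgroups), the normality reduction in Lemma \ref{normal} being harmless on either side by the antimonotonicity of $H^\star(\psi,-)$ (Lemma \ref{N<M->HN>HM}) and of $k(\psi,-)$ (Remark \ref{Base}(b)).

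The main obstacle, then, is to show these two index families are \emph{mutually cofinal} in the relevant sense. A clopen compact subgroup $U$ of the compact group $K$ automatically has finite index and is open, hence $\mathcal B(K)\subseteq\CC_\tau(K)$, giving $k(\psi)\le\aent_\tau(\psi)$ immediately. For the reverse inequality, I would invoke the classical structure fact (van Dantzig's theorem; cf.\ \cite{HR}) that a totally disconnected compact group has a local base at $e_K$ consisting of \emph{open compact subgroups}; this is exactly the statement recalled just before Lemma \ref{B(G)}. Consequently, given any $N\in\CC_\tau(K)$, it is $\tau$-open, so it contains some $U\in\mathcal B(K)$; by antimonotonicity $H^\star(\psi,N)\le H^\star(\psi,U)=k(\psi,U)\le k(\psi)$. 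Taking the supremum over $N$ yields $\aent_\tau(\psi)\le k(\psi)$. Combining the two inequalities gives $\aent_\tau(\psi)=k(\psi)$, and together with $k(\psi)=h_{top}(\psi)$ this completes the proof. The only subtlety to be careful about is making sure the $\limsup$'s are limits on both sides before comparing them term by term — but that is precisely what Propositions \ref{C=C_n->ent*=0} and \ref{no-mu} supply — and that $C_n(\psi,U)$ does have finite index in $K$ when $U$ does, which follows since $C_n(\psi,U)$ is a finite intersection of finite-index subgroups.
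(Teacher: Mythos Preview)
Your proof is correct and follows essentially the same approach as the paper: identify $k(\psi,U)$ with $H^\star(\psi,U)$ via Proposition~\ref{no-mu}, take suprema via Lemma~\ref{B(G)}, and cite $k=h_{top}$ on compact groups. The only difference is that you argue by mutual cofinality of $\mathcal B(K)$ and $\CC_\tau(K)$, whereas the paper simply observes that these two families are \emph{equal} when $K$ is compact (an open subgroup of a compact group is automatically closed, hence compact, and of finite index), which shortens your argument by a step.
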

\begin{proof}
In this case $\mathcal B(K,\tau)=\CC_\tau(K)$. By Proposition \ref{no-mu}, if $U\in\mathcal B(K,\tau)$, then $k(\psi,U)=H^\star(\psi,U)$. Lemma \ref{B(G)} gives $$\aent_\tau(\psi)=\sup\{H^\star(\psi,U):U\in\CC_\tau(K)\}=\sup\{k(\psi,U):U\in\mathcal B(K,\tau)\}=k(\psi).$$
The last equality follows from the known fact that $k$ and $h_{top}$ coincide on continuous endomorphisms of compact groups.
\end{proof}

The next result from \cite{DGS} connects the adjoint algebraic entropy with the algebraic entropy $\ent$ through Pontryagin duality. We recall that for an abelian group $G$ and an endomorphism $\phi:G\to G$ we have $\ent(\phi)=h_{alg}(\phi\restriction_{t(G)})$ (see Remark \ref{locfin}(d)).

\begin{theorem}[Bridge Theorem]\label{ent*=ent^}\emph{\cite{DGS}}
Let $K$ be a compact abelian group and $\psi:K\to K$ an endomorphism. Then $\ent(\psi)=\ent^\star(\widehat\psi)$.
\end{theorem}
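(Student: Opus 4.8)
\textbf{Proof proposal for Theorem \ref{ent*=ent^} (Bridge Theorem for adjoint algebraic entropy).}

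The plan is to exploit Pontryagin duality to translate the combinatorics of finite-index subgroups of $K$ into the combinatorics of finite subgroups of the discrete dual $\widehat K$, and then invoke the already-established Bridge Theorem \ref{BT} (or directly the definition of $\ent$). First I would recall the basic duality dictionary: if $K$ is a compact abelian group with discrete dual $\widehat K$, then the finite-index closed subgroups $N$ of $K$ correspond bijectively, via the annihilator operation $N \mapsto N^\perp = \{\chi \in \widehat K : \chi(N) = 0\}$, to the finite subgroups $F = N^\perp$ of $\widehat K$, and $[K:N] = |N^\perp| = |F|$. Moreover this correspondence is order-reversing, and it interacts with the dual endomorphism in the expected way: for a continuous endomorphism $\psi : K \to K$ with dual $\widehat\psi : \widehat K \to \widehat K$, one has $(\psi^{-1}(N))^\perp = \widehat\psi(N^\perp)$. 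A subtlety to handle at the outset: $\ent^\star(\widehat\psi)$ a priori ranges over \emph{all} finite-index subgroups of $K$, but since $K$ is compact abelian every finite-index subgroup is automatically closed (its cosets form a finite open cover, so the complement of the subgroup is open), hence the annihilator correspondence applies to the whole family $\CC(K)$; for abelian $K$ there is also no distinction between $\CC(K)$ and $\CC^\lhd(K)$.

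Next I would compute, for a fixed $N \in \CC(K)$ with $F = N^\perp$, the adjoint cotrajectory $C_n(\widehat\psi\,\text{-side})$ versus the algebraic trajectory on $\widehat K$. Wait — careful with which map is being dualized: the statement asks for $\ent^\star(\widehat\psi)$, so $\widehat\psi$ is the endomorphism whose adjoint entropy we compute, acting on $\widehat K$ which is discrete; and the cotrajectories live in $\widehat K$. Actually re-reading: $K$ is compact abelian, $\psi : K \to K$, and we want $\ent(\psi) = \ent^\star(\widehat\psi)$. So $\widehat\psi$ acts on the discrete group $\widehat K$, and $\ent^\star(\widehat\psi) = \sup\{H^\star(\widehat\psi, M) : M \in \CC(\widehat K)\}$ where $M$ runs over finite-index subgroups of $\widehat K$. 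Now the finite-index subgroups $M$ of $\widehat K$ correspond under double-duality annihilation to finite subgroups $A = M^\perp$ of $K$ (using $\widehat{\widehat K} \cong K$), with $[\widehat K : M] = |A|$. The key identity I would establish is
\begin{equation}\label{cotraj-dual}
C_n(\widehat\psi, M)^\perp = T_n(\psi, A),
\end{equation}
so that $[\widehat K : C_n(\widehat\psi, M)] = |C_n(\widehat\psi, M)^\perp| = |T_n(\psi, A)|$. This follows by iterating $(\widehat\psi^{\,-1}(M))^\perp = \psi(M^\perp)$ together with the fact that annihilator turns intersections of subgroups of $\widehat K$ into joins (sums) of the annihilator subgroups in $K$: $(\bigcap_i M_i)^\perp = \sum_i M_i^\perp$, valid here because all the groups involved have finite index / are finite, so no closure issues arise and the annihilator is an exact anti-isomorphism of the relevant finite lattices.

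From \eqref{cotraj-dual} the theorem is essentially immediate: $H^\star(\widehat\psi, M) = \limsup_n \frac{\log[\widehat K : C_n(\widehat\psi, M)]}{n} = \limsup_n \frac{\log|T_n(\psi, A)|}{n} = H_{alg}(\psi, A)$, and since $A = M^\perp$ is a finite subgroup of $K$ and conversely every finite subgroup $A$ of $K$ arises as $M^\perp$ for $M = A^\perp \in \CC(\widehat K)$, taking suprema gives $\ent^\star(\widehat\psi) = \sup\{H_{alg}(\psi, A) : A \text{ finite subgroup of } K\} = \ent(\psi)$ by Remark \ref{locfin}(d) and the definition of $\ent$ recalled there. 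Alternatively, one can package this even more cheaply: combine the Bridge Theorem \ref{BT} in the form $h_{alg}(\widehat\psi) = h_{top}(\widehat{\widehat\psi}) = h_{top}(\psi)$ restricted appropriately to torsion parts — but the direct annihilator computation is cleaner and self-contained. I expect the main obstacle to be purely bookkeeping: verifying that the annihilator correspondence is genuinely an anti-isomorphism on the sublattice generated by $M$ and its $\widehat\psi$-preimages (one must check these all have finite index, which follows from $M$ having finite index and $\widehat\psi^{\,-1}$ preserving finite index as noted before Fact \ref{pullbacklhd}), and correctly tracking which map gets dualized and on which side the cotrajectory lives. No deep input beyond Pontryagin duality and the definitions is needed; the theorem is really a formal consequence of the duality between the lattices $(\CC(\widehat K), \cap)$ and (finite subgroups of $K$, $+$) intertwining $\widehat\psi^{\,-1}$ with $\psi$.
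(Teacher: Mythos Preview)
The paper does not prove Theorem \ref{ent*=ent^} itself; it merely cites \cite{DGS}. Your argument via the annihilator correspondence is correct and is precisely the standard route (and essentially the argument in \cite{DGS}): finite-index subgroups $M$ of the discrete group $\widehat K$ correspond to finite subgroups $A=M^\perp$ of $K$, the identity $(\widehat\psi^{-1}(M))^\perp=\psi(M^\perp)$ together with $(\bigcap_i M_i)^\perp=\sum_i M_i^\perp$ yields $C_n(\widehat\psi,M)^\perp=T_n(\psi,A)$, and hence $H^\star(\widehat\psi,M)=H_{alg}(\psi,A)$; taking suprema gives $\ent^\star(\widehat\psi)=\ent(\psi)$.

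Two small remarks. First, your opening paragraph momentarily places the finite-index subgroups in $K$ rather than in $\widehat K$; you catch and correct this in the second paragraph, but in a write-up you should excise the false start. Second, the observation that ``every finite-index subgroup of a compact abelian group is closed'' is irrelevant here, since $\widehat K$ is discrete and all its subgroups are trivially closed; what you actually need (and use) is that finite subgroups of the compact group $K$ are closed, which is immediate. With these cosmetic fixes the proof is clean and complete.
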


A consequence of this theorem is the following exotic criterion for determining the continuity of an endomorphism of a compact abelian group.

\begin{corollary}
Let $K$ be a compact abelian group and $\psi:K\to K$ an endomorphism. If $\ent(\psi)$ is finite and positive, then $\psi:K\to K$ is not continuous.
\end{corollary}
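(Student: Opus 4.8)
The plan is to argue by contradiction, combining the Bridge Theorem \ref{ent*=ent^} with the Dichotomy Theorem \ref{dichotomy}. Suppose, for contradiction, that $\psi:K\to K$ is continuous. Since $K$ is compact abelian, its Pontryagin dual $\widehat K$ is a discrete abelian group, and the continuity of $\psi$ is precisely what guarantees that the assignment $\chi\mapsto\chi\circ\psi$ sends $\widehat K$ into $\widehat K$; thus $\widehat\psi:\widehat K\to\widehat K$ is a well-defined endomorphism of the discrete abelian group $\widehat K$.

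Next I would invoke the Bridge Theorem \ref{ent*=ent^}, which applies here since $K$ is compact abelian and $\psi$ is a (continuous) endomorphism: it yields $\ent(\psi)=\ent^\star(\widehat\psi)$. On the other hand, $\widehat K$ is an abelian group and $\widehat\psi$ an endomorphism of it, so the Dichotomy Theorem \ref{dichotomy} gives $\ent^\star(\widehat\psi)\in\{0,\infty\}$. Combining the two, $\ent(\psi)=\ent^\star(\widehat\psi)\in\{0,\infty\}$, contradicting the hypothesis that $\ent(\psi)$ is finite and positive. Hence $\psi$ cannot be continuous.

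There is really no hard step: the statement is an immediate corollary, and the only point requiring a line of care is the observation that ``$\widehat\psi$'' is meaningful in the first place — i.e. that continuity of $\psi$ is exactly the hypothesis needed for $\widehat\psi$ to be an endomorphism of $\widehat K$ — so that both \ref{ent*=ent^} and \ref{dichotomy} are legitimately applicable under the assumption for contradiction. One might also add a remark that the same argument shows, more generally, that any endomorphism $\psi$ of a compact abelian group whose $\ent$-value lies in $(0,\infty)$ witnesses a discontinuous map, which is the ``exotic criterion'' advertised before the statement.
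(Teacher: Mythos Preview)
Your proof is correct and is exactly the argument the paper has in mind: the corollary is stated immediately after Theorem \ref{ent*=ent^} as ``a consequence of this theorem'', and the intended deduction is precisely the combination of the Bridge Theorem \ref{ent*=ent^} with the Dichotomy Theorem \ref{dichotomy} that you spell out. Your extra care in noting that continuity of $\psi$ is what makes $\widehat\psi$ a well-defined endomorphism of the discrete group $\widehat K$ is appropriate, since Theorem \ref{ent*=ent^} is phrased a bit loosely in this respect.
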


The following Bridge Theorem \ref{BT*}, relating the topological adjoint entropy and the algebraic entropy through the Pontryagin duality, extends Weiss Bridge Theorem \ref{BT-WP} from totally disconnected compact abelian groups to arbitrary compact abelian groups. On the other hand, this is not possible using the topological entropy, so in some sense the adjoint topological entropy may be interpreted as the true ``dual entropy'' of the algebraic entropy $\ent$, as well as the topological entropy is the ``dual entropy'' of the algebraic entropy $h_{alg}$ in view of Theorem \ref{BT}. 

\begin{theorem}[Bridge Theorem]\label{BT*}
Let $(K,\tau)$ be a compact abelian group and $\psi:(G,\tau)\to (G,\tau)$ a continuous endomorphism. Then $$\aent_\tau(\psi)=\ent(\widehat\psi).$$
\end{theorem}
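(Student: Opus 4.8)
The plan is to dualize the whole computation via Pontryagin duality and recognize what comes out on the discrete side as $\ent(\widehat\psi)$; the argument runs parallel to the proof of Weiss' Bridge Theorem \ref{BT-WP}. Put $G=\widehat K$ (a discrete abelian group) and $\phi=\widehat\psi:G\to G$. Since $K$ is abelian, Lemma \ref{normal} lets us use all $\tau$-open subgroups, so $\aent_\tau(\psi)=\sup\{H^\star(\psi,N):N\in\CC_\tau(K)\}$, and every $\tau$-open subgroup of the compact group $K$ automatically has finite index. On the discrete side, Remark \ref{locfin}(d) gives $\ent(\phi)=\sup\{H_{alg}(\phi,F):F\ \text{finite subgroup of}\ G\}$.

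The key tool is the annihilator correspondence $H\mapsto H^\perp=\{\chi\in\widehat K:\chi(H)=0\}$. First I would record that $N\mapsto N^\perp$ is an inclusion-reversing bijection between $\CC_\tau(K)$ and the family of all finite subgroups of $G$, with
\begin{equation*}
[K:N]=\bigl|\widehat{K/N}\bigr|=|N^\perp|\qquad(N\in\CC_\tau(K));
\end{equation*}
surjectivity uses $F^{\perp\perp}=F$ for the (closed) finite subgroups $F$ of $\widehat K$ together with $\widehat{\widehat K}\cong K$. Next comes the main identity: for every $N\in\CC_\tau(K)$ and every $n\in\N_+$,
\begin{equation*}
\bigl(C_n(\psi,N)\bigr)^\perp=T_n(\phi,N^\perp).
\end{equation*}
To prove it, note that each $\psi^{-k}(N)$ is closed of finite index in $K$ (as already observed before the definition of $\aent$), and dualizing the injection $K/\psi^{-k}(N)\hookrightarrow K/N$ induced by $\psi^k$ shows it is realized precisely by $\widehat\psi^{\,k}$, so $(\psi^{-k}(N))^\perp=\widehat\psi^{\,k}(N^\perp)=\phi^k(N^\perp)$; then, since annihilators carry finite intersections of closed subgroups of $K$ to sums of annihilators (and no closure is needed, these annihilators being finite), $(C_n(\psi,N))^\perp=\sum_{k=0}^{n-1}\phi^k(N^\perp)=T_n(\phi,N^\perp)$, the last equality because in the abelian group $G$ the subset product of subgroups is their sum.

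Combining the two displays with the cardinality fact, $[K:C_n(\psi,N)]=|(C_n(\psi,N))^\perp|=|T_n(\phi,N^\perp)|$ for all $n$, hence
\begin{equation*}
H^\star(\psi,N)=\limsup_{n\to\infty}\frac{\log[K:C_n(\psi,N)]}{n}=\limsup_{n\to\infty}\frac{\log|T_n(\phi,N^\perp)|}{n}=H_{alg}(\phi,N^\perp).
\end{equation*}
Taking the supremum over $N\in\CC_\tau(K)$ and using that $N\mapsto N^\perp$ runs over exactly the finite subgroups of $G$, one gets $\aent_\tau(\psi)=\sup\{H_{alg}(\phi,F):F\ \text{finite subgroup of}\ G\}=\ent(\phi)=\ent(\widehat\psi)$.

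The main obstacle is the bookkeeping around the annihilator: one must check that $\CC_\tau(K)$ corresponds \emph{exactly} (not merely up to inclusions) to the finite subgroups of $\widehat K$, that the dual of the injection $K/\psi^{-k}(N)\hookrightarrow K/N$ is indeed realized by $\widehat\psi^{\,k}$ restricted to $N^\perp$, and that $[K:\psi^{-k}(N)]$, which may strictly drop below $[K:N]$, still matches $|\phi^k(N^\perp)|$; everything else is a routine dualization of the definitions of $C_n(\psi,N)$ and $T_n(\phi,F)$. As a consistency check, when $K$ is totally disconnected $\widehat K$ is torsion, so $\ent(\widehat\psi)=h_{alg}(\widehat\psi)=h_{top}(\psi)$ by Remark \ref{locfin}(d) and the Bridge Theorem \ref{BT}, recovering Theorem \ref{ent*=htop}.
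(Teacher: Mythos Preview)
The paper states this theorem without proof (it is a survey result, implicitly referring to \cite{G}), so there is no proof to compare against. Your argument is correct and follows exactly the route the paper signals: it is the direct Pontryagin-duality computation underlying Weiss' Bridge Theorem, carried out with the annihilator bijection $N\mapsto N^\perp$ between $\CC_\tau(K)$ and the finite subgroups of $\widehat K$, the identity $(\psi^{-k}(N))^\perp=\phi^k(N^\perp)$ (obtained by dualizing the injection $K/\psi^{-k}(N)\hookrightarrow K/N$), and the resulting equality $[K:C_n(\psi,N)]=|T_n(\phi,N^\perp)|$. This is precisely the argument one finds in \cite{G} and is what the paper intends when it says the result ``extends Weiss Bridge Theorem \ref{BT-WP} from totally disconnected compact abelian groups to arbitrary compact abelian groups.''
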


We leave the following open question.

\begin{problem}
Is it possible to extend the Bridge Theorem \ref{BT*} to all continuous endomorphisms of locally compact abelian groups?
\end{problem}

\newpage
\section{Entropy in a category}\label{cav-sec}

In this section we briefly describe a first categorical approach to entropy from \cite{DG1}, tailored to cover the constriction of the Pinkser subgroup
from a categorical point of view. 

\subsection{Covariant entropy function in an abelian category}

We start recalling the definition of (covariant) entropy function in an abelian category. The required set of three axioms is clearly inspired by the properties of the algebraic entropy and it is minimal in order to ensure the construction of the Pinsker radical  below (see Definition \ref{DefiPin}).

\begin{definition}\label{h-def}
Let $\mathfrak M$ be a well-powered cocomplete abelian category. 
An \emph{entropy function} $h$ of $\mathfrak M$ is a function $h:\mathfrak M\to \R_{\geq0}\cup\{\infty\}$ such that:
\begin{itemize}
\item[{(A1)}] $h(0) = 0$ and $h(M)=h(N)$ if $M$ and $N$ are isomorphic objects in $\mathfrak M$;
\item[{(A2)}] $h(M)=0$ if and only if $h(N)=0=h(Q)$ for every exact sequence $0\longrightarrow N\longrightarrow M \longrightarrow Q\longrightarrow0$ in $\mathfrak M$;
\item[{(A3)}] for a set $\{M_j:j\in I\}$ of objects of $\mathfrak M$, $h\left(\bigoplus_{j\in J} M_j\right) = 0$ if and only if $h(M_j) = 0$ for all $j\in J$.
\end{itemize}
An entropy function $h$ of $\mathfrak M$ is \emph{binary} if it takes only the values $0$ and $\infty$.
\end{definition}

The fundamental and inspiring example is given by the algebraic entropy:

\begin{example} Let us first recall that the category $\Flow_\abg$ of flows of the category of abelian groups
is an abeilan category. Indeed, every flow $\f: G \to G$ defines a structure of a $\Z[X]$-module on the abelian group $G$
(where   $\Z[X]$ is the ring of polynomials with integer coefficients). In this way the category $\Flow_\abg$ is isomorphic to the 
category $\mathfrak M$ of $\Z[X]$-module, that is an abelian category. 
Now it suffices to note that the algebraic entropy $h_{alg}$ is an entropy function of the category 
$\mathfrak M$ in view of the Invariance under conjugation Axiom, the Continuity Axiom and the Addition Theorem. 
\end{example}

Extending the notion of Pinsker subgroup, defined for the algebraic entropy (see Definition \ref{pinsker-def}), we have the following general

\begin{definition}\label{DefiPin}
Let $h$ be an entropy function of $\mathfrak M$. The \emph{Pinsker radical}  with respect to $h$ is defined for every object $M$ of $\mathfrak M$ as the largest subobject $\P_h(M)$ of $M$ such that $h(\P_h(M)) =0$. 
\end{definition}

Clearly, such a subobject is uniquely determined by its definition. Its existence easily follows from the properties (A1) -- (A3). Indeed, 
the join 
$$
\P_h(M)=\sum\{N_j\subseteq M: h(N_j)=0\}
$$
has the desired properties. 

Using the properties (A1) -- (A3) one can see that the subobject $\P_h(M)$ of $M$ is functorial. In other words, every morphism $f:M \to N$ in $\mathfrak M$ carries $\P_h(M)$ into $\P_h(N)$ (i.e., $f(\P_h(M))\subseteq \P_h(N)$). Therefore,  one has a 
functor $\P_h:\mathfrak M\to\mathfrak M$ assigning to each $M\in \mathfrak M$ its Pinsker radical $\P_h(M)$. We keep the 
notation $\P_h$ and the name radical for this functor too.

It is natural to consider the class $$\mathcal T_h=\{M\in\mathfrak M: h(M)=0\}$$ for a given entropy function $h$ of $\mathfrak M$.
In terms of Pinsker radical $\P_h$ these are precisely the objects $M$ with $\P_h(M)=M$. 

Moreover we say that an object $M$ of $\mathfrak M$ has \emph{completely positive entropy} if $h(N)>0$ for every non-zero subobject $N$ of $M$, and we denote this by $h(M)>\!\!>0$. As a natural counterpart of $\mathcal T_h$, we define the class of all objects in $\mathfrak M$ with completely positive entropy, that is, $$\mathcal F_h=\{M\in\mathfrak M: h(M)>\!\!>0\}.$$
Obviously, $M \in \mathcal F_h$ if and only if $\P_h(M)=0$. 

Let $\mathfrak t_h=(\mathcal T_h,\mathcal F_h)$. Since $\mathcal T_h=\{M\in\mathfrak M: \P_h(M)=M\}$ and $\mathcal F_h=\{M\in\mathfrak M: \P_h(M)=0\}$, one has the following: 

\begin{theorem}
Let $\mathfrak M$ be a well-powered cocomplete abelian category.  If $h$ is an entropy function of $\mathfrak M$, then $\P_h$ is a hereditary radical of $\mathfrak M$ and $\mathfrak t_h$ is a hereditary torsion theory in $\mathfrak M$.
\end{theorem}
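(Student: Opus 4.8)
The plan is to verify directly from the axioms (A1)--(A3) that the Pinsker radical $\P_h$ defined via the join $\P_h(M)=\sum\{N\subseteq M: h(N)=0\}$ is a well-defined, functorial, hereditary radical, and then invoke the standard correspondence between hereditary radicals and hereditary torsion theories in an abelian category. First I would record the well-definedness of $\P_h(M)$: since $\mathfrak M$ is well-powered the class of subobjects $N\subseteq M$ with $h(N)=0$ is a set, so the join exists by cocompleteness; and by (A3) the direct sum $\bigoplus_j N_j$ of all such subobjects has zero entropy, hence its image in $M$, which is exactly $\P_h(M)$, has zero entropy by (A2) applied to the epimorphism $\bigoplus_j N_j\twoheadrightarrow \P_h(M)$ (a quotient of a zero-entropy object has zero entropy). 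Thus $\P_h(M)$ is genuinely the largest subobject with zero entropy, and in particular $\P_h(M)=M$ exactly when $M\in\mathcal T_h$, while $\P_h(M)=0$ exactly when $M\in\mathcal F_h$.

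Next I would establish functoriality: for a morphism $f:M\to N$, the subobject $f(\P_h(M))$ is a quotient of $\P_h(M)$ (namely $\P_h(M)/\ker(f\restriction_{\P_h(M)})$), so by (A2) it has zero entropy, hence is contained in $\P_h(N)$ by maximality. This gives the functor $\P_h:\mathfrak M\to\mathfrak M$ together with the natural inclusion $\P_h\hookrightarrow \mathrm{id}_{\mathfrak M}$, so $\P_h$ is a preradical. To see it is a \emph{radical}, I would check $\P_h(M/\P_h(M))=0$: if $N/\P_h(M)$ is a subobject of $M/\P_h(M)$ with $h(N/\P_h(M))=0$, then the exact sequence $0\to\P_h(M)\to N\to N/\P_h(M)\to 0$ together with $h(\P_h(M))=0$ and (A2) forces $h(N)=0$, so $N\subseteq\P_h(M)$, i.e. $N/\P_h(M)=0$. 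To see it is \emph{hereditary} (left exact), I would check $\P_h(N)=N\cap\P_h(M)$ for every subobject $N\subseteq M$: the inclusion $\P_h(N)\subseteq N\cap\P_h(M)$ follows from functoriality applied to $N\hookrightarrow M$ and the fact that $\P_h(N)$ has zero entropy; conversely $N\cap\P_h(M)$ is a subobject of $\P_h(M)$, hence has zero entropy by (A2) (a subobject of a zero-entropy object has zero entropy), so it lies inside $\P_h(N)$.

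Finally I would assemble the torsion theory. Set $\mathcal T_h=\{M:\P_h(M)=M\}$ and $\mathcal F_h=\{M:\P_h(M)=0\}$. Using the standard dictionary (see e.g. the torsion-theory literature, or Stenstr\"om), a hereditary radical $r$ on a well-powered cocomplete abelian category yields the hereditary torsion theory $(\mathcal T_r,\mathcal F_r)$ with $\mathcal T_r=\{M:r(M)=M\}$ and $\mathcal F_r=\{M:r(M)=0\}$; one verifies $\Hom(T,F)=0$ for $T\in\mathcal T_h$, $F\in\mathcal F_h$ (any morphism $T\to F$ has image a quotient of $T$, hence in $\mathcal T_h$, and a subobject of $F$, hence in $\mathcal F_h\cap\mathcal T_h=\{0\}$ since $\P_h$ of such an object is simultaneously the whole object and zero), closure of $\mathcal T_h$ under quotients, coproducts and extensions (from (A2), (A3) and the radical property), closure of $\mathcal F_h$ under subobjects, products and extensions, and the defining short exact sequence $0\to\P_h(M)\to M\to M/\P_h(M)\to 0$ with $\P_h(M)\in\mathcal T_h$ and $M/\P_h(M)\in\mathcal F_h$. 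Hereditariness of the torsion theory is exactly the hereditariness of $\P_h$ proved above (equivalently, $\mathcal T_h$ is closed under subobjects). I do not expect a genuine obstacle here; the only mildly delicate point is being careful about which of (A2)'s two implications is used where (the ``only if'' direction gives that subobjects and quotients of zero-entropy objects have zero entropy, the ``if'' direction gives the extension-closure of $\mathcal T_h$), and making sure the set-vs-class issue in forming $\P_h(M)$ is handled by well-poweredness before invoking (A3) on the possibly-large family of zero-entropy subobjects.
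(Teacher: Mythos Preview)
Your proposal is correct and follows essentially the same approach as the paper, which only sketches the argument: the paper establishes existence of $\P_h(M)$ via the join formula and functoriality in the paragraphs preceding the theorem, then simply states the theorem after noting that $\mathcal T_h$ and $\mathcal F_h$ are the fixed and the vanishing classes of $\P_h$, deferring the remaining routine verifications to \cite{DG1}. Your write-up fills in exactly those verifications (radical property, hereditariness, and the standard radical/torsion-theory dictionary) in the expected way.
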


In particular, this theorem can be applied in the known case of the algebraic entropy.

One can define a preorder $\preceq$ of the class $\H(\mathfrak M)$ of all entropy functions of $\mathfrak M$ induced by the order of $\R_{\geq0}\cup\{\infty\}$. It makes $(\H(\mathfrak M),\preceq)$ a complete lattice as well as its sublattice $(\HB(\mathfrak M),\preceq)$ of all binary entropy functions of $\mathfrak M$. The assignment $h\mapsto \mathfrak t_h$
between entropy functions of $\mathfrak M$ and hereditary torsion theories in $\mathfrak M$ is order preserving, but generally
many-to-one. Its restriction to binary entropy functions defines a bijective order preserving correspondence 
between binary entropy functions of $\mathfrak M$ and hereditary torsion theories in $\mathfrak M$. 
 So, there may be information in an entropy function which is not captured by the corresponding hereditary torsion theory, and binary entropy functions are simply those which do not contain any additional information.

\subsection{Contravariant entropy functions}

The topological entropy is not a (covariant) entropy function, so the point is to understand how to deal with this different situation. 
More specifically, the topological entropy $h_{top}$ is continuous with respect to inverse limits when considered in 
the category $\mathbf{CompGrp}$ of all compact groups (see Proposition \ref{invlim}), while the algebraic entropy is continuous with respect to direct limits (see Proposition \ref{dirlim}). This is the substantial difference between the two entropies (see also Remark \ref{Berny}).

The distinction between both types of entropy is well visible also in the case of the ``normalization axiom" that imposes a specific value of the entropy function at the Bernoulli shifts. 

\begin{remark}\label{Berny} For $K\in\abg$, the left Bernoulli shift ${}_K\beta^\oplus$ of $K^{(\N)}$ has algebraic entropy $0$, and the right Bernoulli shift $\beta_K^\oplus$ of $K^{(\N)}$ has algebraic entropy $\log |K|$. 
Conversely, for $K\in\mathbf{CompGrp}$, the left Bernoulli shift ${}_K\beta$ of $K^\N$ has topological entropy $\log |K|$, while the right Bernoulli shift $\beta_K$ of $K^\N$ has topological entropy $0$. 
\end{remark}

This gives a good motivation to split the abstract notion of entropy functions in two dual notions, say \emph{covariant entropy functions} (precisely those of Definition \ref{h-def}) and \emph{contravariant entropy functions} following the pattern of $h_{top}$ on $\mathbf{CompGrp}$. 

\begin{definition}\label{hc-def}
Let $\mathfrak N$ be a complete abelian category. A \emph{contravariant entropy function} of $\mathfrak N$ is an entropy function $h:\mathfrak N^{op}\to \R_{\geq0}\cup\{\infty\}$, where $\mathfrak N^{op}$ is the opposite category of $\mathfrak N$.
\end{definition}

The main example is the topological entropy in the category of compact abelian groups. 

The difference between the two notions (covariant and and contravariant entropy) relies in the continuity property. Indeed, both the covariant and contravariant entropy functions must be invariant under conjugation and satisfy the Addition Theorem (or some weaker version of the Addition Theorem), while the continuity property must be imposed in a selective way: the covariant entropy functions must be continuous with respect to direct limits in $\mathfrak N$, while the contravariant entropy functions must be continuous with respect to inverse limits in $\mathfrak N$. 

\medskip 
This approach works for the category of compact abelian groups which is abelian, while the category $\textbf{CompGrp}$ is semiabelian but not abelian. This suggests to generalize the setting of Definition \ref{hc-def} at least to semiabelian categories to include as example the topological entropy for the larger category $\mathbf{CompGrp}$
Hence we leave the following: 

\begin{problem}\emph{\cite{DG1}}
Develop the theory of covariant (respectively, contravariant) entropy functions in semiabelian cocomplete (respectively, complete) categories.
\end{problem}

\newpage

\end{document}